\newcommand{\Auteur}{Viviane Pons}
\newcommand{\Titre}{Combinatoire algébrique liée aux ordres sur les permutations}
\newcommand{\TitreEN}{Algebraic combinatorics on orders of permutations}
\newcommand{\Date}{7 octobre 2013}
\author{\Auteur}
\title{\Titre}
\date{\today}
\newcommand{\ConcDec}{~\vec{.}~} 
\newcommand{\Dec}[1]{\vec{#1}}  
\newcommand{\Ret}[1]{\widetilde{#1}} 
\DeclareMathOperator{\std}{std} 
\newcommand{\Sym}[1]{\mathfrak{S}_{#1}} 
\newcommand{\kSym}[2]{\mathfrak{S}_{#1}^{#2}} 
\newcommand{\bSym}[2]{{}^{#2}\!\mathfrak{S}_{#1}}
\newcommand{\kbSym}[3]{{}^{#3}\!\mathfrak{S}_{#1}^{#2}}
\newcommand{\brho}[1]{{}^{#1}\!\rho} 
\newcommand{\krho}[1]{\rho^{#1}}
\newcommand{\kbrho}[2]{{}^{#2}\!\rho^{#1}}
\newcommand{\mrho}[1]{\widetilde{\rho}^{#1}}
\DeclareMathOperator{\coinv}{coinv} 
\DeclareMathOperator{\inv}{inv} 
\DeclareMathOperator{\cle}{cle} 
\newcommand{\pmax}{\vee} 
\newcommand{\pmin}{\wedge} 
\newcommand{\infd}{\leq_R} 
\newcommand{\infg}{\leq_L}
\newcommand{\infb}{\leq_B}
\newcommand{\syng}[1]{~\tiny{\yng(#1)}} 
\newcommand{\hpi}{\hat{\pi}} 
\newcommand{\hK}{\hat{K}} 
\DeclareMathOperator{\Succs}{Succs} 
\newcommand{\W}{W} 
\newcommand{\tprec}{\prec} 
\newcommand{\E}[1]{\mathfrak{E}_{#1}} 
\newcommand{\ttprec}{\vartriangleleft} 
\newcommand{\vleq}{\begin{turn}{-90}$\leq$\end{turn}} 
\newcommand{\vgeq}{\begin{turn}{-90}$\geq$\end{turn}}
\newcommand{\veq}{\begin{turn}{-90}$=$\end{turn}}
\newcommand{\bblock}{\beta} 
\newcommand{\Ww}{\mathcal{W}} 
\newcommand{\noeud}{n\oe{}ud } 
\newcommand{\noeuds}{n\oe{}uds }
\newcommand{\trprec}{\vartriangleleft} 
\newcommand{\trsucc}{\vartriangleright} 
\newcommand{\ntrprec}{\ntriangleleft}
\newcommand{\cp}{\hat{+}} 
\DeclareMathOperator{\ABR}{ABR} 
\DeclareMathOperator{\ABD}{ABD} 
\DeclareMathOperator{\shape}{shape} 
\DeclareMathOperator{\exec}{exec} 
\DeclareMathOperator{\BF}{\mathbf{F}} 
\DeclareMathOperator{\BG}{\mathbf{G}} 
\DeclareMathOperator{\BE}{\mathbf{E}} 
\DeclareMathOperator{\BH}{\mathbf{H}} 
\DeclareMathOperator{\BP}{\mathbf{P}} 
\newcommand{\Aa}{\mathcal{A}} 
\DeclareMathOperator{\FQSym}{\mathbf{FQSym}}
\DeclareMathOperator{\PBT}{\mathbf{PBT}}
\newcommand{\FQSymm}{\FQSym^{(m)}} 
\newcommand{\PBTm}{\PBT^{(m)}} 
\DeclareMathOperator{\OB}{B} 
\DeclareMathOperator{\OBT}{\mathcal{B}} 
\newcommand\dec{F_{\ge}} 
\newcommand\inc{F_{\le}} 
\DeclareMathOperator{\BI}{\mathbf{I}} 
\DeclareMathOperator{\Itrees}{trees} 
\DeclareMathOperator{\Isize}{size} 
\DeclareMathOperator{\PI}{\mathcal{P}} 
\newcommand{\BB}{\mathbb{B}} 
\newcommand{\SI}{\mathbb{S}} 
\newcommand{\pleft}{\vec{\bullet}} 
\newcommand{\pright}{\overleftarrow{\delta}} 
\newcommand{\polleft}{\succ}
\newcommand{\polright}{\prec_\delta}
\newcommand{\mpaths}{\emph{$m$-ballot paths}} 
\newcommand{\mpath}{\emph{$m$-ballot path}}
\newcommand{\kpaths}[1]{\emph{#1-ballot paths}}
\newcommand{\kpath}[1]{\emph{#1-ballot path}}
\newcommand{\Tamnm}{\mathcal{T}_{n}^{(m)}} 
\newcommand{\Tam}[2]{\mathcal{T}_{#1}^{(#2)}}
\DeclareMathOperator{\OBm}{B^{(m)}} 
\newcommand{\OBk}[1]{\OB^{(#1)}} 
\newcommand{\Phim}{\Phi^{(m)}} 
\newcommand{\OBTm}[1]{\OBT_{#1}^{(m)}} 
\newcommand{\OBTk}[2]{\OBT_{#1}^{(#2)}}
\newcommand{\polrightx}{\prec_{\frac{\delta}{x}}} 
\newcommand{\prightx}{\tfrac{\pright}{x}}
\newcommand{\BBm}{\BB^{(m)}} 
\newcommand{\BBk}[1]{\BB^{(#1)}}
\DeclareMathOperator{\BR}{R} 
\newcommand{\PIm}{\PI^{(m)}} 
\newcommand{\lret}{\widetilde{\leq}}
\newcommand{\stdm}{\std^{(m)}} 
\newcommand{\BGm}[1]{\BG_{#1}^{(m)}} 
\newcommand{\BFm}[1]{\BF_{#1}^{(m)}} 
\newcommand{\BPm}[1]{\BP_{#1}^{(m)}} 
\newcommand{\Ti}{\widetilde{T}} 
\newcommand{\NN}{\mathbb{N}}
\newcommand{\CC}{\mathbb{C}}
\newcommand{\ZZ}{\mathbb{Z}}
\newcommand{\PP}{\mathbb{P}}
\newcommand{\KK}{\mathbb{K}}
\definecolor{Noir}{RGB}{0,0,0}
\definecolor{Rouge}{RGB}{205,35,38}
\definecolor{Bleu}{RGB}{2,60,195}
\definecolor{Vert}{RGB}{23,103,1}
\definecolor{Orange}{RGB}{255,113,15}
\definecolor{Blanc}{RGB}{255,255,255}
\newcommand{\red}[1]{\textbf{\textcolor{red}{#1}}}
\newcommand{\sred}[1]{\textcolor{red}{#1}}
\newcommand{\blue}[1]{\textcolor{blue}{#1}}
\newcommand{\green}[1]{\textcolor{Vert}{#1}}
\tikzstyle{Coupled} = [opacity = .5, color=green, line width = 2]
\tikzstyle{Red} = [color = red]
\tikzstyle{Blue} = [color = blue]
\tikzstyle{Green} = [color = Vert]
\tikzstyle{Gray} = [color = gray]
\tikzstyle{Path} = [line width = 1.2]
\tikzstyle{StrongPath} =  [line width=2.5]
\tikzstyle{DPoint} = [fill, radius=0.1]
\tikzstyle{Point} = [fill, radius=0.08]
\tikzstyle{BigPoint} = [fill, radius=0.13]
\tikzstyle{Leaf} = [color = gray]
\tikzstyle{Line1} = [dashed]
\tikzstyle{Line2} = [dotted, ultra thick]
\tikzstyle{SageClass}=[rectangle, draw=black, rounded corners, fill=blue!30, drop shadow, anchor=north, rectangle split, rectangle split parts=2]
\tikzstyle{SimpleSageClass}=[rectangle, draw=black, rounded corners, fill=blue!30, drop shadow, anchor=north]
\tikzstyle{coercionArrow}=[thick, color=red]
\tikzstyle{Chap}=[rectangle,draw=blue!100,fill=blue!20,thick,text width=3.8cm,text centered,line width=1.5pt, font=\scriptsize]
\tikzstyle{ArrowChap} = [Rouge!80, thick, draw, line width = 2pt]
\newtheorem{Theoreme}{Théorème}[section]
\newtheorem{Corollaire}[Theoreme]{Corollaire}
\newtheorem{Proposition}[Theoreme]{Proposition}
\newtheorem{Lemme}[Theoreme]{Lemme}
\newtheorem{Definition}[Theoreme]{Définition}
\theoremstyle{remark}
\newtheorem{Remarque}[Theoreme]{Remarque}
\newtheorem{Exemple}[Theoreme]{Exemple}
\numberwithin{equation}{chapter}
\begin{document}

\frontmatter
\lhead[\oldstylenums \thepage]{\rightmark}
\rhead[\leftmark]{\oldstylenums \thepage}


\thispagestyle{empty}

\begin{center}
    \huge
    \textsc{Thèse de doctorat}
\end{center}

\begin{center}
\includegraphics[width=130px,height=130px]{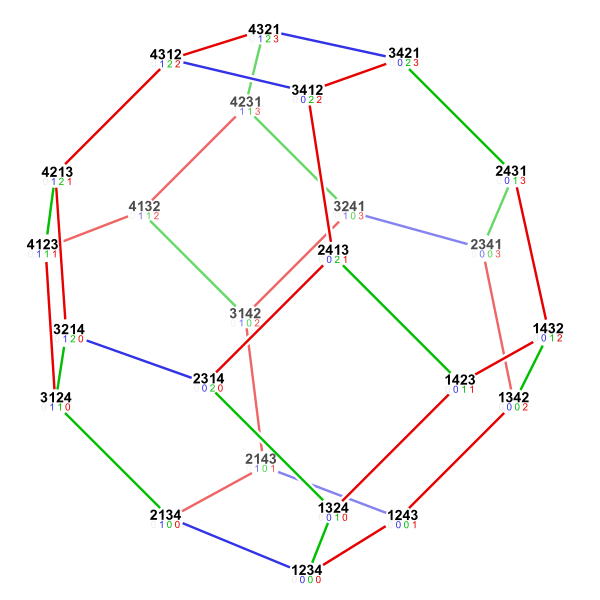}
\end{center}

\begin{center}
    \huge
    \textsc{\Titre}
\end{center}

\begin{center}
    \large
    pour l'obtention du grade de
\end{center}

\begin{center}
    \Large
    Docteur de l'université Paris-Est
\end{center}

\begin{center}
    \large
    \textbf{Spécialité Informatique} \\
\footnotesize
\'Ecole Doctorale de Mathématiques et des Sciences et Techniques de l'Information et de la Communication
\end{center}


\vspace{1em}

\begin{center}
    \large
    Présentée et soutenue publiquement par
\end{center}

\begin{center}
    \Large
    \Auteur
\end{center}

\begin{center}
    \large
    le \Date
\end{center}

\begin{center}
    \large
    \textbf{Devant le jury composé de}
\end{center}


\begin{center}
    \large
    \begin{tabular}{ll}
        Jean-Christophe Aval    \qquad & \qquad \'Examinateur \\
		François Bergeron       \qquad & \qquad Rapporteur \\
        Frédéric Chapoton       \qquad & \qquad Rapporteur \\
        Sylvie Corteel          \qquad & \qquad \'Examinateur \\
        Jean-Christophe Novelli \qquad & \qquad Directeur de thèse \\
        Frédéric Patras         \qquad & \qquad \'Examinateur \\
        Jean-Yves Thibon        \qquad & \qquad Directeur de thèse \\
    \end{tabular}
\end{center}


\vspace{0.5em}

\begin{center}
    \footnotesize
    Laboratoire d'informatique Gaspard-Monge \\
    UMR 8049 LIGM \\
    5, bd Descartes, Champs-sur-Marne, 77454 Marne-la-Vallée Cedex 2, France
\end{center}

\vspace*{\fill}

\cleardoublepage

\lhead[\oldstylenums \thepage]{}
\rhead[Remerciements]{\oldstylenums \thepage}

\begin{small}
\textit{Pour Elyah Ferrari Bullet, ma filleule qui aura un an le 8 octobre.}
\section*{Remerciements}

C'est avec beaucoup d'émotion que j'arrive aujourd'hui au bout de mes trois années de thèse, conclusion de ma vie d'étudiante et début de ma vie de chercheur. Ces trois années et les années d'études qui ont précédé ont été pour moi source de beaucoup de bonheur. Le plaisir d'étudier et d'apprendre ne s'est jamais tari et, je l'espère, m'accompagnera encore longtemps. Ce plaisir, je le dois aussi aux personnes que j'ai côtoyées, qui m'ont donné envie de continuer et que je voudrais remercier aujourd'hui.

Ma première pensée va à Alain Lascoux. Il est celui qui manque à cette soutenance, celui qui aurait dû me poser des questions de géométrie algébrique (et y répondre). Je sais qu'aujourd'hui il est malade et je souhaite de tout mon c\oe{}ur qu'il nous revienne. Cette dernière année, il a été souvent absent et il nous a manqué à tous. Sans lui, le séminaire du vendredi n'est jamais au complet. C'est à ce séminaire que je l'ai vu pour la première fois alors que je finissais ma première année de master. J'étais tout à fait incapable de comprendre ce dont il parlait. Je crois qu'une de mes plus grande fierté est d'avoir aujourd'hui l'impression, parfois, un peu, de temps en temps, de suivre le fil de ses explications. J'ai été particulièrement impressionnée lorsque j'ai commencé à travailler avec lui au début de ma thèse. Je me sentais minuscule face à son monceau de connaissances (et c'est toujours le cas), face à son esprit vif de combinatoriste, à sa compréhension profonde des objets, à sa vision globale des problèmes au delà de la simple question posée. Je dois dire qu'honnêtement, pour ces mêmes raisons, ça n'a pas toujours été facile de travailler avec lui. Mais il a fait preuve avec moi, comme je pense avec tous ses étudiants, d'une très grande patience car il a le désir très fort de transmettre sa vision des mathématiques. J'ai appris énormément avec lui et je suis fière que l'on puisse remarquer sa présence et son influence dans mon travail. Je le remercie de tout mon c\oe{}ur pour ce qu'il m'a apporté et pour tout ce qu'il doit encore m'apprendre.

Mes prochains remerciements vont évidemment à mes deux directeurs de thèse Jean-Yves Thibon et Jean-Christophe Novelli. J'ai d'abord rencontré Jean-Yves pour mon premier projet de recherche en M1, puis, très vite, Jean-Christophe à travers le séminaire du vendredi matin. Très tôt, ils ont su m'accueillir dans l'équipe, me donner confiance en moi, confirmer mon désir de continuer en thèse. Ils savent orienter sans diriger pour laisser à chacun de leurs étudiants la possibilité de trouver sa propre voie dans la recherche. Ils ont été tous les deux présents tout au long de ma thèse pour me conseiller scientifiquement et humainement, relire scrupuleusement mes articles et mes exposés. Plus récemment, j'ai eu le plaisir de collaborer avec eux et j'espère avoir encore de nombreuses occasions de le faire. Je pense qu'on leur doit en grande partie l'ambiance si agréable de l'équipe de Marne-la-Vallée, en particulier, grâce au séminaire du vendredi matin. Je les remercie chaleureusement de leur soutien et de leur présence ces trois dernières années.

Je remercie aussi François Bergeron et Frédéric Chapoton d'avoir accepté d'\^etre les rapporteurs de mon manuscrit. J'ai beaucoup apprécié l'accueil que j'ai reçu dans l'équipe de François Bergeron, au LACIM, en 2011 et j'espère que nous aurons d'autres occasions de travailler ensemble. Frédéric Chapoton m'a invitée récemment dans son équipe à l'Institut Camille Jordan et j'ai l'espoir que cette thèse soit le point de départ de nombreuses et fructueuses collaborations. Je remercie aussi Jean-Christophe Aval, Sylvie Corteel et Frédéric Patras d'avoir accepté de participer à mon jury. Je connais personnellement Jean-Christophe Aval et Sylvie Corteel avec qui j'ai de très bons souvenirs, que ce soit à l'assaut des volcans avec Jean-Christophe ou plus simplement autour de la machine à café du LIAFA avec Sylvie. Je tiens d'ailleurs à remercier l'ensemble de l'équipe du LIAFA pour l'accueil chaleureux qui m'a été fait lors du groupe de travail du mardi matin. Je souhaite aussi remercier Florent Hivert et Nicolas Thierry du LRI. Gr\^ace à eux, j'ai pu m'initier à Sage et rejoindre la grande famille de Sage-Combinat. \`A travers la communauté Sage et gr\^ace à leurs invitations, j'ai pu me rendre à de nombreuses conférences et rencontrer ainsi une large communauté de chercheurs. Je pense que cela a été un atout inestimable pour ma thèse et, plus tard, pour ma carrière en tant que chercheur.

Pendant ces trois années, j'ai partagé mon bureau avec les autres doctorants, ATER ou postdocs de l'équipe combinatoire. Ce fut toujours un plaisir, une occasion de discuter de recherche et d'autres choses, une bonne ambiance qui donne envie d'aller travailler. Au grès des soutenances, des postes et des départs, ils ont été nombreux, je remercie donc (par ordre chronologique) Valentin Féray, Adrien Boussicault, Pierre-Loïc Méliot, Hayat Cheballah, Jean-Paul Bultel, Samuele Giraudo, Marc Sage, Rémi Maurice, Vincent Vong, Grégory Chatel, Nicolas Borie, Olivier Bouillot ainsi que tous les autres membres de l'équipe. Par ailleurs, le labo ne se limite pas à la combinatoire. Je voudrais déjà remercier tout le personnel administratif sans qui rien ne fonctionnerait et en particulier Line Fonfrede pour le champagne, mon contrat (!) et tout le reste. Je cite aussi toutes les autres : Sylvie Cach, Gabrielle Brossard,  Séverine Giboz, Marie-Hélène Duprat, Pascale Souliez, Corinne Palescandolo et Angélique Crombez. Que ce soit "quelque part au troisième étage" ou près de la machine à café, j'ai toujours trouvé une ambiance amicale et je remercie tous ceux que j'ai croisés qui ont su l'entretenir. Certains, avant d'être mes collègues, ont été mes enseignants. Je pense en particulier à Marc Zipstein que j'ai connu dès ma première année d'université. J'espère qu'il ne se découragera pas devant les flots d'étudiants indifférents qu'il combat chaque année et qu'il donnera encore à beaucoup d'autres l'envie de continuer. Et comme chacun sait qu'une thèse en combinatoire est aussi un examen de cuisine dont il est le principal examinateur, j'espère ne pas le décevoir ! Je le remercie pour tout ce qu'il m'a appris en cuisine ou en programmation ainsi que pour le foie gras (et vous le remercierez aussi quand vous l'aurez goûté).

En terminant ma thèse, je quitte aussi l'université dans laquelle j'ai obtenu mes trois diplômes. Ce fut pour moi plus qu'une école ou qu'un lieu de passage. Je m'y suis engagée profondément et personnellement à travers les divers conseils et instances où j'ai tenu mon rôle d'élue étudiante. Je remercie tous ceux, étudiants, enseignants, personnels, qui ont rendu ces années inoubliables. Et plus globalement, je remercie tous les enseignants qui m'ont donné le goût de la science, des mathématiques et de la connaissance au cours de ma scolarité. Je pense en particulier à M. et Mme Vangioni lorsque j'étais au lycée. Enfin, je remercie mes amis et ma famille qui n'ont jamais cessé de me soutenir. En particulier, je remercie ma mère pour avoir relu patiemment mon manuscrit, Rébecca car, bien que loin, elle est toujours là, et mon compagnon Sébastien toujours à mes côtés. 
\end{small}
\cleardoublepage

\lhead[\oldstylenums \thepage]{}
\rhead[Résumé]{\oldstylenums \thepage}
\begin{center}
    {\bf \Titre \quad --- \quad \TitreEN}
\end{center}
\begin{small}

\section*{\normalsize Résumé}

Cette thèse se situe dans le domaine de la combinatoire algébrique et porte sur l'étude et les applications de trois ordres sur les permutations : les deux ordres faibles  (gauche et droit) et l'ordre fort ou de Bruhat. 

Dans un premier temps, nous étudions l'action du groupe symétrique sur les polynômes multivariés. En particulier, les opérateurs de \emph{différences divisées} permettent de définir des bases de l'anneau des polynômes qui généralisent les fonctions de Schur aussi bien du point de vue de leur construction que de leur interprétation géométrique. Nous étudions plus particulièrement la base des polynômes de Grothendieck introduite par Lascoux et Schützenberger. Lascoux a montré qu'un certain produit de polynômes peut s'interpréter comme un produit d'opérateurs de différences divisées. En développant ce produit, nous ré-obtenons un résultat de Lenart et Postnikov et prouvons de plus que le produit s'interprète comme une somme sur un intervalle de l'ordre de Bruhat.

Nous présentons aussi l'implantation que nous avons réalisée sur Sage des polynômes multivariés. Cette implantation permet de travailler formellement dans différentes bases et d'effectuer des changements de bases. Elle utilise l'action des différences divisées sur les vecteurs d'exposants des polynômes multivariés. Les bases implantées contiennent en particulier les polynômes de Schubert, les polynômes de Grothendieck et les polynômes clés (ou caractères de Demazure).

Dans un second temps, nous étudions le \emph{treillis de Tamari} sur les arbres binaires. Celui-ci s'obtient comme un quotient de l'ordre faible sur les permutations : à chaque arbre est associé un intervalle de l'ordre faible formé par ses extensions linéaires. Nous montrons qu'un objet plus général, les intervalles-posets, permet de représenter l'ensemble des intervalles du treillis de Tamari. Grâce à ces objets, nous obtenons une formule récursive donnant pour chaque arbre binaire le nombre d'arbres plus petits ou égaux dans le treillis de Tamari. Nous donnons aussi une nouvelle preuve que la fonction génératrice des intervalles de Tamari vérifie une certaine équation fonctionnelle décrite par Chapoton.

Enfin, nous généralisons ces résultats aux treillis de $m$-Tamari. Cette famille de treillis introduite par Bergeron et Préville-Ratelle était décrite uniquement sur les chemins. Nous en donnons une interprétation sur une famille d'arbres binaires en bijection avec les arbres $m+1$-aires. Nous utilisons cette description pour généraliser les résultats obtenus dans le cas du treillis de Tamari classique. Ainsi, nous obtenons une formule comptant le nombre d'éléments plus petits ou égaux qu'un élément donné ainsi qu'une nouvelle preuve de l'équation fonctionnelle des intervalles de $m$-Tamari. Pour finir, nous décrivons des structures algébriques $m$ qui généralisent les algèbres de Hopf $\FQSym$ et $\PBT$ sur les permutations et les arbres binaires.

\subsection*{\small Mots-clés~:} 

combinatoire algébrique; ordres du groupe symétrique; polynômes de Schubert; polynômes de Grothendieck; polynômes clés; différence divisée; algèbre 0-Hecke; treillis de Tamari; algèbre de Hopf; treillis de $m$-Tamari.
\end{small}

\begin{small}
\begin{otherlanguage}{english}
\section*{\normalsize Abstract}

This thesis comes within the scope of algebraic combinatorics and studies problems related to three orders on permutations: the two said weak orders (right and left) and the strong order or Bruhat order. 

We first look at the action of the symmetric group on multivariate polynomials. By using the \emph{divided differences} operators, one can obtain some generalisations of the Schur function and form bases of non symmetric multivariate polynomials. This construction is similar to the one of Schur functions and also allows for geometric interpretations. We study more specifically the Grothendieck polynomials which were introduced by Lascoux and Schützenberger. Lascoux proved that a product of these polynomials can be interpreted in terms of a product of divided differences. By developing this product, we reobtain a result of Lenart and Postnikov and also prove that it can be interpreted as a sum over an interval of the Bruhat order.

We also present our implementation of multivariate polynomials in Sage. This program allows for formal computation on different bases and also implements many changes of bases. It is based on the action of the divided differences operators. The bases include Schubert polynomials, Grothendieck polynomials and Key polynomials.

In a second part, we study the \emph{Tamari lattice} on binary trees. This lattice can be obtained as a quotient of the weak order. Each tree is associated with the interval of its linear extensions. We introduce a new object called, \emph{interval-posets} of Tamari and show that they are in bijection with the intervals of the Tamari lattice. Using these objects, we give the recursive formula counting the number of elements smaller than or equal to a given tree. We also give a new proof that the generating function of the intervals of the Tamari lattice satisfies some functional equation given by Chapoton.

Our final contributions deals with the $m$-Tamari lattices. This family of lattices is a generalization of the classical Tamari lattice. It was introduced by Bergeron and Préville-Ratelle and was only known in terms of paths. We give the description of this order in terms of some family of binary trees, in bijection with $m+1$-ary trees. Thus, we generalize our previous results and obtain a recursive formula counting the number of elements smaller than or equal to a given one and a new proof of the functional equation. We finish with the description of some new $"m"$ Hopf algebras which are generalizations of the known $\FQSym$ on permutations and $\PBT$ on binary trees.

\subsection*{\small Keywords:}

algebraic combinatorics; orders of the symmetric group; Schubert polynomials; Grothendieck polynomials; key polynomials; divided difference; 0-Hecke algebra; Tamari lattice; Hopf algebra; $m$-Tamari lattices.

\end{otherlanguage}
\end{small}
\cleardoublepage

\lhead[\oldstylenums \thepage]{Table des matières}
\rhead[Table des matières]{\oldstylenums \thepage}
\tableofcontents
\cleardoublepage

\lhead[\oldstylenums \thepage]{Table des figures}
\rhead[Table des figures]{\oldstylenums \thepage}
\listoffigures
\cleardoublepage

\mainmatter

\lhead[\oldstylenums \thepage]{Introduction}
\rhead[Introduction]{\oldstylenums \thepage}
\addcontentsline{toc}{chapter}{Introduction}

\chapter*{Introduction}

\section*{Avant-propos}

\subsection*{Combinatoire et algèbre}

On peut définir la combinatoire comme l'étude des ensembles finis ou dénombrables d'objets. Les problèmes les plus couramment cités sont sans doute ceux de \emph{combinatoire énumérative} où l'on cherche à dénombrer ces ensembles. Ils apparaissent en particulier en probabilités discrètes et certains ont été étudiés dès l'antiquité : nombre de tirages de $k$ numéros parmi $n$, nombre de façon d'asseoir $n$ personnes autour d'une table, etc. En Europe, on cite souvent Fibonacci comme l'un des précurseurs du domaine. On en retrouve plus tard chez Newton, Pascal, Leibnitz ou Euler. Au XX\ieme~siècle, la combinatoire prend un nouvel essor avec l'arrivée de l'informatique et l'importance prise par les algorithmes. Non seulement on utilise la combinatoire comme outil dans l'analyse d'algorithmes, mais les avancées technologiques justifient l'optimisation des calculs proposée par l'approche combinatoire. L'utilisation de plus en plus courante de \emph{l'exploration par ordinateur} permet d'expérimenter \emph{a priori} sur les objets  pour découvrir de nouvelles propriétés.

En \emph{combinatoire algébrique}, on cherche plus particulièrement à \emph{structurer} les ensembles d'objets à l'aide des outils fournis par l'algèbre. On étudiera alors l'agencement interne propre à la nature de l'objet pour définir, par exemple, un produit, un ordre partiel ou d'autres relations. Cette approche permet une compréhension plus profonde des objets étudiés et l'on peut s'en servir pour résoudre des questions énumératives. Inversement, la vision combinatoire apporte à l'algèbre un éclairage nouveau : les caractères d'un groupe s'identifient à des partitions, un produit de polynômes devient une somme sur un intervalle... Le sens combinatoire donné à des problèmes algébriques permet souvent d'appréhender les questions d'une façon plus globale et moins technique, révélant les aspects structurels de certaines opérations. Concrètement, la combinatoire permet de prouver des nouveaux résultats et propose des méthodes algorithmiques effectives pour réaliser des calculs. Elle utilise pour cela les structures  et méthodes de l'informatique classique comme les algorithmes de tris ou les arbres binaires de recherche \cite{AVL}. Elle apparaît aujourd'hui fondamentale dans de très nombreux domaines : théorie des nombres, théorie des groupes, théorie des représentations, géométrie algébrique, etc.

\subsection*{Structures d'ordre}

La notion d'ordre sur les éléments est tout à fait naturelle en mathématiques, ne faisant que formaliser la succession des nombres entiers. L'ordre sur les entiers est dit \emph{total} car deux nombres sont toujours comparables. Les \emph{ordres partiels}, bien que moins immédiats, se retrouvent aussi dans le monde réel : relations des individus dans un arbre généalogique, ensemble d'actions dépendant les unes des autres. En mathématique, on peut donner comme exemple l'inclusion des ensembles qui est souvent utilisée comme base de cas plus généraux. Au delà d'une simple relation, les ordres partiels sont étudiés eux-mêmes en tant qu'objets mathématiques et sont à la base d'une vaste théorie \cite{MacNeille, lattices, Birkhoff}. Dans ce mémoire, nous n'y apportons pas de nouvelle contribution mais utilisons ces résultats dans un but algébrique.

En effet, un ordre partiel est un outil d'interprétation dans un calcul algébrique. Lorsque l'on développe une expression, on obtient le résultat sous forme de somme. L'un des objectifs est alors de comprendre cette somme pour, par exemple, améliorer l'algorithme de calcul mais aussi pour en découvrir de nouvelles propriétés. Si on arrive à déterminer une structure d'ordre sur les objets permettant d'exprimer la somme comme un intervalle, on a atteint une partie de cet objectif. On peut par exemple ainsi optimiser l'espace de stockage car les éléments extrémaux contiennent l'ensemble de l'information. On peut aussi se contenter de calculer ces éléments extrémaux plutôt que l'ensemble de la somme, obtenant par là des calculs bien plus rapides. De plus, en découvrant la structure d'ordre sous-jacente à un calcul on comprend dans quel contexte les objets peuvent être étudiés, ou sous quelle forme. On relie ainsi le problème à une structure plus globale parfois en lien avec une théorie qui \emph{a priori} semblait différente. En effet, entre eux, les ordres possèdent des relations : quotient, isomorphisme, ordre plus fin ou plus grossier, sous-ordre, etc. Et ces relations existent parfois aussi sur le plan algébrique. C'est par exemple le cas pour l'ordre de Tamari sur les arbres binaires que nous décrivons dans la partie \ref{part:tamari} : en termes d'ordres, il est un quotient de l'ordre faible sur les permutations et en termes d'algèbres, l'algèbre $\PBT$ sur les arbres binaires est une sous-algèbre de celle sur les permutations $\FQSym$ \cite{Tamari2}. Cela nous amène à l'objet de base de notre mémoire : les permutations.

\subsection*{Permutations}

En combinatoire, une \emph{permutation} est à la fois un objet simple et fondamental. Elle correspond au résultat d'un réarrangement d'un ensemble donné d'objets. Par exemple, un jeu de cartes mélangé est une permutation des cartes. De façon générale, on s'intéressera aux permutations des entiers $1, \dots, n$. Si le concept apparaît depuis longtemps dans divers domaines des mathématiques, il prend tout son sens au XIX\ieme~siècle quand la théorie des groupes est introduite par Galois à travers les permutations des racines d'un polynôme. Les permutations sont des applications de $\lbrace 1, \dots, n \rbrace$ dans $\lbrace 1, \dots, n \rbrace$ et en ce sens, l'ensemble des permutations possède une structure de groupe. C'est ce qu'on appelle le \emph{groupe symétrique} $\Sym{n}$, il sera l'objet principal de ce mémoire. Il est fondamental en mathématiques et plus particulièrement en algèbre. Le théorème de Cayley prouve ainsi que tout groupe fini est un sous-groupe de $\Sym{n}$. Par ailleurs, le groupe symétrique peut souvent être compris comme une version spécialisée et simple d'objets plus complexes. Ainsi, au sein de la théorie des \emph{groupes de Coxeter} apparue au début du XX\ieme~siècle, $\Sym{n}$ est le groupe de Coxeter de type $A_{n-1}$. De nombreux résultats sont prouvés au départ sur $\Sym{n}$ avant d'être généralisés aux autres types. C'est par exemple le cas des propriétés des ordres sur les permutations que nous décrivons dans le chapitre \ref{chap:prelim_groupe_sym}. Un autre exemple est celui de l'algèbre de Hecke dont les générateurs vérifient, comme les générateurs du groupe symétrique, les \emph{relations de tresses} mais où des paramètres sont ajoutés aux relations quadratiques. En ce sens, c'est une généralisation de l'algèbre du groupe symétrique et en particulier le calcul des représentations est similaire \cite{HoefHecke,LascouxHecke}. 

\subsection*{Ordres du groupe symétrique}

Il existe de nombreuses façons d'ordonner les permutations. Dans cette thèse, on décrit trois ordres liés à la structure de groupe : les ordres faibles, droit et gauche, et l'ordre fort ou \emph{de Bruhat}. La longueur d'une permutation $\sigma$ est donnée par son nombre \emph{d'inversions}, c'est-à-dire par le nombre de couples $(i,j)$ tels que $\sigma_i > \sigma_j$. Aussi bien pour les ordres faibles que pour l'ordre fort, la relation d'ordre est basée sur les multiplications par des éléments particuliers (les transpositions) qui augmentent la longueur de la permutation. Ces ordres apparaissent dans de très nombreux contextes. Ils sont en particulier définis de façon plus générale sur les groupes de Coxeter et ont été étudiés entre autres par Björner \cite{BjornerCoxeter} et Lascoux et Sch\"utzenberger \cite{LascouxCoxeter}. 

L'ordre fort a été introduit par Ehresmann dans le contexte de la géométrie algébrique \cite{Ehresmann} : il correspond à l'ordre d'inclusion des variétés de Schubert dans la variété de drapeau. Sur les permutations, c'est l'ordre d'inclusion des facteurs gauches réordonnés. Il apparaît donc dans de nombreux problèmes liés à cette thématique, en particulier dans les calculs sur les polynômes de Grothendieck \cite{LenartPostnikov}. Il permet aussi de décrire les changements de base dans l'algèbre 0-Hecke \cite{LascouxCoxeter} comme nous le verrons dans le chapitre \ref{chap:polynomes_action}. Les ordres faibles se décrivent aussi par une inclusion : celle des ensembles d'inversions des permutations. On peut prouver qu'ils possèdent une structure de treillis \cite{PermTreillis, BjornerCoxeter} et que l'objet géométrique sous-jacent est un polytope connu : le \emph{permutoèdre}. Ils apparaissent de façon récurrente en combinatoire. Par exemple, les classes plaxiques décrites par Lascoux et Sch\"utzenberger \cite{LascouxPlaxique} forment des ensembles connexes de l'ordre faible droit. On les trouve aussi dans la théorie des fonctions non commutatives où ils décrivent en particulier le produit des éléments de $\FQSym$ \cite{NCSF6}.  

\section*{Contexte}

Dans cette thèse, nous présentons en particulier deux domaines où les ordres du groupe symétrique jouent un rôle fondamental : les bases des polynômes multivariés d'une part et les algèbres de Hopf liées aux arbres binaires et permutations d'autre part. Nous donnons à présent le contexte de notre travail dans ces deux domaines. 

\subsection*{Polynômes et action du groupe symétrique}

Le groupe symétrique joue un rôle important dans l'étude des polynômes en plusieurs variables. L'action d'une permutation sur un monôme est simplement la permutation des variables. Les polynômes invariants par cette action sont appelés \emph{polynômes symétriques}. Lorsqu'on travaille avec un nombre infini de variables dans l'espace des séries formelles, on parle alors de \emph{fonctions symétriques}. \'Etudiées au XIX\ieme~siècle par des mathématiciens tels que MacMahon \cite{MacMahon}, Cauchy ou Jacobi, elles sont au cœur de nombreuses questions liées à la combinatoire algébrique.

L'anneau des fonctions symétriques se décrit dans différentes bases indexées par les \emph{partitions}, c'est-à-dire les vecteurs $\lambda = (\lambda_1, \dots, \lambda_n)$ avec $\lambda_1 \geq \lambda_2 \geq \dots \geq \lambda_n$. On trouvera les définitions des différentes bases dans le premier chapitre du livre de Macdonald \cite{MAC}. Une de ces bases revêt une importance particulière : la base des fonctions de Schur. Bien qu'étudiées en premier lieu par Jacobi et Cauchy, elles portent le nom du mathématicien Isaai Schur connu pour son travail en théorie des représentations. En effet, les fonctions de Schur correspondent aux caractères irréductibles du groupe symétrique. Elles forment le lien principal entre la théorie des représentations et la combinatoire, justifiant à elles seules de très nombreux articles et généralisations. En particulier, elles motivent les recherches liées à la combinatoire des tableaux à travers la célèbre règle de Littlewood-Richardson.

Un autre domaine fortement lié à la théorie des représentation est la géométrie algébrique et le rôle des fonctions de Schur y est aussi fondamental. Le \emph{calcul de Schubert}, introduit par le mathématicien du même nom au XIX\ieme~siècle, pose la question du nombre de solutions à des problèmes d'intersections. On décompose les éléments d'une variété géométrique en \emph{cellules de Schubert} en fonctions de leurs intersections avec un \emph{drapeau}, c'est-à-dire un ensemble de sous-espaces vectoriels donnés $V_1 \subset V_2 \subset \dots \subset V_n$. Le nombre d'intersections de deux sous-variétés données peut alors s'obtenir par un calcul algébrique dans \emph{l'anneau de cohomologie}. Dans le cas où la variété de départ est la \emph{Grassmannienne}, l'anneau de cohomologie est un quotient de l'anneau des fonctions symétriques. L'image des cellules de Schubert correspond alors aux fonctions de Schur \cite{Fulton}. Le produit des fonctions de Schur permet ainsi d'encoder les intersections des sous-variétés dans la Grassmannienne. C'est dans ce cadre qu'a d'abord été prouvée la \emph{formule de Pieri} \cite{MAC} qui en donne l'interprétation combinatoire en termes de tableau.

Si la variété de départ n'est plus la Grassmannienne mais une \emph{variété de drapeaux}, l'anneau de cohomologie devient isomorphe à un quotient non plus des fonctions symétriques mais des \emph{polynômes non symétriques}. On peut alors décrire des bases des polynômes qui généralisent les fonctions de Schur et répondent aux mêmes types de questions géométriques. Ces bases sont toutes définies à partir d'un opérateur clé : la différence divisée. L'opérateur de différence divisée $\partial_i$ peut être compris comme une dérivation discrète qu'on attribue généralement à Newton. On la définit par
\begin{equation}
f.\partial_i = \frac{f(\dots, x_i, x_{i+1}, \dots) - f(\dots, x_{i+1}, x_i, \dots)}{x_i - x_{i+1}}.
\end{equation}
C'est une opération qui \emph{symétrise} le polynôme en $x_i$ et $x_{i+1}$. Comme on peut le lire dans un article historique récapitulatif de Lascoux \cite{LascouxSchubHist}, c'est en 1973 qu'un intérêt pour les différences divisées naît en combinatoire à travers les articles de Bernstein-Gelfand-Gelfand \cite{BGG} et Demazure \cite{Dem2}. Les auteurs prouvent que tout comme les générateurs du groupe symétrique, les différences divisées $\partial_i$ vérifient les relations de tresses (en outre, elles sont aussi de carré nul). Cette propriété fondamentale permet d'indexer les différences divisées par des permutations en interprétant une décomposition réduite comme un produit d'opérateurs $\partial_i$. C'est ce qui fait le lien entre ces opérateurs et les ordres faibles sur les permutations qui se définissent eux aussi à partir des décompositions réduites \cite{LascouxCoxeter}. Les différences divisées sont étudiées de façon très poussée par Lascoux et Sch\"utzenberger \cite{LascouxDiffDiv} qui travaillent à l'époque sur les polynômes de Kazhdan-Lusztig \cite{LascouxKaz}. En interprétant en termes d'opérateurs les relations de couverture de l'ordre faible, ils définissent une nouvelle famille de polynômes indexés par les permutations. Plus précisément, on note $\partial_\omega$ la différence divisée correspondant à la permutation maximale pour l'ordre faible $\omega = n, n-1, \dots, 1$. En appliquant $\partial_\omega$ à un monôme dit \emph{dominant} $x_1^{\lambda_1} x_2^{\lambda_2} \dots x_n^{\lambda_n}$ où $\lambda$ est une partition, on obtient un polynôme symétrique et plus précisément, une \emph{fonction de Schur}. On interprète ce calcul comme une symétrisation progressive du polynôme : le polynôme le "moins" symétrique est le monôme dominant de départ et le "plus" symétrique est la fonction de Schur. \`A eux tous, les polynômes intermédiaires forment une base des polynômes non symétriques, ce sont les polynômes de Schubert. En d'autres termes, c'est l'ensemble des images par différences divisées des monômes dominants  \cite{LascouxSchub}. Cette base généralise au cas non symétrique celle des fonctions de Schur. D'un point de vue géométrique, les polynômes de Schubert sont l'image des variétés de Schubert dans l'anneau de cohomologie de la variété de drapeaux \cite{Chern}.

Lascoux et Sch\"utzenberger introduisent aussi une autre famille de polynômes appelés \emph{polynômes de Grothendieck} \cite{LascouxGroth}. La définition est très similaire à celle des polynômes de Schubert mais utilise une différence divisée dite \emph{isobare} qui conserve le degré du polynôme. D'un point de vue géométrique, les polynômes de Grothendieck permettent de travailler dans l'anneau de Grothendieck de la variété de drapeau plutôt que dans l'anneau de cohomologie. On définit deux types de différences divisées isobares qui s'interprètent aussi comme deux bases de l'algèbre dite 0-Hecke. Les changements de bases, et, par là, toute la combinatoire des polynômes de Grothendieck, sont alors liés à l'ordre de Bruhat sur les permutations \cite{LascouxCoxeter}. Dans le cas des polynômes de Schubert comme dans celui des polynômes de Grothendieck, l'enjeu principal est de comprendre la structure multiplicative de l'anneau par une formule généralisant la formule de Pieri sur les fonctions de Schur. Dans les deux cas, la question d'une interprétation combinatoire du produit de deux éléments quelconques est un problème ouvert. Cependant, la formule de Monk \cite{Monk} décrit le cas particulier important du produit d'un polynôme de Schubert par un élément indexé par un générateur. Pour les polynômes de Grothendieck, Lascoux propose un résultat similaire en interprétant un produit de polynômes comme un produit d'opérateurs de différences divisées \cite{LascouxGroth}. Nous traitons la question du développement de ce produit dans le chapitre \ref{chap:polynomes_grothendieck}. Encore récemment, les questions relatives aux polynômes de Grothendieck et de Schubert ont fait l'objet de très nombreux articles \cite{LenartPostnikov, SottileLenart, LenartSimple, BergeronKbruhat, LenartGreedyAlgo} tous liés à la combinatoire de l'ordre de Bruhat.

\subsection*{Treillis de Tamari et algèbres de Hopf}

Dans la partie \ref{part:tamari} de cette thèse, nous étudions un ordre sur les arbres binaires appelé \emph{treillis de Tamari} qui est un quotient de l'ordre faible sur les permutations. Le lien entre les deux structures fait intervenir une nouvelle structure algèbrique : les \emph{algèbres de Hopf}.

Lors des dernières décennies, la structure \emph{d'algèbre de Hopf} a pris une importance particulière en combinatoire. Définir une structure \emph{d'algèbre} sur un ensemble d'objets combinatoires revient à définir un produit, c'est-à-dire un algorithme pour \emph{composer} deux objets. L'opération duale est le \emph{coproduit} qui décompose un objet en deux objets distincts. Apparues dans le cadre de la topologie algébrique, les \emph{algèbres de Hopf} sont des structures munies à la fois d'un produit et d'un coproduit vérifiant certaines propriétés de compatibilité \cite{Hopf1,Hopf2}. Les fonctions symétriques en particulier sont munies d'une telle structure. L'opération de coproduit passe par le doublement d'alphabet : on ne travaille plus sur un seul ensemble de variables $x_1, x_2, x_3, \dots$ mais sur deux ensembles $x_1, x_2, x_3, \dots$ et $y_1, y_2, y_3, \dots$. Chaque monôme d'une fonction symétrique se découpe alors en un monôme sur les $x$ et un autre sur les $y$. Cette structure est déjà implicitement présente dans le travail de MacMahon \cite{MacMahon}. Plus tard, Rota insiste sur l'importance des algèbres de Hopf en combinatoire \cite{Rota1,Rota2}.

En 1995, Malvenuto et Reutenauer décrivent une algèbre de Hopf sur les permutations \cite{MalReut}. Le produit s'exprime comme un intervalle de l'ordre faible. \`A la même époque, Gelfand, Krob, Lascoux, Leclerc, Retakh et Thibon donnent un équivalent en variables non commutatives de l'algèbre de Hopf des fonctions symétriques \cite{NCSF1}. Cet article sera suivi de nombreux autres explorant les différentes ramifications de cette nouvelle théorie \cite{NCSF2,NCSF3,NCSF4,NCSF5,NCSF6,NCSF7}. Ce travail met à jour un nouveau principe de calcul : un objet combinatoire peut "s'exprimer" comme une somme de mots sur un alphabet (commutatif ou non). Plutôt que de décrire le produit sur l'objet en tant que tel, on le décrit sur le polynôme associé. De même, le coproduit s'obtient par un doublement d'alphabet correctement défini. Cette méthode est appelée la \emph{réalisation polynomiale}. Elle simplifie grandement les calculs car parmi les nombreux axiomes que doit vérifier une algèbre de Hopf, beaucoup sont naturellement présents sur les développements en mots. C'est en utilisant ce principe que Duchamp, Hivert, Thibon et Novelli obtiennent une nouvelle description de l'algèbre de Malvenuto-Reutenauer \cite{NCSF6,NCSF7} qu'ils nomment \emph{algèbre des fonctions quasi-symétriques libres} ou $\FQSym$. Dans \cite{CHA}, Hivert donne l'exemple de $\FQSym$ comme illustration du principe de réalisation polynomiale, c'est aussi ce que nous ferons dans le chapitre \ref{chap:tamari_prelim}. Les éléments de $\FQSym$ sont indexés par des permutations et sont développés comme sommes de mots par l'opération de \emph{standardisation} qui n'est rien d'autre que l'application du \emph{tri par bulles}. Les auteurs définissent plusieurs bases et retrouvent très simplement les formules de produit et coproduit données par Malvenuto et Retenauer. Ils obtiennent en particulier que l'algèbre $\FQSym$ est autoduale. Sa dualité reflète celle qui existe entre l'ordre faible droit et l'ordre faible gauche. En effet, le produit sur une des bases est donné par les intervalles de l'ordre faible droit tandis que le produit sur la base duale est donnée par l'ordre faible gauche.

Le lien avec les arbres binaires apparaît par le biais de l'algèbre de Hopf que Loday et Ronco décrivent sur ces objets en tant que sous-algèbre de celle de Malvenuto et Reutenauer \cite{PBT1}. Comme conséquence du travail sur $\FQSym$, Hivert, Novelli et Thibon en donnent une réalisation polynomiale nommée $\PBT$ en tant que sous-algèbre de $\FQSym$ \cite{PBT2}. La structure d'algèbre de $\PBT$ est liée à un treillis sur les arbres binaires appelé \emph{treillis de Tamari}. Ce treillis a été initialement décrit par Tamari sur les parenthésages \cite{Tamari1, Tamari2}. Vu comme un objet géométrique, c'est \emph{l'associaèdre} ou polytope de Stasheff. Les relations entre l'associaèdre et le permutoèdre sont souvent étudiées d'un point de vue géométrique : l'associaèdre est un permutoèdre auquel on a supprimé des faces. En termes d'ordres partiels, le treillis de Tamari est un quotient de l'ordre faible sur les permutations. Cette propriété est en particulier prouvée dans \cite{PBT2} et nous revenons en détail dessus dans ce mémoire. L'idée principale est que certains intervalles de l'ordre faible peuvent être décrits comme des \emph{extensions linéaires} de posets, et plus précisément d'arbres \cite{BW}. Les extensions linéaires des arbres binaires découpent l'ordre faible en intervalles distincts et l'ordre entre ces intervalles correspond à l'ordre de Tamari. Cette correspondance existe aussi en termes d'algèbres : un élément $\BP_T$ de la base de $\PBT$ se développe dans $\FQSym$ comme la somme des extensions linéaires de son arbre binaire $T$. Ainsi le produit dans $\PBT$ est à la fois un intervalle du treillis de Tamari et de l'ordre faible sur les permutations. 

Le treillis de Tamari en tant que tel suscite de nombreuses publications. On peut citer en particulier le résultat de Chapoton \cite{Chap} dénombrant les intervalles. Très récemment, de nouvelles pistes de recherche ont été ouvertes par l'objet plus général des treillis de $m$-Tamari décrits par Bergeron et Préville-Ratelle \cite{BergmTamari}. En particulier, une formule générale dénombrant les intervalles dans $m$-Tamari a été prouvée \cite{mTamari}.

\section*{Contributions et plan du mémoire}

Notre travail est divisé en deux parties distinctes qui reposent sur un même principe : utiliser les propriétés des ordres du groupe symétrique pour obtenir des résultats algébriques et énumératifs nouveaux. Après une partie \ref{part:prelim} introductive, la partie \ref{part:polynomes} porte sur les polynômes multivariés. On trouvera les principaux résultats dans les chapitres \ref{chap:polynomes_grothendieck} et \ref{chap:polynomes_sage}. La partie \ref{part:tamari} est dédiée aux treillis de Tamari et $m$-Tamari, nos contributions sont rassemblées dans les chapitres \ref{chap:tamari_intervalles} et \ref{chap:mtamari}. Le schéma suivant représente l'organisation globale des chapitres entre eux. 

\begin{figure}[ht]
\centering
\input{includes/figures/chapitres}
\end{figure}

\subsection*{Préliminaires}

La partie \ref{part:prelim} est composée de deux courts chapitres préliminaires introduisant les notions dont nous aurons besoin. Dans le chapitre \ref{chap:prelim_posets}, nous rappelons la définition des \emph{ensembles partiellement ordonnés} et des \emph{treillis} et en donnons les principales propriétés. Le chapitre \ref{chap:prelim_groupe_sym} est consacré au groupe symétrique. Nous donnons la définition des ordres faibles et forts en termes de décompositions réduites. Pour l'ordre  fort en particulier, nous décrivons précisément les propriétés de comparaison des éléments obtenus par les projections sur les permutations bigrassmanniennes. En outre, dans le paragraphe \ref{sub-sec:prelim_groupe_sym:bruhat:coset} nous prouvons un lemme annexe sur des intersections d'intervalles dans l'ordre de Bruhat dont nous aurons besoin par la suite.

\subsection*{Polynômes multivariés}

La partie \ref{part:polynomes} est dédiée à l'étude des polynômes multivariés dans le cadre que nous avons déjà évoqué. Le chapitre \ref{chap:polynomes_action} rappelle l'action du groupe symétrique sur les polynômes et les définitions des différences divisées. Nous effectuons un bref survol de la théorie des fonctions symétriques en décrivant en particulier la construction des fonctions de Schur à partir des différences divisées. Nous rappelons la formule de Pieri et l'interprétation géométrique qui en découle. Nous présentons ensuite certaines bases des polynômes non symétriques telles qu'elles ont été définies par Lascoux et Schützenberger \cite{LascouxSchub, LascouxGroth}, c'est-à-dire construites à partir des différences divisées. Nous terminons par les propriétés de l'algèbre $0$-Hecke dont l'action sur les polynômes est celle des différences divisées isobares. En particulier, il existe deux sortes de différences divisées isobares qui correspondent à deux bases de l'algèbre. La combinatoire qui en découle est celle de l'ordre de Bruhat.

Le chapitre \ref{chap:polynomes_grothendieck} présente un équivalent de la formule de Pieri pour les polynômes de Grothendieck. Nous avons vu qu'une question importante est celle du produit des polynômes. Dans le cas des fonctions de Schur, un résultat fondamental est donné par la formule de Pieri qui décrit la multiplication d'une fonction de Schur par une fonction complète. Le produit équivalent pour les polynômes de Grothendieck est $G_\sigma G_{s_k}$ où $G_\sigma$ et $G_{s_k}$ sont des polynômes de Grothendieck indexés respectivement par une permutation quelconque $\sigma$ et une transposition simple $s_k$. Dans \cite{LascouxGroth}, Lascoux interprète ce produit comme un produit d'opérateurs de deux sortes de différences divisées isobares. Par ailleurs, on trouve dans \cite{LenartPostnikov} un développement de ce même produit utilisant une énumération de chaînes dans l'ordre de Bruhat. \`A partir du résultat de Lascoux, nous commençons par donner une nouvelle preuve en type $A$ du théorème de Lenart et Postnikov  \cite{LenartPostnikov}. Notre approche permet une meilleure compréhension du résultat et nous prouvons alors que les permutations qui apparaissent dans le développement du produit forment un intervalle de l'ordre de Bruhat. Ce résultat a donné lieu à un article publié \cite{Me_Grothendieck}.

Dans le chapitre \ref{chap:polynomes_sage}, nous présentons l'implantation réalisée en Sage des bases des polynômes multivariés. Sage \cite{SAGE_WEBSITE} est un logiciel libre de calcul formel. Pour les besoins de notre recherche et dans l'objectif d'offrir un outil de calcul à la communauté, nous avons développé au sein du groupe Sage-combinat \cite{SAGE_COMBINAT} un cadre complet de travail sur les bases des polynômes. Notre logiciel permet de définir les polynômes multivariés comme des sommes formelles de vecteurs (les exposants) et d'y appliquer les opérateurs de différences divisées. Nous avons implanté les différentes bases décrites dans le chapitre \ref{chap:polynomes_action} : polynômes de Schubert, polynômes de Grothendieck, polynômes clés. Il est possible de travailler formellement dans ces bases, de les développer en monômes ou d'effectuer des changements de bases. Enfin, notre implantation permet de définir de nouveaux opérateurs et de nouvelles bases pour les besoins de l'expérimentation. Dans ce chapitre, nous présentons les fonctionnalités de base ainsi que les choix que nous avons fait en termes d'architecture logicielle. Enfin, nous donnons plusieurs exemples d'applications avancées possibles avec notre implantation. En particulier, nous expliquons comment les résultats du chapitre \ref{chap:polynomes_grothendieck} peuvent être obtenus. Cette implantation est disponible en tant que patch additionnel à Sage \cite{SAGE_Polynomials}. Elle a fait l'objet de nombreuses présentations et d'un article \cite{Me_Polynomes}.

\subsection*{Treillis de ($m$)-Tamari et algèbres}

Dans la partie \ref{part:tamari}, nous utilisons le lien entre l'ordre faible sur les permutations et l'ordre de Tamari sur les arbres binaires pour démontrer des résultats sur les treillis de Tamari et $m$-Tamari. Le chapitre \ref{chap:tamari_prelim} est un chapitre préliminaire où nous rappelons les définitions et différentes descriptions de l'ordre de Tamari. Nous expliquons en quoi l'ordre de Tamari est un quotient et un sous-treillis de l'ordre faible par la description des \emph{classes sylvestres}. C'est aussi le premier chapitre où nous abordons la notion d'algèbre de Hopf. Nous en donnons donc la définition et présentons les deux exemples qui nous intéressent : $\FQSym$ et $\PBT$.

Comme nous le rappelons dans le chapitre \ref{chap:tamari_prelim}, les classes sylvestres sont des intervalles de l'ordre faible correspondant à des extensions linéaires d'arbres binaires. Par ailleurs, à partir d'un arbre binaire $T$, il est possible d'obtenir deux arbres planaires $\dec$ et $\inc$ dont les extensions linéaires correspondent à des intervalles respectivement finaux et initiaux de l'ordre faible. De façon plus générale, dans le chapitre \ref{chap:tamari_intervalles}, nous définissons un nouvel objet appelé \emph{intervalle-poset} de Tamari dont les extensions linéaires correspondent à des intervalles de l'ordre faible entre deux classes sylvestres. Ces intervalles-posets sont en bijection avec les intervalles de Tamari. Nous utilisons leurs propriétés pour donner une nouvelle preuve de la formule de Chapoton \cite{Chap} dénombrant le nombre d'intervalles dans le treillis de Tamari. Nous prouvons par ailleurs un nouveau résultat : pour chaque arbre, on calcule de façon récursive un polynôme que l'on nomme \emph{polynôme de Tamari} de l'arbre. Nous prouvons que pour un arbre $T$, ce polynôme compte le nombre d'arbres $T'\leq T$ dans l'ordre de Tamari en fonction d'une statistique particulière. Ce résultat a fait l'objet d'une récente publication sous forme d'un \emph{extended abstract} \cite{Me_Tamari}.

Dans \cite{BergmTamari}, Bergeron et Préville-Ratelle décrivent une généralisation du treillis de Tamari qu'ils nomment treillis de $m$-Tamari. Leur description est donnée en termes de chemins qui généralisent les mots de Dyck. Le chapitre \ref{chap:mtamari} est dédié à ces treillis généralisés de Tamari. Tout d'abord, en utilisant le plongement des treillis de $m$-Tamari dans les treillis de Tamari classiques, nous expliquons quelle description de l'ordre peut être faite en termes d'arbres. Cette description n'était pas connue jusqu'alors et et faisait l'objet d'une question à la fin de l'article \cite{mTamari}. Nous l'utilisons pour généraliser les résultats du chapitre \ref{chap:tamari_intervalles} aux treillis de $m$-Tamari. En particulier, nous obtenons une nouvelle preuve que les intervalles de $m$-Tamari vérifient l'équation fonctionnelle résolue dans \cite{mTamari} pour les dénombrer. Tout comme dans le cas de treillis de Tamari classique, nous obtenons un résultat plus précis et donnons pour chaque arbre la formule calculant le nombre d'éléments inférieurs ou égaux dans le treillis de $m$-Tamari. Enfin, nous donnons des constructions analogues des algèbres $\FQSym$ et $\PBT$ que nous appelons $\FQSymm$ et $\PBTm$ et qui possèdent des propriétés voisines.

Les implantations réalisées dans le cadre de cette dernière partie ne nous ont pas semblé nécessiter un chapitre à part entière. Cependant, comme dans la partie précédente, notre recherche a été basée sur de l'exploration préliminaire effectuée grâce au logiciel Sage. Une partie des implantations réalisées ont pu être intégrées au logiciel et nous avons participé aux différents projets sur l'implantation des arbres en Sage \cite{SAGE_BinaryTrees, SAGE_BinaryMaps, SAGE_MaryTrees}.

\cleardoublepage

\lhead[\oldstylenums \thepage]{\S~\thesection \; --- \; \rightmark}
\rhead[Chapitre~\thechapter \; --- \; \leftmark]{\oldstylenums \thepage}

\part{Préliminaires}
\label{part:prelim}

\chapter{Structures algébriques et ordres sur les objets combinatoires}
\label{chap:prelim_posets}

Nous commençons ce mémoire par un court chapitre d'introduction à quelques notions de combinatoire. Le paragraphe \ref{sec:prelim_posets:struct_elementaires} rappelle le principe de la construction d'espaces vectoriels et d'algèbres sur les objets combinatoires à travers deux exemples : les permutations et les mots. Les structures de posets et de treillis sont définies dans les paragraphes \ref{sec:prelim_posets:posets} et \ref{sec:prelim_posets:treillis}. Ces objets sont à la base de notre travail et nous donnons un aperçu de leurs propriétés fondamentales. Pour une approche plus complète, nous invitons le lecteur à se reporter aux ouvrages suivants \cite{Stanley}, \cite{Birkhoff}, \cite{lattices} et \cite{LascouxCoxeter}.

\section{Objets et structures élémentaires}
\label{sec:prelim_posets:struct_elementaires}

\subsection{Espaces vectoriels d'objets combinatoires}
\label{sub-sec:prelim_posets:struct_elementaires:ev}
\index{classe combinatoire}

Une \emph{classe combinatoire} est un ensemble d'objets munis d'une notion de taille. L'ensemble des objets ayant une taille donnée est toujours fini. Par exemple, si $A$ est l'alphabet $\lbrace a, b \rbrace$, on note $A^*$ l'ensemble des mots sur $A$.  On a $A^* = \lbrace \epsilon, a, b, aa, ab, ba, bb, aaa, aab, \dots \rbrace$, où $\epsilon$ est le mot vide. La taille d'un mot $u$, notée $|u|$, est donnée par son nombre de lettres. L'ensemble des mots de taille $n$ est fini et de taille $2^n$. 

Les \emph{permutations} sont les objets de base dans notre travail. Nous en donnons une première définition en termes de mots.

\begin{Definition}
\label{def:prelim_posets:permutations}
Une permutation de taille $n$ est un mot sur l'alphabet $\lbrace 1, 2, \dots, n \rbrace$ où chaque lettre apparaît exactement une fois.
\end{Definition}

Par exemple, les permutations de taille 3 sont $123$, $213$, $132$, $231$, $312$, $321$. L'ensemble des permutations de taille $n$ est fini et de taille $n!$.

On sera rapidement amené à former des sommes formelles d'objets combinatoires. Cela revient à se placer dans un espace vectoriel dont la base est l'ensemble des objets. On considérera que le corps de base de l'espace vectoriel est un corps quelconque $\mathbb{K}$ de caractéristique nulle. L'espace vectoriel $E$ dont la base est une classe combinatoire $C$ est \emph{gradué}, c'est-à-dire

\begin{equation}
\label{eq:prelim_posets:ev}
E = \bigoplus_{n \in \mathbb{N}} E_n
\end{equation}
où $E_n$ est de base $C_n$, les objets combinatoires de taille $n$. 

\subsection{Produits et algèbres}
\label{sub-sec:prelim_posets:struct_elementaires:alg}

On enrichit souvent les espaces vectoriels d'objets combinatoires d'une notion de produit, on obtient alors une algèbre.

\begin{Exemple}
\label{ex:prelim_posets:concat}
Le produit basé sur la concaténation des mots\index{produit!concaténation} : $A^* \times A^* \rightarrow A^*$,
\begin{equation}
u.v \rightarrow uv.
\end{equation}
Par exemple $aab.abab = aababab$. De façon similaire, on définit la \emph{concaténation décalée}\index{produit!concaténation décalée} sur les permutations
\begin{equation}
\sigma \ConcDec \mu = \sigma \Dec{\mu}
\end{equation}
où $\Dec{\mu}$ est le mot $\mu$ où les lettres ont été décalées de $|\sigma|$. Par exemple $132\ConcDec3421 = 1326754$.
\end{Exemple}

\begin{Exemple}
\label{ex:prelim_poset:shuffle}
Le \emph{produit de mélange}\index{produit!de mélange} sur sur les mots se définit récursivement par
\begin{equation} 
\label{eq:prelim_posets:shuffle}
    u \shuffle v :=
    \begin{cases}
        u & \mbox{si $v = \epsilon$}, \\
        v & \mbox{si $u = \epsilon$}, \\
        u_1(u' \shuffle v) \enspace + \enspace  v_1(u \shuffle v')
                & \mbox{sinon, où $u_1, v_1 \in A$ et $u = u_1 u'$ et $v = v_1 v'$}.
    \end{cases}
\end{equation}
C'est la somme de tous les "mélanges" des lettres de $u$ et $v$ tels que l'ordre des lettres dans $u$ et $v$ respectivement ne soit pas modifié. Par exemple, 
\begin{align}
\label{eq:prelim_posets:shuffle_ex}
\red{ab} \shuffle ba &= \red{ab}ba + \red{a}b\red{b}a + \red{a}ba\red{b} + b\red{ab}a
+ b \red{a} a \red{b} + ba \red{ab} \\
&= 2~abba + 2~baab + abab + baba
\end{align}

Comme pour la concaténation, on peut définir le \emph{produit de mélange décalé}\index{produit!de mélange décalé} sur les permutations.
\begin{equation}
\label{eq:prelim_posets:shuffle_dec}
\sigma \cshuffle \mu = \sigma \shuffle \Dec{\mu}
\end{equation} 
par exemple :
\begin{equation}
\label{eq:prelim_posets:shuffle_dec_ex}
\red{12} \cshuffle 21 = \red{12}43 + \red{1}4\red{2}3 + \red{1}43\red{2} + 4\red{12}3
+ 4\red{1}3\red{2} + 43\red{12}
\end{equation}
\end{Exemple}

\begin{Definition}
\label{def:prelim_posets:alg_graduee}
L'algèbre $A$ d'une classe combinatoire est dite graduée\index{algèbre!graduée} si son produit vérifie la relation suivante :
\begin{equation}
\label{eq:prelim_posets:prod_gradue}
|x \times y | = |x| + |y|
\end{equation}
pour tout $x,y \in A$. Dit autrement, pour tout $n,m \in \mathbb{N}$, on a que le produit $\times$ est une application de $A_n \times A_m$ vers  $A_{n+m}$.

\end{Definition}

Les deux exemples précédents, la concaténation et le produit de mélange (resp. la concaténation décalée et le produit de mélange décalé) sont des produits gradués sur les mots (resp. sur les permutations).

\subsection{Opérations sur les mots et les permutations}
\label{sub-sec:prelim_posets:struct_elementaires:operations}
\index{facteur}
\index{préfixe}
\index{suffixe}
\index{sous-mot}

Les mots et les permutations sont des objets de base en combinatoire. De nombreuses opérations sur des objets plus complexes peuvent en fait s'exprimer à l'aide des mots et de la concaténation. Nous aurons besoin de certaines notions et opérations classiques que nous définissons dès maintenant.

Soit $u=u_1 \dots u_n$ un mot de taille $n$. Un \emph{facteur} de $u$ est un mot $v=u_i u_{i+1} \dots u_{i+m}$ avec $1 \leq i \leq n$ et $i+m \leq n$. Le mot $u$ s'écrit alors $u = u'vu''$ où $u'$ et $u''$ sont deux autres facteurs de $u$. Si $v$ est placé au début de $u$, c'est-à-dire si $v = u_1 \dots u_m$, on dit que $v$ est un \emph{facteur gauche} ou \emph{préfixe} de $u$. De même si $v$ est placé à la fin de $u$, c'est-à-dire si $v = u_{n-m} \dots u_n$, on dit que $v$ est un \emph{facteur droit} ou \emph{suffixe} de $u$. Un \emph{sous-mot} de $u$ est un mot $v = u_{j_1} \dots u_{j_m}$ où $1 \leq j_1 < j_2 < \dots < j_m \leq n$. C'est-à-dire que le sous-mot $v$ est composé d'un sous-ensemble des lettres de $u$ dont on a conservé l'ordre. Par exemple si $u= aabcba$ alors $aab$ est un préfixe, $ba$ un suffixe, $bc$ un facteur et $abb$ un sous-mot de $u$. Les préfixes et suffixes sont en particulier des facteurs, et les facteurs, des sous-mots. Le mot vide $\epsilon$ est à la fois préfixe, suffixe, facteur et sous-mot de n'importe quel mot $u$. On parle de facteur (resp. préfixe, suffixe ou sous-mot) propre quand on n'inclut pas le mot vide.

Toutes ces notions s'appliquent aussi aux permutations qui sont des mots particuliers. Par exemple la permutation $52431$ admet entre autres le préfixe $52$, le suffixe $431$, le facteur $24$ et le sous-mot $231$. On remarque qu'en général ces mots ne sont pas eux-mêmes des permutations. Une solution pour rester dans la même classe combinatoire est de \emph{standardiser} les mots. 

\begin{Definition}
\label{def:prelim_posets:std}
\index{standardisation}
Soit $u = u_1 \dots u_n$ un mot de taille $n$ sur un alphabet ordonné $A = \lbrace a_1, a_2, \dots \rbrace$ (par exemple, les entiers positifs). On dit que $u$ admet une inversion  $(i,j)$ avec $i<j$ si $u_i > u_j$. Le \emph{standardisé} de $u$, noté $\std(u)$, est l'unique permutation $\sigma$ de taille $n$ telle que les inversions de $\sigma$ soient exactement les inversions de $u$. 
\end{Definition}

Par exemple, si $u=baa$ sur l'alphabet ordonné $A = \lbrace a < b \rbrace$, alors $u$ admet deux inversions $(1,2)$ et $(1,3)$. La seule permutation de taille 3 admettant uniquement ces deux inversions est $\std(u) = 312$. De façon générale, la standardisation revient à numéroter les lettres de $u$ des plus petites lettres vers les plus grandes lettres et de la gauche vers la droite, cf. figure \ref{fig:prelim_posets:std}.

\begin{figure}[ht]
\centering
\input{includes/figures/standardise}
\caption[Standardisé du mot $ddabcbbaa$.]{Standardisé du mot $ddabcbbaa$, on numérote d'abord les $a$, puis les $b$, et ainsi de suite.}
\label{fig:prelim_posets:std}
\end{figure}

Par définition, le standardisé d'une permutation est la permutation elle-même. On peut utiliser la standardisation sur les facteurs et sous-mots d'une permutation pour obtenir à nouveau des permutations. Par exemple, les préfixes standardisés de $52431$ sont $\lbrace \epsilon, 1,21, 312, 4132, 52431 \rbrace$. 

\begin{Definition}
\label{def:prelim_posets:motif}
\index{motif d'une permutation}
On dit qu'une permutation $\sigma$ admet comme \emph{motif} la permutation $\mu$ s'il existe un sous-mot de $\sigma$ dont le standardisé est la permutation $\mu$. Si $\sigma$ n'admet pas le motif $\mu$, on dit qu'elle \emph{évite} le motif.
\end{Definition}

Par exemple, la permutation $\sigma = 4213$ admet le motif $312$ car $413$ est sous-mot de $\sigma$ et $\std(413) = 312$. 

\section{Posets}
\label{sec:prelim_posets:posets}

Il est possible de définir des relations d'ordre sur les objets combinatoires.
Les ensembles partiellement ordonnés sont couramment appelés \emph{posets}, de l'anglais \emph{Partially Ordered Set}. Cet objet est fondamental dans notre travail. Les posets peuvent s'étudier à la fois comme des objets combinatoires en tant que tels ou comme des structures appliquées à d'autres objets combinatoires. C'est surtout dans ce second contexte que nous les rencontrerons. Nous présentons ici les notions fondamentales dont nous aurons besoin.

\subsection{Définition}
\label{sub-sec:prelim_posets:posets:def}

\begin{Definition}
\label{def:prelim_posets:posets}
\index{poset}
\index{relation!d'ordre}
Un  \emph{poset} est un ensemble $P$ muni d'une relation d'ordre $\leq$ vérifiant les conditions
\begin{enumerate}
\item de réflexivité : $\forall x \in P, x \leq x$;
\item de transitivité : $\forall x,y,z \in P$, si $x \leq y$ et $y \leq z$ alors $x \leq z$;
\item d'antisymétrie : $\forall x,y \in P$, si $x \leq y$ et $y \leq x$ alors $x=y$.
\end{enumerate}

Si la relation d'ordre est telle que pour tout $x,y \in P$, alors soit $x \leq y$ , soit $y \leq x$, on dit que l'ordre est total ou linéaire.
\end{Definition}

Nous ne traiterons dans ce mémoire que des cas où l'ensemble $P$ est fini.

\begin{Definition}
\label{def:prelim_posets:couv}
On dit que $y$ \emph{couvre} $x$ et on écrit $x \lessdot y$ si on a $x < y$ et qu'il n'existe pas de $z \in P$ tel que $x < z < y$. On appelle l'ensemble de ces relations les \emph{relations de couverture}.\index{relation!de couverture}
\end{Definition}

\begin{Definition}
\label{def:prelim_posets:minimal}
On dit qu'un élément $x\in P$ est \emph{minimal} (resp. \emph{maximal}) s'il n'existe pas d'élément $y \in P$, $y \neq x$, tel que $y \leq x$ (resp. $y \geq x$).
\end{Definition}

Les relations de couverture suffisent à définir le poset (les autres relations étant obtenues par transitivité). Pour représenter un poset, il suffit donc d'indiquer les éléments et leurs relations de couverture, c'est ce qu'on appelle un \emph{diagramme de Hasse}\index{diagramme de Hasse}. De façon générale, les posets sont représentés par leur diagramme de Hasse de la façon suivante : si $x$ est relié à $y$ par une arête et que $x$ est \emph{en dessous} de $y$ alors $x \lessdot y$, les plus petits éléments se trouvent donc en bas du diagramme, cf. figure \ref{fig:prelim_posets:exemple_poset}.

\begin{figure}[ht]
\centering
\input{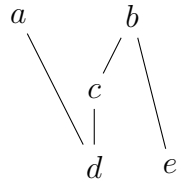}
\caption[Exemple de diagramme de Hasse.]{Exemple de diagramme de Hasse. Les relations de couverture sont $d\lessdot c \lessdot b$, $d\lessdot a$ et $e \lessdot b$. Les éléments $d$ et $e$ sont minimaux. L'ordre est partiel, par exemple $a$ et $b$ ne sont pas comparables.}
\label{fig:prelim_posets:exemple_poset}
\end{figure}

Lorsque les éléments du poset sont les entiers ou un alphabet quelconque, on peut considérer le poset comme un objet combinatoire en tant que tel. La taille est alors donnée par le nombre d'éléments et l'on peut par exemple chercher à dénombrer les posets d'une taille donnée sur l'alphabet $\lbrace 1, \dots, n \rbrace$ \cite{OEISPosets}. Dans le chapitre \ref{chap:tamari_intervalles}, nous définirons ainsi une classe spécifique de posets, les \emph{intervalles-posets} de Tamari liés à l'ordre de Tamari que nous dénombrerons. Cependant, la plupart du temps, nous considérons les posets non pas comme des objets combinatoires mais comme des structures sur les objets combinatoires. Ce sera le cas par exemple quand nous étudierons les ordres sur les permutations dans le chapitre \ref{chap:prelim_groupe_sym}. Pour des raisons de cohérence avec la littérature actuelle et nos propres articles, nous dessinons alors les posets "à l'envers". Les éléments minimaux sont en haut du diagramme et la relation de couverture se lit $y$ est \emph{en dessous} de $x$ si $x \lessdot y$ ($y$ couvre $x$). C'est le cas par exemple des ordres sur les permutations représentés dans les figures \ref{fig:perm_droit}, \ref{fig:perm_gauche} et \ref{fig:bruhat}.

\subsection{Vocabulaire de base}
\label{sub-sec:prelim_posets:posets:voc}

\begin{Definition}
\label{def:prelim_posets:chaine}
Une chaîne\index{chaîne} d'un poset $P$ est un ensemble d'éléments $\lbrace x_1, \dots, x_m \rbrace$ tel que
\begin{equation}
x_1 \leq x_2 \leq \dots \leq x_m.
\end{equation}
Si quelque soit $i \in \llbracket 1,m-1 \rrbracket$, on a $x_i \lessdot x_{i+1}$, alors la chaîne est dite \emph{saturée}\index{chaîne! saturée}.
\end{Definition}

Par exemple, $d\lessdot c \lessdot b$ est une chaîne saturée du poset donné figure \ref{fig:prelim_posets:exemple_poset}. 

\begin{Definition}
\label{def:prelim_posets:gradue}
\index{poset!gradué}
On dit qu'un poset est \emph{gradué} s'il existe une application $\Phi$ bien définie telle que
\begin{enumerate}
\item $\Phi(x) = 0$ si $x$ est minimal,
\item $\Phi(y) = \Phi(x) + 1$ si $x \lessdot y$.
\end{enumerate}
\end{Definition}

De façon équivalente, un poset est gradué si la longueur d'une chaîne saturée entre un élément $y \in P$ et un élément minimal $x$ du poset ne dépend ni de $x$, ni de la chaîne choisie. Ainsi, le poset donné figure \ref{fig:prelim_posets:exemple_poset} n'est pas gradué car $d \lessdot c \lessdot b$ et $e \lessdot b$ sont deux chaînes saturées de longueurs différentes d'éléments minimaux vers $b$.

Les définitions qui suivent donnent des constructions de posets à partir d'un poset donné. Des exemples de toutes les constructions (sous-poset, intervalle, poset quotient, etc.) sont illustrés figure \ref{fig:prelim_posets:sous-posets}.

\begin{figure}[ht]
\input{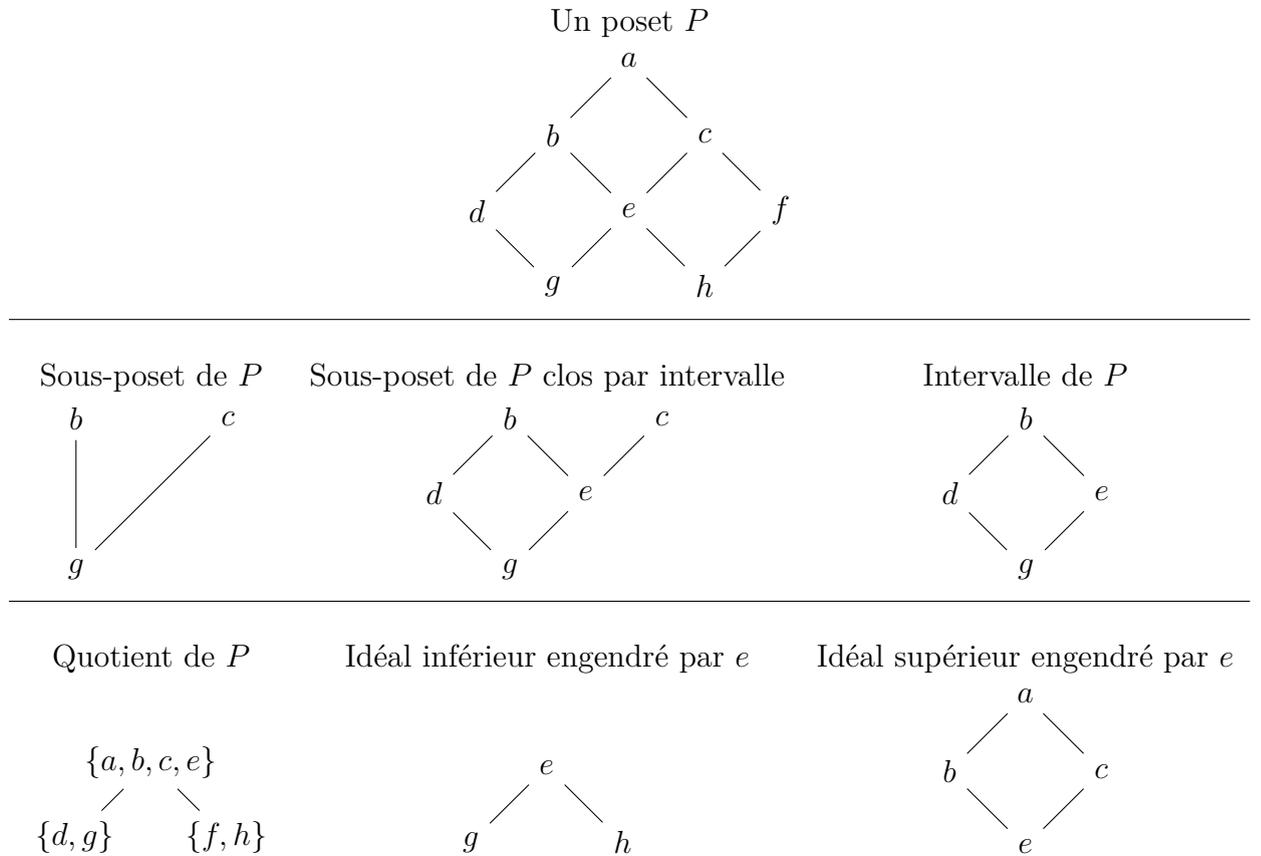}
\caption{Exemples de sous-posets et posets quotients}
\label{fig:prelim_posets:sous-posets}
\end{figure}

\begin{Definition}
\label{def:prelim_posets:sous-poset}
Un poset $P'$ est un sous-poset\index{poset!sous-poset} de $P$ si en tant qu'ensemble, $P' \subset P$ et si la relation d'ordre de $P'$ est la même que celle de $P$ restreinte aux éléments de $P'$.

Si pour tout $x,y \in P'$, on a que $x \leq y$ implique que $\forall z \in P$ avec $x \leq z \leq y$ alors $z \in P'$, on dit que $P'$ est \emph{clos par intervalle}.\index{clos par intervalle} Si de plus, $P'$ a un unique élément minimal et un unique élément maximal, on dit que $P'$ est un \emph{intervalle} de $P$.\index{intervalle}
\end{Definition}

\begin{Definition}
\label{def:prelim_posets:posets_iso}
Soit $\varphi : P_1 \rightarrow P_2$ une application d'un poset $P_1$ vers un poset $P_2$. On dit que $\varphi$ est un morphisme de posets si $\varphi$ préserve l'ordre des éléments. C'est-à-dire, que pour tout $x,y \in P_1$ on a
\begin{equation}
x \leq_{P_1} y \Rightarrow \varphi(x) \leq_{P_2} \varphi(y). 
\end{equation}
Si $\varphi$ est bijective et si $\varphi^{-1}$ est aussi un morphisme de posets, on dit que $\varphi$ est un isomorphisme de posets.\index{morphisme!de poset}\index{isomorphisme!de poset}
\end{Definition}

\begin{Definition}
\label{def:prelim_posets:poset_quotient}
Soit $\mathcal{P}$ une partition du poset $P$, on dit que $\mathcal{P}$ est un \emph{poset quotient}\index{poset!quotient} de $P$ si la relation définie sur $\mathcal{P}$ par 
\begin{equation}
\dot{x} \leq \dot{y} \Leftrightarrow \exists x \in \dot{x}, y \in \dot{y} \text{ tels que } x \leq y.
\end{equation}
pour tout $\dot{x},\dot{y} \in \mathcal{P}$ est une relation d'ordre.
\end{Definition}

On trouve dans \cite{PosetCongru} les conditions nécessaires et suffisantes que doit vérifier une telle partition pour que le quotient forme bien un poset.

\begin{Definition}
Soit $P$ un poset et $x \in P$, l'idéal inférieur (resp. supérieur) de $P$ engendré par $x$ est l'ensemble des éléments $y \leq x$ (resp. $y \geq x$).\index{poset!idéal}
\end{Definition}

\subsection{Extensions et extensions linéaires de poset}
\label{sub-sec:prelim_posets:posets:ext}
\index{extension!d'un poset}
\index{extension!linéaire}

Soient $(P,\leq_P)$ et $(Q, \leq_Q)$ deux posets sur un même ensemble d'éléments $E$. Si on a
\begin{equation}
\label{eq:prelim_posets:ext}
x \leq_P y \Rightarrow x \leq_Q y
\end{equation}
pour tout $x,y \in E$, alors $Q$ est une \emph{extension} de $P$. Construire une extension de $P$ revient donc à rajouter des relations au poset $P$. Si l'ordre du poset $Q$ est total, on dit que $Q$ est une extension linéaire de $P$. 

Une extension linéaire de $P$ peut se représenter comme un mot $u$ dont les lettres sont les éléments de $P$ selon la règle : si $a\leq_P b$ alors $a$ doit \^etre placé avant $b$ dans $u$. Par exemple, les mots $dceab$ et $edcba$ sont des extensions linéaires du poset représenté figure \ref{fig:prelim_posets:exemple_poset}. Si les éléments du poset sont des entiers de 1 à $n$, alors les extensions linéaires du poset peuvent \^etre vues comme des permutations, cf. figure \ref{fig:prelim_posets:ext-lin}.

\begin{figure}[ht]
\centering
\input{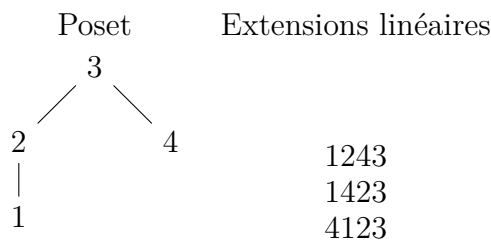}
\caption{Exemple d'extensions linéaires d'un poset}
\label{fig:prelim_posets:ext-lin}
\end{figure}

\subsection{Base d'un poset}
\label{sub-sec:prelim_posets:posets:base}
\index{base!d'un poset}

\begin{Definition}
\label{def:prelim_posets:base}
La base d'un poset $P$ est le plus petit ensemble $B \subset P$ tel que pour tout $x,y \in P$, on ait
\begin{equation}
x \leq_P y \Leftrightarrow \lbrace z \in B, z \leq x \rbrace \subset \lbrace z \in B, z \leq y \rbrace.
\end{equation}
\end{Definition}

C'est-à-dire qu'à chaque élément correspond une projection sur la base et que la comparaison des éléments peut se faire par comparaison ensembliste des projections. Dans \cite{LascouxCoxeter}, Lascoux et Schützenberger donnent une caractérisation des éléments de la base.

\begin{Lemme}
Un élément $x$ d'un poset $P$ appartient à la base de $P$ si et seulement si il existe $y\in P$ tel que $x$ soit un élément minimal de $P \backslash \lbrace z \leq y \rbrace$.
\end{Lemme}

Un exemple de calcul de la base d'un poset est donné figure \ref{fig:prelim_posets:base_poset}. De façon équivalente, il est possible de définir la \emph{cobase} d'un poset en utilisant les éléments maximaux et des projections d'idéaux supérieurs au lieu d'inférieurs.

\begin{figure}
\centering
\input{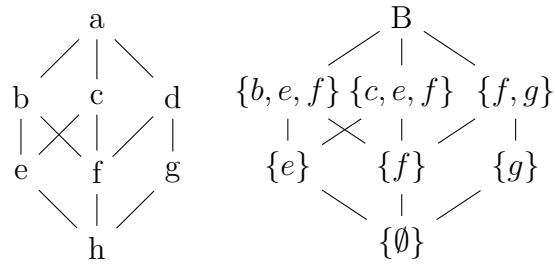}
\caption[Exemple d'un poset et de sa base]{Exemple d'un poset et de sa base. La base du poset est formée par les éléments $b,c,e,f,g$. En effet, $e,f$ et $g$ sont minimaux pour $P \backslash \lbrace z \leq h \rbrace$ et $b$ et $c$ sont minimaux pour respectivement $P \backslash \lbrace z \leq c \rbrace$ et $P \backslash \lbrace z \leq b \rbrace$. A droite, on a représenté le m\^eme poset où les éléments ont été remplacés par leur projection sur la base.} 
\label{fig:prelim_posets:base_poset}
\end{figure}

\section{Treillis}
\label{sec:prelim_posets:treillis}
\index{treillis}

\subsection{Définition et exemples}
\label{sub-sec:prelim_posets:treillis:def}

Soit $Z$ une partie d'un poset $P$. La borne inférieure de $Z$, notée $\pmin Z$, est l'unique élément $z$ tel que

\begin{equation}
y \leq z \Leftrightarrow \forall x \in Z, y \leq x 
\end{equation}
s'il existe ou $\emptyset$ sinon. De façon symétrique, la borne supérieure de $Z$, notée $\pmax Z$ est l'unique élément $z$ tel que
\begin{equation}
y \geq z \Leftrightarrow \forall x \in Z, y \geq x 
\end{equation}
s'il existe ou $\emptyset$ sinon. Quand une partie $Z$ ne comporte que deux éléments $x$ et $y$ on écrit aussi $x \pmin y = \pmin Z$ et $x \pmax y = \pmax Z$.

\begin{Definition}
\label{def:prelim_posets:treillis}
Un treillis est un poset $P$ tel que, pour toute partie $X$ de $P$, $\pmin X$ et $\pmax X$ sont différents de $\emptyset$.
\end{Definition}

La figure \ref{fig:prelim_posets:treillis} présente un exemple de poset qui n'est pas un treillis. En effet, on a que $b \pmin c = \emptyset$ et symétriquement $e \pmax f = \emptyset$. 

\begin{figure}[ht]
\centering
\input{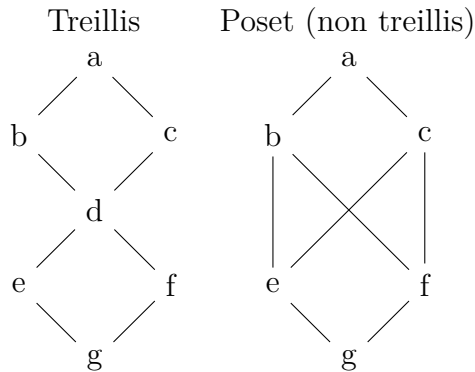}
\caption{Exemple et contre-exemple de treillis}
\label{fig:prelim_posets:treillis}
\end{figure}

\index{base!d'un treillis}
Dans le cas d'un treillis, le calcul de la base est trivial. On a que $x$ appartient à la base du treillis si et seulement si $x$ couvre un unique élément. Pour le treillis donné en exemple dans la figure \ref{fig:prelim_posets:treillis}, la base est donc formée de quatre éléments : $b,c,e$ et $f$.

\subsection{Treillis enveloppant}
\label{sub-sec:prelim_posets:treillis:env}
\index{treillis!enveloppant}

Un résultat de MacNeille \cite{MacNeille} est que tout poset $P$ possède un \emph{treillis enveloppant}, c'est-à-dire qu'il existe un treillis minimal $T$ tel que $P$ soit un sous-poset de $T$ à isomorphisme d'ordre près. La construction de ce treillis est donnée en particulier dans \cite{lattices} et \cite{LascouxCoxeter}. 

Le treillis enveloppant $T$ d'un poset $P$ est le plus petit ensemble de parties de $P$ clos par intersection, contenant $P$ ainsi que tous les idéaux supérieurs de $P$. La relation d'ordre est l'inclusion ensembliste et le morphisme plongeant $P$ dans $T$ est celui qui envoie chaque élément sur son idéal. On pourra vérifier dans la figure \ref{fig:prelim_posets:treillis} que le treillis de gauche est le treillis enveloppant du poset de droite. En effet, l'élément supplémentaire $d$ est en fait l'intersection des idéaux de $b$, $\lbrace b,e,f,g \rbrace$ et de $c$, $\lbrace c,e,f,g \rbrace$. 

La propriété suivante du treillis enveloppant est donnée dans \cite{LascouxCoxeter}.

\begin{Proposition}
La base d'un poset $P$ est la même que celle de son treillis enveloppant.
\end{Proposition}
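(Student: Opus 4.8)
Following the construction recalled above, identify each $x\in P$ with its order ideal $\downarrow\! x:=\lbrace z\in P : z\leq x\rbrace$, so that $P\subseteq T$; recall that then every element of $T$ is an order ideal of $P$, that $T$ is ordered by inclusion, is closed under intersection, and contains $P$ together with all the $\downarrow\! x$, and that consequently every element of $T$ is either $P$ itself or an intersection of finitely many ideals $\downarrow\! x_1,\dots,\downarrow\! x_m$. Write $B:=\mathrm{base}(P)$ and $B_T:=\mathrm{base}(T)$. It suffices to prove the three statements: (a) $\downarrow\! x\in B_T$ for every $x\in B$; (b) an element $I\in T$ that is not of the form $\downarrow\! x$ never belongs to $B_T$; (c) if $\downarrow\! x\in B_T$ then $x\in B$. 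Together these give $B_T=\lbrace\downarrow\! x : x\in B\rbrace$, i.e. $\mathrm{base}(T)=\mathrm{base}(P)$ under the identification. The two ingredients are the characterization of the base valid in any finite poset $Q$ (the Lemme above): $x\in\mathrm{base}(Q)$ iff $x$ is a minimal element of $Q\setminus\lbrace z\leq y\rbrace$ for some $y\in Q$; and, for the lattice $T$, the characterization $I\in B_T$ iff $I$ covers exactly one element.

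\textbf{Proof of (a).} Let $x\in B$ and pick $y\in P$ with $x$ minimal in $P\setminus\downarrow\! y$. Since $x\notin\downarrow\! y$ we have $\downarrow\! x\not\subseteq\downarrow\! y$, so $\downarrow\! x$ lies in $T$ but not below $\downarrow\! y$ in $T$. I claim it is minimal there, whence $\downarrow\! x\in B_T$ by the Lemme applied to $T$. Indeed, let $I\in T$ with $I\subseteq\downarrow\! x$ and $I\not\subseteq\downarrow\! y$, and choose $w\in I$ with $w\not\leq y$. As $w\in I\subseteq\downarrow\! x$ we get $w\leq x$, so $w\in P\setminus\downarrow\! y$ together with $w\leq x$ and minimality of $x$ force $w=x$; hence $x\in I$, and since $I$ is an order ideal of $P$, $\downarrow\! x\subseteq I$, i.e. $I=\downarrow\! x$.

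\textbf{Proof of (b) and (c).} For (b), let $I\in T$ with $I\neq\downarrow\! x$ for all $x$, and suppose for contradiction $I\in B_T$, so that $I$ covers a unique element $J$ in $T$; then every $K\in T$ with $K\subsetneq I$ satisfies $K\subseteq J$ (climb from $K$ to an element covered by $I$, necessarily $J$). For each $a\in I$ we have $\downarrow\! a\in T$, $\downarrow\! a\subseteq I$, and $\downarrow\! a\neq I$ (otherwise $I$ would be principal), hence $\downarrow\! a\subsetneq I$ and so $a\in\downarrow\! a\subseteq J$; therefore $I\subseteq J$, contradicting $J\subsetneq I$. For (c), let $\downarrow\! x\in B_T$ and pick, via the Lemme, $I\in T$ with $\downarrow\! x$ minimal in $T\setminus\lbrace K\in T: K\subseteq I\rbrace$; in particular $\downarrow\! x\not\subseteq I$, and since $I$ is an order ideal this forces $x\notin I$ and $I\neq P$. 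Writing $I=\downarrow\! x_1\cap\dots\cap\downarrow\! x_m$, there is an index $i$ with $x\notin\downarrow\! x_i$; set $y:=x_i$, so $x\in P\setminus\downarrow\! y$. I claim $x$ is minimal there: if $w<x$ with $w\not\leq y$, then $\downarrow\! w\subsetneq\downarrow\! x$, so $\downarrow\! w$ lies strictly below $\downarrow\! x$ in $T$, whence by minimality $\downarrow\! w\subseteq I\subseteq\downarrow\! y$, giving $w\leq y$, a contradiction. Thus $x\in B$ by the Lemme.

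\textbf{Conclusion and main difficulty.} Combining (a), (b) and (c) yields $B_T=\lbrace\downarrow\! x : x\in B\rbrace$, i.e. $\mathrm{base}(T)=\mathrm{base}(P)$. The individual implications above are short; the real work lies in setting up the enveloping lattice carefully enough that all of them apply — one must verify that the elements of $T$ really are order ideals of $P$ (this is precisely what lets us pass from "$x\in I$" to "$\downarrow\! x\subseteq I$" at several points), that the $\downarrow\! x$ are exactly the images of $P$, and that covers and intersections in $T$ behave as used. One must also watch the degenerate cases where $P$ has no greatest or least element, so that $P$ (resp. $\emptyset$) is a genuinely new element of $T$, and fix the convention for empty intersections. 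Once this bookkeeping is in place, the proof reduces to the three paragraphs above.
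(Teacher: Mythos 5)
The paper itself gives no proof of this proposition: it is stated with a pointer to Lascoux--Schützenberger and merely checked on the running example, so there is nothing to compare your argument against line by line. Your proof is correct and self-contained. The decomposition into (a) every $\downarrow\! x$ with $x$ in the base of $P$ is in the base of $T$, (b) no non-principal element of $T$ is in the base of $T$, and (c) $\downarrow\! x$ in the base of $T$ forces $x$ in the base of $P$, is exactly what is needed, and each step uses only the two facts the paper does state: the Lemme characterizing base elements as minimal elements of some $P\setminus\lbrace z\leq y\rbrace$, and the fact that in a lattice the base consists of the elements covering exactly one element. The key structural observations — that every element of $T$ is a down-set of $P$ (so $a\in I$ implies $\downarrow\! a\subseteq I$), and that every element of $T$ other than $P$ is an intersection of principal ideals — are easy but essential, and you state and use them correctly; the degenerate elements $\emptyset$ and a non-principal $P$ are disposed of by (b) as written. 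One small point worth noting: the paper's text describes $T$ as generated by the \emph{idéaux supérieurs}, whereas its worked example (the ideal of $b$ being $\lbrace b,e,f,g\rbrace$) and the Lemme both work with down-sets $\lbrace z\leq y\rbrace$; you consistently adopt the down-set reading, which is the one that makes the Lemme and the example coherent, so your convention is the right one, but it deserves the explicit sentence of justification you give it.
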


Pour l'exemple de la figure \ref{fig:prelim_posets:treillis}, le poset et son treillis enveloppant ont tous les deux comme base $\lbrace b,c,e,f\rbrace$.

\chapter{Ordres du groupe symétrique}
\label{chap:prelim_groupe_sym}

Dans ce chapitre, nous rappelons des propriétés connues du groupe symétrique et en particulier les structures d'ordre qui lui sont associées. Le paragraphe \ref{sec:prelim_groupe_sym:group_sym} rappelle la structure de groupe sur les permutations et énonce les propriétés de base dont nous aurons besoin. Les ordres dits \emph{faibles} sont définis dans le paragraphe~\ref{sec:prelim_groupe_sym:perm}, ils seront utilisés dans la suite de cette thèse principalement dans la partie \ref{part:tamari}. Le paragraphe \ref{sec:prelim_groupe_sym:bruhat} est dédiée à l'étude de l'ordre dit \emph{fort} ou \emph{de Bruhat}. Nous rappelons en particulier comment comparer les éléments et calculer des bornes supérieures, propriétés utilisées dans le chapitre \ref{chap:polynomes_grothendieck}. 

Les résultats que nous énonçons ici sont pour la plupart bien connus. Notre principale référence est l'article \cite{LascouxCoxeter} de Lascoux et Schützenberger qui décrit en détail les ordres sur les groupes de Coxeter. 
 Par ailleurs, dans le paragraphe \ref{sub-sec:prelim_groupe_sym:bruhat:coset}, nous prouvons aussi quelques propriétés particulières sur les intervalles de type coset dont nous nous servirons dans le chapitre~\ref{chap:polynomes_grothendieck}.

\section{Le groupe symétrique}
\label{sec:prelim_groupe_sym:group_sym}
\index{groupe symétrique}
\index{permutations}

\subsection{Structure de groupe sur les permutations}
\label{sub-sec:prelim_groupe_sym:groupe_sym:def}

La définition \ref{def:prelim_posets:permutations} donne une description des permutations en terme de mots sur l'alphabet $1, \dots, n$. On peut aussi les interpréter comme des bijections de $\lbrace 1, \dots, n \rbrace$ vers $\lbrace 1, \dots, n \rbrace$. Dans ce cas, le mot correspondant à la permutation est simplement la lecture des images. Par exemple, la permutation $\mu = 4213$ est l'application qui envoie 1 sur 4, 2 sur 2, 3 sur 1 et 4 sur 3. 

Vu de cette façon, l'ensemble des permutations de taille $n$ forme un groupe non commutatif dont la loi est la composition des applications. L'élément neutre est la permutation identité, c'est-à-dire le mot $123\dots n$. Par exemple, si $\sigma = 2314$ et $\mu = 4213$, on a

\begin{align}
\sigma \mu &:= \sigma \circ \mu = 4321, \\
\mu \sigma &:= \mu \circ \sigma = 2143, \\
\sigma^{-1} &= 3124.
\end{align}

\index{point fixe}
\index{transposition}
\index{transposition!simple}
Un \emph{point fixe} de $\sigma$ est un entier $i$ tel que $\sigma(i) = i$. La permutation identité ne contient que des points fixes. Le seul point fixe de la permutation $\mu = 4213$ est 2. Une \emph{transposition} est une permutation où seuls deux points ne sont pas fixes, on dit qu'elle "échange" deux valeurs. Par exemple, $153426$ est la transposition qui échange 2 et 5 et on la note $(2,5)$. Une \emph{transposition simple} est une transposition où les valeurs échangées sont consécutives, on l'écrit $s_i := (i,i+1)$. Lorsqu'on mutliplie par la droite une permutation $\sigma$ par une transposition $(i,j)$, cela revient à échanger les lettres en positions $i$ et $j$ dans $\sigma$. Si l'on multiplie par la gauche, alors on échange les valeurs $i$ et $j$. Par exemple :

\begin{align}
342615.(2,5) &= 312645 \\
(2,5).342615 &= 345612.
\end{align}

\index{cycles d'une permutation}
En tant qu'élément de groupe, une permutation peut se décomposer en un produit de cycles disjoints. Un \emph{cycle} est une permutation particulière qui s'écrit $(a_1, a_2, \dots, a_r)$ et qui signifie que l'image de $a_1$ est $a_2$, l'image de $a_2$ est $a_3$, etc, et l'image de $a_r$ est $a_1$. Les éléments qui n'apparaissent pas dans $a_1, \dots, a_r$ sont des points fixes. Deux cycles $c_1$ et $c_2$ sont dits disjoints s'ils n'agissent pas sur les mêmes valeurs, c'est-à-dire si les éléments qui apparaissent dans $c_1$ sont des points fixes de $c_2$. Une permutation peut toujours s'écrire comme un produit de cycles disjoints. Par exemple, si $\sigma = 524613$, on a $\sigma = (1,5)(3,4,6)$. Le produit entre deux cycles disjoints est commutatif, on a aussi $\sigma = (3,4,6)(1,5)$. Cependant, la décomposition en cycle est unique à commutation des cycles près. Les cycles de taille 1 sont des points fixes et les cycles de taille 2 des transpositions ce qui justifie l'écriture $(a,b)$ pour la transposition qui échange $a$ et $b$.

Le groupe sur les permutations est aussi appelé \emph{groupe symétrique} et on le note $\Sym{n}$. Le groupe symétrique de taille $n$ est engendré par les transpositions simples $s_i$ pour $1\leq i < n$ et admet la présentation suivante :

\begin{align}
\label{eq:prelim_groupe_sym:rel_carree}
s_i ^2 &= 1, \\
\label{eq:prelim_groupe_sym:tresse1}
s_i s_j &= s_j s_i \text{ pour } |i-j| > 1, \\
\label{eq:prelim_groupe_sym:tresse2}
s_i s_{i+1} s_i &= s_{i+1} s_i s_{i+1}.
\end{align}

\index{relation! de tresses}
Les relations \eqref{eq:prelim_groupe_sym:tresse1} et \eqref{eq:prelim_groupe_sym:tresse2} sont appelées \emph{relations de tresses}. Elles ne sont pas propres au groupe symétrique. En particulier, on peut définir des déformations du groupe symétrique où seule la relation \eqref{eq:prelim_groupe_sym:rel_carree} est modifiée. C'est le cas par exemple de l'algèbre de Hecke que nous verrons dans le chapitre \ref{chap:polynomes_action}. 

\subsection{Décompositions réduites et code}
\label{sub-sec:prelim_groupe_sym:groupe_sym:decomp}

\index{décomposition réduite}
\index{longueur d'une permutation}
Toute permutation peut donc \^etre écrite comme un produit de transpositions simples. On appelle cette écriture une \emph{décomposition} de la permutation. Au vu des relations entre générateurs, elle n'est pas unique. La taille d'une décomposition est donnée par le nombre de générateurs qu'elle contient. Si la taille d'une décomposition est minimale, c'est-à-dire s'il n'existe pas d'autres décompositions de la permutation contenant moins de générateurs, on l'appelle une \emph{décomposition réduite} de la permutation et sa taille définit la \emph{longueur} de la permutation, notée $\ell(\sigma)$. Une permutation admet en général plusieurs décompositions réduites.

\begin{Exemple}
La permutation $\sigma = 523461$ admet entre autres comme décompositions réduites :
\begin{align}
\sigma &= s_4s_3s_2s_1s_2s_3s_4s_5, \\
\sigma &= s_1s_2s_3s_4s_5s_3s_2s_1,
\end{align}
elle est donc de longueur 8.
\end{Exemple}

Notons que si $\sigma$ admet une décomposition réduite $v$, une décomposition réduite de $\sigma^{-1}$ est simplement le retourné de $v$ car $s_i^2 = 1$. Une permutation et son inverse ont donc m\^eme longueur. La longueur d'une permutation est aussi donnée par son nombre \emph{d'inversions}. 

\index{inversions}
\index{coinversions}
\begin{Definition}
Une inversion d'une permutation $\sigma$ est un couple $(i,j)$ avec $i<j$ et $\sigma(i)>\sigma(j)$. 

Une coinversion d'une permutation $\sigma$ est un couple $(a,b)$ tel que $a<b$ et $\sigma^{-1}(a) > \sigma^{-1}(b)$. 
\end{Definition}

Par exemple, les inversions de $\sigma = 523461$ sont $(1,2)$, $(1,3)$, $(1,4)$, $(1,6)$, $(2,6)$, $(3,6)$, $(4,6)$ et $(5,6)$ ses coinversions sont $(1,2)$, $(1,3)$, $(1,4)$, $(1,5)$, $(1,6)$, $(2,5)$, $(3,5)$, et $(4,5)$. Une permutation est entièrement codée par ses inversions. On peut aussi utiliser son \emph{code de Lehmer}.

\index{code de Lehmer}
\begin{Definition}
Le \emph{code de Lehmer} (ou tout simplement code) d'une permutation $\sigma$ est le vecteur $v$ défini par  $v(i) = \# \lbrace \sigma(j) < \sigma(i), j>i \rbrace$. 
\end{Definition}

Par exemple, le code de la permutation $\sigma = 523461$ est $v=\left[4,1,1,1,1,0\right]$. Le code de Lehmer suffit pour retrouver la permutation de départ. En effet, si $v(1) = i$, alors $\sigma(1)=i\!+\!1$. Puis si $v(2) = j$, alors $\sigma(2)$ est le $j\!+\!1$ème nombre restant par ordre croissant et ainsi de suite. On peut donc travailler indifféremment avec une permutation ou avec son code.

\index{permutation!maximale}
De façon évidente, pour $n$ donné, il n'existe qu'une seule permutation de taille $n$ qui n'ait pas d'inversions. C'est la permutation identité $12\dots n$ dont la longueur est $0$, c'est-à-dire dont la décomposition réduite est le mot vide. Il existe aussi une seule permutation de longueur maximale, donnée par $n~n-1 \dots 1$. Son code est $\left[ n-1, n-2, \dots, 1, 0 \right]$ et sa longueur est donc $\frac{n(n-1)}{2}$. On l'appelle la \emph{permutation maximale} et on la note $\omega$.

Nous aurons besoin de deux autres notions sur les permutations.

\index{descente d'une permutation}
\index{recul d'une permutation}
\begin{Definition}
Une \emph{descente} d'une permutation $\sigma$ est une position $i$ telle que $\sigma(i) > \sigma(i+1)$. Un recul de $\sigma$ est une descente de $\sigma^{-1}$, c'est-à-dire une valeur $a$ telle que $\sigma^{-1}(a)>\sigma^{-1}(a+1)$.
\end{Definition}

Les descentes de la permutation $523461$ sont 1 et 5, ses reculs sont 1 et 4.

\subsection{Groupes de Coxeter}
\label{sub-sec:prelim_groupe_sym:groupe_sym:coxeter}

Le groupe symétrique appartient à une catégorie plus large de groupes appelés les \emph{groupes de Coxeter}. Bien que l'objet principal de cette thèse soit le groupe symétrique, de nombreuses notions que nous abordons admettent une définition plus générale. Notre travail s'inscrit donc au sein de cette théorie et nous avons jugé utile d'en donner les définitions de base. Pour une approche plus complète, nous invitons le lecteur à lire \cite{Coxeter}. 

\begin{Definition}
\label{def:prelim_groupe_sym:coxeter}
Un groupe de Coxeter est un groupe engendré par une famille de générateurs $r_1, \dots, r_n$ admettant une présentation sous forme $(r_i r_j)^{m_{ij}} = 1$ où $m_{ij} \in \mathbb{N} \cup +\infty$ et vérifie :
\begin{enumerate}
\item $m_{ii} = 1$
\item $m_{ij} = m_{ji}$
\item $m_{ij} \geq 2$ si $i\neq j$.
\end{enumerate}
\end{Definition}

Pour $\Sym{n}$, les générateurs sont $s_1, \dots, s_{n-1}$ et on a $m_{ij} = 2$ si $|i-j| \neq 1$ et $m_{i~i+1} = 3$. Les groupes de Coxeter finis ont été classifiés dès 1935, ils correspondent à des groupes de réflexions dans l'espace euclidien. Le groupe $\Sym{n}$ est appelé \emph{groupe de Coxeter de type $A$} ou $A_{n-1}$. Les groupes de type $BC$ et $D$ seront abordés dans le chapitre \ref{chap:polynomes_sage} et nous en donnons donc une définition ici. 

\begin{Definition}
\label{def:prelim_groupe_sym:bn}
Le groupe de Coxeter de type $BC$ de taille $n$ est le groupe engendré par l'ensemble des générateurs et relations de $A_n$ ($s_1, \dots, s_{n-1}$) et un générateur supplémentaire $s_n^{B}$ (aussi noté $s_n^C$) vérifiant les relations:
\begin{align}
s_i s_n^B &= s_n^B s_i \text{ pour } i \neq n-1 \\
s_{n-1} s_n^B s_{n-1} s_n^B &= s_n^B s_{n-1} s_n^B s_{n-1}.
\end{align}
C'est-à-dire $m_{i n} = 2$ pour $i \neq n-1$ et $m_{n-1~n} = 4$.
\end{Definition}

\begin{Definition}
\label{def:prelim_groupe_sym:dn}
Le groupe de Coxeter de type $D$ de taille $n$ est le groupe engendré par l'ensemble des générateurs et relations de $A_n$ ($s_1, \dots, s_{n-1}$) et un générateur supplémentaire $s_n^{D}$ vérifiant les relations:
\begin{align}
s_i s_n^B &= s_n^B s_i \text{ pour } i \neq n-2 \\
s_{n-2} s_n^D s_{n-2} &= s_{n-2} s_n^D s_{n-2}.
\end{align}
C'est-à-dire $m_{i n} = 2$ pour $i \neq n-2$ et $m_{n-2~n} = 3$.
\end{Definition}

Nous verrons dans le chapitre \ref{chap:polynomes_action} que les groupes $BC$ et $D$ peuvent être identifiés respectivement aux permutations signées et permutations signées avec un nombre pair de négatifs. 

Les notions de \emph{décomposition réduite} et \emph{longueur} des éléments décrites dans le paragraphe \ref{sub-sec:prelim_groupe_sym:groupe_sym:decomp} s'étendent naturellement à l'ensemble des groupes de Coxeter. De m\^eme, les descentes peuvent \^etre définies comme suit: un générateur $r_i$ est une descente de $\sigma$ si $\ell(\sigma r_i) = \ell(\sigma) - 1$, c'est un recul de $\sigma$ si $\ell(r_i \sigma) = \ell(\sigma) - 1$. Dans le cas du groupe symétrique, le générateur $s_i$ est identifié à la position $i$ et on dit que la permutation a une descente en $i$ si $s_i$ vérifie la propriété ci-dessus.

\section{Ordres faibles : treillis sur les permutations}
\label{sec:prelim_groupe_sym:perm}

\subsection{Définition}
\label{sub-sec:prelim_groupe_sym:perm:def}
\index{ordre!faible!droit}
\index{ordre!faible!gauche}

\begin{Definition}
\label{def:permu-facteur}
Deux éléments $\sigma$ et $\mu$ de $\Sym{n}$ sont comparables pour \emph{l'ordre faible droit} (ou simplement ordre droit) s'il existe une décomposition réduite de $\sigma$ qui soit un facteur gauche d'une décomposition réduite de $\mu$. On écrit $\sigma \infd \mu$ .

De même, deux éléments sont comparables pour \emph{l'ordre faible gauche} s'il existe une décomposition réduite de $\sigma$ qui soit un facteur droit d'une décomposition réduite de $\mu$, et on écrit $\sigma \infg \mu$.
\end{Definition}

Par exemple, la permutation $3142= s_2 s_1 s_3$ est plus petite pour l'ordre droit que $4312 = s_2 s_1 s_3 s_2 s_1$ et plus petite pour l'ordre gauche que $4231 = s_3 s_1 s_2 s_1 s_3$. Cette définition n'est pas spécifique au type $A$ mais nous étudierons ici uniquement le cas de $\Sym{n}$.

Les relations de couverture se déduisent facilement de la définition. Une permutation $\sigma$ est couverte par une permutation $\mu$ dans l'ordre droit (c'est-à-dire que $\mu$ est son successeur direct) si $\mu = \sigma s_i$ avec $\ell(\mu) = \ell(\sigma) +1$. On a multiplié $\sigma$ \emph{par la droite} avec une transposition simple, d'où la terminologie \emph{ordre droit}. De la m\^eme façon, dans le cas de l'ordre gauche, on multiplie \emph{par la gauche} par une transposition simple.

On a $\ell(\sigma s_i) = \ell(\sigma) +1$ si et seulement si $\sigma s_i$ a une descente en $i$, la transposition $s_i$ échange alors $\sigma(i)$ et $\sigma(i+1)$. Le nombre d'éléments couverts par une permutation $\mu$ dans l'ordre droit est donc exactement son nombre de descentes.

De m\^eme, dans l'ordre gauche, $\ell(s_i \sigma) = \ell(\sigma) +1$ si et seulement si $s_i \sigma$ a un recul en $i$ et la transposition $s_i$ échange les valeurs $i$ et $i+1$ dans $\sigma$. Le nombre d'éléments couverts par une permutation $\mu$ dans l'ordre gauche est donc son nombre de reculs.

\index{permutoèdre}
De par leur définition, les ordres droits et gauches sont gradués par le nombre d'inversions des permutations. Ils sont en fait isomorphes par passage à l'inverse. Plongé dans $\mathbb{R}^{n-1}$, leur diagramme de Hasse est un polytope connu, le \emph{permutoèdre}, et on appelle parfois ces ordres les \emph{ordres du permutoèdre}. Dans toute cette thèse, on représentera toujours l'ordre faible tel que le plus petit élément (la permutation identité) soit en haut du diagramme. Les diagrammes de Hasse des ordres faibles droits et gauches pour les tailles 3 et 4 sont donnés figures \ref{fig:perm_droit} et \ref{fig:perm_gauche}. Les relations de couverture correspondent aux transpositions simples : à chaque passage par une arête du graphe, on échange deux valeurs à des positions consécutives dans le cas de l'ordre droit et deux valeurs consécutives dans le cas de l'ordre gauche. Une décomposition réduite d'une permutation est un chemin dans l'ordre droit entre l'identité et la permutation où chaque arête correspond à une transposition simple. 

\begin{figure}[ht]
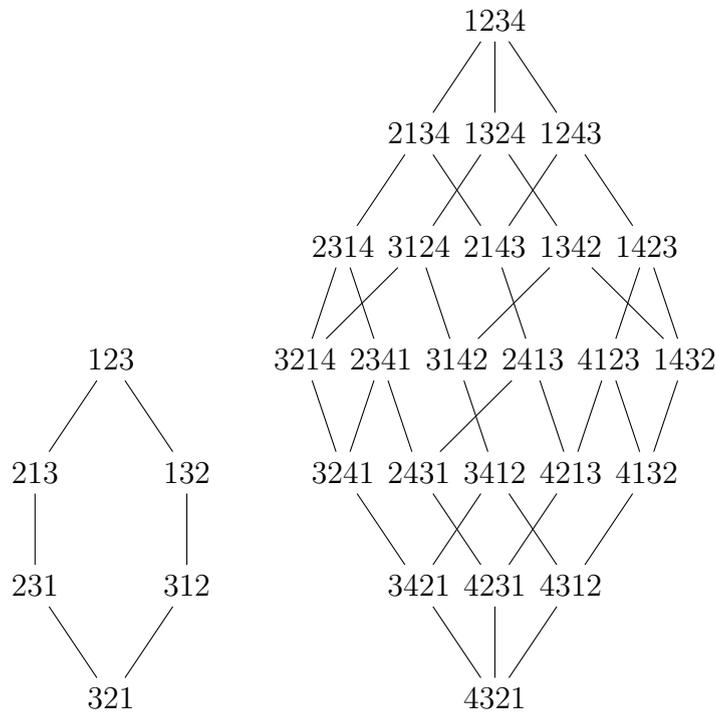

\centering
\begin{tabular}{cc}
\input{includes/figures/perm_droit3.tex} &
\input{includes/figures/perm_droit4.tex}
\end{tabular}
\caption{Ordre faible droit pour les tailles 3 et 4.}
\label{fig:perm_droit}
\end{figure}

\begin{figure}[ht]
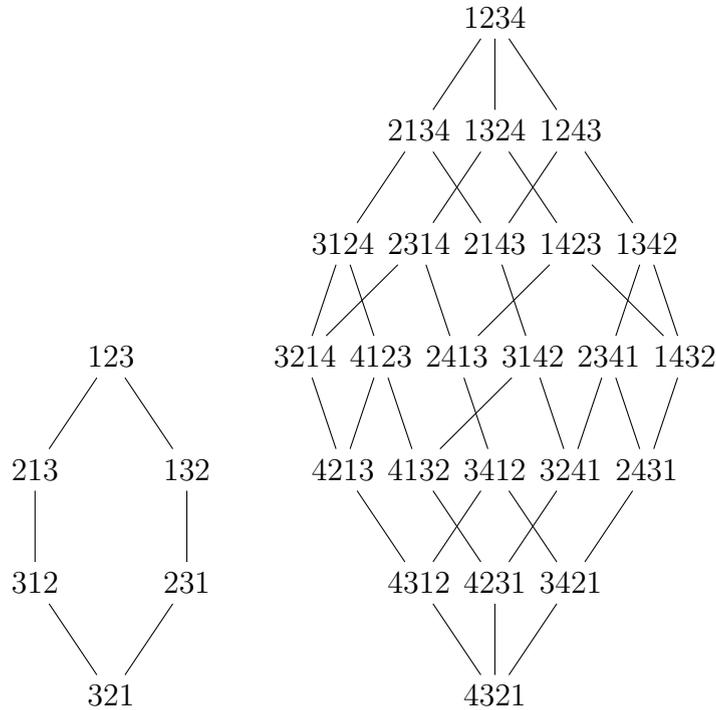

\centering
\begin{tabular}{cc}
\input{includes/figures/perm_gauche3.tex} &
\input{includes/figures/perm_gauche4.tex}
\end{tabular}
\caption{Ordre faible gauche pour les tailles 3 et 4.}
\label{fig:perm_gauche}
\end{figure}

\subsection{Structure de treillis}
\label{sub-sec:prelim_groupe_sym:perm:treillis}
\index{treillis}

\index{permutations!grassmanniennes}
Les ordres, droit et gauche, possèdent chacun une structure de treillis \cite{PermTreillis}. Comme on l'a vu paragraphe \ref{sec:prelim_posets:treillis}, la base d'un treillis est formée des éléments couvrant un unique élément. De ce fait, la base de l'ordre droit est formée des permutations n'ayant qu'une seule descente, appelées permutations \emph{grassmanniennes}. De même, la base de l'ordre gauche est formée par l'inverse des permutations grassmanniennes, c'est-à-dire les permutations n'ayant qu'un seul recul. La figure \ref{fig:prelim_groupe_sym:perm:bases} représente les deux bases en taille 4.

\begin{figure}[ht]
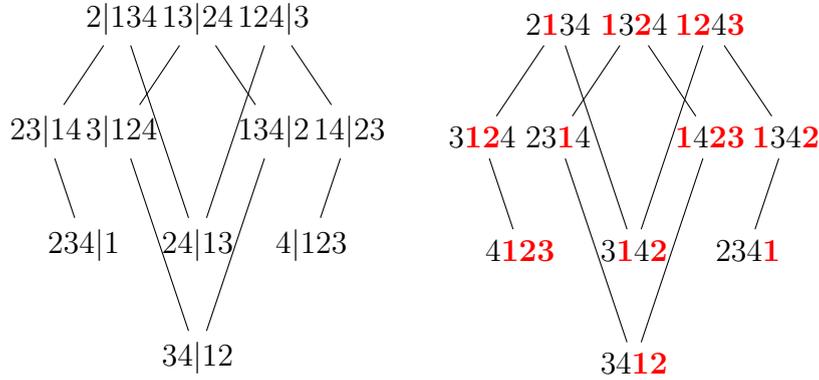

\centering
\begin{tabular}{cc}
\input{includes/figures/grassmanniennes_4} &
\input{includes/figures/grassmanniennes_inv_4}
\end{tabular}
\caption[Permutations grassmanniennes et inverses.]{Permutations grassmanniennes et inverses, bases de l'ordre faible.}
\label{fig:prelim_groupe_sym:perm:bases}
\end{figure}

Pour comparer deux permutations, il suffit donc de comparer les ensembles de permutations grassmanniennes qui leur sont inférieures. On a en fait une propriété plus forte,

\begin{align}
\sigma \infd \mu &\Leftrightarrow \coinv(\sigma) \subset \coinv(\mu), \\
\sigma \infg \mu &\Leftrightarrow \inv(\sigma) \subset \inv(\mu), 
\end{align}
où $\coinv(\sigma)$ et $\inv(\sigma)$ correspondent respectivement à l'ensemble des coinversions et des inversions de $\sigma$. Par exemple, les coinversions de la permutation $3124$ sont $(1,3)$ et $(2,3)$. Elle est plus petite pour l'ordre droit que $4312$ dont les coinversions sont $(1,3)$, $(1,4)$, $(2,3)$, $(2,4)$ et $(3,4)$.

On appelle $\kSym{n}{k}$ l'ensemble des éléments de $\Sym{n}$ ayant une unique descente en $k$ auquel on adjoint l'identité. Cet ensemble est un treillis pour l'ordre droit \cite{BW2}. Ses éléments sont les permutations minimales des classes du quotient de $\Sym{n}$ par $\Sym{k} \times \Sym{n-k}$, le groupe engendré par les $s_i$ avec $i \neq k$. On aura besoin de l'application suivante $\krho{k} : \Sym{n} \rightarrow \kSym{n}{k}$ qui à chaque permutation $\sigma$ associe l'élément minimal de son coset $\sigma (\Sym{k} \times \Sym{n-k})$. Cela revient à réordonner les valeurs de la permutations à gauche et à droite de $k$. Par exemple :

\begin{equation}
\krho{3}(361425) = 136 \vert 245.
\end{equation}

De même, $\bSym{n}{r}$, l'ensemble des éléments de $\Sym{n}$ ayant un unique recul en $r$ auquel on adjoint l'identité, est aussi un treillis. On définit $\brho{r}$ qui à chaque permutation $\sigma$ associe l'élément minimal de son coset $(\Sym{r} \times \Sym{n-r})\sigma$. Cela revient à réordonner séparément les valeurs inférieures et supérieures à $r$. Par exemple :

\begin{equation}
\brho{4}(\red{3}6\red{142}5) = \red{1}5\red{234}6.
\end{equation} 

Par ailleurs, nous aurons aussi besoin de l'application $\mrho{k}$ qui à une permutation $\sigma$ associe cette fois l'élément maximal de son coset $\sigma(\Sym{k} \times \Sym{n-k})$. Dans l'exemple précédent, cela donne :

\begin{equation}
\mrho{3}(361425) = 631 \vert 542.
\end{equation}

\section{Ordre de Bruhat}
\label{sec:prelim_groupe_sym:bruhat}
\index{ordre!de Bruhat}
\index{ordre!fort}

\subsection{Définition}
\label{sub-sec:prelim_groupe_sym:bruhat:def}

\begin{Definition}
\label{def:bruhat-sous-mot}
Deux éléments $\sigma$ et $\mu$ de $\Sym{n}$ sont comparables pour l'\emph{ordre de Bruhat} (ou \emph{ordre fort}) si une décomposition réduite de $\sigma$ est un sous-mot d'une décomposition réduite de $\mu$. On écrit $\sigma \infb \mu$.
\end{Definition}

Par exemple, la permutation $2143 = s_1 s_3$ est plus petite que $3421 = s_2 s_1 s_2 s_3 s_2$. Pour alléger l'écriture, la notations $\leq$ pour les permutations désignera dans toute cette section $\infb$. De façon générale, il suffit de considérer les sous-mots d'une décomposition réduite arbitraire d'une permutation pour obtenir l'ensemble des permutations qui lui sont inférieures \cite{bourbaki}. Un facteur étant un sous-mot particulier, l'ordre de Bruhat est une extension à la fois de l'ordre faible gauche et de l'ordre faible droit, cependant il n'est pas restreint à l'union de ces deux ordres. Par exemple, $4123 = s_3 s_2 s_1$ est plus grande que $2143 = s_3 s_1$ mais n'est son successeur ni pour l'ordre droit, ni pour l'ordre gauche.

\index{lemme d'échange}
Historiquement, l'ordre de Bruhat a d'abord été décrit comme la clôture transitive de la relation de couverture suivante.

\begin{Proposition}
\label{prop:bruhat-couv}
Une permutation $\sigma$ est couverte par une permutation $\mu$ si et seulement si $\mu = \sigma \tau$ avec $\tau$ une transposition et $\ell(\mu) = \ell(\sigma) + 1$. 

On dira alors que la transposition $\tau$ est une \emph{transposition de Bruhat} pour la permutation $\sigma$.
\end{Proposition}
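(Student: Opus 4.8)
The plan is to prove the two implications separately, leaning on the subword characterisation of the Bruhat order (Definition~\ref{def:bruhat-sous-mot}, using the fact --- cited to Bourbaki in the text --- that an arbitrary reduced decomposition of $\mu$ may be used) and on the \emph{strong exchange property} of $\Sym{n}$, which follows from the braid relations \eqref{eq:prelim_groupe_sym:tresse1}--\eqref{eq:prelim_groupe_sym:tresse2}: if $w = s_{i_1}\cdots s_{i_p}$ is any decomposition and $\tau$ is a transposition with $\ell(w\tau) < \ell(w)$, then $w\tau = s_{i_1}\cdots \widehat{s_{i_k}}\cdots s_{i_p}$ for some $k$. I would first record a preliminary remark: $\sigma \infb \mu$ forces $\ell(\sigma) \le \ell(\mu)$, with equality only if $\sigma = \mu$, since a reduced decomposition of $\sigma$ is a subword of one of $\mu$, and a subword of equal length is the whole word.

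For the direction ``$\mu = \sigma\tau$ with $\tau$ a transposition and $\ell(\mu) = \ell(\sigma)+1$ $\implies$ $\sigma$ couverte par $\mu$'': fix a reduced decomposition $\mu = s_{i_1}\cdots s_{i_p}$, $p = \ell(\mu)$. As $\mu\tau = \sigma$ has length $p-1 < p$, strong exchange gives $\sigma = s_{i_1}\cdots\widehat{s_{i_k}}\cdots s_{i_p}$ for some $k$; this word has length $\ell(\sigma)$, hence is a reduced decomposition of $\sigma$, and it is a subword of our decomposition of $\mu$, so $\sigma \infb \mu$ by Definition~\ref{def:bruhat-sous-mot}, and $\sigma \ne \mu$ by the length count. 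To see it is a covering relation, note that any $\nu$ with $\sigma \infb \nu \infb \mu$ satisfies $\ell(\sigma) \le \ell(\nu) \le \ell(\sigma)+1$ by the preliminary remark, whence $\nu = \sigma$ (equal lengths to $\sigma$) or $\nu = \mu$ (equal lengths to $\mu$).

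For the converse ``$\sigma$ couverte par $\mu$ $\implies$ $\mu = \sigma\tau$ with $\tau$ a transposition and $\ell(\mu) = \ell(\sigma)+1$'', the crux is a \emph{one-step lifting} statement: whenever $\sigma < \mu$ there is a transposition $\tau$ with $\sigma < \sigma\tau \infb \mu$ and $\ell(\sigma\tau) = \ell(\sigma)+1$. Granting it, the covering hypothesis forces $\sigma\tau = \mu$ and we are done. To establish the lifting, I would fix a reduced decomposition $s_{i_1}\cdots s_{i_p}$ of $\mu$, pick among all reduced subwords spelling $\sigma$ the one whose set of positions is lexicographically smallest, say it omits $S \ne \emptyset$, and re-insert the letter at $k := \min S$. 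By minimality of $k$ all positions $< k$ are used, so $\sigma = a c$ with $a = s_{i_1}\cdots s_{i_{k-1}}$ and $c$ the remaining part of the subword; the re-inserted word spells $\nu := a\, s_{i_k}\, c = \sigma\tau$ with $\tau = c^{-1} s_{i_k} c$ a transposition, and $\nu \infb \mu$ because its word is a subword of the reduced decomposition of $\mu$. The delicate point --- and the main obstacle --- is to show that this re-inserted word, of length $\ell(\sigma)+1$, is again reduced (so that $\ell(\nu) = \ell(\sigma)+1$ and $\sigma < \nu$); this is precisely where lexicographic minimality is used, via the exchange/deletion condition: were the word non-reduced, a deletion would yield a reduced subword for $\sigma$ on an earlier position set, contradicting minimality. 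One may also shortcut this step by quoting the classical fact that the Bruhat order of a Coxeter group is graded by length.
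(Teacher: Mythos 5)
Your proof is correct in outline, but it is worth noting that the paper itself does not actually prove this proposition: it presents the covering relation as the historical definition of the Bruhat order and disposes of the equivalence with the subword order (Definition~\ref{def:bruhat-sous-mot}) in one sentence, ``par récurrence sur la longueur \dots en utilisant le lemme d'échange'' with a citation to Bourbaki. You supply the missing argument along the standard textbook route, and both directions are sound: the forward implication via the strong exchange condition (for an arbitrary reflection, which is stronger than the exchange lemma as stated in the paper, but classical and easily checked in $\Sym{n}$ via inversions), and the converse via a one-step lifting obtained from a lexicographically minimal reduced subword. The one place where you are too quick is the step you yourself flag: if the re-inserted word $a\,s_{i_k}\,c$ were not reduced, the deletion you invoke does not directly produce a reduced subword for $\sigma$ --- strong exchange applied to $\ell(\sigma\tau)<\ell(\sigma)$ deletes a letter from the word for $\sigma$ to produce a word for $\nu=\sigma\tau$, and one must then multiply back by $a s_{i_k} a^{-1}$ and split into the cases where the deleted position lies before or after $k$ (the first case contradicts the reducedness of the prefix $s_{i_1}\cdots s_{i_k}$ of $\mu$'s word, the second yields the lex-smaller subword for $\sigma$). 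This is completable, and your proposed shortcut is in fact cleaner than you suggest: once gradedness of the Bruhat order is granted, a covering $\sigma\lessdot\mu$ gives $\ell(\mu)=\ell(\sigma)+1$, a reduced subword for $\sigma$ inside a reduced word for $\mu$ then omits exactly one letter $s_{i_k}$, and $\sigma^{-1}\mu=c^{-1}s_{i_k}c$ is immediately a transposition, with no lifting argument needed at all.
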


Cette relation diffère de celle de l'ordre droit car la transposition $\tau$ n'est plus nécessairement une transposition simple. Par ailleurs, on peut donner une définition similaire avec un produit à gauche qui donnera le même ordre. En effet, si $\mu = \sigma \tau$ alors il existe une transposition $\tau'$ telle que $\mu = \tau' \sigma$. La proposition suivante donne un critère simple pour savoir si une transposition $\tau$ est une transposition de Bruhat pour une permutation $\sigma$.

\begin{Proposition}
\label{prop:prelim_groupe_sym:bruhat-critere}
Soit $\tau = (a,b)$ et $\sigma \in \Sym{n}$. Alors $\tau$ est une transposition de Bruhat pour $\sigma$ si et seulement si :
\begin{enumerate}
\item $\sigma(a) < \sigma(b)$
\item Il n'existe pas de $i$ tel que $a < i < b$ et $\sigma(a) < \sigma(i) < \sigma(b)$.
\end{enumerate}
En d'autres termes, les valeurs \emph{positionnées} entre $\sigma(a)$ et $\sigma(b)$ ne sont pas \emph{comprises} entre $\sigma(a)$ et $\sigma(b)$.
\end{Proposition}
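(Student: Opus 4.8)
Le plan est d'exploiter la caractérisation combinatoire de l'ordre faible droit que l'on a déjà établie, à savoir $\sigma \infd \mu \Leftrightarrow \coinv(\sigma) \subset \coinv(\mu)$, et plus précisément le fait que la transposition $\tau = (a,b)$ est une transposition de Bruhat pour $\sigma$ si et seulement si $\ell(\sigma\tau) = \ell(\sigma)+1$, c'est-à-dire si $\sigma\tau$ possède exactement une inversion de plus que $\sigma$. En effet, multiplier $\sigma$ à droite par $\tau = (a,b)$ avec $a < b$ revient à échanger les lettres en positions $a$ et $b$. On note $\mu = \sigma\tau$, de sorte que $\mu(a) = \sigma(b)$, $\mu(b) = \sigma(a)$, et $\mu(i) = \sigma(i)$ pour $i \notin \{a,b\}$.

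Premièrement, je calcule la variation du nombre d'inversions en passant de $\sigma$ à $\mu$. Les seules paires $(i,j)$ dont le statut d'inversion change sont celles impliquant une position dans $\{a,b\}$. Pour la paire $(a,b)$ elle-même : elle est une inversion de $\mu$ ssi elle n'en est pas une de $\sigma$, donc la condition $\sigma(a) < \sigma(b)$ équivaut à dire que l'on crée (et non détruit) l'inversion en $(a,b)$, ce qui contribue $+1$. Pour une position intermédiaire $i$ avec $a < i < b$ : on compare le comportement des paires $(a,i)$ et $(i,b)$. Un petit examen de cas sur la position de $\sigma(i)$ par rapport à l'intervalle $[\sigma(a), \sigma(b)]$ montre que si $\sigma(a) < \sigma(i) < \sigma(b)$, alors l'échange crée deux nouvelles inversions (en $(a,i)$ et en $(i,b)$), tandis que dans les autres cas le bilan pour cette position $i$ est nul (soit aucune des deux paires ne change de statut, soit l'une gagne une inversion et l'autre en perd une). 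Pour une position $i < a$ ou $i > b$, les paires $(i,a)$ et $(i,b)$ comparent $\sigma(i)$ aux deux valeurs $\sigma(a)$, $\sigma(b)$ dans le même ordre relatif avant et après, donc le bilan est nul.

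Deuxièmement, j'assemble ces contributions : $\ell(\mu) - \ell(\sigma) = \varepsilon + 2k$, où $\varepsilon = +1$ si $\sigma(a) < \sigma(b)$ et $\varepsilon = -1$ sinon, et $k$ est le nombre de positions $i$ avec $a < i < b$ telles que $\sigma(i)$ soit comprise strictement entre $\sigma(a)$ et $\sigma(b)$ (dans l'ordre donné par le signe de $\varepsilon$, mais la condition $\min(\sigma(a),\sigma(b)) < \sigma(i) < \max(\sigma(a),\sigma(b))$ est symétrique). Dès lors, $\ell(\mu) = \ell(\sigma)+1$ équivaut à $\varepsilon = +1$ et $k = 0$, ce qui est exactement la conjonction des conditions (1) et (2). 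Enfin, il reste à vérifier que $\ell(\mu) = \ell(\sigma)+1$ caractérise bien le fait que $\tau$ soit une transposition de Bruhat pour $\sigma$, ce qui est précisément l'énoncé de la proposition \ref{prop:bruhat-couv} déjà admise.

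Le point le plus délicat sera l'analyse de cas pour les positions intermédiaires $a < i < b$ : il faut être soigneux pour montrer que, hors du cas « $\sigma(i)$ entre $\sigma(a)$ et $\sigma(b)$ », le bilan net des paires $(a,i)$ et $(i,b)$ est toujours exactement nul, en distinguant selon que $\sigma(i)$ est au-dessus du maximum ou au-dessous du minimum de $\{\sigma(a),\sigma(b)\}$. Ce n'est pas difficile mais demande de traiter plusieurs sous-cas sans en oublier ; le reste de l'argument est un simple comptage.
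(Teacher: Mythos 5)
Votre preuve suit essentiellement la même démarche que celle du manuscrit : la proposition y est justifiée comme une simple reformulation de la condition $\ell(\sigma\tau) = \ell(\sigma)+1$, en examinant quelles paires changent de statut d'inversion lors de l'échange des positions $a$ et $b$, et votre analyse de cas (positions extérieures à bilan nul, positions intermédiaires contribuant $0$ ou $+2$) est exactement celle, plus détaillée, que le texte esquisse. Un seul point à corriger dans l'assemblage final : lorsque $\sigma(a) > \sigma(b)$, le bilan est $\ell(\sigma\tau) - \ell(\sigma) = -(1+2k)$ et non $\varepsilon + 2k = -1+2k$ ; avec la formule telle que vous l'écrivez, le cas $\varepsilon=-1$, $k=1$ donnerait aussi $+1$ et l'équivalence annoncée ne suivrait pas, alors qu'avec la formule correcte $\varepsilon(1+2k)$ la conclusion ($\varepsilon=+1$ et $k=0$) est immédiate.
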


Cette proposition est juste une reformulation de la condition $\ell(\sigma \tau) = \ell(\sigma) +1$. En effet, si $\sigma(a) < \sigma(b)$ alors on augmente la longueur de $\sigma$ en les inversant. Si $a < i < b$ et $\sigma(i)<\sigma(a)$, l'inversion $(a,i)$ est conservée. De même, si $\sigma(i) > \sigma(b)$, l'inversion $(i,b)$ est conservée. Mais si $\sigma(a) < \sigma(i) < \sigma(b)$, alors la transposition $(a,b)$ rajoute les inversions $(a,i)$ et $(i,b)$ en plus de $(a,b)$ qui ne sont pas compensées. Par ailleurs, il n'y a pas d'inclusion des inversions ou des coinversions entre une permutation et son successeur. Par exemple, si $a < b < c$ et $\sigma(a) < \sigma(c) <\sigma(b)$, l'inversion $(b,c)$ devient une inversion $(a,c)$.  Par exemple, si $\sigma = 251436$, la transposition $(1,4)$ est une transposition de Bruhat. Le successeur de $\sigma$ est alors $\sigma' = 451236$. La permutation $\sigma$ comporte 5 inversions : $(1,3)$, $(2,3)$, $(2,4)$, $(2,5)$ et $(4,5)$. Son successeur en comporte bien 6: $(1,3)$, $(1,4)$, $(1,5)$, $(2,3)$, $(2,4)$, et $(2,5)$. Par contre, $(1,6)$ n'est pas une transposition de Bruhat pour $\sigma = 251436$ à cause des valeurs $4$ et $5$ qui sont à la fois comprises et positionnées entre les valeurs 2 et 6.

La correspondance entre l'ordre obtenu par la clôture transitive de la relation et l'ordre sur les sous-mots se prouve par récurrence sur la longueur des permutations en utilisant une propriété fondamentale, le \emph{lemme d'échange} décrit par Bourbaki \cite{bourbaki}.

\begin{Lemme}[Lemme d'échange]
\label{lem:}
Soit $\sigma$ une permutation et $s_i$ une transposition simple telle que $\ell(\sigma s_i) > \ell(\sigma)$, alors l'idéal inférieur engendré par $\sigma s_i$ se décompose en trois ensembles disjoints :
\begin{itemize}
\item $I_1 = \lbrace \mu \leq \sigma, \mu s_i \leq \sigma \rbrace$,
\item $I_2 = \lbrace \nu \leq \sigma, \nu s_i > \sigma \rbrace$,
\item $I_3 = \lbrace \eta > \sigma, \eta s_i \leq \sigma \rbrace$.
\end{itemize}
Dit autrement, il n'existe pas de permutation $\sigma s_i \geq \eta > \sigma$ telle que $\eta s_i > \sigma$.
\end{Lemme}

De la proposition~\ref{prop:bruhat-couv}, on déduit immédiatement que l'ordre est gradué. Les diagrammes de Hasse pour les tailles 3 et 4 sont donnés en figure \ref{fig:bruhat}. Contrairement aux ordres faibles, l'ordre de Bruhat n'est pas un treillis. Cela apparaît dès la taille 3 : on a que $\pmin \lbrace 231, 312 \rbrace = \emptyset$. Par ailleurs, on a vu que les ordres faibles étaient isomorphes l'un à l'autre par passage à l'inverse. Dans le cas de l'ordre de Bruhat, le passage à l'inverse est un \emph{automorphisme}, c'est-à-dire $\sigma \leq \mu \Leftrightarrow \sigma^{-1} \leq \mu^{-1}$, ce qui est clair par la définition. Cela se traduit par une propriété de symétrie dans toutes les propositions liées à l'ordre de Bruhat. Ainsi, on a vu en proposition \ref{prop:bruhat-couv} que la relation de couverture se décrit aussi bien par un produit à droite qu'à gauche.

\begin{figure}[ht]
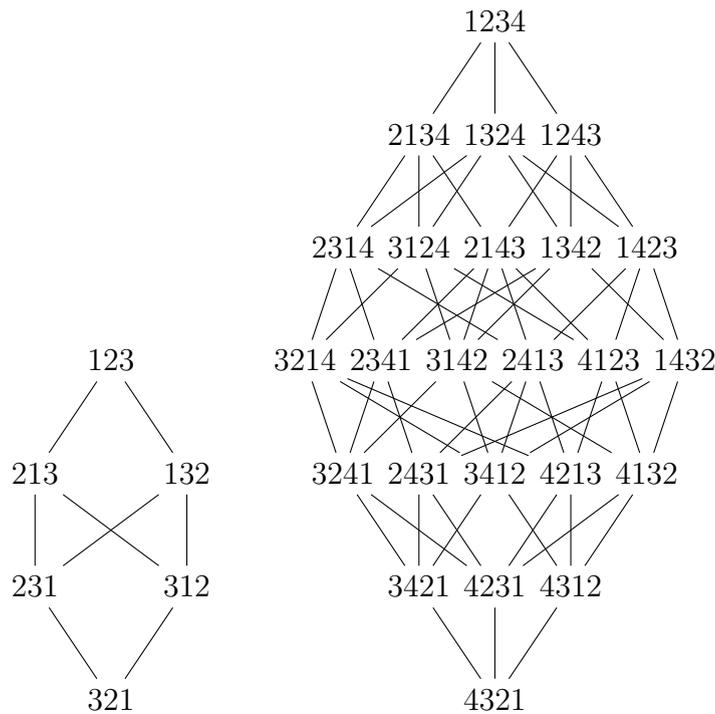

\centering
\begin{tabular}{cc}
\input{includes/figures/bruhat3.tex} &
\input{includes/figures/bruhat4.tex}
\end{tabular}
\caption{Ordres de Bruhat, tailles 3 et 4}
\label{fig:bruhat}
\end{figure}

\subsection{Comparaison des éléments}
\label{sub-sec:prelim_groupe_sym:bruhat:comp}
\index{permutations!bigrassmanniennes}

La comparaison des éléments dans l'ordre de Bruhat nous amène à considérer une nouvelle projection $\kbrho{k}{r}:=\krho{k} \circ \brho{r} = \brho{r} \circ \krho{k}$ bien définie car les opérations $\brho{r}$ et $\krho{k}$ commutent. Cette projection envoie les éléments de $\Sym{n}$ sur l'ensemble $\kbSym{n}{k}{r}$ des permutations ayant une unique descente en $k$ et un unique recul en $r$ auquel on adjoint l'identité.

\begin{align}
\kbrho{3}{4}(361425) &= \brho{4}(136245) = 125346 \\
&= \krho{3}(152346) = 125346
\end{align}

On appelle \emph{permutations bigrassmanniennes} les permutations possédant une unique descente et un unique recul (cf. figure \ref{fig:prelim_groupe_sym:bruhat:bigrassmanniennes}), c'est-à-dire l'intersection des bases des ordres droits et gauches. Un résultat de Deodhar nous dit que:

\begin{equation}
\label{eq:prelim_groupe_sym:bruhat:deodhar}
\sigma \leq \mu \Leftrightarrow \forall k,r \in \llbracket 1, n-1 \rrbracket,  \kbrho{k}{r}(\sigma) \leq \mu.
\end{equation}

Cela signifie que la base de l'ordre de Bruhat est incluse dans l'ensemble des permutations bigrassmanniennes. Les projecteurs $\krho{k}$ et $\brho{r}$ ainsi que les permutations bigrassmanniennes sont définis de façon générale sur les groupes de Coxeter et la base de l'ordre de Bruhat est toujours incluse dans l'ensemble des bigrassmanniennes. Dans le cas du type $A$, nous verrons que la base est constituée exactement des permutations bigrassmanniennes. 

\begin{figure}[ht]
\centering
\input{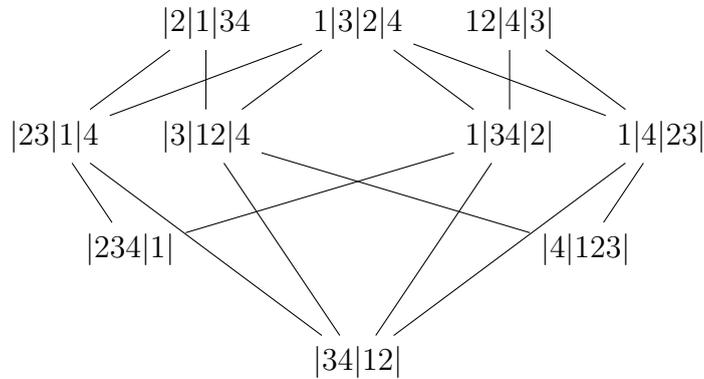}
\caption[Permutations bigrassmanniennes en taille 4]{Permutations bigrassmanniennes en taille 4, une permutation bigrassmannienne correspond à un échange de deux blocs de nombres consécutifs sur la permutation identité (représentés par les barres verticales). L'unique descente se situe avant la deuxième barre, l'unique recul avant la troisième barre.}
\label{fig:prelim_groupe_sym:bruhat:bigrassmanniennes}
\end{figure}

Historiquement, l'ordre de Bruhat pour le type $A$ a été décrit par Ehresmann comme la comparaison des clés des permutations \cite{Ehresmann}. 

\index{clé d'une permutation}
\begin{Definition}
\label{def:prelim_groupe_sym:cle}
La clé d'une permutation $\sigma$ est la suite de ses facteurs gauches réordonnés. On la note $\cle(\sigma)$.
\end{Definition}

Par exemple, la clé de la permutation $361425$ est

\begin{equation}
\begin{array}{cccccc}
3 & 6 & 6 & 6 & 6 & 6 \\ 
  & 3 & 3 & 4 & 4 & 5 \\
  &   & 1 & 3 & 3 & 4 \\
  &   &   & 1 & 2 & 3 \\
  &   &   &   & 1 & 2 \\
  &   &   &   &   & 1 \\
\end{array}
\end{equation}

On a alors la propriété suivante 

\begin{Proposition}
\label{prop:prelim_groupe_sym:bruhat:compcle}
\begin{equation}
\sigma \leq \mu \Leftrightarrow \cle(\sigma) \leq \cle(\mu)
\end{equation}
par la comparaison terme à terme.
\end{Proposition}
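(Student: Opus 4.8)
Je reformulerais d'abord la comparaison des clés. Pour $\sigma\in\Sym{n}$ et $1\le j,m\le n$, notons $A_j(\sigma)=\lbrace\sigma(1),\dots,\sigma(j)\rbrace$ l'ensemble sous-jacent au $j$-ème facteur gauche (la $j$-ème colonne de $\cle(\sigma)$ étant cet ensemble rangé par ordre décroissant) et $r_{j,m}(\sigma)=\#\lbrace i\le j:\sigma(i)\ge m\rbrace$. La $i$-ème plus grande valeur de $A_j(\sigma)$ valant au moins $m$ exactement lorsque $r_{j,m}(\sigma)\ge i$, la comparaison terme à terme $\cle(\sigma)\le\cle(\mu)$ équivaut à $r_{j,m}(\sigma)\le r_{j,m}(\mu)$ pour tout couple $(j,m)$. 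J'aurais aussi besoin du fait élémentaire suivant : remplacer, dans un ensemble fini, un élément par un élément strictement plus grand et absent fait croître coordonnée par coordonnée la suite décroissante de ses éléments.

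\emph{Sens direct.} Par transitivité de l'inégalité terme à terme, il suffit de supposer que $\mu$ couvre $\sigma$. La proposition~\ref{prop:bruhat-couv} donne $\mu=\sigma(a,b)$ avec $a<b$ et, la longueur croissant, $\sigma(a)<\sigma(b)$. La multiplication à droite par $(a,b)$ laisse $A_j$ inchangé pour $j<a$ et pour $j\ge b$, et remplace $\sigma(a)$ par $\sigma(b)>\sigma(a)$ pour $a\le j<b$ ; le fait élémentaire ci-dessus donne $\cle(\sigma)\le\cle(\mu)$.

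\emph{Sens réciproque.} Supposons $\cle(\sigma)\le\cle(\mu)$ et $\sigma\ne\mu$. Je raisonnerais par récurrence sur $S(\mu)-S(\sigma)$, où $S(\tau)$ désigne la somme des entrées de $\cle(\tau)$ : l'hypothèse entraîne $S(\sigma)\le S(\mu)$, avec égalité si et seulement si $\sigma=\mu$, une clé déterminant sa permutation. Soit $p$ le plus petit indice tel que $\sigma(p)\ne\mu(p)$ ; alors $\sigma$ et $\mu$ coïncident avant $p$, donc $A_{p-1}(\sigma)=A_{p-1}(\mu)$, et la comparaison des $p$-èmes colonnes force $\sigma(p)<\mu(p)$, tandis que $\mu(p)$ apparaît parmi $\sigma(p+1),\dots,\sigma(n)$. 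Parmi les positions $i>p$ vérifiant $\sigma(p)<\sigma(i)\le\mu(p)$ (il en existe : celle où $\sigma$ vaut $\mu(p)$), je choisirais $b$ minimisant $\sigma(i)$. Par minimalité, toute position $i$ avec $p<i<b$ vérifie $\sigma(i)<\sigma(p)$ ou $\sigma(i)>\sigma(b)$ ; aucune valeur comprise entre $\sigma(p)$ et $\sigma(b)$ n'est donc positionnée entre $p$ et $b$, et la proposition~\ref{prop:prelim_groupe_sym:bruhat-critere} assure que $\tau=(p,b)$ est une transposition de Bruhat pour $\sigma$. Posant $\sigma'=\sigma\tau$, on a $\sigma\lessdot\sigma'$ et, par le sens direct, $\cle(\sigma)\le\cle(\sigma')$ avec au moins une inégalité stricte.

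Le point décisif — et le principal obstacle — est de vérifier que $\cle(\sigma')\le\cle(\mu)$ ; on en déduit alors $S(\mu)-S(\sigma')<S(\mu)-S(\sigma)$, d'où $\sigma'\infb\mu$ par récurrence puis $\sigma\infb\mu$. Or $r_{j,m}$ n'augmente — et seulement de $1$ — qu'aux couples $(j,m)$ avec $p\le j<b$ et $\sigma(p)<m\le\sigma(b)$, pour lesquels il faut l'inégalité \emph{stricte} $r_{j,m}(\sigma)<r_{j,m}(\mu)$. Je l'obtiendrais en scindant $r_{j,m}$ selon les positions $<p$, la position $p$ et les positions de $\lbrace p+1,\dots,j\rbrace$ : l'égalité $A_{p-1}(\sigma)=A_{p-1}(\mu)$ et le fait que $\mu(p)\ge\sigma(b)\ge m$ fournissent déjà une inégalité large plus une contribution $+1$ en position $p$, le reliquat (les valeurs $>\sigma(b)$ apparaissant entre les positions $p$ et $j$) étant contrôlé par l'inégalité de rang $r_{j,\sigma(b)+1}(\sigma)\le r_{j,\sigma(b)+1}(\mu)$ au seuil bien choisi $\sigma(b)+1$. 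Comme variante évitant cette analyse fine, on pourrait partir de la caractérisation de Deodhar~\eqref{eq:prelim_groupe_sym:bruhat:deodhar} : les $\kbrho{k}{r}(\sigma)$ étant bigrassmanniennes (ou l'identité), et la clé d'une bigrassmannienne étant gouvernée par une unique inégalité de rang, la conjonction sur tous les $(k,r)$ des relations $\kbrho{k}{r}(\sigma)\infb\mu$ se retraduit exactement en $\cle(\sigma)\le\cle(\mu)$, ce qui ramène la question au seul cas bigrassmannien, lequel sert de cas de base.
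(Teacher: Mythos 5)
La proposition est rappelée dans le mémoire comme un résultat classique d'Ehresmann, sans démonstration ; votre proposition est donc à évaluer sur le fond, et elle contient une lacune réelle dans le sens réciproque, précisément à l'endroit que vous signalez vous-même comme « le point décisif ».

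Le sens direct et la reformulation en termes des fonctions de rang $r_{j,m}$ sont corrects. En revanche, le choix de la transposition $(p,b)$ avec $\sigma(b)$ \emph{minimal} dans $\rbrack\sigma(p),\mu(p)\rbrack$ ne préserve pas la comparaison des clés, et l'inégalité stricte $r_{j,m}(\sigma)<r_{j,m}(\mu)$ que vous visez est fausse dans votre « cas 2 » ($\mu(p)>\sigma(b)$). Contre-exemple : $\sigma=1324$ et $\mu=3142$ dans $\Sym{4}$. On a bien $\cle(\sigma)\le\cle(\mu)$ (et $\sigma\infb\mu$, car $s_2$ est un sous-mot de $s_2s_1s_3$). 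Votre algorithme donne $p=1$, candidats $\sigma(2)=3$ et $\sigma(3)=2$, donc $b=3$ et $\sigma'=\sigma\,(1,3)=2314$ ; or $\cle(2314)$ a pour deuxième colonne $(3,2)$ tandis que celle de $\cle(3142)$ est $(3,1)$, donc $\cle(\sigma')\not\le\cle(\mu)$ et de fait $2314\not\infb 3142$. Au couple critique $(j,m)=(2,2)$ on a $r_{2,2}(\sigma)=r_{2,2}(\mu)=1$ : l'inégalité n'est pas stricte. Votre argument par le seuil $\sigma(b)+1$ ne fonctionne que lorsque $\mu(p)=\sigma(b)$ (la contribution $+1$ en position $p$ est alors disjointe du comptage au seuil $\sigma(b)+1$) ; dès que $\mu(p)>\sigma(b)$, la position $p$ est déjà comptée au seuil $\sigma(b)+1$ et l'on ne récupère qu'une inégalité large, insuffisante. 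Il faut un autre choix de transposition (dans l'exemple, $(1,2)$, qui amène $\sigma(p)$ sur la valeur $\mu(p)$ elle-même, convient) ou une récurrence sur $\ell(\mu)$ via les descentes et le lemme d'échange, comme dans la preuve classique.

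La « variante » par le critère de Deodhar ne répare pas la preuve : d'une part \eqref{eq:prelim_groupe_sym:bruhat:deodhar} est un théorème non trivial que le mémoire se contente de citer, d'autre part la réduction laisse entier le « cas de base » bigrassmannien (comparer une bigrassmannienne à une permutation quelconque dans l'ordre de Bruhat via une unique inégalité de rang), qui est essentiellement l'énoncé à démontrer et que vous affirmez sans preuve.
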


La proposition \ref{prop:prelim_groupe_sym:bruhat:compcle} admtet une autre écriture.

\begin{align}
\nonumber
\sigma \leq \mu \Leftrightarrow 
&(\forall r, 1 \leq r < n),(\forall k,  1 \leq k <n) \\
&\# \lbrace \sigma_i \leq r, i \leq k \rbrace \geq \# \lbrace \mu_i \leq r, i \leq k \rbrace.
\end{align} 
où $\sigma = \sigma_1 \dots \sigma_n$ et $\mu = \mu_1 \dots \mu_n$. Vue de cette façon, la comparaison des clés est une interprétation directe des projecteurs $\kbrho{k}{r}$. Par ailleurs, en plus de la symétrie par passage à l'inverse, l'ordre de Bruhat possède aussi une symétrie par retournement des permutations. Soit $\Ret{\sigma}$ le retourné de $\sigma$, alors 
\begin{equation}
\sigma \leq \mu \Leftrightarrow \Ret{\sigma} \geq \Ret{\mu}.
\end{equation} 
Plus précisément,
\begin{equation}
\# \lbrace \sigma_i \leq r, i \leq k \rbrace \geq \# \lbrace \mu_i \leq r, i \leq k \rbrace \Leftrightarrow \# \lbrace \sigma_i \leq r, i >k \rbrace \leq \# \lbrace \mu_i \leq r, i >k \rbrace
\end{equation}
car $\# \lbrace \sigma_i \leq r, i \leq k \rbrace = r - \# \lbrace \sigma_i \leq r, i > k \rbrace$. La proposition \ref{prop:prelim_groupe_sym:bruhat:compcle} peut alors s'écrire de façon plus générale comme

\begin{Proposition}
\label{prop:prelim_groupe_sym:bruhat:compcle_gen}
Soit $\sigma = \sigma_1 \dots \sigma_n$ et $\mu = \mu_1 \dots \mu_n$, alors $\sigma \leq \mu$ si et seulement si il existe $k$ tel que $\cle(\sigma_1 \dots \sigma_k) \leq \cle(\mu_1 \dots \mu_k)$ et $\cle(\sigma_n\sigma_{n-1} \dots \sigma_k) \geq \cle(\mu_n \mu_{n-1} \dots \mu_k)$.
\end{Proposition}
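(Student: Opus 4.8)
The plan is to reduce the statement to the count form of Proposition~\ref{prop:prelim_groupe_sym:bruhat:compcle} recorded just above it, namely that $\sigma\leq\mu$ iff $\#\{\sigma_i\leq r,\,i\leq j\}\geq\#\{\mu_i\leq r,\,i\leq j\}$ for every cut $j$ and every $r$. (Here $\cle$ of a word, not necessarily a permutation, means the same staircase array as in Definition~\ref{def:prelim_groupe_sym:cle}: its $j$-th column is the first $j$ letters sorted in decreasing order.) The one elementary fact I would isolate first is that, for two $j$-element subsets $A,B\subseteq\llbracket 1,n\rrbracket$, listing each in decreasing order and comparing termwise gives $A\leq B$ exactly when $\#\{a\in A:a\leq r\}\geq\#\{b\in B:b\leq r\}$ for all $r$; this is just the statement that equal-size multisets have pointwise-ordered decreasing rearrangements iff their tail counts are ordered, together with the identity $\#\{a\in A:a\leq r\}=j-\#\{a\in A:a>r\}$.

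Applying this column by column, the hypothesis $\cle(\sigma_1\cdots\sigma_k)\leq\cle(\mu_1\cdots\mu_k)$ is equivalent to: for all $j$ with $1\leq j\leq k$ and all $r$, $\#\{\sigma_i\leq r,\,i\leq j\}\geq\#\{\mu_i\leq r,\,i\leq j\}$. Likewise $\cle(\sigma_n\sigma_{n-1}\cdots\sigma_k)\geq\cle(\mu_n\mu_{n-1}\cdots\mu_k)$ unwinds, the reversed inequality on keys turning into the reversed inequality on the counts $\#\{\cdot\leq r\}$ (again because the two multisets compared at each column have the same cardinality), to: for all $j$ with $k\leq j\leq n$ and all $r$, $\#\{\sigma_i\leq r,\,i\geq j\}\leq\#\{\mu_i\leq r,\,i\geq j\}$. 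Using the identity $\#\{\sigma_i\leq r,\,i\leq j-1\}=r-\#\{\sigma_i\leq r,\,i\geq j\}$ already noted in the text, this last family rewrites equivalently as $\#\{\sigma_i\leq r,\,i\leq j'\}\geq\#\{\mu_i\leq r,\,i\leq j'\}$ for all $j'$ with $k-1\leq j'\leq n-1$ and all $r$.

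It then only remains to observe that the two cut ranges $\llbracket 1,k\rrbracket$ and $\llbracket k-1,n-1\rrbracket$ overlap and together cover $\llbracket 1,n-1\rrbracket$ (the cuts $j=0$ and $j=n$ being vacuous), so that, for one and the same $k$, the prefix condition and the reversed-suffix condition together amount to exactly the full family of inequalities of Proposition~\ref{prop:prelim_groupe_sym:bruhat:compcle}, hence to $\sigma\leq\mu$. This gives the ``if'' direction for any valid $k$; for ``only if'', $\sigma\leq\mu$ forces the whole family, so both conditions hold for every $k$, in particular for some $k$. The only point requiring care is the termwise-versus-counts dictionary of the first paragraph and the index bookkeeping in the reversal step, especially because at a given column the two sets $\{\sigma_1,\dots,\sigma_j\}$ and $\{\mu_1,\dots,\mu_j\}$ need not coincide; once that is set up the argument is routine combinatorics.
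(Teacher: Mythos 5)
Your argument is correct and follows essentially the same route as the paper, which derives the proposition from the count reformulation of key comparison together with the complementation identity $\#\lbrace \sigma_i \leq r,\, i \leq j \rbrace = r - \#\lbrace \sigma_i \leq r,\, i > j \rbrace$; you simply make explicit the termwise-versus-counts dictionary and the covering of the cut range $\llbracket 1, n-1\rrbracket$ by $\llbracket 1,k\rrbracket \cup \llbracket k-1, n-1\rrbracket$, details the paper leaves implicit.
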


Cette symétrie est fondamentale dans notre travail. Dans la suite de cette thèse, en particulier pour les résultats du chapitre \ref{chap:polynomes_grothendieck}, nous utiliserons de façon poussée la comparaison des éléments dans l'ordre de Bruhat, et en particulier la proposition \ref{prop:prelim_groupe_sym:bruhat:compcle_gen} sur la comparaison des clés. Un exemple de comparaison des clés en partant de la gauche, de la droite, ou en coupant au milieu est donné figure~\ref{fig:prelim_groupe_sym:bruhat:compcle}.

\begin{figure}[ht]
\centering
\input{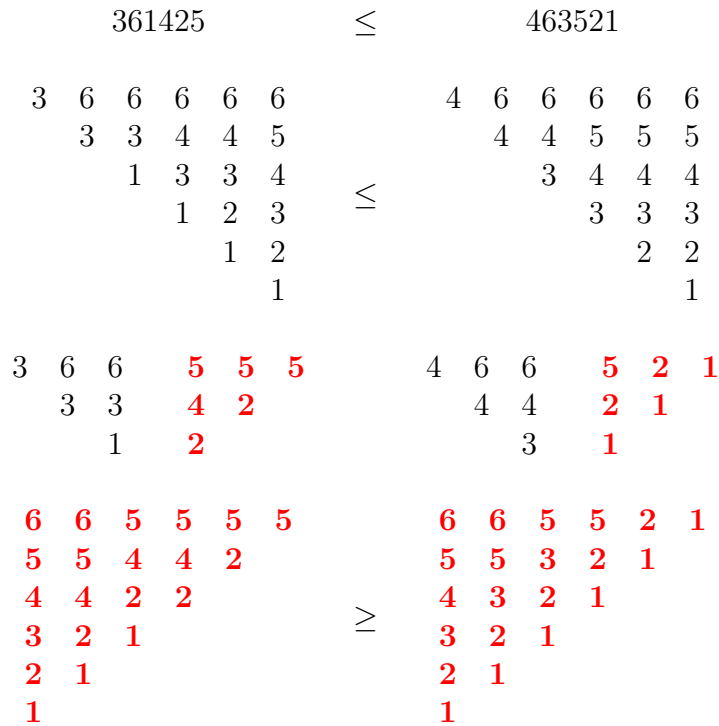}
\caption{Exemple de comparaison de clés}
\label{fig:prelim_groupe_sym:bruhat:compcle}
\end{figure}

\subsection{Borne supérieure et treillis enveloppant}
\label{sub-sec:prelim_groupe_sym:bruhat:sup}

La description en termes de clés donne une méthode pour calculer les bornes supérieures et inférieures d'une partie donnée. 

\begin{Proposition}
Une permutation $\sigma$ est la borne supérieure d'un ensemble de permutations si $\cle(\sigma)$ est le $Sup$ composante à composante des clés de l'ensemble.
\end{Proposition}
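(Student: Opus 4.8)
The plan is to deduce everything from the characterisation of the Bruhat order by termwise comparison of keys, namely Proposition~\ref{prop:prelim_groupe_sym:bruhat:compcle}, which states that $\alpha \leq \mu \Leftrightarrow \cle(\alpha) \leq \cle(\mu)$ entrywise. Write $S = \{\tau^{(1)}, \dots, \tau^{(p)}\}$ for the given set, and let $K$ be the triangular array obtained by taking, in each cell, the maximum of the corresponding entries of $\cle(\tau^{(1)}), \dots, \cle(\tau^{(p)})$. The hypothesis of the statement is precisely that $K = \cle(\sigma)$ for the permutation $\sigma$ under consideration.

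First I would check that $\sigma$ is an upper bound of $S$. For every $i$, each entry of $\cle(\tau^{(i)})$ is, by definition of the cellwise maximum, at most the corresponding entry of $K = \cle(\sigma)$; hence $\cle(\tau^{(i)}) \leq \cle(\sigma)$ entrywise, and Proposition~\ref{prop:prelim_groupe_sym:bruhat:compcle} gives $\tau^{(i)} \leq \sigma$ for all $i$. Next I would show that $\sigma$ is the \emph{least} such element. Let $\nu$ be any upper bound of $S$, so $\tau^{(i)} \leq \nu$ for all $i$, that is, $\cle(\tau^{(i)}) \leq \cle(\nu)$ entrywise for all $i$. Taking the cellwise maximum over $i$ of the left-hand sides yields $K \leq \cle(\nu)$ entrywise, i.e. $\cle(\sigma) \leq \cle(\nu)$; applying Proposition~\ref{prop:prelim_groupe_sym:bruhat:compcle} once more gives $\sigma \leq \nu$. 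Since this holds for every upper bound $\nu$, we conclude $\sigma = \pmax S$.

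The only delicate point, and the reason the statement is phrased as a sufficient condition rather than an equivalence, is that the cellwise maximum $K$ of a family of keys need not itself be a key: in general there is no permutation whose reordered left factors form $K$. When it happens to be a key, the argument above shows the supremum exists in $\Sym{n}$ and equals $\sigma$; when it is not, the supremum simply does not exist inside $\Sym{n}$ (it materialises only in the enveloping lattice of MacNeille recalled in Section~\ref{sec:prelim_posets:treillis}). So there is essentially no obstacle in the proof itself: the content is entirely carried by the key-comparison criterion, and the only care needed is to read the hypothesis ``$\cle(\sigma)$ equals the cellwise $\mathrm{Sup}$'' as already presupposing that this cellwise $\mathrm{Sup}$ is a valid key. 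An entirely symmetric argument, using the corresponding statement for lower factors in Proposition~\ref{prop:prelim_groupe_sym:bruhat:compcle_gen}, handles infima.
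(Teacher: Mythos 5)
Your argument is correct and is exactly the one the paper leaves implicit (no proof is given there, the statement being treated as an immediate consequence of Proposition~\ref{prop:prelim_groupe_sym:bruhat:compcle} on termwise comparison of keys). Your closing caveat — that the cellwise $\mathrm{Sup}$ of keys need not itself be a key, in which case the supremum only exists in the MacNeille enveloping lattice of monotone triangles — matches the paper's own discussion following the proposition.
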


On en donne un exemple figure~\ref{fig:prelim_groupe_sym:bruhat:maxcle}. L'ordre de Bruhat n'est pas un treillis. En effet, l'ensemble des clés n'est pas stable par passage au Sup (voir deuxième exemple de la figure \ref{fig:prelim_groupe_sym:bruhat:maxcle}). On définit alors l'ensemble des \emph{triangles monotones} qui contient les clés et qui est stable par passage au Sup.

\begin{figure}[ht]
\centering
\input{includes/figures/exemple_max_cle}
\caption[Exemple de $Sup$ pour Bruhat]{Exemple de $Sup$ pour Bruhat, dans le deuxième exemple, le triangle Sup ne correspond pas à une clé de permutation car la deuxième colonne n'est pas incluse dans la troisième. En fait, on se restreint à une condition locale sur les triplets, le triplet problématique a été coloré en rouge. }
\label{fig:prelim_groupe_sym:bruhat:maxcle}
\end{figure}

\index{triangle monotone}
\begin{Definition}
\label{def:prelim_groupe_sym:triangle}
Un \emph{triangle monotone} est une suite de colonnes de tailles $1,2,\dots,n$ vérifiant des conditions de croissance large sur les ligne de gauche à droite, décroissance stricte sur les colonnes de haut en bas et décroissance large sur les diagonales de haut-gauche vers bas-droit. 
\end{Definition}

A un triangle $t$ de taille $n$ rempli avec les lettres $\lbrace 1, \dots, n \rbrace$, on peut toujours associer un ensemble de bigrassmanniennes $\mathcal{B}$ tel que $t = \pmax \mathcal{B}$ et on en déduit la proposition suivante.

\begin{Proposition}
\label{prop:prelim_groupe_sym:bruhat_env}
Le treillis enveloppant de l'ordre de Bruhat est isomorphe à l'ensemble des triangles monotones rempli avec les lettres $\lbrace 1, \dots, n \rbrace$ où l'ordre est la comparaison composante à composante. 
\end{Proposition}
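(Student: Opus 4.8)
The plan is to realise the poset $\mathcal{T}_n$ of monotone triangles on $\{1,\dots,n\}$ (with the componentwise order) as the Dedekind--MacNeille completion, i.e. the enveloping lattice, of $(\Sym{n},\infb)$, the bridge being the key map $\cle$. First I would collect the ingredients already at hand. By Proposition~\ref{prop:prelim_groupe_sym:bruhat:compcle} the assignment $\sigma \mapsto \cle(\sigma)$ is an order embedding of $(\Sym{n},\infb)$ into $\mathcal{T}_n$ (injectivity is clear since the $i$-th column of $\cle(\sigma)$ records the set $\{\sigma_1,\dots,\sigma_i\}$); the key of a permutation is itself a monotone triangle; and, as recalled in the excerpt, $\mathcal{T}_n$ is closed under the componentwise supremum $\pmax$. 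The key of the identity is the componentwise minimum of $\mathcal{T}_n$ and $\cle(\omega)$ its maximum, both being monotone triangles lying, respectively, below and above every key and hence below and above every triangle (each triangle being a $\pmax$ of keys). A finite poset that has a least element and in which every pair admits a supremum is a lattice, so $\mathcal{T}_n$ is a lattice, the meet of $t$ and $t'$ being the $\pmax$ of the (nonempty) set of monotone triangles below both.

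The heart of the proof is to show that the image $\cle(\Sym{n})$ is at once \emph{join-dense} and \emph{meet-dense} in the lattice $\mathcal{T}_n$. By the standard universal characterisation of the MacNeille completion of a finite poset, a lattice that contains a copy of $P$ which is both join-dense and meet-dense in it is isomorphic to the enveloping lattice of $P$; applying this with $P=(\Sym{n},\infb)$, $P\hookrightarrow\mathcal{T}_n$ via $\cle$, concludes the argument, and moreover the canonical embedding $\sigma\mapsto\{\mu\in\Sym{n}:\mu\infb\sigma\}$ of $P$ into its completion corresponds, under the identification, to $\sigma\mapsto\cle(\sigma)$, since $\{\mu:\cle(\mu)\le\cle(\sigma)\}$ is exactly the lower ideal of $\sigma$. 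Join-density is immediate: it is precisely the fact stated just before the proposition that every $t\in\mathcal{T}_n$ equals $\pmax\mathcal{B}$ for some set $\mathcal{B}$ of bigrassmannian permutations, so $t$ is a componentwise supremum of keys of permutations.

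For meet-density I would exploit the reversal anti-automorphism of $\infb$, namely $\sigma\infb\mu\Leftrightarrow\Ret{\mu}\infb\Ret{\sigma}$ (equivalently, Proposition~\ref{prop:prelim_groupe_sym:bruhat:compcle_gen} read from the right). Define $\theta\colon\mathcal{T}_n\to\mathcal{T}_n$ by taking the $i$-th column of $\theta(t)$ to be the decreasing rearrangement of the complement in $\{1,\dots,n\}$ of the $(n-i)$-th column of $t$ (the last column being necessarily $n,n-1,\dots,1$). One then checks that $\theta$ is a well-defined involution, that it reverses the componentwise order, and that $\theta(\cle(\sigma))=\cle(\Ret{\sigma})$ — this last point because reversing the word $\sigma$ turns the left factor $\{\sigma_1,\dots,\sigma_{n-i}\}$ into its complement. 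Hence $\theta$ maps $\cle(\Sym{n})$ onto itself while reversing order, and applying it to the join-density statement yields that every monotone triangle is a componentwise infimum of keys of permutations, which is meet-density.

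The step I expect to be the main obstacle is this last one: verifying that the ``column-complement'' map $\theta$ really is order-reversing on $\mathcal{T}_n$ and really sends $\cle(\sigma)$ to $\cle(\Ret{\sigma})$, i.e. that taking complements of columns interacts correctly with the row-increase, column-decrease and diagonal-decrease conditions of Definition~\ref{def:prelim_groupe_sym:triangle}; everything else is either cited from the excerpt or a formal consequence of the MacNeille universal property. If one prefers to avoid invoking that abstract property, one can instead exhibit the mutually inverse order-preserving maps $t\mapsto\{\sigma\in\Sym{n}:\cle(\sigma)\le t\}$ and $A\mapsto\pmax\{\cle(\sigma):\sigma\in A\}$ between $\mathcal{T}_n$ and the lattice of intersections of lower ideals of $\infb$; but then the verification that $\{\sigma:\cle(\sigma)\le t\}$ is closed under this intersection operation again reduces exactly to the meet-density statement, so the technical core is the same.
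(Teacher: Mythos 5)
Your proof is correct, and it is substantially more complete than the paper's own treatment: the paper offers no proof of this proposition, deducing it in one line from the single fact that every monotone triangle is a componentwise $\pmax$ of keys of bigrassmannian permutations, i.e.\ from join-density alone. Join-density by itself does not characterise the enveloping lattice (the Boolean lattice on three atoms contains the three-element antichain join-densely, yet is strictly larger than its enveloping lattice, which has only five elements), so the meet-density half you supply is genuinely needed; this, together with the explicit order-reversing involution $\theta$ realising the symmetry $\sigma \infb \mu \Leftrightarrow \Ret{\mu} \infb \Ret{\sigma}$ at the level of triangles, is the real content your argument adds. The step you flag as the main obstacle does go through: encoding a $k$-subset $C$ of $\lbrace 1,\dots,n\rbrace$ by the counting function $f_C(r)=\#\lbrace c\in C : c\ge r\rbrace$, the componentwise order on decreasing rearrangements of equal-size sets becomes the pointwise order on $f$, and the row-increase plus diagonal-decrease conditions between column $i$ (a set $C$) and column $i+1$ (a set $D$) together amount to $f_C\le f_D\le f_C+1$ pointwise; since $f_{C^c}(r)=(n-r+1)-f_C(r)$, this is equivalent to $f_{D^c}\le f_{C^c}\le f_{D^c}+1$, so $\theta$ preserves monotone triangles and reverses the componentwise order, and $\theta(\cle(\sigma))=\cle(\Ret{\sigma})$ is exactly the complement computation you describe. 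The only point you should make explicit is the bridge between the paper's concrete definition of the enveloping lattice (intersection-closure of the ideals) and the join/meet-density characterisation of the MacNeille completion that you invoke; that equivalence is standard and is in the references the paper already cites for the enveloping lattice, and your alternative explicit-maps argument avoids appealing to it at the cost of redoing the same meet-density computation.
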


\begin{Remarque}
Il existe une bijection bien connue entre les triangles monotones et les matrices à signes alternants. Le treillis enveloppant de l'ordre de Bruhat peut donc \^etre vu comme un treillis sur les matrices à signe alternants. Les permutations correspondent alors aux matrices de permutations (ne contenant que des 1) et les triangles qui ne sont pas des clés aux matrices contenant des $-1$.
\end{Remarque}

Le nombre de prédécesseurs d'un triangle monotone est le nombre de composantes du triangle qui peuvent \^etre diminuées de 1 en conservant les conditions de croissance. On a vu dans le paragraphe \ref{sub-sec:prelim_groupe_sym:bruhat:comp} que la base de l'ordre de Bruhat était comprise dans l'ensemble des bigrassmanniennes. En fait, les bigrassmanniennes correspondent exactement aux triangles possédant un unique prédécesseur et forment donc la base du treillis enveloppant et donc de l'ordre de Bruhat.

\subsection{Intervalles et cosets}
\label{sub-sec:prelim_groupe_sym:bruhat:coset}

L'ordre de Bruhat n'étant pas un treillis, l'intersection de deux intervalles n'est en général pas un intervalle. En effet, bien que l'intersection soit close par intervalle, elle peut posséder plusieurs éléments minimaux ou maximaux. Cependant, nous prouvons ici une propriété particulière de certains intervalles appelés \emph{cosets} dont nous nous servirons dans le chapitre \ref{chap:polynomes_grothendieck}. 

Pour $k$ donné, on étudie les classes d'équivalences du quotient de $\Sym{n}$ par $\Sym{k} \times \Sym{n-k}$, c'est-à-dire les permutations qui sont envoyées sur le même élément par la projection $\krho{k}$ (voir paragraphe \ref{sub-sec:prelim_groupe_sym:perm:treillis}). L'ensemble des permutations appartenant à une même classe d'équivalence est appelé un \emph{coset}. Tous les cosets sont isomorphes à $\Sym{k} \times \Sym{n-k}$ et forment donc chacun des intervalles de $\Sym{n}$. Un coset particulier s'écrit $\sigma(\Sym{k} \times \Sym{n-k})$ où $\sigma$ est un représentant quelconque de la classe.

\begin{Lemme}
\label{lem:prelim_groupe_sym:bruhat:coset}
Soient $\sigma$ et $\nu$ deux permutations de $\Sym{n}$ telles que $\nu > \sigma$. Alors, pour $k<n$, on a que l'intersection du coset $\sigma(\Sym{k} \times \Sym{n-k})$ et de l'intervalle $[\sigma, \nu]$ est un intervalle. 
\end{Lemme}

\begin{proof}
Il nous faut prouver que l'intersection possède un unique élément maximal. Les éléments du coset $\sigma(\Sym{k} \times \Sym{n-k})$ correspondent aux permutations $\mu$ telles que $\lbrace \mu_1, \dots \mu_k  \rbrace = \lbrace \sigma_1, \dots \sigma_k \rbrace$. Tout d'abord, nous décrivons la construction d'un élément particulier $\nu'$ appartenant à l'intersection puis nous prouvons qu'il est maximal.

Soit $V_1 = \lbrace \sigma_1, \dots, \sigma_k \rbrace$, alors $\nu_1' = \max(v \in V_1, v \leq \nu_1)$. Puis on définit $V_2 = V_1 \backslash \lbrace \nu_1' \rbrace$ et $\nu_2' = \max(v \in V_2, v \leq \nu_2)$. On continue la construction jusqu'à $\nu_k'$. Pour la partie droite de $\nu'$, on prend $V_n = \lbrace \sigma_{k+1}, \dots, \sigma_n \rbrace$ et $\nu_n' = \min(v \in V_n, v \geq \nu_n)$, puis $V_{n-1} = V_n \backslash \lbrace \nu_n' \rbrace$ et on continue la construction jusqu'à $\nu_{k+1}'$. Par exemple, pour $\sigma = 13245$, $k=2$ et $\nu = 54123$, on a $\nu' = 31524$. Le reste de la preuve sera fait uniquement sur la partie gauche de $\nu'$, la preuve sur la partie droite est complètement symétrique.

Tout d'abord, comme $\nu > \sigma$, la construction de l'élément $\nu'$ est toujours possible. On prouve que $V_i$ contient toujours un élément $v \leq \nu_i$. En effet, soit $v_{\min} = \min(v \in V_i)$, les éléments strictement plus petits que $v_{\min}$ dans $\lbrace \sigma_1, \dots, \sigma_n \rbrace$ ont donc  déjà été choisis et on a $\# \lbrace \sigma_j < v_{\min}, j \leq k \rbrace = \# \lbrace \nu_j' < v_{\min}, j < i \rbrace = \# \lbrace \nu_j< v_{\min}, j < i \rbrace$. Par ailleurs, comme $\sigma < \nu$, on a que $ \# \lbrace \sigma_j < v_{min}, j \leq k \rbrace \geq \# \lbrace \nu_j < v_{min}, j \leq k \rbrace$ et donc $\# \lbrace \nu_j < v_{min}, j \leq k \rbrace = \# \lbrace \nu_j< v_{\min}, j < i \rbrace$ ce qui signifie que $v_{\min} \leq \nu_i$.

Par ailleurs, il est clair par construction que $\nu'$ appartient au coset $\sigma(\Sym{k} \times \Sym{n-k})$ et que $\nu' \leq \nu$. Soit $\mu \in \sigma(\Sym{k} \times \Sym{n-k})$ tel que $\sigma \leq \mu \leq \nu$, prouvons que $\mu \leq \nu'$. Supposons que $\mu \nleq \nu'$, alors il existe $r < n$ et $i < k$ tel que 
\begin{equation}
\label{eq:prelim_groupe_sym:bruhat:coset_proof_1}
\# \lbrace \mu_j \leq r, j \leq i \rbrace < \# \lbrace \nu_j' \leq r, j \leq i \rbrace.
\end{equation}

Cela signifie en particulier qu'il existe $\mu_\ell > r$ avec $\ell \leq i$ tel que $\mu_\ell \notin \lbrace \nu_1', \dots, \nu_i' \rbrace$, c'est-à-dire que $\mu_\ell \in \lbrace \nu_{i+1}', \dots, \nu_k' \rbrace$. On choisit $\mu_\ell$ minimal. Pour $j \leq i$, on a que $\mu_\ell \in V_j$ et que $\mu_\ell \neq \nu_j'$ ce qui signifie que  $\nu_j' < \mu_\ell$ si et seulement si $\nu_j < \mu_\ell$, d'où
\begin{equation}
\label{eq:prelim_groupe_sym:bruhat:coset_proof_2}
\# \lbrace \nu_j < \mu_\ell, j\leq i \rbrace = \# \lbrace \nu_j' < \mu_\ell, j \leq i \rbrace.
\end{equation}

Par ailleurs, comme $\mu_\ell$ a été choisi minimal, que $\mu_\ell > r$ et $\mu_\ell \notin \lbrace \nu_1', \dots, \nu_i' \rbrace$ on a que
\begin{equation}
\label{eq:prelim_groupe_sym:bruhat:coset_proof_3}
\# \lbrace \mu_j < \mu_\ell, j \leq i \rbrace < \# \lbrace \nu_j' < \mu_\ell, j \leq i \rbrace.
\end{equation}

Or on a $\mu \leq \nu$ et donc $\# \lbrace \mu_j < \mu_\ell,  j \leq i \rbrace > \# \lbrace \nu_j < \mu_\ell, j\leq i \rbrace$ ce qui contredit \eqref{eq:prelim_groupe_sym:bruhat:coset_proof_2} et \eqref{eq:prelim_groupe_sym:bruhat:coset_proof_3}. D'où $\mu \leq \nu'$.
\end{proof}

Le Lemme \ref{lem:prelim_groupe_sym:bruhat:coset} peut être généralisé sous la forme suivante.

\begin{Lemme}
\label{lem:prelim_groupe_sym:bruhat:coset_gen}
Soient $1 \leq k_1 < k_2 < \dots < k_m < n$ et $\sigma$ et $\nu$ deux permutations de $\Sym{n}$ telles que $\sigma \leq \nu$. Alors l'intersection du coset $\sigma(\Sym{k_1} \times \Sym{k_2 - k_1} \times \Sym{k_3 - k_2} \times \dots \times \Sym{n - k_m})$ et de l'intervalle $[\sigma, \nu]$ forme un intervalle.
\end{Lemme}

\begin{proof}
La preuve se fait par récurrence étant donné que $\sigma (\Sym{k_1} \times \Sym{n- k_1}) \supset \sigma( \Sym{k_1} \times \Sym{k_2 - k1} \times \Sym{n - k_2}) \supset \dots \supset \sigma(\Sym{k_1} \times \Sym{k_2 - k_1} \times \dots \times \Sym{n - k_m})$.
\end{proof}

\cleardoublepage

\part{Polyn\^omes multivariés}
\label{part:polynomes}

\chapter{Action du groupe symétrique sur les polynômes}
\label{chap:polynomes_action}

Les bases de l'anneau des polyn\^omes étudiées dans ce chapitre apparaissent naturellement dans un problème ancien de géométrie algébrique, le calcul de Schubert. Le principe est de décomposer une variété géométrique en ce qu'on appelle des cellules de Schubert en fonction de leurs intersections avec un drapeau, c'est-à-dire un ensemble d'espaces vectoriels emboîtés. Le cas où la variété de départ est la Grassmannienne est en particulier décrit par Fulton \cite{Fulton}. Les intersections de cellules de Schubert peuvent alors être obtenues par un calcul algébrique dans l'anneau de cohomologie qui correspond ici à un quotient de l'anneau des fonctions symétriques. En particulier, les images des cellules de Schubert sont les \emph{fonctions de Schur}. Quand la variété de départ est une variété de drapeau, l'anneau de cohomologie est un quotient de l'anneau des polynômes non symétriques et l'image des cellules de Schubert est donné par les polynômes de Schubert \cite{LascouxSchub}. Malgré leur origine géométrique, ces polynômes se calculent de façon purement combinatoire et font intervenir les structures vues dans la partie~\ref{part:prelim}. 

En particulier, l'action du groupe symétrique joue un rôle fondamental. Les propriétés des polyn\^omes sont donc fortement liées à celles du groupe symétrique et en particulier aux structures d'ordres que nous avons définies. Nous expliquons ici comment certains opérateurs particuliers, les \emph{différences divisées}, permettent de retrouver les bases des polyn\^omes multivariés et le lien ténu qui existe entre ces opérateurs et les ordres sur les permutations, en particulier l'ordre de Bruhat. 

Dans tout ce chapitre, nous nous appuyons sur le travail de Lascoux et Schützenberger à qui l'on doit en particulier les polynômes de Schubert et de Grothendieck \cite{LascouxSchub, LascouxKBruhat, LascouxGroth}. Si les différences divisées étaient connues depuis Newton, ce n'est que récemment qu'elles ont été utilisées dans le contexte de la combinatoire et de la géométrie, d'abord par Demazure \cite{Dem1, Dem2} et Bernstein-Gelfand-Gelfand \cite{BGG} puis de façon plus systèmatique par Lascoux et Schützenberger \cite{LascouxDiffDiv}. On trouvera dans l'oeuvre de Lascoux de nombreuses explications sur les calculs à partir de différences divisées. On peut par exemple se reporter à l'historique des polyn\^omes de Schubert \cite{LascouxSchubHist}. L'article \cite{LascouxCoxeter} que nous avons déjà largement utilisé dans le chapitre \ref{chap:prelim_groupe_sym} fait le lien entre les différences divisées et les ordres sur les groupes de Coxeter. Par ailleurs, la thèse de Veigneau \cite{VEIGN} aborde la théorie d'une façon très similaire à la nôtre avec des explications détaillées de l'interprétation géométrique.

Dans le paragraphe \ref{sec:polynomes_action:ope}, nous définissons l'action de base du groupe symétrique sur les polynômes et introduisons les différences divisées. Le paragraphe \ref{sec:polynomes_action:sym} est un rapide survol de la théorie des fonctions symétriques et plus précisément de la construction des fonctions de Schur par différence divisée. Dans le paragraphe \ref{sec:polynomes_action:pol}, nous généralisons cette construction au cas des polynômes non symétriques. Enfin, dans le paragraphe \ref{sec:polynomes_action:0hecke}, nous étudions plus particulièrement les différences divisées isobares $\pi$ et $\hpi$, générateurs de l'algèbre 0-Hecke. Les relations entre ces opérateurs permettent de faire le lien avec l'ordre de Bruhat vu en partie \ref{part:prelim}. Le cadre formel que nous définissons ici sera la base de notre travail sur les polyn\^omes de Grothendieck dans le chapitre \ref{chap:polynomes_grothendieck}.

\section{Opérateurs sur les polynômes}
\label{sec:polynomes_action:ope}

\subsection{Action élémentaire sur les monômes}
\label{sub-sec:polynomes_action:ope:si}

Le groupe symétrique, et plus généralement les groupes de Coxeter, agissent fidèlement sur les vecteurs. Soit $v = \left[ v_1, \dots, v_n \right] \in \mathbb{Z}^n$ on a:

\begin{align}
v s_i &= \left[ v_1, \dots, v_{i+1}, v_i, \dots, v_n \right] &\text{ pour }1 \leq i < n, \\
v s_i^B &= \left[ v_1, \dots, -v_i, \dots, v_n \right] &\text{ pour }1 \leq i \leq n, \\
v s_i^D &= \left[ v_1, \dots, -v_i, -v_{i-1}, \dots, v_n \right] &\text{ pour }1 \leq i \leq n. 
\end{align}
où les opérateurs agissent sur leur gauche. On vérifie facilement que les relations définies dans les paragraphes \ref{sub-sec:prelim_groupe_sym:groupe_sym:def} et \ref{sub-sec:prelim_groupe_sym:groupe_sym:coxeter} sont bien respectées. On rappelle que le groupe $BC_n$ (resp. $D_n$) est engendré par $s_1, \dots, s_{n-1}$ et $s_n^B$ (resp. $s_n^D$). Cependant, on s'autorise aussi à utiliser $s_i^B$ (resp. $s_i^D$) pour $i\neq n$. On a vu que $\Sym{n}$ était le groupe des permutations, ce qui correspond bien à l'orbite du vecteur $\left[ 1, \dots, n \right]$ . Dans le cas de $BC_n$ (resp. $D_n$), on obtient les permutations signées (resp. signées avec un nombre pair de négatifs).

On se place maintenant dans l'espace $\mathbb{Z}\left[x_1, \dots, x_n \right]$ des polynômes multivariés. Pour $v \in \mathbb{Z}^n$, on écrit:

\begin{equation}
x^v = x_1^{v_1} x_2^{v_2} \dots x^{v_n}.
\end{equation}

Ainsi, les polynômes sont vus comme des sommes formelles de vecteurs (les exposants).
L'action du groupe symétrique sur les vecteurs nous donne alors une opération sur les polynômes. De la même façon, les groupes $BC$ et $D$ agissent sur les polynômes à exposants dans $\mathbb{Z}$. Pour $f$ un polynôme (éventuellement avec exposants négatifs), on a donc :
\begin{align}
f s_i &= f(x_1, \dots, x_{i+1}, x_i, \dots, x_n), \\
f s_i^B &= f(x_1, \dots, \frac{1}{x_i}, \dots, x_n), \\
f s_i^D &= f(x_1, \dots, \frac{1}{x_i}, \frac{1}{x_{i-1}}, \dots, x_n).
\end{align}

\subsection{Différences divisées}
\label{sub-sec:polynomes_action:ope:diffdiv}
\index{différence divisée}

On utilise les opérations élémentaires $s_i$  pour définir des familles d'opérateurs plus sophistiquées, les \emph{différences divisées}. La différence divisée $\partial_i$ s'écrit :

\begin{equation}
\label{eq:polynomes_action:partial}
f \partial_i = \frac{f - f s_i}{x_i - x_{i+1}}.
\end{equation}

La différence entre $f$ et $fs_i$ \emph{symétrise} le polynôme $f$. C'est-à-dire que si $g = f\partial_i$, on a $g s_i = g$. Par ailleurs, le résultat est toujours un polynôme car la symétrie fait que $f - fs_i$ s'annule pour $x_i = x_{i+1}$. Plus précisément, on peut écrire la différence divisée directement comme une somme.

\begin{align}
x^v\partial_i &= 0 &\text{ si } v_i = v_{i+1} \\
\label{eq:polynomes_action:partial_sum}
x^v\partial_i &= \sum_{k=v_{i+1}}^{v_{i}-1} x^{[\dots, k, v_i-k,\dots]} &\text{ si } v_i > v_{i+1}\\
x^v\partial_i &= \sum_{k=v_i}^{v_{i+1}-1} -x^{[\dots, k, v_{i+1} -k, \dots]} &\text{ si } v_i < v_{i+1}
\end{align}

Par exemple,
\begin{equation}
\label{eq:polynomes_action:exemple_diffdiv}
x_1^5x_2 ~ \partial_1 = x_{1} x_{2}^{4} + x_{1}^{2} x_{2}^{3} + x_{1}^{3} x_{2}^{2}  + x_{1}^{4} x_{2}.
\end{equation}

La famille d'opérateurs $(\partial_i)_{1 \leq i < n}$ satisfait des relations proches de celles des $s_i$. En effet, les relations de tresses \eqref{eq:prelim_groupe_sym:tresse1} et \eqref{eq:prelim_groupe_sym:tresse2} sont conservées et seule la relation quadratique est modifiée. On a:

\begin{align}
\label{eq:polynomes_action:partial_quad}
\partial_i ^2 &= 0.  \\
\label{eq:polynomes_action:partial_tresse1}
\partial_i \partial_j &= \partial_j \partial_i \text{ pour } |i-j| > 1 \\
\label{eq:polynomes_action:partial_tresse2}
\partial_i \partial_{i+1} \partial_i &= \partial_{i+1} \partial_i \partial_{i+1}
\end{align}

Il existe d'autres types de différences divisées. Nous présentons deux sortes de \emph{différences divisées isobares} qui, contrairement aux $\partial_i$, conservent le degré du polynômes :

\index{différence divisée!isobare}
\begin{align}
\label{eq:polynomes_action:pi}
\pi_i &:= x_i \partial_i, \\
\label{eq:polynomes_action:hpi}
\hpi_i &:= \partial_i x_i.
\end{align}
Ou de façon équivalente,
\begin{align}
f \pi_i &= \frac{x_i f - x_{i+1} (fs_i)}{x_i - x_{i+1}}, \\
f \hpi_i &= \frac{x_{i+1} (f - fs_i)}{x_i - x_{i+1}}.
\end{align}

Les familles $(\pi_i)_{1 \leq i < n}$ et $(\hpi_i)_{1 \leq i <n}$ vérifient elles aussi les relations de tresses ainsi que les relations quadratiques :

\begin{align}
\label{eq:polynomes_action:pi_quad}
\pi_i^2 &= \pi_i, \\
\label{eq:polynomes_action:hpi_quad}
\hpi_i^2 &= - \hpi_i.
\end{align}
Par ailleurs, on vérifie que
\begin{equation}
\label{eq:polynomes_action:pi_hpi}
\hpi_i = \pi_i - 1.
\end{equation}

Pour $s_{i_1} \dots s_{i_r}$ une décomposition réduite de $\sigma$, on écrit $\partial_\sigma := \partial_{i_1} \dots \partial_{i_r}$, $\pi_\sigma := \pi_{i_1} \dots \pi_{i_r}$ et $\hpi_\sigma := \hpi_{i_1} \dots \hpi_{i_r}$. Comme les différences divisées vérifient les relations de tresse, ces produits ne dépendent pas de la décomposition réduite choisie et sont donc bien définis.

Nous verrons dans le paragraphe \ref{sec:polynomes_action:pol} que les différences divisées sont utilisées pour définir des bases de l'anneau des polyn\^omes multivariés. Par ailleurs, les opérateurs $\pi_i$ et $\hpi_i$ sont liés à l'algèbre de 0-Hecke que nous présentons dans le paragraphe \ref{sec:polynomes_action:0hecke} et à l'ordre de Bruhat. Dans le chapitre \ref{chap:polynomes_grothendieck}, nous les étudions de façon plus approfondie et prouvons un résultat sur un produit mélangeant des opérateurs $\pi_i$ et $\hpi_i$.

\subsection{Types $B$, $C$ et $D$}
\label{sub-sec:polynomes_action:ope:bcd}

Les différences divisées $\partial_i$, $\pi_i$ et $\hpi_i$ sont définies à partir des générateurs $s_i$ du groupe symétrique. Nous avons vu dans le paragraphe \ref{sub-sec:prelim_groupe_sym:groupe_sym:coxeter} que le groupe symétrique $\Sym{n}$ appartient à une famille plus générale : les groupes de Coxeter. Dans le paragraphe \ref{sub-sec:polynomes_action:ope:si}, nous avons en particulier défini les actions des éléments $s_i^B$ et $s_i^D$ sur les vecteurs. De façon similaire, il existe aussi des différences divisées $\partial_i^B$, $\partial_i^C$ et $\partial_i^D$.

Les groupes de Coxeter sont générés par des réflexions que l'on peut réaliser géométriquement comme des systèmes de racines \cite{Coxeter}. Les vecteurs d'exposants s'interprètent comme des éléments de l'espace ambiant de ces systèmes de racines. La différence divisée $\partial_i$ appliquée à $x^v$ revient à sommer des éléments $x^{v- e_i - kr}$ où $e_i =[0, \dots, 0, 1, 0, \dots, 0]$ et $r = [0, \dots, -1 , 1 \dots, 0]$, comme on l'a vu en \eqref{eq:polynomes_action:partial_sum}. Le vecteur $r$ est une racine simple de l'espace ambiant du système de racine $A_{n-1}$. De façon générale les racines simples de $A_{n-1}$ sont les vecteurs $(r_i)_{1 \leq i < n}$ de $\ZZ^n$ où $r_i[i] = 1$, $r_i[i+1] = -1$ et où les autres valeurs sont nulles. On pose
\begin{align}
r_n^B &= [0, \dots, 0, 1] \\ 
r_n^C &= [0, \dots, 0, 2] \\
r_n^D &= [0, \dots, 0, 1, 1].
\end{align}
Les racines simples des espaces ambiants des systèmes de racines de type $B_n$ (resp. $C_n$ et $D_n$) sont les vecteurs $(r_i)_{1 \leq i < n}$ et $r_n^B$ (resp. $r_n^C$ et $r_n^D$). On a alors, par exemple 
\begin{align}
x_1 x_2^3 ~\partial_2^B &= \sum_{0 \leq k < 6} x^{[1,2] - k.[0,1]} \\
&= x_1 x_2^2 + x_1 x_2^1 + x_1 x_2^0 + x_1 x_2^{-1} + x_1 x_2^{-2} + x_1 x_2^{-3}, \\
x_1 x_2^3 ~\partial_2^C &= \sum_{0 \leq k < 3} x^{[1,2] - k.[0,2]} \\
&= x_1 x_2^{2} + x_1 x_2^{0} + x_1 x_2^{-2}, \\
x_1 x_2^3 ~\partial_2^D &=  \sum_{0 \leq k < 4} x^{[1,2] - k.[1,1]} \\
&= x_1 x_2^{2} + x_1^0 x_2^{1} + x_1^{-1} x_2^{0} + x_1^{-2} x_2^{-1}.
\end{align}
La valeur maximale de $k$ est donnée par le produit scalaire entre le vecteur et la co-racine de $r_i$ dans l'espace ambiant. De façon générale, on traduit ces sommes par
\begin{align}
\label{eq:polynomes_action:def-diffdiv-B}
\partial_n^B &= \frac{1 - s_n^B}{x_n^{\frac{1}{2}} - x_{n}^{-\frac{1}{2}}} \\
\label{eq:polynomes_action:def-diffdiv-B}
\partial_n^C &= \frac{1 - s_n^C}{x_n - x_n^{-1}} \\
\label{eq:polynomes_action:def-diffdiv-B}
\partial_n^D &= \frac{1 - s_n^D}{x_{n-1}^{-1} - x_n}.
\end{align}

Les différences divisées isobares peuvent aussi s'exprimer de façon formelle sur les espaces ambiants des systèmes de racines. On obtient alors les définitions suivantes
\begin{align}
\pi_i^B &= x_i^{1/2}\partial_i^B, &\hat{\pi}_i^B &= \partial_i^B x_i^{-1/2}, \\
\pi_i^C &= x_i \partial_i^C, &\hat{\pi}_i^C &= \partial_i^C x_i^{-1}, \\
\pi_i^D &= \left( 1 - \frac{s_i^D}{x_{i-1}x_i} \right) \frac{1}{1-\frac{1}{x_{i-1}x_i}}, &\hat{\pi_i}^D &= \frac{1-s_i^D}{x_{i-1}x_i-1}.
\end{align}

\section{Polyn\^omes et fonctions symétriques}
\label{sec:polynomes_action:sym}

\subsection{Définition et premières bases}
\label{sub-sec:polynomes_action:sym:def}
\index{fonctions!symétriques}

On dit qu'un polyn\^ome en $n$ variables est \emph{symétrique} s'il est invariant par l'action de $\Sym{n}$.  Par exemple, le polyn\^ome suivant sur $\lbrace x_1, x_2, x_3 \rbrace$ est symétrique :

\begin{equation}
\label{eq:polynomes_action:sym:exemple}
x_1 x_2 x_3^2 + x_1 x_2^2 x_3 + x_1^2 x_2 x_3.
\end{equation}

Si l'on considère maintenant des séries formelles sur un alphabet infini, les \emph{fonctions symétriques} sont les séries invariantes par l'action de $\Sym{n}$ pour tout $n$. On notera cet ensemble $Sym$ et sa projection sur les polynômes en $n$ variables $Sym_n$. 

Les fonctions symétriques sont un objet d'étude fondamental en combinatoire algébrique. Elles admettent plusieurs bases dont nous présentons ici quelques exemples. Le calcul des changements de base est un problème ancien, déjà abordé par Newton puis plus tard par Vandermonde, Fa\`a de Bruno, Cayley et Kostka. Elles apparaissent dans des problèmes liés à la fois à la combinatoire, l'algèbre et la géométrie. Nous nous contentons ici d'un très rapide survol des notions de base. Le premier chapitre du livre de Macdonald \cite{MAC} est une très bonne introduction à la théorie. 

\index{fonctions!monomiales}
\index{partition}
La base la plus naturelle des fonctions symétriques est celle des \emph{fonctions monomiales} obtenue en symétrisant les monômes. Soit $v$ un vecteur de taille $n$, le symétrisé de $x^v$ est la somme de tous les éléments distincts $x^{v \sigma}$ avec $\sigma \in \Sym{n}$. Par exemple, le symétrisé de $x_1 x_2^2 x_3$ est le polyn\^ome symétrique \eqref{eq:polynomes_action:sym:exemple}. Soit $\lambda$ une partition de longueur $n$, c'est-à-dire un vecteur tel que $\lambda_1 \geq \lambda_2 \geq \dots \geq \lambda_n \geq 0$, alors $m_\lambda$ est le symétrisé de $x^\lambda$. Tout polynôme symétrique peut s'écrire comme une somme de $m_\lambda$. Les $(m_\lambda)$ forment donc une base des polynômes symétriques. Par exemple, le polynôme \eqref{eq:polynomes_action:sym:exemple} est égal à $m_{2,1,1}(x_1,x_2,x_3)$.

\index{fonctions!élémentaires}
\index{fonctions!complètes}
On définit aussi deux autres bases : les fonctions \emph{élémentaires} et \emph{complètes}. Soit $r \in \mathbb{N}$, alors

\begin{align}
e_r &:= \sum_{i_1 < i_2 < \dots < i_r} x_{i_1} x_{i_2} \dots x_{i_r}, \\
h_r &:= \sum_{i_1 \leq i_2 \leq \dots \leq i_r} x_{i_1} x_{i_2} \dots x_{i_r}.
\end{align}
Par exemple,
\begin{align}
e_2(x_1,x_2,x_3) &= x_1 x_2 + x_1 x_3 + x_2 x_3,\\
h_2(x_1,x_2,x_3) &= x_1^2 + x_2^2 + x_3^2 +  x_1 x_2 + x_1 x_3 + x_2 x_3.
\end{align}

A présent, pour $\lambda$ une partition, on écrit $e_\lambda := e_{\lambda_1} e_{\lambda_2} \dots e_{\lambda_n}$ et de la m\^eme façon, $h_\lambda := h_{\lambda_1} h_{\lambda_2} \dots h_{\lambda_n}$. On prouve que les $e_\lambda$ ainsi que les $h_\lambda$ sont algébriquement indépendants et forment des bases des  fonctions symétriques.

Les \emph{partitions} sont les objets combinatoires indexant les bases des fonctions symétriques. Une partition $\lambda$ est un vecteur $(\lambda_1, \lambda_2, \dots, \lambda_k)$ tel que $\lambda_1 \geq \lambda_2 \geq \dots \geq \lambda_k$. La \emph{longueur} d'une partition $\ell(\lambda)$ est le nombre d'éléments dans le vecteur. La \emph{taille} d'une partition est donnée par la somme des valeurs $\lambda_1 + \lambda_2 + \dots + \lambda_n$. On représente souvent une partition par un \emph{diagramme de Ferrers} : un ensemble de cases tel que la ième ligne (en partant du bas) contienne $\lambda_i$ cases. Par exemple, la partition $(4,3,2,2)$ est représentée par 
\begin{equation}
{\syng{2,2,3,4}}.
\end{equation}
En lisant le diagramme selon les colonnes plutôt que selon les lignes, on obtient $\lambda' = (4,4,2,1)$ la \emph{partition conjuguée} de $\lambda$,
\begin{equation}
{\syng{1,2,4,4}}.
\end{equation}

\subsection{Fonctions de Schur}
\label{sub-sec:polynomes_action:sym:schur}
\index{fonctions!de Schur}

Nous avons vu dans le paragraphe \ref{sub-sec:polynomes_action:ope:diffdiv} que les opérateurs de différence divisée symétrisaient les polynômes. En effet, si $f = x_1^4 x_2 ^2 x_3$, alors

\begin{equation}
	f \partial_1 = x_{1}^{3} x_{2}^{2} x_{3} + x_{1}^{2} x_{2}^{3} x_{3}
\end{equation}
est un polynôme symétrique en $x_1$ et $x_2$. Si à présent on applique $\partial_2$, on obtient
\begin{equation}
	f \partial_1 \partial_2 = x_{1}^{3} x_{2} x_{3} + x_{1}^{2} x_{2}^{2} x_{3} + x_{1}^{2} x_{2} x_{3}^{2}
\end{equation}
qui est symétrique en $x_2$ et $x_3$. Cependant, on a perdu la symétrie entre $x_1$ et $x_2$, on peut la retrouver en réappliquant $\partial_1$. On obtient alors
\begin{equation}
	f \partial_1 \partial_2 \partial_1 = x_{1}^{2} x_{2} x_{3} + x_{1} x_{2}^{2} x_{3} + x_{1} x_{2} x_{3}^{2}.
\end{equation}

Pour obtenir un polynôme symétrique en 3 variables, on a appliqué $\partial_1 \partial_2 \partial_1 = \partial_\omega$ où $\omega$ est la permutation maximale de taille 3. Cette propriété est vraie quelque soit $n$ et les polyn\^omes symétriques obtenus sont appelés \emph{fonctions de Schur}. Celles-ci forment une base des fonctions symétriques. Plus précisément, soit $\lambda$ une partition de longueur $n$ et $\delta$ le vecteur $[ n-1, n-2,\allowbreak \dots, 1, 0]$, alors

\begin{align}
s_\lambda &:= x^{\lambda + \delta} \partial_\omega \\
&= \frac{x^{\lambda + \delta} \sum_{\sigma \in \Sym{n}} \varepsilon(\sigma) s_\sigma}{\prod_{1 \leq i < j \leq n} (x_i - x_j) }.
\end{align}

Le numérateur est \emph{l'antisymétrisé} de $x^{\lambda + \delta}$ : on applique toutes les permutations de $\Sym{n}$ pondérées par leur signature $\varepsilon(\sigma)$. Le dénominateur est le \emph{déterminant de Vanderrmonde}.

Les fonctions de Schur peuvent aussi être exprimées en tant que déterminants de fonctions élémentaires ou complètes. On a

\begin{align}
\label{eq:polynomes_action:SfromH}
s_\lambda = \mathrm{det}(h_{\lambda_i - i +j})_{1 \leq i,j \leq n}, \\
\label{eq:polynomes_action:SfromE}
s_\lambda = \mathrm{det}(e_{\lambda_i' - i +j})_{1 \leq i,j \leq m},
\end{align}
où $\lambda'$ est la partition conjuguée de $\lambda$ et $n = \ell(\lambda)$, $m=\ell(\lambda')$.
En particulier, 

\begin{align}
s_{(n)} &= h_n, \\
s_{(1^n)} &= e_n.
\end{align}

\subsection{Formule de Pieri et interprétation géométrique }
\label{sub-sec:polynomes_action:sym:pieri}
\index{formule de Pieri}

Les fonctions de Schur jouent un rôle fondamental en combinatoire algébrique. Elles correspondent en particulier aux caractères des représentations du groupe symétrique \cite{MAC}. Leur structure multiplicative est liée à une très jolie combinatoire sur les tableaux décrite par la règle de Littlewood-Richardson. Le cas spécifique de la multiplication d'une fonction de Schur par une fonction complète est donné par la \emph{formule de Pieri} \cite{MAC}. Soit $\mu$ une partition et $r \in \mathbb{N}$, alors

\begin{equation}
\label{eq:pieri-formula}
s_\mu h_r = \sum_\lambda s_\lambda
\end{equation}
sommé sur les partitions $\lambda$ égales à la partition $\mu$ augmentée d'une bande horizontale de taille $r$. Cela revient à ajouter des cases sur le  diagramme de Ferrers de la partition $\mu$ : au maximum une case par colonne et $r$ cases en tout. Par exemple, le produit de la fonction de Schur $s_{(2,2,1)}$ par la fonction complète $h_2$ s'écrit

\begin{align}
s_{\syng{1,2,2}} h_{\syng{2}} &= s_{\syng{1,2,2,2}} +  s_{\syng{1,1,2,3}} +  s_{\syng{2,2,3}} +  s_{\syng{1,2,4}} \\
s_{(2,2,1)} h_2 &= s_{(2,2,2,1)} + s_{(3,2,1,1)} + s_{(3,2,2)} + s_{(4,2,1)}.
\end{align}

Ce calcul a une importance particulière en géométrie algébrique. La géométrie énumérative est apparue vers le milieu du XIXème siècle. Son but est de calculer le nombre de solutions de problèmes de type intersection de variétés. Un cas classique est le \emph{calcul de Schubert}. La Grassmannienne $G(k,n)$ est l'ensemble des sous-espaces vectoriels de dimension $k$ d'un espace de dimension $n$. Soit maintenant un ensemble de sous-espaces vectoriels $V_i$ de dimension $i$ tel que $V_1 \subset V_2 \subset \dots V_n$, ou \emph{drapeau}. On décompose l'ensemble $G(k,n)$ en \emph{cellules de Schubert} en fonction de la dimension de $W \cap V_i$ pour $W \subset G(k,n)$ et $i=1,\dots, k$.  Les cellules de Schubert sont indexées par des partitions et notées $\Omega_\mu$. Une cellule $\Omega_\mu$ est incluse dans l'adhérence d'une cellule $\Omega_\nu$ si et seulement si $\mu \subset \nu$. Cela donne un ordre sur les cellules de Schubert introduit par Ehresmann et qu'on appelle aussi ordre de Bruhat. En effet, quand on ne travaille plus sur la Grassmannienne mais sur les variétés de drapeau, l'ordre sur les cellules de Schubert correspond alors à celui que nous avons vu dans le chapitre \ref{chap:prelim_groupe_sym}.

Un des buts de la géométrie algébrique est de résoudre des problèmes énumératifs comme celui que nous avons décrit par des méthodes algébriques. Ainsi, on montre que les adhérences $\Omega_\mu$ des cellules de Schubert peuvent être identifiées à des variétés algébriques et on les appelle \emph{variétés de Schubert}. Il est alors possible de définir une relation d'équivalence appelée homologie sur le $\mathbb{Z}$-module des combinaisons linéaires des sous-variétés de $G(k,n)$. On peut ainsi travailler dans l'anneau de cohomologie de $G(k,n)$. Le produit des classes des sous-variétés $U$ et $V$ de $G(k,n)$ dans cet anneau code l'information géométrique liée à l'intersection de $U$ et $V$. Plus précisément, la classe de cohomologie d'un point de $G(k,n)$ est toujours indexée par la partition $(k^{n-k})$, on la note $\sigma_{(k^{n-k})}$. Si $U$ et $V$ sont deux sous-variétés de dimensions complémentaires dont on note les classes respectives $[U]$ et $[V]$ alors

\begin{equation}
[U] [V] = m \sigma_{(k^{n-k})}
\end{equation}
où $m$ est le nombre de points d'intersection de $U$ et de $V$.

Ce produit s'interprète en réalité comme un produit de polynôme. Plus précisément, à une variété de Schubert $\Omega_\mu$, on fait correspondre une fonction de Schur $s_\mu$. L'anneau de cohomologie de $G(k,n)$ s'identifie alors au quotient $Sym/\mathcal{I}(k,n)$ où $\mathcal{I}(k,n)$ est l'idéal engendré par les fonctions de Schur $s_\nu$ telles que $\nu$ n'est pas contenue dans la partition $(k^{n-k})$. C'est dans ce contexte que la formule de Pieri a d'abord été établie. Ces problèmes géométriques motivent encore aujourd'hui les questions combinatoires liées aux fonctions symétriques ainsi qu'aux polynômes non symétriques.

Le premier cas non trivial est la Grassmannienne $G(2,4)$ des plans de dimensions 2 dans $\mathbb{C}^4$ qui peut en fait être identifiée à l'espace de module des droites projectives dans $\mathbb{P}^3(\mathbb{C})$. Le calcul suivant

\begin{equation}
s_{\syng{1,1}}^2 \equiv 1.s_{\syng{2,2}}
\end{equation}
nous confirme un résultat géométrique clair : il existe exactement une droite contenue dans deux plans en position générale. En effet, dans $G(2,4)$, la classe d'un plan est donnée par la partition $(1,1)$. Si ce résultat est relativement simple, ce système de calcul permet de résoudre des problèmes beaucoup plus complexes. En restant dans $G(2,4)$ on obtient par exemple qu'il existe deux droites s'appuyant sur quatre droites en position générale. Le résultat est donné par le calcul

\begin{equation}
s_{\syng{1}}^4 \equiv 2 s_{\syng{2,2}},
\end{equation}
la classe d'une droite s'appuyant sur une droite donnée étant $s_{\syng{1}}$. De la m\^eme façon, dans $G(2,5)$ (identifiée à à l'espace de module des droites projectives dans $\mathbb{P}^4$), le nombre de droites coupant six plans est donné par le coefficient de $s_{\syng{3,3}}$ dans le développement de $s_{\syng{1}}^6$. La règle de Littlewood-Richardson nous dit qu'il est égal au nombre de tableaux standard de forme $(3,3)$, c'est-à-dire 5.

\section{Polynômes non symétriques} 
\label{sec:polynomes_action:pol}

La théorie des fonctions symétriques a été largement étudiée en combinatoire. Nous avons vu ses motivations géométriques. Elle fait aussi le lien avec la théorie des représentations. L'étude par une approche similaire des polynômes non symétriques et de leur combinatoire est plus récente. Elle est liée à des problèmes géométriques qui généralisent ceux que nous avons abordés. Si pour les fonctions symétriques, les objets combinatoires de base sont les partitions et les tableaux, les problèmes du cas non symétrique font intervenir les permutations et les ordres du groupe symétrique par le biais des différences divisées. 

\subsection{Polynômes de Schubert}
\label{sub-sec:polynomes_action:pol:schub}
\index{polynômes!de Schubert}

Nous avons vu qu'en appliquant la différence divisée maximale $\partial_\omega$ à un monôme, on obtenait un polynôme symétrique. Si on applique à présent toutes les différences divisées $\partial_\sigma$, on symétrise partiellement le polynôme. Ces symétrisations partielles nous donnent une base des polynômes non symétriques : les polynômes de Schubert. On se place sur l'anneau des polynômes sur $\mathbb{Z}$ en deux alphabets commutatifs potentiellement infinis $\lbrace x_1, x_2, \dots \rbrace$ et $\lbrace y_1, y_2, \dots \rbrace$.

\begin{Definition}
\label{def:polynomes_action:schub}
Soit $\lambda$ une partition, on définit le polynôme de Schubert dominant $Y_\lambda$ par
\begin{equation}
Y_\lambda :=  \prod_{\substack{i = 1 \ldots n\\ j = 1 \ldots \lambda_i}} (x_i - y_j).
\end{equation}

Les polynômes de Schubert sont l'ensemble des images non nulles des polynômes dominants par les différences divisées en $x$.
\end{Definition}

\begin{figure}[ht]
\centering
\input{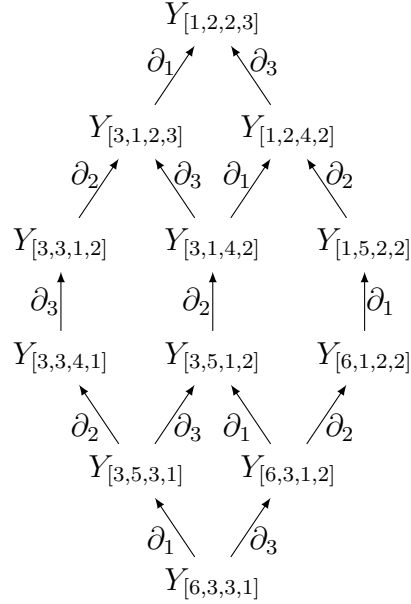}
\caption{Images par différences divisées du polynôme Schubert dominant $Y_{[6,3,3,1]}$}
\label{fig:polynmes_action:schub_images}
\end{figure}

Si l'on pose l'alphabet $y = 0$, alors pour $\lambda$ une partition, le polynôme dominant correspondant à $\lambda$ est simplement $x^{\lambda}$. Pour éviter les confusions, on parlera souvent de polynômes \emph{simples} quand $y=0$ et \emph{doubles} lorsqu'on utilise les deux alphabets. 

Un polynôme est donné par un polynôme dominant et un produit de différences divisées. En réalité, on indexe les polynômes par des vecteurs tels que pour $v$ avec  $v_i > v_{i+1}$, on a

\begin{equation}
\label{eq:polynomes_action:schub_rec}
Y_{\dots, v_{i+1}, v_i - 1, \dots} = Y_v \partial_i.
\end{equation}

Par exemple, sur la figure \ref{fig:polynmes_action:schub_images} on lit que
\begin{equation}
Y_{[1,2,2,3]} = Y_{[6,3,3,1]} \partial_1 \partial_2 \partial_3 \partial_2 \partial_1. \\
\end{equation}
On applique les différences divisées en suivant un chemin dans le graphe entre $Y_{[6,3,3,1]}$ et $Y_{[1,2,2,3]}$. Le résultat ne dépend pas du chemin suivi car les différences divisées vérifient les relations de tresses. Dans le cas des polynômes de Schubert simples, cela donne

\begin{align}
\nonumber
Y_{[1,2,2,3]} &= x_{1}^{3} x_{2}^{2} x_{3}^{2} x_{4} + x_{1}^{3} x_{2}^{2} x_{3} x_{4}^{2} + x_{1}^{3} x_{2} x_{3}^{2} x_{4}^{2} + x_{1}^{2} x_{2}^{3} x_{3}^{2} x_{4} + x_{1}^{2} x_{2}^{3} x_{3} x_{4}^{2} + x_{1}^{2} x_{2}^{2} x_{3}^{3} x_{4}\\
 \nonumber
  &+ 3~x_{1}^{2} x_{2}^{2} x_{3}^{2} x_{4}^{2} + x_{1}^{2} x_{2}^{2} x_{3} x_{4}^{3} + x_{1}^{2} x_{2} x_{3}^{3} x_{4}^{2} + x_{1}^{2} x_{2} x_{3}^{2} x_{4}^{3} + x_{1} x_{2}^{3} x_{3}^{2} x_{4}^{2} + x_{1} x_{2}^{2} x_{3}^{3} x_{4}^{2} \\
   &+ x_{1} x_{2}^{2} x_{3}^{2} x_{4}^{3}.
\end{align}
C'est la fonction de Schur $s_{(3,2,2,1)}$ développée sur $\lbrace x_1, x_2, x_3 \rbrace$. De façon générale, les polynômes de Schubert indexés par des vecteurs croissants sont des fonctions de Schur. 

Il arrive qu'en appliquant une différence divisée, on retombe selon \eqref{eq:polynomes_action:schub_rec} sur un polynôme dominant. Par exemple, on a $Y_{[6,3,3,1]} = Y_{[6,4,3,1]}\partial_2$. Le polynôme $Y_{[6,3,3,1]} $ a donc deux définitions : $x_1^6 x_2^3 x_3^3 x_4$ ou $x_1^6 x_2^4 x_3^3 x_4 \partial_2$. Cependant, on prouve que cela ne provoque pas d'incohérence.

Le vecteur indexant le polynôme peut être interprété comme le code de Lehmer d'une permutation. On peut donc aussi indexer les polynômes par une permutation plutôt que par un vecteur et la relation \eqref{eq:polynomes_action:schub_rec} s'écrit

\begin{equation}
\label{eq:polynomes_action:schub_perm}
Y_{\sigma} = Y_{\sigma s_i} \partial_i.
\end{equation}
où $\sigma$ est une permutation telle que $\sigma_i > \sigma_{i+1}$. Les différences divisées sont alors simplement des opérateurs formels de réordonnement sur les permutations. Le produit de différences divisées qu'on applique à un polynôme dominant pour obtenir un polynôme donné est un chemin dans l'ordre faible.

\begin{figure}[ht]
\centering
\input{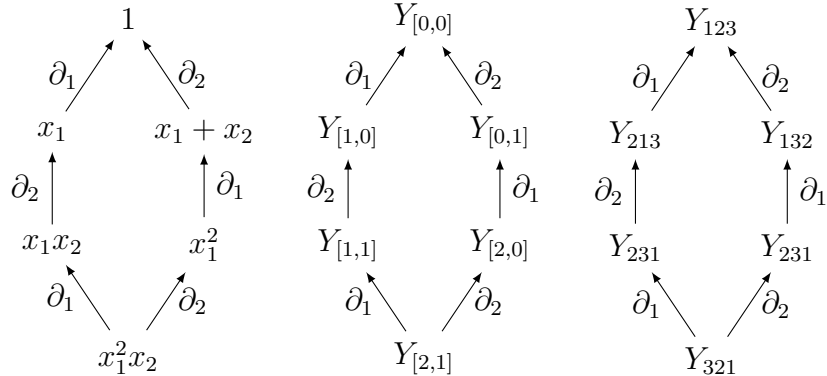}
\caption[Polynômes de Schubert indexés par $\Sym{3}$]{Polynômes de Schubert indexés par $\Sym{3}$. A gauche, les polynômes développés en somme de monômes, au milieu la notation avec codes de Lehmer en index et à droite la notation avec permutations en index.}
\label{fig:polynomes_action:schub_groupe}
\end{figure}

L'indexation par code de Lehmer permet de connaître le nombre de variables nécessaires (sur l'alphabet $x$) au développement du polynôme. En effet, si $v$ est un vecteur de taille $n$ sans 0 final, par construction $Y_v$ se développe sur $n$ variables. Plus précisément, l'ensemble $(Y_v)_{|v|\leq n}$ forme une base triangulaire des polynômes en $n$ variables. Par ailleurs, chaque polynôme est de degré homogène donné par la somme des valeurs du vecteur. Ces propriétés sont illustrées figure \ref{fig:polynomes_action:schub_transition} pour les polynômes en 2 variables de degré inférieur ou égal à 3. 

\begin{figure}[ht]
\centering
\input{includes/figures/polynomes_schubert_base}
\caption[Matrice de transition des polynômes de Schubert]{Matrice de transition des polynômes de Schubert pour les vecteurs de taille 2 jusqu'en degré 3}
\label{fig:polynomes_action:schub_transition}
\end{figure}

Les polynômes de Schubert sont donc une généralisation des fonctions de Schur à la fois sur les polynômes non symétriques et par l'utilisation d'un double alphabet. Ils répondent aussi à un problème géométrique plus général. Nous avons vu que le produit des fonctions Schur correspondait au produit des variétés de Schubert de la Grassmannienne dans l'anneau de cohomologie. Au lieu de la Grassmanienne, considérons une suite de variétés linéaires emboitées
\begin{equation}
W_{k_1} \subset W_{k_2} \subset \dots \subset W_{k_n}
\end{equation}
de dimension $k_1 < \dots < k_n$. On l'appelle une \emph{variété de drapeau}, le drapeau est dit complet si $k_i = i$. 

Tout comme pour la Grassmannienne, on peut considérer les intersections d'éléments de la variété de drapeau avec un drapeau de référence. On doit à Ehresmann \cite{Ehresmann} la décomposition en cellules de Schubert des variétés de drapeaux. Pour $\mathcal{F}_n$, la variété des drapeaux complets de $\CC^n$, les cellules sont indexées par une suite de tableaux colonnes

\begin{equation}
\begin{array}{ccccc}
a_{11} & a_{21} & a_{31} & \dots & a_{n1} \\
       & a_{22} & a_{32} &       & a_{n2} \\
       &        & a_{33} &       & \dots   \\
       &        &        &       & a_{nn}
\end{array}
\end{equation}
qui sont par construction emboitées les unes dans les autres. C'est-à-dire, on a
 $\lbrace a_{11} \rbrace \subset \lbrace a_{21}, a_{22} \rbrace \subset \dots \lbrace a_{n1}, \dots, a_{nn} \rbrace $. On a vu dans le chapitre \ref{chap:prelim_groupe_sym} qu'on pouvait interpréter ce type de tableau comme des facteurs gauches réordonnés de permutations. Les cellules de Schubert sont donc indexées par des permutations et l'ordre d'inclusion est l'ordre de Bruhat vu au paragraphe \ref{sec:prelim_groupe_sym:bruhat}.

Comme dans le cas de la Grassmannienne, les anneaux d'homologie et de cohomologie des variétés de drapeaux peuvent être interprétés comme des quotients d'anneaux de polynômes, non symétriques cette fois. Plus précisément, l'anneau de cohomologie de la variété de drapeau $\mathcal{F}_n$ est isomorphe au quotient $\ZZ[x_1, \dots, x_n]/I^{+}$ où $I^{+}$ est l'idéal engendré par les fonctions symétriques sans termes constants en les variables $\lbrace x_1, \dots, x_n \rbrace$. Dans ce contexte, la variété de Schubert indexée par la permutation $\sigma$ correspond au polynôme de Schubert simple $Y_\sigma$. Les polynômes doubles permettent de travailler dans l'anneau de cohomologie équivariante, généralisation classique de la cohomologie. Nous donnerons des exemples de calcul et d'interprétations géométriques dans le chapitre \ref{chap:polynomes_sage}.

\subsection{Polynômes de Grothendieck}
\label{sub-sec:polynomes_action:pol:groth}
\index{polynômes!de Grothendieck}

Pour obtenir une information plus précise sur la géométrie d'une variété, on remplace parfois l'anneau de cohomologie par l'anneau de Grothendieck des classes d'isomorphismes de fibrés vectoriels. On parle dans ce cas de $K$-théorie et de $K$-théorie équivariante. Pour la variété de drapeau $\mathcal{F}^n$, les variétés de Schubert sont alors représentées par une autre base des polynômes multivariés, les polynômes de Grothendieck, introduits par Lascoux et Sch\"utzenberger \cite{LascouxGroth}.

\begin{Definition}
\label{def:polynomes_action:groth}
Soit $\lambda$ une partition, on définit le polynôme de Grothendieck dominant $G_\lambda$ par
\begin{equation}
\label{eq:polynomes_action:groth_dom}
G_\lambda :=  \prod_{\substack{i = 1 \ldots n\\ j = 1 \ldots \lambda_i}} (1 - \frac{y_j}{x_i}).
\end{equation}

Les polynômes de Grothendieck sont l'ensemble des images des polynômes dominants par les différences divisées $\pi_i$.
\end{Definition}

Tout comme pour les polynômes de Schubert, on indexe les polynômes de Grothendieck par des vecteurs. Pour $v$ tel que $v_i > v_{i+1}$, on a

\begin{equation}
\label{eq:polynomes_action:groth_rec}
G_{\dots, v_{i+1}, v_i - 1, \dots} = G_v \pi_i.
\end{equation}
Le vecteur correspond là aussi au code de Lehmer d'une permutation. On peut décider d'indexer directement par la permutation elle même. Dans ce cas, pour $\sigma$ telle que $\sigma_i > \sigma_{i+1}$, on obtient

\begin{equation}
\label{eq:polynomes_action:groth_perm}
G_{\sigma} = G_{\sigma s_i} \pi_i.
\end{equation}

L'algorithme de calcul d'un polynôme de Grothendieck est donc le même que celui d'un polynôme de Schubert, seuls l'opération et le développement du polynôme dominant changent. Par exemple, pour calculer $G_{[1,2,2,3]}$  on suivra dans le graphe dessiné figure \ref{fig:polynmes_action:schub_images} le même chemin que pour $Y_{[1,2,2,3]}$ et on obtient

\begin{equation}
G_{[1,2,2,3]} = G_{[6,3,3,1]} \pi_1 \pi_2 \pi_3 \pi_2 \pi_1.
\end{equation}

On utilisera parfois des polynômes de Grothendieck en un seul alphabet. Deux solutions sont alors possibles. On peut spécialiser l'alphabet $y$ en 1. Dans ce cas les polynômes de Grothendieck forment une base de l'anneau $\ZZ[\frac{1}{x_1}, \dots, \frac{1}{x_n}]$. Cette base est triangulaire pour les variables $(1 - \frac{1}{x_i})$. Une seconde solution consiste donc à effectuer un changement de variable $x_i = (1 - \frac{1}{x_i})$ pour obtenir une base de $\ZZ[x_1, \dots, x_n]$. On a illustré le changement de base dans ce dernier cas figure~\ref{fig:polynomes_action:groth_transition}.

\begin{figure}[ht]
\centering
\input{includes/figures/polynomes_groth_base}
\caption[Matrice de transition des polynômes de Grothendieck]{Début de la matrice de transition des polynômes de Grothendieck simples pour les vecteurs de taille 2 (après le changement de variable $x_i = 1 - \frac{1}{x_i}$)}
\label{fig:polynomes_action:groth_transition}
\end{figure}

\subsection{Polynômes Clés}
\label{sub-sec:polynomes_action:pol:dem}
\index{polynômes!clés}
\index{caractères de Demazure}

Nous avons vu qu'un polynôme de Schubert dominant simple est donné par $x^\lambda$ avec $\lambda$ une partition. Les polynômes de Schubert sont ensuite obtenus par l'application des opérateurs $\partial_i$. Dans le cas des polynômes de Grothendieck, on applique les opérateurs $\pi_i$ tout en modifiant la définition des polynômes dominants. Il est aussi possible d'appliquer les opérateurs $\pi_i$ sur les monômes dominants de Schubert, on obtient alors les \emph{polynômes clés} ou \emph{caractères de Demazure}.

\begin{Definition}
\label{def:polynomes_action:dem}
Soit $\lambda$ une partition, on définit les polynômes clés dominants $K_\lambda$ et $\hK_\lambda$ par
\begin{equation}
\hK_\lambda = K_\lambda :=  x^\lambda
\end{equation}

Les polynômes clés $K$ et $\hK$ sont l'ensemble des images des polynômes dominants par les différences divisées isobares, respectivement $\pi_i$ et $\hpi_i$.
\end{Definition}

On obtient donc deux sortes de polynômes clés, $K$ et $\hK$. Ces polynômes ont été introduits à l'origine par Demazure et sont les caractères en théorie des représentations des \emph{modules de Demazure}.

Tout comme les polynômes de Schubert et de Grothendieck, on les indexe par des vecteurs. Soit un vecteur $v$ tel que $v_i > v_{i+1}$ alors

\begin{align}
K_{v s_i} &= K_v \pi_i, \\
\hK_{v s_i} &= \hK_v \hpi_i.
\end{align}

Les familles $(K_v)_{|v| \leq n}$ et $(\hK_v)_{|v|\leq n}$ forment des bases des polynômes en $n$ variables. Les polynômes $K_v$ sont égaux aux polynômes de Schubert sur une large classe de vecteurs : ceux qui correspondent aux permutations dites \emph{vexillaires}, c'est-à-dire évitant le motif 2143. La première permutation non vexillaire correspond au motif lui même, donc au code de Lehmer $[1,0,1]$,

\begin{align}
Y_{[1,0,1]} &= x_1 x_2 + x_1 x_3 + x_1^2, \\
K_{[1,0,1]} &= x_1 x_2 + x_1 x_3.
\end{align}

Nous étudierons en détail le changement de base des $K$ vers les $\hK$ dans le paragraphe \ref{sec:polynomes_action:0hecke}. Il est lié à l'algèbre 0-Hecke et à l'ordre de Bruhat, nous l'utilisons de façon poussée dans le chapitre \ref{chap:polynomes_grothendieck}. 

Les polynômes clés admettent aussi des définitions en types $B$, $C$ et $D$. Les éléments sont indexés par des vecteurs $v \in \ZZ^n$. On dit que $v$ est dominant pour les types $B$ et $C$ si $v = (v_1, \dots, v_n)$ et $v_1 \geq v_2 \geq \dots \geq v_n \geq 0$. Et on dit que $v$ est dominant pour le type $D$ si $v = (v_1, \dots, v_{n-1}, \pm v_n)$ avec $v_1 \geq v_2 \geq \dots \geq v_n \geq 0$. Alors pour $v$ dominant pour le type  $\diamondsuit = B,C$ ou $D$, on pose
\begin{equation}
K^\diamondsuit_v = \hK^\diamondsuit_v = x^v,
\end{equation}
et pour $v \in \ZZ^n$ tel que $v_i > v_{i+1}$ et $i<n$, 
\begin{align}
K^\diamondsuit_{v s_i} &= K^\diamondsuit_v \pi_i, \\
\hK^\diamondsuit_{v s_i} &= K^\diamondsuit_v \hpi_i.
\end{align}
Le cas $i=n$ est différent en fonction du type. Pour les types $B$ et $C$, on a
\begin{align}
K^B_{v s_n^B} &= K^B_v \pi_n^B & \hK^B_{v s_n^B} &= \hK^B_v \pi_n^B \\
K^C_{v s_n^C} &= K^C_v \pi_n^C & \hK^C_{v s_n^C} &= \hK^C_v \pi_n^C 
\end{align}
si $v_n >0$ et
\begin{align}
K^D_{v s_n^D} &= K^D_v \pi_n^D & \hK^D_{v s_n^D} &= \hK^D_v \pi_n^D
\end{align}
si $v_{n-1} + v_n >0$.

Par exemple, on calcule 
\begin{align}
K^B_{[2,-1,1]} &= K^B_{[2,1,-1]} \pi_2 \\ 
&= K^B_{[2,1,1]} \pi_3^B \pi_2 \\  
&= x_1^2 + x_1^2 x_2^{-1} x_3 + x_1^2 x_2 + x_1^2 x_2 x_3^{-1} + x_1^2 x_2 x_3 + x_1^2 x_3, \\
K^C_{[2,-1,1]} &= K^C_{[2,1,1]} \pi_3^B \pi_2 \\
&= x_1^2  + x_1^2 x_2^{-1} x_3 + x_1^2 x_2 x_3^{-1} + x_1^2 x_2 x_3, \\
K^D_{[2,-2,1]} &= K^D_{[2,1,-2]} \pi_2 \\
&=  K^D_{[2,2,-1]} \pi^D_3 \pi_2 \\
&= x_{1}^{2} x_{2}^{2}x_{3}^{-1} + x_{1}^{2} x_{2} + x_{1}^{2} x_2 x_{3}^{2} + x_{1}^{2} x_{3} + x_{1}^{2} x_{2}x_{3}^{-2} + x_{1}^{2}x_{2} \\
\nonumber
& + x_{1}^{2} x_2^{-2} x_{3} + x_{1}^{2}x_{3}^{-1}.
\end{align}

Les ensembles $(K^B_v)$, $(\hK^B_v)$, $(K^C_v)$, $(\hK^C_v)$, $(K^D_v)$ et $(\hK^D_v)$ pour $v \in \ZZ^n$ forment chacun des bases des polynômes multivariés à exposants dans $\ZZ^n$, aussi appelés \emph{polynômes de Laurent}. 

\section{Algèbre 0-Hecke et ordre de Bruhat}
\label{sec:polynomes_action:0hecke}
\index{algèbre!de 0-Hecke}
\index{ordre!de Bruhat}

Les opérateurs $\pi$ et $\hpi$ apparaissent dans un autre contexte, celui de l'algèbre de Hecke. Ils sont en effet les générateurs de ce qu'on appelle communément \emph{l'algèbre de Hecke à $q=0$} ou \emph{0-Hecke}, cas dégénéré de l'algèbre de Hecke classique. Le changement de base entre les deux familles de générateurs fait intervenir l'ordre de Bruhat et nous intéresse plus particulièrement. Par ailleurs, en faisant agir l'algèbre de 0-Hecke sur les vecteurs, les opérateurs $\pi$ et $\hpi$ s'interprètent simplement comme des opérateurs de réordonnement. De nombreuses questions liées aux polynômes peuvent alors s'interpréter formellement comme des opérations sur un espace vectoriel dont la base est indexée par les permutations. 

\subsection{Algèbre de 0-Hecke}
\label{sub-sec:polynomes_action:0hecke:def}

L'algèbre de Hecke de $\Sym{n}$, notée $\mathcal{H}(t_1, t_2)$, est une déformation de l'algèbre du groupe symétrique en fonction de deux paramètres scalaires $t_1$ et $t_2$. Elle est engendrée par une famille d'opérateurs $(T_i)_{1 \leq i <n}$ vérifiant les relations

\begin{align}
\label{eq:polynomes_action:Hecke_tresse1}
&T_iT_j = T_jT_i, &\text{ si } |i-j|>1, \\
\label{eq:polynomes_action:Hecke_tresse2}
&T_{i+1}T_iT_{i+1} = T_iT_{i+1}T_i, &\text{ si } 1  \leq i \leq n-2, \\
\label{eq:polynomes_action:Hecke_carree}
&(T_i - t_1)(T_i - t_2)=0.
\end{align} 

Elle conserve donc les mêmes relations de tresses que $\Sym{n}$ : \eqref{eq:prelim_groupe_sym:tresse1} et \eqref{eq:prelim_groupe_sym:tresse2}. Seule la relation quadratique est modifiée. Dans la littérature, on trouve souvent l'algèbre de Hecke paramétrée par un unique paramètre $q$, ce qui correspond à $t_2 = q$ et $t_1 = -1$ et donc à la relation quadratique

\begin{equation}
(T_i - q)(T_i + 1) = 0.
\end{equation} 

L'algèbre de Hecke est de dimension $n!$ et sa base est donnée par $(T_\sigma)_{\sigma  \in \Sym{n}}$ où pour $s_{i_1}\dots s_{i_m}$ une décomposition réduite de $\sigma$, on a
\begin{equation}
T_\sigma := T_{i_1} \dots T_{i_m}.
\end{equation}

Nous étudions ici le cas où $q=0$, qu'on appelle aussi l'algèbre de 0-Hecke. Si l'on pose $t_2 = 0$ et $t_1 = -1$, les $T_i$ vérifient la relation quadratique \eqref{eq:polynomes_action:pi_quad} et correspondent donc aux opérateurs de différence divisée $\pi_i$. Si l'on pose à présent

\begin{equation}
\label{eq:polynomes_action:0hecke_base}
\hpi_i = \pi_i -1,
\end{equation}
ces nouveaux opérateurs $\hpi_i$ correspondent aux opérateurs de même nom définis dans le paragraphe \ref{sub-sec:polynomes_action:ope:diffdiv}. De là, ils vérifient les relations de tresses \eqref{eq:polynomes_action:Hecke_tresse1} et \eqref{eq:polynomes_action:Hecke_tresse2}. Leur relation quadratique est donnée par \eqref{eq:polynomes_action:hpi_quad} et correspond à la relation quadratique de Hecke \eqref{eq:polynomes_action:Hecke_carree} pour $t_2 = 0$ et $t_1 = -1$. Si $t_2 = 0$, la spécialisation de $t_1$ en $\pm 1$ n'a donc pas d'impact structurel sur l'algèbre. Dans les deux cas, on l'appelle algèbre de $0$-Hecke et elle admet deux bases $\pi = (\pi_\sigma)_{\sigma \in \Sym{n}}$ et $\hpi = (\hpi_\sigma)_{\sigma \in \Sym{n}}$ dont la relation est donnée par \eqref{eq:polynomes_action:0hecke_base}.

\subsection{Changement de base}
\label{sub-sec:polynomes_action:0hecke:bases}

Le changement de base entre les générateurs $\pi_\sigma$ et $\hpi_\sigma$ est donné par l'ordre de Bruhat. Soit $s_{i_1}s_{i_2}\dots s_{i_m}$ une décomposition réduite d'une permutation $\sigma$, alors

\begin{align}
\hpi_\sigma &= \hpi_{i_1}\hpi_{i_2}\dots \hpi_{i_m}, \\
\label{eq:polynomes_action:HatPiToPi-half}
            &= (\pi_{i_1} - 1)(\pi_{i_2} - 1) \dots (\pi_{i_m} - 1).
\end{align}

On prouve la proposition suivante.

\begin{Proposition} \label{prop:polynomes_action:0hecke_bases}
Soit $\sigma \in \Sym{n}$, alors le développement de $\pi_\sigma$ (resp. $\hpi_\sigma$) dans la base $\hpi$ (resp. $\pi$) est donnée par
   \begin{align}
   		\label{eq:HatPiToPi}
		\hat{\pi}_\sigma &= \sum_{\mu \leq \sigma} (-1)^{\ell(\mu) - \ell(\sigma)} \pi_{\mu},\\
   		\label{eq:PiToHatPi}
		\pi_\sigma &= \sum_{\mu \leq \sigma} \hat{\pi}_\mu,
   \end{align}
où l'ordre $\mu \leq \sigma$ est l'ordre de Bruhat sur les permutations.
\end{Proposition}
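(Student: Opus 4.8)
Le plan est de raisonner par récurrence sur $\ell(\sigma)$ et d'établir \eqref{eq:HatPiToPi} et \eqref{eq:PiToHatPi} simultanément. Le cas de base $\sigma = e$ (la permutation identité) est immédiat puisque $\pi_e = \hpi_e = 1$. Pour l'hérédité, on fixe une décomposition réduite de $\sigma$ se terminant par un générateur $s_i$, c'est-à-dire $\sigma = \sigma' s_i$ avec $\ell(\sigma) = \ell(\sigma') + 1$. On a alors $\pi_\sigma = \pi_{\sigma'}\pi_i$ et $\hpi_\sigma = \hpi_{\sigma'}\hpi_i$, et il s'agit de développer ces produits en appliquant l'hypothèse de récurrence à $\sigma'$ puis en utilisant la relation $\pi_i = \hpi_i + 1$ et les relations quadratiques $\pi_i^2 = \pi_i$ et $\hpi_i^2 = -\hpi_i$ de l'algèbre de 0-Hecke.

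Le cœur du calcul repose sur l'observation élémentaire suivante : pour toute permutation $\mu$, on a $\pi_\mu\pi_i = \pi_{\mu s_i}$ et $\hpi_\mu\hpi_i = \hpi_{\mu s_i}$ lorsque $\ell(\mu s_i) > \ell(\mu)$, tandis que $\pi_\mu\pi_i = \pi_\mu$ et $\hpi_\mu\hpi_i = -\hpi_\mu$ lorsque $\ell(\mu s_i) < \ell(\mu)$ (il suffit d'écrire une décomposition réduite de $\mu$ se terminant par $s_i$ et d'utiliser les relations quadratiques). En reportant ceci dans l'expansion fournie par l'hypothèse de récurrence, les termes indexés par les $\mu \leq \sigma'$ possédant une descente en $i$ s'annihilent deux à deux, et l'on obtient, en posant $A = \lbrace \mu \leq \sigma' : \ell(\mu s_i) > \ell(\mu) \rbrace$,
\begin{align}
\pi_\sigma &= \sum_{\mu \in A} \hpi_\mu \;+\; \sum_{\mu \in A}\hpi_{\mu s_i}, \\
\hpi_\sigma &= \sum_{\mu \in A}(-1)^{\ell(\mu) - \ell(\sigma)}\pi_\mu \;+\; \sum_{\mu \in A}(-1)^{\ell(\mu s_i) - \ell(\sigma)}\pi_{\mu s_i}.
\end{align}

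Il reste alors à reconnaître que ces sommes parcourent exactement l'intervalle $[e,\sigma]$ : la dernière étape, et la difficulté principale, est l'identité ensembliste $[e,\sigma'] s_i$-stable $[e,\sigma' s_i] = A \sqcup A s_i$ (réunion disjointe, l'application $\mu \mapsto \mu s_i$ étant injective sur $A$). C'est essentiellement une reformulation du lemme d'échange rappelé au chapitre \ref{chap:prelim_groupe_sym}, mais on peut aussi le prouver directement à partir de la caractérisation de l'ordre de Bruhat par les sous-mots (définition \ref{def:bruhat-sous-mot}) : on fixe une décomposition réduite $w$ de $\sigma'$, de sorte que $w s_i$ est réduite pour $\sigma' s_i$, et pour $\tau \leq \sigma' s_i$ on discute selon qu'un sous-mot réduit de $w s_i$ représentant $\tau$ utilise ou non la dernière lettre $s_i$ ; les contraintes de longueur imposent alors soit $\tau \leq \sigma'$ avec $\ell(\tau s_i) > \ell(\tau)$, soit $\tau s_i \leq \sigma'$ avec $\ell(\tau s_i) < \ell(\tau)$, c'est-à-dire $\tau \in A$ ou $\tau \in A s_i$. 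Une fois cette identité acquise, les deux sommes ci-dessus deviennent $\sum_{\tau \leq \sigma}\hpi_\tau$ et $\sum_{\tau \leq \sigma}(-1)^{\ell(\tau)-\ell(\sigma)}\pi_\tau$, ce qui achève l'hérédité. Je m'attends donc à ce que tout le contenu non trivial soit concentré dans cette description combinatoire de l'intervalle $[e,\sigma]$, le reste n'étant qu'un jeu d'écriture sur les relations de l'algèbre de 0-Hecke. On notera enfin qu'une fois \eqref{eq:PiToHatPi} établie, \eqref{eq:HatPiToPi} lui est équivalente par inversion de Möbius, pourvu que l'on dispose de la valeur de la fonction de Möbius de l'ordre de Bruhat ; la preuve par récurrence parallèle esquissée ici présente l'avantage de ne pas en dépendre.
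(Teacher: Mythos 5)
Your proof is correct, but it takes a genuinely different route from the paper's. The paper simply expands the product $\hpi_\sigma = (\pi_{i_1}-1)\cdots(\pi_{i_m}-1)$ of \eqref{eq:polynomes_action:HatPiToPi-half}, observes via the subword property that every term is some $\pi_\mu$ with $\mu\leq\sigma$ in Bruhat order, and then \emph{defers} the only delicate point --- that after all cancellations each such $\mu$ survives with coefficient exactly $(-1)^{\ell(\mu)-\ell(\sigma)}$ --- to an external reference (\cite[Lemme 1.13]{LascouxGroth}); formula \eqref{eq:PiToHatPi} is then just the inverse change of basis. You instead run a self-contained parallel induction on $\ell(\sigma)$, reducing everything to the $0$-Hecke relations plus the set identity $[e,\sigma]=A\sqcup As_i$ with $A=\{\mu\leq\sigma' : \ell(\mu s_i)>\ell(\mu)\}$, which is the lifting property of the Bruhat order and which you correctly extract from the subword characterisation; the one point you pass over quickly --- that $\tau\leq\sigma'$ with $\ell(\tau s_i)<\ell(\tau)$ forces $\tau s_i<\tau\leq\sigma'$ by transitivity, so such $\tau$ indeed land in $As_i$ --- is a one-line fix, and your computation of the cancellations ($\hpi_\mu\hpi_i=-\hpi_\mu$ and $\pi_\mu\pi_i=\pi_\mu$ when $i$ is a descent of $\mu$, killing exactly the terms outside $A$) is correct. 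What your approach buys is a complete proof of both identities at once, without invoking Lascoux's lemma or the M\"obius function of the Bruhat order; what the paper's approach buys is brevity and an immediate explanation of \emph{why} only the $\mu\leq\sigma$ can appear in the expansion.
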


En développant \eqref{eq:polynomes_action:HatPiToPi-half}, on obtient bien que $\hpi_\sigma$ est une somme de $\pi_\mu$ où une décomposition réduite de $\mu$ est un sous mot de la décomposition réduite de $\sigma$. C'est-à-dire qu'on a bien $\mu \leq \sigma$ pour l'ordre de Bruhat. Il reste à vérifier que les annulations dans le développement de \eqref{eq:polynomes_action:HatPiToPi-half} permettent bien d'obtenir chaque permutation avec coefficient $\pm 1$ en fonction de sa longueur. On trouve ce résultat dans \cite[Lemme 1.13]{LascouxGroth}, on peut le vérifier sur l'exemple suivant.

\begin{align}
	\hpi_{321} &= \hpi_1 \hpi_2 \hpi_1 \\
	           &= (1 - \pi_1)(1 - \pi_2)(1 - \pi_1) \\
	           &= 1 - \pi_1 - \pi_2 + \pi_2 \pi_1 - \pi_1 + \pi_1^2 + \pi_1\pi_2 - \pi_1 \pi_2 \pi_3 \\
	           &= 1 - \pi_1 - \pi_2 + \pi_2 \pi_1 + \pi_1\pi_2 - \pi_1 \pi_2 \pi_3
\end{align}

\subsection{Opérateurs de réordonnement}
\label{sub-sec:polynomes_action:0hecke:ope}

L'action des opérateurs $\pi$ et $\hpi$ sur les permutations s'interprète comme une opération de réordonnement. On définit deux espaces vectoriels formels sur les permutations dont les bases respectives sont appelées $K = (K_\sigma)_{\sigma \in \Sym{n}}$ et $\hK = (\hK_\sigma)_{\sigma \in \Sym{n}}$. L'action des ensembles $\pi$ et $\hpi$ sur respectivement $K$ et $\hK$ est donnée par

 \begin{equation}
   		\label{eq:polynomes_action:0hecke-pi}
			K_\sigma \pi_i = 
			\begin{cases}
				K_{\sigma s_i} \text{ si }\sigma_i > \sigma_{i+1}, \\
				K_\sigma \text{ sinon},
			\end{cases}
   \end{equation}

   \begin{equation}
   		\label{eq:polynomes_action:0hecke-hpi}
		\hat{K}_\sigma \hat{\pi}_i =
		\begin{cases}
			\hat{K}_{\sigma s_i} \text{ si }\sigma_i > \sigma_{i+1}, \\
			-\hat{K}_\sigma \text{ sinon}.
		\end{cases}
   \end{equation}
   
Cette action est bien compatible avec la définition des opérateurs, on dit qu'elle \emph{réordonne} la permutation. En effet, pour $\omega$ la permutation maximale, on a que $K_\omega \pi_\omega = K_{12 \dots n}$. L'application d'un opérateur $\pi_i$ (ou $\hpi_i$) revient à remonter d'une arête dans le graphe du permutoèdre. L'action de l'opérateur lorsque la permutation est déjà ordonnée est déterminée par les relations quadratiques des opérateurs $\pi$ et $\hpi$. On pose à présent que

\begin{equation}
K_\omega = \hK_\omega.
\end{equation}

Les ensembles $K$ et $\hK$ sont alors deux bases d'un même espace vectoriel et le changement de base est donné par la proposition \ref{prop:polynomes_action:0hecke_bases}, 

\begin{align}
   		\label{eq:HatKToK}
		\hat{K}_\sigma &= \sum_{\mu \geq \sigma} (-1)^{\ell(\mu) - \ell(\sigma)} K_\mu, \\
   		\label{eq:KToHatK}
		K_\sigma &= \sum_{\mu \geq \sigma} \hat{K}_\mu.
   \end{align}
   
Nous avons utilisé ici la même terminologie que pour les polynômes clés du paragraphe \ref{sub-sec:polynomes_action:pol:dem}. En effet, ces polynômes sont une des interprétations que l'on peut donner aux bases $K$ et $\hK$. Il suffit pour cela de poser

\begin{equation}
\label{eq:polynomes_action:0hecke-dem}
\hK_\omega = K_\omega = x_1^n x_2^{n-1} \dots x_n.
\end{equation}

On retrouve la définition des polynômes clés dominants (Définition \ref{def:polynomes_action:dem}). On s'est simplement restreint aux polynômes indexés par des vecteurs aux valeurs distinctes. Dans le cas général, il faut préciser l'action de $\pi_i$ quand $v_i = v_{i+1}$ : $K_v \pi_i = K_v$ et $\hK_v \hpi_i = 0$.

Cependant, les polynômes clés ne sont pas la seule interprétation des bases formelles $K$ et $\hK$. Au lieu de \eqref{eq:polynomes_action:0hecke-dem}, posons à présent

\begin{equation}
K_\omega = \prod_{i=1}^{n-1} \prod_{j=i}^{n-i} (1 - \frac{y_j}{x_i}).
\end{equation}

D'après la Définition \ref{def:polynomes_action:groth}, le polynôme $K_\omega$ correspond alors au polynôme de Grothendieck dominant indexé par la partition $\lambda = n-1, n-2, \dots, 1, 0$, c'est-à-dire par le code de la permutation maximale $\omega$. L'action des opérateurs formels $\pi$ sur $K$ est la même que celle des différences divisées $\pi$ sur les polynômes de Grothendieck indexés par des permutations \eqref{eq:polynomes_action:groth_perm}.

En fonction du sens donné à $K_\omega$, les bases formelles $K$ et $\hK$ peuvent donc être interprétées en termes de polynômes clés ou de polynômes de Grothendieck. Cela nous permet de prouver formellement des résultats qui s'appliqueront aux deux types de polynômes. Ce sera l'objet du chapitre \ref{chap:polynomes_grothendieck}.


\chapter{Formule de Pieri pour les polynômes de Grothendieck}
\label{chap:polynomes_grothendieck}
\index{polynômes!de Grothendieck}
\index{formule de Pieri}

Nous avons vu l'importance de l'anneau de Grothendieck dans les problème de géométrie algébrique. Cet anneau est isomorphe à un anneau de polynômes où les variétés de Schubert sont représentées par les polynômes de Grothendieck (cf. paragraphe \ref{sub-sec:polynomes_action:pol:groth}). La structure multiplicative de ces polynômes est donc particulièrement intéressante à étudier. Dans le cas de l'anneau de cohomologie de la Grassmannienne, cette structure est donnée par la formule de Pieri sur les fonctions de Schur que nous avons décrites dans le paragraphe \ref{sub-sec:polynomes_action:sym:pieri}. On cherche à obtenir une description combinatoire similaire pour les polynômes de Grothendieck. 

La formule de Pieri donne le produit d'une fonction de Schur par une fonction complète. Un équivalent pour les polynômes de Grothendieck est le produit 
\begin{equation}
G_\sigma G_{s_k}
\end{equation}
où $\sigma$ est une permutation donnée et $s_k$ une transposition simple. En termes de codes, cela donne
\begin{equation}
G_v G_{[0^{k-1},1]}
\end{equation}
où $v$ est le code de Lehmer de la permutation $\sigma$. Le polynôme $G_{s_k} = G_{[0^{k-1},1]}$ se développe simplement par les règles décrites dans le paragraphe \ref{sub-sec:polynomes_action:pol:groth}. On a que
\begin{equation}
G_{[0^{k-1},1]} = G_{[k,0^{k-1}]} \pi_1 \pi_2 \dots \pi_{k-1}.
\end{equation}

Calculons par exemple $G_{s_3}$,
\begin{align}
G_{[3,0,0]} \pi_1 \pi_2 &= (1- \frac{y_1}{x_1})(1 - \frac{y_2}{x_1})(1 - \frac{y_3}{x_1}) \pi_1 \pi_2\\
&= \left( 1 - \frac{e_1(y_1,y_2,y_3)}{x_1} + \frac{e_2(y_1,y_2,y_3)}{x_1^2}  - \frac{e_3(y_1,y_2,y_3)}{x_1^3} \right) \pi_1 \pi_2 \\
&= \left( 1 + \frac{e_2(y_1,y_2,y_3)}{x_1 x_2} - \frac{e_3(y_1,y_2,y_3)}{x_1^2 x_2} - \frac{e_3(y_1,y_2,y_3)}{x_1 x_2^2} \right) \pi_2 \\
&= 1 - \frac{y_1 y_2 y_3}{x_1 x_2 x_3}.
\end{align}

La fonction $e$ est la fonction symétrique élémentaire définie au paragraphe~\ref{sub-sec:polynomes_action:sym:def}. A chaque étape, les termes de degré $-1$ en $x_i$ sont annulés par l'opérateur $\pi_i$. De façon générale, on obtient donc 
\begin{equation}
G_{s_k} = 1 - \frac{y_1 \dots y_k}{x_1 \dots x_k}.
\end{equation}

\index{algèbre!de 0-Hecke}
Comme dans le cas de l'anneau de cohomologie, on travaille dans un quotient de l'anneau des polynômes par les fonctions symétriques. Plus précisément, on identifie les fonctions symétriques en $x$ à des fonctions symétriques en $y$. Dans \cite[Théorème 6.4]{LascouxGroth}, Lascoux décrit le produit $G_\sigma G_{s_k}$ dans ce quotient en termes des opérateurs $\pi$ et $\hpi$ de l'algèbre de 0-Hecke.

On rappelle que l'application $\mrho{k}$ définie au paragraphe \ref{sub-sec:prelim_groupe_sym:perm:treillis} envoie une permutation $\sigma$ sur l'élément maximal de son coset $\sigma (\Sym{k} \times \Sym{n-k})$.

\begin{Theoreme}{(Lascoux)}
\label{thm:polynomes_grothendieck:PieriGrothLascoux}
Soit $\sigma \in \Sym{n}$ et $1 \leq k < n$. Soit $\zeta = \zeta(\sigma,k)$ l'image de $\sigma$ par la projection $\mrho{k}$. Alors modulo l'idéal décrit ci-dessus, on a
\begin{equation}
\label{eq:polynomes_grothendieck:PieriGrothLascoux}
G_{\sigma} \frac{y_{\sigma_1}\cdots y_{\sigma_k} }{x_1\cdots x_k} 
= G_{\omega}  \hat{\pi}_{\omega \zeta} 
       \pi_{\zeta^{-1}\sigma}.
\end{equation}
\end{Theoreme}

Par exemple, pour $\sigma = 136254$ et $k=4$ on a $\zeta = 632154$ et
\begin{equation}
G_\sigma \frac{y_1 y_3 y_6 y_2}{x_1 x_2 x_3 x_4} = G_{654321} \hpi_3 \hpi_4 \hpi_5 \hpi_2 \hpi_3 \hpi_4 ~ \pi_3 \pi_2 \pi_1 \pi_2.
\end{equation}

Par le calcul (cf. paragraphe \ref{sub-sec:polynomes_sage:appli:groth}), on obtient une somme de 14 éléments avec multiplicités $\pm 1$

\begin{align}
\label{eq:polynomes_grothendieck:exemple}
G_\sigma \frac{y_1 y_3 y_6 y_2}{x_1 x_2 x_3 x_4} &= G_{136254} - G_{136452} - G_{136524} + G_{136542} - G_{146253} \\ \nonumber &+ G_{146352} + G_{146523} - G_{146532} - G_{156234} + G_{156243} \\ \nonumber &+ G_{156324} - G_{156342} - G_{156423} + G_{156432}.
\end{align}

Le théorème de Lascoux nous dit que l'étude du produit $G_\sigma G_{s_k}$ revient à un calcul formel sur les opérateurs $\pi$ et $\hpi$. Plutôt que de travailler sur les polynômes, on peut donc se placer dans les bases formelles $K$ et $\hK$ définies dans le paragraphe~\ref{sec:polynomes_action:0hecke}. Le calcul précédent revient donc à développer

\begin{equation}
\label{eq:polynomes_grothendieck:K-produit}
K_{\omega}  \hat{\pi}_{\omega \zeta} 
       \pi_{\zeta^{-1}\sigma}
\end{equation}

dans la base des $K$. Ce sera l'objet de ce chapitre. On verra que l'utilisation de la base $\hK$ est essentielle. Bien que les relations entre les opérateurs $\pi$ et $\hpi$ soient bien connues et relativement simples, il n'existe pas de résultat général permettant de développer un produit qui mélange les deux types d'opérateurs. Comme on l'a vu avec l'exemple \eqref{eq:polynomes_grothendieck:exemple}, on obtient empiriquement des coefficients $\pm 1$. Ce résultat s'explique par une interprétation combinatoire donnée dans un cadre plus général par Lenart et Postnikov \cite[Corollaire 8.2]{LenartPostnikov}.

\index{ordre!de Bruhat}
\begin{Theoreme}{(Lenart et Postnikov)}
\label{thm:polynomes_grothendieck:LenartPostnikov}
Soit $s_k$ une transposition simple. On construit la liste de transpositions suivante
$(r_1, \dots r_\ell) = ( (1,n), (2,n), \dots, (k,n), \allowbreak (1,n-1), \dots, (k,n-1), \dots, (1,k+1), \dots, (k,k+1))$. Alors

\begin{equation}
\label{eq:polynomes_grothendieck:LenartPostnikov}
G_{\sigma}G_{s_k} = G_{\sigma} - \frac{y_1 \dots y_k}{y_{\sigma_1} \dots y_{\sigma_k}} \sum_J (-1)^{\vert J \vert}  G_{w(J)}
\end{equation}
sommé sur les sous-ensembles $J=(j_1 < j_2 \dots < j_s)$ de $(1, \dots, \ell)$ tels que $\sigma \lessdot \sigma r_{j_1} \lessdot \sigma r_{j_1}r_{j_2} \lessdot \dots \lessdot \sigma r_{j_1} \dots r_{j_s} = w(J)$ soit une chaîne saturée pour l'ordre de Bruhat de $\sigma$ à $w(J)$. En d'autres termes, $J$ est un chemin dans le diagramme de Hasse de l'ordre de Bruhat entre $\sigma$ et $w(J)$.

La somme n'a pas d'annulation et les coefficients sont $\pm 1$ (chaque permutation est obtenue par au plus une chaîne).
\end{Theoreme}

Ce théorème nous dit que le développement de \eqref{eq:polynomes_grothendieck:K-produit} s'exprime en termes d'énumérations de chaînes sur l'ordre de Bruhat. Sottile et Lenart \cite{SottileLenart} avaient déjà obtenu un résultat similaire dans le cas des polynômes de Grothendieck simples. La démonstration de Lenart et Postnikov ne s'appuie pas sur le théorème de Lascoux \ref{thm:polynomes_grothendieck:PieriGrothLascoux} et leur résultat s'applique à tous les groupes de Coxeter. Dans ce chapitre, nous proposons une démonstration combinatoire de ce théorème dans le cas du type $A$. Surtout nous donnons un résultat plus fort qui permet d'exprimer la somme en termes non plus énumératifs mais structurels. 

\begin{Theoreme}
\label{thm:polynomes_grothendieck:interval}
Soit $\sigma \in \Sym{n}$ et $1 \leq k <n$, alors il existe une permutation $\eta(\sigma, k)$ construite explicitement à partir de $\sigma$ et $k$ telle que
\begin{equation}
G_{\sigma} \frac{y_{\sigma_1}\cdots y_{\sigma_k} }{x_1\cdots x_k} 
=  G_{\omega}  \hat{\pi}_{\omega \zeta} 
       \pi_{\zeta^{-1}\sigma} = \sum_{\sigma \leq \mu \leq \eta} (-1)^{\ell(\mu) - \ell(\sigma)} G_\mu.
\end{equation}
\end{Theoreme}

Le développement de \eqref{eq:polynomes_grothendieck:K-produit} correspond donc à une somme sur un intervalle de l'ordre de Bruhat. Dans l'exemple donné en \eqref{eq:polynomes_grothendieck:exemple}, la permutation $\eta$ est égale à $156432$ et la somme se fait sur l'intervalle $[136254, 156432]$. 

\index{polynômes!clés}
Le but premier de chapitre est de prouver le théorème \ref{thm:polynomes_grothendieck:interval} par un développement dans la base $K$ de l'expression \eqref{eq:polynomes_grothendieck:K-produit}. Les résultats énoncés ici ont été publiés dans \cite{Me_Grothendieck}. L'interprétation en termes de produit de polynômes est propre aux polynômes de Grothendieck. Cependant, le développement du produit des opérateurs $\hpi$ et $\pi$ comme une somme sur un intervalle peut être vu comme un calcul dans l'algèbre de 0-Hecke ou sur les polynômes clés. Une première partie de la preuve est donnée paragraphe \ref{sec:polynomes_grothendieck:cloture} où l'on démontre que l'ensemble de sommation est clos par intervalle. On se sert pour cela d'un développement dans la base $\hK$ puis d'un changement de base. Dans le paragraphe \ref{sec:polynomes_grothendieck:chaines}, nous reformulons le résultat du théorème \ref{thm:polynomes_grothendieck:LenartPostnikov} pour étudier de façon plus précise la structure des chaînes de l'ordre de Bruhat qui apparaissent dans la somme. On donnera en particulier une preuve directe du théorème \ref{thm:polynomes_grothendieck:LenartPostnikov}. Par cette nouvelle description, nous donnons de façon explicite la construction de la permutation $\eta(\sigma, k)$. Nous prouvons dans le paragraphe \ref{sec:polynomes_grothendieck:intervalle} qu'elle est bien l'unique élément maximal de l'ensemble de sommation et donc que la somme se fait sur un intervalle. Le paragraphe \ref{sec:polynomes_grothendieck:gen} est dédié à deux problèmes annexes : la variation de la permutation $\eta(\sigma,k)$ quand $k$ varie et la généralisation du développement de \eqref{eq:polynomes_grothendieck:K-produit} aux autres sous-groupes paraboliques de l'algèbre 0-Hecke.

\section{Clôture par intervalle}
\label{sec:polynomes_grothendieck:cloture}
\index{clos par intervalle}

Le théorème \ref{thm:polynomes_grothendieck:LenartPostnikov} nous dit que le développement de \eqref{eq:polynomes_grothendieck:K-produit} dans la base des $K$ est une somme de permutations avec coefficients $\pm 1$ en fonction de la longueur de la permutation. On peut donc directement considérer cette somme comme un ensemble. Le but de ce paragraphe est de prouver que l'ensemble est clos par intervalle.

\subsection{Développement sur la base $\hK$}
\label{sub-sec:polynomes_grothendieck:cloture:hK}

L'expression \eqref{eq:polynomes_grothendieck:K-produit} peut être développée dans la base $K$ ou $\hK$. En fait, le développement dans la base $\hK$ est beaucoup plus simple et nous apporte déjà des informations sur le résultat. La première partie du calcul de \eqref{eq:polynomes_grothendieck:K-produit} consiste en l'application des opérateurs $\hpi$. Cela se fait directement par la définition et on obtient le résultat dans les deux bases $K$ et $\hK$.

\begin{align}
\label{eq:polynomes_grothendieck:KomegaHatPi}
K_\omega \hat{\pi}_{\omega \zeta} &= \hat{K}_\omega \hat{\pi}_{\omega \zeta} \\
\label{eq:polynomes_grothendieck:HatKzeta}
&= \hat{K}_\zeta \\
\label{eq:polynomes_grothendieck:BigInterval}
&= \sum_{\mu \geq \zeta} (-1)^{\ell(\mu) - \ell(\zeta)} K_\mu .
\end{align}

Une décomposition réduite de $\omega \zeta$ correspond à un chemin dans l'ordre faible entre $\omega$ et $\zeta$. Pour le calcul \eqref{eq:polynomes_grothendieck:exemple}, c'est par exemple le chemin $s_3 s_4 s_5 s_2 s_3 s_4$ entre $654321$ et la permutation $\zeta = 632154$. On obtient donc bien $\hK_\zeta$ ou, par un changement de base vers $K$, une somme sur l'intervalle  $[\zeta, \omega]$. Dans la deuxième partie du calcul, on doit à présent appliquer les opérateurs $\pi$. On peut pour cela partir soit de \eqref{eq:polynomes_grothendieck:HatKzeta} dans la base $\hK$, soit de \eqref{eq:polynomes_grothendieck:BigInterval} dans la base $K$. Dans un premier temps, partons de \eqref{eq:polynomes_grothendieck:HatKzeta} et appliquons les opérateurs à $\hK_\zeta$. On obtient alors le développement de \eqref{eq:polynomes_grothendieck:K-produit} dans la base $\hK$.

\begin{Proposition}
\label{prop:polynomes_grothendieck:HatK}
Soient $\sigma$, $k$, et $\zeta$ définis comme dans le théorème \ref{thm:polynomes_grothendieck:PieriGrothLascoux}, alors 
\begin{equation}
\label{eq:polynomes_grothendieck:resultHatK}
K_\omega \hat{\pi}_{\omega \zeta} \pi_{\zeta^{-1} \sigma} = \sum_{\zeta \geq \mu \geq \sigma} \hat{K}_\mu.
\end{equation}
\end{Proposition}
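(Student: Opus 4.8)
The plan is to turn the statement into a purely formal computation in the $\hK$ basis, the key point being that $\zeta=\mrho{k}(\sigma)$ is the maximal element of the coset $\sigma(\Sym{k}\times\Sym{n-k})$, not only for the Bruhat order but also for the right weak order $\infd$.

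First I would use \eqref{eq:polynomes_grothendieck:KomegaHatPi}--\eqref{eq:polynomes_grothendieck:HatKzeta}, which already give $K_\omega\hpi_{\omega\zeta}=\hK_\zeta$, so that only the identity
\[
\hK_\zeta\,\pi_{\zeta^{-1}\sigma}=\sum_{\zeta\geq\mu\geq\sigma}\hK_\mu
\]
remains to be proved. Since $\zeta\in\sigma(\Sym{k}\times\Sym{n-k})$, the element $\rho:=\zeta^{-1}\sigma$ lies in $\Sym{k}\times\Sym{n-k}$ and $\zeta\rho=\sigma$. The crucial ingredient is a \emph{sign-free} lemma: for every $w\in\Sym{k}\times\Sym{n-k}$ one has $\hK_\zeta\hpi_w=\hK_{\zeta w}$ with $\ell(\zeta w)=\ell(\zeta)-\ell(w)$. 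To see this, recall that the coset, ordered by $\infd$, is isomorphic to $(\Sym{k}\times\Sym{n-k},\infd)$ through its minimal representative, whose right weak order has a unique maximum $w_0^{(k)}$; the image of $w_0^{(k)}$ is exactly $\zeta$, so $\zeta$ is the $\infd$-maximum of the coset. Hence for every prefix $w_t$ of a reduced word of $w$ we get $\ell(\zeta w_t)=\ell(\zeta)-\ell(w_t)=\ell(\zeta)-t$; the chain $\zeta\gtrdot\zeta s_{j_1}\gtrdot\cdots\gtrdot\zeta w$ therefore drops by exactly one at each step, so each successive application of an operator $\hpi_{j_t}$ along the word falls into the descent case of \eqref{eq:polynomes_action:0hecke-hpi} and no $-\hK$ term is ever produced.

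Next I would expand $\pi_{\zeta^{-1}\sigma}=\sum_{\nu\leq\zeta^{-1}\sigma}\hpi_\nu$ using \eqref{eq:PiToHatPi}, the sum being over the Bruhat order. Every such $\nu$ is a subword of a reduced word of $\rho\in\Sym{k}\times\Sym{n-k}$, hence belongs to $\Sym{k}\times\Sym{n-k}$, so the sign-free lemma applies termwise and yields $\hK_\zeta\pi_{\zeta^{-1}\sigma}=\sum_{\nu\leq\zeta^{-1}\sigma}\hK_{\zeta\nu}$. The map $\nu\mapsto\zeta\nu$ being injective, it remains to check that its image is the Bruhat interval $[\sigma,\zeta]$. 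For this I would invoke the standard description of the Bruhat order on a parabolic coset with maximal representative $\zeta$: for $\nu,\nu'\in\Sym{k}\times\Sym{n-k}$, $\zeta\nu\leq\zeta\nu'$ if and only if $\nu'\leq\nu$. Taking $\nu'=\zeta^{-1}\sigma$ shows $\{\zeta\nu:\nu\leq\zeta^{-1}\sigma\}=\{\mu\in\zeta(\Sym{k}\times\Sym{n-k}):\mu\geq\sigma\}$, and since $\zeta$ is the coset maximum each such $\mu$ also satisfies $\mu\leq\zeta$. Finally, the Bruhat-monotonicity of $\mrho{k}$ --- one of the projection properties of the kind recalled in paragraph~\ref{sub-sec:prelim_groupe_sym:perm:treillis} --- gives $\mrho{k}(\mu)=\zeta$ for every $\mu$ with $\sigma\leq\mu\leq\zeta$, so the whole interval $[\sigma,\zeta]$ sits inside the coset and the image is precisely $[\sigma,\zeta]$.

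The main obstacle is the sign-free lemma: one must guarantee that reordering the index $\zeta$ by an arbitrary element of the parabolic subgroup, and a fortiori by any subword of $\zeta^{-1}\sigma$, never triggers the $-\hK$ branch of \eqref{eq:polynomes_action:0hecke-hpi}. This is equivalent to the assertion that $\zeta$ is the weak-order maximum of its coset; once this characterisation of $\mrho{k}$ is secured, the remainder of the argument is formal, the identification of the summation set with $[\sigma,\zeta]$ resting only on the Deodhar-type facts about parabolic cosets already collected in Chapter~\ref{chap:prelim_groupe_sym}.
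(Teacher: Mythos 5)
Your proof is correct, but it follows a genuinely different route from the paper's. The paper transports the whole computation into $\Sym{k}\times\Sym{n-k}$: since a reduced word of $\zeta^{-1}\sigma$ contains no $s_k$, it splits into two commuting factors, so under the coset bijection $\varphi$ one gets $\hK_{\zeta}\pi_{\zeta^{-1}\sigma}=\phi^{-1}\bigl(\hK_{\omega'}\pi_{\omega'\sigma'}\otimes\hK_{\omega''}\pi_{\omega''\sigma''}\bigr)$; each factor is evaluated as $K_{\sigma'}$ (resp.\ $K_{\sigma''}$) by descending the weak order from the top of the parabolic, and the change of basis \eqref{eq:KToHatK} then yields the product of two upper intervals, whose preimage is $[\sigma,\zeta]$. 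You instead stay in $\Sym{n}$, expand $\pi_{\zeta^{-1}\sigma}=\sum_{\nu\leq\zeta^{-1}\sigma}\hpi_\nu$ by \eqref{eq:PiToHatPi}, and apply your sign-free lemma $\hK_\zeta\hpi_w=\hK_{\zeta w}$ for $w\in\Sym{k}\times\Sym{n-k}$, which is valid precisely because $\zeta=\mrho{k}(\sigma)$ is the weak-order maximum of its coset, so $\ell(\zeta w)=\ell(\zeta)-\ell(w)$ and the $-\hK$ branch of \eqref{eq:polynomes_action:0hecke-hpi} is never triggered. Your route buys a direct, index-tracking argument with no tensor factorization; its cost is two inputs not explicitly recorded in Chapter~\ref{chap:prelim_groupe_sym}: the Bruhat anti-isomorphism $\nu\mapsto\zeta\nu$ from $\Sym{k}\times\Sym{n-k}$ onto the coset (standard, writing $\zeta=\krho{k}(\sigma)\,w_0'$ with $w_0'$ the longest element of the parabolic, since left multiplication by $w_0'$ reverses Bruhat order there while $u\mapsto\krho{k}(\sigma)u$ preserves it), and the containment $[\sigma,\zeta]\subseteq\sigma(\Sym{k}\times\Sym{n-k})$. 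For the latter, rather than appealing to monotonicity of $\mrho{k}$, it is cleaner to use Proposition~\ref{prop:prelim_groupe_sym:bruhat:compcle}: for $\sigma\leq\mu\leq\zeta$ the $k$-th columns of the keys give $\lbrace\mu_1,\dots,\mu_k\rbrace\geq\lbrace\sigma_1,\dots,\sigma_k\rbrace$ and $\lbrace\mu_1,\dots,\mu_k\rbrace\leq\lbrace\zeta_1,\dots,\zeta_k\rbrace=\lbrace\sigma_1,\dots,\sigma_k\rbrace$ componentwise, forcing equality of the sets. With these two standard facts secured, your argument is complete and of comparable length to the paper's.
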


\begin{proof} 

On a par définition que $\lbrace \zeta_1, \dots, \zeta_k \rbrace = \lbrace \sigma_1, \dots, \sigma_k \rbrace$. Cela signifie que l'intervalle $[\sigma, \zeta]$, bien qu'il soit inclus dans $\Sym{n}$, est en bijection avec un intervalle de $\Sym{k} \times \Sym{n-k}$. On appelle $\varphi$ la bijection du coset $\sigma(\Sym{k} \times \Sym{n-k})$ vers $\Sym{k} \times \Sym{n-k}$. On a que $\varphi(\sigma) = (\sigma', \sigma'')$ où $\sigma'$ et $\sigma''$ sont les standardisés de respectivement $\sigma_1 \dots \sigma_k$ et $\sigma_{k+1} \dots \sigma_n$. Par ailleurs, $\varphi(\zeta) = (\omega', \omega'')$ où $\omega'$ et $\omega''$ sont les permutations maximales de respectivement $\Sym{k}$ et $\Sym{n-k}$. Enfin, une décomposition réduite $v$ de $\zeta^{-1} \sigma$ est un chemin dans l'ordre faible entre $\zeta$ et $\sigma$. En particulier, elle ne contient pas la transposition $s_k$ et se décompose donc en deux chemins $v'$ et $v''$ qui commutent tels que $v'$ ne contient que des transpositions $s_i$ avec $i < k$ et $v''$ des transpositions $s_i$ avec $k < i <n$. 

\`A présent, posons $\phi$ l'application sur l'espace vectoriel engendré par $\hK$ telle que $\phi(\hK_\mu) = \hK_{\mu'} \otimes \hK_{\mu''}$ pour $\varphi(\mu) = (\mu',\mu'')$, alors

\begin{align}
\hK_{\zeta} \pi_{\zeta^{-1} \sigma} &= \phi^{-1} \left( \hK_\omega' \pi_{\omega' \sigma'} \otimes \hK_{\omega''} \pi_{\omega'' \sigma''} \right) \\
&= \phi^{-1} \left( K_{\sigma'} \otimes K_{\sigma''} \right) \\
&= \phi^{-1} \left( \sum_{\mu' \geq \sigma'} \hK_{\mu'} \otimes \sum_{\mu'' \geq \sigma''} \hK_{\mu''} \right).
\end{align}

On obtient une somme sur l'image par $\varphi^{-1}$ du produit entre les intervalles $[\sigma',\mu']$ et $[\sigma'',\mu'']$. Cette image correspond par définition à $[\sigma, \zeta]$.
\end{proof}

Prenons par exemple la permutation $\sigma = 15423$ et $k=3$. Dans ce cas, $\zeta = 54132$ et une décomposition réduite de $\zeta^{-1}\sigma$ est donnée par $s_2 s_1 s_4$. On remarque qu'elle ne contient pas $s_3$. L'intervalle $[\sigma, \zeta]$ est en bijection avec $[132, 321] \times [12, 21]$. On a

\begin{align}
\hK_{54132} \pi_2 \pi_1 \pi_4 &= \phi^{-1} \left( \hK_{321} \pi_2 \pi_1 \otimes \hK_{21} \pi_1 \right) \\
&= \phi^{-1} \left( (\hK_{132} + \hK_{231} + \hK_{312} + \hK_{321}) \otimes (\hK_{12} + \hK_{21}) \right) \\
&= \phi^{-1} (\hK_{132} \otimes \hK_{12}) + \phi^{-1} (\hK_{132} \otimes \hK_{21}) + \phi^{-1} (\hK_{231} \otimes \hK_{12}) \\ \nonumber &+ \phi^{-1} (\hK_{231} \otimes \hK_{21}) + \phi^{-1} (\hK_{312} \otimes \hK_{12}) + \phi^{-1} (\hK_{312} \otimes \hK_{21})\\ \nonumber  &+ \phi^{-1} (\hK_{321} \otimes \hK_{12}) + \phi^{-1} (\hK_{321} \otimes \hK_{21}) \\
&= \hK_{15423} + \hK_{15432} + \hK_{45123} + \hK_{45132} + \hK_{51423} + \hK_{51432}\\ \nonumber &+  \hK_{54123} + \hK_{54132}
\end{align}

\subsection{Changement de base}
\label{sub-sec:polynomes_grothendieck:cloture:K}
\index{ordre!de $k$-Bruhat}

Le développement de \eqref{eq:polynomes_grothendieck:K-produit} dans la base des $\hK$  s'obtient donc simplement par une somme sur un intervalle. Par application directe du changement de base de $\hK$ vers $K$ donné en \eqref{eq:HatKToK}, on obtient une première description du développement dans la base $K$. On a besoin pour cela d'introduire l'ordre de $k$-Bruhat, notion dérivée de l'ordre de Bruhat. 

\begin{Definition}
\label{def:polynomes_grothendieck:kbruhat}
Soit $1 \leq k <n$, une transposition $\tau = (a,b)$ est une $k$-transposition si $a \leq k < b$. On dit que $\tau$ est une $k$-transposition de Bruhat pour une permutation $\sigma$ si $\tau$ est une $k$-transposition et si $\sigma \tau$ est un successeur de $\sigma$ pour l'ordre de Bruhat. La permutation $\sigma \tau$ est alors un $k$-successeur de $\sigma$.
\end{Definition}

La notion de $k$-successeur est par définition plus contrainte que celle de successeur. Elle définit par transitivité un ordre qu'on appelle $k$-Bruhat. L'ordre de Bruhat est donc une extension de l'ordre de $k$-Bruhat (cf. paragraphe \ref{sub-sec:prelim_posets:posets:ext}). Cet ordre a des propriétés algébriques intéressantes, il a été introduit par \cite{LascouxKBruhat} et étudié de façon plus approfondie dans \cite{BergeronKbruhat}. Nous l'utiliserons à plusieurs reprises.

\begin{Proposition}
\label{prop:polynomes_grothendieck:complement}
Soit $\Succs_{\sigma}^{-k}$ l'ensemble des successeurs $\sigma'$ de $\sigma$ qui ne sont pas des $k$-successeurs. C'est-à-dire qu'on a $\sigma' = \sigma \tau$ avec $\tau$ une transposition non $k$-transposition. Alors, le développement de \eqref{eq:polynomes_grothendieck:K-produit} dans la base des $K$ est donné par
\begin{equation}
\label{eq:polynomes_grothendieck:complement}
\sum (-1)^{\ell(\nu) - \ell(\sigma)} K_\nu
\end{equation}
sommé sur les permutations $\nu$ telles que $\nu \geq \sigma$ et $\forall \sigma' \in \Succs_{\sigma}^{-k}$, $\nu \ngeq \sigma'$.
\end{Proposition}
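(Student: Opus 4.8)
La proposition à prouver est la Proposition~\ref{prop:polynomes_grothendieck:complement}, qui donne une première description dans la base $K$ du développement de \eqref{eq:polynomes_grothendieck:K-produit} comme somme sur un ensemble de permutations $\nu$ caractérisé par $\nu \geq \sigma$ et $\nu \ngeq \sigma'$ pour tout $k$-non-successeur $\sigma'$ de $\sigma$.

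\begin{proof}[Plan de preuve]
Le plan est de partir du développement dans la base $\hK$ déjà obtenu en Proposition~\ref{prop:polynomes_grothendieck:HatK}, à savoir $K_\omega \hat\pi_{\omega\zeta}\pi_{\zeta^{-1}\sigma} = \sum_{\zeta \geq \mu \geq \sigma} \hat K_\mu$, puis d'y appliquer terme à terme le changement de base \eqref{eq:KToHatK} (ou plutôt \eqref{eq:HatKToK} dans l'autre sens) : pour chaque $\mu$ avec $\sigma \leq \mu \leq \zeta$, on a $\hat K_\mu = \sum_{\nu \geq \mu} (-1)^{\ell(\nu)-\ell(\mu)} K_\nu$. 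En sommant sur $\mu$ et en regroupant les coefficients de $K_\nu$, on obtient que le coefficient de $K_\nu$ dans le développement final est $\sum_{\sigma \leq \mu \leq \min(\zeta,\nu)} (-1)^{\ell(\nu)-\ell(\mu)}$, où la somme porte sur les $\mu$ de l'intervalle $[\sigma,\zeta]$ qui sont aussi $\leq \nu$. Le cœur de la preuve est alors un lemme combinatoire sur l'ordre de Bruhat : cette somme alternée vaut $(-1)^{\ell(\nu)-\ell(\sigma)}$ si et seulement si la sous-structure $\{\mu : \sigma \leq \mu \leq \zeta,\ \mu \leq \nu\}$ a un unique élément minimal, c'est-à-dire si $\nu \geq \sigma$ tout en évitant d'être au-dessus d'un des $k$-non-successeurs, et vaut $0$ sinon.

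La première étape consiste donc à identifier précisément l'ensemble $\{\mu \in [\sigma,\zeta] : \mu \leq \nu\}$. Ici j'utiliserais le fait, découlant de la définition de $\zeta = \mrho{k}(\sigma)$, que $[\sigma,\zeta]$ est en bijection (via le $\varphi$ de la preuve de la Proposition~\ref{prop:polynomes_grothendieck:HatK}) avec un intervalle produit dans $\Sym{k}\times\Sym{n-k}$, donc c'est un treillis. Les successeurs de $\sigma$ dans cet intervalle sont exactement les $\sigma\tau$ avec $\tau$ une $k$-transposition de Bruhat pour $\sigma$ (Définition~\ref{def:polynomes_grothendieck:kbruhat}) — ce sont les $k$-successeurs. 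Réciproquement, un successeur $\sigma'$ qui n'est pas un $k$-successeur (donc $\sigma' \in \Succs_\sigma^{-k}$) sort de l'intervalle $[\sigma,\zeta]$. L'idée clef est alors : $\nu$ est au-dessus d'un élément de $[\sigma,\zeta]$ autre que $\sigma$ si et seulement si $\nu$ est au-dessus d'un $k$-successeur de $\sigma$, donc la restriction $\{\mu \in [\sigma,\zeta] : \mu \leq \nu\}$ est soit $\{\sigma\}$ seul (quand $\nu$ n'est pas au-dessus d'un $k$-successeur), soit un sous-intervalle contenant strictement $\sigma$.

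La deuxième étape est le calcul de la somme alternée de Möbius. Quand $\{\mu \in [\sigma,\zeta]: \mu \leq \nu\} = \{\sigma\}$, la somme se réduit au seul terme $\mu = \sigma$ et donne $(-1)^{\ell(\nu)-\ell(\sigma)}$ — mais il faut encore que $\nu \geq \sigma$ (sinon l'ensemble est vide et le coefficient est $0$). Quand l'ensemble contient strictement $\sigma$ : il faut voir que la somme $\sum (-1)^{\ell(\mu)}$ sur un intervalle non trivial gradué de l'ordre de Bruhat s'annule. C'est la raison pour laquelle il faut que l'ensemble $\{\mu \in [\sigma,\zeta] : \mu \leq \nu\}$ soit lui-même un \emph{intervalle} (avec un unique maximum) : sa structure de treillis (héritée de celle de $[\sigma,\zeta] \cong$ intervalle produit) garantit que $\sup\{\mu : \mu \leq \nu,\ \mu \in [\sigma,\zeta]\}$ existe, et alors la somme alternée sur l'intervalle $[\sigma, \sup]$ est nulle dès que $\sup \neq \sigma$, car tout intervalle non trivial d'un poset gradué a autant d'éléments de longueur paire que de longueur impaire — plus précisément on utilise ici que $\hat K$ et $K$ sont deux bases du même espace et que $\sum_{\mu \in [a,b]}(-1)^{\ell(\mu)} = [a=b]$, ce qui est exactement la relation d'inversion de Möbius entre \eqref{eq:HatKToK} et \eqref{eq:KToHatK}.

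L'obstacle principal que j'anticipe est le contrôle fin de l'ensemble $\{\mu \in [\sigma,\zeta] : \mu \leq \nu\}$ et la vérification qu'il possède toujours un unique élément maximal : c'est là qu'intervient de façon cruciale le Lemme~\ref{lem:prelim_groupe_sym:bruhat:coset} (et sa généralisation \ref{lem:prelim_groupe_sym:bruhat:coset_gen}), qui affirme précisément que l'intersection d'un coset $\sigma(\Sym{k}\times\Sym{n-k})$ avec un intervalle de Bruhat $[\sigma,\nu]$ est un intervalle. Or $[\sigma,\zeta]$ est justement la trace du coset $\sigma(\Sym{k}\times\Sym{n-k})$ dans $[\sigma,\omega]$ puisque $\zeta$ est le maximum du coset. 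Donc $\{\mu \in [\sigma,\zeta]: \mu \leq \nu\} = \sigma(\Sym{k}\times\Sym{n-k}) \cap [\sigma,\nu]$ est un intervalle par le Lemme~\ref{lem:prelim_groupe_sym:bruhat:coset}, ce qui débloque l'argument d'annulation. Il restera à reformuler proprement « $\nu$ n'est pas au-dessus d'un $k$-non-successeur de $\sigma$ » comme condition équivalente à « l'intersection du coset avec $[\sigma,\nu]$ est réduite à $\{\sigma\}$ », ce qui est une petite vérification : si $\nu \geq \sigma' = \sigma\tau$ avec $\tau$ une transposition de Bruhat non $k$-transposition, alors $\nu$ est au-dessus d'un successeur hors du coset, mais réciproquement tout élément du coset autre que $\sigma$ et $\leq \nu$ force l'existence d'un $k$-successeur $\leq \nu$, et donc la condition de la proposition porte bien sur le complément $\Succs_\sigma^{-k}$.
\end{proof}
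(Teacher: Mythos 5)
Votre plan suit exactement la démarche du texte : partir du développement dans la base $\hK$ donné par la Proposition~\ref{prop:polynomes_grothendieck:HatK}, appliquer le changement de base terme à terme, regrouper les coefficients $c_\nu$ comme somme alternée sur l'intersection du coset $\sigma(\Sym{k}\times\Sym{n-k})$ avec $[\sigma,\nu]$, invoquer le Lemme~\ref{lem:prelim_groupe_sym:bruhat:coset} pour dire que cette intersection est un intervalle, puis conclure par l'annulation de Möbius. Tout cela est le bon squelette, et le choix des outils est le bon.

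Il y a cependant une erreur réelle au point précis qui relie la condition combinatoire à l'énoncé : vous avez inversé l'appartenance au coset des deux types de successeurs. Une $k$-transposition $(a,b)$ avec $a\le k<b$ échange une valeur des $k$ premières positions avec une valeur des positions suivantes ; elle modifie donc l'ensemble $\lbrace \sigma_1,\dots,\sigma_k\rbrace$ et fait \emph{sortir} du coset $\sigma(\Sym{k}\times\Sym{n-k})$. Au contraire, une transposition qui n'est pas une $k$-transposition (ses deux indices sont du même côté de $k$) préserve cet ensemble : les successeurs directs de $\sigma$ qui restent dans le coset, donc dans $[\sigma,\zeta]$, sont exactement les éléments de $\Succs_{\sigma}^{-k}$, et non les $k$-successeurs comme vous l'écrivez à deux reprises (« les successeurs de $\sigma$ dans cet intervalle sont \ldots les $k$-successeurs », puis « un successeur \ldots non $k$-successeur \ldots sort de l'intervalle »). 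Or c'est précisément cette identification qui transforme « l'intersection du coset avec $[\sigma,\nu]$ est réduite à $\lbrace\sigma\rbrace$ » en la condition « $\nu\ngeq\sigma'$ pour tout $\sigma'\in\Succs_{\sigma}^{-k}$ » de l'énoncé ; menée telle que vous l'écrivez, votre preuve aboutirait à la condition opposée (éviter les $k$-successeurs), qui est fausse et contredirait la suite du chapitre, où $\E{\sigma,k}$ est justement engendré par des chemins de $k$-transpositions partant de $\sigma$. Le reste est correct ; notez seulement que l'annulation de la somme alternée sur un intervalle de Bruhat non trivial n'est pas une propriété de tous les posets gradués, mais résulte de la fonction de Möbius de l'ordre de Bruhat ou, comme vous le proposez aussi, de l'inversion mutuelle des formules \eqref{eq:HatKToK} et \eqref{eq:KToHatK}.
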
  
On somme donc sur des permutations $\nu \geq \sigma$ tel qu'il n'existe pas de chemin entre $\sigma$ et $\nu$ dont la première transposition ne soit pas une $k$-transposition. On verra par la suite que toutes les permutations $\nu$ sont en fait supérieures à $\sigma$ pour $k$-Bruhat. 

\begin{proof}
Par un changement de base sur \eqref{eq:polynomes_grothendieck:resultHatK}, on obtient

\begin{align}
\label{eq:polynomes_grothendieck:DoubleKSum}
\sum_{\zeta \geq \mu \geq \sigma} \hat{K}_\mu &= \sum_{\zeta \geq \mu \geq \sigma} \sum_{\nu \geq \mu} (-1)^{\ell(\nu) - \ell(\mu)}K_\nu \\
\label{eq:Expansioncnu}
&= \sum_{\nu \geq \sigma} c_\nu K_\nu
\end{align}
où
\begin{equation}
\label{eq:polynomes_grothencieck:cnu}
c_\nu = \sum_{\substack{\mu \leq \nu \\ \sigma \leq \mu \leq \zeta}} (-1)^{\ell(\nu) - \ell(\mu)}.
\end{equation}

Par \cite{VermatMobius}, $c_\nu$ est une somme sur la \emph{fonction de Möbius} de $\nu$ et $\mu$. Cette somme se fait sur l'intersection de l'intervalle $[\sigma, \nu]$ avec le coset $\sigma(\Sym{k} \times \Sym{n-k})$. Par le lemme \ref{lem:prelim_groupe_sym:bruhat:coset}, cette intersection est un intervalle. De ce fait, $c_\nu \neq 0$ uniquement quand l'intervalle est réduit à un élément $\sigma$. En d'autres termes, $c_\nu = 0$ si et seulement si il existe $\mu$ tel que $\sigma < \mu \leq \zeta$ et $\mu \leq \nu$. Il suffit de tester les éléments $\mu$ qui sont successeurs directs de $\sigma$, ce qui correspond à $\Succs_{\sigma}^{-k}$.
\end{proof}

De la proposition \ref{prop:polynomes_grothendieck:complement}, on déduit immédiatement le corollaire suivant.

\begin{Corollaire}
\label{thm:polynomes_grothendieck:cloture}
L'ensemble des permutations $\nu$ telles que $K_\nu$ apparaît dans le développement de \eqref{eq:polynomes_grothendieck:K-produit} dans la base $K$ est clos par intervalle.
\end{Corollaire}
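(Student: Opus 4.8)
Le plan est de déduire directement la clôture par intervalle de la description explicite de l'ensemble de sommation obtenue dans la proposition~\ref{prop:polynomes_grothendieck:complement}. Celle-ci identifie l'ensemble des permutations $\nu$ apparaissant dans le développement de \eqref{eq:polynomes_grothendieck:K-produit} dans la base $K$ à
\[
S = \lbrace \nu \in \Sym{n} : \nu \geq \sigma \text{ et } \forall \sigma' \in \Succs_{\sigma}^{-k},\ \nu \ngeq \sigma' \rbrace.
\]
Je commencerais par remarquer que $S$ est l'intersection de deux ensembles~: l'idéal supérieur $A = \lbrace \nu : \nu \geq \sigma \rbrace$ engendré par $\sigma$, et l'ensemble $B = \lbrace \nu : \forall \sigma' \in \Succs_{\sigma}^{-k},\ \nu \ngeq \sigma' \rbrace$. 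L'ensemble $A$ est clos vers le haut par définition. Pour $B$, si $\nu \in B$ et $\mu \leq \nu$, alors aucun $\sigma' \in \Succs_{\sigma}^{-k}$ ne peut vérifier $\sigma' \leq \mu$, car on aurait sinon $\sigma' \leq \mu \leq \nu$, ce qui contredit $\nu \in B$~; donc $\mu \in B$ et $B$ est clos vers le bas.

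Il reste alors à conclure. Soient $\nu_1, \nu_2 \in S$ avec $\nu_1 \leq \nu_2$, et $\mu$ une permutation quelconque telle que $\nu_1 \leq \mu \leq \nu_2$. Comme $\mu \geq \nu_1 \geq \sigma$, on a $\mu \in A$~; comme $\mu \leq \nu_2 \in B$ et que $B$ est clos vers le bas, on a $\mu \in B$. Donc $\mu \in A \cap B = S$, ce qui est exactement l'assertion que $S$ est clos par intervalle au sens de la définition~\ref{def:prelim_posets:sous-poset}.

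Il n'y a en réalité aucune difficulté ici~: tout le contenu a été reporté dans la proposition~\ref{prop:polynomes_grothendieck:complement}, et par son intermédiaire dans le lemme~\ref{lem:prelim_groupe_sym:bruhat:coset} sur les intervalles de type coset. Le corollaire se ramène à l'observation générale que, dans un poset, l'intersection d'un idéal supérieur et d'un idéal inférieur est toujours close par intervalle. La seule chose à surveiller est de ne pas confondre cette clôture avec le fait que $S$ soit un intervalle, c'est-à-dire possède un unique élément maximal~: cette propriété plus forte fera l'objet du théorème~\ref{thm:polynomes_grothendieck:interval}.
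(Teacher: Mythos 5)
Votre preuve est correcte et suit essentiellement la même démarche que celle du mémoire : elle repose sur la proposition~\ref{prop:polynomes_grothendieck:complement} et sur le même argument de transitivité (si aucun $\sigma' \in \Succs_{\sigma}^{-k}$ n'est sous $\nu$, aucun ne peut être sous un $\mu \leq \nu$). La reformulation en intersection d'un idéal supérieur et d'un ensemble clos vers le bas est un simple réemballage de l'argument du texte, qui établit directement que tout $\nu'$ avec $\sigma \leq \nu' \leq \nu$ reste dans l'ensemble.
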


\begin{proof}
Soit $\nu$ tel que $K_\nu$ apparaisse dans la somme \ref{eq:polynomes_grothendieck:complement}. Supposons qu'il existe $\sigma \leq \nu' \leq \nu$ telle que $K_{\nu'}$ n'apparaisse pas dans la somme. Alors, par la proposition \ref{prop:polynomes_grothendieck:complement}, il existe $\mu' \in \Succs_{\sigma}^{-k}$ avec $\mu' \leq \nu'$. Par transitivité, on a $\mu' \leq \nu$ ce qui contredit le fait que $K_\nu$ apparaisse dans la somme.
\end{proof}

Ce corollaire est une première étape dans la preuve du théorème \ref{thm:polynomes_grothendieck:interval}. Il reste à prouver que l'ensemble en question est bien un intervalle, c'est-à-dire qu'il comporte un unique élément maximal. Ce sera fait dans le paragraphe \ref{sec:polynomes_grothendieck:intervalle}. Par ailleurs, la proposition \ref{prop:polynomes_grothendieck:complement} permet de retrouver une partie du résultat de Lenart et Postnikov du théorème \ref{thm:polynomes_grothendieck:LenartPostnikov} : on obtient que les multiplicités des permutations dans le développement du produit sont $\pm 1$ en fonction de leur longueur.

\section{\'Enumération de chaînes}
\label{sec:polynomes_grothendieck:chaines}

Pour terminer la preuve du théorème \ref{thm:polynomes_grothendieck:interval}, il nous faut d'abord donner une description plus précise de l'énumération de chaînes de l'ordre de Bruhat du théorème~\ref{thm:polynomes_grothendieck:LenartPostnikov}. Nous reformulons donc ce théorème en réduisant la liste de transpositions à considérer et en introduisant un ordre sur les transpositions. Nous montrons d'abord que cette nouvelle formulation est équivalente à celle donnée par Lenart et Postnikov. Puis nous donnons une nouvelle preuve combinatoire du théorème~\ref{thm:polynomes_grothendieck:LenartPostnikov}. Enfin cette nouvelle formulation permet d'énoncer certaines propriétés qui seront essentielles à la preuve du théorème principal \ref{thm:polynomes_grothendieck:interval}.

\subsection{Description}
\label{sub-sec:polynomes_grothendieck:chaines:desc}

\begin{Definition}
\label{def:polynomes_grothendieck:Wsigma}
Soit $\sigma \in \Sym{n}$ et $1 \leq k <n$. Alors $\W_{\sigma, k}$ est la liste des $k$-transpositions de $\sigma$ munies d'un ordre total défini par:
\begin{equation}
\label{eq:polynomes_grothendieck:totalOrder}
(a,b) \tprec (a',b') \Leftrightarrow \sigma(a) > \sigma(a') \text{ ou } (a=a' \text{ et } \sigma(b) < \sigma(b'))
\end{equation}
\end{Definition}

La relation $\tprec$ dépend de la permutation $\sigma$ et nous pouvons donc comparer des transpositions uniquement au sein d'une même liste $\W_{\sigma, k }$. Nous verrons que $\tprec$ est en fait une extension linéaire d'un ordre partiel sur les transpositions qui lui ne dépend pas de $\sigma$.

La proposition suivante est une nouvelle formulation du théorème \ref{thm:polynomes_grothendieck:LenartPostnikov} comme nous le montrerons dans le paragraphe \ref{sub-sec:polynomes_grothendieck:chaines:ordre}. Nous en donnerons une preuve directe paragraphe \ref{sub-sec:polynomes_grothedieck:chaines:preuve}.

\begin{Proposition}
\label{prop:polynomes_grothedieck:ESigma}
Si $\W_{\sigma, k} = (\tau_1 \prec \tau_2 \prec \dots \prec \tau_m)$,  alors

\begin{equation}
\label{eq:polynomes_grothendieck:ESigmaResult}
K_\omega \hat{\pi}_{\omega \zeta} \pi_{\zeta^{-1} \sigma} = \E{\sigma,k}
\end{equation}
où
\begin{equation}
\label{eq:polynomes_grothendieck:ESigma}
\E{\sigma,k} := K_\sigma \cdot (1 - \tau_1) \cdot (1-\tau_2) \cdots (1-\tau_m) 
\end{equation}
et
\begin{equation}
\label{eq:polynomes_grothendieck:ProdKSigmaTau}
K_\mu \cdot \tau = \begin{cases}
K_{\mu\tau} \text{ si } \tau \text{ est une Bruhat transposition de } \mu, \\
0 \text{ sinon.}
\end{cases}
\end{equation}
\end{Proposition}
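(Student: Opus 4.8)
\textbf{Plan de la preuve de la proposition \ref{prop:polynomes_grothedieck:ESigma}.}

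L'idée est de partir du résultat déjà établi de la proposition \ref{prop:polynomes_grothendieck:HatK}, à savoir
\[
K_\omega \hat{\pi}_{\omega \zeta} \pi_{\zeta^{-1} \sigma} = \sum_{\zeta \geq \mu \geq \sigma} \hat{K}_\mu,
\]
et de montrer que $\E{\sigma,k}$, développé dans la base $K$, coïncide avec $\sum_{\nu} (-1)^{\ell(\nu)-\ell(\sigma)} K_\nu$ sommé exactement sur l'ensemble décrit par la proposition \ref{prop:polynomes_grothendieck:complement} : les permutations $\nu \geq \sigma$ telles qu'aucun successeur de Bruhat $\sigma' = \sigma\tau$ de $\sigma$ avec $\tau$ \emph{non} $k$-transposition ne vérifie $\sigma' \leq \nu$. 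La première étape est donc de développer le produit \eqref{eq:polynomes_grothendieck:ESigma} : en distribuant, un terme non nul correspond à un sous-ensemble $\{\tau_{i_1} \prec \dots \prec \tau_{i_s}\}$ tel que $\sigma \lessdot \sigma\tau_{i_1} \lessdot \sigma\tau_{i_1}\tau_{i_2} \lessdot \dots$ soit une chaîne saturée pour l'ordre de Bruhat, où à chaque étape la transposition appliquée doit être une transposition de Bruhat de la permutation courante (règle \eqref{eq:polynomes_grothendieck:ProdKSigmaTau}). Le signe du terme est $(-1)^s = (-1)^{\ell(\nu)-\ell(\sigma)}$ où $\nu$ est l'extrémité de la chaîne. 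Il faut alors prouver deux choses : (a) chaque $\nu$ apparaissant dans $\E{\sigma,k}$ apparaît \emph{avec multiplicité} $\pm 1$ (pas d'annulations parasites), c'est-à-dire qu'au plus une telle chaîne croissante aboutit à un $\nu$ donné ; (b) l'ensemble des $\nu$ ainsi atteints est précisément celui de la proposition \ref{prop:polynomes_grothendieck:complement}.

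Pour (a), le point clé est l'ordre total $\tprec$ imposé sur les transpositions : on impose que les transpositions d'une chaîne soient appliquées \emph{dans l'ordre croissant} pour $\tprec$. L'idée est de montrer que, partant de $\sigma$ et $\nu$ avec $\sigma \leq \nu$ (dans un sens à préciser — vraisemblablement $\sigma \leq_{k\text{-Bruhat}} \nu$), il existe une \emph{unique} façon de produire une chaîne saturée $k$-croissante de $\sigma$ à $\nu$ respectant $\tprec$. La construction concrète consiste, à chaque étape, à choisir la plus petite $k$-transposition de Bruhat $\tau$ de la permutation courante $\mu$ (pour $\tprec$, lue relativement à $\mu$ — ou, de façon équivalente, relativement à $\sigma$ si l'on montre que l'ordre relatif des transpositions restantes est préservé le long de la chaîne) telle que $\mu\tau \leq \nu$ ; l'unicité provient de ce que deux transpositions $\tprec$-comparables ne peuvent être échangées dans l'ordre d'application sans briser soit la condition de saturation, soit la condition $\mu\tau \leq \nu$. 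C'est précisément le point où les conditions combinatoires de la définition \ref{def:polynomes_grothendieck:kbruhat} et du critère de la proposition \ref{prop:prelim_groupe_sym:bruhat-critere} interviennent : il faut analyser comment l'application d'une $k$-transposition $(a,b)$ modifie les valeurs aux positions $1,\dots,k$ et $k+1,\dots,n$, et en déduire que l'ordre $\tprec$ (défini via $\sigma(a)$ décroissant puis $\sigma(b)$ croissant) est exactement l'ordre dans lequel les « échanges de valeurs entre le bloc gauche et le bloc droit » doivent se faire pour rester sur une chaîne. Je m'attends à ce que ce soit l'obstacle principal : il faut un lemme technique décrivant la forme normale d'une chaîne $k$-Bruhat saturée en termes du réarrangement des blocs, probablement prouvé par récurrence sur $\ell(\nu) - \ell(\sigma)$.

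Pour (b), l'équivalence avec la caractérisation de la proposition \ref{prop:polynomes_grothendieck:complement} (ou directement avec celle de Lenart--Postnikov, théorème \ref{thm:polynomes_grothendieck:LenartPostnikov}) se fait en reliant la liste $\W_{\sigma,k}$ à la liste $(r_1,\dots,r_\ell)$ de Lenart--Postnikov. On observe que $\W_{\sigma,k}$ est obtenue à partir de l'ensemble de toutes les $k$-transpositions $(a,b)$ avec $a \leq k < b$, muni d'un ordre total ; l'ordre $\tprec$ diffère de l'ordre lexicographique de Lenart--Postnikov mais définit \emph{la même famille de chaînes comptées}, car les deux ordres sont des extensions linéaires du même ordre partiel sur les transpositions qui rend les chaînes saturées uniques. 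Il faudra vérifier que le passage de toutes les $k$-transpositions aux seules $k$-transpositions de Bruhat de $\sigma$ ne change pas la somme (les autres n'apportent que des termes nuls à la première étape, et par récurrence aux étapes suivantes), puis que restreindre aux chaînes $\tprec$-croissantes sélectionne exactement une chaîne par permutation atteignable — ce qui est le contenu de (a). Enfin, la combinaison de (a) et (b) donne que $\E{\sigma,k} = \sum_{\nu} (-1)^{\ell(\nu)-\ell(\sigma)} K_\nu$ sur le bon ensemble, qui par la proposition \ref{prop:polynomes_grothendieck:complement} est bien égal à $K_\omega \hat{\pi}_{\omega\zeta}\pi_{\zeta^{-1}\sigma}$, d'où \eqref{eq:polynomes_grothendieck:ESigmaResult}. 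Une vérification sur l'exemple $\sigma = 136254$, $k=4$ (où l'on retrouve les 14 termes de \eqref{eq:polynomes_grothendieck:exemple}) servira de contrôle.
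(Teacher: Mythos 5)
Votre plan identifie correctement la cible (la caractérisation de la proposition \ref{prop:polynomes_grothendieck:complement}) et le développement de $\E{\sigma,k}$ en somme signée sur des chaînes, mais il comporte une lacune réelle : votre point (b), qui concentre en fait tout le contenu de la proposition, n'est pas démontré. Affirmer que l'ensemble des permutations atteintes par les sous-mots valides de $\W_{\sigma, k}$ coïncide avec $\lbrace \nu \geq \sigma : \forall \sigma' \in \Succs_{\sigma}^{-k},\ \nu \ngeq \sigma' \rbrace$ exige deux inclusions non triviales ; la plus difficile (toute permutation de cet ensemble est atteinte par un sous-mot valide) demande un algorithme de construction de chaîne du type Bergeron--Sottile \emph{et} la vérification que les transpositions produites sont des transpositions de Bruhat de $\sigma$ elle-même, appliquées dans l'ordre $\tprec$, ce que vous reléguez à un lemme technique sans le prouver. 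Votre solution de repli, invoquer le théorème \ref{thm:polynomes_grothendieck:LenartPostnikov}, rendrait l'argument logiquement valide (en le combinant au théorème \ref{thm:polynomes_grothendieck:PieriGrothLascoux} et à la proposition \ref{prop:polynomes_grothendieck:wsigma-generalisation}), mais elle est circulaire par rapport à l'objectif de ce chapitre, qui est précisément de redémontrer Lenart--Postnikov : il ne resterait alors de votre preuve que la reformulation. Le point (a) n'est pas gratuit non plus : deux sous-mots $\tprec$-croissants distincts pourraient a priori aboutir à la même permutation, et votre argument glouton (choisir la plus petite transposition relativement à $\mu$ ou, de façon prétendument équivalente, relativement à $\sigma$) passe sous silence le fait qu'une transposition de Bruhat d'une permutation intermédiaire n'est pas nécessairement une transposition de Bruhat de $\sigma$.

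La preuve du texte contourne entièrement cette difficulté par une récurrence sur l'ordre faible. Le cas initial est $\E{\zeta,k} = \sum_{\mu \geq \zeta}(-1)^{\ell(\mu)-\ell(\zeta)}K_\mu$ (lemme \ref{lem:polynomes_grothendieck:EZeta}, où l'algorithme de Bergeron--Sottile s'applique sans obstacle parce que le facteur gauche réordonné de $\zeta$ est antidominant), puis on montre $\E{\sigma,k}\pi_i = \E{\sigma s_i,k}$ pour chaque pas $s_i$ du chemin de $\zeta$ à $\sigma$ dans l'ordre faible (proposition \ref{prop:polynomes_grothendieck:InductiveStep}), en conjuguant les sous-mots valides par $s_i$ et en appariant les termes qui s'annulent sous $\pi_i$ (lemmes \ref{lem:polynomes_grothendieck:ESigmaPiNonZeroImage} et \ref{lem:polynomes_grothendieck:ESigmaPiZeroImage}). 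Si vous tenez à mener votre approche directe à terme, les énoncés qu'il vous faudrait d'abord établir sont essentiellement le lemme \ref{lem:polynomes_grothendieck:non-inversions} et la proposition \ref{prop:polynomes_grothendieck:validSubwords} sur les sous-mots compatibles.
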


En d'autres termes, on considère $\W_{\sigma, k}$ comme un mot sur les transpositions. Alors $\E{\sigma,k}$ est une somme signée sur les sous-mots de $\W_\sigma$ qui sont des chemins partant de $\sigma$ dans l'ordre de Bruhat. Nous appellerons de tels sous-mots des \emph{sous-mots valides}. Par exemple, si $\sigma = 136254$ et $k=4$ (ce que nous noterons par la suite $\sigma = 1362 \vert 54$), alors $\W_\sigma = ((2,6),(2,5),(4,6),(4,5))$ et

\begin{equation}
\E{1362 \vert 54} = K_{1362 \vert 54} \cdot (1 - (2,6))\cdot(1 - (2,5))\cdot(1 - (4,6))\cdot(1 - (4,5)).
\end{equation}

Quand le produit est développé, $\E{\sigma,k}$ se dessine comme un sous-graphe de l'ordre de Bruhat (cf. figure \ref{fig:polynomes_grothendieck:ESigma}).

\begin{figure}[ht]
\centering
\input{includes/figures/groth_esigma}
\caption{L'ensemble $\E{1362 \vert 54}$}
\label{fig:polynomes_grothendieck:ESigma}
\end{figure}

Les coefficients des éléments $K_\mu$ dans $\E{\sigma, k}$ sont $\pm 1$ en fonction de la longueur de la permutation. Cela correspond bien à ce que l'on obtient dans le théorème~\ref{thm:polynomes_grothendieck:LenartPostnikov} ainsi qu'à la caractérisation que nous avons vue dans la proposition \ref{prop:polynomes_grothendieck:complement}. La somme $\E{\sigma, k}$ peut donc être considérée directement comme un ensemble de permutations et on parlera souvent de \emph{l'ensemble} $\E{\sigma, k}$.

Par ailleurs, on a introduit précédemment l'ordre de $k$-Bruhat. Par définition, une permutation $\mu$ de $\E{\sigma, k}$ est reliée à $\sigma$ par un chemin de $k$-transpositions. On a donc, $\mu$ supérieure à $\sigma$ pour l'ordre de $k$-Bruhat, ce que l'on note $\mu \geq_k \sigma$. Une caractérisation de cet ordre est donnée par \cite[Théorème 1.1.2]{BergeronKbruhat}.

\begin{Theoreme}[Bergeron, Sottile]
\label{thm:polynomes_grothendieck:KBruhat}
On a $\sigma \leq_k \mu$ si et seulement si
\begin{enumerate}[label=(\roman{*}), ref=(\roman{*})]
\item \label{cond:polynomes_grothendieck:kbruhat1} 
$(a \leq k \Rightarrow \sigma(a) \leq \mu(a))$ et
$(b > k \Rightarrow \sigma(b) \geq \mu(b))$,
\item \label{cond:polynomes_grothendieck:kbruhat2}
si $a<b$, $\sigma(a)<\sigma(b)$ et $\mu(a)>\mu(b)$ alors $ a \leq k <b$.
\end{enumerate}
\end{Theoreme}

Cela nous donne une propriété des permutations $\mu \in \E{\sigma, k}$ que nous utiliserons souvent,

\begin{align}
\forall a \leq k, &~~\mu(a) \geq \sigma(a), \\
\forall b > k, &~~\mu(b) \leq \sigma(b).
\end{align}

\subsection{Ordre partiel sur les $k$-transpositions}
\label{sub-sec:polynomes_grothendieck:chaines:ordre}

Dans le théorème \ref{thm:polynomes_grothendieck:LenartPostnikov} et la proposition~\ref{prop:polynomes_grothedieck:ESigma}, l'énumération est donnée en fonction d'une liste ordonnée de transpositions. A première vue, les ordres utilisés semblent différents. Cependant, ils sont tous les deux des extensions linéaires de l'ordre partiel suivant. 

\begin{Definition}
\label{def:polynomes_grothendieck:ordre-partiel}
L'ordre partiel $\ttprec$ sur les $k$-transpositions est défini par
\begin{align}
\label{eq:polynomes_grothendieck:ordre-partiel-1}
&(a,c) \ttprec (a,b) \text{ si } b<c, \\
\label{eq:polynomes_grothendieck:ordre-partiel-2}
&(a,c) \ttprec (b,c) \text{ si } a<b.
\end{align}
\end{Definition}

\begin{Lemme}
\label{lem:polynomes_grothendieck:extensions-lineaires}
L'ordre $\tprec$ de la définition \ref{def:polynomes_grothendieck:Wsigma} est une extension linéaire de $\ttprec$.
\end{Lemme}

\begin{proof}
C'est une conséquence directe du critère donné par la proposition~\ref{prop:prelim_groupe_sym:bruhat-critere} pour les transpositions de Bruhat. Soient $\tau \ttprec \tau'$ avec $\tau$ et $\tau'$ appartenant à un même $\W_{\sigma, k}$. Alors, $\tau$ et $\tau'$ sont des transpositions de Bruhat pour $\sigma$. Si $\tau = (a,c)$ et $\tau'=(a,b)$ avec $b<c$ cela signifie que $\sigma(c)<\sigma(b)$, c'est-à-dire $\tau \tprec \tau'$.  De même, si $\tau = (a,c)$ et $\tau'=(b,c)$ on a $\sigma(a) > \sigma(b)$ donc $\tau \tprec \tau'$.
\end{proof}

\begin{Lemme}
\label{lem:polynomes_grothendieck:extensions-lineaires2}
Soient $\tau, \theta$ deux $k$-transpositions et $\tprec'$ une extension linéaire de l'ordre $\ttprec$. Si $\tau = (a,d)$ et $\theta = (a,c)$ alors $\tau \tprec' \theta$ implique $c<d$. De même  si $\tau = (a,c)$ et $\theta = (b,c)$ alors $\tau \tprec' \theta$ implique $a<b$.
\end{Lemme}

La preuve est immédiate. Ce lemme est vrai en particulier pour l'ordre $\tprec$ sur les transpositions de $\W_{\sigma, k}$. On utilisera à plusieurs reprises les lemmes \ref{lem:polynomes_grothendieck:extensions-lineaires} et  \ref{lem:polynomes_grothendieck:extensions-lineaires2} dans le paragraphe \ref{sec:polynomes_grothendieck:intervalle} pour prouver le théorème \ref{thm:polynomes_grothendieck:interval}.

En donnant une liste ordonnée de $k$-transpositions, le théorème \ref{thm:polynomes_grothendieck:LenartPostnikov} définit lui aussi un ordre. Cet ordre avait en fait déjà été introduit dans \cite[Théorème~4.3]{SottileLenart} où le résultat était déjà donné pour les polynômes de Grothendieck simples. Il se décrit comme suit,

\begin{equation}
\label{eq:polynomes_grothendieck:lenart-order}
(a,b) \tprec' (c,d) \Leftrightarrow b>d \text{~ or ~}(b=d\text{~ and ~}a<c).
\end{equation}
C'est clairement une extension linéaire de $\ttprec$.

\begin{Proposition}
\label{prop:polynomes_grothendieck:wsigma-generalisation}
Soit $\tprec'$ une extension linéaire quelconque de $\ttprec$. On définit $\W_{\sigma,k}'$ et $\E{\sigma,k}'$ de la même façon que $\W_{\sigma,k}$ et $\E{\sigma,k}$ en remplaçant $\tprec$ par $\tprec'$. Alors, $\E{\sigma,k}' = \E{\sigma,k}$.
\end{Proposition}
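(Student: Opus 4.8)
The plan is to reduce Proposition \ref{prop:polynomes_grothendieck:wsigma-generalisation} to a local commutation identity and then verify that identity with the Bruhat criterion. Note first that $\W_{\sigma,k}'$ and $\W_{\sigma,k}$ are the \emph{same} set of $k$-transpositions — the Bruhat transpositions of $\sigma$ — carrying two different linear extensions, $\tprec'$ and $\tprec$, of the partial order $\ttprec$. It is a standard fact that any two linear extensions of a finite poset are joined by a finite sequence of adjacent exchanges, each intermediate term again being a linear extension; moreover, at each such exchange the two swapped elements are necessarily $\ttprec$-incomparable. Hence it suffices to show: if $\tau$ and $\theta$ are $\ttprec$-incomparable $k$-transpositions, then $(1-\tau)\cdot(1-\theta)$ and $(1-\theta)\cdot(1-\tau)$ act identically on the $K$-module, since in $\E{\sigma,k} = K_\sigma \cdot (1-\tau_1)\cdots(1-\tau_m)$ the factors other than the two being exchanged are left untouched. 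Expanding $(1-\tau)(1-\theta) - (1-\theta)(1-\tau)$, this reduces to proving $K_\mu \cdot \tau\theta = K_\mu \cdot \theta\tau$ for every permutation $\mu$.

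Next I would unwind the structure of $\ttprec$-incomparable pairs. By Definition \ref{def:polynomes_grothendieck:ordre-partiel}, the covering moves of $\ttprec$ are ``increase the left endpoint'' and ``decrease the right endpoint'', so $(a,b)\ttprec(a',b')$ holds in the transitive closure precisely when $a\le a'$ and $b\ge b'$; hence $\tau=(a,b)$ and $\theta=(a',b')$ are incomparable iff $a<a'$ and $b<b'$, or $a>a'$ and $b>b'$. Relabelling if necessary, assume $a<a'\le k<b<b'$; then $a<a'<b<b'$, the four positions are distinct, $\tau$ and $\theta$ are disjoint transpositions, and $\mu\tau\theta=\mu\theta\tau$ for every $\mu$. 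Therefore $K_\mu\cdot\tau\theta$ and $K_\mu\cdot\theta\tau$ are both $0$ or both equal to $K_{\mu\tau\theta}$, so everything comes down to the equivalence
\[
\bigl[\tau \text{ a Bruhat transp.\ of } \mu \ \text{and}\ \theta \text{ a Bruhat transp.\ of } \mu\tau\bigr] \iff \bigl[\theta \text{ a Bruhat transp.\ of } \mu \ \text{and}\ \tau \text{ a Bruhat transp.\ of } \mu\theta\bigr].
\]

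The heart of the proof is this last equivalence, which I would establish by a short case analysis using Proposition \ref{prop:prelim_groupe_sym:bruhat-critere}. The key observation is that $a'\in(a,b)$ and $b\in(a',b')$, whereas $b'\notin(a,b)$ and $a\notin(a',b')$; consequently, replacing $\mu$ by $\mu\theta$ alters the Bruhat condition for $\tau$ only through the value at position $a'$, and replacing $\mu$ by $\mu\tau$ alters the Bruhat condition for $\theta$ only through the value at position $b$. Feeding into this the conditions $\mu(a)<\mu(b)$, $\mu(a')<\mu(b')$ and the ``no intermediate value'' clauses of the criterion evaluated at the positions $a'$ and $b$, one splits according to whether $\mu(a)\le\mu(a')$ or $\mu(a)>\mu(a')$; in each branch the hypotheses pin down the relative order of $\mu(a),\mu(a'),\mu(b),\mu(b')$ enough to force both implications. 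I expect this bookkeeping to be the only real obstacle — the preceding reductions are purely formal, whereas here one must track precisely which value falls into which open interval. Once the equivalence is proved, $K_\mu\cdot\tau\theta=K_\mu\cdot\theta\tau$ holds for all $\mu$, each adjacent exchange preserves $\E{\sigma,k}$, and hence $\E{\sigma,k}'=\E{\sigma,k}$.
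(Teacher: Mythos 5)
Your proposal is correct and rests on the same idea as the paper's proof: two $\ttprec$-incomparable $k$-transpositions are disjoint, hence commute, so any subword of $\W_{\sigma,k}'$ can be reordered into a subword of $\W_{\sigma,k}$ representing the same permutation. The added value of your write-up is that you make explicit, and essentially verify via the criterion of Proposition \ref{prop:prelim_groupe_sym:bruhat-critere}, the point the paper's one-line argument leaves implicit — namely that exchanging two adjacent incomparable factors preserves \emph{validity} of the chain, i.e.\ that $\tau$ Bruhat for $\mu$ and $\theta$ Bruhat for $\mu\tau$ is equivalent to $\theta$ Bruhat for $\mu$ and $\tau$ Bruhat for $\mu\theta$, which is exactly what is needed for $(1-\tau)(1-\theta)=(1-\theta)(1-\tau)$ as operators.
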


\begin{proof}
Notons que si $\tau$ et $\theta$ sont deux $k$-transpositions non comparables par $\ttprec$, alors elles commutent. Ainsi, si $w'$ est un sous-mot de $\W_{\sigma,k}'$, on peut réordonner ses transpositions selon $\tprec$ au lieu de $\tprec'$. On obtient alors un mot $w$, sous-mot de $\W_{\sigma, k}$. Par ailleurs, si on interprète les mots de transpositions en termes de permutations, on a $w = w'$.
\end{proof}

Toutes les propriétés fondamentales de $\E{\sigma, k}$ viennent de l'ordre partiel sur les $k$-transpositions. L'ordre total n'est imposé que pour une facilité d'écriture et de construction. En particulier, tous les résultats que nous prouverons par la suite ne dépendent que de l'ordre partiel. On démontre en particulier le lemme suivant qui nous servira de nombreuses fois.

\begin{Lemme}
\label{lem:polynomes_grothendieck:non-inversions}
Soit $\mu \geq_k \sigma$ tel qu'un chemin entre $\sigma$ et $\mu$ soit donné par $w = \tau_1 \tprec' \dots \tprec' \tau_r$ où $\tprec'$ est une extension linéaire de $\ttprec$. Et soit $\theta =(a,b)$ une $k$-transposition telle que $\tau_r \tprec' \tau$. Alors

\begin{equation}
\label{eq:polynomes_grothendieck:non-inversions1}
\sigma(a) < \sigma(b) \Rightarrow \mu(a) < \mu(b)
\end{equation}
De façon plus forte, pour $c$ tel que $a < c < b$ alors

\begin{equation}
\label{eq:polynomes_grothendieck:non-inversions2}
\sigma(a) < \sigma(c) < \sigma(b) \Rightarrow \mu(a) < \mu(c) < \mu(b)
\end{equation}

\end{Lemme}

\begin{proof}
Prouvons d'abord la propriété plus faible \eqref{eq:polynomes_grothendieck:non-inversions1}. Il suffit de regarder une seule étape. Soit $\sigma$ telle que $\sigma(a) < \sigma(b)$ et $\tau$ une $k$-transposition de Bruhat pour $\sigma$ avec $\tau \tprec' \theta$. On veut prouver que $\sigma \tau (a) < \sigma \tau (b)$. Si $\tau = (a,c)$ alors par le lemme \ref{lem:polynomes_grothendieck:extensions-lineaires2} on a $c>b$. Comme $\tau$ est une transposition de Bruhat pour $\sigma$, cela signifie $\sigma(b) > \sigma(c)$. De même si $\tau = (c,b)$, on a $a < c$ et donc $\sigma(c)<\sigma(a)$.

Pour prouver \eqref{eq:polynomes_grothendieck:non-inversions2}, on se place d'abord dans le cas où $c \leq k$. Comme $(a,b) \ttprec (c,b)$, il est clair que $\mu(c) < \mu(b)$. Prouvons sur une étape $\tau$ que si $\sigma(a) < \sigma(c)$ alors $\sigma \tau (a) < \sigma \tau (c)$. On a que $\sigma \tau (c) \geq \sigma(c)$ donc seule l'action de $\tau$ sur $a$ nous intéresse. Si $\tau = (a,d)$ alors comme $a < c < d$, et que $\tau$ est une transposition de Bruhat, on a nécessairement $\sigma(c) > \sigma(d)$. La preuve pour le cas $c > k$ est exactement symétrique. 
\end{proof}

Il est clair qu'une chaîne de l'énumération donnée en proposition \ref{prop:polynomes_grothedieck:ESigma} peut être réordonnée pour correspondre à une chaîne du théorème \ref{thm:polynomes_grothendieck:LenartPostnikov}. Il n'est pas clair que l'opération inverse puisse être effectuée. En effet, le théorème \ref{thm:polynomes_grothendieck:LenartPostnikov} part d'une liste contenant toutes les $k$-transpositions alors que la proposition \ref{prop:polynomes_grothedieck:ESigma} n'utilise que les $k$-transpositions de Bruhat d'une certaine permutation. Cependant, la liste de $k$-transpositions du théorème \ref{thm:polynomes_grothendieck:LenartPostnikov} peut en fait être réduite à $\W_{\sigma, k}$.

\begin{Lemme}
\label{lem:polynomes_grothendieck:equiv-lenart}
Les transpositions apparaissant dans les chaînes du théorème \ref{thm:polynomes_grothendieck:LenartPostnikov} sont des transpositions de Bruhat pour la permutation $\sigma$.
\end{Lemme}

\begin{proof}
Soit $\tau_1 \tprec' \dots \tprec' \tau_\ell$ une chaîne du théorème \ref{thm:polynomes_grothendieck:LenartPostnikov} et supposons qu'il existe $\tau_i = (a,b)$ une transposition de la chaîne qui ne soit pas de Bruhat pour $\sigma$. Soit $w = \tau_1 \tprec' \dots \tprec' \tau_{i-1}$ et $\mu = \sigma w$. Comme $\tau$ n'est pas une transposition de Bruhat pour $\sigma$ alors il existe $c$ tel que $a < c < b$ et $\sigma(a) < \sigma(c) < \sigma(b)$. Par le lemme \ref{lem:polynomes_grothendieck:non-inversions}, on a alors $\mu(a) < \mu(c) < \mu(b)$ ce qui contredit le fait que $\tau_i$ soit une transposition de Bruhat pour $\mu$.
\end{proof}

\subsection{Preuve directe}
\label{sub-sec:polynomes_grothedieck:chaines:preuve}

Nous avons vu au paragraphe \ref{sec:polynomes_grothendieck:cloture} que 

\begin{equation}
\label{eq:polynomes_grothendieck:BigInterval2}
K_\omega \hat{\pi}_{\omega \zeta} = \sum_{\mu \geq \zeta} (-1)^{\ell(\mu) - \ell(\zeta)} K_\mu .
\end{equation}

La proposition \ref{prop:polynomes_grothedieck:ESigma} se prouve par récurrence sur la longueur de la permutation $\zeta^{-1} \sigma$ en appliquant les opérateurs $\pi$ sur la somme \eqref{eq:polynomes_grothendieck:BigInterval2}. Le cas initial est donné par le lemme suivant.

\begin{Lemme}
\label{lem:polynomes_grothendieck:EZeta}
\begin{equation}
\label{eq:EZeta}
\E{\zeta,k} = \sum_{\mu \geq \zeta} (-1)^{\ell(\mu) - \ell(\zeta)}K_\mu.
\end{equation}
\end{Lemme}

\begin{proof}[Démonstration lemme \ref{lem:polynomes_grothendieck:EZeta}]
Le lemme \ref{lem:polynomes_grothendieck:EZeta} revient à dire que l'ensemble $\E{\sigma, k}$ est égal à l'intervalle $[\zeta, \omega]$. Par construction, on a $\E{\zeta, k} \subset [\zeta, \omega]$. Il reste à prouver $[\zeta, \omega] \subset \E{\zeta, k}$.

Soit $\mu \in [\zeta, \omega]$, on prouve d'abord que $\mu \geq_k \zeta$. Soit $a \leq k$, la comparaison des clés gauches de $\mu$ et $\zeta$ nous dit que le facteur gauche réordonné de $\mu$ est plus grand valeur par valeur que le facteur gauche réordonné de $\zeta$. Dans le cas de $\zeta$, ce facteur est antidominant et on a donc $\mu(a) \geq \zeta(a)$. De même pour $b>k$, on a $\mu(b) \leq \zeta(b)$. La condition \ref{cond:polynomes_grothendieck:kbruhat1} du théorème \ref{thm:polynomes_grothendieck:KBruhat} est donc vérifiée. Par ailleurs si $a < b$ et $\zeta(a) < \zeta(b)$, alors $a \leq k < b$ et donc la condition \ref{cond:polynomes_grothendieck:kbruhat2} est aussi vérifiée.

Il existe donc un chemin dans l'ordre de Bruhat entre $\zeta$ et $\mu$ formé de $k$-transpositions. Il faut prouver que ce chemin est un sous mot de $\W_{\zeta, k}$. Pour cela on utilise l'algorithme décrit dans \cite[Algorithme 3.1.1]{BergeronKbruhat}. En fonction de deux permutations $\zeta \leq_k \mu$, un processus retourne une $k$-transposition $\tau$ telle que $\zeta \leq_k \mu \tau \leq_k \mu$. La transposition $\tau$ est donc le dernier élément d'un chemin entre $\zeta$ et $\mu$. On applique ensuite récursivement le processus sur $\mu \tau$. Le choix de $\tau$ est fait comme suit :
\begin{itemize}
\item choisir $a \leq k$ avec $\zeta(a)$ minimal pour $\zeta(a) < \mu(a)$,
\item choisir $b > k$ avec $\zeta(b)$ maximal pour $\mu(b) < \mu(a) \leq \zeta(b)$,
\end{itemize}
alors $\tau = (a,b)$. Bergeron et Sottile prouvent que $\mu$ est bien un successeur de $\mu \tau$ pour Bruhat. On a par construction $\zeta(a) < \zeta(b)$. Or $\zeta(i) < \zeta(a)$ pour tout $i$ tel que $a < i \leq k$ et $\zeta(i) > \zeta(b)$ pour tout $i$ tel que $k < i \leq b$. De là, $\tau$ est une transposition de Bruhat pour $\zeta$ et donc $\tau \in \W_{\zeta, k}$. Par ailleurs, comme on choisit à chaque étape $\zeta(a)$  minimal et $\zeta(b)$ maximal, on aura $\tau' \tprec \tau$ pour $\tau'$ choisie après $\tau$. Le chemin obtenu est donc bien un sous-mot de $\W_{\zeta, k}$.
\end{proof}

Prouver la proposition \ref{prop:polynomes_grothedieck:ESigma} revient maintenant à montrer

\begin{equation}
\label{eq:polynomes_grothendieck:rec}
\E{\sigma, k} = \E{\zeta, k} \pi_{\zeta^{-1} \sigma}.
\end{equation}

Il suffit pour cela de montrer une seule étape.

\begin{Proposition}
\label{prop:polynomes_grothendieck:InductiveStep}
Soit $\sigma \in \Sym{n}$ et on suppose que l'hypothèse de récurrence \eqref{eq:polynomes_grothendieck:rec} est vérifiée. Soit $s_i$ une transposition simple telle que $i \neq k$ et $\sigma s_i < \sigma$, alors

\begin{equation}
\label{eq:polynomes_grothendieck:InductiveStep}
\E{\sigma,k} \pi_i = \E{\sigma s_i,k}.
\end{equation}

\end{Proposition}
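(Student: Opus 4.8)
Le plan est de démontrer la proposition \ref{prop:polynomes_grothendieck:InductiveStep} par une comparaison directe des deux sommes signées de sous-mots. Commençons par quelques observations préalables. Comme $\sigma s_i < \sigma$, on a $\sigma_i > \sigma_{i+1}$, donc $\sigma' := \sigma s_i$ a une montée en $i$. Comme $i \neq k$, la transposition $s_i$ appartient à $\Sym k \times \Sym{n-k}$, de sorte que $\sigma$ et $\sigma'$ sont dans le même coset~; on a donc $\zeta(\sigma',k) = \zeta(\sigma,k) = \zeta$ et $\ell(\zeta^{-1}\sigma') = \ell(\zeta^{-1}\sigma) + 1$, ce qui confirme que \eqref{eq:polynomes_grothendieck:InductiveStep} est bien l'étape nécessaire pour propager la récurrence \eqref{eq:polynomes_grothendieck:rec} de $\sigma$ à $\sigma'$. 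Notons aussi que $\zeta$, étant l'élément maximal du coset, a ses deux blocs de valeurs $\zeta_1 > \dots > \zeta_k$ et $\zeta_{k+1} > \dots > \zeta_n$ décroissants, donc $\zeta$ a une descente en $i$.

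Pour deviner le résultat attendu, je commencerais par calculer $\E{\sigma,k}\pi_i$ dans la base $\hK$. Par l'hypothèse de récurrence jointe au lemme \ref{lem:polynomes_grothendieck:EZeta} et à la proposition \ref{prop:polynomes_grothendieck:HatK}, on a $\E{\sigma,k} = \hK_\zeta \pi_{\zeta^{-1}\sigma} = \sum_{\mu \in [\sigma,\zeta]} \hK_\mu$. Comme $\pi_i = 1 + \hpi_i$ et que, par \eqref{eq:polynomes_action:0hecke-hpi}, $\hK_\mu \pi_i$ vaut $\hK_\mu + \hK_{\mu s_i}$ si $\mu$ a une descente en $i$ et $0$ sinon, on obtient $\E{\sigma,k}\pi_i = \sum_{\mu \in [\sigma,\zeta],\ \mu_i > \mu_{i+1}} (\hK_\mu + \hK_{\mu s_i})$. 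Un court argument avec le lemme d'échange (appliqué à $\sigma$ et $\mu$, puis à $\nu$ et $\zeta$, en utilisant $\sigma s_i < \sigma$ et $\zeta s_i < \zeta$) montre que les ensembles $\lbrace \mu \in [\sigma,\zeta] : \mu_i > \mu_{i+1} \rbrace$ et $\lbrace \mu s_i : \mu \in [\sigma,\zeta],\ \mu_i > \mu_{i+1} \rbrace$ sont disjoints et de réunion $[\sigma',\zeta]$, d'où $\E{\sigma,k}\pi_i = \sum_{\nu \in [\sigma',\zeta]} \hK_\nu$. L'énoncé à prouver équivaut donc à l'identité $\E{\sigma',k} = \sum_{\nu \in [\sigma',\zeta]} \hK_\nu$ (ce qui est aussi cohérent avec la caractérisation de la proposition \ref{prop:polynomes_grothendieck:complement}).

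Le cœur de la preuve consiste à établir cette dernière identité en appariant directement les sous-mots. L'observation clé est que la conjugaison $\tau \mapsto s_i \tau s_i$ est une involution de l'ensemble des $k$-transpositions — pour $i < k$ elle fixe chaque $(a,b)$ avec $a \notin \lbrace i, i+1 \rbrace$ et échange $(i,b) \leftrightarrow (i+1,b)$, le cas $i > k$ étant traité de façon symétrique par la symétrie de retournement de l'ordre de Bruhat — et que, de $\sigma = \sigma' s_i$, on tire pour tout mot $w = \tau_1 \cdots \tau_s$ l'identité $\sigma w = \sigma' (s_i \tau_1 s_i) \cdots (s_i \tau_s s_i) s_i$. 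En utilisant cela, le critère de la proposition \ref{prop:prelim_groupe_sym:bruhat-critere} pour les transpositions de Bruhat, la description de $\W_{\sigma,k}$ et $\W_{\sigma',k}$, ainsi que les propriétés de l'ordre partiel $\ttprec$ (lemmes \ref{lem:polynomes_grothendieck:extensions-lineaires}, \ref{lem:polynomes_grothendieck:extensions-lineaires2} et \ref{lem:polynomes_grothendieck:non-inversions}), je mettrais en place la correspondance entre les sous-mots valides de $\W_{\sigma,k}$ — après application de la règle de $\pi_i$, qui supprime ou regroupe les termes $K_{\sigma w}$ pour lesquels $\sigma w$ monte en $i$ — et les sous-mots valides de $\W_{\sigma',k}$, en vérifiant que les signes, c'est-à-dire les parités des longueurs des sous-mots, coïncident.

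L'obstacle principal est précisément cet appariement final. Les transpositions $(i,b)$ et $(i+1,b)$ sont rangées $(i,b) \tprec (i+1,b)$ dans $\W_{\sigma,k}$ mais $(i+1,b) \tprec (i,b)$ dans $\W_{\sigma',k}$~: leur ordre relatif est donc inversé. De plus, pour un $b > k$ fixé, il peut arriver que $(i,b)$ et $(i+1,b)$ soient toutes deux des $k$-transpositions de Bruhat de $\sigma$ alors qu'une seule l'est pour $\sigma'$, et réciproquement. Il faut mener une analyse de cas soignée sur les positions relatives des valeurs $\sigma_i$, $\sigma_{i+1}$, $\sigma_b$ et des valeurs situées strictement entre les positions $i+1$ et $b$, et vérifier dans chaque cas que l'application de $\pi_i$ — qui trie les positions $i$ et $i+1$, et dont la relation quadratique $\pi_i^2 = \pi_i$ produit les annulations — réalise exactement l'inversion de l'ordre de ces transpositions ainsi que les changements quant à celles qui sont de Bruhat. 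Une fois ce travail local effectué, il se combine avec le calcul dans la base $\hK$ du paragraphe précédent pour donner $\E{\sigma,k}\pi_i = \E{\sigma',k}$.
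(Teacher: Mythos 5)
Your opening computation in the $\hK$ basis is correct: combining the recurrence hypothesis with Proposition \ref{prop:polynomes_grothendieck:HatK}, the action \eqref{eq:polynomes_action:0hecke-hpi} of $\pi_i=1+\hpi_i$, and the lifting property of the Bruhat order does give $\E{\sigma,k}\pi_i=\sum_{\nu\in[\sigma s_i,\zeta]}\hK_\nu$. But this adds nothing beyond what Proposition \ref{prop:polynomes_grothendieck:HatK} already yields for $\sigma s_i$ (since $\pi_{\zeta^{-1}\sigma s_i}=\pi_{\zeta^{-1}\sigma}\pi_i$): the whole content of the proposition is the identification of that sum with the combinatorially defined $\E{\sigma s_i,k}$, i.e.\ with the signed sum over valid subwords of $\W_{\sigma s_i,k}$, and that is precisely where your argument stops. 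You correctly name the obstacles — the conjugation $\tau\mapsto s_i\tau s_i$ reverses the relative order of $(i,b)$ and $(i+1,b)$, and a transposition can be a Bruhat transposition for $\sigma$ without its conjugate being one for $\sigma s_i$ — but the ``analyse de cas soignée'' is announced in the conditional and never carried out. That case analysis \emph{is} the proof.

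Concretely, two statements are missing, and they are exactly the two lemmas the paper isolates. First (Lemme \ref{lem:polynomes_grothendieck:ESigmaPiNonZeroImage}): the conjugation of subwords is a bijection only after restricting, on the $\W_{\sigma,k}$ side, to those valid $w$ such that $\sigma w s_i\ngtr\sigma$; showing that for such $w$ every conjugated letter remains a Bruhat transposition along the conjugated chain requires propagating the condition $\tilde\mu s_i\ngtr\sigma$ to every intermediate permutation $\tilde\mu$ of the chain and excluding letters $(i+1,b)$ with $\sigma(b)>\sigma(i)$ — none of which follows from the involution on transpositions alone, and some transpositions of $\W_{\sigma,k}$ genuinely disappear from $\W_{\sigma s_i,k}$ (the ``branches coupées''), so a global pairing of all subwords cannot work. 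Second (Lemme \ref{lem:polynomes_grothendieck:ESigmaPiZeroImage}): the remaining terms, those $\mu\in\E{\sigma,k}$ with $\mu s_i>\sigma$, must cancel in pairs under $\pi_i$, which requires proving $\mu s_i\in\E{\sigma,k}$; the paper obtains this from the characterization of Proposition \ref{prop:polynomes_grothendieck:complement} (valid for $\E{\sigma,k}$ by the recurrence hypothesis) together with a counting argument on left keys, not from any property of the word $\W_{\sigma,k}$ itself, and nothing in your sketch supplies a substitute. Without these two steps the proposal is a plan rather than a proof; with them, it coincides with the paper's argument.
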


Remarquons d'abord une première propriété.

\begin{Lemme}
$\E{\sigma s_i, k} \cap \E{\sigma,k} = \emptyset$, et plus précisément $ \forall \nu \in \E{\sigma s_i, k}$, $\nu \ngtr \sigma$.
\end{Lemme}
\begin{proof}
Si $i < k$, soit $p_\mu = \# \lbrace \mu_j \geq \sigma_i, j \leq i \rbrace$, le nombre de valeurs supérieures ou égales à $\sigma_i$ dans le facteur de taille $i$ d'une permutation. Le nombre $p_\mu$ est constant sur $\E{\sigma s_i, k}$ et égal à $p_{\sigma s_i} = p_{\sigma} -1$. En effet, toute transposition $(a,b)$ avec $a \leq i < k < b$ telle que $\sigma s_i(b) > \sigma(i)$ n'est pas une transposition de Bruhat pour $\sigma s_i$. On en conclut que $\nu \ngeq \sigma$ pour $\nu \in \E{\sigma s_i, k}$. Un raisonnement symétrique peut être fait si $ i > k$.
\end{proof}

La proposition \ref{prop:polynomes_grothendieck:InductiveStep} est une conséquence des deux lemmes suivant.

\begin{Lemme}
\label{lem:polynomes_grothendieck:ESigmaPiNonZeroImage}
On a les implications suivantes :
\begin{enumerate}
\item \label{lem:polynomes_grothendieck:ESigmaPiNonZeroImage-LtoR}
Soit $w = \tau_{i_1} \dots \tau_{i_r}$ un sous-mot valide de $\W_{\sigma, k}$ et tel que pour $\mu = \sigma w$, alors $\mu s_i \ngtr \sigma$. Alors, $w' = (s_i\tau_{i_1}s_i)(s_i\tau_{i_2}s_i)\dots (s_i\tau_{i_r}s_i)$
est un sous-mot valide de $\W_{\sigma s_i, k}$.

\item \label{lem:polynomes_grothendieck:ESigmaPiNonZeroImage-RtoL}
Inversement, si $w'=t_1 \dots t_r$ est un sous-mot valide de $W_{\sigma s_i, k}$ alors 
$w = (s_i t_1 s_i) \dots (s_i t_r s_i)$
est un sous-mot valide de $W_{\sigma, k}$
\end{enumerate}
\end{Lemme}

\begin{Lemme}
\label{lem:polynomes_grothendieck:ESigmaPiZeroImage}
Soit $\mu \in \E{\sigma, k}$ tel que $\mu s_i > \sigma$, alors $\mu s_i \in \E{\sigma,k}$.
\end{Lemme}

\begin{proof}[Démonstration du lemme \ref{lem:polynomes_grothendieck:ESigmaPiNonZeroImage}] 
Cette démonstration utilise principalement des arguments de comparaisons de clés dans l'ordre de Bruhat, en particulier la proposition \ref{prop:prelim_groupe_sym:bruhat:compcle_gen}. Pour faciliter l'écriture, nous notons $\sigma(1, \dots, i)$ le facteur gauche de taille $i$ de $\sigma$ et $\sigma(1, \dots, i) \leq \mu(1, \dots, i)$ signifie que le facteur gauche réordonné de $\sigma$ est plus petit terme à terme que le facteur gauche réordonné de $\mu$.

L'implication (\ref{lem:polynomes_grothendieck:ESigmaPiNonZeroImage-RtoL}) est immédiate. Si $t$ est une $k$-transposition de Bruhat pour $\sigma s_i$, il est clair que $s_i t s_i$ est une $k$-transposition de Bruhat pour $\sigma$. Par ailleurs, si $t \tprec t'$ dans $\W_{\sigma s_i, k}$ alors $ s_i t s_i \tprec s_i t' s_i$ dans $\W_{\sigma, k}$ car $t$ et $s_i t s_i$ agissent sur les mêmes valeurs. Donc tout sous-mot valide $t_1 \dots t_r$ de $\W_{\sigma s_i, k}$ donne un sous-mot valide $(s_i t_1 s_i) \dots (s_i t_r s_i)$ de $\W_{\sigma, k}$.

 \`A présent, soit  $w = \tau_{i_1} \dots \tau_{i_r}$ un sous-mot valide de $\W_{\sigma, k}$ tel que pour $\mu = \sigma w$, alors $\mu s_i \ngtr \sigma$. Sans perte de généralité, on supposera que $i<k$. La preuve est symétrique pour $i>k$.
 
On commence par prouver que pour toute permutation $\tilde{\mu}$ de la chaîne donnée par $w$ entre $\sigma$ et $\mu$, alors $\tilde{\mu}s_i \ngtr \sigma$. On sait que $\mu s_i \ngtr \sigma$, ce qui signifie qu'il existe au moins un facteur gauche de $\mu s_i$ qui n'est pas plus grand que le facteur correspondant de $\sigma$. Par ailleurs, comme $\mu \geq \sigma$ tous les facteurs gauches de $\mu$ sont plus grands que les facteurs gauches correspondants de $\sigma$. Le seul facteur gauche qui diffère entre $\mu$ et $\mu s_i$ est le facteur gauche de taille $i$. On a donc
\begin{align}
&\mu s_i (1, \dots, i) \ngeq \sigma(1, \dots, i)\text{~ et }\\
&\mu s_i (1, \dots, i) = \lbrace \mu(1), \dots, \mu(i-1), \mu(i+1) \rbrace.
\end{align}
Par ailleurs comme $\tilde{\mu} <_k \mu$, on a $\tilde{\mu}(j) \leq \mu(j)$ pour tout 
$j\leq k$ et donc $\tilde{\mu} s_i (1, \dots, i) \ngeq \sigma(1, \dots, i)$.

A partir de ce résultat, on prouve que pour toute transposition $\tau$ de $w$, on a $\sigma \tau(i) > \sigma \tau (i+1)$. On sait que $\sigma(i) > \sigma(i+1)$ et $i<k$, le seul cas à considérer est donc celui où $\tau = (i+1,b)$. On veut prouver que $\sigma(b) < \sigma(i)$. On suppose qu'une telle transposition appartient à $w$ et qu'elle relie une permutation $\nu$ à une permutation $\nu'$. La permutation $\nu'$ est dans la chaîne donnée par $w$ entre $\sigma$ et $\mu$, par le résultat précédent, on a donc $\nu' s_i \ngtr \sigma$ et plus précisément $ \lbrace \nu'(1), \dots, \nu'(i-1), \nu'(i+1) \rbrace \ngeq \lbrace \sigma(1), \dots, \sigma(i) \rbrace$. Comme $\nu' >_k \sigma$, on a que $\nu'(j) \geq \sigma(j)$ pour $j \leq i-1$. Nécessairement, $\nu'(i+1) < \sigma(i)$ pour avoir $\nu' s_k \ngtr \sigma$. Cela nous donne que $\nu(b) < \sigma(i)$ car $\nu(b) = \nu'(i+1)$.

Supposons à présent que $\sigma(b) > \sigma(i)$. Cela signifie que la valeur en $b$ a été modifiée par $w$ car on a vu que $\nu(b) < \sigma(i)$. Il existe donc une transposition $(c,b) \tprec \tau$ dans $w$ telle que  $\sigma(c) < \sigma(i) < \sigma(b)$. Pour que $(c,b)$ soit une transposition de Bruhat, il faut $c > i$. Cependant, on a $(c,b) \tprec \tau = (i+1,b)$ et donc $c<i+1$ (par le lemme \ref{lem:polynomes_grothendieck:extensions-lineaires2}). On arrive à une contradiction et donc $\sigma(b) < \sigma(i)$.

De cette propriété, on déduit facilement que pour chaque transposition $\tau$ de $w$, la transposition $s_i \tau s_i$ est une transposition de Bruhat pour $\sigma s_i$. Et donc, $(s_i \tau_{i_1} s_i) \dots (s_i \tau_{i_r} s_i)$ est un sous-mot de $\W_{\sigma s_i, k}$. Par ailleurs, ce sous-mot est valide car si $\nu$ et $\nu' = \nu \tau$ sont deux permutations de la chaîne, alors $\nu'(i+1) < \nu(i)$ ce qui fait que $s_i \tau s_i$ est une transposition de Bruhat pour $\nu s_i$. 
\end{proof}

\begin{proof}[Démonstration du lemme \ref{lem:polynomes_grothendieck:ESigmaPiZeroImage}]
On utilise la proposition \ref{prop:polynomes_grothendieck:complement} qui est vraie pour $\E{\sigma,k}$ par l'hypothèse de récurrence. On montre que si $\mu \in \E{\sigma, k}$ avec $\mu s_i > \sigma$, il n'existe pas de permutation $\sigma' \in \Succs_{\sigma}^{-k}$ telle que $\sigma' \leq \mu s_i$. Cela signifie que $\mu s_i \in \E{\sigma, k}$. La propriété est triviale pour $\mu > \mu s_i$, on se place donc dans le cas inverse $\mu < \mu s_i$.

Supposons alors qu'il existe $\sigma' = \sigma \tau$ un successeur de $\sigma$ tel que $\tau = (a,b)$ ne soit pas une $k$-transposition et tel que $\sigma' \leq \mu s_i$. Comme précédemment, on suppose $i<k$. Tout d'abord, on a nécessairement $a \leq i < b$. En effet, on sait que $\mu s_i \geq \sigma'$ et que $\mu \ngeq \sigma'$, or le seul facteur gauche qui diffère entre $\mu$ et $\mu s_i$ est celui de taille $i$. On en déduit que $\mu(1, \dots, i) \ngeq \sigma'(1, \dots, i)$. Or $\mu(1, \dots, i) \geq \sigma(1, \dots, i)$ donc $\sigma(1, \dots, i) \neq \sigma'(1, \dots, i)$.

\`A présent, montrons

\begin{equation}
\label{eq:polynomes_grothendieck:proof-ESigmaPiZeroImage-eg1}
\forall j<b, (\sigma(j) < \sigma(b)  \Leftrightarrow \mu(j) < \sigma(b)).
\end{equation}

Comme on a $\mu(j) \geq \sigma(j)$ pour $j\leq k$, l'implication de droite à gauche est claire. Supposons à présent qu'il existe $c < b$ avec $\sigma(c) < \sigma(b)$ et $\mu(c) \geq \sigma(b)$. Cela signifie qu'on a appliqué une transposition $(c,d)$ avec $c < b <d$ et $\sigma(c) < \sigma(b) < \sigma(d)$ ce qui n'est pas possible.

Par ailleurs, on prouve aussi
\begin{equation}
\label{eq:polynomes_grothendieck:proof-ESigmaPiZeroImage-eg2}
 \forall j \leq a, (\mu(j) < \sigma(b) \Leftrightarrow  \mu s_i(j) < \sigma(b)) .
\end{equation}

Pour $a<i$, on a $\mu(1, \dots, j) = \mu s_i(1, \dots j)$ et donc \eqref{eq:polynomes_grothendieck:proof-ESigmaPiZeroImage-eg2} est vraie. Il reste à considérer $a=i$, dans ce cas $\sigma(i) = \sigma(a) < \sigma(b)$ et donc $\mu(i) <\sigma(b)$ par \eqref{eq:polynomes_grothendieck:proof-ESigmaPiZeroImage-eg1}. De même, comme $\sigma(i+1) < \sigma(i) < \sigma(b)$ on a aussi $\mu s_i (i) = \mu(i+1) < \sigma(b)$ par \eqref{eq:polynomes_grothendieck:proof-ESigmaPiZeroImage-eg1}. En effet, comme $a=i$ et $(a,b) \neq s_i$ alors $i+1 < b$. 

\`A présent par \eqref{eq:polynomes_grothendieck:proof-ESigmaPiZeroImage-eg1} et \eqref{eq:polynomes_grothendieck:proof-ESigmaPiZeroImage-eg2}, on a

\begin{align}
\# \lbrace j \leq a~;~\mu s_i(j) < \sigma(b) \rbrace &= \#  \lbrace j \leq a~;~\sigma(j) < \sigma(b) \rbrace \\
&= \#  \lbrace j \leq a~;~\sigma'(j) < \sigma(b) \rbrace + 1,
\end{align}
ce qui contredit le fait $\mu s_i \geq \sigma'$. 
\end{proof}

\begin{proof}[Démonstration de la proposition \ref{prop:polynomes_grothendieck:InductiveStep}]
Une conséquence du lemme \ref{lem:polynomes_grothendieck:ESigmaPiNonZeroImage} est que

\begin{equation}
\E{\sigma s_i, k} = \left( \sum_{ \substack{\mu \in \E{\sigma,k} \\ \mu s_i \ngtr \sigma}} (-1)^{\ell(\mu) - \ell(\sigma)} K_\mu \right) \pi_i. 
\end{equation}

En effet, pour tous les éléments $K_\mu$ de la somme ci-dessus, on a $K_\mu \pi_i = K_{\mu s_i}$ car $\mu s_i \ngtr \sigma$ implique $\mu s_i < \mu$. L'implication (\ref{lem:polynomes_grothendieck:ESigmaPiNonZeroImage-LtoR}) du lemme \ref{lem:polynomes_grothendieck:ESigmaPiNonZeroImage} nous dit que les $K_{\mu s_i}$ sont bien dans la somme $\E{\sigma s_i, k}$ et l'implication (\ref{lem:polynomes_grothendieck:ESigmaPiNonZeroImage-RtoL}) nous dit qu'elle ne contient pas d'autres éléments. \`A présent,

\begin{equation}
\E{\sigma,k} \pi_i = \E{\sigma s_i, k} + \sum_{\substack{\mu \in \E{\sigma,k} \\ \mu s_i > \sigma}} (-1)^{ \ell(\mu) - \ell(\sigma)} K_\mu \pi_i.
\end{equation} 

Par le lemme \ref{lem:polynomes_grothendieck:ESigmaPiZeroImage}, la seconde partie de la somme est égale à 0. En effet, chaque élément $K_\mu$ est couplé à un élément $K_{\mu s_i}$ qui apparaît lui aussi dans la somme avec un signe opposé. Pour chaque couple, on a alors $(K_\mu - K_{\mu s_i})\pi_i = 0$ par définition de l'opérateur $\pi_i$.
\end{proof}

Pour illustrer cette preuve, on peut dessiner l'exemple suivant (figure \ref{fig:polynomes_grothendieck:ESigmaPi_i}). Chaque élément de $\E{\sigma, k}$ (sur la gauche) est couplé par $s_i$ soit avec un élément de $\E{\sigma s_i, k}$ (sur la droite), soit avec un autre élément de $\E{\sigma, k}$.

\begin{figure}[ht]
\centering
\input{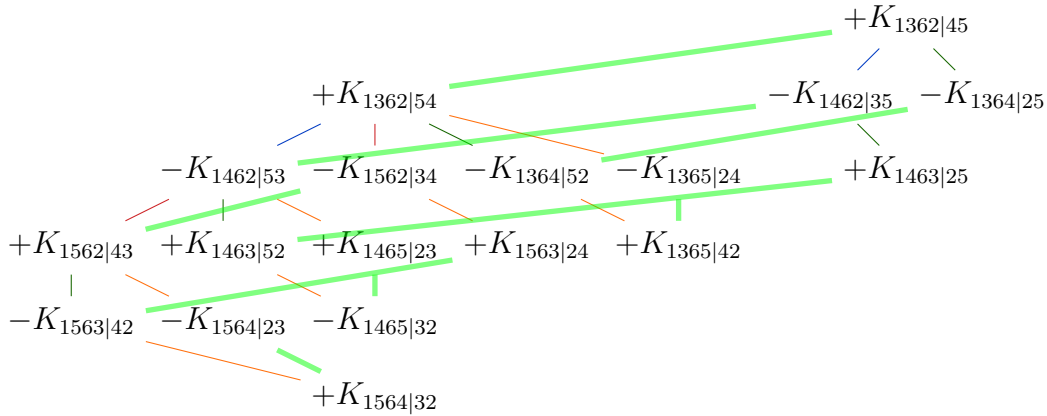}
\caption{Illustration du calcul $\E{1362|54} \pi_4 = \E{1362|45}$}
\label{fig:polynomes_grothendieck:ESigmaPi_i}
\end{figure}

\begin{figure}[ht]
\centering
\input{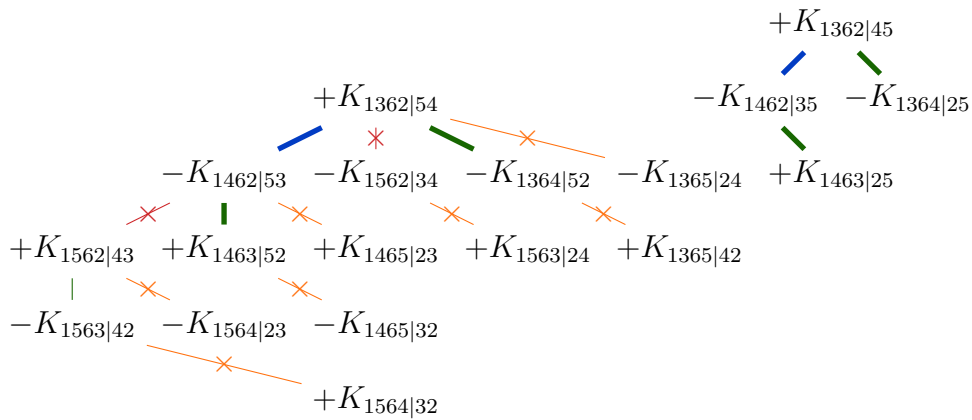}
\caption{Branches "coupées" dans $\E{1362|54} \pi_4 = \E{1362|45}$}
\label{fig:polynomes_grothendieck:cuttingBranches}
\end{figure}

Par ailleurs $\E{\sigma s_i, k}$ est en fait un sous-arbre de $\E{\sigma, k}$ après conjugaison des transpositions. Les transpositions de $\W_{\sigma, k} = (\tau_1, \dots \tau_m)$ forment les branches de l'arbre. Le lemme \ref{lem:polynomes_grothendieck:ESigmaPiNonZeroImage} nous dit que $\W_{\sigma s_i, k}$ est un sous-mot conjugué de $\W_{\sigma, k}$: seules les transpositions $\tau$ telles que $s_i \tau s_i$ soit toujours une transposition de Bruhat pour $\sigma s_i$ sont conservées. Cela revient à "couper" les branches de $\E{\sigma, k}$ qui correspondent à des transpositions non conservées (cf. figure \ref{fig:polynomes_grothendieck:cuttingBranches}).

\section{Structure d'intervalle}
\label{sec:polynomes_grothendieck:intervalle}

Nous savons par le corollaire \ref{thm:polynomes_grothendieck:cloture} que l'ensemble $\E{\sigma, k}$ est clos par intervalle. Pour prouver le théorème \ref{thm:polynomes_grothendieck:interval}, il faut prouver que $\E{\sigma, k}$ possède un unique élément maximal. La suite de transpositions $\W_{\sigma, k}$ est toujours un chemin valide à partir de $\sigma$ dans l'ordre de Bruhat. Et donc, l'élément maximal de $\E{\sigma, k}$ ne peut être que la permutation $\sigma\W_{\sigma, k}$, c'est-à-dire la permutation où l'on a appliqué toutes les transpositions. C'est la seule permutation de longueur maximale $\ell(\sigma) + |\W_{\sigma,k}|$. La démonstration du théorème \ref{thm:polynomes_grothendieck:interval} sera donc achevée par la preuve du lemme suivant.

\begin{Lemme}
\label{lem:polynomes_grothendieck:intervalMax}
\begin{equation}
\label{eq:polynomes_grothendieck:intervalMax}
\forall \mu \in \E{\sigma, k},~~ \mu \leq \eta(\sigma, k)
\end{equation}
où $\eta(\sigma, k) = \sigma \W_{\sigma, k}$.
\end{Lemme}

Nous prouvons ce lemme paragraphe \ref{sub-sec:polynomes_grothendieck:intervalle:preuve}. Nous avons d'abord besoin d'étudier plus précisément la structure de l'ensemble $\E{\sigma, k}$.

\subsection{Sous-mots compatibles}
\label{sub-sec:polynomes_grothendieck:intervalle:compatible}

\begin{Definition}
\label{def:polynomes_grothendieck:compatible}
On dit qu'un sous-mot $w$ de $\W_{\sigma, k}$ n'est pas \emph{compatible} si:
\begin{enumerate}
\item $w$ contient le sous-mot $(a,c)(b,d)$ avec $a<b<c<d$,
\label{enum:polynomes_grothendieck:compatible1}
\item $(a,d) \in \W_{\sigma, k}$ et $(a,d) \notin  w$.
\label{enum:polynomes_grothendieck:compatible2}
\end{enumerate}
Sinon, $w$ est dit compatible.
\end{Definition}

Par exemple, pour $\W_{1372|654} =((2,7),(2,6),(2,5),(4,7),(4,6),(4,5))$, le mot $w = ((2,6),\allowbreak(2,5),(4,7))$ n'est pas compatible. En effet, il contient le sous-mot $(2,6)(4,7)$ mais pas la transposition $(2,7) \in \W_{\sigma, k}$. Notons que si $W_{\sigma, k}$ contient 3 transpositions $(a,d),(a,c)$ et $(b,d)$ elles seront toujours dans l'ordre $(a,d) \tprec (a,c) \tprec (b,d)$. 

Nous donnons à présent la proposition clé dans la preuve du  théorème \ref{thm:polynomes_grothendieck:interval}. Elle permet d'énumérer les permutations de $\E{\sigma, k}$ directement à partir de $\W_{\sigma, k}$ sans tester la compatibilité avec l'ordre de Bruhat.

\begin{Proposition}
\label{prop:polynomes_grothendieck:validSubwords}
Si $w$ est un sous-mot de $\W_{\sigma,k}$ alors $w$ est un sous-mot valide (c'est-à-dire $\sigma w \in \E{\sigma,k}$) si et seulement si $w$ est compatible au sens de la définition~\ref{def:polynomes_grothendieck:compatible}.
\end{Proposition}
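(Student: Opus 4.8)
Le plan est de démontrer les deux implications de l'équivalence séparément. Pour le sens direct (sous-mot valide $\Rightarrow$ compatible), je raisonne par contraposée : je suppose que $w$ n'est pas compatible, donc $w$ contient un sous-mot $(a,c)(b,d)$ avec $a<b<c<d$ tandis que $(a,d)\in\W_{\sigma,k}$ mais $(a,d)\notin w$. Puisque $(a,d)\ttprec(a,c)\ttprec(b,d)$, la transposition $(a,d)$ apparaît strictement avant $(a,c)$ et $(b,d)$ dans $\W_{\sigma,k}$ ; comme elle n'est pas dans $w$, au moment où le sous-mot $w$ arrive à $(b,d)$ la permutation courante $\nu$ vérifie les hypothèses du lemme~\ref{lem:polynomes_grothendieck:non-inversions} avec $\theta=(b,d)$ (car toutes les transpositions déjà appliquées précèdent $(b,d)$ dans une extension linéaire de $\ttprec$). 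Le fait que $(a,d)$ soit une transposition de Bruhat pour $\sigma$ donne $\sigma(a)<\sigma(d)$, et le critère de Bruhat (proposition~\ref{prop:prelim_groupe_sym:bruhat-critere}) appliqué à $(a,c)$ et $(b,d)$ fournit l'encadrement $\sigma(a)<\sigma(b)$ (ou $\sigma(a)<\sigma(d)$ avec une valeur intermédiaire positionnée en $b$). Alors la partie forte~\eqref{eq:polynomes_grothendieck:non-inversions2} du lemme~\ref{lem:polynomes_grothendieck:non-inversions} entraîne $\nu(a)<\nu(b)<\nu(d)$, ce qui contredit le fait que $(b,d)$ soit une transposition de Bruhat pour $\nu$ : $w$ n'est donc pas valide.

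Pour le sens réciproque (compatible $\Rightarrow$ valide), je procède par récurrence sur la longueur de $w$, en utilisant le lemme~\ref{lem:polynomes_grothendieck:non-inversions}. Si $w=w'\,(a,b)$ avec $w'$ compatible et valide, de permutation associée $\nu=\sigma w'$, il faut montrer que $(a,b)$ est une transposition de Bruhat pour $\nu$. Par le lemme~\ref{lem:polynomes_grothendieck:equiv-lenart} et la compatibilité de $w$, je montre d'abord que pour tout $c$ avec $a<c<b$, soit $\sigma(c)<\sigma(a)$, soit $\sigma(c)>\sigma(b)$ — car si $\sigma(a)<\sigma(c)<\sigma(b)$, alors soit $(a,b)$ n'est pas dans $\W_{\sigma,k}$ (absurde), soit il existerait dans $w'$ une transposition créant la configuration interdite $(a,c')(b',b)$ avec une valeur intermédiaire non résorbée, violant la compatibilité. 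Ensuite, par le lemme~\ref{lem:polynomes_grothendieck:non-inversions} appliqué à chaque étape de $w'$ (toutes les transpositions de $w'$ précédant $(a,b)$ dans $\W_{\sigma,k}$), les valeurs positionnées strictement entre $a$ et $b$ conservent leur position relative par rapport à $\sigma(a)$ et $\sigma(b)$ ; comme aucune n'est comprise entre ces deux valeurs et que $\nu(a)=\sigma(a)<\sigma(b)=\nu(b)$ (l'inégalité $\nu(a)=\sigma(a)$ car $a$ n'a pas encore été touché — cela résulte de ce que $(a,b)$ est la première transposition de $w$ de première composante $a$, par l'ordre $\ttprec$), le critère de Bruhat est satisfait pour $\nu$.

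Le point délicat sera de contrôler précisément, dans le sens réciproque, que lorsqu'on applique $(a,b)$ la valeur en position $a$ vaut toujours $\sigma(a)$ : il faut vérifier que l'ordre $\tprec$ force toutes les transpositions de $w'$ de première composante $a$ à venir après celles partageant $b$ comme seconde composante, donc que la première transposition de $w$ « utilisant » la position $a$ est bien déterminée. Cela repose sur la combinatoire fine de $\ttprec$ (définition~\ref{def:polynomes_grothendieck:ordre-partiel}) et sur le lemme~\ref{lem:polynomes_grothendieck:extensions-lineaires2}. Une fois cette proposition établie, le théorème~\ref{thm:polynomes_grothendieck:interval} suit : par le corollaire~\ref{thm:polynomes_grothendieck:cloture} l'ensemble $\E{\sigma,k}$ est clos par intervalle, la proposition~\ref{prop:polynomes_grothendieck:validSubwords} montre que $\W_{\sigma,k}$ tout entier est un sous-mot compatible donc valide, ce qui fournit l'élément maximal $\eta(\sigma,k)=\sigma\W_{\sigma,k}$, et le lemme~\ref{lem:polynomes_grothendieck:intervalMax} (dont la preuve combinera encore compatibilité et comparaison de clés) garantit qu'il majore tous les éléments de $\E{\sigma,k}$.
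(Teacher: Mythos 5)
Votre plan reprend la stratégie générale de la preuve du texte (deux contraposées, lemme~\ref{lem:polynomes_grothendieck:non-inversions}, combinatoire de l'ordre $\ttprec$), mais deux points essentiels sont faux ou manquants. Dans le sens \og non compatible $\Rightarrow$ non valide \fg{}, la conclusion que vous visez, $\nu(a)<\nu(b)<\nu(d)$, ne contredit pas le critère de Bruhat (proposition~\ref{prop:prelim_groupe_sym:bruhat-critere}) pour $(b,d)$ : la position $a$ vérifie $a<b$ et n'est donc pas \emph{positionnée} entre $b$ et $d$. L'obstruction doit se situer en une position $c$ avec $b<c<d$ ; c'est précisément la transposition $(a,c)$ du motif de conflit qui transporte la valeur de la position $a$ vers la position $c$. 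Comme $(a,c)\tprec(b,d)$ force $\sigma(a)>\sigma(b)$ (et non $\sigma(a)<\sigma(b)$ comme vous l'écrivez), et comme $(a,d)$, transposition de Bruhat pour $\sigma$ absente de $w$, garantit que l'ordre des valeurs en $a$ et $d$ est préservé jusqu'à l'application de $(a,c)$, on obtient après celle-ci $\nu(b)<\nu(c)<\nu(d)$, qui est la vraie contradiction. Il faut de plus choisir le motif de conflit avec $(a,c)$ minimale pour pouvoir appliquer le lemme~\ref{lem:polynomes_grothendieck:non-inversions} aux transpositions antérieures ; sans ce choix et sans le rôle de $(a,c)$, l'argument ne produit aucune contradiction.

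Dans le sens \og compatible $\Rightarrow$ valide \fg{}, l'affirmation $\nu(a)=\sigma(a)$ sur laquelle repose votre récurrence est fausse en général : par le lemme~\ref{lem:polynomes_grothendieck:extensions-lineaires2}, les transpositions $(a,e)$ avec $e>b$ précèdent $(a,b)$ dans $\W_{\sigma,k}$ et peuvent appartenir à $w'$, donc modifier la position $a$ avant l'application de $(a,b)$. Par exemple pour $\sigma=1362\vert 54$ et $w=(2,6)(2,5)$, la position $2$ vaut $4\neq\sigma(2)=3$ au moment d'appliquer $(2,5)$, qui est pourtant un sous-mot valide. Le point que vous signalez comme \og délicat \fg{} n'est donc pas un simple contrôle de l'ordre $\tprec$ : l'énoncé à vérifier est faux et l'argument doit être restructuré. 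La preuve du texte contourne la difficulté en démontrant la contraposée : partant de la première transposition $(b,d)$ qui échoue, elle localise la valeur obstructrice en une position $c$ (en montrant au passage $c>k$), remonte à la dernière transposition $(a,c)$ ayant agi sur $c$, et en déduit le motif de conflit.
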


La démonstration de cette proposition utilise des propriétés déjà démontrées de $\E{\sigma, k}$. Nous nous servons en particulier de la caractérisation de l'ordre de $k$-Bruhat donnée dans le théorème \ref{thm:polynomes_grothendieck:KBruhat} ainsi que du lemme \ref{lem:polynomes_grothendieck:non-inversions} sur la conservation de l'ordre des valeurs dans $\E{\sigma, k}$.

\begin{proof}
Prouvons d'abord que si $w =  \tau_{i_1}  \dots \tau_{i_r}$ est sous-mot non valide de $\W_{\sigma, k}$ alors $w$ est aussi non compatible. Soit $w_1$ le facteur gauche de $w$ de longueur maximale tel que $w_1$ soit un sous-mot valide. Alors $w = w_1 \tau w_1'$ avec $\tau=(b,d) \in \W_{\sigma,k}$ une transposition qui n'est pas de Bruhat pour $\sigma w_1$. Le lemme~\ref{lem:polynomes_grothendieck:non-inversions} nous donne que $\sigma w_1 (b) < \sigma w_1 (d)$. Alors, il existe $c$ tel que $b<c<d$ et 

\begin{equation}
\label{eq:polynomes_grothendieck:proof-valid-subword-conflit}
\begin{array}{ccccc}
 \sigma w_1(b)& < &\sigma w_1 (c)& < &\sigma w_1 (d)\\
 \vgeq        &   &              &   & \vleq         \\
 \sigma (b)   &   &              &   & \sigma(d)
\end{array}
\end{equation}

Tout d'abord, utilisons la caractérisation de l'ordre de $k$-Bruhat du théorème~\ref{thm:polynomes_grothendieck:KBruhat} pour prouver que $c>k$. On sait que $\tau$ est une transposition de Bruhat pour $\sigma$. Cela signifie que $\sigma(c) < \sigma(b)$ ou bien $\sigma(c) > \sigma(d)$. Dans le premier cas, on lis sur \eqref{eq:polynomes_grothendieck:proof-valid-subword-conflit} que $\sigma(c) < \sigma w_1 (c)$ ce qui par $k$-Bruhat nous donne $c\leq k$. Dans ce cas, comme $\sigma(c) < \sigma(b)$, toute transposition $(c,*)$ est supérieure à $\tau = (b,d)$ et donc $\sigma w_1 (c) = \sigma(c)$ ce qui n'est pas possible. On a donc $\sigma(c)>\sigma(d) \geq \sigma w_1 (d) > \sigma w_1 (c)$ ce qui implique $c>k$.

Soit $(a,c)$ la dernière transposition à agir sur la position $c$. On sait que $(a,c)$ existe car $\sigma(c) \neq \sigma w_1(c)$. On a $w_1 = w_2 (a,c) w_2'$. Par ailleurs comme $\sigma(c) > \sigma(d)$ et $(a,c) \prec (b,d)$ alors $a \neq b$ et on a

\begin{equation}
\label{eq:polynomes_grothendieck:proof-valid-subword-main}
\begin{array}{ccccc}
\sigma w_2 (d) & > & \sigma w_2 (a) & > & \sigma w_2 (b) \\
\vleq          &   & \vgeq          &   & \veq \\
\sigma(d) & & \sigma(a) & & \sigma(b)
\end{array}
\end{equation}

Les relations verticales viennent de l'ordre de $k$-Bruhat entre $\sigma$ et $\sigma w_2$. Les relations horizontales découlent de \eqref{eq:polynomes_grothendieck:proof-valid-subword-conflit}. De là, nous affirmons que

\begin{align}
\label{eq:polynomes_grothendieck:proof-valid-subword-values}
& \sigma(c) > \sigma(d) > \sigma (a) > \sigma (b) \text{~ et} \\
\label{eq:polynomes_grothendieck:proof-valid-subword-positions}
& a < b < c < d.
\end{align}

On a déjà prouvé $\sigma(c) > \sigma(d)$ et on peut lire dans \eqref{eq:polynomes_grothendieck:proof-valid-subword-main} que $\sigma(d) > \sigma(a)$. Par ailleurs, $(a,c) \tprec (b,d)$ nous donne $\sigma(a) > \sigma(b)$ et donc \eqref{eq:polynomes_grothendieck:proof-valid-subword-values} est vraie. Pour prouver \eqref{eq:polynomes_grothendieck:proof-valid-subword-positions} il suffit de montrer $a<b$. Comme $(b,d)$ est une transposition de Bruhat pour $\sigma$ alors à cause de \eqref{eq:polynomes_grothendieck:proof-valid-subword-values} on ne peut pas avoir $b<a<d$.

Par \eqref{eq:polynomes_grothendieck:proof-valid-subword-values} et \eqref{eq:polynomes_grothendieck:proof-valid-subword-positions}, on a que $(a,c)(b,d)$ est un sous-mot de $w$ répondant à la condition \ref{enum:polynomes_grothendieck:compatible1} de la définition \ref{def:polynomes_grothendieck:compatible}. \`A présent, $(a,d)$ est aussi une transposition de Bruhat pour $\sigma$. En effet, s'il existe $a < x < d$ tel que $\sigma(a) < \sigma(x) < \sigma(d)$ alors soit $x<c$ et $(a,c)$ n'est pas une transposition de Bruhat, soit $x > c > b$ et $(b,d)$ n'est pas une transposition de Bruhat. Donc $(a,d) \in \W_{\sigma, k}$ et comme  $\sigma w_2 (d) > \sigma(a)$ alors $(a,d) \notin w$ et la condition \ref{enum:polynomes_grothendieck:compatible2} de la définition \ref{def:polynomes_grothendieck:compatible} est aussi satisfaite. On a bien que $w$ n'est pas un sous-mot compatible.

\`A présent, soit $w$ un sous-mot non compatible de $\W_{\sigma, k}$, on prouve que $w$ n'est pas un sous-mot valide. Par définition, $w$ contient au moins un sous-mot $(a,c)(b,d)$ satisfaisant les conditions \eqref{enum:polynomes_grothendieck:compatible1} et \eqref{enum:polynomes_grothendieck:compatible2} de la définition \ref{def:polynomes_grothendieck:compatible}. On choisit celui dont la transposition $(a,c)$ est minimale. On a $w = w_1 (b,d) w_1'$, prouvons que si $w_1$ est un sous-mot valide alors $(b,d)$ n'est pas une transposition de Bruhat pour $\sigma w_1$. On a $(a,c) \in w_1$, c'est-à-dire $w_1 = w_2 (a,c) w_2'$ et l'on montre

\begin{equation}
\begin{array}{ccccc}
\sigma(b)       & < &\sigma(a)       & < & \sigma(d) \\
\veq            &   & \vleq          &   & \vgeq     \\
\sigma w_2 (b)  & < & \sigma w_2 (a) & < & \sigma w_2 (d).
\end{array}
\end{equation}

La relation $\sigma(b) < \sigma(a) < \sigma(d)$ est immédiate car par définition $a<b<d$ et $(a,d) \tprec (b,d)$. Les relations  $\sigma(a) \leq \sigma w_2 (a)$ et $\sigma(d) \geq \sigma w_2(d)$  sont données par $k$-Bruhat. Par ailleurs, toute transposition agissant sur $b$ est supérieure à $(a,c)$ donc la valeur en position $b$ n'a pas été modifiée. Il reste à prouver $\sigma w_2 (a) < \sigma w_2 (d)$. Toute transposition $\tau \in w_2$ est telle que $\tau \tprec (a,d)$. En effet, si $(a,x) \in w$ avec $(a,d) \tprec (a,x) \tprec (a,c)$ on a par le lemme \ref{lem:polynomes_grothendieck:extensions-lineaires2} que $c<x<d$ et donc $(a,x)(b,d)$ satisfait les conditions \eqref{enum:polynomes_grothendieck:compatible1} et \eqref{enum:polynomes_grothendieck:compatible2} de la définition \ref{def:polynomes_grothendieck:compatible}. On a choisi $(a,c)$ minimale donc une telle transposition $(a,x)$ n'existe pas. On peut alors appliquer le lemme~\ref{lem:polynomes_grothendieck:non-inversions} et on a $\sigma w_2 (a) < \sigma w_2 (d)$. Si on pose $\sigma' = \sigma w_2 (a,c)$ on a maintenant

\begin{equation}
\label{eq:proof-valid-subword-conflit2}
b<c<d \text{~ and ~} \sigma'(b) < \sigma'(c) < \sigma'(d).
\end{equation}

Par le lemme \ref{lem:polynomes_grothendieck:non-inversions}, comme $w_2'$ est un chemin valide dont les transpositions sont inférieures à $(b,d)$, cette relation est préservée. La transposition $(b,d)$ n'est dont pas une transposition de Bruhat pour la permutation $\sigma w_1$.
\end{proof}

\subsection{Preuve du résultat}
\label{sub-sec:polynomes_grothendieck:intervalle:preuve}

La proposition \ref{prop:polynomes_grothendieck:validSubwords} nous donne une caractérisation des listes de transpositions apparaissant dans $\E{\sigma, k}$, c'est l'outil essentiel pour démontrer le lemme \ref{lem:polynomes_grothendieck:intervalMax}. 

\begin{proof}[Démonstration du lemme \ref{lem:polynomes_grothendieck:intervalMax}]
Soit $\mu \in \E{\sigma,k}$ tel que $\mu \neq \eta(\sigma, k)$. On prouve qu'il existe $\tilde{\mu} \in \E{\sigma,k}$ tel que $\tilde{\mu}$ soit un successeur direct de $\mu$ pour l'ordre de Bruhat. C'est suffisant pour prouver le lemme \ref{lem:polynomes_grothendieck:intervalMax} car si $\tilde{\mu} \neq \eta(\sigma, \mu)$, on peut à nouveau appliquer l'algorithme pour obtenir un successeur de $\tilde{\mu}$. A chaque étape, la longueur de la permutation augmente de 1 et donc le processus se termine au bout de $\ell(\eta) - \ell(\mu)$ itérations.

Soit $\mu = \sigma w$ avec $w$ un sous-mot valide de $\W_{\sigma, k}$ et $\mu \neq \eta$. On définit alors la transposition $\tau$ comme étant la première transposition de $\W_{\sigma, k}$ à ne pas apparaître dans $w$. C'est-à-dire qu'on a $\tau \in \W_{\sigma,k}$, $\tau \notin w$ et $\tau$ minimale. Cette transposition existe toujours car $\mu \neq \eta$ ce qui signifie que $w \neq \W_{\sigma, k}$. Le chemin $w$ s'écrit alors $w = uv$ avec $u$ un facteur gauche de $\W_{\sigma,k}$ et tel que $\tau$ est supérieure à toutes les transpositions de $u$ et inférieure à toutes celles de $v$. On pose alors $\tilde{w} = u \tau v$. Notons que $u\tau$ est alors aussi un facteur gauche de $\W_{\sigma,k}$ et que $\tilde{w}$ est un sous-mot de $\W_{\sigma,k}$.

Prouvons à présent que $\tilde{w}$ est un sous-mot valide de $\W_{\sigma,k}$. On utilise pour cela la proposition \ref{prop:polynomes_grothendieck:validSubwords}. Supposons que $\tilde{w}$ soit un sous-mot non compatible. On a alors que $\W_{\sigma,k}$ contient $(a,d) \tprec (a,c) \tprec (b,d)$ avec $(a,d) \notin \tilde{w}$ et $(a,c)(b,d)$ un sous-mot de $\tilde{w}$. Ces conditions sont aussi vérifiées pour $w$. En effet, comme $(a,d) \notin \tilde{w}$ alors nécessairement $(a,d) \notin w$. Par ailleurs, comme $u \tau$ est un facteur gauche de $\W_{\sigma,k}$ et $(a,d) \notin u \tau$, on a $\tau \tprec (a,d) \tprec (a,c) \tprec (b,d)$. Le sous-mot $(a,c)(b,d)$ se retrouve donc aussi dans $w$. De là, on déduit que $\tilde{w}$ est compatible, c'est-à-dire valide et on pose $\tilde{\mu} = \sigma \tilde{w} \in \E{\sigma,k}$.

Par ailleurs, $\ell(\tilde{\mu}) = \ell(\sigma) + \vert \tilde{w} \vert = \ell(\sigma) + \vert w \vert +1 = \ell(\mu) + 1$ et $\tilde{\mu} = \sigma u \tau v = \sigma u v v^{-1} \tau v = \mu v^{-1} \tau v = \mu \theta$ où $\theta$ est le conjugué d'une transposition, c'est-à-dire une transposition. De là, $\tilde{\mu}$ est un successeur direct de $\mu$.
\end{proof}

Ci-dessous, un exemple du processus $\mu \rightarrow \tilde{\mu}$ décrit dans la preuve appliqué à $\mu = 1365 \vert 42 \in \E{1362 \vert 54}$ représenté en figure \ref{fig:polynomes_grothendieck:ESigma}.

\begin{align}
\mu = \mu_0 &= (1362 \vert 54)(46)(45) =  1365 \vert 42 \\
\mu_1:= \tilde{\mu}_0 &= (1362 \vert 54)(26)(46)(45) = 1465 \vert 32 \\
\mu_2:= \tilde{\mu}_1 &=  (1362 \vert 54)(26)(25)(46)(45) = 1564 \vert 32 = \eta
\end{align} 

\begin{Remarque}
\label{rqe:polynomes_grothendieck:permutation-pattern}
Si $\W_{\sigma,k}$ contient le sous-mot $(a,d)(a,c)(b,d)$ alors on peut trouver des sous-mots non compatibles. On dit alors que $\W_{\sigma,k}$ contient un \emph{motif de conflit}. En particulier, comme on a $\sigma(b) < \sigma(a) < \sigma(d) < \sigma(c)$, cela implique que $\sigma$ contient le motif de permutation $21\vert43$. Cependant, l'implication inverse n'est pas vraie. Par exemple, la permutation $213 \vert 54$ contient le motif $21\vert43$ mais comme $(1,5)$, $(1,4)$ et $(2,5)$ ne sont pas des transpositions de Bruhat, $\W_{\sigma,k}$ ne contient pas le motif de conflit.
\end{Remarque}

\begin{Remarque}
\label{rqe:polynomes_grothendieck:interval-size}
La proposition \ref{prop:polynomes_grothendieck:validSubwords} nous donne aussi une information sur la taille de l'intervalle. Si $\W_{\sigma,k}$ est de taille $m$ et ne contient pas de motifs de conflit, alors $\vert \E{\sigma,k} \vert = 2^m$. Par ailleurs, le nombre de sous-mots non compatibles liés à un trio donné $(a,d) \prec (a,c) \prec (b,d)$ est $2^{m-3}$. De là, si $\W_{\sigma, k}$ ne contient qu'un seul motif de conflit, on a $\vert \mathfrak{E}_\sigma \vert = 2^m - 2^{m-3}$. Par exemple, $\vert \mathfrak{E}_{1362 \vert 54} \vert = 2^4 - 2^1 = 14$. De façon plus générale, on peut appliquer un algorithme d'inclusion-exclusion. Si le nombre de motifs de conflits est élevé, l'algorithme d'inclusion-exclusion peut prendre beaucoup plus de temps que le calcul direct de $\E{\sigma, k}$. Par exemple, la permutation  $4321 \vert 8765$  contient 36 motifs de conflits ce qui suppose de calculer des milliards d'intersections de ces motifs quand la taille de $\E{4321 \vert 8765}$ n'est que de 6092. Mais dans les cas de permutations de grande taille avec peu de motifs de conflit, l'algorithme est efficace. 
\end{Remarque}

\section{Quelques généralisations}
\label{sec:polynomes_grothendieck:gen}

\subsection{Variation du paramètre $k$}
\label{sub-sec:polynomes_grothendieck:gen:var-k}

Il est intéressant d'étudier la variation de la permutation $\eta(\sigma,k)$ lorsque le paramètre $k$ varie de 1 à $n$. A partir de la proposition \ref{prop:polynomes_grothedieck:ESigma}, on obtient un algorithme qui retourne $\eta(\sigma,k)$ à partir de $\eta(\sigma, k-1)$ et $\sigma$.

\begin{Proposition}
Soit $\sigma \in \Sym{n}$ et $1 \leq k \leq n$. Par convention, on pose $\eta(\sigma,0)= \eta(\sigma,n)  = \sigma$.  Soient $a_1 < a_2 < \dots < a_m < k $ les positions telles que $(a_i,k)$ soit une transposition de Bruhat pour $\sigma$. Soient $ k < b_1 < b_2 < \dots < b_{m'}$ les positions telles que $(k,b_i)$ soit une transposition de Bruhat pour $\sigma$. Alors $\eta(\sigma,k) = \eta(\sigma, k-1)\rho$ où $\rho$ est le cycle $(a_1, \dots, a_m, k, b_1, \dots, b_{m'})$.
\end{Proposition}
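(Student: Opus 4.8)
The plan is to exploit the inductive description of $\E{\sigma,k}$ and of its maximal element $\eta(\sigma,k) = \sigma\W_{\sigma,k}$ given in Proposition~\ref{prop:polynomes_grothedieck:ESigma} and Lemma~\ref{lem:polynomes_grothendieck:intervalMax}. Recall that $\eta(\sigma,k)$ is obtained from $\sigma$ by applying, in the order $\tprec$, all the $k$-transpositions of Bruhat of $\sigma$. The statement to prove is that $\eta(\sigma,k)$ and $\eta(\sigma,k-1)$ differ precisely by the cycle $\rho = (a_1,\dots,a_m,k,b_1,\dots,b_{m'})$, where the $a_i<k$ are the positions with $(a_i,k)$ a Bruhat transposition of $\sigma$ and the $b_j>k$ those with $(k,b_j)$ a Bruhat transposition of $\sigma$.

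First I would describe $\W_{\sigma,k}$ in terms of $\W_{\sigma,k-1}$. A $k$-transposition $(a,b)$ has $a\le k<b$; a $(k-1)$-transposition has $a\le k-1<b$. The $k$-transpositions of Bruhat of $\sigma$ split into three families: those $(a,b)$ with $a<k<b$ (which are also $(k-1)$-transpositions and, by Proposition~\ref{prop:prelim_groupe_sym:bruhat-critere}, are Bruhat transpositions of $\sigma$ regardless of the value of $k$, so they lie in $\W_{\sigma,k-1}$ too), those of the form $(k,b_j)$, and — crucially — those $(a_i,k)$ with $a_i<k$. Symmetrically, $\W_{\sigma,k-1}$ contains the common family $\{(a,b): a<k<b\}$ together with the transpositions $(a,k-1)$ and $(k-1,b)$ that are Bruhat for $\sigma$. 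The key combinatorial observation is that, under the order $\tprec$ of Definition~\ref{def:polynomes_grothendieck:Wsigma}, all transpositions touching position $k$ on the left, i.e. $(a_i,k)$, and all touching it on the right, i.e. $(k,b_j)$, can be grouped; by equation~\eqref{eq:polynomes_grothendieck:totalOrder} and Lemma~\ref{lem:polynomes_grothendieck:extensions-lineaires2}, the $(a_i,k)$ come in decreasing order of $\sigma(a_i)$ and the $(k,b_j)$ in increasing order of $\sigma(b_j)$, and the transpositions $(a,b)$ with $a<k<b$ not involving $k$ commute with all of these in the relevant subword sense (Proposition~\ref{prop:polynomes_grothendieck:wsigma-generalisation}). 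So, up to the equivalence of Proposition~\ref{prop:polynomes_grothendieck:wsigma-generalisation}, one may reorder $\W_{\sigma,k}$ so that it factors as $W_0 \cdot \big((a_m,k)\cdots(a_1,k)\cdot(k,b_1)\cdots(k,b_{m'})\big)$ and likewise $\W_{\sigma,k-1}$ factors with the same $W_0$ followed by the analogous block at position $k-1$ — one must check this reordering is legitimate, i.e. that the transpositions not touching $k$ (resp. $k-1$) are pairwise incomparable with those touching it, which follows from the $\ttprec$ order of Definition~\ref{def:polynomes_grothendieck:ordre-partiel}.

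Next I would compute directly the product of the block of transpositions at position $k$. Applying $(a_m,k)(a_{m-1},k)\cdots(a_1,k)(k,b_1)\cdots(k,b_{m'})$ as a product of transpositions in $\Sym{n}$ — here one must be careful that these are abstract transpositions multiplied on the right, so the composite is a permutation that one can evaluate as a cycle — gives, by a short computation tracking where each position goes, exactly the cycle $\rho = (a_1,a_2,\dots,a_m,k,b_1,\dots,b_{m'})$. The point is that the ordering imposed by $\tprec$ (the $a_i$ processed in decreasing $\sigma(a_i)$, the $b_j$ in increasing $\sigma(b_j)$) is precisely what is needed for the product of these transpositions to telescope into a single cycle with the index ordering stated. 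I would verify this on the level of the permutation action: $\rho$ sends $a_i \mapsto a_{i+1}$, $a_m\mapsto k$, $k\mapsto b_1$, $b_j\mapsto b_{j+1}$, $b_{m'}\mapsto a_1$, which one reads off the transposition product. Then, since $\W_{\sigma,k}$ and $\W_{\sigma,k-1}$ share the factor $W_0$ and differ only in the trailing block, and since $\eta(\sigma,k) = \sigma\W_{\sigma,k}$, we get $\eta(\sigma,k) = \eta(\sigma,k-1)\rho$ after absorbing the difference between the position-$k$ block and the position-$(k-1)$ block — here one has to check that the position-$(k-1)$ block of $\W_{\sigma,k-1}$ that is ``removed'' and the position-$k$ block that is ``added'' interact correctly with $W_0$, or better, set up the induction so that $W_0$ is literally the common part and one compares $\sigma W_0 \cdot B_{k}$ with $\sigma W_0 \cdot B_{k-1}$ where $B_j$ is the block at position $j$; then $\eta(\sigma,k) = \eta(\sigma,k-1) \cdot B_{k-1}^{-1} B_k$ and one identifies $B_{k-1}^{-1}B_k$ with $\rho$.

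The main obstacle I anticipate is justifying the reordering of $\W_{\sigma,k}$ into the factored form $W_0 \cdot B_k$ in a way compatible with the transition to $\W_{\sigma,k-1}$: the sets of Bruhat transpositions change when $k$ changes — a pair $(a,b)$ with $a<k-1$ and $b=k$ is a Bruhat transposition of $\sigma$ for the parameter $k$ but is not even a $(k-1)$-transposition — so one cannot simply say ``$\W_{\sigma,k-1}\subseteq\W_{\sigma,k}$''. The careful bookkeeping is to isolate the common sublist $W_0$ of transpositions $(a,b)$ with $a<k-1$ and $b>k$ (those are Bruhat transpositions for $\sigma$ for \emph{both} parameters, by Proposition~\ref{prop:prelim_groupe_sym:bruhat-critere}, since the criterion there does not mention $k$), show it can be taken as a left factor in both orderings via Proposition~\ref{prop:polynomes_grothendieck:wsigma-generalisation}, and then handle the two ``position'' blocks explicitly. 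Once the factorisation is in place, the cycle computation and the conclusion $\eta(\sigma,k)=\eta(\sigma,k-1)\rho$ are routine. The boundary conventions $\eta(\sigma,0)=\eta(\sigma,n)=\sigma$ are immediate since $\W_{\sigma,0}$ and $\W_{\sigma,n}$ are empty (there are no $0$-transpositions, nor any $n$-transpositions, with $a\le 0$ or $b>n$ respectively).
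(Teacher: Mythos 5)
Your proposal follows essentially the same route as the paper's proof: both isolate the common factor of Bruhat transpositions not acting on position $k$ (those $(a,b)$ with $a<k<b$, which are simultaneously $(k-1)$- and $k$-transpositions), use the freedom to pass to another linear extension of $\ttprec$ (Proposition~\ref{prop:polynomes_grothendieck:wsigma-generalisation}) to write $\W_{\sigma,k-1}=w\,(a_1,k)\cdots(a_m,k)$ and $\W_{\sigma,k}=w\,(k,b_{m'})\cdots(k,b_1)$, and then identify $(a_m,k)\cdots(a_1,k)(k,b_{m'})\cdots(k,b_1)$ with the cycle $\rho$. The only quibbles are cosmetic: the common block is characterised by $a\le k-1$ and $b>k$ (not $a<k-1$), and the transpositions of $W_0$ sharing an index with the $k$-block are not incomparable but are forced by $\ttprec$ to precede it, which is what actually licenses the factorisation.
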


\begin{proof}
Dans cette preuve, la liste $\W_{\sigma,k}$ est toujours donnée à commutation près. C'est-à-dire que que nous n'utilisons plus systématiquement l'ordre $\tprec$ mais des extensions linéaires de l'ordre $\ttprec$ défini au paragraphe \ref{sub-sec:polynomes_grothendieck:chaines:ordre}. En effet, nous avons besoin d'ordonner les transpositions $(a,b)$ prioritairement selon $a$ ou $b$ selon la circonstance.

Regardons d'abord les cas extrêmes où $k=1$ ou $k=n$. Si $k=1$, alors $\W_{\sigma,1} = ((1,b_{m'}),\allowbreak(1,b_{m'-1}), \dots, (1,b_1))$ par définition. Le produit des transpositions est égal au cycle $\rho = (1, b_1, \dots,\allowbreak b_{m'})$. On a bien $\eta(\sigma,1) = \sigma \W_{\sigma, 1} = \eta(\sigma, 0) \rho$. Si $k=n$, alors $\W_{\sigma, n-1} = ((a_1,n),(a_2,n), \dots, (a_m,n))$ et $\W_{\sigma,n-1}^{-1}$ est égal au cycle $\rho = (a_1, \dots, a_m, n)$. On a bien $\eta(\sigma,n) = \sigma = \sigma \W_{\sigma, n-1} \W_{\sigma, n-1}^{-1} = \eta(\sigma, n-1)\rho$.

\`A présent, dans le cas où $1 < k < n$, alors $\W_{\sigma, k-1}$ et $\W_{\sigma,k}$ comporte un facteur commun de transpositions qu'on note $w$. Ce sont les transpositions qui n'agissent pas sur $k$. On peut alors écrire $\W_{\sigma, k-1} = w (a_1,k) \dots (a_m,k)$ et $\W_{\sigma, k} = w (k,b_{m'})\dots (k,b_1)$. Ce qui donne, $\W_{\sigma, k} = \W_{\sigma, k-1} (a_m,k) (a_{m-1}, k) \dots (a_1,k)\allowbreak(k,b_{m'})\allowbreak(k,b_{m'-1})\dots(k,b_1) = \W_{\sigma, k-1} \rho$.
\end{proof}

Nous donnons un exemple de cet algorithme appliqué à la permutation $\sigma = 1362547$ figure \ref{fig:polynomes_grothendieck:variation_k}. A chaque étape, $k \rightarrow k+1$ on obtient un cycle qui envoie $\eta(\sigma,k)$ vers $\eta(\sigma,k+1)$. En faisant varier $k$ de $0$ à $n$, on associe donc à la permutation $\sigma$ une décomposition de l'identité en un produit de $n$ cycles. Nous n'avons pas trouvé d'autres exemples où cette décomposition apparaît mais il nous a paru intéressant de la signaler. On trouvera en figure 
\ref{fig:polynomes_grothendiecl:variation_k-3-4} la décomposition de l'unité en produit de cycles associées aux permutations de taille 3 et 4.

La décomposition associée à une permutation $\sigma$, notée $c(\sigma)$, dépend de ses transpositions de Bruhat, c'est-à-dire de ses successeurs. Chaque transposition $(a,b)$ est utilisée deux fois : quand $k=a$ et quand $k=b$. Si $c(\sigma) = (c_1,\dots, c_n)$ où $c_1, \dots, c_n$ sont des cycles, on a $|c_1| + \dots + |c_n| = 2 \times |\Succs(\sigma)| + k$ où $|\Succs(\sigma)|$ est le nombre de successeurs de $\sigma$. Par ailleurs, les cycles vérifient une certaine symétrie : si $i$ apparaît dans le cycle $c_j$ alors $j$ apparaît dans le cycle $c_i$. Et par construction, on a toujours $i \in c_i$. Enfin, si $(i,j)$ est une transposition de Bruhat pour $\sigma$ alors $(\sigma_i,\sigma_j)$ l'est pour $\sigma^{-1}$. De là si pour $\sigma$ on a $c_i = (a_1, \dots, a_m)$ alors pour $\sigma^{-1}$ le cycle $c_{\sigma_i}$ sera composé des valeurs $\sigma(a_1), \dots, \sigma(a_m)$ réordonnées.

\begin{figure}[ht]
\centering
\input{includes/figures/exemple_variation_k}
\caption[Exemple de l'algorithme de variation de $k$ pour $1362547$]{Exemple de l'algorithme de variation de $k$ pour $1362547$, en appliquant le cycle sur la droite de la permutation ligne $k$, on obtient celle de la ligne $k+1$.}
\label{fig:polynomes_grothendieck:variation_k}
\end{figure}

\begin{figure}[ht!]
\centering
\input{includes/figures/variation_k_3-4}
\caption{Décomposition de l'identité en produits de cycles pour les permutations de tailles 3 et 4}
\label{fig:polynomes_grothendiecl:variation_k-3-4}
\end{figure}

\subsection{Autres sous-groupes paraboliques}
\label{sub-sec:polynomes_grothendieck:gen:parabolic-subgroups}
\index{algèbre!de 0-Hecke}
\index{sous-groupes paraboliques}
\index{polynômes!clés}

Un \emph{sous-groupe parabolique} d'un groupe de Coxeter $G$ est un sous-groupe de $G$ engendré par un ensemble restreint des générateurs de $G$. Lorsque l'on calcule $\E{\zeta, k} \pi_{\zeta^{-1} \sigma}$, on applique des opérateurs $\pi_i$ tels que $i \neq k$. Le produit $\pi_{\zeta^{-1} \sigma}$ est un élément du sous-groupe parabolique de l'algèbre de 0-Hecke engendré par $ \lbrace \pi_1, \dots, \pi_{k-1}, \pi_{k+1}, \dots, \pi_n \rbrace$. On peut généraliser ce calcul aux sous-groupes paraboliques où il manque plus d'un seul générateur.

\begin{Definition}
\label{def:polynomes_grothendieck:gen-blocks}
Soit $\sigma \in \Sym{n}$ et $1 \leq k_1 < k_2 < \dots < k_m < n$. Par convention, on posera $k_0 = 0$. On définit le bloc $\bblock_i(\sigma)$ pour $0 < i \leq m$ comme étant le mot $[\sigma(k_{i-1}+1), \sigma(k_{i-1}+2), \dots, \sigma(n)]$ et $\Ww_i$ est la liste de transpositions

\begin{align}
\nonumber
(&(k_{i-1} +1, n), (k_{i-1}+1, n-1), \dots, (k_{i-1}+1, k_i+1),\\
\nonumber
 &(k_{i-1} +2, n), (k_{i-1}+2, n-1), \dots, (k_{i-1}+2, k_i+1),\\
\nonumber
 &\dots,\\
 &(k_i, n), (k_i, n-1), \dots, (k_i, k_i+1)).
\end{align}

La liste $\Ww_{(k_1, \dots, k_m)}$ est la concaténation des listes $\Ww_i$ pour $m \geq i \geq 1$, c'est-à-dire  $\Ww_{(k_1, \dots, k_m)} := (\Ww_m, \dots, \Ww_1)$.
\end{Definition}

Par exemple, si $\sigma = 43283657$  et $k_1 = 2$, $k_2 = 3$, $k_3 = 6$ (ce qu'on notera $\sigma = 43 \vert 2 \vert 836 \vert 57$), alors $\bblock_1 =  43 \vert 283657$, $\bblock_2 = 2 \vert 83657$, $\bblock_3 = 836 \vert 57$ et $ \Ww_3 = ( (4,8),(4,7), (5,8), (5,7),(6,8),(6,7))$, $\Ww_2 = ((3,8),(3,7),(3,6),(3,5),(3,4))$, $\Ww_1 = ((1,8), (1,7),(1,6),(1,5),(1,4),(1,3),(2,8),(2,7),\allowbreak (2,6),(2,5),(2,4),(2,3))$. 

\begin{Proposition}
\label{prop:polynomes_grothendieck:gen-esigma}
Soit $\sigma \in \Sym{n}$ et $1 \leq k_1 < k_2 < \dots < k_m < n$. Soit $\zeta = \zeta(\sigma, k_1, \dots, k_m)$ l'élément maximal du coset $\sigma (\Sym{k_1} \times \Sym{k_2 - k_1} \times \Sym{k_3 - k_2} \times \dots \times \Sym{n - k_m})$. Alors

\begin{equation}
\label{eq:polynomes_grothendieck:gen-esigma}
K_{\omega} \hat{\pi}_{\omega \zeta} \pi_{\zeta^{-1} \sigma} = \E{\sigma, (k_1, \dots, k_m)}
\end{equation}
où 
\begin{equation}
\E{\sigma, (k_1, \dots, k_m)}  = \sum_{w} (-1)^{\vert w \vert} K_{\sigma w}
\end{equation}
sommé sur les sous-mots $w$ de la liste $\Ww_{(k_1, \dots, k_m)}$ qui correspondent à des chemins dans l'ordre de Bruhat à partir de $\sigma$.

Par ailleurs, la somme n'a pas d'annulations, les coefficients sont $\pm 1$ et l'ensemble de sommation est clos par intervalle.
\end{Proposition}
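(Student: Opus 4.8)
Le plan est de démontrer les trois assertions de la proposition~\ref{prop:polynomes_grothendieck:gen-esigma} --- l'identité \eqref{eq:polynomes_grothendieck:gen-esigma}, l'absence d'annulations avec des coefficients $\pm 1$, et la clôture par intervalle --- en adaptant la machinerie des paragraphes \ref{sec:polynomes_grothendieck:cloture} à \ref{sec:polynomes_grothendieck:chaines}, le seul ingrédient véritablement nouveau étant le lemme généralisé sur les cosets~\ref{lem:prelim_groupe_sym:bruhat:coset_gen}. Les deux assertions qualitatives constituent en fait la partie facile et peuvent être établies en premier, indépendamment de la description en termes de sous-mots. En notant $P$ le sous-groupe parabolique $\Sym{k_1}\times\Sym{k_2-k_1}\times\dots\times\Sym{n-k_m}$, on remarque d'abord, exactement comme dans la preuve de la proposition~\ref{prop:polynomes_grothendieck:HatK}, que l'intervalle $[\sigma,\zeta]$ est entièrement contenu dans le coset $\sigma P$ (par un argument de comparaison de clés : pour $\sigma \leq \mu \leq \zeta$, la $k_j$-ième colonne de la clé gauche de $\mu$ est coincée entre deux colonnes égales), que $\sigma P$ est isomorphe comme intervalle à un produit d'intervalles dans $\Sym{k_1}, \Sym{k_2-k_1}, \dots$, et que $\pi_{\zeta^{-1}\sigma}$ se factorise en un produit d'opérateurs qui commutent, un par bloc. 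On obtient ainsi $K_\omega \hpi_{\omega\zeta} \pi_{\zeta^{-1}\sigma} = \sum_{\sigma \leq \mu \leq \zeta} \hK_\mu$ dans la base $\hK$. En passant à la base $K$ comme en \eqref{eq:polynomes_grothendieck:DoubleKSum}--\eqref{eq:polynomes_grothencieck:cnu}, le coefficient de $K_\nu$ est une somme de fonctions de Möbius sur $[\sigma,\nu] \cap \sigma P$ ; par le lemme~\ref{lem:prelim_groupe_sym:bruhat:coset_gen}, cette intersection est un intervalle, donc le coefficient est nul sauf lorsque cet intervalle est réduit à $\lbrace \sigma \rbrace$, auquel cas il vaut $(-1)^{\ell(\nu) - \ell(\sigma)}$. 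On obtient les coefficients $\pm 1$ et l'absence d'annulations, et la clôture par intervalle s'ensuit mot pour mot comme dans le corollaire~\ref{thm:polynomes_grothendieck:cloture}.

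Il reste à identifier ce développement avec la somme sur les sous-mots $\sum_w (-1)^{|w|} K_{\sigma w}$, parcourant les sous-mots de $\Ww_{(k_1,\dots,k_m)}$ qui sont des chemins de Bruhat à partir de $\sigma$. Je démontrerais \eqref{eq:polynomes_grothendieck:gen-esigma} par récurrence sur $\ell(\zeta^{-1}\sigma)$, en suivant pas à pas le paragraphe~\ref{sub-sec:polynomes_grothedieck:chaines:preuve}. Pour le cas initial $\sigma = \zeta$, l'analogue du lemme~\ref{lem:polynomes_grothendieck:EZeta} est que $\E{\zeta,(k_1,\dots,k_m)} = [\zeta,\omega]$ ; on utilise ici que $\zeta$ est antidominant par blocs (strictement décroissant sur chacun des $m+1$ intervalles $(k_{j-1},k_j]$), de sorte que toute transposition de Bruhat de $\zeta$ --- et de toute permutation rencontrée le long d'une chaîne saturée partant de $\zeta$ --- traverse une frontière $k_j$ et figure donc dans $\Ww_{(k_1,\dots,k_m)}$, ainsi qu'une sélection gloutonne dans l'esprit de l'algorithme de Bergeron et Sottile pour réaliser un $\mu \geq \zeta$ arbitraire par un sous-mot de $\Ww_{(k_1,\dots,k_m)}$ ; réciproquement, tout sous-mot de $\Ww_{(k_1,\dots,k_m)}$ formant un chemin de Bruhat aboutit dans $[\zeta,\omega]$ par construction. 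Pour l'étape de récurrence, on montre, pour une transposition simple $s_i$ avec $i \notin \lbrace k_1,\dots,k_m \rbrace$ et $\sigma s_i < \sigma$, que $\E{\sigma,(k_1,\dots,k_m)} \pi_i = \E{\sigma s_i,(k_1,\dots,k_m)}$ ; les lemmes de découpage de branches~\ref{lem:polynomes_grothendieck:ESigmaPiNonZeroImage} et~\ref{lem:polynomes_grothendieck:ESigmaPiZeroImage} se transposent, car $s_i$ agit à l'intérieur d'un seul bloc tandis que toute transposition de $\Ww_{(k_1,\dots,k_m)}$ enjambe une frontière de blocs, si bien que la conjugaison par $s_i$ ne fait que permuter $\Ww_{(k_1,\dots,k_m)}$ en préservant la condition de chemin de Bruhat. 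On pourrait aussi procéder par récurrence sur $m$ en utilisant la factorisation $\Ww_{(k_1,\dots,k_m)} = (\Ww_m, \Ww_{(k_1,\dots,k_{m-1})})$ --- les blocs $1,\dots,m-1$ ont les mêmes frontières pour les paramètres $(k_1,\dots,k_m)$ et $(k_1,\dots,k_{m-1})$ --- accompagnée d'une factorisation compatible de $\hpi_{\omega\zeta}$ à travers $\hpi_{\omega\zeta^{(m-1)}}$, où $\zeta^{(m-1)}$ est le représentant maximal du coset plus grossier.

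Le principal obstacle est de refaire, avec $m$ frontières de blocs, la combinatoire des paragraphes \ref{sec:polynomes_grothendieck:chaines} à \ref{sec:polynomes_grothendieck:intervalle} : il faut choisir le bon ordre partiel sur les transpositions apparaissant dans $\Ww_{(k_1,\dots,k_m)}$ --- une concaténation, dans l'ordre des blocs $m, m-1, \dots, 1$, des ordres $\ttprec$ de chaque bloc --- puis rétablir les analogues du lemme~\ref{lem:polynomes_grothendieck:non-inversions} (préservation de l'ordre relatif des valeurs le long d'une chaîne) et du critère de sous-mot \emph{compatible} de la proposition~\ref{prop:polynomes_grothendieck:validSubwords}. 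Le décompte est plus délicat que dans le cas à un seul bloc, car une transposition $(a,b) \in \Ww_i$ peut avoir son extrémité droite $b$ dans n'importe quel bloc ultérieur, ce qui fait proliférer les \emph{motifs de conflit} de la remarque~\ref{rqe:polynomes_grothendieck:permutation-pattern} ; on n'a toutefois pas besoin d'exhiber explicitement l'élément maximal, puisque la proposition n'affirme que la clôture par intervalle, laquelle est déjà acquise par le premier paragraphe.
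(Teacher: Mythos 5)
Votre premier paragraphe traite les assertions qualitatives (coefficients $\pm 1$, absence d'annulations, clôture par intervalle) exactement comme le texte : développement $\sum_{\sigma\leq\mu\leq\zeta}\hK_\mu$ dans la base $\hK$ en utilisant que $[\sigma,\zeta]$ est contenu dans le coset et que $\pi_{\zeta^{-1}\sigma}$ se factorise bloc par bloc, puis changement de base, fonction de Möbius et lemme~\ref{lem:prelim_groupe_sym:bruhat:coset_gen}. Cette partie est correcte et identique à celle du texte.

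Pour l'identité \eqref{eq:polynomes_grothendieck:gen-esigma}, en revanche, la voie que vous développez le plus --- récurrence sur $\ell(\zeta^{-1}\sigma)$ en \emph{transposant} les lemmes~\ref{lem:polynomes_grothendieck:ESigmaPiNonZeroImage} et~\ref{lem:polynomes_grothendieck:ESigmaPiZeroImage} --- contient un trou que vous signalez vous-même sans le combler. Pour $m\geq 2$, la liste $\Ww_{(k_1,\dots,k_m)}$ ne peut pas être réduite aux transpositions de Bruhat de $\sigma$ (l'analogue du lemme~\ref{lem:polynomes_grothendieck:equiv-lenart} tombe en défaut, comme le texte le souligne juste après l'énoncé), or toute la machinerie des paragraphes~\ref{sec:polynomes_grothendieck:chaines} et~\ref{sec:polynomes_grothendieck:intervalle} --- l'ordre $\tprec$ défini par les valeurs de $\sigma$, le lemme~\ref{lem:polynomes_grothendieck:non-inversions}, la caractérisation du $k$-Bruhat et les inégalités $\mu(a)\geq\sigma(a)$ pour $a\leq k$ --- repose sur cette réduction. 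Ces inégalités n'ont d'ailleurs pas d'analogue global quand les blocs interfèrent : une transposition $(a,b)\in\Ww_1$ avec $b\leq k_m$ \emph{diminue} la valeur en position $b$, qui est pourtant à gauche de la frontière $k_m$. Affirmer que les lemmes de découpage de branches \emph{se transposent} est donc une pétition de principe ; le cas initial $\sigma=\zeta$ à $m$ frontières demanderait de même un algorithme glouton multi-blocs que vous ne donnez pas. La bonne nouvelle est que la voie que vous mentionnez en passant --- récurrence sur $m$ via la factorisation $\Ww_{(k_1,\dots,k_m)}=(\Ww_m,\Ww_{(k_1,\dots,k_{m-1})})$ --- est précisément la démonstration du texte et contourne entièrement l'obstacle : on écrit $\hpi_{\omega\zeta}=\hpi_{\omega\zeta'}\hpi_{\zeta'^{-1}\zeta}$ car $\zeta'\geq\zeta$ pour l'ordre faible droit, on fait commuter les opérateurs d'indices $<k_{m-1}$ et $>k_{m-1}$ (aucun $\pi_{k_{m-1}}$ n'apparaît), on applique la proposition~\ref{prop:polynomes_grothedieck:ESigma} déjà acquise au seul bloc $\bblock_m$, et on conclut par hypothèse de récurrence appliquée aux $\sigma u$ ; aucune nouvelle combinatoire de chaînes n'est nécessaire. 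Il faut promouvoir cette parenthèse au rang d'argument principal et abandonner la récurrence sur la longueur.
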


Remarquons que dans le cas général, la liste $\Ww_{(k_1,\dots,k_m)}$ ne dépend pas de la permutation $\sigma$. Quand $m=1$, la liste $\Ww_{k}$ correspond à la liste de $k$-transposition donnée dans le théorème \ref{thm:polynomes_grothendieck:LenartPostnikov} de Lenart et Postnikov à commutations près (on utilise une autre extension linéaire de $\ttprec$). On a vu dans le lemme \ref{lem:polynomes_grothendieck:equiv-lenart} que cette liste pouvait être réduite à $\W_{\sigma, k}$. C'est grâce à cette réduction que l'on obtient les caractéristiques très particulières de $\E{\sigma,k}$ qui nous ont permis de prouver le théorème \ref{thm:polynomes_grothendieck:interval}. Dans le cas général, une telle réduction n'est pas possible. L'ensemble de sommation, bien que clos par intervalle, ne comporte pas un unique élément maximal. Par exemple, pour $\sigma = 25 \vert 14 \vert 63$, l'ensemble $\E{25 \vert 14 \vert 63}$ comporte deux éléments de longueur maximale :  $362541 = 251463 (3,6)(4,5)(1,3)(2,4)$ et $461532 = 251463 (4,5)(1,6)(1,5)(2,4)$.

Prouvons d'abord la deuxième partie de la proposition, c'est-à-dire que la somme est sans annulation, avec des coefficients $\pm 1$ et sur un ensemble clos par intervalle. On énonce pour cela une généralisation de la proposition \ref{prop:polynomes_grothendieck:complement}.

\begin{Proposition}
\label{prop:polynomes_grothendieck:gen-complement}
Soit $\Succs_{\sigma}^{-(k_1, \dots, k_m)}$ l'ensemble des successeurs $\sigma' = \sigma \tau$ de $\sigma$ tels que $\tau$ ne soit pas une $k_i$-transpositions pour $1 \leq i \leq m$. On a alors

\begin{equation}
K_{\omega} \hat{\pi}_{\omega \zeta} \pi_{\zeta^{-1} \sigma}  = \sum (-1)^{\ell(\nu) - \ell(\sigma)} K_\nu
\end{equation}
sommé sur les permutations $\nu$ telle que $\nu \geq \sigma$ et $\forall \sigma' \in \Succs_{\sigma}^{-(k_1, \dots, k_m)}$, $\nu \ngeq \sigma'$.

En particulier, l'ensemble de sommation est clos par intervalle.
\end{Proposition}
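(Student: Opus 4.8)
L'idée est de décalquer la preuve de la proposition~\ref{prop:polynomes_grothendieck:complement}, en remplaçant le lemme de coset à un bloc~\ref{lem:prelim_groupe_sym:bruhat:coset} par sa version à plusieurs blocs, le lemme~\ref{lem:prelim_groupe_sym:bruhat:coset_gen}. La première étape consiste à établir l'analogue de la proposition~\ref{prop:polynomes_grothendieck:HatK} dans ce cadre, à savoir
\[
K_{\omega} \hat{\pi}_{\omega \zeta}\pi_{\zeta^{-1}\sigma}
 = \sum_{\substack{\mu \geq \sigma \\ \mu \in \sigma(\Sym{k_1}\times\cdots\times\Sym{n-k_m})}} \hK_\mu .
\]
On utilise comme dans le cas à un bloc l'isomorphisme de posets $\varphi$ entre le coset $\sigma(\Sym{k_1}\times\cdots\times\Sym{n-k_m})$ et le produit $\Sym{k_1}\times\cdots\times\Sym{n-k_m}$, qui envoie $\sigma$ sur le $(m+1)$-uplet des blocs standardisés $(\sigma^{(1)},\dots,\sigma^{(m+1)})$ et $\zeta$ sur le $(m+1)$-uplet des permutations maximales $(\omega^{(1)},\dots,\omega^{(m+1)})$. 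Une décomposition réduite de $\zeta^{-1}\sigma$ ne contient aucune lettre $s_{k_i}$, donc se scinde en $m+1$ sous-mots commutant deux à deux, chacun n'utilisant que des générateurs internes à un bloc. En appliquant l'application tensorisée $\phi$, puis bloc par bloc l'identité à un alphabet $\hK_{\omega^{(j)}}\pi_{(\omega^{(j)})^{-1}\sigma^{(j)}} = K_{\sigma^{(j)}} = \sum_{\mu^{(j)}\geq\sigma^{(j)}}\hK_{\mu^{(j)}}$ (qui résulte de $\hK_{\omega^{(j)}}=K_{\omega^{(j)}}$, du réordonnement et de~\eqref{eq:KToHatK}), et en recombinant par $\phi^{-1}$, on obtient la somme annoncée, en notant que $\varphi$ transporte l'ordre de Bruhat de $\Sym{n}$ restreint au coset sur l'ordre produit.

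Vient ensuite le changement de base : en développant chaque $\hK_\mu$ dans la base $K$ grâce à~\eqref{eq:HatKToK}, on trouve $K_{\omega} \hat{\pi}_{\omega \zeta}\pi_{\zeta^{-1}\sigma} = \sum_{\nu \geq \sigma} c_\nu K_\nu$ avec $c_\nu = \sum (-1)^{\ell(\nu)-\ell(\mu)}$ sommé sur les $\mu$ du coset tels que $\sigma \leq \mu \leq \nu$. L'ensemble de sommation de $c_\nu$ est exactement l'intersection de l'intervalle $[\sigma,\nu]$ avec le coset $\sigma(\Sym{k_1}\times\cdots\times\Sym{n-k_m})$, qui par le lemme~\ref{lem:prelim_groupe_sym:bruhat:coset_gen} est un intervalle dont le minimum est $\sigma$ (puisque $\sigma$ appartient à la fois au coset et à $[\sigma,\nu]$). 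D'après l'évaluation de la fonction de Möbius de l'ordre de Bruhat due à Verma (l'identité télescopique $\sum_{\mu\in[\sigma,\rho]}(-1)^{\ell(\mu)-\ell(\sigma)} = \delta_{\sigma,\rho}$, cf.~\cite{VermatMobius}), on a $c_\nu = (-1)^{\ell(\nu)-\ell(\sigma)}$ quand cet intervalle est réduit à $\lbrace \sigma \rbrace$, et $c_\nu = 0$ sinon. Il reste à traduire la condition que $[\sigma,\nu]\cap\sigma(\Sym{k_1}\times\cdots\times\Sym{n-k_m})$ est réduit à $\lbrace\sigma\rbrace$ : elle est en défaut exactement lorsqu'un successeur de Bruhat direct de $\sigma$ appartenant au coset est $\leq \nu$ ; or un successeur direct $\sigma\tau$ appartient au coset si et seulement si $\tau=(a,b)$ a ses deux indices dans un même bloc $(k_{i-1},k_i]$, c'est-à-dire si $\tau$ n'est une $k_i$-transposition pour aucun $i$, donc si $\sigma\tau \in \Succs_\sigma^{-(k_1,\dots,k_m)}$. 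On obtient ainsi la plage de sommation de l'énoncé. Pour la clôture par intervalle : si $\nu$ apparaît dans la somme et $\sigma \leq \nu' \leq \nu$, alors $\nu' \geq \sigma$, et si un $\sigma' \in \Succs_\sigma^{-(k_1,\dots,k_m)}$ vérifiait $\sigma' \leq \nu'$ on aurait $\sigma' \leq \nu$ par transitivité, contredisant la présence de $\nu$ dans la somme ; donc $\nu'$ est aussi dans la somme.

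Le principal obstacle attendu est la première étape : il faut vérifier soigneusement que la décomposition en blocs de $\zeta^{-1}\sigma$ se comporte correctement vis-à-vis de $\phi$ et que la factorisation tensorielle reproduit effectivement $\sum_{\mu\geq\sigma,\,\mu\in\text{coset}}\hK_\mu$ — ce qui repose sur le fait que la relation « $\geq$ dans le coset » coïncide avec « $\geq$ dans $\Sym{n}$ », puisque le coset se plonge comme un intervalle de Bruhat. Une fois ce point acquis et le lemme~\ref{lem:prelim_groupe_sym:bruhat:coset_gen} invoqué à la place du lemme~\ref{lem:prelim_groupe_sym:bruhat:coset}, le reste n'est qu'une transcription de l'argument à un bloc.
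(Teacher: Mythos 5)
Votre preuve est correcte et suit exactement la même démarche que celle du texte : d'abord le développement dans la base $\hK$ via la factorisation tensorielle du coset (généralisation de la proposition~\ref{prop:polynomes_grothendieck:HatK}), puis le changement de base où l'on remplace le lemme~\ref{lem:prelim_groupe_sym:bruhat:coset} par le lemme~\ref{lem:prelim_groupe_sym:bruhat:coset_gen} et où la formule de Möbius de Verma identifie les coefficients non nuls. Vous ne faites qu'expliciter les détails que le texte laisse implicites en renvoyant au cas $m=1$ ; l'identification de votre ensemble de sommation $\lbrace \mu \geq \sigma,\ \mu \in \text{coset}\rbrace$ avec l'intervalle $[\sigma,\zeta]$ du texte est immédiate puisque $\zeta$ est l'élément maximal du coset.
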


\begin{proof}
De façon similaire au cas $m=1$, on obtient que

\begin{equation}
K_{\omega} \hpi_{\omega \zeta} \pi_{\zeta^{-1} \sigma} = \sum_{\zeta \geq \mu \geq \sigma} \hK_\mu.
\end{equation}
La preuve est similaire à celle de la proposition \ref{prop:polynomes_grothendieck:HatK}. Il suffit de remarquer que $[\sigma, \zeta]$ est inclus dans le coset $\sigma(\Sym{k_1} \times \Sym{k_2 - k_1} \times \dots \Sym{n - k_m})$ et que le calcul peut donc s'effectuer dans l'espace vectoriel engendré par $(K_\mu)_{\mu \in \Sym{k_1}} \otimes (K_\mu)_{\mu \in \Sym{k_2 - k_1}} \otimes \dots (K_\mu)_{\mu \in \Sym{n - k_m}}$. Par un changement de base, on obtient alors le résultat voulu avec un argument similaire à celui de la proposition \ref{prop:polynomes_grothendieck:complement}. On utilise cette fois le lemme généralisé \ref{lem:prelim_groupe_sym:bruhat:coset_gen} sur l'intersection d'un coset et d'un intervalle. 
\end{proof}

\begin{figure}[t]
\centering
\input{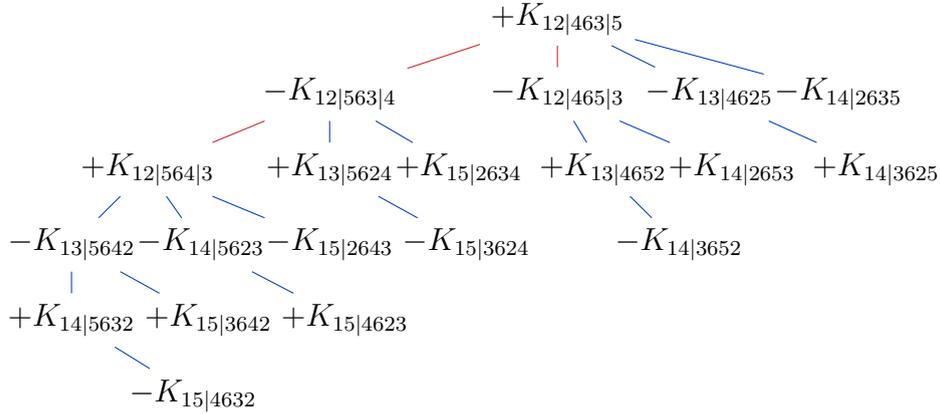}
\caption[L'ensemble $\E{12 \vert 463 \vert 5}$]{L'ensemble $\E{12 \vert 463 \vert 5}$, le calcul en rouge correspond à l'application des sous mots de $\Ww_2$ sur le bloc $\bblock_2 = 463 \vert 5$. Sur chaque élément, on applique ensuite (en bleu) les sous-mots de $\Ww_1$.}
\label{fig:polynomes_grothendieck:ESigmaMultiK}
\end{figure}

\begin{proof}[Démonstration de la proposition \ref{prop:polynomes_grothendieck:gen-esigma} ]
La preuve se fait par récurrence sur $m$. Le cas $m=1$ correspond à la proposition \ref{prop:polynomes_grothedieck:ESigma}. \`A présent, soit $\zeta'$ l'élément maximal de  $\sigma (\Sym{k_1} \times \Sym{k_2 - k_1} \times \dots \times \Sym{n - k_{m-1}})$. Les coinversions de $\zeta'$ contiennent les coinversions de $\zeta$ et donc $\zeta' \geq \zeta$ pour l'ordre faible droit. Cela signifie qu'une décomposition réduite de $\omega \zeta'$ est un préfixe d'une décomposition réduite de $\omega \zeta$ et on a

\begin{equation}
K_{\omega} \hat{\pi}_{\omega \zeta} \pi_{\zeta^{-1} \sigma} = K_{\omega} \hat{\pi}_{\omega \zeta'} \hat{\pi}_{\zeta'^{-1} \zeta} \pi_{\zeta^{-1} \sigma}.
\end{equation}

Tous les opérateurs $\hpi_i$ dans $\hpi_{\zeta'^{-1} \zeta}$ sont tels que $i > k_{m-1}$. Par ailleurs, il n'y a pas d'opérateur $\pi_{k_{m-1}}$ dans le produit $\pi_{\zeta^{-1} \sigma}$ et donc les opérateurs $\pi_i$ avec $i<k_{m-1}$ commutent avec les opérateurs $\pi_i$ avec $i>k_{m-1}$ ainsi qu'avec les opérateurs $\hpi_{\zeta'^{-1} \zeta}$. On peut donc écrire

\begin{equation}
K_{\omega} \hat{\pi}_{\omega \zeta} \pi_{\zeta^{-1} \sigma} = (K_{\omega} \hat{\pi}_{\omega \zeta'}  \pi_{\zeta^{-1} \sigma}^{(<k_{m-1})})(\hat{\pi}_{\zeta'^{-1} \zeta} \pi_{\zeta^{-1} \sigma}^{(>k_{m-1})}).
\end{equation}

Soit $\tilde{\sigma}$ la permutation donnée par $[\sigma(1), \sigma(2),\dots, \sigma(k_{m-1}), \zeta'(k_{m-1} +1), \zeta'(k_{m-1}+2), \dots, \zeta'(n)]$. On a alors $\pi_{\zeta^{-1} \sigma}^{(<k_{m-1})} = \pi_{\zeta'^{-1} \tilde{\sigma}}$. Par ailleurs, le produit $(\hpi_{\zeta'^{-1} \zeta} \pi_{\zeta^{-1} \sigma}^{(>k_{m-1})})$ agit seulement sur le bloc $\bblock_m(\sigma)$. On peut ignorer le facteur gauche de taille $k_{m-1}$ de $\sigma$ pour utiliser la proposition \ref{prop:polynomes_grothedieck:ESigma}. On a alors

\begin{equation}
K_{\tilde{\sigma}} \hpi_{\zeta'^{-1} \zeta} \pi_{\zeta^{-1} \sigma}^{(>k_{m-1})} = \sum_u (-1)^{|u|} K_{\sigma u}
\end{equation}
sommé sur les sous-mots $u$ de $\Ww_m$ valides en partant de $\sigma$. Si l'on exprime ce résultat en termes d'opérateurs, cela donne

\begin{equation}
 \hpi_{\zeta'^{-1} \zeta} \pi_{\zeta^{-1} \sigma}^{(>k_{m-1})} = \sum_u (-1)^{|u|} \pi_{\tilde{\sigma}^{-1} \sigma u} .
\end{equation}

C'est une somme sur les chemins entre $\tilde{\sigma}$ (qui sur le bloc $\bblock_m$ correspond à la permutation maximale) et les permutations $\sigma u$. On a donc

\begin{equation}
K_{\omega} \hat{\pi}_{\omega \zeta} \pi_{\zeta^{-1} \sigma} = \sum_{u} (-1)^{\vert u \vert} K_\omega \hat{\pi}_{\omega \zeta'} \pi_{\zeta'^{-1} \tilde{\sigma}} \pi_{\tilde{\sigma}^{-1} \sigma u}
\end{equation}
sommé sur les sous-mots valides de $\Ww_m$. Comme $\pi_{\zeta'^{-1} \tilde{\sigma}}$ et $ \pi_{\tilde{\sigma}^{-1} \sigma u}$ contiennent seulement des opérateurs $\pi_i$ avec respectivement $i<k_{m-1}$ et $i>k_{m-1}$, leur produit est toujours un produit réduit et donc

\begin{equation}
K_{\omega} \hat{\pi}_{\omega \zeta} \pi_{\zeta^{-1} \sigma} = \sum_{u} (-1)^{\vert u \vert} K_\omega \hat{\pi}_{\omega \zeta'} \pi_{\zeta'^{-1} \sigma u}.
\end{equation}
ce qui donne \eqref{eq:polynomes_grothendieck:gen-esigma} par récurrence.
\end{proof}

Par exemple, voyons le calcul détaillé pour $\sigma = 12 \vert 463 \vert 5$ ($k_1 = 2$, $k_2 = 5$). On a $\zeta = 21 \vert 643 \vert 5$ et $\zeta' = 21 \vert 6543$. Le résultat est illustré figure \ref{fig:polynomes_grothendieck:ESigmaMultiK}.

\begin{align}
K_{\omega} \hat{\pi}_{\omega \zeta} \pi_{\zeta^{-1} \sigma}  &= K_{654321} \hat{\pi}_4  \hat{\pi}_3  \hat{\pi}_2  \hat{\pi}_1 \hat{\pi}_5 \hat{\pi}_4 \hat{\pi}_3 \hat{\pi}_2 \hat{\pi}_4  \hat{\pi}_5~ \pi_1 \pi_3 \\
&= (K_{654321} \hat{\pi}_4  \hat{\pi}_3  \hat{\pi}_2  \hat{\pi}_1 \hat{\pi}_5 \hat{\pi}_4 \hat{\pi}_3 \hat{\pi}_2 ~ \pi_1)  (\hat{\pi}_4  \hat{\pi}_5 ~ \pi_3) \\
&= \hat{K}_{216543} \pi_1 (\pi_4 \pi_5 \pi_3 - \pi_4\pi_3 - \pi_5\pi_3 + \pi_3) \\
&= \mathfrak{E}_{12 \vert 4635} - \mathfrak{E}_{12 \vert 4653} - \mathfrak{E}_{12 \vert 5634} + \mathfrak{E}_{12 \vert 5643}
\end{align}

Nous ne connaissons pas d'interprétations géométriques de ce calcul en termes de produit de polynômes. Cependant, le résultat est intéressant en tant que tel soit comme un développement dans l'algèbre 0-Hecke soit comme application de ces opérateurs sur les polynômes clés ou les polynômes de Grothendieck.


\chapter{Implantation des bases des polynômes en Sage}
\label{chap:polynomes_sage}

Créé en 2005, le logiciel Sage  \cite{SAGE_WEBSITE} a pris une place importante au sein des systèmes de calcul formel pour la recherche en mathématique et plus particulièrement en combinatoire. Il a pour but de proposer une alternative libre et gratuite aux logiciels traditionnels tels que Maple ou Magma. Sous licence GPL, il s'ajoute à la grande famille des logiciels libres développés dans la philosophie du système d'exploitation Linux. Le développement libre n'est pas nouveau en mathématiques et Sage se propose aussi de réunir au sein d'un même logiciel les outils des différentes communautés. On y trouve entre autres des programmes tels que GAP et Symmetrica. Son modèle de développement est décentralisé, basé sur la communauté. Chaque utilisateur peut devenir développeur et proposer des modifications, corrections ou ajouts au logiciel. Ces propositions sont envoyées sous formes de \emph{patchs} et validées par d'autres développeurs avant d'être incorporées à une nouvelle version du logiciel. Dans le domaine du calcul formel traditionnel, Sage a atteint un niveau comparable aux logiciels classiques. Il a été ajouté récemment à la liste des logiciels de calculs formels admis au concours de l'agrégation. On pourra lire \cite{SAGE_BOOK} pour se familiariser avec ses principales fonctionnalités.

Le modèle de développement de Sage trouve un intérêt particulier en combinatoire algébrique. La recherche en combinatoire est souvent basée sur une exploration préliminaire par le calcul informatique. Les outils nécessaires font appel à de nombreuses branches des mathématiques : algèbre linéaire, théorie des groupes, théorie des représentations... Le développement d'une base commune par les différentes communautés est donc un atout majeur. Avant même l'arrivée de Sage, un projet \emph{Mupad-Combinat} avait été mis en place au sein du logiciel Mupad. Cependant, les possibilités de développement ont été limitées par les contraintes liées au logiciel, en particulier car il était propriétaire. En 2008, la communauté \emph{combinat} décide de rejoindre Sage devenant alors \emph{Sage-combinat} \cite{SAGE_COMBINAT}. Concrètement, Sage-combinat est un ensemble de patchs qui visent à améliorer l'intégration de la combinatoire dans Sage. Sage-combinat est aussi et surtout une communauté de chercheurs / développeurs qui créent, testent et intègrent ces patchs.

Il existait déjà dans Sage une implantation classique des polynômes multivariés en tant qu'expressions formelles en plusieurs variables. Cette implantation ne répond pas aux besoins de calculs sur des bases multiples telles que Schubert, Grothendieck ou les polynômes clés. En effet, on veut pouvoir travailler formellement sur les éléments de ces bases, leur appliquer des opérateurs, les convertir d'une base à l'autre et les développer sur la base des monômes seulement si nécessaire. Pour ce faire, nous considérons un polynôme dans une base quelconque comme une somme formelle de vecteurs. L'interprétation de ce vecteur est donné par la base : sur les monômes le vecteur est un exposant, sur les autres bases c'est ce que nous avons décrit dans le paragraphe \ref{sec:polynomes_action:pol}. Cela nous permet aussi de travailler en un nombre quelconque de variables, les variables n'étant plus données chacune comme des éléments formels mais seulement par la taille du vecteur. Cette approche n'est pas nouvelle, on la trouve en particulier au sein du logiciel ACE développé sous Maple \cite{VEIGN} ou en partie dans le logiciel Symmetrica. Cependant, il n'existait pas encore de version en Sage, et donc accessible à tous, d'une telle implantation. Ce faisant, nous nous inscrivons ici au sein d'un projet plus important de développement de la combinatoire dans Sage. Notre implantation a fait l'objet de plusieurs présentation et d'une publication \cite{Me_Polynomes}.

Outre les aspects philosophiques qui nous ont fait choisir un logiciel libre plutôt que propriétaire comme Maple, le choix de Sage est aussi d'ordre technique. Sage est développé en python et utilise une architecture orientée objet. Toute notre implantation est basée sur cette architecture, utilisant de façon avancée l'héritage et la classification. En particulier, nous utilisons le modèle de développement en \emph{Catégories} / \emph{Parents} / \emph{\'Eléments} mis en place par Nicolas Thiéry \cite{SAGE_categories}. Par ailleurs, nous avons pu profiter des implantations déjà existantes en Sage comme les groupes de Coxeter ou les modules libres. 

Dans le paragraphe \ref{sec:polynomes_sage:demo}, nous commençons par présenter les fonctionnalités de base du logiciel : création d'un polynôme, application d'un opérateur, changement de base, etc. Le paragraphe \ref{sec:polynomes_sage:archi} nous sert à expliquer l'architecture globale du projet et nos choix d'implantation. Enfin, paragraphe \ref{sec:polynomes_sage:appli}, nous donnons des exemples d'applications avancées, et en particulier, les calculs du chapitre \ref{chap:polynomes_grothendieck}. 

\section{Fonctionnalités, exemples d'utilisation}
\label{sec:polynomes_sage:demo}

\subsection{Installation et distribution du logiciel}
\label{sub-sec:polynomes_sage:demo:install}

Le logiciel est disponible en tant que patch additionnel à Sage \cite{SAGE_Polynomials} . On peut le télécharger sur sa page dédiée : \url{http://trac.sagemath.org/sage_trac/ticket/6629} et l'installer. Les étapes sont les suivantes :
\begin{enumerate}
    \item Installer Sage.
    \item Télécharger le patch contenant le programme.
    \item Effectuer une copie de votre version de sage avec la commande 
    
    {\tt sage -clone polynomials}.
    \item Lancer sage puis sur la line de commande sage, taper 
    
    {\tt hg\_sage.apply("nom\_du\_patch.patch")}.
    \item Quitter sage (quit) puis lancer la commande 
    
    {\tt sage -br}.
\end{enumerate} 

Le patch est en cours de révision. Il doit être approuvé par d'autres utilisateurs / développeurs et sera ensuite intégré au logiciel Sage. Il n'y aura alors plus d'installation spécifique et les fonctionnalités seront présentes sur l'installation par défaut de Sage. 

Il est aussi possible de bénéficier du patch en installant la suite complète des patchs de \emph{sage-combinat} par la commande {\tt sage - combinat install}. Ces patchs sont expérimentaux et n'ont pas encore fait l'objet d'un processus de validation.

\subsection{Création d'un polynôme, application d'opérateurs}
\label{sub-sec:polynomes_sage:demo:pol}

On crée d'abord l'objet de base représentant l'algèbre des polynômes.
\begin{lstlisting}
sage: A = AbstractPolynomialRing(QQ)
sage: A
The abstract ring of multivariate polynomials on x over Rational Field
\end{lstlisting}
On définit ensuite la base des monômes qui va nous servir à créer des éléments.
\begin{lstlisting}
sage: m = A.monomial_basis(); m
The ring of multivariate polynomials on x over Rational Field on the monomial basis
\end{lstlisting}
On peut alors créer un polynôme à partir de {\tt m}.
\begin{lstlisting}
sage: pol = m[1,1,2] + m[2,3]; pol
x[1, 1, 2] + x[2, 3, 0]
\end{lstlisting}
L'élément {\tt x[1,1,2]} signifie $x^{(1,1,2)}=x_1^1x_2^1x_3^2$ comme on peut le voir en transformant le polynôme en expression symbolique.
\begin{lstlisting}
sage: pol.to_expr()
x1^2*x2^3 + x1*x2*x3^2
\end{lstlisting}
Il n'est pas nécessaire de préciser à l'avance le nombre de variables : il sera calculé en fonction de la taille du vecteur. L'objet polynôme connaît son nombre de variable à travers son objet \emph{parent}.
\begin{lstlisting}
sage: pol.parent()
The ring of multivariate polynomials on x over Rational Field with 3 variables on the monomial basis
sage: pol = pol.change_nb_variables(4)
sage: pol
x[1, 1, 2, 0] + x[2, 3, 0, 0]
sage: pol.parent()
The ring of multivariate polynomials on x over Rational Field with 4 variables on the monomial basis
\end{lstlisting}
Un polynôme est toujours vu comme une somme formelle de vecteurs. Il ne peut pas être factorisé. Si l'on multiplie deux polynômes, le résultat sera toujours donné sous forme développée.
\begin{lstlisting}
sage: pol * pol
x[2, 2, 4, 0] + 2*x[3, 4, 2, 0] + x[4, 6, 0, 0]
\end{lstlisting}

On peut à présent appliquer des opérateurs de différences divisées à un polynôme. Par exemple, on trouve ci-dessous le calcul \eqref{eq:polynomes_action:exemple_diffdiv}.
\begin{lstlisting}
sage: pol = m[5,1]; pol
x[5, 1]
sage: pol.divided_difference(1)
x[1, 4] + x[2, 3] + x[3, 2] + x[4, 1]
sage: pol.isobaric_divided_difference(1)
x[1, 5] + x[2, 4] + x[3, 3] + x[4, 2] + x[5, 1]
sage: pol.hat_isobaric_divided_difference(1)
x[1, 5] + x[2, 4] + x[3, 3] + x[4, 2]
\end{lstlisting} 
Même si le polynôme n'est défini que sur deux variables, on peut appliquer une différence divisée qui fait intervenir la troisième variable.
\begin{lstlisting}
sage: pol.isobaric_divided_difference(2)
x[5, 1, 0] + x[5, 0, 1]
\end{lstlisting}
Certaines méthodes permettent d'appliquer une série d'opérations.
\begin{lstlisting}
sage: pol = m[1,4,2] + m[5,5,1]
sage: pol.apply_reduced_word([1,2])
-x[1, 2, 2] + x[3, 1, 1]
sage: pol.apply_reduced_word([1,2],method="pi")
-x[2, 2, 3] - x[2, 3, 2] - x[3, 2, 2] + x[5, 1, 5] + x[5, 2, 4] + x[5, 3, 3] + x[5, 4, 2] + x[5, 5, 1]
sage: pol.apply_reduced_word([1,2],method="hatpi")
-x[1, 2, 4] - x[1, 3, 3] - x[2, 2, 3]
\end{lstlisting}
Ici on a appliqué au polynôme les opérateurs respectifs $\partial_1 \partial_2$, $\pi_1 \pi_2$ et $\hpi_1 \hpi_2$. On peut par exemple vérifier la relation de tresse.
\begin{lstlisting}
sage: pol.apply_reduced_word([1,2,1])
x[1, 1, 2] + x[1, 2, 1] + x[2, 1, 1]
sage: pol.apply_reduced_word([2,1,2])             
x[1, 1, 2] + x[1, 2, 1] + x[2, 1, 1]
\end{lstlisting}
Il est aussi possible de mélanger différents types d'opérateurs. Voici la commande pour appliquer $\partial_2$ puis $\pi_1$.
\begin{lstlisting}
sage: pol.apply_composed_morphism([("d",2),("pi",1)])
x[1, 5, 4] - x[2, 2, 2] + x[2, 4, 4] + x[2, 5, 3] + x[3, 3, 4] + x[3, 4, 3] + x[3, 5, 2] + x[4, 2, 4] + x[4, 3, 3] + x[4, 4, 2] + x[4, 5, 1] + x[5, 1, 4] + x[5, 2, 3] + x[5, 3, 2] + x[5, 4, 1]
\end{lstlisting}
Enfin, on peut appliquer des opérateurs de type $B$, $C$ ou $D$ que nous avons vus paragraphe \ref{sub-sec:polynomes_action:ope:bcd}.
\begin{lstlisting}
pol.divided_difference(1,"B")
x[-5, 5, 1] + x[-4, 5, 1] + x[-3, 5, 1] + x[-2, 5, 1] + x[-1, 4, 2] + x[-1, 5, 1] + x[1, 5, 1] + x[2, 5, 1] + x[3, 5, 1] + x[4, 5, 1] + x[0, 4, 2] + x[0, 5, 1]
sage: pol.divided_difference(1,"C")
x[-4, 5, 1] + x[-2, 5, 1] + x[2, 5, 1] + x[4, 5, 1] + x[0, 4, 2] + x[0, 5, 1]
sage: pol.divided_difference(2,"D")
x[-4, -5, 1] + x[-3, -4, 1] + x[-3, -1, 2] + x[-2, -3, 1] + x[-2, 0, 2] + x[-1, -2, 1] + x[-1, 1, 2] + x[1, 3, 2] + x[1, 0, 1] + x[2, 1, 1] + x[3, 2, 1] + x[4, 3, 1] + x[5, 4, 1] + x[0, -1, 1] + x[0, 2, 2]
\end{lstlisting}

Quelque soit le nombre de variables du polynôme, les opérateurs de types $B$, $C$ et $D$ sont toujours respectivement $\partial_i^B$, $\partial_i^C$ et $\partial_i^D$ définis en \eqref{eq:polynomes_action:def-diffdiv-B}, \eqref{eq:polynomes_action:def-diffdiv-B} e \eqref{eq:polynomes_action:def-diffdiv-B}, même quand $i \neq n$. On peut mélanger des opérateurs de types différents, par exemple, ici $\partial_1 \pi_1^B \pi_1^C$.
\begin{lstlisting}
sage: pol.apply_composed_morphism([("d",1),("pi",1,"B"),("pi",2,"C")])
-x[-3, -1, 2] - x[-3, 1, 2] - x[-2, -2, 2] - x[-2, -1, 2] - x[-2, 1, 2] - x[-2, 2, 2] - x[-2, 0, 2] - x[-1, -3, 2] - x[-1, -2, 2] - 2*x[-1, -1, 2] - 2*x[-1, 1, 2] - x[-1, 2, 2] - x[-1, 3, 2] - x[-1, 0, 2] - x[1, -3, 2] - x[1, -2, 2] - 2*x[1, -1, 2] - 2*x[1, 1, 2] - x[1, 2, 2] - x[1, 3, 2] - x[1, 0, 2] - x[2, -2, 2] - x[2, -1, 2] - x[2, 1, 2] - x[2, 2, 2] - x[2, 0, 2] - x[3, -1, 2] - x[3, 1, 2] - x[0, -3, 2] - x[0, -2, 2] - 2*x[0, -1, 2] - 2*x[0, 1, 2] - x[0, 2, 2] - x[0, 3, 2] - x[0, 0, 2]
\end{lstlisting}
Cependant, il est possible d'indexer les monômes directement par des éléments de l'espace ambiant du système de racines d'un groupe de Coxeter. Dans ce cas, le type du groupe est contenu dans l'objet polynôme et les opérateurs utilisés sont ceux du groupe.
\begin{lstlisting}
sage: mb = A.ambient_space_basis("B")
sage: mb
The ring of multivariate polynomials on x over Rational Field on the Ambient space basis of type B
sage: polb = mb[1,4,2] + mb[5,5,1]
sage: polb.group_type()
'B'
sage: polb.divided_difference(1)
-x(1, 3, 2) - x(2, 2, 2) - x(3, 1, 2)
sage: polb.divided_difference(3)
x(1, 4, 0) + x(1, 4, -2) + x(1, 4, -1) + x(1, 4, 1) + x(5, 5, 0) + x(5, 5, -1)
\end{lstlisting}
Ici, la première opération réalisée est la différence divisée classique $\partial_1$ et la seconde est $\partial_3^B$. En effet comme on l'a vu paragraphe \ref{sub-sec:polynomes_action:ope:bcd}, en type $B$, seule la racine simple $r_n$ est différente des racines de type $A$.

Il est aussi possible de définir soi-même un opérateur agissant sur les exposants.
\begin{lstlisting}
sage: def affine(self,key): return self.divided_difference_on_basis(key) - self.si_on_basis(key)
 sage: m.add_operator("a",affine)
sage: pol
x[1, 4, 2] + x[5, 5, 1]
sage: pol.apply_morphism(1,method="a")
-x[1, 3, 2] - x[2, 2, 2] - x[3, 1, 2] - x[4, 1, 2] - x[5, 5, 1]
sage: pol.divided_difference(1) - pol.si(1)
-x[1, 3, 2] - x[2, 2, 2] - x[3, 1, 2] - x[4, 1, 2] - x[5, 5, 1]
\end{lstlisting}
Ici, on a défini la méthode {\tt affine} comme la différence entre la différence divisée et l'opérateur $s_i$ qui échange deux variables. En l'ajoutant à la base $m$, on a créé une  famille d'opérateurs $a_i = \partial_i - s_i$. On peut par exemple vérifier que ce nouvel opérateur vérifie les relations de tresses.
\begin{lstlisting}
sage: pol.apply_reduced_word([1,2,1],method="a") == pol.apply_reduced_word([2,1,2],method="a") 
True
\end{lstlisting}

\subsection{Schubert, Grothendieck, polynômes clés et autres bases}
\label{sub-sec:polynomes_sage:demo:bases}

On a déjà montré différentes bases dans les exemples précédents. Les monômes peuvent être indexés soit par des vecteurs, soit par des éléments de l'espace ambiant d'un système de racine. 
\begin{lstlisting}
sage: A = AbstractPolynomialRing(QQ)
sage: m = A.monomial_basis(); m
The ring of multivariate polynomials on x over Rational Field on the monomial basis
sage: ma = A.ambient_space_basis("A"); ma
The ring of multivariate polynomials on x over Rational Field on the Ambient space basis of type A
sage: mb = A.ambient_space_basis("B"); mb
The ring of multivariate polynomials on x over Rational Field on the Ambient space basis of type B
sage: mc = A.ambient_space_basis("C"); mc
The ring of multivariate polynomials on x over Rational Field on the Ambient space basis of type C
sage: md = A.ambient_space_basis("D"); md
The ring of multivariate polynomials on x over Rational Field on the Ambient space basis of type D
sage: pol = m[2,2,3]; pol
x[2, 2, 3]
sage: mb(pol)
x(2, 2, 3)
\end{lstlisting}

Les autres bases sont définies à partir de l'algorithme qui développe un vecteur en somme de monômes. Voici par exemple les polynômes de Schubert.
\begin{lstlisting}
sage: Schub = A.schubert_basis_on_vectors()
sage: Schub
The ring of multivariate polynomials on x over Rational Field on the Schubert basis of type A (indexed by vectors)
sage: pol = Schub[2,1,2] + Schub[1,3,2]
sage: pol
Y(1, 3, 2) + Y(2, 1, 2)
sage: pol.expand()
x(1, 3, 2) + x(2, 1, 2) + x(2, 2, 1) + x(2, 2, 2) + x(2, 3, 1) + x(3, 1, 1) + x(3, 1, 2) + x(3, 2, 1)
\end{lstlisting}
La méthode {\tt expand} utilise l'algorithme que nous avons vu au paragraphe \ref{sub-sec:polynomes_action:pol:schub}. \`A partir du développement d'un polynôme de Schubert, des méthodes préexistantes en Sage permettent d'obtenir le morphisme inverse d'une somme de monômes vers un polynôme de Schubert.
\begin{lstlisting}
sage: Schub(m[2,4,1] + m[5,5,2])
Y(2, 4, 1) - Y(3, 3, 1) - Y(4, 2, 1) + Y(5, 5, 2)
sage: pol^2       
Y(2, 6, 4) + 2*Y(3, 4, 4) + 2*Y(3, 5, 3) + Y(3, 5, 4) + Y(3, 6, 3) + Y(4, 2, 4) + Y(4, 3, 3) + 2*Y(4, 4, 3) + Y(4, 4, 4) + 2*Y(4, 5, 2) + Y(4, 5, 3) + Y(5, 2, 3) + 2*Y(5, 2, 4) + 2*Y(5, 3, 3) + Y(5, 5, 2) + Y(6, 2, 2) + 2*Y(6, 2, 3)
\end{lstlisting}
Pour effectuer la multiplication, le programme développe les deux polynômes dans la base des monômes et transforme ensuite le résultat en polynôme de Schubert. On peut aussi appliquer les opérateurs de différences divisées directement aux polynômes de Schubert.
\begin{lstlisting}
sage: pol  
Y(1, 3, 2) + Y(2, 1, 2)
sage: pol.divided_difference(1)
Y(1, 1, 2)
sage: pol.isobaric_divided_difference(1)
Y(1, 2, 2) + Y(1, 3, 1) + Y(1, 3, 2)
sage: pol.hat_isobaric_divided_difference(1)
Y(1, 2, 2) + Y(1, 3, 1) - Y(2, 1, 2)
\end{lstlisting}
Par défaut, le programme applique l'opérateur sur le polynôme développé avant de l'exprimer dans la base des Schubert. Cependant, si une méthode a été implantée directement pour les polynômes de Schubert, c'est elle qui sera utilisée. Ici, c'est le cas de la différence divisée $\partial_i$. 

Pour les polynômes de Grothendieck simples, on a implanté deux bases : celle où les variables $y_i$ sont spécialisées à 1 et où les exposants des $x$ sont négatifs et celle où un changement de variable a été appliqué pour obtenir des exposants positifs (cf. paragraphe \ref{sub-sec:polynomes_action:pol:groth}). 
\begin{lstlisting}
sage: Grothp = A.grothendieck_positive_basis_on_vectors(); Grothp
The ring of multivariate polynomials on x over Rational Field on the Grothendieck basis of type A, with positive exposants (indexed by vectors) 
sage: Grothn = A.grothendieck_negative_basis_on_vectors(); Grothn
The ring of multivariate polynomials on x over Rational Field on the Grothendieck basis of type A with negative exposants (indexed by vectors)
sage: pol1 = Grothp[1,3,2] + Grothp[2,1,2]; pol1
G(1, 3, 2) + G(2, 1, 2)
sage: pol2 = Grothn[1,3,2] + Grothn[2,1,2]; pol2
G(1, 3, 2) + G(2, 1, 2)
sage: pol1.expand()
x(1, 3, 2) + x(2, 1, 2) + x(2, 2, 1) + x(2, 3, 1) - 2*x(2, 3, 2) + x(3, 1, 1) - x(3, 2, 2) - x(3, 3, 1) + x(3, 3, 2)
sage: pol2.expand()
2*x(0, 0, 0) + x(-3, -3, -2) - x(-3, -3, -1) + x(-3, -2, 0) - 2*x(-3, -2, -2) + x(-3, -2, -1) - x(-3, -1, 0) + x(-3, -1, -2) + x(-2, 0, 0) + x(-2, -3, 0) - x(-2, -3, -2) - 5*x(-2, -2, 0) + 5*x(-2, -2, -1) + 3*x(-2, -1, 0) + 2*x(-2, -1, -2) - 5*x(-2, -1, -1) - x(-2, 0, -2) - 3*x(-1, 0, 0) - x(-1, -3, 0) + x(-1, -3, -1) + 4*x(-1, -2, 0) + 3*x(-1, -2, -2) - 7*x(-1, -2, -1) - 4*x(-1, -1, -2) + 4*x(-1, -1, -1) + x(-1, 0, -2) + 2*x(-1, 0, -1) - x(0, -2, -2) + x(0, -2, -1) - 2*x(0, -1, 0) + x(0, -1, -2) + x(0, -1, -1) - 2*x(0, 0, -1)
sage: polexp = pol1.expand()
sage: polexp.subs_var([(i, 1 - A.var(i)^(-1)) for i in xrange(1,4)]) == pol2
True
\end{lstlisting}
Le changement de base de la base des monômes vers les polynômes de Grothendieck n'est définie que pour la base en exposants positifs. 
\begin{lstlisting}
sage: Grothp(m[2,4,1] + m[5,5,2])
G(2, 4, 1) - G(3, 3, 1) + G(3, 4, 1) - G(4, 2, 1) + G(4, 4, 1) + G(5, 5, 2)
sage: pol1^2
G(2, 6, 4) + 2*G(3, 4, 4) + 2*G(3, 5, 3) - G(3, 5, 4) + G(3, 6, 3) - 2*G(3, 6, 4) + G(4, 2, 4) + G(4, 3, 3) - G(4, 3, 4) + 2*G(4, 4, 3) - G(4, 4, 4) + 2*G(4, 5, 2) - 3*G(4, 5, 3) - G(4, 6, 3) + G(4, 6, 4) + G(5, 2, 3) + G(5, 2, 4) + G(5, 3, 3) - 3*G(5, 3, 4) - 2*G(5, 4, 3) + 2*G(5, 4, 4) - G(5, 5, 2) + 2*G(5, 5, 3) - G(5, 5, 4) - G(5, 6, 2) + 2*G(5, 6, 3) - G(5, 6, 4) + G(6, 2, 2) - G(6, 2, 4) - 3*G(6, 3, 3) + 3*G(6, 3, 4) - 2*G(6, 4, 2) + 4*G(6, 4, 3) - 2*G(6, 4, 4) + 2*G(6, 5, 2) - 4*G(6, 5, 3) + 2*G(6, 5, 4)
\end{lstlisting}

Les polynômes clés sont les seuls qui soient implantés pour les types $B$, $C$ et $D$ selon les définitions données au paragraphe \ref{sub-sec:polynomes_action:pol:dem}. On peut vérifier les calculs donnés en exemple dans cette section. 
\begin{lstlisting}
sage: K = A.demazure_basis_on_vectors(); K
The ring of multivariate polynomials on x over Rational Field on the Demazure basis of type A (indexed by vectors)
sage: Kb = A.demazure_basis_on_vectors("B"); Kb
The ring of multivariate polynomials on x over Rational Field on the Demazure basis of type B (indexed by vectors)
sage: Kc = A.demazure_basis_on_vectors("C"); Kc
The ring of multivariate polynomials on x over Rational Field on the Demazure basis of type C (indexed by vectors)
sage: Kd = A.demazure_basis_on_vectors("C"); Kd
The ring of multivariate polynomials on x over Rational Field on the Demazure basis of type C (indexed by vectors)
sage: K[1,0,1].expand()
x(1, 1, 0) + x(1, 0, 1)
sage: Kb[2,-1,1].expand()
x(2, 0, 0) + x(2, -1, 1) + x(2, 1, 0) + x(2, 1, -1) + x(2, 1, 1) + x(2, 0, 1)
sage: Kc[2,-1,1].expand()
x(2, 0, 0) + x(2, -1, 1) + x(2, 1, -1) + x(2, 1, 1)
sage: Kd[2,-2,1].expand()
x(2, -2, 1) + x(2, -1, 0) + x(2, -1, 2) + 2*x(2, 1, 0) + x(2, 1, -2) + x(2, 1, 2) + x(2, 2, -1) + x(2, 2, 1) + x(2, 0, -1) + 2*x(2, 0, 1)
\end{lstlisting}
A chaque fois, le polynôme est développé dans la base {\tt ma}, {\tt mb}, {\tt mc} ou {\tt md}. 
\begin{lstlisting}
sage: Kb[2,-1,1].expand().parent()
The ring of multivariate polynomials on x over Rational Field with 3 variables on the Ambient space basis of type B
sage: Kc[2,-1,1].expand().parent()
The ring of multivariate polynomials on x over Rational Field with 3 variables on the Ambient space basis of type C
\end{lstlisting}

Les changements de bases des polynômes de Schubert, Grothendieck ou des polynômes clés se font non seulement vers la base monomiale mais aussi entre bases. Le passage par la base monomiale est fait de façon implicite par le programme.
\begin{lstlisting}
sage: K(Schub[1,0,1])  
K(1, 0, 1) + K(2, 0, 0)
sage: Schub(Grothp[1,4,2])
Y(1, 4, 2) - 2*Y(2, 4, 2) - Y(3, 4, 1) + Y(3, 4, 2)
\end{lstlisting}

Enfin, il est possible de définir sa propre base. En effet, une base est entièrement donnée par l'algorithme de développement d'un élément. On définit donc simplement cette fonction. Voici par exemple une copie des polynômes de Schubert.
\begin{lstlisting}
sage: def schubert_on_basis(v, basis, call_back):
...     for i in xrange(len(v)-1):
...         if(v[i]<v[i+1]):
...             v[i], v[i+1] = v[i+1] + 1, v[i]
...             return call_back(v).divided_difference(i+1)
...     return basis(v)
\end{lstlisting}
Le paramètre {\tt v} est le vecteur indexant l'élément, {\tt basis} est la base sur laquelle on est en train de développer et {\tt call\_back} est l'appel récursif de la méthode.
\begin{lstlisting}
sage: myBasis = A.linear_basis_on_vectors("A","MySchub","Y",schubert_on_basis)
sage: pol = myBasis[2,1,2] + myBasis[1,3,2]
sage: pol.expand()
x(1, 3, 2) + x(2, 1, 2) + x(2, 2, 1) + x(2, 2, 2) + x(2, 3, 1) + x(3, 1, 1) + x(3, 1, 2) + x(3, 2, 1)
sage: Schub(pol)
Y(1, 3, 2) + Y(2, 1, 2)
\end{lstlisting}

\subsection{Polynômes doubles}
\label{sub-sec:polynomes_sage:demo:double}

Un polynôme en deux ensembles de variables $x$ et $y$ est un polynôme en $x$ dont les coefficients sont des polynômes en $y$. Nous avons défini une nouvelle classe pour en faciliter l'utilisation
\begin{lstlisting}
sage: D = DoubleAbstractPolynomialRing(QQ); D
The abstract ring of multivariate polynomials on x over The abstract ring of multivariate polynomials on y over Rational Field
sage: D.an_element()
y[0]*x[0, 0, 0] + 2*y[0]*x[1, 0, 0] + y[0]*x[1, 2, 3] + 3*y[0]*x[2, 0, 0]
sage: mx = D.monomial_basis(); mx
The ring of multivariate polynomials on x over The abstract ring of multivariate polynomials on y over Rational Field on the monomial basis
sage: my = D.coeffs_ring().monomial_basis(); my
The ring of multivariate polynomials on y over Rational Field on the monomial basis
sage: pol = my[1,2] * mx[2,1,3] + my[1,2] * mx[1,4,2] + my[3,1] * mx[2,1,3]
sage: pol
(y[1,2])*x[1, 4, 2] + (y[1,2]+y[3,1])*x[2, 1, 3]
\end{lstlisting}
On peut facilement échanger le rôle de $x$ et $y$.
\begin{lstlisting}
sage: pol.swap_coeffs_elements()
(x[1,4,2]+x[2,1,3])*y[1, 2] + (x[2,1,3])*y[3, 1]
sage: pol.swap_coeffs_elements().parent()
The ring of multivariate polynomials on y over The abstract ring of multivariate polynomials on x over Rational Field with 2 variables on the monomial basis
\end{lstlisting}
On peut changer la base aussi bien des variables $x$ que $y$.
\begin{lstlisting}
sage: Schubx = D.schubert_basis_on_vectors()
sage: Schuby = D.coeffs_ring().schubert_basis_on_vectors()
sage: pol = my[1,2] * mx[2,1,3] + my[1,2] * mx[1,4,2] + my[3,1] * mx[2,1,3]
sage: pol
(y[1,2])*x[1, 4, 2] + (y[1,2]+y[3,1])*x[2, 1, 3]
sage: Schubx(pol)
(y[1,2])*Yx(1, 4, 2) + (y[1,2]+y[3,1])*Yx(2, 1, 3) + (-y[1,2]-y[3,1])*Yx(2, 2, 2) + (-y[1,2]-y[3,1])*Yx(2, 3, 1) + (-y[1,2])*Yx(2, 3, 2) + (-y[1,2])*Yx(2, 4, 1) + (-y[1,2]-y[3,1])*Yx(3, 1, 2) + (y[1,2]+y[3,1])*Yx(3, 2, 1) + (-y[1,2]-y[3,1])*Yx(4, 1, 1) + (-y[1,2])*Yx(4, 1, 2) + (y[1,2])*Yx(4, 2, 1)
sage: pol.change_coeffs_bases(Schuby)
(Yy(1,2)-Yy(2,1))*x[1, 4, 2] + (Yy(1,2)-Yy(2,1)+Yy(3,1))*x[2, 1, 3]
\end{lstlisting}

Enfin, on peut utiliser les bases de Schubert et Grothendieck doubles que nous avons définies dans les paragraphes \ref{sub-sec:polynomes_action:pol:schub} et \ref{sub-sec:polynomes_action:pol:groth}.
\begin{lstlisting}
sage: DSchub = D.double_schubert_basis_on_vectors(); DSchub
The ring of multivariate polynomials on x over The abstract ring of multivariate polynomials on y over Rational Field on the Double Schubert basis of type A (indexed by vectors)
sage: pol = DSchub[1,2,1]; pol
y[0]*YY(1, 2, 1)
sage: pol.expand()
(y(3,1,0)+y(3,0,1))*x(0, 0, 0) + (-y(2,1,0)-y(2,0,1)-y(3,0,0))*x(1, 0, 0) + (y(1,1,0)+y(1,0,1)+2*y(2,0,0))*x(1, 1, 0) + (-2*y(1,0,0)-y(0,1,0)-y(0,0,1))*x(1, 1, 1) + (-y[1])*x(1, 2, 0) + y[0]*x(1, 2, 1) + (y(1,1,0)+y(1,0,1)+y(2,0,0))*x(1, 0, 1) + y[2]*x(2, 0, 0) + (-y[1])*x(2, 1, 0) + y[0]*x(2, 1, 1) + (-y[1])*x(2, 0, 1) + (-y(2,1,0)-y(2,0,1)-y(3,0,0))*x(0, 1, 0) + (y(1,1,0)+y(1,0,1)+y(2,0,0))*x(0, 1, 1) + y[2]*x(0, 2, 0) + (-y[1])*x(0, 2, 1) + (-y(2,1,0)-y(2,0,1))*x(0, 0, 1)
sage: pol = my[1,1] * mx[1,2,1]; pol
(y[1,1])*x[1, 2, 1]
sage: DSchub(pol)
(y(2,3,1))*YY(0, 0, 0) + (-y(2,2,1))*YY(1, 0, 0) + (y(1,1,1))*YY(1, 1, 1) + (y[1,1])*YY(1, 2, 1) + (-y(2,2,0))*YY(1, 0, 1) + (-y[1,1])*YY(2, 1, 1) + (-y[2,1])*YY(2, 0, 1) + (y(2,2,1))*YY(0, 1, 0) + (y(2,1,1)+y(2,2,0))*YY(0, 1, 1) + (y[2,2])*YY(0, 2, 0) + (y[2,1])*YY(0, 2, 1) + (y(2,3,0))*YY(0, 0, 1)
sage: DGroth = D.double_grothendieck_basis_on_vectors(); DGroth
The ring of multivariate polynomials on x over The abstract ring of multivariate polynomials on y over Rational Field on the Double Grothendieck basis of type A (indexed by vectors)
sage: pol = DGroth[1,2,1]; pol
y[0]*GG(1, 2, 1)
sage: pol.expand()
y[0]*x(0, 0, 0) + (-y(2,1,1))*x(-2, -2, 0) + (y(3,1,1))*x(-2, -2, -1) + (y(1,1,1))*x(-2, -1, 0) + (-y(2,1,1))*x(-2, -1, -1) + (-y[1])*x(-1, 0, 0) + (y(1,1,1))*x(-1, -2, 0) + (-y(2,1,1))*x(-1, -2, -1) + (y(2,0,0)-y(0,1,1))*x(-1, -1, 0) + (y(1,1,1)-y(3,0,0))*x(-1, -1, -1) + y[2]*x(-1, 0, -1) + (-y[1])*x(0, -1, 0) + y[2]*x(0, -1, -1) + (-y[1])*x(0, 0, -1)
\end{lstlisting}

\section{Architecture du logiciel}
\label{sec:polynomes_sage:archi}

\subsection{Catégorie / Parent / \'Elément}
\label{sub-sec:polynomes_sage:archi:cat}

Une notion de base en programmation objet est celle \emph{d'héritage}.  Si plusieurs objets utilisent une même méthode, on peut les faire hériter d'un objet commun et implanter cette méthode dans ce parent. C'est ce qu'on appelle la factorisation du code. Elle évite de répéter plusieurs fois le même algorithme. Quand les objets représentent des notions mathématiques, la notion d'héritage n'est plus suffisante. En effet, non seulement on veut pouvoir donner des méthodes communes à tous les polynômes mais on veut pouvoir travailler sur l'ensemble des polynômes en tant qu'objet. C'est à ce niveau qu'interviennent les notions \emph{d'éléments} et de \emph{parents}. 

\begin{lstlisting}
sage: from sage.structure.element import Element
sage: from sage.structure.parent import Parent
sage: A = AbstractPolynomialRing(QQ)
sage: pol = A.an_element()
sage: pol
x[0, 0, 0] + 2*x[1, 0, 0] + x[1, 2, 3] + 3*x[2, 0, 0]
sage: isinstance(pol,Element)
True
sage: pol.parent()
The ring of multivariate polynomials on x over Rational Field with 3 variables on the monomial basis
sage: isinstance(pol.parent(),Parent)
True
\end{lstlisting}

Concrètement, {\tt Element} et {\tt Parent} sont des objets de Sage. Lorsqu'on définit la classe des {\tt Polynomes}, c'est-à-dire l'objet représentant l'ensemble des polynômes, celui-ci hérite de {\tt Parent}. Les éléments de type {\tt Polynome} (sans "s") héritent de {\tt Element}. Le lien qui unit les classes {\tt Polynomes} et {\tt Polynome} n'est pas un lien d'héritage. Cependant, on peut définir des méthodes communes à tous les polynômes  au sein de la classe {\tt Polynomes} : elles seront ajoutées dynamiquement aux éléments.

\begin{lstlisting}
sage: type(pol)
sage.combinat.multivariate_polynomials.monomial.FiniteMonomialBasis_with_category.element_class
sage: type(pol.parent())
sage.combinat.multivariate_polynomials.monomial.FiniteMonomialBasis_with_category
\end{lstlisting}
Ici, la classe {\tt Polynomes} est {\tt FiniteMonomialBasis} et les éléments polynômes sont de type {\tt FiniteMonomialBasis.element\_class}.  On remarque que le nom de la classe est en fait {\tt FiniteMonomialBasis\_with\_category}, cette classe a été créée dynamiquement pour permettre au parent de récupérer les méthodes de sa \emph{catégorie}.

En effet, il arrive que des algorithmes soient communs à plusieurs types de parents qui vérifient des propriétés mathématiques communes. Par exemple, un module libre dont la base est indexée par des objets combinatoires est considéré dans Sage comme un {\tt Parent}. Si ce module est en fait une algèbre, il suffit de définir le produit sur les éléments de la base. Ainsi, dans la catégorie {\tt Algèbre} on implémente un algorithme général qui s'appliquera à tous les parents utilisant la catégorie. Un parent possède en général de très nombreuses catégories.
\begin{lstlisting}
sage: pol.parent().categories()
[The category of bases of The abstract ring of multivariate polynomials on x over Rational Field with 3 variables where algebra tower is The ring of multivariate polynomials on x over Rational Field on the monomial basis,
 Category of realizations of The abstract ring of multivariate polynomials on x over Rational Field with 3 variables,
 Category of realizations of sets,
 Category of graded algebras with basis over Rational Field,
 Category of graded modules with basis over Rational Field,
 Category of graded algebras over Rational Field,
 Category of graded modules over Rational Field,
 Category of algebras with basis over Rational Field,
 Category of modules with basis over Rational Field,
 Category of algebras over Rational Field,
 Category of rings,
 Category of rngs,
 Category of vector spaces over Rational Field,
 Category of modules over Rational Field,
 Category of bimodules over Rational Field on the left and Rational Field on the right,
 Category of left modules over Rational Field,
 Category of right modules over Rational Field,
 Category of commutative additive groups,
 Category of semirings,
 Category of commutative additive monoids,
 Category of commutative additive semigroups,
 Category of additive magmas,
 Category of monoids,
 Category of semigroups,
 Category of magmas,
 Category of sets,
 Category of sets with partial maps,
 Category of objects]
\end{lstlisting}

\subsection{Multibases, multivariables}
\label{sub-sec:polynomes_sage:archi:multi}

Un des objectifs du programme est de pouvoir travailler avec plusieurs bases. On définit pour cela un parent abstrait qui ne possédera pas directement d'éléments mais des \emph{réalisations}, c'est-à-dire d'autre parents qui représenteront les différentes bases (cf. figure \ref{fig:polynomes_sage:multibases}).

\begin{figure}[ht]
\centering
\input{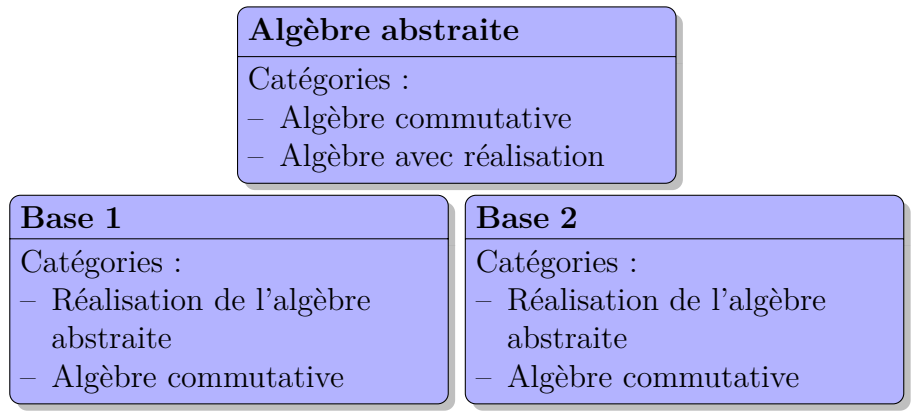}
\caption{Structure d'algèbre abstraite avec plusieurs bases}
\label{fig:polynomes_sage:multibases}
\end{figure}

Les changements de bases sont des objets {\tt Morphisme} qui possèdent une méthode retournant un objet de la base 2 à partir d'un objet de la base 1. On crée ces morphismes au moment de la création des bases et on les enregistre comme des \emph{conversions de type}. Tout objet parent possède une méthode {\tt call}. On appelle la méthode {\tt call} du parent avec en argument un objet d'un autre type. La méthode vérifie alors s'il existe une conversion possible entre les deux types : elle utilise pour cela le graphe créé par les conversions enregistrées.

\begin{lstlisting}
sage: A = AbstractPolynomialRing(QQ)
sage: pol = A.an_element(); pol
x[0, 0, 0] + 2*x[1, 0, 0] + x[1, 2, 3] + 3*x[2, 0, 0]
sage: Schub = A.schubert_basis_on_vectors()
sage: Schub(pol)
Y(0, 0, 0) + 2*Y(1, 0, 0) + Y(1, 2, 3) - Y(1, 3, 2) + 3*Y(2, 0, 0) - Y(2, 1, 3) + Y(2, 3, 1) + Y(3, 1, 2) - Y(3, 2, 1) + Y(4, 1, 1)
sage: Schub.has_coerce_map_from(pol.parent())
True
\end{lstlisting}

Cette architecture multi-bases est classique dans Sage et utilisée par de nombreuses implantations algébriques. Cependant, dans le cas des polynômes, nous devons gérer une difficulté supplémentaire : la gestion du multivarié. En effet, un des besoins du programme est de pouvoir travailler en un nombre quelconque de variables sans le préciser \emph{à priori}. Pour cela, on utilise à nouveau le système de conversion et des algèbres abstraites.

\begin{lstlisting}
sage: A = AbstractPolynomialRing(QQ); A
The abstract ring of multivariate polynomials on x over Rational Field
sage: m = A.monomial_basis(); m
The ring of multivariate polynomials on x over Rational Field on the monomial basis
sage: m3 = m.finite_basis(3); m3
The ring of multivariate polynomials on x over Rational Field with 3 variables on the monomial basis
sage: F3 = A.finite_polynomial_ring(3); F3
The abstract ring of multivariate polynomials on x over Rational Field with 3 variables
sage: pol = A.an_element(); pol
x[0, 0, 0] + 2*x[1, 0, 0] + x[1, 2, 3] + 3*x[2, 0, 0]
sage: pol.parent() == m3
True
sage: pol.parent() == F3.monomial_basis()
True
\end{lstlisting}

Les réalisations directes de l'algèbre abstraite des polynômes sont aussi des algèbres abstraites. Pour une réalisation concrète, il faut la donnée d'une \emph{base} et d'un \emph{nombre de variables}. Chaque parent concret est créé à la volée par son parent abstrait au moment de la création du polynôme. On crée aussi des conversions entre les bases concrètes en différents nombres de variables.

\begin{lstlisting}
sage: A = AbstractPolynomialRing(QQ)
sage: m = A.monomial_basis()   
sage: m3 = m.finite_basis(3)    
sage: m4 = m.finite_basis(4)
sage: m3.has_coerce_map_from(m4)
False
sage: m4.has_coerce_map_from(m3)
True
\end{lstlisting}
Sur cet exemple, on peut lire qu'il existe une conversion automatique des polynômes en 3 variables vers les polynômes en 4 variables mais pas l'inverse. On a illustré cette structure dans la figure \ref{fig:polynomes_sage:multivars}.

\begin{figure}[ht]
\centering
\input{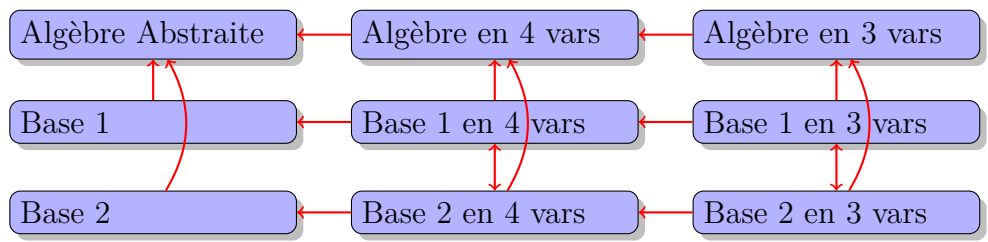}
\caption[Structure d'algèbre abstraite avec plusieurs bases et plusieurs variables]{Structure d'algèbre abstraite avec plusieurs bases et plusieurs variables. Les conversions sont représentées par des flèches rouges.}
\label{fig:polynomes_sage:multivars}
\end{figure}
Cette structure interne est transparente. Les conversions automatiques permettent une utilisation fluide du programme.

Notre implantation comporte nativement les bases des polynômes décrites dans le chapitre \ref{chap:polynomes_action}. Cependant, il est tout à fait possible de rajouter des bases. En effet, en dehors des bases monomiales, toutes les bases sont définies sur le même principe. Elles héritent de la classe {\tt LinearBasisOnVectors} et leurs éléments sont indexés par les éléments de l'espace ambiant du système de racine d'un groupe de Coxeter (qu'on peut tout simplement considérer comme des vecteurs). Seule est définie la fonction qui à un vecteur associe son développement en monôme. \`A partir de cette fonction, on crée le morphisme qui envoie les éléments de la nouvelle base sur les monômes. En créant un élément de la classe {\tt LinearBasisOnVectors}, on crée une nouvelle base dotée de toute la structure décrite précédemment. On en a donné un exemple paragraphe \ref{sub-sec:polynomes_sage:demo:bases}. Il est possible de spécifier que le changement de base vers les monômes est triangulaire et dans ce cas, l'inversion se fera automatiquement et le morphisme inverse sera ajouté comme conversion. Malheureusement, la façon dont sont implantés les morphismes en Sage ne permet pas encore d'inverser les matrices graduées non triangulaires car Sage ne précalcule pas la matrice. 

Grâce au graphe de conversion de Sage, bien qu'on ne définisse que la conversion d'une base donnée vers les monômes, toutes les conversions entre bases se font automatiquement. Pour créer une nouvelle base, l'utilisateur n'a donc qu'à écrire le code entièrement spécifique à sa base et pourra utiliser l'ensemble du système mis en place.

\subsection{Opérateurs : python, un langage dynamique}
\label{sub-sec:polynomes_sage:archi:ope}

Les objets fondamentaux dans tous les changements de bases sur lesquels nous travaillons sont les opérateurs de différences divisées que nous avons définis au paragraphe \ref{sec:polynomes_action:ope}. Un opérateur se définit par une fonction qui prend en paramètre un vecteur et un entier $i$ et qui lui associe une somme de monômes. A partir de ces fonctions nous créons des objets {\tt Morphisme} de Sage qu'on peut appliquer à un polynôme. Le fait d'utiliser un objet {\tt Morphisme} plutôt qu'une simple méthode permet d'utiliser des fonctions préexistantes comme la multiplication des morphismes et la création à partir d'une méthode sur la base.

La seule donnée dont nous avons besoin est donc la fonction définissant l'opérateur. Pour des raisons techniques, cette fonction est définie au sein d'une classe interne à chaque base {\tt \_divided\_difference\_wrapper}. Une instance de cette classe contient le paramètre $i$, la base dans laquelle vit le polynôme et éventuellement d'autres paramètres nécessaires à la différence divisées (par exemple, le type). On détaille ici le processus sur un exemple.

\begin{lstlisting}
sage: A = AbstractPolynomialRing(QQ)
sage: m = A.monomial_basis()
sage: m3 = m.finite_basis(3)
sage: w = m._divided_difference_wrapper(m3,2)
sage: pol = m[2,1,4]; pol
x[2, 1, 4]
sage: key = list(pol)[0][0]; key
[2, 1, 4]
sage: w.divided_difference_on_basis(key)
-x[2, 1, 3] - x[2, 2, 2] - x[2, 3, 1]
sage: w.isobaric_divided_difference_on_basis(key)
-x[2, 2, 3] - x[2, 3, 2]
\end{lstlisting}
Pour créer {\tt w}, on a donné en paramètre le parent du polynôme {\tt m3} et {\tt i=2}. L'objet {\tt w} contient les méthodes sur la base utilisées pour créer les morphismes. 

En fait, l'ensemble des méthodes est défini uniquement pour la base des monômes indexés par des éléments de l'espace ambiant d'un système de racine. Pour calculer la différence divisée, on utilise les opérations déjà existantes sur les groupes de Coxeter en se basant sur les formules décrites paragraphe \ref{sub-sec:polynomes_action:ope:bcd}. Cette méthode permet un algorithme qui ne dépend pas du type.

\begin{lstlisting}
@cached_method
def divided_difference_on_basis(self,key):
	i = self._i
	keys = self._module.basis().keys()
	n = key.scalar(keys.simple_coroot(i))
	if n >= 0:
		return self._module.sum_of_monomials((keys(key-(j)*keys.simple_root(i)-keys.basis()[i-1]) for j in xrange(n)))
	else:
		return -self.divided_difference_on_basis(keys.simple_reflection(i)(key))
\end{lstlisting}
On remarque que l'on utilise ici le mot clé {\tt cached\_method}, c'est ce qui correspond à l'option {\tt remember} de Maple. Elle permet de ne pas effectuer plusieurs fois le même calcul.

Les autres bases possèdent aussi une classe {\tt \_divided\_difference\_wrapper} mais font le plus souvent appel aux méthodes de la base monomiale de l'espace ambiant. Pour créer un  opérateur, on vérifie d'abord si la méthode correspondante existe au sein de {\tt \_divided\_difference\_wrapper} et si oui on l'utilise, si non, on effectue une conversion du polynôme pour pouvoir utiliser la différence divisée par défaut. Voilà par exemple la méthode spécifique de la différence divisée des polynômes de Schubert.

\begin{lstlisting}
@cached_method
def divided_difference_on_basis(self,key):
	i = self._i
	if(key[i-1] > key[i]):
		key2 = [key[j] for j in xrange(self._module.nb_variables())]
		key2[i-1], key2[i] = key2[i], key2[i-1]-1
		return self._module(key2)
	return self._module.zero()
\end{lstlisting}
Dans le cas d'un polynôme de Schubert, la différence divisée correspond par définition à l'échange de deux composantes du vecteur. Il n'est donc pas nécessaire de développer le polynôme pour l'appliquer.

Comme nous l'avons indiqué dans le titre de cette section, Python est un langage dynamique. Cela signifie que l'on peut rajouter des méthodes "à la volée" lors de l'exécution du programme. C'est ce qui permet d'ajouter des opérateurs avec la méthode {\tt add\_operator} dont nous avons donné un exemple paragraphe~\ref{sub-sec:polynomes_sage:demo:pol}. Le code se résume la fonction suivante.

\begin{lstlisting}
def add_operator(self, name, method):
	setattr(self._divided_difference_wrapper, name +"_on_basis", method)
\end{lstlisting}
La méthode prend en paramètre un nom qui sera celui à partir duquel on appellera l'opérateur et une méthode du type de celles que nous avons présentées plus haut. On ajoute simplement un nouvel attribut à la classe interne de la base.

\section{Applications avancées}
\label{sec:polynomes_sage:appli}

\subsection{Degrés projectifs des variétés de Schubert}
\label{sub-sec:polynomes_sage:appli:schubert}

Comme nous l'avons vu au paragraphe \ref{sub-sec:polynomes_action:pol:schub}, le produit sur les polynômes de Schubert s'interprète géométriquement en tant que produit dans l'anneau de cohomologie de la variété de drapeau. Dans \cite{VEIGN}, Veigneau calcule avec le logiciel ACE les degrés projectifs des variétés de Schubert. On peut effectuer un calcul similaire avec notre implantation.

Le degré projectif $d(X)$ d'une sous-variété $X \subset \PP^M$ de codimension $k$ est le nombre d'intersections entre $X$ et un hyperplan générique de dimension $k$. Soit $\sigma \in \Sym{n}$ et $X_\sigma$ une cellule de Schubert de la variété de drapeau $\mathcal{F}_n = \mathcal{F}(\CC^n)$ plongé dans l'espace projectif $\PP^M$ par le plongement de Plücker (avec $M= 2^N-1$ où $N=\frac{n(n-1)}{2}$ est la dimension de $\mathcal{F}_n$). Le degré projectif de la variété $X_\sigma$, $d(X_\sigma)$ est donné par un calcul sur les polynômes de Schubert. Soit $v$, le code de Lehmer de $\sigma$ et $h =  (n-1)x_1 + (n-2)x_2 + \ldots + x_{n-1}$ la classe d'une section hyperplane. On a que $d(X_\sigma)$ est le coefficient de $Y_{[n-1,n-2,\dots, 0]}$ dans $Y_v h^{N - \ell(\sigma)}$ \cite{Chern}. Il est donné par la fonction suivante.

\begin{lstlisting}
def proj_deg(perm):
    n = len(perm)
    d = n*(n-1)/2 - perm.length()
    
    # we create the polynomial ring and the bases
    A = AbstractPolynomialRing(QQ)
    Schub = A.schubert_basis_on_vectors()
    
    # we compute the product
    h = sum( [(n-i) * A.var(i) for i in xrange(1,n)])
    res = Schub( h**d * Schub(perm.to_lehmer_code()))
    
    # we return the coefficient
    key = [n-i for i in xrange(1,n+1)]
    return res[key]
\end{lstlisting}

Par exemple, si $\sigma = 2143$, le degré projectif de la variété de Schubert $X_\sigma$ est~78.

\begin{lstlisting}
sage: p = Permutation([2,1,4,3])   
sage: proj_deg(p)               
78
\end{lstlisting}

On peut aussi effectuer le calcul directement et lire le résultat sur le polynôme.

\begin{lstlisting}
sage: A = AbstractPolynomialRing(QQ)
sage: Schub = A.schubert_basis_on_vectors()
sage: m = A.monomial_basis()
sage: pol = Schub( (3*m[1] + 2*m[0,1] + m[0,0,1])^4 * Schub[1,0,1,0])
sage: pol
8*Y(1, 1, 4, 0) + 23*Y(1, 2, 3, 0) + 24*Y(1, 3, 2, 0) + 39*Y(1, 4, 1, 0) + 15*Y(1, 5, 0, 0) + Y(1, 0, 5, 0) + 48*Y(2, 1, 3, 0) + 101*Y(2, 2, 2, 0) + 117*Y(2, 3, 1, 0) + 84*Y(2, 4, 0, 0) + 12*Y(2, 0, 4, 0) + 173*Y(3, 1, 2, 0) + 78*Y(3, 2, 1, 0) + 147*Y(3, 3, 0, 0) + 53*Y(3, 0, 3, 0) + 283*Y(4, 1, 1, 0) + 171*Y(4, 2, 0, 0) + 96*Y(4, 0, 2, 0) + 93*Y(5, 1, 0, 0) + 176*Y(5, 0, 1, 0) + 80*Y(6, 0, 0, 0)
sage: pol[3,2,1,0]
78
\end{lstlisting}

On peut utiliser la fonction pour calculer tous les degrés projectifs pour toutes les permutations de taille 4.

\begin{lstlisting}
sage: degrees = {}
sage: for perm in Permutations(4):
....:     degrees[perm] = proj_deg(perm)
....:     
sage: degrees
{[2, 1, 4, 3]: 78, [1, 3, 4, 2]: 48, [3, 2, 4, 1]: 3, [3, 1, 2, 4]: 48, [4, 2, 1, 3]: 3, [1, 4, 2, 3]: 46, [3, 2, 1, 4]: 16, [4, 1, 3, 2]: 3, [2, 3, 4, 1]: 6, [3, 4, 2, 1]: 1, [1, 2, 3, 4]: 720, [1, 3, 2, 4]: 280, [2, 4, 3, 1]: 3, [2, 3, 1, 4]: 46, [3, 4, 1, 2]: 2, [4, 2, 3, 1]: 1, [1, 4, 3, 2]: 16, [4, 1, 2, 3]: 6, [2, 4, 1, 3]: 12, [4, 3, 1, 2]: 1, [4, 3, 2, 1]: 1, [3, 1, 4, 2]: 14, [2, 1, 3, 4]: 220, [1, 2, 4, 3]: 220}
\end{lstlisting}

\subsection{Déterminants de fonctions de Schur}
\label{sub-sec:polynomes_sage:appli:schur}

Les polynômes de Schubert Grassmannien sont ceux indexés par un vecteur $v$ tel que $v_1 \leq v_2 \leq \dots \leq v_n$. On a vu dans le chapitre \ref{chap:polynomes_action} que ce sont des polynômes symétriques et qu'ils correspondent aux fonctions de Schur dans le cas des Schubert simples. Plus précisément, la matrice de transition entre les polynômes de Schubert grassmanniens doubles et les fonctions de Schur est unitriangulaire.

\begin{lstlisting}
sage: A = AbstractPolynomialRing(QQ)
sage: Schub =  A.schubert_basis_on_vectors()
sage: pol = Schub[1,2]
sage: pol.expand()
x(1, 2) + x(2, 1)
sage: 
sage: D = DoubleAbstractPolynomialRing(QQ)
sage: DSChub = D.double_schubert_basis_on_vectors()
sage: pol = DSchub[1,2]
sage: pol
y[0]*YY(1, 2)
sage: Schub = D.schubert_basis_on_vectors()
sage: Schub(pol)
y[0]*Yx(1, 2) + (-y(2,1,0)-y(2,0,1))*Yx(0, 0) + (-y(1,0,0)-y(0,1,0)-y(0,0,1))*Yx(1, 1) + (y(1,1,0)+y(1,0,1)+y(2,0,0))*Yx(0, 1) + (-y[1])*Yx(0, 2)
\end{lstlisting}
Cela nous permet de calculer des déterminants de fonctions de Schur en les remplaçant par des polynômes de Schubert et en spécialisant arbitrairement les variables $y$. Par exemple, on peut calculer
\begin{equation}
\vert s_\mu(A) \vert_{\mu \subseteq \syng{1,1}},
\end{equation}
où $A \in [ \lbrace x_1, x_2 \rbrace, \lbrace x_1, x_3 \rbrace, \lbrace x_2, x_3 \rbrace ]$ et prouver que l'on obtient $\prod_{j>i}(x_j-x_i)$.  Tout d'abord, remplaçons $s_\mu$ par
\begin{equation}
\vert Y_u(A,y) \vert_{u = 00, 01, 11}
\end{equation}
et spécialisons en $y_1=x_1$, $y_2=x_2$. Dans ce cas, le déterminant devient
\begin{equation}
\label{eq:polynomes_sage:ex-det-schur}
\left|
\begin{array}{lll}
1 & 1 & 1 \\
0 & x_3 - x_2 & x_3 - x_1 \\
0 & 0 & (x_3-x_1)(x_2-x_1)
\end{array}
\right|
\end{equation}
et nous donne le résultat. La fonction suivante calcule cette matrice.

\begin{lstlisting}
def compute_matrix(variables, alphabets, indices):
    
    n = len(indices)
    
    #Initial definitions
    D = DoubleAbstractPolynomialRing(QQ)
    DSchub = D.double_schubert_basis_on_vectors()
    
    result_matrix = []
    
    for u in indices:
        line = []
        
        #the expansion on the double schubert will allow us to compute the result
        pu = DSchub(u).expand()
        
        for a in alphabets:
        	#we apply our polynomial on alphabets and specialize the y 
            coeff = pu.to_expr(alphabet = a, alphabety = variables)
            line.append(coeff)
        result_matrix.append(line)
    
    return Matrix(result_matrix)
\end{lstlisting}
On peut vérifier le résultat de \eqref{eq:polynomes_sage:ex-det-schur}.
\begin{lstlisting}
sage: var('x1,x2,x3')       
(x1, x2, x3)
sage: variables = (x1,x2,x3)
sage: alphabets = [[x1,x2],[x1,x3],[x2,x3]]
sage: indices =  [[0,0],[0,1],[1,1]]
sage: res = compute_matrix(variables, alphabets, indices)     
sage: res
[1                            1                            1]
[0                     -x2 + x3                     -x1 + x3]
[0                            0 x1^2 - x1*x2 - x1*x3 + x2*x3]
sage: det = res.determinant()
sage: det
-(x2 - x3)*(x1^2 - x1*x2 - x1*x3 + x2*x3)
sage: factor(det)
-(x2 - x3)*(x1 - x3)*(x1 - x2)
\end{lstlisting}

\subsection{Produit des polynômes de Grothendieck}
\label{sub-sec:polynomes_sage:appli:groth}

Notre implantation peut être utilisée pour effectuer les calculs nécessaires au chapitre \ref{chap:polynomes_grothendieck}. Elle permet de vérifier le théorème \ref{thm:polynomes_grothendieck:interval} sur des exemples.

Tout d'abord, vérifions sur un exemple le théorème \ref{thm:polynomes_grothendieck:PieriGrothLascoux}. On a besoin d'effectuer un produit de polynôme et donc de convertir depuis la base des monômes vers la base  Grothendieck. Dans notre implantation, cela ne peut se faire que sur les polynômes de Grothendieck simples. On calcule $G_\sigma(1 - x_1)(1 - x_2)(1 - x_3)(1 - x_4)$ (le changement de variable remplace $x_i^{-1}$ par $1 - x_i$). Par ailleurs, on travaille dans un espace quotienté par un idéal et on supprime donc les termes inutiles.

\begin{lstlisting}
def apply_operators(perm, k):
    perm = Permutation(perm)
    code = perm.to_lehmer_code()
    
    A = AbstractPolynomialRing(QQ)
    Groth = A.grothendieck_positive_basis_on_vectors()
    pol= Groth( code )
    
    factor = prod( [A.one() - A.var(i+1) for i in xrange(k)] )
    
    pol =  pol*factor
    
    #suppression des termes inutiles
    res = pol.parent().zero()
    for (k,c) in pol:
        m = pol.nb_variables() - 1
        for i in xrange(pol.nb_variables()):
            if(k[i]>m): break 
            m = m-1
        else:
            res+= pol.parent().term(k,c)
            
    return res
\end{lstlisting}

Pour $\sigma = 136254$, on obtient
\begin{lstlisting}
sage: perm = Permutation([1,3,6,2,5,4])  
sage: res1 = apply_operators(perm, 4)
sage: res1
-G(0, 1, 3, 1, 1, 0) - G(0, 1, 3, 2, 0, 0) + G(0, 1, 3, 2, 1, 0) + G(0, 1, 3, 0, 1, 0) + G(0, 2, 3, 1, 1, 0) + G(0, 2, 3, 2, 0, 0) - G(0, 2, 3, 2, 1, 0) - G(0, 2, 3, 0, 1, 0) - G(0, 3, 3, 0, 0, 0) + G(0, 3, 3, 1, 0, 0) - G(0, 3, 3, 1, 1, 0) - G(0, 3, 3, 2, 0, 0) + G(0, 3, 3, 2, 1, 0) + G(0, 3, 3, 0, 1, 0)
\end{lstlisting}
Soit en terme de permutations
\begin{lstlisting}
from sage.combinat import permutation
sage: perms = [ permutation.from_lehmer_code(key) for (key,c) in res1.elements()]
sage: perms
[[1, 5, 6, 3, 4, 2],
 [1, 4, 6, 5, 3, 2],
 [1, 3, 6, 2, 5, 4],
 [1, 5, 6, 2, 4, 3],
 [1, 5, 6, 3, 2, 4],
 [1, 5, 6, 4, 3, 2],
 [1, 3, 6, 5, 2, 4],
 [1, 3, 6, 4, 5, 2],
 [1, 4, 6, 5, 2, 3],
 [1, 4, 6, 2, 5, 3],
 [1, 5, 6, 2, 3, 4],
 [1, 5, 6, 4, 2, 3],
 [1, 4, 6, 3, 5, 2],
 [1, 3, 6, 5, 4, 2]]
\end{lstlisting}
On peut vérifier que ce sont bien les mêmes qui apparaissent dans le calcul suivant où l'on utilise le résultat du théorème \ref{thm:polynomes_grothendieck:PieriGrothLascoux}.
\begin{lstlisting}
sage: A = AbstractPolynomialRing(QQ)
sage: K = A.demazure_basis_on_vectors()
sage: pol = K[6,5,4,3,2,1]
sage: res2 = pol.apply_reduced_word([3,4,5,2,3,4],method="hatpi")
sage: res2 = res2.apply_reduced_word([3,2,1,2],method="pi")
sage: res2
K(1, 3, 6, 2, 5, 4) - K(1, 3, 6, 4, 5, 2) - K(1, 3, 6, 5, 2, 4) + K(1, 3, 6, 5, 4, 2) - K(1, 4, 6, 2, 5, 3) + K(1, 4, 6, 3, 5, 2) + K(1, 4, 6, 5, 2, 3) - K(1, 4, 6, 5, 3, 2) - K(1, 5, 6, 2, 3, 4) + K(1, 5, 6, 2, 4, 3) + K(1, 5, 6, 3, 2, 4) - K(1, 5, 6, 3, 4, 2) - K(1, 5, 6, 4, 2, 3) + K(1, 5, 6, 4, 3, 2)
\end{lstlisting}
On généralise ce second calcul dans une fonction.
\begin{lstlisting}
def product_permutations(perm, k):
    
    #Initial definitions
    A = AbstractPolynomialRing(QQ)
    K = A.demazure_basis_on_vectors()
    n = len(perm)
    
    # permutations
    omega = Permutation([n-i for i in xrange(n)])
    zeta = [perm[i] for i in xrange(k)]
    zeta.sort()
    zeta.reverse()
    zeta2 = [perm[i] for i in xrange(k,n)]
    zeta2.sort()
    zeta2.reverse()
    zeta.extend(zeta2)
    zeta = Permutation(zeta)
    
    #paths
    hatpis = (zeta * omega).reduced_word()
    pis = (perm * zeta.inverse()).reduced_word()
    
    #computation
    pol = K(list(omega))
    if(len(hatpis)!=0):
        pol = pol.apply_reduced_word(hatpis, method="hatpi")
    if(len(pis)!=0):
        pol = pol.apply_reduced_word(pis, method="pi")
    
    # perm set from polynomial
    perms = [Permutation(key) for (key,c) in pol.elements()]
    return perms
\end{lstlisting}
Sur l'exemple, cela donne
\begin{lstlisting}
sage: perm = Permutation([1,3,6,2,5,4])
sage: perms = product_permutations(perm, 4)
sage: perms
[[1, 5, 6, 2, 4, 3],
 [1, 4, 6, 3, 5, 2],
 [1, 5, 6, 4, 3, 2],
 [1, 3, 6, 5, 4, 2],
 [1, 4, 6, 5, 2, 3],
 [1, 4, 6, 2, 5, 3],
 [1, 5, 6, 3, 2, 4],
 [1, 5, 6, 2, 3, 4],
 [1, 5, 6, 4, 2, 3],
 [1, 4, 6, 5, 3, 2],
 [1, 3, 6, 4, 5, 2],
 [1, 3, 6, 2, 5, 4],
 [1, 5, 6, 3, 4, 2],
 [1, 3, 6, 5, 2, 4]]
\end{lstlisting}
L'application des opérateurs est une opération formelle sur les vecteurs. Dans notre implantation, elle est directement définie au niveau des polynômes clés : aucun changement de base n'est nécessaire. Ce second calcul est donc beaucoup plus rapide que le premier : on passe de 10 secondes à un dixième de seconde.
\begin{lstlisting}
sage: time( apply_operators(perm, 4))
CPU times: user 10.60 s, sys: 0.02 s, total: 10.62 s
Wall time: 10.73 s
-G(0, 1, 3, 1, 1, 0) - G(0, 1, 3, 2, 0, 0) + G(0, 1, 3, 2, 1, 0) + G(0, 1, 3, 0, 1, 0) + G(0, 2, 3, 1, 1, 0) + G(0, 2, 3, 2, 0, 0) - G(0, 2, 3, 2, 1, 0) - G(0, 2, 3, 0, 1, 0) - G(0, 3, 3, 0, 0, 0) + G(0, 3, 3, 1, 0, 0) - G(0, 3, 3, 1, 1, 0) - G(0, 3, 3, 2, 0, 0) + G(0, 3, 3, 2, 1, 0) + G(0, 3, 3, 0, 1, 0)
sage: time(product_permutations(perm,4))
CPU times: user 0.09 s, sys: 0.00 s, total: 0.09 s
Wall time: 0.09 s
[[1, 5, 6, 2, 4, 3],
 [1, 4, 6, 3, 5, 2],
 [1, 5, 6, 4, 3, 2],
 [1, 3, 6, 5, 4, 2],
 [1, 4, 6, 5, 2, 3],
 [1, 4, 6, 2, 5, 3],
 [1, 5, 6, 3, 2, 4],
 [1, 5, 6, 2, 3, 4],
 [1, 5, 6, 4, 2, 3],
 [1, 4, 6, 5, 3, 2],
 [1, 3, 6, 4, 5, 2],
 [1, 3, 6, 2, 5, 4],
 [1, 5, 6, 3, 4, 2],
 [1, 3, 6, 5, 2, 4]]
\end{lstlisting}

\`A présent, vérifions le résultat du théorème \ref{thm:polynomes_grothendieck:interval}. La fonction suivante génère l'intervalle de Bruhat entre deux permutations.
\begin{lstlisting}
def bruhat_interval(p1, p2, res = None):
    if res is None:
        res = []
    if(p1.bruhat_lequal(p2)):
        res.append(p1)
        if(p1.length() < p2.length()):
            succs = p1.bruhat_succ()
            for p in succs:
                if p not in res:
                    bruhat_interval(p,p2,res)
        return res
    else:
        return None
\end{lstlisting}
On teste que l'ensemble obtenu par l'application des opérateurs est bien un intervalle de l'ordre de Bruhat.
\begin{lstlisting}
def is_result_interval(perm, k):
    perms = product_permutations(perm, k)
    #get 1 max permutation
    pmax = perm
    for p in perms:
        if(pmax.bruhat_lequal(p)):
            pmax = p
            
    interval = bruhat_interval(perm,pmax)
    return set(interval) == set(perms)
\end{lstlisting}
Sur l'exemple précédent, cela donne :
\begin{lstlisting}
sage: perm = Permutation([1,3,6,2,5,4])
sage: is_result_interval(perm,4)
True
\end{lstlisting}
On peut tester sur l'ensemble des permutations de taille 4 et 5.
\begin{lstlisting}
def test_all_perms(size):
    for p in Permutations(size):
        for k in xrange(1,size):
            if not is_result_interval(p,k):
                print p,k
                return False
    return True
sage: test_all_perms(4)
True
sage: test_all_perms(5)
True
\end{lstlisting}

\cleardoublepage

\part{Treillis de ($m$-)Tamari et algèbres}
\label{part:tamari}

\chapter{Treillis sur les arbres binaires et algèbres de Hopf}
\index{arbres!binaires}
\index{treillis!de Tamari}
\index{ordre!de Tamari}
\index{algèbre!de Hopf}
\label{chap:tamari_prelim}

La troisième partie de ce mémoire est dédiée à l'étude d'un ordre sur les arbres binaires appelé \emph{treillis de Tamari} qui peut se décrire comme un quotient de l'ordre faible sur les permutations. Il apparaît en 1962 dans le travail de Tamari \cite{Tamari1} sous forme de structure d'ordre sur les parenthésages formels d'une expression. Tamari prouve plus tard que cet ordre est en fait un treillis \cite{Tamari2}. Par ailleurs, son diagramme de Hasse correspond à un polytope connu sous le nom d'associaèdre ou polytope de Stasheff. Le nombre de parenthésages formels d'une expression est compté par les nombres de Catalan et il existe donc de multiples objets combinatoires sur lesquels on peut décrire ce treillis \cite{Stanley}. Dans ce chapitre, nous en rappelons deux : les \emph{chemins de Dyck} et les \emph{arbres binaires}. Dans les deux cas, la relation de couverture se décrit par une opération très simple (cf. figures \ref{fig:tamari_prelim:rot-dyck} et \ref{fig:tamari_prelim:rot-tree}).

Les relations entre l'associaèdre et le permutoèdre sont souvent étudiées d'un point de vue géométrique. L'approche algébrique permet de démontrer un résultat très fort : l'ordre de Tamari est un quotient de l'ordre faible sur les permutations. Ce résultat apparaît déjà en filigrane dans les travaux de Björner et Wachs \cite{BW}. On en trouve une généralisation dans la théorie des treillis cambriens de Reading \cite{Cambrian}. Une explication directe est donnée dans \cite{PBT2}. Ce dernier article ainsi qu'un précédent de Loday et Ronco \cite{PBT1} font le lien avec une structure importante en combinatoire algébrique : les \emph{algèbres de Hopf}.

 Le principe d'une structure d'algèbre est la composition des objets. C'est-à-dire que si $A$ est un espace vectoriel d'objets combinatoires, on définit une application $A \otimes A \rightarrow A$. Si maintenant on définit une opération dite de \emph{coproduit} : $A \rightarrow A \otimes A$ et qu'elle vérifie une certaine compatibilité avec le produit, on obtient une structure de bigèbre. En termes combinatoires, cela revient à \emph{décomposer} les objets. Dans le cadre de la topologie algébrique du milieu du XXème siècle, Heinz Hopf étudia certaines bigèbres possédant une opération appelée \emph{antipode} qui généralise la notion de passage à l'inverse des groupes. C'est ce qu'on appelle les \emph{Algèbres de Hopf} \cite{Hopf1, Hopf2}.

En combinatoire, c'est surtout la structure de bigèbre qui est intéressante. Comme on travaille dans des espaces de dimension finie graduellement, l'existence de l'antipode est toujours vérifiée et on appelle donc ces espaces des \emph{algèbres de Hopf combinatoires}. Cette double structure de produit et coproduit est déjà implicitement présente dans le travail de MacMahon sur les fonctions symétriques \cite{MacMahon}. Puis dans les années 1970, Rota fait le lien avec la structure étudiée par les topologues et insiste sur l'importance des algèbres de Hopf en combinatoire \cite{Rota1,Rota2}. On pourra lire \cite{CHA} pour une bonne introduction sur le sujet.

La théorie des algèbres de Hopf combinatoires prend une nouvelle direction avec l'apparition en 1995 des fonctions symétriques non-commutatives qui feront l'objet d'une série d'articles sur plus de dix ans \cite{NCSF1,NCSF2,NCSF3,NCSF4,NCSF5,NCSF6,NCSF7}. C'est en fait une nouvelle méthode de calcul qui est décrite : à un objet combinatoire, on associe une somme de mots sur un alphabet non commutatif. Le produit et le coproduit ne se définissent plus sur les objets mais sur les mots ce qui en fait des opérations triviales. La difficulté réside alors dans la fonction qui à chaque objet associe une somme. C'est ce qu'on appelle la \emph{réalisation polynomiale}. Elle permet souvent de trivialiser des preuves jusqu'alors complexes et de donner le cadre pour des résultats plus importants. En particulier, dans \cite{NCSF6, NCSF7} les auteurs introduisent l'algèbre des fonctions quasi-symétriques libres $\FQSym$ dont les bases sont indexées par les permutations. Cette algèbre se révèle isomorphe à une algèbre de Hopf déjà connue sur les permutations, celle introduite par Malvenuto et Reutenauer \cite{MalReut}. 

Le lien avec le treillis de Tamari apparaît en 1998 lorsque Loday et Ronco décrivent une algèbre de Hopf sur les arbres binaires comme un quotient de l'algèbre de Malvenuto-Reutenauer \cite{PBT1}. Le produit dans cette algèbre s'interprète comme un intervalle du treillis de Tamari. De leurs résultats sur $\FQSym$, Hivert, Novelli et Thibon tirent alors une nouvelle interprétation de l'algèbre de Loday et Ronco comme un quotient de $\FQSym$ \cite{PBT2}. On la nomme $\PBT$. Un élément de la base de $\PBT$ est indexé par un arbre binaire. Il correspond à une somme sur un intervalle de l'ordre faible dans l'algèbre $\FQSym$. Les auteurs prouvent alors que l'ordre entre les intervalles à l'intérieur de l'ordre faible correspond en fait au treillis de Tamari sur les arbres binaires. On retrouve ainsi que le treillis de Tamari est un quotient et un sous-treillis de l'ordre faible. Dans le chapitre \ref{chap:tamari_intervalles}, nous utiliserons cette propriété pour prouver un nouveau résultat sur le treillis de Tamari.

Plus généralement, la troisième partie de ce mémoire est dédiée à l'étude de l'ordre de Tamari, à ses généralisations et aux structures algébriques qui lui sont liées. Ce premier chapitre sert d'introduction et nous rappelons en détail les résultats que nous venons d'évoquer. Dans le paragraphe \ref{sec:tamari_prelim:tamari}, nous commençons par définir l'ordre de Tamari sur les chemins de Dyck et les arbres binaires. Nous expliquons aussi en quoi il est un quotient et un sous-treillis de l'ordre faible. Le paragraphe \ref{sec:tamari_prelim:hopf} nous sert à définir les algèbres de Hopf dans le cadre combinatoire où nous les utilisons. Nous donnons le principe de la réalisation polynomiale à travers l'exemple de l'algèbre $\FQSym$. Enfin, dans le paragraphe \ref{sec:tamari_prelim:pbt}, nous décrivons l'algèbre $\PBT$ sur les arbres binaires en nous basant sur les résultats de \cite{PBT2}.

\section{Treillis de Tamari et ordre faible}
\label{sec:tamari_prelim:tamari}

\subsection{Définition : chemins de Dyck et arbres binaires}
\label{sub-sec:tamari_prelim:tamari:def}
\index{chemins de Dyck}
\index{treillis!de Tamari}
\index{ordre!de Tamari}

La définition historique de l'ordre de Tamari est donnée en termes de parenthésages. Nous donnons ici celle en termes de \emph{chemins de Dyck} qui est couramment utilisée.

\begin{Definition}
\label{def:tamari_prelim:dyck}
Un \emph{chemin de Dyck} de taille $n$ est un chemin dans le plan depuis l'origine jusqu'au point $(2n,0)$ formé de pas dits "montants" $(1,1)$ et de pas "descendants" $(1,-1)$ tel que le chemin ne descend jamais en dessous la ligne $y=0$. 
\end{Definition}

On identifie les chemins à des mots sur un alphabet binaire $\lbrace 1, 0 \rbrace$ où les pas montants sont représentés par des $1$ et les pas descendants par des $0$. On les appelle alors \emph{mots de Dyck}. Ils contiennent autant de 1 que de 0 et et dans chacun de leurs préfixes, le nombre de 1 est supérieur ou égal au nombre de 0. Un chemin est dit \emph{primitif} s'il n'a pas d'autres contacts avec la droite $y=0$ que son origine et son point final. 

\begin{Definition}
\label{def:tamari_prelim:dyck_rotation}
Soit $u$ un chemin de Dyck, tel que $u$ contienne un pas descendant $d$ suivi d'un chemin primitif $c$. Une rotation sur $u$ consiste à échanger le pas descendant $d$ avec le chemin $c$.
\end{Definition}

L'opération de rotation, illustrée figure \ref{fig:tamari_prelim:rot-dyck}, est la relation de couverture de l'ordre de Tamari. C'est-à-dire qu'un chemin $v$ est plus plus grand qu'un chemin $u$ si on peut atteindre $v$ par une série de rotations sur $u$. Cela définit bien un ordre et même un treillis \cite{Tamari2}, cf. figure \ref{fig:tamari_prelim:dyck-3-4}.

\begin{figure}[ht]
\input{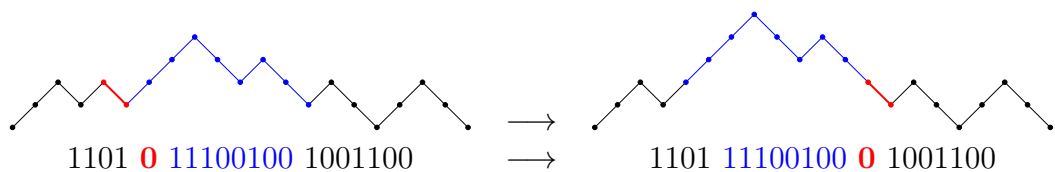}
\caption{Rotation sur les chemins de Dyck}
\label{fig:tamari_prelim:rot-dyck}
\end{figure}

\index{arbres!binaires}
Par la suite, nous utiliserons plutôt la définition de l'ordre de Tamari sur les arbres binaires. Un \emph{arbre binaire} se définit récursivement comme étant soit l'arbre vide $\emptyset$ (qu'on appelle aussi \emph{feuille}), soit un couple d'arbres binaires appelés \emph{sous-arbre droit} et \emph{sous-arbre gauche} greffés sur un \noeud racine. Si $T$ est un arbre formé de la racine $x$ et respectivement des sous-arbres, gauche et droit, $A$ et $B$, on écrit $T=x(A,B)$. L'opération de rotation existe aussi sur les arbres.

\begin{figure}[p]
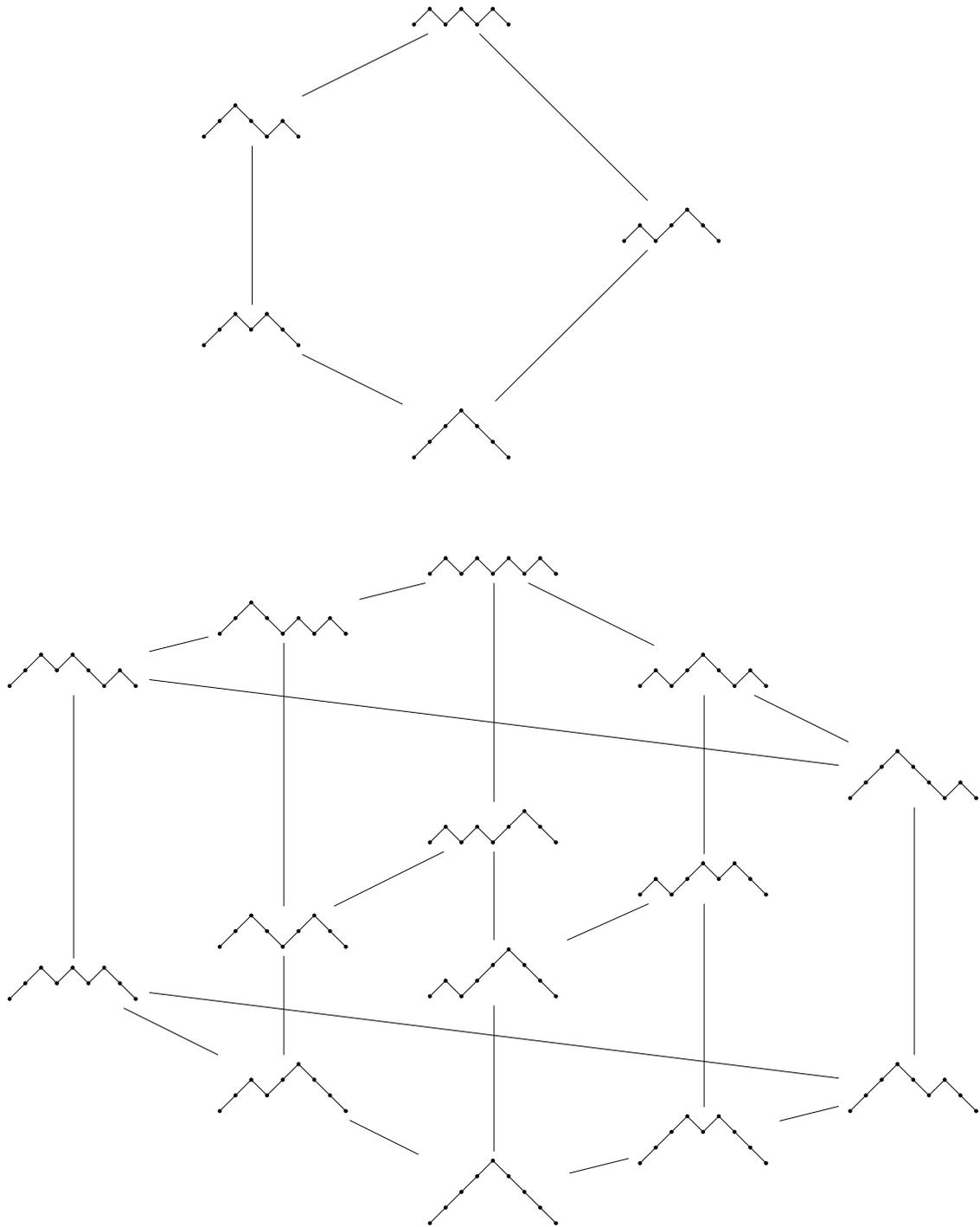

\begin{leftfullpage}
\centering
\input{includes/figures/tamari_dyck-3}
\vspace{3em}
\input{includes/figures/tamari_dyck-4}
\caption[Ordre de Tamari sur les chemins de Dyck de tailles 3 et 4]{Ordre de Tamari sur les chemins de Dyck de tailles 3 et 4, le plus petit élément est le chemin alterné en haut du diagramme.}
\label{fig:tamari_prelim:dyck-3-4}
\end{leftfullpage}
\end{figure}
\begin{figure}[p]
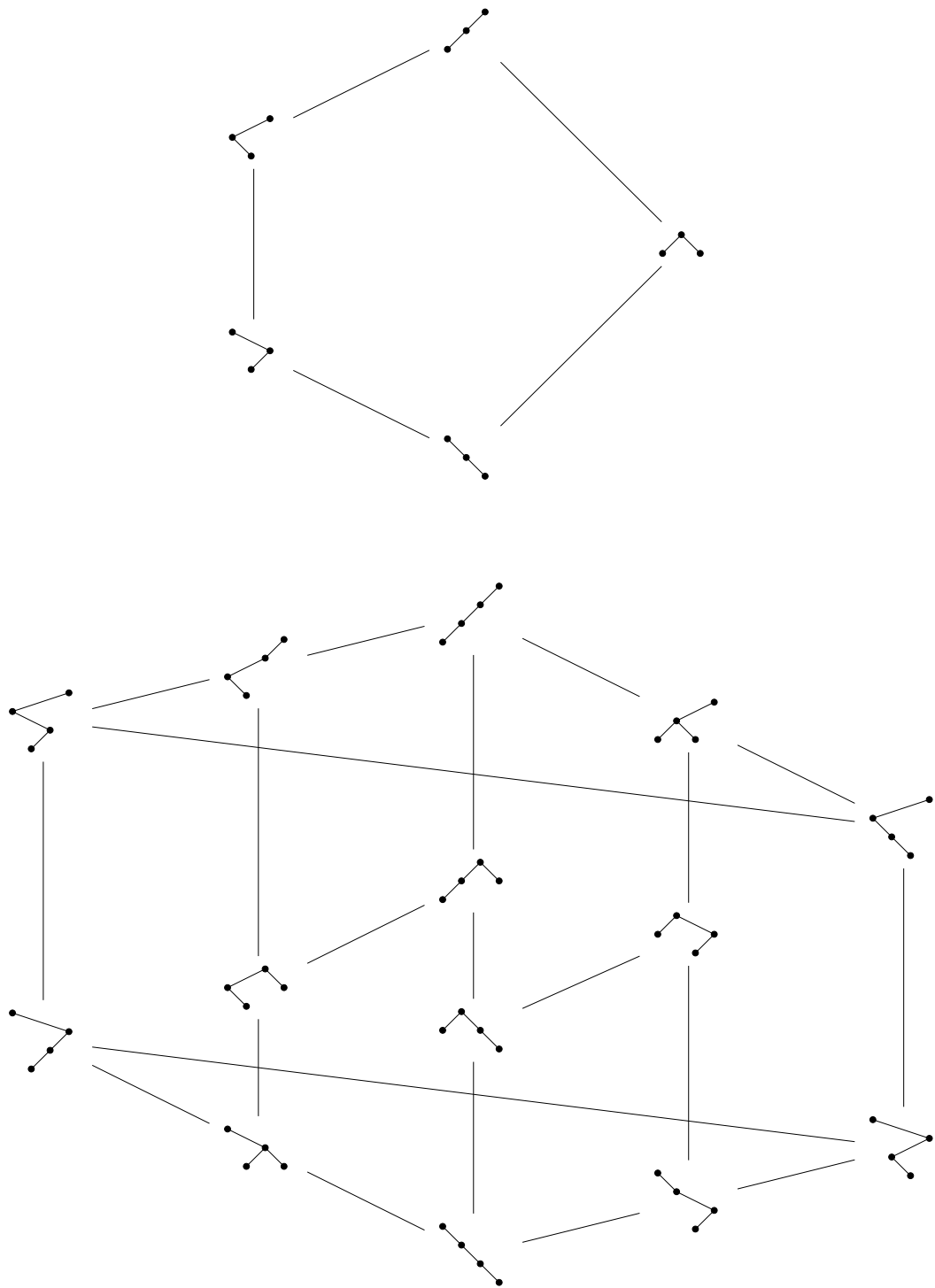

\begin{fullpage}
\centering
\input{includes/figures/tamari_trees-3}
\vspace{3em}
\input{includes/figures/tamari_trees-4}
\caption[Ordre de Tamari sur les arbres binaires de tailles 3 et 4]{Ordre de Tamari sur les arbres binaires de tailles 3 et 4, le plus petit élément est le peigne gauche en haut du diagramme.}
\label{fig:tamari_prelim:trees-3-4}
\end{fullpage}
\end{figure}

\begin{Definition}
Soit $y$ un \noeud d'un arbre binaire $T$ dont le sous-arbre gauche n'est pas vide et soit $x$ la racine de ce sous-arbre gauche. La \emph{rotation droite} de $T$ en $y$ est la  réécriture locale décrite par la figure \ref{fig:tamari_prelim:rot-tree}, c'est-à-dire que l'on remplace $y(x(A,B),C)$ par $x(A,y(B,C))$ où $A$,$B$ et $C$ sont des arbres binaires potentiellement vides.
\end{Definition}

\begin{figure}[ht]
\centering
\input{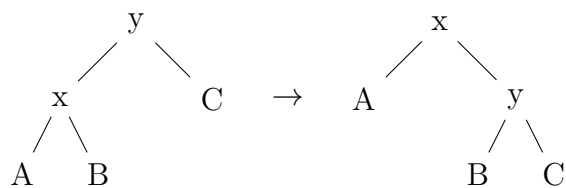}
\caption{Rotation sur les arbres binaires}
\label{fig:tamari_prelim:rot-tree}
\end{figure}

Cette opération sur les arbres binaires n'est pas spécifique à l'ordre de Tamari. En effet, elle est utilisée (en complément de son opération symétrique, la rotation gauche) pour \emph{équilibrer} les arbres binaires quand ils sont utilisés dans des algorithmes de tris de données \cite{AVL}. Si on la considère comme une relation de couverture, le poset obtenu est isomorphe à l'ordre obtenu sur les chemins de Dyck. C'est l'ordre de Tamari sur les arbres binaires illustré figure \ref{fig:tamari_prelim:trees-3-4}.

La correspondance entre les chemins de Dyck et les arbres binaires utilise la structure récursive des deux objets. Un chemin de Dyck $D$ est soit le chemin vide, soit s'écrit $D = D_1 1 D_2 0$ où $D_1$ et $D_2$ sont deux chemins de Dyck. On définit alors $T$, l'arbre correspondant à $D$ par $T=x(T_1,T_2)$ où $T_1$ et $T_2$ sont les arbres binaires correspondant respectivement à $D_1$ et $D_2$. La bijection est illustrée figure \ref{fig:tamari_prelim:dyck-tree}. On remarque que $D_1$ est le préfixe de $D$ jusqu'au dernier retour à 0. En particulier, si $D$ est un chemin primitif alors $D_1$ est vide. Le chemin $D_2$ est vide si $D$ se termine par le mot $10$. 

\begin{figure}[ht]
\centering
\input{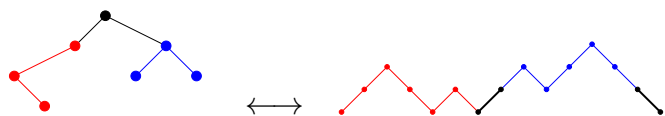}
\caption{Correspondance chemins de Dyck / arbres binaires}
\label{fig:tamari_prelim:dyck-tree}
\end{figure}

\subsection{Liens avec l'ordre faible}
\label{sub-sec:tamari_prelim:tamari:ordre_faible}
\index{ordre!faible}

Le lien avec l'ordre faible passe par un étiquetage particulier des arbres binaires, les \emph{arbres binaires de recherche}.

\begin{Definition}
\index{arbres!binaires!de recherche}
Un arbre binaire de recherche est un arbre binaire étiqueté vérifiant pour chaque \noeud $x$ la condition suivante : si $x$ est étiqueté par $k$ alors les \noeuds du sous-arbre gauche (resp. droit) de $x$ sont étiquetés par des entiers inférieurs ou égaux (resp. supérieurs) à $k$. 
\end{Definition}

\begin{figure}[ht]
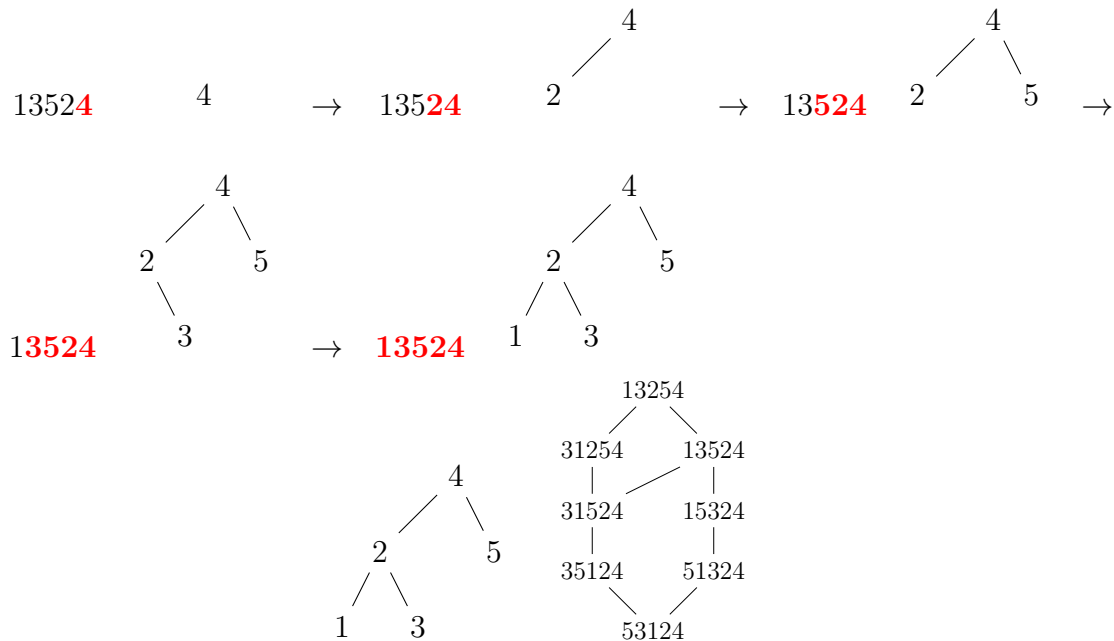

\centering
\input{includes/figures/tree-insertion}
\input{includes/figures/bst-example}
\caption{Insertion dans un arbre binaire de recherche et extensions linéaires}
\label{fig:tamari_prelim:bst-exemple}
\end{figure}

Un exemple d'arbre binaire de recherche est donné figure \ref{fig:tamari_prelim:bst-exemple}.
Pour un arbre binaire donné de taille $n$, il n'existe qu'un seul étiquetage, dit \emph{standard}, utilisant les entiers de $1$ à $n$ une fois chacun, tel que le résultat soit un arbre binaire de recherche. On identifie donc les arbres binaires non étiquetés et les arbres binaires de recherche standard. La structure d'arbre binaire de recherche est utilisée couramment en algorithmique pour le stockage d'ensembles ordonnés. En particulier, l'algorithme récursif d'insertion dans un arbre binaire de recherche est bien connu. L'insertion de l'entier $k$ dans l'arbre $T$ se fait de la façon suivante : si $T$ est vide alors $k$ devient la racine de $T$, sinon, si $k\leq racine(T)$ (resp. $k > racine(T)$) on insère $k$ dans le sous-arbre gauche (resp. droit) de $T$. 

On utilise cet algorithme pour associer un arbre binaire $\ABR(\sigma)$ à chaque permutation $\sigma$ : on insère successivement les entiers de la permutation de la droite vers la gauche dans un arbre $T$ vide au départ. Ce processus est illustré figure \ref{fig:tamari_prelim:bst-exemple}. La propriété suivante est déjà dans \cite{BW} et reprise dans \cite{PBT2} dans le cadre qui nous intéresse.

\begin{Proposition}
\index{classe sylvestre}
Les permutations qui donnent le même arbre binaire par insertion dans un arbre binaire de recherche sont les \emph{extensions linéaires} de cet arbre vu comme un poset. Elles forment un intervalle pour l'ordre faible droit. On appelle cet ensemble la \emph{classe sylvestre} de l'arbre.
\end{Proposition}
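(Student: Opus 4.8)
Le plan est de prouver séparément les deux assertions. D'abord, que $\ABR(\sigma) = T$ équivaut à ce que $\sigma$, lu comme un mot, soit une extension linéaire du poset $P(T)$ associé à $T$, à savoir l'arbre muni de l'ordre pour lequel chaque \noeud est inférieur à son père (de sorte que la racine soit l'élément maximal et les feuilles près des minimaux) : c'est bien cette orientation qui rend l'énoncé correct, comme on le vérifie sur l'exemple de la figure \ref{fig:tamari_prelim:bst-exemple}. Ensuite, que l'ensemble des extensions linéaires d'un tel poset, dont le diagramme de Hasse est une forêt, forme un intervalle de l'ordre faible droit.

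Pour la première assertion, je procéderais par récurrence sur la taille $n$ en exploitant la structure récursive des arbres. Si $T = x(A,B)$ et si $k = |A|+1$ est l'étiquette standard de la racine, alors l'insertion de droite à gauche insère $\sigma_n$ en premier, donc $\ABR(\sigma) = T$ force $\sigma_n = k$ ; ensuite les lettres de $\sigma_1 \dots \sigma_{n-1}$ inférieures à $k$ (qui sont exactement $1, \dots, k-1$) s'insèrent dans le sous-arbre gauche indépendamment des lettres supérieures à $k$ (qui sont $k+1, \dots, n$), lesquelles s'insèrent dans le sous-arbre droit. Ainsi $\ABR(\sigma) = T$ si et seulement si $\sigma_n = k$, le sous-mot des lettres $< k$ vérifie par récurrence la propriété relative à $A$ et le sous-mot des lettres $> k$, standardisé, celle relative à $B$. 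Du côté des posets, $P(T)$ admet $k$ pour élément maximal, qui couvre les racines de $P(A)$ et de $P(B)$, ces deux sous-posets portant sur des ensembles d'éléments disjoints et sans relation mutuelle ; un mot $\sigma$ est donc une extension linéaire de $P(T)$ sous exactement ces mêmes trois conditions. Les deux caractérisations se ramenant à la même récurrence, elles coïncident, le cas de base $n \leq 1$ étant immédiat.

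Pour la seconde assertion, j'utiliserais la caractérisation $\sigma \infd \mu \Leftrightarrow \coinv(\sigma) \subset \coinv(\mu)$. Pour une extension linéaire $\sigma$ de $P(T)$ et une paire d'entiers $a < b$, une analyse de cas donne trois situations : si $a$ est un descendant de $b$ dans $P(T)$, alors $a$ est dans le sous-arbre gauche de $b$ et $(a,b)$ n'est jamais une coinversion ; si $b$ est un descendant de $a$, alors $b$ est dans le sous-arbre droit de $a$ et $(a,b)$ est toujours une coinversion ; si $a$ et $b$ sont incomparables, ils se trouvent dans les deux sous-arbres de leur plus petit ancêtre commun (nécessairement $a$ à gauche, $b$ à droite au vu des étiquettes) et leur ordre relatif dans $\sigma$ est libre. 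Les coinversions « forcées » sont donc communes à toutes les extensions linéaires, et l'ensemble des coinversions d'une extension linéaire est toujours compris entre l'ensemble forcé et son union avec les paires libres. En invoquant les extensions linéaires canoniques — la minimale obtenue en retirant itérativement l'élément minimal de plus petite étiquette, la maximale en retirant l'élément minimal de plus grande étiquette, construction due à Björner et Wachs \cite{BW, BW2} pour les forêts — on obtient deux permutations $\sigma_{\min}$ et $\sigma_{\max}$ réalisant respectivement l'ensemble minimal et l'ensemble maximal de coinversions. Réciproquement, si $\coinv(\sigma_{\min}) \subset \coinv(\mu) \subset \coinv(\sigma_{\max})$, alors $\mu$ respecte toute relation de couverture de $P(T)$ : pour un fils gauche $a$ de $b$, la paire $(a,b)$, non-coinversion forcée, n'appartient pas à $\coinv(\sigma_{\max})$ donc pas à $\coinv(\mu)$, et $a$ précède $b$ ; pour un fils droit $a$ de $b$, la paire $(b,a)$, coinversion forcée, appartient à $\coinv(\sigma_{\min})$ donc à $\coinv(\mu)$, et $a$ précède encore $b$. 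Comme $\leq_{P(T)}$ est engendré par ses couvertures, $\mu$ est une extension linéaire, d'où $\ABR(\mu) = T$ par la première partie. L'ensemble des extensions linéaires est donc l'intervalle $[\sigma_{\min}, \sigma_{\max}]$ de l'ordre faible droit.

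Le point le plus délicat sera l'existence des extensions linéaires extrémales, c'est-à-dire le fait que la spécification forcée des coinversions (chaque paire comparable dans son sens obligatoire, chaque paire incomparable dans l'ordre croissant des entiers) est transitive et définit un ordre total : c'est là qu'intervient de manière essentielle que le diagramme de Hasse de $P(T)$ est une forêt, et c'est précisément le contenu du résultat de Björner et Wachs que l'on peut citer tel quel. Il faudra par ailleurs s'assurer que l'intervalle obtenu dans l'ordre faible ne contient aucun élément supplémentaire, ce que garantit exactement l'argument sur les relations de couverture ci-dessus.
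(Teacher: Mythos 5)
Votre démonstration est correcte, mais il faut d'abord préciser que le mémoire ne démontre pas cette proposition : il la cite de \cite{BW} et \cite{PBT2} et se contente de renvoyer à la figure \ref{fig:tamari_prelim:bst-exemple} pour vérification. Votre argument constitue donc une preuve autonome là où le texte s'appuie sur la littérature, et il est intéressant de le situer par rapport aux deux routes disponibles. La route de \cite{PBT2}, esquissée plus loin dans le chapitre (paragraphe sur la congruence sylvestre), caractérise les fibres de $\ABR$ par une réécriture locale $u\,ac\,v\,b\,w \equiv u\,ca\,v\,b\,w$ ; elle a l'avantage d'être compatible avec la concaténation et la restriction aux intervalles, ce qui est exactement ce dont on a besoin ensuite pour la structure d'algèbre de Hopf de $\PBT$. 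Votre route, via la caractérisation $\sigma \infd \mu \Leftrightarrow \coinv(\sigma) \subset \coinv(\mu)$ et la trichotomie sous-arbre gauche / sous-arbre droit / paires libres, est plus élémentaire et identifie explicitement les bornes $\sigma_{\min}$ et $\sigma_{\max}$ de l'intervalle ; c'est d'ailleurs précisément la technique que la thèse emploie elle-même plus tard pour démontrer la proposition analogue sur les forêts $\dec(T)$ et $\inc(T)$ (proposition \ref{prop:tamari_intervalles:ext-lin-foret}), de sorte que votre preuve s'insère naturellement dans l'outillage du texte. Votre récurrence pour la première assertion (racine insérée en premier, donc placée en dernière position du mot, puis découpage en lettres $<k$ et $>k$) est également correcte et correspond bien à l'orientation du poset retenue par le mémoire.

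Un seul point mérite d'être reformulé : vous écrivez que l'existence des extensions linéaires extrémales « intervient de manière essentielle [du fait] que le diagramme de Hasse de $P(T)$ est une forêt ». Être une forêt ne suffit pas : le poset sur $\lbrace 1,2,3 \rbrace$ dont la seule relation est $1 \trprec 3$ est une forêt, mais ses extensions linéaires $\lbrace 123, 132, 213 \rbrace$ ont deux éléments maximaux pour l'ordre droit et ne forment pas un intervalle. Ce qui est essentiel, c'est l'étiquetage d'arbre binaire de recherche (les étiquettes de chaque sous-arbre forment un intervalle d'entiers, le sous-arbre gauche portant les petites valeurs), qui garantit que l'ensemble des coinversions forcées et son union avec les paires libres sont tous deux transitivement clos, donc réalisés par des permutations. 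Votre analyse de cas utilise bien cette propriété, et c'est elle — plutôt que la seule structure de forêt — qu'il faut invoquer dans le résultat de \cite{BW} ; la condition exacte sur l'étiquetage est d'ailleurs celle du lemme \ref{lem:tamari_intervalles:carac-foret}.
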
 

\begin{figure}[p]
\begin{leftfullpage}
\centering
\input{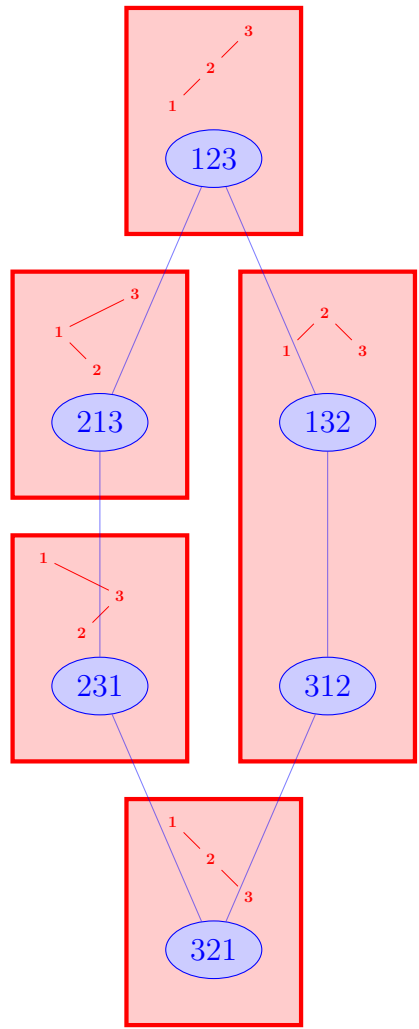}
\caption[L'ordre de Tamari comme quotient de l'ordre faible droit]{L'ordre de Tamari comme quotient de l'ordre faible droit : taille 3 sur la page de gauche et 4 sur la page de droite. Les permutations sont regroupées par classes d'équivalence. On pourra vérifier que si une relation existe entre deux permutations de classes différentes, alors les arbres binaires associés sont comparables dans l'ordre de Tamari.}
\label{fig:tamari_prelim:quotient}
\end{leftfullpage}
\end{figure}
\begin{figure}[p]
\begin{fullpage}
\centering
\input{includes/figures/tamari_quotient4}
\end{fullpage}
\end{figure}

En effet, un arbre binaire étiqueté peut être interprété comme un poset (où les éléments minimaux se trouvent en bas). Si $a$ se trouve dans le sous-arbre issu de $b$, on écrit $a \trprec b$. Une extension linéaire de l'arbre se définit alors comme au paragraphe \ref{sub-sec:prelim_posets:posets:ext} : si $a \trprec b$ alors $a$ se trouve avant $b$ dans l'extension linéaire. Dans la figure \ref{fig:tamari_prelim:bst-exemple}, on donne l'ensemble des extensions linéaires d'un arbre donné et on pourra vérifier qu'elles forment bien un intervalle pour l'ordre droit.

\`A partir de cette construction, on prouve une propriété très forte sur l'ordre faible et l'ordre de Tamari. On donnera une idée de la preuve donnée dans \cite{PBT2} à partir de la \emph{congruence sylvestre} au paragraphe \ref{sub-sec:tamari_prelim:pbt:sylv}.

\begin{Theoreme}
\label{thm:tamari_prelim:quotient}
L'ordre de Tamari est à la fois un sous-treillis de l'ordre faible et un quotient par la relation 
\begin{equation}
\sigma \equiv \mu \Leftrightarrow \ABR(\sigma) = \ABR(\mu).
\end{equation}
\end{Theoreme}

L'opération $\ABR(\sigma)$ découpe l'ordre faible en classes d'équivalences qui sont des intervalles pour l'ordre faible droit. Les éléments maximaux de ces intervalles sont les permutations qui évitent le motif $132$. Si on restreint l'ordre faible à ces permutations, on obtient un ordre isomorphe à l'ordre de Tamari. Cette propriété est aussi vraie sur les éléments minimaux des classes. Enfin l'ordre de Tamari est un quotient de l'ordre faible, ce qui s'exprime par

\begin{equation}
\sigma \leq \mu \Rightarrow \ABR(\sigma) \leq \ABR(\mu)
\end{equation}
où la relation à gauche est celle de l'ordre faible droit et à droite, celle de Tamari sur les arbres binaires. On pourra vérifier ces propriétés pour les tailles 3 et 4 dans la figure \ref{fig:tamari_prelim:quotient}.

On a vu que les ordres faibles, droit et gauche, étaient isomorphes. L'ordre de Tamari est donc aussi un sous-treillis et un treillis quotient de l'ordre faible gauche. Pour obtenir une description explicite, on utilise \emph{les arbres binaires décroissants}.

\begin{Definition}
\index{arbres!binaires!décroissants}
Un arbre binaire décroissant est un arbre binaire étiqueté tel que pour chaque \noeud $x$ étiqueté par $k$, les \noeuds de l'arbre issu de $x$ soient étiquetés par des entiers inférieurs ou égaux à $k$. 
\end{Definition}

Les permutations sont en bijection avec les arbres binaires décroissants. On obtient une permutation par un parcours infixe de l'arbre binaire décroissant (fils gauche, racine, fils droit). Réciproquement, à une permutation qui s'écrit $\sigma = unv$ où $n$ est la valeur maximale de $\sigma$, on fait correspondre récursivement l'arbre $T = n(T_u,T_v)$ où $T_u$ et $T_v$ sont les arbres des facteurs $u$ et $v$. On note cet arbre $\ABD(\sigma)$. La bijection est illustrée figure \ref{fig:tamari_prelim:decreasing}.

\begin{figure}[ht]
\centering
\input{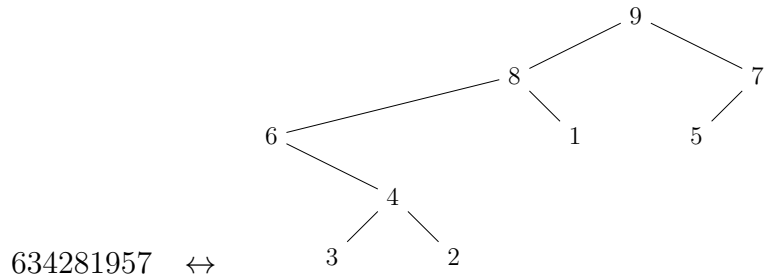}
\caption{Un arbre binaire décroissant et la permutation associée.}
\label{fig:tamari_prelim:decreasing}
\end{figure}

\begin{figure}[ht]
\centering
\input{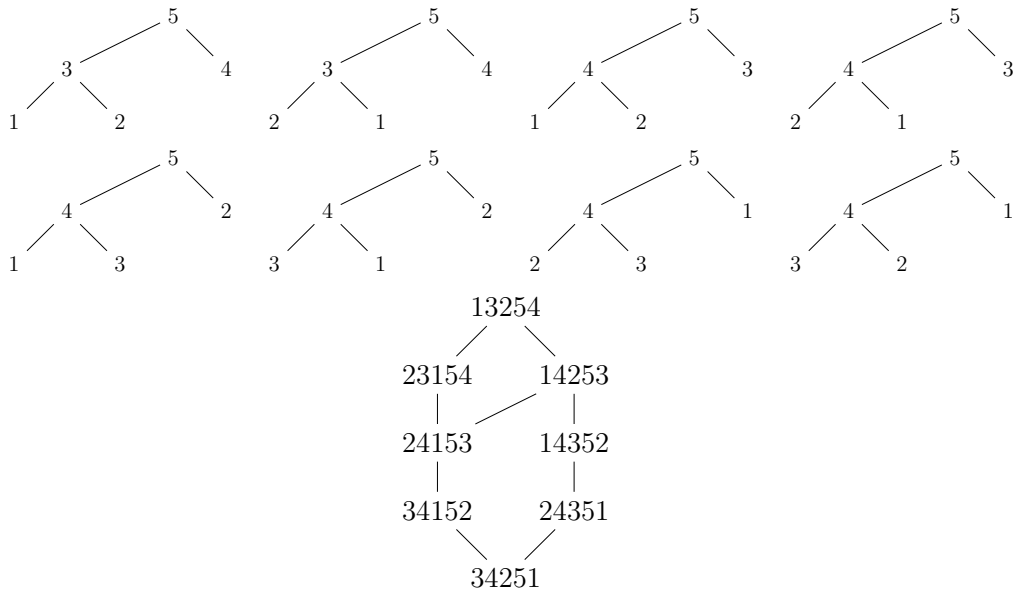}
\caption{Les arbres décroissants d'un arbre binaire donné et l'intervalle correspondant dans l'ordre gauche.}
\label{fig:tamari_prelim:arbres-decroissants}
\end{figure}

Le nombre d'étiquetages décroissants d'un arbre binaire donné est égal au nombre de ses extensions linéaires. Il est donné par la formule des équerres :
\begin{equation}
\frac{n!}{\prod_v h_v}
\end{equation}
où le produit est évalué sur les sommets $v$ de l'arbre et où $h_v$ est la taille du sous-arbre issu de $v$. Les permutations associées aux étiquetages décroissants d'un arbre binaire sont les inverses des extensions linéaires de son arbre binaire de recherche. Elles forment donc un intervalle de l'ordre faible gauche. Par ailleurs, l'ordre de Tamari est un quotient de l'ordre faible gauche par la relation $\sigma \equiv \mu$ si et seulement si les arbres décroissants associés à $\sigma$ et $\mu$ ont le même arbre binaire sous-jacent. Un exemple est donné figure \ref{fig:tamari_prelim:arbres-decroissants} d'un arbre binaire et de ses étiquetages décroissants.

\subsection{Arbres planaires}
\label{sub-sec:tamari_prelim:tamari:planaires}
\index{arbres!planaires}

L'ordre de Tamari peut aussi se décrire sur un autre type d'arbre : les \emph{arbres planaires enracinés}. Un arbre planaire se décrit récursivement comme étant un \noeud racine auquel est greffée une liste d'arbres planaires : les \emph{fils} du \noeud. La liste peut être vide et l'ordre des arbres dans la liste est important. La taille d'un arbre est donnée par son nombre de n\oe{}uds. Les arbres binaires de taille $n$ sont en bijection avec les arbres planaires de taille $n+1$ par l'opération suivante : soit $F$ l'arbre planaire associé à $T$, alors
\begin{enumerate}
\item Si $x$ est le fils gauche de $y$ dans $T$, alors $x$ est le \emph{frère gauche} de $y$ dans $F$
\item Si $x$ est le fils droit de $y$ dans $T$, alors $x$ est le fils de $y$ dans $F$.
\end{enumerate}
Récursivement, on part d'un \noeud racine $p_0$ dans $F$ et on insère les \noeuds de $T$. Si $T$ contient un unique \noeud $x$, il devient le fils de $p_0$. Sinon, on a $T = x(T_1,T_2)$, on insère d'abord $T_1$ puis le \noeud $x$ et enfin on insère $T_2$ dans l'arbre de racine $x$. Cette bijection met en lumière une structure récursive différente de la structure usuelle sur les arbres binaires : un arbre binaire est une liste d'autres arbres greffés sur sa branche gauche. La bijection est illustrée figure \ref{fig:tamari_prelim:binaire-planaire}.

La bijection peut aussi se faire directement avec les chemins de Dyck. Un chemin de Dyck s'écrit comme une suite de chemins primitifs, chaque chemin primitif correspondant à un fils de la racine. On lit sur les trois objets une statistique que nous utiliserons à plusieurs reprises : le nombre de \emph{\noeuds sur la branche gauche} de l'arbre binaire correspond au nombre de fils de la racine de l'arbre planaire et au nombre de chemins primitifs qui composent le chemin de Dyck qu'on appelle aussi nombre de \emph{retours à 0} \cite{FindStatTouchPoints, FindStatLeftBranch}.

\begin{figure}[ht]
\centering
\input{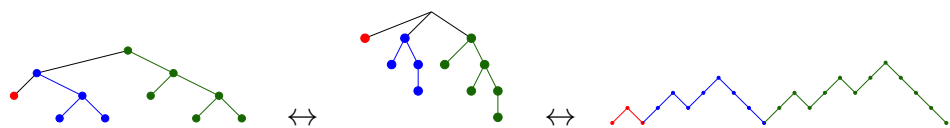}
\caption{Bijection arbre binaire / arbre planaire / chemin de Dyck}
\label{fig:tamari_prelim:binaire-planaire}
\end{figure}

\begin{figure}[p]
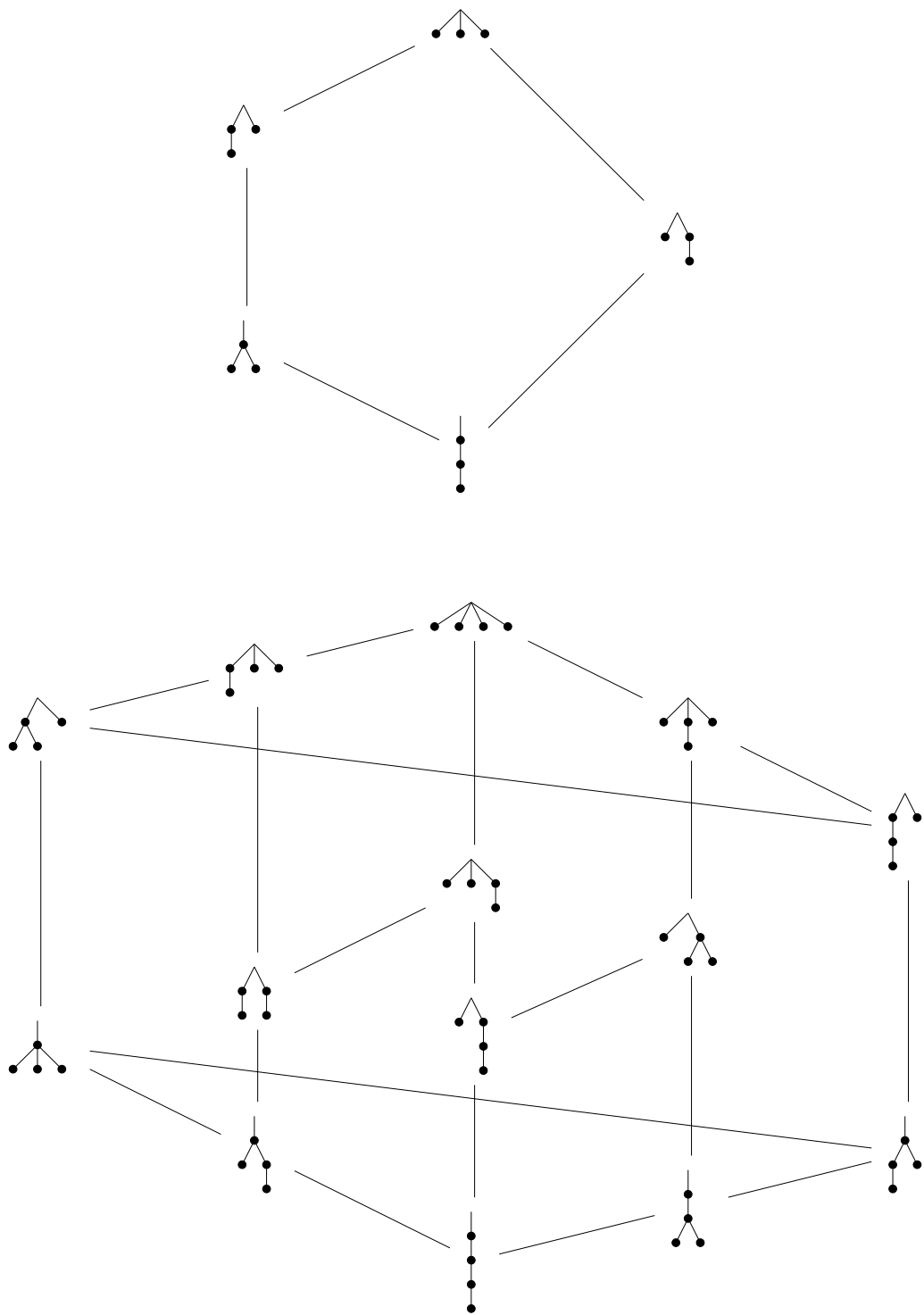

\centering
\input{includes/figures/tamari_planar-3}
\vspace{3em}
\input{includes/figures/tamari_planar-4}
\caption{Ordre de Tamari sur les arbres planaires, tailles 3 et 4}
\label{fig:tamari_prelim:planar-3-4}
\end{figure}

La relation de couverture donnée par la rotation se décrit simplement sur les arbres planaires comme le glissement d'un \noeud sur son frère gauche (cf. figure~\ref{fig:tamari_prelim:planar-3-4}). \`A partir de la bijection, on vérifie facilement que cette opération correspond bien aux rotations définies sur les chemins de Dyck et les arbres binaires.

Il est aussi possible d'effectuer la bijection symétrique entre les arbres binaires et les arbres planaires en transformant les \emph{fils droits} de l'arbre binaire en \emph{frères droits} de l'arbre planaire. Dans ce cas, le nombre de \noeuds sur la branche droite correspond au nombre de fils de la racine de l'arbre planaire. Ces deux bijections jouent un rôle important dans la suite de notre travail. Nous revenons dessus de façon détaillée dans le chapitre \ref{chap:tamari_intervalles}.

\section{Algèbres de Hopf combinatoires}
\label{sec:tamari_prelim:hopf}
\index{algèbre!de Hopf}

Le lien entre l'ordre de Tamari et l'ordre faible est en fait de nature \emph{algébrique} et passe par la définition de deux \emph{algèbres de Hopf} : $\FQSym$ et $\PBT$. Pour définir ces algèbres, nous avons besoin d'introduire les notions de base liées aux algèbres de Hopf en combinatoire.

Formellement, les algèbres de Hopf sont des objets algébriques assez complexes vérifiant de nombreux axiomes. Cependant, en combinatoire, les opérations de base que sont le \emph{produit} et le \emph{coproduit} se définissent très simplement sur les mots et les axiomes découlent alors naturellement. Plutôt que d'introduire un cadre formel pour en donner plus tard des exemples concrets, il nous a semblé plus naturel de partir des exemples de base pour introduire les notions algébriques qui y sont liées.

\subsection{Cogèbres et bigèbres : le doublement d'alphabet}
\label{sub-sec:tamari_prelim:hopf:def}

Soit $A$ un alphabet. On se place sur l'algèbre libre engendrée par $A$ que l'on note $\Aa = \KK \langle A \rangle$ où le produit est la concaténation des mots. Cette algèbre est tout simplement l'ensemble des combinaisons linéaires des mots sur $A$ aussi appelées \emph{polynômes non commutatifs}. Elle est clairement associative car

\begin{equation}
(u . v) . w = u . (v . w) = uvw
\end{equation}
pour $u,v,w \in A^*$. Le produit $\mu := .$ est une application linéaire $\mu : \Aa \otimes \Aa \rightarrow \Aa$ et l'associativité se traduit par le diagramme commutatif suivant :
\begin{equation}
\label{eq:tamari_prelim:diag_ass}
    \xymatrix{
        \Aa \otimes \Aa \otimes \Aa \ar[r]^-{I \otimes \mu} \ar[d]_-{\mu \otimes I} &
            \Aa \otimes \Aa \ar[d]^-\mu \\
        \Aa \otimes \Aa \ar[r]_-\mu &
        \Aa
    }
\end{equation}
où $I : \Aa \rightarrow \Aa$ est l'identité.

Le mot vide $\epsilon$ est donc l'élément neutre pour la multiplication, c'est-à-dire
\begin{equation}
\forall u \in A^*, u.\epsilon = \epsilon .u = u. 
\end{equation}
Le mot $\epsilon$ est l'unité de l'algèbre $\Aa$. Un élément $k \in \KK$ est assimilé à l'élément $k \epsilon \in \Aa$. On peut donc interpréter $\epsilon$ non pas comme un mot mais comme une application $\epsilon: \KK \rightarrow \Aa$. La propriété de l'unité s'exprime alors aussi par un diagramme.
\begin{equation}
\label{eq:tamari_prelim:diag_unit}
    \xymatrix{
        \Aa \otimes \KK \ar[r]^-{I \otimes \epsilon} \ar[dr]_-\simeq &
            \Aa \otimes \Aa \ar[d]^-p & \KK \otimes \Aa \ar[l]_-{\epsilon \otimes I} \ar[dl]^\simeq \\
        & \Aa &
    }
\end{equation}

\index{morphisme!d'algèbre}
\`A présent, si $A = \lbrace a, b, \dots \rbrace$, introduisons un nouvel alphabet $A' = \lbrace a', b', \dots \rbrace$ copie de l'alphabet $A$ et dont les lettres commutent avec celles de $A$. On définit l'opération $\Delta$ sur les lettres de $A$ par $\Delta(a) = a + a'$.  Puis on étend l'opération de telle sorte que $\Delta$ soit un morphisme d'algèbre, c'est-à-dire
\begin{equation}
\Delta(u.v) = \Delta(u).\Delta(v)
\end{equation}
pour $u,v \in A^*$. Par exemple, si $A = \lbrace a, b \rbrace$ on a
\begin{align}
\Delta(aab) &= (a + a')(a + a')(b + b') \\
&= aab + aab' + aa'b + aa'b' + a'ab + a'ab' + a'a'b + a'a'b' \\
&= aab + aab' + 2aba' + 2aa'b' + ba'a' + a'a'b'.
\end{align}

Un mot sur les alphabets $A$ et $A'$ se décompose en deux parties indépendantes : un mot sur $A$ et un mot sur $A'$. On peut le considérer comme un élément de $\Aa \otimes \Aa' \simeq \Aa \otimes \Aa$. L'opération $\Delta : \Aa \rightarrow \Aa \otimes \Aa$ est alors un morphisme d'algèbre tel que $\Delta(a) = a \otimes \epsilon + \epsilon \otimes a$. Le calcul précédent s'écrit 
\begin{equation}
\Delta(aab) = aab \otimes \epsilon + aa \otimes b + 2(ab \otimes a) + 2(a \otimes ab) + b \otimes aa + \epsilon \otimes aab.
\end{equation}

L'opération $\Delta$ est \emph{co-associative}. Pour un mot $u$, on a $\Delta(u) = \sum u_1 \otimes u_2$ sur les couples $(u_1,u_2)$ de sous-mots complémentaires de $u$. Alors $\sum \Delta(u_1) \otimes u_2 = \sum u_1 \otimes \Delta(u_2) = \sum u_1 \otimes u_2 \otimes u_3$, la somme sur les triplets complémentaires de sous-mots de $u$. Sur un exemple, cela donne

\begin{align}
(\Delta \otimes I)\Delta(ab) &= (\Delta \otimes I)\left( (a \otimes \epsilon) + (\epsilon \otimes a) \right) \left( (b \otimes \epsilon) + (\epsilon \otimes b) \right) \\
&= ( a \otimes \epsilon \otimes \epsilon + \epsilon \otimes a \otimes \epsilon + \epsilon \otimes \epsilon \otimes a)( b \otimes \epsilon \otimes \epsilon + \epsilon \otimes b \otimes \epsilon + \epsilon \otimes \epsilon \otimes b) \\
&= (I \otimes \Delta)\left( (a \otimes \epsilon) + (\epsilon \otimes a) \right) \left( (b \otimes \epsilon) + (\epsilon \otimes b) \right)\\
&= (I \otimes \Delta)\Delta(ab).
\end{align} 
On écrit le diagramme commutatif suivant
\begin{equation}
    \xymatrix{
        \Aa \ar[r]^-\Delta \ar[d]_-\Delta & \Aa \otimes \Aa \ar[d]^-{I \otimes \Delta} \\
        \Aa \otimes \Aa \ar[r]_-{\Delta \otimes I} & \Aa \otimes \Aa \otimes \Aa
    }.
\end{equation}
Ce diagramme est la version "renversée" du diagramme de l'associativité de l'algèbre \eqref{eq:tamari_prelim:diag_ass}. De la même façon, la \emph{co-unité} de $\Aa$ est l'application linéaire $c : \Aa \rightarrow \KK$ telle que $c(u) = 0$ si $u \neq \epsilon$ et $c(\epsilon) = 1$, on a
\begin{equation}
    \xymatrix{
        \Aa \otimes \KK & \Aa \otimes \Aa \ar[r]^-{c \otimes I} \ar[l]_-{I \otimes c} &
            \KK \otimes \Aa \\
        & \Aa \ar[lu]^-\simeq \ar[u]_-\Delta \ar[ru]_-\simeq &
    }
\end{equation}
version renversée du diagramme \eqref{eq:tamari_prelim:diag_unit} de l'unité de l'algèbre. Munie de l'opération $\Delta$, l'espace vectoriel $\Aa$ possède une structure de \emph{cogèbre}. La compatibilité entre $\Delta$ et $\mu$ en fait une \emph{bigèbre}.

\begin{Definition}
\index{cogèbre}
\index{bigèbre}
\index{algèbre!de Hopf}
Soit $\Aa$ un espace vectoriel. Si $\Aa$ est muni d'un produit associatif $\mu : \Aa \otimes \Aa \rightarrow \Aa$ et d'une unité $\epsilon : \KK \rightarrow \Aa$, on dit que $\Aa$ est une \emph{algèbre}. Si par ailleurs, $\Aa$ est munie d'un \emph{coproduit} $\Delta : \Aa \rightarrow \Aa \otimes \Aa$ co-associatif et d'une co-unité $c: \Aa \rightarrow \KK$ alors $\Aa$ est une \emph{cogèbre}.

Enfin, si $\Aa$ est à la fois une algèbre et une cogèbre et que $\Delta$ et $c$ sont des \emph{morphismes d'algèbres} alors, $\Aa$ est une \emph{bigèbre}.
\end{Definition}

En tant qu'espace vectoriel, $\Aa$ est \emph{gradué}, c'est-à-dire que

\begin{equation}
\index{algèbre!graduée}
\Aa = \bigoplus_{n \in \NN} \Aa^n 
\end{equation}
où $\Aa^n$ est l'espace vectoriel sur les mots de taille $n$. En tant que bigèbre, $\Aa$ est aussi graduée ce qui signifie que son produit $\mu$ et son coproduit $\Delta$ vérifient
\begin{align}
\mu(\Aa^n \otimes \Aa^m) &\subset \Aa^{n+m},\\
\Delta(\Aa^k) &\subset \bigoplus_{n+m=k} \Aa^n \otimes \Aa^m.
\end{align}
Par ailleurs la dimension de $\Aa^0$ est 1 (le mot vide $\epsilon$) ce qui signifie que $\Aa$ est \emph{connexe}. On peut alors prouver que \emph{l'antipode} de $\Aa$ est bien définie \cite{Hopf1, Hopf2}. Nous ne détaillerons pas cette propriété car nous n'en aurons pas besoin. C'est elle qui justifie l'appellation \emph{algèbre de Hopf}. Par la suite, nous n'étudierons que des bigèbres graduées et connexes et parlerons donc toujours \emph{d'algèbres de Hopf combinatoires}.

Si l'alphabet $A$ n'est pas commutatif, le produit $\mu$ non plus. On a $u.v \neq v.u$. Cependant, le coproduit $\Delta$ que nous avons défini est \emph{co-commutatif} : soit $\omega : \Aa \otimes \Aa \rightarrow \Aa \otimes \Aa$ l'application définie par $\omega(u \otimes v) = v \otimes u$, alors $\omega \circ \Delta = \Delta$. Par la suite, on définira d'autres coproduits qui n'auront pas cette propriété.

L'exemple des polynômes non commutatifs, s'il parait trivial, n'en est pas moins fondamental. En effet, il est souvent fastidieux de prouver tous les axiomes relatifs au produit et au coproduit sur une bigèbre. Dans le cas de $\Aa$, ce sont des propriétés élémentaires. Une technique est alors d'exprimer une algèbre combinatoire en fonction de $\Aa$. Cela revient à associer à chaque objet un développement sous forme de polynômes (commutatifs ou non). Pour prouver que l'espace en question possède une structure d'algèbre de Hopf, on prouve que la famille de polynômes obtenue est stable par les opérations de produit et de coproduit. Cette technique est appelée la \emph{réalisation polynomiale}, nous en donnons un exemple au paragraphe suivant avec l'algèbre $\FQSym$.

\begin{Remarque}
Les fonctions symétriques que nous avons étudiées dans le chapitre \ref{chap:polynomes_action} possèdent aussi une structure d'algèbre de Hopf et la définition du coproduit passe aussi par un doublement d'alphabet.
\end{Remarque}

\subsection{L'algèbre des fonctions quasi-symétriques libres}
\label{sub-sec:tamari_prelim:hopf:fqsym}
\index{FQSym}

Pour illustrer le principe de la réalisation polynomiale, nous expliquons la construction de l'algèbre de Hopf des fonctions quasi-symétriques libres $\FQSym$ comme cela a été fait dans \cite{NCSF6,NCSF7}. Cette algèbre est isomorphe à l'algèbre définie par Malvenuto et Reutenauer sur les permutations \cite{MalReut}.

\begin{Definition}
Soit $u = u_1 \dots u_n$ un mot sur l'alphabet ordonné $A$. L'action d'une permutation $\sigma$ sur $u$ est donnée par
\begin{equation}
u \bullet \sigma = u_{\sigma_1}\dots u_{\sigma_n}.
\end{equation}
 \emph{L’exécution} de $u$, $\exec(u)$, est la permutation $\sigma$ de longueur minimale telle que $u \bullet \sigma$ soit ordonné c'est-à-dire $u_{\sigma_1} \leq u_{\sigma_2} \leq \dots \leq u_{\sigma_n}$. 
\end{Definition}

Par exemple, si $u=baa$ alors $\exec(u) = 231$, et on a $u \bullet \sigma = aab$. L'exécution d'un mot $u$ dépend uniquement de ses inversions, c'est-à-dire du standardisé de $u$ tel que nous l'avons défini paragraphe \ref{sub-sec:prelim_posets:struct_elementaires:operations}. On a $\exec(u) = \exec(\std(u))$. Par ailleurs, pour une permutation $\nu$ alors $\nu \bullet \sigma = \nu \circ \sigma$ et donc $\exec(\nu) = \nu^{-1}$. On a donc
\begin{equation}
\exec(u) = \std(u)^{-1}.
\end{equation}

On se place à présent sur un alphabet infini et on définit la série
\begin{equation}
\BF_\sigma := \sum_{\exec(u) = \sigma} u.
\end{equation}

On a alors le résultat suivant \cite{NCSF6}.

\begin{Proposition}
Le produit $\BF_\sigma \BF_\mu$ pour $\sigma \in \Sym{n}$ et $\mu \in \Sym{m}$ s'exprime comme une somme d'éléments $\BF_{\nu}$ avec $\nu \in \Sym{n+m}$. Plus précisément
\begin{equation}
\label{eq:tamari_prelim:fqsym_prod_f}
\BF_\sigma \BF_\mu = \sum_{\nu \in \sigma \cshuffle \mu} \BF_{\nu}.
\end{equation}
\end{Proposition}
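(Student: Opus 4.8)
Pour $\sigma \in \Sym{n}$ et $\mu \in \Sym{m}$,
\[
\BF_\sigma \BF_\mu = \sum_{\nu \in \sigma \cshuffle \mu} \BF_{\nu}.
\]

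Plan de preuve. L'idée est de calculer directement le produit des deux séries en utilisant la réalisation polynomiale et la définition de $\exec$. On écrit $\BF_\sigma \BF_\mu = \sum_{u,v} uv$ où la somme porte sur les couples de mots $(u,v)$ tels que $\exec(u) = \sigma$ et $\exec(v) = \mu$, le produit $uv$ étant la concaténation. Il faut regrouper ces concaténations suivant leur exécution. L'obstacle principal est le phénomène suivant : dans $uv$, une lettre de $u$ et une lettre de $v$ peuvent être \emph{égales}, et alors l'ordre dans lequel $\exec$ les trie dépend d'une convention (par définition, $\exec$ est la permutation de \emph{longueur minimale} rendant le mot trié, donc en cas d'égalité elle préfère ne pas inverser, c'est-à-dire garder les positions de gauche avant celles de droite). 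La première étape est donc de fixer soigneusement cette convention et de montrer le lemme clé : pour un mot $w$ de taille $n+m$, si l'on pose $u = w_1 \dots w_n$ et $v = w_{n+1} \dots w_{n+m}$, alors $\exec(w)$ ne dépend que de $\exec(u)$, de $\exec(v)$, et d'une \og place \fg{} choisie pour insérer les $m$ dernières positions parmi les $n$ premières ; réciproquement toute telle donnée provient d'un $w$.

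Deuxième étape : traduire cette combinatoire en termes de permutations. Pour $w$ fixé, soit $\tau = \std(w) \in \Sym{n+m}$ son standardisé ; on a $\exec(w) = \tau^{-1}$. En restreignant $\tau$ aux $n$ premières positions et en standardisant, on récupère $\std(u)$, et de même $\std(v)$ sur les $m$ dernières ; donc $\sigma = \std(u)^{-1}$ et $\mu = \std(v)^{-1}$. Ce que décrit $\tau$, c'est exactement la donnée d'un mélange : les valeurs $\{1,\dots,n+m\}$ sont réparties en deux blocs (celui provenant de $u$, celui provenant de $v$) dont les restrictions réordonnées sont $\std(u)$ et $\std(v)$, ce qui signifie que $\tau^{-1}$, lue comme mot, est un élément du produit de mélange décalé $\std(u)^{-1} \cshuffle \std(v)^{-1} = \sigma \cshuffle \mu$ (ici le décalage de $m$ sur les positions du second bloc correspond exactement à la définition du \cshuffle{} donnée en \eqref{eq:prelim_posets:shuffle_dec}). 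Il faut vérifier que cette correspondance $\tau \mapsto \tau^{-1} \in \sigma \cshuffle \mu$ est une bijection entre les standardisés apparaissant et les éléments du mélange ; c'est ici qu'intervient le fait, déjà signalé dans l'excerpt pour la standardisation, que $\std$ établit une bijection entre classes de mots à inversions fixées et permutations.

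Troisième étape : conclure. On a montré que $\{\std(w) : w = uv,\ \exec(u)=\sigma,\ \exec(v)=\mu\}$ est exactement $\{\nu^{-1} : \nu \in \sigma \cshuffle \mu\}$, et que chaque $\nu$ du mélange est atteint exactement une fois. Donc
\[
\BF_\sigma \BF_\mu = \sum_{u,v}\ uv = \sum_{\nu \in \sigma \cshuffle \mu}\ \sum_{\substack{uv\ :\ \std(uv)=\nu^{-1}}} uv = \sum_{\nu \in \sigma \cshuffle \mu}\ \sum_{\exec(w)=\nu} w = \sum_{\nu \in \sigma \cshuffle \mu} \BF_{\nu},
\]
où l'avant-dernière égalité regroupe, pour $\nu$ fixé, tous les mots de taille $n+m$ d'exécution $\nu$ (qui sont bien tous de la forme $uv$ avec $\exec(u)=\sigma$, $\exec(v)=\mu$, par la réciproque du lemme clé). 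Le point délicat reste la gestion des égalités de lettres aux frontières entre les deux blocs, qui est précisément ce que le produit de mélange \og compte \fg{} correctement (avec multiplicité, comme on le voit sur l'exemple \eqref{eq:prelim_posets:shuffle_dec_ex}) : deux insertions distinctes d'un même ensemble de lettres donnent deux termes distincts dans $\sigma \cshuffle \mu$, et donc deux familles disjointes de mots $w$.
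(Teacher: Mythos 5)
Votre démonstration est correcte, mais notez d'abord que le mémoire ne prouve pas cette proposition : il renvoie à \cite{NCSF6} et se contente de la vérifier sur l'exemple $\BF_{21}\BF_{1}$. Il n'y a donc pas de preuve « du papier » à laquelle comparer la vôtre ; votre argument est celui, standard, de la (dé)standardisation, et il est essentiellement complet. Le point clé que vous identifiez est le bon : pour $w=uv$, les inversions de $w$ internes aux $n$ premières positions sont exactement celles de $u$, donc $\std(\std(w)_1\cdots\std(w)_n)=\std(u)=\sigma^{-1}$ (et de même pour $v$), ce qui force $\exec(w)\in\sigma\cshuffle\mu$ ; réciproquement l'alphabet infini (hypothèse posée juste avant l'énoncé) garantit que, pour chaque $\nu\in\sigma\cshuffle\mu$, \emph{tous} les mots $w$ avec $\exec(w)=\nu$ apparaissent, et chacun exactement une fois puisque la coupure $w=uv$ avec $|u|=n$ est unique. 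Votre discussion des lettres égales à la frontière est juste mais se dissout dès qu'on travaille avec $\std$ (numérotation des lettres égales de gauche à droite) via $\exec(u)=\std(u)^{-1}$, ce que vous faites. Deux petites retouches : le décalage dans $\sigma\cshuffle\mu=\sigma\shuffle\Dec{\mu}$ porte sur $\mu$ et vaut $n=|\sigma|$, et non $m$ ; et la phrase « chaque $\nu$ du mélange est atteint exactement une fois » doit se lire « la fibre de $w\mapsto\exec(w)$ au-dessus de chaque $\nu$ est exactement $\lbrace w : \exec(w)=\nu\rbrace$ », ce que votre réciproque du lemme clé établit — l'absence de multiplicités vient de ce que $\sigma$ et $\Dec{\mu}$ ont des lettres disjointes, contrairement au mélange de mots généraux de l'exemple \eqref{eq:prelim_posets:shuffle_ex}.
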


On rappelle que $\cshuffle$ est le produit de mélange décalé sur les permutations que nous avons défini paragraphe \ref{sub-sec:prelim_posets:struct_elementaires:alg}. Voyons le résultat sur un exemple. Soit $\sigma = 21$ et $\mu = 1$. Comme le résultat sera une combinaison linéaire de permutations de taille 3, on peut se contenter du développement de $\BF_\sigma$ et $\BF_\mu$ sur $A = \lbrace a,b,c \rbrace$. On a alors
\begin{align}
\BF_{21} &= ba + ca + cb, \\
\BF_{1} &= a + b + c, \\
\BF_{21} \BF_{1} &= baa + bab + bac + caa + cab + cac + cba + cbb + cbc \\
&= (baa +caa + cab +cbb) + (bab + bac + cac +cbc) + cba \\
&= \BF_{231} + \BF_{213} + \BF_{321}.
\end{align}

On a donc une structure d'algèbre sur les éléments $\BF_\sigma$. La définition du coproduit n'est pas celle du paragraphe \ref{sub-sec:tamari_prelim:hopf:def} mais utilise toujours un doublement d'alphabet. Soit $B$ un second alphabet ordonné infini tel que les lettres de $A$ soient considérées plus petites que les lettres de $B$ et qu'elles commutent avec les lettres de $B$. L'alphabet formé de cette façon est noté $A \cp B$. Le coproduit sur $\BF_\sigma$ consiste à développer la somme sur $A \cp B$ plutôt que sur $A$. Par exemple, si $A = \lbrace a, b, c \rbrace$ et $B = \lbrace a', b', c' \rbrace$,

\begin{align}
\BF_{231}(A \cp B) &= baa +caa + cab +cbb +  \\
\nonumber
&+ a'aa + a'ab + a'bb + b'aa + \dots \\
\nonumber
&+  b'aa' + c'aa' + c'ab'+ \dots \\
\nonumber
&+ b'a'a' + c'a'a' + c'a'b' + c'b'b'.
\end{align}

Comme les lettres de $A$ et $B$ commutent, on peut les réordonner pour séparer les deux alphabets et exprimer la somme dans $\Aa \otimes \Aa$ comme nous l'avons fait paragraphe \ref{sub-sec:tamari_prelim:hopf:def}. 

\begin{align}
\Delta(\BF_{231}) &= (baa \otimes \epsilon) + (caa \otimes \epsilon) + (cab \otimes \epsilon) + (cbb \otimes \epsilon) \\
\nonumber
&+ (aa \otimes a) + (ab \otimes a) + (bb \otimes a) + (aa \otimes b) + \dots \\
\nonumber
&+  (a \otimes ba) + (a \otimes ca) + (a \otimes cb) + \dots \\
\nonumber
&+ (\epsilon \otimes baa) + (\epsilon \otimes caa) + (\epsilon \otimes cab) + (\epsilon \otimes cbb).
\end{align}

Les séries qui apparaissent à gauche et à droite du signe $\otimes$ correspondent à des éléments $\BF$ et on a
\begin{equation}
\Delta(\BF_{231}) = \BF_{231} \otimes 1 + \BF_{12} \otimes \BF_{1} + \BF_{1} \otimes \BF_{21} + 1 \otimes \BF_{231}.
\end{equation}
Ici, $1$ désigne l'unité de l'algèbre, c'est-à-dire $1 = \BF_{\epsilon} = \epsilon$. De façon générale, on a

\begin{Proposition}
\begin{equation}
\label{eq:tamari_prelim:fqsym_coprod_f}
\Delta(\BF_\sigma) := \BF_\sigma(A \cp B)  = \sum_{\sigma = u.v} \BF_{\std(u)} \otimes \BF_{\std(v)}.
\end{equation}
\end{Proposition}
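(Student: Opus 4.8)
The plan is to prove the formula by a direct, coefficient-wise computation in $\KK\langle A \cp B\rangle$, in the spirit of the polynomial realization: since $\BF_\sigma$ and $\BF_\sigma(A\cp B)$ are formal series, it suffices to identify, for each pair of words $(u',v')$ with $u'$ on $A$ and $v'$ on $B$, the coefficient of the monomial $u'\otimes v'$ on both sides. First I would record the elementary behaviour of standardization over a doubled alphabet. For a word $w$ on $A\cp B$, let $w^{(A)}$ (resp. $w^{(B)}$) be the subword of letters lying in $A$ (resp. $B$), read in their original order. Because letters of $A$ commute with letters of $B$, bubbling every $B$-letter to the right gives $w = w^{(A)}\,w^{(B)}$ in $\KK\langle A\cp B\rangle$, which realizes the identification $\KK\langle A\cp B\rangle \simeq \KK\langle A\rangle\otimes\KK\langle B\rangle \simeq \Aa\otimes\Aa$ via $w\mapsto w^{(A)}\otimes w^{(B)}$. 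Moreover, since every letter of $A$ is smaller than every letter of $B$, the standardized word $\std(w)$ assigns the values $1,\dots,k$ (with $k=|w^{(A)}|$) to the positions carrying $A$-letters, ordered among themselves exactly as in $\std(w^{(A)})$, and the values $k+1,\dots,n$ to the positions carrying $B$-letters, ordered as in $\std(w^{(B)})$.

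Next I would fix $\sigma\in\Sym{n}$ and describe the words $w$ on $A\cp B$ with $\exec(w)=\sigma$. By definition $\exec(w)=\std(w)^{-1}=\sigma$ means that reading $\sigma = \sigma_1\cdots\sigma_n$ lists the positions of $w$ in non-decreasing order of their letters, ties broken by position. As $A<B$, the first $k$ entries $\sigma_1,\dots,\sigma_k$ are precisely the positions carrying $A$-letters and $\sigma_{k+1},\dots,\sigma_n$ those carrying $B$-letters; in particular the set of $A$-positions of $w$ is forced to be $\{\sigma_1,\dots,\sigma_k\}$. Set $u=\sigma_1\cdots\sigma_k$ and $v=\sigma_{k+1}\cdots\sigma_n$, so that $\sigma = u\cdot v$ as a word. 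A short check (the entries of $u$ are distinct, being positions) shows that $\std(u)$, obtained by replacing each position by its rank among the $A$-positions, is exactly the permutation sorting $w^{(A)}$, i.e. $\std(u)=\exec(w^{(A)})=\std(w^{(A)})^{-1}$, and likewise $\std(v)=\exec(w^{(B)})=\std(w^{(B)})^{-1}$. Equivalently, $w^{(A)}$ is one of the words summed in $\BF_{\std(u)}$ and $w^{(B)}$ one of those summed in $\BF_{\std(v)}$.

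I would then assemble the bijection. For fixed $\sigma$ and fixed $k$, the words $w$ on $A\cp B$ with $\exec(w)=\sigma$ and exactly $k$ letters in $A$ are obtained by placing an arbitrary word of $\BF_{\std(u)}$ on the prescribed positions $\{\sigma_1,\dots,\sigma_k\}$ and an arbitrary word of $\BF_{\std(v)}$ on the complementary positions; conversely, any such placement $w$ does satisfy $\exec(w)=\sigma$, since the standardization rule of the first paragraph recovers $\sigma$ from $\std(w^{(A)})=\std(u)^{-1}$ and $\std(w^{(B)})=\std(v)^{-1}$. Hence $w\mapsto(w^{(A)},w^{(B)})$ is a bijection from $\{w:\exec(w)=\sigma,\ |w^{(A)}|=k\}$ onto the set of monomials of $\BF_{\std(u)}\otimes\BF_{\std(v)}$. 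Summing over $w$, then over $k=0,\dots,n$ — equivalently over all deconcatenations $\sigma=u\cdot v$ — gives $\BF_\sigma(A\cp B)=\sum_{\sigma=u\cdot v}\BF_{\std(u)}\otimes\BF_{\std(v)}$, and in particular $\Delta(\FQSym)\subseteq\FQSym\otimes\FQSym$.

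The commutation bookkeeping and the identity $\std(u)=\exec(w^{(A)})$ are routine. The one genuinely load-bearing observation, and the point I would state most carefully, is that the positions of the $A$-letters of $w$ are entirely determined by $\sigma$ and $k$ (they are $\{\sigma_1,\dots,\sigma_k\}$): this is exactly what makes $w\mapsto(w^{(A)},w^{(B)})$ well defined and bijective, rather than $\binom{n}{k}$-to-one. Everything else follows by transporting the $\exec=\std^{-1}$ correspondence through the doubled alphabet.
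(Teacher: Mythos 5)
Your proof is correct, and it follows exactly the polynomial-realization mechanism that the paper illustrates on the example of $\BF_{231}$ and otherwise delegates to the reference \cite{NCSF6}: expand $\BF_\sigma$ over $A \cp B$, separate the commuting alphabets, and identify the two factors. The one detail you rightly isolate as load-bearing — that for a word $w$ with $\exec(w)=\sigma$ and $k$ letters in $A$, the set of $A$-positions is forced to be $\{\sigma_1,\dots,\sigma_k\}$, so that $w\mapsto(w^{(A)},w^{(B)})$ is a bijection rather than many-to-one — is precisely what makes the identification $\std(u)=\exec(w^{(A)})$ and the resulting multiplicity-free matching go through.
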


Ce résultat est prouvé dans \cite{NCSF6} et permet d'obtenir la proposition suivante.

\begin{Proposition}
L'algèbre des éléments $(\BF_{\sigma})$ munie du produit de mélange décalé \eqref{eq:tamari_prelim:fqsym_prod_f} et du coproduit \eqref{eq:tamari_prelim:fqsym_coprod_f} est une algèbre de Hopf.
\end{Proposition}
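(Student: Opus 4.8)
Le but est de vérifier que l'espace vectoriel engendré par les $(\BF_\sigma)_{\sigma \in \Sym{n}, n \in \NN}$, muni du produit \eqref{eq:tamari_prelim:fqsym_prod_f} et du coproduit \eqref{eq:tamari_prelim:fqsym_coprod_f}, satisfait les axiomes d'une algèbre de Hopf combinatoire au sens du paragraphe \ref{sub-sec:tamari_prelim:hopf:def}, c'est-à-dire : bigèbre graduée connexe. L'idée directrice est d'utiliser la \emph{réalisation polynomiale} : comme $\BF_\sigma$ est par définition une somme de mots dans l'algèbre libre $\Aa = \KK \langle A \rangle$, toute identité algébrique entre les $\BF_\sigma$ qui découle d'identités de séries dans $\Aa$ est automatiquement vérifiée, car $\Aa$ est associative, munie d'une unité, co-associative et munie d'une co-unité par les arguments élémentaires déjà exposés. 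Il ne reste donc qu'à vérifier les propriétés de \emph{stabilité} du sous-espace engendré par les $\BF_\sigma$ et la \emph{compatibilité} produit/coproduit.

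Concrètement, je procéderais en quatre étapes. D'abord, la \textbf{stabilité par produit} : les propositions \eqref{eq:tamari_prelim:fqsym_prod_f} (produit de mélange décalé) et \eqref{eq:tamari_prelim:fqsym_coprod_f} (coproduit par doublement d'alphabet $A \cp B$) montrent déjà que le produit et le coproduit de deux éléments de base se réexpriment comme combinaisons linéaires d'éléments de base : le sous-espace $\FQSym$ est donc bien une sous-bigèbre de $\Aa$. Ensuite, \textbf{unité et co-unité} : l'unité est $1 = \BF_{\epsilon}$, la co-unité est la restriction de $c : \Aa \to \KK$ ; les diagrammes \eqref{eq:tamari_prelim:diag_unit} et sa version renversée sont hérités de $\Aa$. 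Troisième étape, la \textbf{compatibilité $\Delta \circ \mu = (\mu \otimes \mu) \circ (I \otimes \omega \otimes I) \circ (\Delta \otimes \Delta)$} : puisque $\Delta$ est défini comme le morphisme d'algèbre $f \mapsto f(A \cp B)$, c'est un morphisme d'algèbres par construction même, donc $\Delta(\BF_\sigma \BF_\mu) = \Delta(\BF_\sigma)\Delta(\BF_\mu)$ ; la compatibilité est donc gratuite — c'est le c\oe{}ur de l'argument par réalisation polynomiale, et la raison pour laquelle on introduit $A \cp B$. Enfin, \textbf{graduation et connexité} : le produit de mélange décalé envoie $\Sym{n} \times \Sym{m}$ dans $\Sym{n+m}$, le coproduit respecte la graduation car il découpe $\sigma = u.v$ en facteurs, et $\FQSym^0 = \KK\, \BF_{\epsilon}$ est de dimension $1$ ; l'existence de l'antipode en résulte par le théorème standard sur les bigèbres graduées connexes cité dans \cite{Hopf1, Hopf2}.

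Le point le plus délicat — s'il y en a un — n'est pas la compatibilité (qui est formelle une fois $\Delta$ reconnu comme morphisme d'algèbre), mais la \textbf{vérification que $\Delta$ est bien à valeurs dans $\FQSym \otimes \FQSym$ et pas seulement dans $\Aa \otimes \Aa$}, c'est-à-dire la justification précise de \eqref{eq:tamari_prelim:fqsym_coprod_f}. Il faut s'assurer que, après réordonnement des lettres de $A$ avant celles de $B$, chaque bloc de mots regroupés à gauche et à droite du symbole $\otimes$ est exactement de la forme $\BF_{\std(u)}$ et $\BF_{\std(v)}$ pour une découpe $\sigma = u.v$ en facteurs gauche et droit : cela repose sur le fait que $\exec$ ne dépend que du standardisé (égalité $\exec(u) = \exec(\std(u))$) et sur une analyse combinatoire des mots sur $A \cp B$ ayant une exécution donnée. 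Cette vérification est traitée dans \cite{NCSF6}, et je m'appuierais sur ce résultat ; l'essentiel de la preuve ici consiste donc à assembler proprement les briques \eqref{eq:tamari_prelim:fqsym_prod_f} et \eqref{eq:tamari_prelim:fqsym_coprod_f} dans le cadre axiomatique du paragraphe \ref{sub-sec:tamari_prelim:hopf:def}.
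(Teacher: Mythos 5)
Votre preuve est correcte et suit essentiellement la même démarche que celle du mémoire : la réalisation polynomiale ramène tout à la stabilité des éléments $\BF_\sigma$ par le produit et le coproduit (c'est-à-dire aux deux propositions précédentes), la co-associativité découlant de $(A \cp B) \cp C = A \cp (B \cp C)$ et l'antipode de la graduation et de la connexité. Notez seulement que la compatibilité produit/coproduit n'est pas tout à fait \emph{gratuite} au sens où $\Delta$ serait un morphisme d'algèbre par définition même : il faut, comme le fait le mémoire, observer que sur un alphabet infini le développement de $\BF_\sigma(A \cp B)\BF_\mu(A \cp B)$ coïncide avec celui de $\BF_\sigma(A)\BF_\mu(A)$, autrement dit que la formule du produit de mélange décalé ne dépend pas de l'alphabet infini totalement ordonné choisi --- c'est précisément le fait dont votre argument a implicitement besoin.
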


On note cette algèbre $\FQSym$. Ce résultat était déjà donné dans \cite{MalReut} et prouvé directement sur les permutations. Dans \cite{NCSF6}, les auteurs utilisent comme nous l'avons vu le développement des éléments $\BF_\sigma$ comme sommes de mots. Dans ce cas, la preuve devient beaucoup plus simple car il ne reste à prouver que la stabilité des éléments $\BF_\sigma$ par le produit et le coproduit. En effet, la co-associativité du coproduit est triviale car $(A \cp B) \cp C = A \cp (B \cp C)$. De même, la compatibilité du produit et du coproduit s'obtient facilement car, sur des alphabets infinis, le développement de $\BF_\sigma(A \cp B)\BF_\mu(A \cp B)$ est équivalent au développement de $\BF_\sigma(A)\BF_\mu(A)$. Notons que dans le cas de $\FQSym$, le coproduit n'est plus le même qu'au paragraphe \ref{sub-sec:tamari_prelim:hopf:def} et n'est plus cocommutatif.

\subsection{Ordre faible, dualité et autres bases}
\label{sub-sec:tamari_prelim:hopf:dualite}
\index{algèbre!duale}

Les éléments $(\BF_\sigma)$ sont appelés la \emph{base fondamentale} de l'algèbre $\FQSym$. Il est possible de définir d'autres bases sur $\FQSym$ en particulier par l'opération de \emph{dualité}. La notion du dual d'une algèbre de Hopf étend celle du dual d'un espace vectoriel. Soit $E$ un espace vectoriel de base $B$. Son dual est l'espace $E^*$ des formes linéaires sur $E$. Comme on travaille en dimension finie (ou graduellement finie), $E^*$ est isomorphe à $E$. Sa base est donnée par $B^*$ tel que $b_i^*(bj) = \delta_{i,j}$ pour $b_i^* \in B^*$ et $b_j \in B$. On utilise la notation du crochet de dualité $\langle \phi, x \rangle := \phi(x)$ pour $\phi \in E^*$ et $x \in E$. En particulier, si $\phi \in E^*$ et $b_i \in B$, alors $\langle \phi , b_i \rangle$ est le coefficient de $b_i^*$ dans $\phi$.

Soit $(\Aa, ., \Delta)$, une algèbre de Hopf, son algèbre duale est $(\Aa^*,.^*, \Delta^*)$ où $\Aa^*$ est le dual de $\Aa$ en tant qu'espace vectoriel et où le produit $.^*$ et le coproduit $\Delta^*$ sont définis par

\begin{align}
\label{eq:tamari_prelim:dual_coprod}
\langle \Delta^*(z), x \otimes y \rangle &= \langle z, x.y \rangle & \forall x,y \in \Aa, z \in \Aa^*, \\
\label{eq:tamari_prelim:dual_prod}
 \langle y.^*z, x \rangle &= \langle y \otimes z, \Delta(x) \rangle & \forall x \in \Aa, y,z \in \Aa^*.
\end{align}

Soit $(\BG_\sigma)$ la base duale de $(\BF_\sigma)$. Par \eqref{eq:tamari_prelim:dual_prod}, on a
\begin{align}
\BG_\sigma \BG_\mu &= \sum_{\BF_\sigma \otimes \BF_\mu \in \Delta(\BF_\nu)} \BG_\nu, \\
&= \sum_{\substack{\nu = u.v \\ \std(u) = \sigma \\ \std(v) = \mu}} \BG_\nu.
\end{align}
Et par \eqref{eq:tamari_prelim:dual_coprod},
\begin{align}
\Delta(\BG_\sigma) &= \sum_{\sigma \in \mu \cshuffle \nu} \BG_\mu \otimes \BG_\nu.
\end{align}

Le coproduit de $\BF_\sigma$ revenait à sommer sur les découpages de la permutation $\sigma$ en deux blocs en fonction d'une position $k$. Sur la base $\BG_\sigma$, on découpe en fonction d'une valeur $k$ : la partie gauche du produit tensoriel est formée par le sous-mot des valeurs inférieures ou égales à $k$ et la partie droite, par le sous-mot standardisé des valeurs supérieures à $k$. De même, pour le produit de $\BF_\sigma$ et $\BF_\mu$ avec $\sigma \in \Sym{k_1}$ et $\mu \in \Sym{k_2}$, on sommait sur les $\binom{k_1 + k_2}{k_1}$ façons de choisir $k_1$ positions où placer les valeurs de $\sigma$. Dans le produit $\BG_\sigma \BG_\mu$, on choisit les $k_1$ premières valeurs et leur ordre est donnée par $\sigma$.

Voyons sur un exemple,
\begin{align}
\BG_{21} \BG_{1} &= \BG_{\red{21}3} + \BG_{\red{31}2} + \BG_{\red{32}1}, \\
\Delta(\BG_{312}) &= \BG_{312} \otimes 1 + \BG_{12} \otimes \BG_{1} + \BG_{1} \otimes \BG_{21} + 1 \otimes \BG_{312}.
\end{align}

Cette dualité positions / valeurs rappelle la relation entre les ordres faibles droits et gauches sur les permutations. Et en effet, on prouve sur le développement en mot que le produit et le coproduit des éléments $\BF_{\sigma^{-1}}$ vérifient les relations \eqref{eq:tamari_prelim:dual_coprod} et \eqref{eq:tamari_prelim:dual_prod}, ce qui nous donne le résultat suivant.

\begin{Proposition}
En tant qu'algèbre de Hopf, $\FQSym$ est \emph{auto-duale} ce qui signifie qu'il existe un isomorphisme de bigèbre entre $\FQSym$ et $\FQSym^*$. De plus, l'application $\BG_\sigma \rightarrow \BF_{\sigma^{-1}}$ est un isomorphisme explicite.
\end{Proposition}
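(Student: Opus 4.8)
L'énoncé affirme que $\FQSym$ est auto-duale, avec isomorphisme explicite $\BG_\sigma \mapsto \BF_{\sigma^{-1}}$. La stratégie est de vérifier directement, sur les formules de produit et de coproduit déjà établies dans l'excerpt, que l'application linéaire $\Phi$ définie par $\Phi(\BG_\sigma) = \BF_{\sigma^{-1}}$ (étendue par linéarité) est un morphisme de bigèbres, puis de constater qu'elle est trivialement bijective puisqu'elle envoie une base sur une base (le passage à l'inverse est une involution sur $\Sym{n}$, donc $\Phi$ est une permutation de la base $(\BF_\nu)$ à chaque degré). Comme $\FQSym$ est graduée et connexe, un morphisme de bigèbres bijectif est automatiquement un morphisme d'algèbres de Hopf (l'antipode étant déterminée par la structure de bigèbre), ce qui conclut.

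\textbf{Étape 1 : compatibilité avec le produit.} Il faut montrer $\Phi(\BG_\sigma \BG_\mu) = \Phi(\BG_\sigma)\Phi(\BG_\mu)$, c'est-à-dire, pour $\sigma \in \Sym{k_1}$, $\mu \in \Sym{k_2}$,
\begin{equation}
\sum_{\substack{\nu = u.v \\ \std(u) = \sigma,\ \std(v) = \mu}} \BF_{\nu^{-1}} = \BF_{\sigma^{-1}} \BF_{\mu^{-1}} = \sum_{\rho \in \sigma^{-1} \cshuffle \mu^{-1}} \BF_{\rho}.
\end{equation}
Il suffit donc d'établir la bijection combinatoire : l'ensemble des inverses des permutations $\nu$ de $\Sym{k_1+k_2}$ s'écrivant $\nu = u.v$ avec $\std(u)=\sigma$ et $\std(v)=\mu$ coïncide avec le mélange décalé $\sigma^{-1} \cshuffle \mu^{-1}$. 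C'est un fait classique, qui revient à observer que découper $\nu$ en deux facteurs gauche/droite de longueurs $k_1$ et $k_2$ dont les standardisés sont $\sigma$ et $\mu$ équivaut, après passage à l'inverse, à entrelacer (sans changer leur ordre relatif) les positions des valeurs $1,\dots,k_1$ selon $\sigma^{-1}$ et celles des valeurs $k_1+1,\dots,k_1+k_2$ selon $\mu^{-1}$ décalées. On le vérifie en notant que $\nu^{-1}(j)$ donne la position de la valeur $j$ dans $\nu$, donc les valeurs $\le k_1$ de $\nu^{-1}$ sont les positions occupées par $u$ (en ordre $\sigma^{-1}$) et les valeurs $> k_1$ celles occupées par $v$ (en ordre $\mu^{-1}$ décalé) : c'est exactement la définition de $\cshuffle$.

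\textbf{Étape 2 : compatibilité avec le coproduit, et conclusion.} De manière symétrique (duale), il faut montrer $\Delta(\Phi(\BG_\sigma)) = (\Phi \otimes \Phi)(\Delta(\BG_\sigma))$, soit
\begin{equation}
\Delta(\BF_{\sigma^{-1}}) = \sum_{\sigma^{-1} = u.v} \BF_{\std(u)} \otimes \BF_{\std(v)} \overset{?}{=} \sum_{\sigma \in \mu \cshuffle \nu} \BF_{\mu^{-1}} \otimes \BF_{\nu^{-1}},
\end{equation}
ce qui est précisément la même identité combinatoire que l'étape 1, lue en sens inverse (découpage en facteurs d'un côté, mélange décalé de l'autre) — on peut donc la déduire formellement de l'étape 1 par le jeu des formules de dualité \eqref{eq:tamari_prelim:dual_coprod} et \eqref{eq:tamari_prelim:dual_prod}, ou bien la redémontrer directement avec le même argument sur positions et valeurs. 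Une fois ces deux compatibilités acquises, $\Phi$ est un morphisme de bigèbres graduées ; sa bijectivité est immédiate (involutivité de $\sigma \mapsto \sigma^{-1}$) ; et comme $\FQSym$ est connexe et graduée, $\Phi$ commute automatiquement avec l'antipode, donc c'est un isomorphisme d'algèbres de Hopf, et $\FQSym \cong \FQSym^*$. Le principal obstacle, purement technique, est la vérification soigneuse de la bijection de l'étape 1 : il faut manipuler proprement le lien entre découpage en facteurs d'une permutation et mélange décalé via le passage à l'inverse, en prenant garde aux conventions (action à droite, décalage des valeurs), mais aucun argument profond n'est requis — tout repose sur les formules explicites déjà démontrées dans l'excerpt.
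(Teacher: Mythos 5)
Votre démonstration est correcte et suit essentiellement la même voie que le texte, qui se contente d'affirmer « on prouve sur le développement en mot que le produit et le coproduit des éléments $\BF_{\sigma^{-1}}$ vérifient les relations \eqref{eq:tamari_prelim:dual_coprod} et \eqref{eq:tamari_prelim:dual_prod} » : vous explicitez précisément cette vérification, et l'identité combinatoire clé (découpage en facteurs d'un côté, mélange décalé de l'autre, échangés par passage à l'inverse via la dualité positions/valeurs) est la bonne. Rien à redire sur le fond.
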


Par la suite, on identifiera $\FQSym$ et son dual. On a alors que $\BG_\sigma = \BF_{\sigma^{-1}} = \sum_{\std(u) = \sigma} u$ est une autre base de $\FQSym$, duale de la base $\BF$. Le produit de $\BF_\sigma$ et $\BF_\mu$ est le produit de mélange décalé. C'est une somme sur l'intervalle de l'ordre faible droit entre $\sigma . \Dec{\mu}$ et $\Dec{\mu}.\sigma$ où $\Dec{\mu}$ est la permutation $\mu$ décalée de $|\sigma|$. Le produit $\BG_\mu \BG_\sigma$ est une somme sur l'intervalle de l'ordre faible gauche entre $\sigma . \Dec{\mu}$ et $\Dec{\sigma} . \mu$. Par exemple,

\begin{align}
\BF_{231} \BF_{12} &= \sum_{\nu \in [23145,45231]_R} \BF_\nu \\
\BG_{312} \BG_{12} &= \sum_{\nu \in [31245,53412]_L} \BG_\nu.
\end{align} 

On définit deux autres bases de $\FQSym$, les fonctions \emph{élémentaires} $\BE$ et \emph{homogène} $\BH$ par
\begin{align}
\BE^\sigma &:= \sum_{\sigma' \geq_R \sigma} \BF_{\sigma'} \\
\BH^\sigma &:= \sum_{\sigma' \leq_R \sigma} \BF_{\sigma'}.
\end{align}

Ce sont des sommes sur des intervalles respectivement initiaux et finaux de l'ordre droit. On a
\begin{align}
\BE^{\sigma} \BE^{\mu} &= \BE^{\sigma . \Dec{\mu}} \\
\BH^{\sigma} \BH^{\mu} &= \BH^{\Dec{\mu}.\sigma}
\end{align}
où $\Dec{\mu}$ est la permutation $\mu$ décalée de $|\sigma|$. Ces bases sont dites \emph{multiplicatives} car le résultat du produit est donné par un unique élément.

\section{L'algèbre de Hopf sur les arbres binaires $\PBT$}
\label{sec:tamari_prelim:pbt}
\index{PBT}

La structure de l'algèbre $\FQSym$ est fortement liée à l'ordre faible. On a vu que l'ordre de Tamari était un sous-treillis et un treillis quotient de l'ordre faible. Cette relation passe en fait par le lien algébrique entre l'algèbre $\FQSym$ sur les permutations et l'algèbre $\PBT$ sur les arbres binaires que nous allons définir maintenant. 

\subsection{Congruence sylvestre}
\label{sub-sec:tamari_prelim:pbt:sylv}
\index{congruence sylvestre}

Dans le paragraphe \ref{sub-sec:tamari_prelim:tamari:ordre_faible}, nous avons décrit l'algorithme qui à chaque permutation $\sigma$ associe son arbre binaire de recherche $\ABR(\sigma)$. L'algorithme est basé sur l'opération bien connue d'insertion dans un arbre binaire de recherche et s'applique aussi aux mots sur un alphabet ordonné qui ne sont pas des permutations. \`A chaque mot $u$, on associe $\ABR(u)$ un arbre binaire de recherche étiqueté avec les lettres de $u$. Deux mots $u$ et $v$ ont le même arbre binaire de recherche s'ils sont liés par la \emph{congruence sylvestre} \cite{PBT2}.

\begin{Definition}
\label{def:tamari_prelim:congru_sylvestre}
Deux mots $w_1$ et $w_2$ sont dits \emph{adjacents par la relation sylvestre} si on a
\begin{align}
w_1 = u~ac~v~b~w,~~~ w_2 = u~ca~v~b~w
\end{align}
où $u,v,w$ sont des mots et $a,b,c$ des lettres telles que $a \leq b < c$. 

La \emph{congruence sylvestre} est la clôture transitive de la relation d'adjacence sylvestre. On a $u \equiv v$ s'il existe une chaîne de mots 
\begin{equation}
u = w_1, w_2, \dots, w_k = v
\end{equation}
telle que $w_i$ soit adjacent à $w_{i+1}$ pour $1 \leq i < k$.
\end{Definition}

Cette relation est clairement une relation de congruence. De plus, elle est compatible avec la concaténation : si $u \equiv u'$ et $v \equiv v'$ alors $u.v \equiv u'.v'$. Par ailleurs si $I = \lbrace a_i, a_{i+1}, \dots, a_{i+k} \rbrace$ est un intervalle de l'alphabet $A$ et si $u \equiv v$ alors $u_I \equiv v_I$ où $u_I$ (resp. $v_I$) est le sous-mot de $u$ (resp. $v$) restreint aux lettres de $I$. On dit que la congruence sylvestre est compatible à la restriction aux intervalles. Enfin, on a $u \equiv v$ si et seulement si $\std(u) \equiv \std(v)$.

On a que $\ABR(u)$ a la même forme, c'est-à-dire le même arbre binaire non étiqueté sous-jacent, que $\ABR(\std(u))$. Par ailleurs, la construction de l'arbre binaire décroissant $\ABD$ d'une permutation s'étend elle aussi aux mots en posant $\ABD(u) = \ABD(\std(u))$. On pose alors
\begin{equation}
\BP_T = \sum_{\shape(\ABD(u))=T} u,
\end{equation} 
où $T$ est un arbre binaire et $\shape(\ABD(u))$ la forme de l'arbre étiqueté $\ABD(u)$. Pour un mot $u$ tel que $\std(u) = \sigma$, on a d'après \cite[Lemme 11]{PBT2} que $\ABR(u)$ a la même forme que $\ABR(\sigma)$ et $\ABD(\sigma^{-1})$ ce qui permet d'exprimer $\BP_T$ comme une somme sur $\FQSym$,
\begin{equation}
\BP_T = \sum_{\shape(\ABR(\sigma)) = T} \BF_\sigma.
\end{equation}

On définit ainsi une sous-algèbre de Hopf de $\FQSym$. En effet, la stabilité du produit et celle du coproduit sont données par la compatibilité de la congruence sylvestre avec la déstandardisation et la restriction aux intervalles. Par ailleurs, cette algèbre est isomorphe à celle définie sur les arbres binaires par Loday et Ronco \cite{PBT1}. On la note $\PBT$. On a par exemple,

\begin{equation}
\label{eq:tamari_prelim:ex_pbt}
\BP_{
\scalebox{.5}{\input{includes/figures/trees/T4-7}}
}  = \BF_{2143} + \BF_{2413} + \BF_{4213}.
\end{equation}

\begin{Definition}
Soit $T$ un arbre binaire. Le \emph{mot canonique} de $T$ est la permutation $\omega_T$ qui correspond à la lecture suivante de l'arbre binaire de recherche de $T$ : fils droit, fils gauche, racine.
\end{Definition}

\index{classe sylvestre}
Dans l'exemple \eqref{eq:tamari_prelim:ex_pbt}, le mot canonique est $4213$. Soit $\sigma$ une permutation, on appelle les permutations ayant le même arbre binaire que $\sigma$ la \emph{classe sylvestre} de $\sigma$. Comme on l'a vu paragraphe \ref{sub-sec:tamari_prelim:tamari:ordre_faible}, les classes sylvestres sont des extensions linéaires d'arbres et forment donc des intervalles de l'ordre faible. La définition de ces classes en terme de la congruence sylvestre donnée par la définition \ref{def:tamari_prelim:congru_sylvestre} permet de prouver le théorème \ref{thm:tamari_prelim:quotient}. En effet, on prouve que les mots canoniques des classes sylvestres sont les permutations évitant 132 et que l'ordre restreint à ces permutations est isomorphe à l'ordre de Tamari. Enfin on prouve que si $\sigma \leq \sigma'$  avec $\ABR(\sigma) = T$ et $\ABR(\sigma') = T'$, alors $\omega_{T} \leq \omega_{T'}$. On trouvera la preuve complète dans \cite{PBT2}.

\subsection{Produit et coproduit}
\label{sub-sec:tamari_prelim:pbt:produit-coproduit}

Le développement en termes de $\FQSym$ des éléments $\BP_T$ permet d'obtenir des formules simples pour le produit et le coproduit.

\begin{Proposition}
\label{prop:tamari_prelim:pbt-product}
Soient $T_1$ et $T_2$ deux arbres binaires, le \emph{shuffle} de $T_1$ et $T_2$, noté $T_1 \shuffle T_2$, est l'ensemble  des arbres $T$ tel que $\omega_T$ apparaisse dans le produit de mélange décalé $\omega_{T_1} \cshuffle \omega_{T_2}$. On a alors
\begin{equation}
\label{eq:tamari_prelim:pbt-product}
\BP_{T_1} \BP_{T_2} = \sum_{T \in T_1 \shuffle T_2} \BP_T.
\end{equation}
\end{Proposition}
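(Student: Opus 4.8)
The plan is to deduce the formula directly from the fact, recalled just above, that $\BP_T = \sum_{\shape(\ABR(\sigma)) = T} \BF_\sigma$ together with the product rule \eqref{eq:tamari_prelim:fqsym_prod_f} in $\FQSym$, namely $\BF_\sigma \BF_\mu = \sum_{\nu \in \sigma \cshuffle \mu} \BF_\nu$. The key point to establish is that the sylvester congruence is compatible with the two operations that enter the shifted shuffle: shifting of letters (equivalently, standardization) and concatenation. Both of these are stated in the excerpt: $u \equiv v$ iff $\std(u)\equiv \std(v)$, and $u\equiv u'$, $v\equiv v'$ imply $u.v \equiv u'.v'$. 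Since the shifted shuffle $\sigma \cshuffle \mu$ is by definition $\std$ applied to shuffles of $\sigma$ with the shifted word $\Dec{\mu}$, and the shuffle of two words is a sum of subwords obtained by interleaving, compatibility with concatenation and restriction propagates the sylvester class along the whole shuffle.

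First I would expand the left-hand side:
\begin{equation}
\BP_{T_1}\BP_{T_2} = \Bigl(\sum_{\shape(\ABR(\sigma_1))=T_1}\BF_{\sigma_1}\Bigr)\Bigl(\sum_{\shape(\ABR(\sigma_2))=T_2}\BF_{\sigma_2}\Bigr) = \sum_{\substack{\shape(\ABR(\sigma_1))=T_1\\ \shape(\ABR(\sigma_2))=T_2}}\ \sum_{\nu\in \sigma_1\cshuffle\sigma_2}\BF_\nu .
\end{equation}
The goal is to show that the coefficient of $\BF_\nu$ on the right equals $1$ if $\shape(\ABR(\nu)) \in T_1 \shuffle T_2$ and $0$ otherwise, where $T_1\shuffle T_2$ is the set defined in the statement. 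For this I would fix $\nu\in\Sym{n+m}$ and consider the unique factorization $\nu = w_1 w_2$ into a left factor of length $n = |T_1|$ and a right factor of length $m$; write $\sigma_1 = \std(w_1)$ and $\sigma_2'$ for the standardization of $w_2$ (so $\sigma_2 := \sigma_2'$ is a permutation of $\Sym{m}$ and $w_2$ is $\Dec{\sigma_2}$ read inside $\nu$). Then $\nu$ appears in $\sigma_1\cshuffle\sigma_2$ for exactly one pair $(\sigma_1,\sigma_2)$ — this is the classical fact that the shifted shuffle, read on a given permutation, reconstructs the two factors uniquely. Hence the coefficient of $\BF_\nu$ in $\BP_{T_1}\BP_{T_2}$ is $1$ precisely when $\shape(\ABR(\sigma_1))=T_1$ and $\shape(\ABR(\sigma_2))=T_2$, and $0$ otherwise.

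It then remains to check that this condition is exactly "$\shape(\ABR(\nu))\in T_1\shuffle T_2$", i.e. that $\omega_{\shape(\ABR(\nu))}$ lies in $\omega_{T_1}\cshuffle\omega_{T_2}$ iff the two standardized factors of $\nu$ have shapes $T_1,T_2$. One direction is immediate: the canonical word $\omega_{T_i}$ is itself a permutation whose search tree has shape $T_i$, so if the factors of $\nu$ have shapes $T_1,T_2$ then, applying the preceding paragraph with $\sigma_i = \omega_{T_i}$, the permutation $\nu':=$ (the representative with canonical factors) lies in $\omega_{T_1}\cshuffle\omega_{T_2}$ and is sylvester-congruent to $\nu$, hence has the same search-tree shape, which is $\shape(\ABR(\nu))$; and this shape is independent of the shuffle representative because the sylvester congruence is stable under the shuffle product — this is the compatibility with standardization and concatenation invoked above. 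The converse direction follows by running the same argument backwards. The main obstacle, and the step deserving the most care, is precisely this last equivalence: one must argue cleanly that every element of a given shifted shuffle $\sigma_1\cshuffle\sigma_2$ lies in one sylvester class depending only on the classes of $\sigma_1$ and $\sigma_2$, so that $T_1\shuffle T_2$ is well defined; this is where the compatibility of the sylvester congruence with shifting (standardization) and concatenation, recalled in Section~\ref{sub-sec:tamari_prelim:pbt:sylv}, does all the work, and it is exactly what makes $\PBT$ a subalgebra of $\FQSym$ in the first place.
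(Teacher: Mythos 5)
Your overall strategy (expand both factors on the $\BF$ basis, apply the $\FQSym$ product rule, show each $\BF_\nu$ occurs with multiplicity at most one, then identify the surviving $\nu$ as a union of sylvester classes indexed by $T_1\shuffle T_2$) is the right one, and it is essentially the argument the paper delegates to \cite{PBT2}. But there is a concrete error in your central step. You recover the pair $(\sigma_1,\sigma_2)$ from $\nu$ by cutting $\nu$ \emph{positionally} into a left factor $w_1$ of length $n$ and a right factor $w_2$ and standardizing. That is the decomposition attached to the coproduct of the $\BF$ basis (equivalently, to the product of the $\BG$ basis), not to the product of the $\BF$ basis. For the shifted shuffle $\sigma_1\cshuffle\sigma_2=\sigma_1\shuffle\Dec{\sigma_2}$, the element $\nu$ determines $(\sigma_1,\sigma_2)$ by \emph{values}: $\sigma_1$ is the subword of $\nu$ formed by the letters $\leq n$, and $\sigma_2$ is the standardized subword of the letters $>n$. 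Concretely, $1423\in 12\cshuffle 21$, yet your recipe gives $\std(14)=12$ and $\std(23)=12$, and $1423\notin 12\cshuffle 12$. This is not a cosmetic slip: the compatibility of the sylvester congruence needed to see that the set of surviving $\nu$ is a union of sylvester classes is the compatibility with \emph{restriction to intervals of the alphabet} (recalled in le paragraphe~\ref{sub-sec:tamari_prelim:pbt:sylv}), not the compatibility with concatenation, which is the tool for the coproduct side and which you invoke instead.

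Second, your closing step --- producing, for each surviving class $T$, a representative lying in $\omega_{T_1}\cshuffle\omega_{T_2}$ --- is asserted rather than proved; as written it comes close to assuming the stability you are trying to establish. The clean route, and the one the paper's remark points to (\og une permutation canonique ne peut être issue que du produit de mélange de deux permutations canoniques \fg), is to test the canonical element itself: $\omega_T$ avoids the pattern $132$, every subword of a $132$-avoiding word avoids $132$, hence the restriction of $\omega_T$ to the values $\lbrace 1,\dots,n\rbrace$ is the unique $132$-avoiding element of its sylvester class, namely $\omega_{T_1}$, and likewise the standardized restriction to the large values is $\omega_{T_2}$; therefore $\omega_T\in\omega_{T_1}\cshuffle\omega_{T_2}$ precisely when the class of $T$ occurs in the product. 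With the value decomposition in place of the positional one, and this canonical-element argument in place of your last paragraph, the proof goes through.
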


On trouve la preuve de cette propriété dans \cite{PBT2}. Elle vient du fait qu'une permutation canonique ne peut être issue que du produit de mélange de deux permutations canoniques. On peut alors indexer les éléments $\BP$ par des permutations canoniques plutôt que par des arbres. On a par exemple
\begin{equation}
\BP_{4213} \BP_{312} = \BP_{7421356} + \BP_{7452136} + \BP_{7456213} + \BP_{7542136} + \BP_{7546213} + \BP_{7564213}.
\end{equation}

Le développement de ce produit sur les $\BF_\sigma$ s'exprime comme une somme sur un intervalle de l'ordre faible droit. Dans l'exemple précédent c'est l'intervalle entre la permutation $2143576$, élément minimal de $\BP_{7421356}$ et $7564213$. C'est aussi un intervalle pour l'ordre de Tamari comme illustré figure \ref{fig:tamari_prelim:prod_pbt}.

\begin{figure}[ht]
\centering
\input{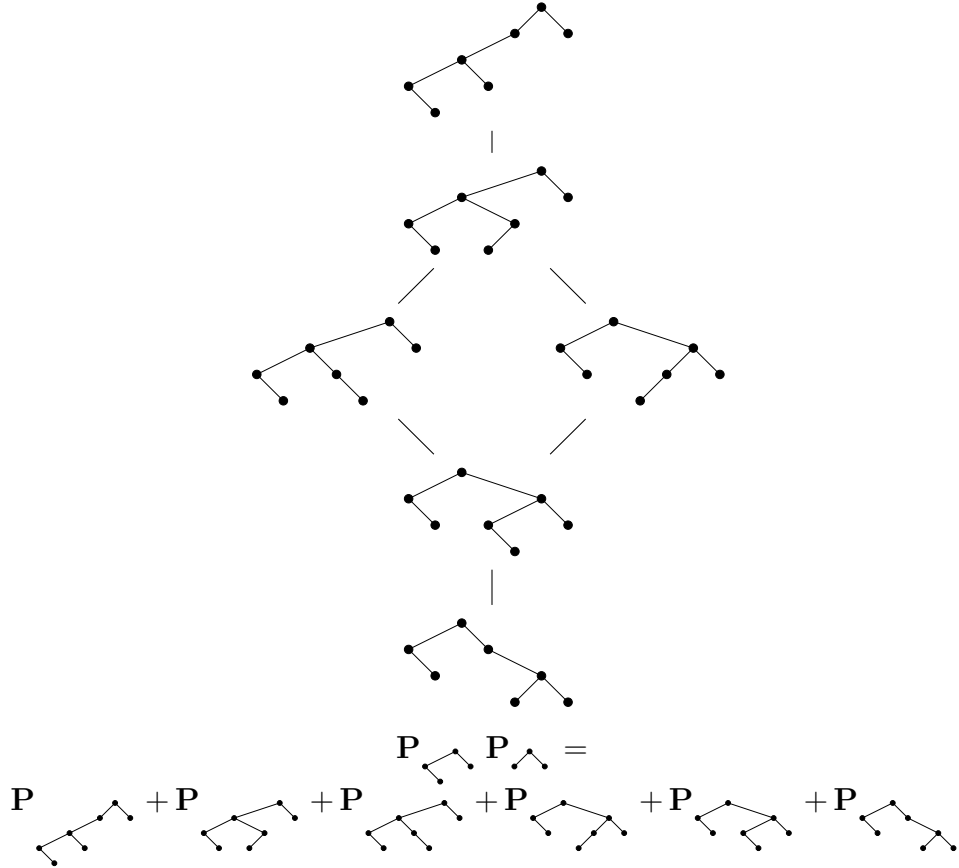}
\caption[Produit dans $\PBT$]{Le produit entre $T_1$ et $T_2$ donne l'intervalle entre l'arbre $T$ où $T_2$ est placé sous le dernier fils gauche de $T_1$ et $T'$ où $T_1$ est placé sous le dernier fils droit de $T_2$.}
\label{fig:tamari_prelim:prod_pbt}
\end{figure}

\begin{Proposition}
Soit $T$ un arbre binaire, le coproduit de $\BP_T$ est donné par
\begin{equation}
\Delta(\BP_T) = \sum \BP_{T'} \otimes \BP_{T''}
\end{equation}
sommé sur les couples d'arbres binaires $(T',T'')$ tels que $\omega_{T'} = \std(w_1)$, $\omega_{T''} = \std(w_2)$ avec $w_1.w_2$ appartenant à la classe sylvestre de $T$.
\end{Proposition}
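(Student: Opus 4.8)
La stratégie consiste à déduire la formule directement du développement $\BP_T = \sum_{\shape(\ABR(\sigma)) = T} \BF_\sigma$ prouvé au paragraphe \ref{sub-sec:tamari_prelim:pbt:sylv} et de la formule du coproduit de $\FQSym$ donnée en \eqref{eq:tamari_prelim:fqsym_coprod_f}, à savoir $\Delta(\BF_\sigma) = \sum_{\sigma = u.v} \BF_{\std(u)} \otimes \BF_{\std(v)}$. Le point-clé à établir est que $\PBT$ est stable par le coproduit, ce qui découle de la compatibilité de la congruence sylvestre avec la déstandardisation (déjà énoncée au paragraphe \ref{sub-sec:tamari_prelim:pbt:sylv}) : si on regroupe les $\BF_\sigma$ apparaissant dans $\Delta(\BP_T)$, les permutations $\sigma$ dans une même classe sylvestre produisent, pour chaque découpage $\sigma = u.v$ en un préfixe de taille fixée $k$, des couples $(\std(u), \std(v))$ qui restent groupés en classes sylvestres complètes. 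On peut donc réécrire $\Delta(\BP_T) = \sum \BP_{T'} \otimes \BP_{T''}$ où la somme porte sur certains couples d'arbres.

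Ensuite, je voudrais identifier précisément ces couples. D'abord, je remarque que les permutations $\sigma$ avec $\shape(\ABR(\sigma)) = T$ sont exactement les mots canoniques des classes sylvestres de forme $T$, ou de façon équivalente, les classes sylvestres de $T$ sont les classes d'équivalence modulo la congruence sylvestre. Un découpage $\sigma = u.v$ avec $|u| = k$ correspond à une factorisation d'un représentant quelconque $w = w_1.w_2$ de la classe sylvestre de $T$ avec $|w_1| = k$ : la compatibilité de la congruence sylvestre avec la concaténation garantit que si $w \equiv \sigma$ alors $\std(w_1) \equiv \std(u)$ et $\std(w_2) \equiv \std(v)$, donc les formes $\ABR(\std(w_1))$ et $\ABR(\std(w_2))$ ne dépendent que de la classe de $w$ et de $k$ (à condition que la factorisation soit licite, c'est-à-dire qu'on puisse effectivement obtenir cette factorisation en coupant un représentant). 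Réciproquement, tout couple $(w_1, w_2)$ avec $w_1.w_2$ dans la classe sylvestre de $T$ donne un tel terme. On obtient donc la formule annoncée avec $\omega_{T'} = \std(w_1)$ et $\omega_{T''} = \std(w_2)$, la somme étant sur les couples d'arbres $(T', T'')$ tels qu'il existe $w_1.w_2$ dans la classe sylvestre de $T$ avec $\ABR(\std(w_1))$ de forme $T'$ et $\ABR(\std(w_2))$ de forme $T''$.

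L'obstacle principal sera la question des multiplicités : il faut vérifier que chaque couple $(T', T'')$ apparaît avec coefficient exactement $1$ (et non pas avec une multiplicité liée au nombre de factorisations valides ou au nombre de permutations de la classe réalisant un découpage donné). Pour cela, je compterais soigneusement : fixons $T'$, $T''$ et la taille $k = |T'|$ du premier bloc ; les $\BF_{\sigma}$ de $\BP_T$ tels que $\sigma = u.v$ avec $\shape(\ABR(\std(u))) = T'$, $\shape(\ABR(\std(v))) = T''$ et $|u| = k$ sont en bijection avec les couples de mots $(u,v)$ tels que $u$ est une extension linéaire de $T'$ (sur un certain alphabet) et $v$ de $T''$, et tels que $u.v$ a pour standardisé une permutation de forme $T$. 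L'argument décisif est que, la classe sylvestre de $T$ étant un intervalle de l'ordre faible et la standardisation de $u.v$ étant déterminée par les inversions, il y a une correspondance un-à-un entre les couples $(u,v)$ possibles à alphabet fixé et les éléments de la classe sylvestre de $T$ admettant un découpage en position $k$ de formes $(T', T'')$ — ce qui ramène le coefficient à $1$ dès qu'on somme correctement sur $\FQSym$. Je m'appuierais ici sur le \cite[Lemme 11]{PBT2} utilisé de la même manière que pour la preuve du produit (Proposition~\ref{prop:tamari_prelim:pbt-product}), en transposant l'argument "une permutation canonique ne provient que d'un produit de mélange de deux canoniques" en sa version duale pour le coproduit.
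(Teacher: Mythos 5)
Your overall strategy (expand $\BP_T$ in the basis $\BF$, apply the deconcatenation coproduct of $\FQSym$, and regroup the resulting terms) is the right one, and note that the paper itself gives no proof here: the proposition is recalled from \cite{PBT2}, with only the example $\Delta(\BP_{4213})$. However, your second paragraph rests on a claim that is false. Compatibility of the sylvester congruence with concatenation says that $u \equiv u'$ and $v \equiv v'$ imply $u.v \equiv u'.v'$; it does \emph{not} give the converse you invoke, namely that $w \equiv \sigma$ forces $\std(w_1) \equiv \std(u)$ and $\std(w_2) \equiv \std(v)$ when both words are cut at the same position $k$. The paper's own example refutes this: $2413 \equiv 4213$, yet cutting at $k=1$ yields the suffixes $\std(413)=312$ and $\std(213)=213$, which lie in different sylvester classes (the trees $\ABR(312)$ and $\ABR(213)$ have different shapes); likewise at $k=2$ the prefixes standardize to $12$ and $21$. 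So the shapes of the two pieces genuinely depend on the chosen representative, not only on the class of $w$ and on $k$, and the mechanism you propose for regrouping the terms $\BF_{\std(u)} \otimes \BF_{\std(v)}$ into complete products $\BP_{T'} \otimes \BP_{T''}$ does not work.

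The regrouping in fact happens \emph{across} representatives, not within each one. The workable route is: for a fixed cut position $k$, partition the sylvester class of $T$ (the linear extensions of the tree-poset $T$) according to the underlying set $I=\{\sigma_1,\dots,\sigma_k\}$ of the prefix. This $I$ must be an order ideal of the poset $T$, i.e. a union of complete subtrees, and for fixed $I$ the map $\sigma \mapsto (\sigma_1\cdots\sigma_k,\ \sigma_{k+1}\cdots\sigma_n)$ is a bijection onto the product (linear extensions of $T|_I$) $\times$ (linear extensions of $T|_{I^c}$), so the two factors vary independently. One then checks that the standardized linear extensions of each restricted poset form a disjoint union of complete sylvester classes, and that distinct ideals (and distinct classes inside a given ideal) produce distinct pairs $(T',T'')$; this yields both the indexing by factorizations $w_1.w_2$ of elements of the class and the coefficient $1$. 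Your third paragraph gestures toward this kind of counting, but it cannot be carried out until the incorrect regrouping step is replaced by the ideal decomposition above, which is essentially the argument of \cite{PBT2}.
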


Par exemple, la classe sylvestre de $\sigma = 4213$ est donnée par $\lbrace 4213, 2413, 2143 \rbrace$ et on a
\begin{align}
\Delta(\BP_{4213}) &= \BP_{4213} \otimes 1 + (\BP_{321} + \BP_{231} + \BP_{213}) \otimes \BP_{1} + (\BP_{21} + \BP_{12}) \otimes \BP_{12} + \BP_{21} \otimes \BP_{21} \\
\nonumber &+ \BP_{1} \otimes (\BP_{213} + \BP_{312}) + 1 \otimes \BP_{4213} 
\end{align}

\subsection{Bases multiplicatives}
\label{sub-sec:tamari_prelim:pbt:mult}

On définit deux autres bases sur $\PBT$, analogues des bases élémentaires et complètes de $\FQSym$ :

\begin{align}
\BH_T &= \sum_{T' \leq T} \BP_{T'}, \\
\BE_T &= \sum_{T' \geq T} \BP_{T'}
\end{align}
où l'ordre utilisé est l'ordre de Tamari sur les arbres binaires. On trouve dans \cite[Théorèmes 29 et 30]{PBT2} la preuve que ces bases sont multiplicatives. On a
\begin{equation}
\BH_{T_1} \BH_{T_2} = \BH_T
\end{equation} 
où $T$ est l'arbre obtenu en greffant $T_2$ à droite du fils le plus à droite de $T_1$. En particulier, $T$ est l'élément maximal de l'intervalle donné par $P_{T_1} P_{T_2}$. Et
\begin{equation}
\BE_{T_1} \BE_{T_2} = \BE_{T}
\end{equation}
où $T$ est l'arbre obtenu en greffant $T_1$ à gauche du fils le plus à gauche de $T_2$, c'est l'élément minimal de l'intervalle donné par $P_{T_1} P_{T_2}$. Par exemple,
\begin{align}
\BH_{\scalebox{0.5}{\input{includes/figures/trees/T4-7}}}
\BH_{\scalebox{0.5}{

%

\begin{tikzpicture}
\node (N0) at (0.750, 0.000){};
\node (N00) at (0.250, -0.500){};
\draw[Point] (N00) circle;
\node (N01) at (1.250, -0.500){};
\draw[Point] (N01) circle;
\draw (N0.center) -- (N00.center);
\draw (N0.center) -- (N01.center);
\draw[Point] (N0) circle;
\end{tikzpicture}}}
&= \BH_{\scalebox{0.5}{\input{includes/figures/trees/T7-ex6}}} \\
\BE_{\scalebox{0.5}{\input{includes/figures/trees/T4-7}}}
\BE_{\scalebox{0.5}{}}
&= \BE_{\scalebox{0.5}{\input{includes/figures/trees/T7-ex1}}}
\end{align}

Enfin, tout comme $\FQSym$, l'algèbre $\PBT$ est auto-duale. L'isomorphisme est explicite et donné dans \cite{PBT2}.


\chapter{Intervalles de Tamari et énumération}
\label{chap:tamari_intervalles}
\index{treillis!de Tamari}
\index{intervalles!de Tamari}

Comme nous l'avons vu dans le chapitre précédent, les éléments $\BP_T$ de $\PBT$ s'expriment comme une somme de permutations dans $\FQSym$. Ces permutations appartiennent à la \emph{classe sylvestre} de $T$. Ce sont les extensions linéaires de l'arbre binaire et elles forment un intervalle de l'ordre faible. Les éléments $\BH_T$ et $\BE_T$ sont aussi des sommes sur des intervalles de l'ordre faible qui englobent cette fois plusieurs classes sylvestres. Plus précisément, ce sont les extensions linéaires des arbres $T' \leq T$ (resp. $T' \geq T$) pour l'ordre de Tamari. En fait, on peut exprimer ces intervalles initiaux et finaux comme les extensions linéaires des deux arbres planaires obtenus par la bijection décrite au paragraphe \ref{sub-sec:tamari_prelim:tamari:planaires}. Plus généralement, un intervalle de Tamari $[T_1, T_2]$ est encodé par un poset particulier dont les extensions linéaires correspondent aux classes sylvestres des arbres inclus dans $[T_1, T_2]$. Nous appelons ces posets les \emph{intervalles-posets} de Tamari et utilisons leurs propriétés combinatoires pour obtenir de nouveaux résultats sur le treillis de Tamari.

Dans \cite{Chap}, Chapoton démontre que le nombre d'intervalles dans le treillis de Tamari est donné par 

\begin{equation}
\label{eq:tamari_intervalles:formule_chap}
I_n = \frac{2(4n+1)!}{(n+1)!(3n+2)!}.
\end{equation}

Cette formule est obtenue par la résolution d'une équation fonctionnelle sur la série génératrice des intervalles de Tamari. Pour prouver que la série génératrice vérifie bien l'équation fonctionnelle, Chapoton utilise des arguments combinatoires. Nous proposons dans ce chapitre une nouvelle preuve de ce résultat utilisant les intervalles-posets. L'équation fonctionnelle donnée par Chapoton peut s'exprimer en fonction d'un opérateur bilinéaire qui s'interprète simplement en termes d'intervalles-posets. On note $\Phi(x,y)$ la série génératrice des intervalles de Tamari où $y$ compte la taille des arbres et $x$ le nombre de \noeuds sur la branche gauche du plus petit arbre de l'intervalle. On prouve que
\begin{equation}
\Phi(x,y) = \OB(\Phi, \Phi) + 1
\end{equation}
où
\begin{equation}
\label{eq:tamari_intervalles:def-B}
\OB(f,g) = xy f(x,y) \frac{x g(x,y) - g(1, y)}{x - 1}.
\end{equation}

Cela nous amène à définir le \emph{polynôme de Tamari} d'un arbre donné.

\begin{Definition}
\label{def:tamari_intervalles:tamari-polynomials}
\index{polynômes!de Tamari}
Soit $T$ un arbre binaire, son polynôme de Tamari $\OBT_T(x)$ est défini récursivement par
\begin{align}
\OBT_\emptyset &:= 1 \\
\OBT_T(x) &:= \OB_{y=1}(\OBT_L,\OBT_R)
\end{align}
où $L$ et $R$ sont respectivement les sous-arbres gauche et droit de $T$.
\end{Definition}

On prouve alors un résultat plus fin que la simple énumération des intervalles.

\begin{Theoreme}
\label{thm:tamari_intervalles:smaller-trees}
Soit $T$ un arbre binaire. Son polynôme de Tamari $\OBT_T(x)$ compte le nombre d'arbres inférieurs ou égaux à $T$ pour l'ordre de Tamari en fonction du nombre de \noeuds sur leur branche gauche. En particulier $\OBT_T(1)$ est le nombre d'arbres inférieurs ou égaux à $T$.

De façon symétrique, si $\tilde{\OBT}_T$ est défini en inversant les rôle des sous-arbres droit et gauche dans $\OBT_T$, alors $\tilde{\OBT}_T$ compte le nombre d'arbres supérieurs ou égaux à $T$ en fonction du nombre de \noeuds sur leur branche droite.  
\end{Theoreme}

\begin{figure}[ht]
\centering
\input{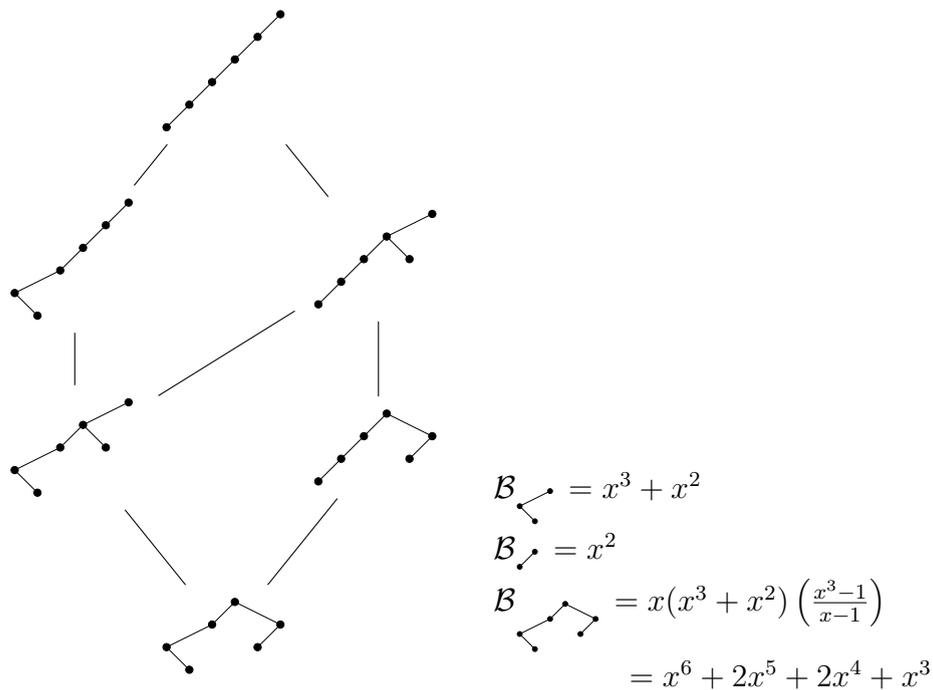}
\caption{Calcul du polynôme de Tamari d'un arbre et intervalle correspondant.}
\label{fig:tamari_intervalles:BTExemple-2}
\end{figure}

Un exemple de calcul du polynôme de Tamari et du résultat du théorème est donné figure \ref{fig:tamari_intervalles:BTExemple-2}. La preuve du théorème \ref{thm:tamari_intervalles:smaller-trees} est donné paragraphe \ref{sub-sec:tamari_intervalles:polynomes:smaller}. Nous commençons au paragraphe \ref{sec:tamari_intervalles:intervalles-posets} par définir les intervalles-posets et nous en donnons les principales propriétés. Dans le paragraphe \ref{sub-sec:tamari_intervalles:polynomes:composition}, nous décrivons une opération de composition sur les intervalles-posets. Nous utilisons cette opération, tout d'abord pour donner une nouvelle preuve du résultat de Chapoton sur la fonction génératrice des intervalles (paragraphe \ref{sub-sec:tamari_intervalles:polynomes:enumeration}) puis pour prouver le théorème~\ref{thm:tamari_intervalles:smaller-trees} (paragraphe \ref{sub-sec:tamari_intervalles:polynomes:smaller}). Au paragraphe \ref{sub-sec:tamari_intervalles:polynomials:bivar}, nous faisons le lien avec un autre résultat de Chapoton sur les flots d'arbres enracinés \cite{ChapBiVar}. Les résultats de ce chapitre découlent d'un travail fait en commun avec Grégory Chatel et sont publiés dans \cite{Me_Tamari}.

\section{Intervalles-posets de Tamari}
\label{sec:tamari_intervalles:intervalles-posets}
\index{intervalles-posets}

\subsection{For\^ets initiales et finales}
\label{sub-sec:tamari_intervalles:intervalles-posets:forets}
\index{forêt!initiale}
\index{forêt!finale}

La bijection entre les arbres binaires et les arbres planaires décrites au paragraphe \ref{sub-sec:tamari_prelim:tamari:planaires} peut aussi s'exprimer en terme de posets.

\begin{Definition}
Soit $T$ un arbre binaire. On identifie $T$ à son arbre binaire de recherche que l'on considère comme un poset. On note $a \trprec_T b$ si $a$ précède $b$ dans le poset c'est-à-dire si $a$ est dans le sous-arbre issu de $b$. Si $a \trprec_T b$ et $a < b$ alors $a$ est dans le sous-arbre gauche de $b$ et on dit que $a \trprec_T b$ est une relation \emph{croissante} de $b$. Si $b \trprec_T a$ alors $b$ est dans le sous-arbre droit de $a$ et on dit que $b \trprec_T a$ est une relation \emph{décroissante} de $T$. 

La \emph{forêt initiale} de $T$, notée $\inc(T)$, est le poset obtenu par la relation $\trprec_{\inc}$ définie telle que
\begin{equation}
a \trprec_{\inc} b \Leftrightarrow a < b \text{ et } a \trprec_T b.
\end{equation}
En d'autre termes, $a \trprec_{\inc} b$ si $a \trprec_T b$ est une relation croissante de $T$. Le poset $T$ est donc une extension de $\inc(T)$.

La \emph{forêt finale} de $T$, notée $\dec(T)$, est le poset obtenu par la relation $\trprec_{\dec}$ définie telle que
\begin{equation}
b \trprec_{\dec} a \Leftrightarrow a < b \text{ et } b \trprec_T a.
\end{equation}
On a donc que $b \trprec_{\dec} a$ si $b \trprec_T a$ est une relation décroissante de $T$.
\end{Definition}

Un exemple de la construction est donné figure \ref{fig:tamari_intervalles:forets}. 

\begin{figure}[ht]
\centering
\input{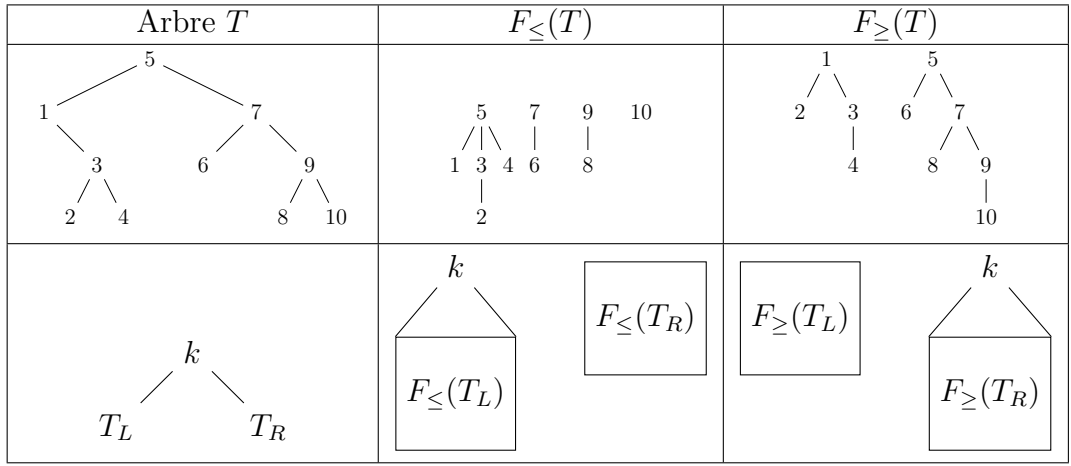}
\caption{Un arbre binaire et les forêts initiales et finales correspondantes.}
\label{fig:tamari_intervalles:forets}
\end{figure}

Les deux opérations sont en fait chacune des bijections : on peut retrouver l'arbre binaire à partir de sa forêt initiale ou de sa forêt finale. Ainsi, la bijection entre la forêt finale et l'arbre binaire est celle donnée entre les arbres planaires et les arbres binaires au paragraphe \ref{sub-sec:tamari_prelim:tamari:planaires}. \`A partir d'une forêt étiquetée, on obtient en effet un arbre planaire en supprimant les étiquettes et en rajoutant une racine commune aux arbres. La construction récursive de l'arbre binaire en fonction de sa forêt initiale ou finale est illustrée figure \ref{fig:tamari_intervalles:forets}. On donne à présent la condition nécessaire et suffisante sur l'étiquetage d'une forêt pour qu'il corresponde à l'étiquetage de l'arbre binaire de recherche correspondant.

\begin{Lemme}
  \label{lem:tamari_intervalles:carac-foret}
  Soit un poset étiqueté $F$. Alors $F$ est la forêt finale d'un arbre binaire $T$ si et seulement si $c \trprec_F a$ implique que $c>a$ et que $b \trprec_F a$ pour tout $b$ tel que $a < b < c$. De même, $F$ est la forêt initiale d'un arbre binaire $T$ si et seulement si $a \trprec_F c$ implique que $a<c$ et que $b \trprec_F c$ pour tout $b$ tel que $a < b < c$.
\end{Lemme}

\begin{proof}
On effectuera la preuve uniquement pour le cas de la forêt finale $\dec$. La preuve pour la forêt initiale est symétrique.

Tout d'abord, prouvons que si $F$ est la forêt finale d'un arbre binaire $T$, alors la condition est vérifiée. Soit $c > a$ tel que $c \trprec_F a$. Par construction, on a $c \trprec_T a$ ce qui signifie que $c$ est dans le sous-arbre droit de $a$ dans $T$. Soit $b$ tel que $a<b<c$. Trois configurations sont possibles : soit $a \trprec_T b$ et $a$ est dans le sous-arbre gauche de $b$, soit $a$ et $b$ ne sont pas comparables, soit $b \trprec_T a$ et $b$ est dans le sous-arbre droit de $a$. 

Supposons que $a$ et $b$ ne soient pas comparables dans $T$. Alors, il existe $b'$ tel que $a < b'< b$ avec $a$ dans le sous-arbre gauche de $b'$ et $b$ dans le sous-arbre droit de $b'$. Comme $c$ est dans le sous-arbre droit de $a$, il est aussi dans le sous-arbre gauche de $b'$. Or $b' < c$ ce qui contredit la règle de l'arbre binaire de recherche. Pour la même raison, $a$ ne peut pas être dans le sous-arbre gauche de $b$. On a donc que $b$ est dans le sous-arbre droit de $a$, c'est-à-dire $b \trprec_T a$. La forêt $F$ est formée par les relations décroissantes de $T$ et on a bien $b \trprec_F a$.

\`A présent, soit $F$ un poset étiqueté vérifiant la condition du lemme. Le poset $F$ se décompose en $r$ composantes connexes $F_1, F_2, \dots, F_r$. Pour chaque $F_i$, il existe un unique élément $x_i$ qu'on appelle la racine de $F_i$ tel que $y \trprec_F x_i$ pour tout $y \in F_i$. En effet, si $x, x'$ et $y$ sont des éléments de $F_i$ avec $y \trprec_F x$ et $y \trprec_F x'$, on a soit $x < x' < y$ et donc $x' \trprec_F x$ ou bien $x' < x < y$ et $x \trprec_F x'$. Comme toutes les relations de $F$ sont décroissantes, l'étiquette de $x_i$ est aussi minimale dans $F_i$ : $y > x_i$ pour tout $y \in F_i$. De plus, si $x_i$ et $x_j$ sont les racines de deux composantes connexes différentes, respectivement $F_i$ et $F_j$, alors $x_i < x_j$ implique que $y < z$ pour tout $y \in F_i$ et $z \in F_j$. En suivant le schéma de la figure \ref{fig:tamari_intervalles:forets}, on pose $k$ la racine de valeur maximale parmi $x_1, \dots, x_r$. En supprimant le sommet $k$ de sa composante connexe, on obtient un nouveau poset $F_L$ formé des fils de $k$ qui vérifie toujours la condition et dont toutes les étiquettes sont supérieures à $k$. Par ailleurs, le poset $F_R$ formé par les autres composantes connexes de $F$ vérifie lui aussi la condition et toutes ses étiquettes sont inférieures à $k$. On peut donc construire récursivement l'arbre binaire $T = k(T_L, T_R)$ où $T_L$ et $T_R$ sont obtenus respectivement par $F_L$ et $F_R$. Par construction, $T$ est un arbre binaire de recherche et $F= \dec(T)$.
\end{proof}

\begin{Proposition}
\label{prop:tamari_intervalles:ext-lin-foret}
Les extensions linéaires de la forêt finale $\dec(T)$ d'un arbre binaire $T$ sont exactement les classes sylvestres des arbres $T'\geq T$ pour l'ordre de Tamari. De même, les extensions linéaires de la forêt initiale $\inc(T)$ sont les classes sylvestres des arbres $T' \leq T$.
\end{Proposition}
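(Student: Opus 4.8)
Le plan est de démontrer l'énoncé pour les forêts finales $\dec(T)$, le cas des forêts initiales étant symétrique (il suffit d'échanger gauche et droite, ou de passer aux arbres renversés). L'idée centrale est de combiner le Lemme~\ref{lem:tamari_intervalles:carac-foret}, qui caractérise les forêts finales parmi les posets étiquetés, avec le Théorème~\ref{thm:tamari_prelim:quotient} décrivant l'ordre de Tamari comme quotient de l'ordre faible via les arbres binaires de recherche. Je procéderais d'abord à une étape structurelle : montrer que les extensions linéaires d'un poset $P$ sur $\{1,\dots,n\}$ forment toujours un intervalle de l'ordre faible droit lorsque $P$ est une forêt finale — plus précisément, que l'ensemble de ces extensions linéaires est clos par la relation d'adjacence sylvestre de la Définition~\ref{def:tamari_prelim:congru_sylvestre}, de sorte qu'il se décompose en une union de classes sylvestres complètes. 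C'est là que la condition du Lemme~\ref{lem:tamari_intervalles:carac-foret} intervient de façon essentielle : si $w = u\,ac\,v\,b\,w'$ est une extension linéaire de $\dec(T)$ avec $a \le b < c$, il faut vérifier que l'échange $ac \to ca$ donne encore une extension linéaire, ce qui revient à montrer que ni $c \trprec_{\dec} a$ ni $a \trprec_{\dec} c$ dans $\dec(T)$ ; la caractérisation du lemme force précisément cette non-comparabilité (sinon $b$ serait aussi relié, contredisant l'ordre des lettres dans $w$).

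Une fois acquis que les extensions linéaires de $\dec(T)$ sont une union de classes sylvestres, il reste à identifier \emph{quels} arbres $T'$ apparaissent. Pour cela j'utiliserais la construction récursive de $\dec(T)$ illustrée dans la Figure~\ref{fig:tamari_intervalles:forets} : $\dec(T)$ s'obtient en gardant les seules relations \emph{décroissantes} de $T$, c'est-à-dire en effaçant les arêtes montantes. Un arbre $T'$ a sa classe sylvestre contenue dans les extensions linéaires de $\dec(T)$ si et seulement si $\dec(T)$ est une extension du poset $T'$ (au sens du paragraphe~\ref{sub-sec:prelim_posets:posets:ext}), ce qui équivaut à : toute relation décroissante de $T$ est une relation de $T'$. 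Je montrerais alors que cette condition est exactement équivalente à $T \le T'$ pour l'ordre de Tamari. L'inclusion « $T \le T'$ implique que les relations décroissantes de $T$ survivent dans $T'$ » s'obtient en examinant l'effet d'une rotation droite (Figure~\ref{fig:tamari_prelim:rot-tree}) sur les relations décroissantes : une rotation ne détruit jamais une relation décroissante (elle peut seulement transformer une relation montante en relation décroissante, ou créer une nouvelle relation décroissante), et on conclut par transitivité de l'ordre de Tamari et de l'inclusion des forêts finales.

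La réciproque — « si toute relation décroissante de $T$ est dans $T'$, alors $T \le T'$ » — est le point que j'anticipe comme le plus délicat. L'approche serait de raisonner par récurrence sur le nombre de relations montantes de $T$ qui ne sont pas des relations de $T'$, en exhibant à chaque fois une rotation droite applicable à un arbre intermédiaire $T''$ avec $T \le T''$, $\dec(T'') \supseteq$-comparable à $\dec(T')$, et qui fait strictement décroître ce nombre ; autrement dit, il faut montrer que $\dec(T') \subsetneq \dec(T)$ entraîne qu'on peut toujours effectuer une rotation sur $T$ vers un arbre encore $\le T'$. Concrètement, on repère un nœud $y$ de $T$ dont le fils gauche $x$ existe et dont la rotation est « dans la direction de $T'$ » ; l'existence d'un tel $y$ découle de ce que $T$ n'est pas déjà $\ge$-minimal pour cette contrainte, et la vérification que l'arbre obtenu reste sous $T'$ utilise à nouveau le Lemme~\ref{lem:tamari_intervalles:carac-foret} appliqué à $T'$ pour contrôler les étiquettes. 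Une alternative, peut-être plus propre, consiste à invoquer directement le fait (conséquence du Théorème~\ref{thm:tamari_prelim:quotient} et de la description de $\PBT$ au paragraphe~\ref{sub-sec:tamari_prelim:pbt:mult}) que $\BH_T = \sum_{T' \le T}\BP_{T'}$ se développe dans $\FQSym$ comme la somme des extensions linéaires de la forêt \emph{initiale} $\inc(T)$, et son dual/miroir pour $\dec(T)$ ; il suffirait alors de réconcilier les deux descriptions du même ensemble de permutations. Je privilégierais la preuve récursive directe pour rester autonome, en gardant l'argument via $\BH_T$ comme vérification.
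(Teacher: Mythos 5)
Votre démarche est correcte dans ses deux premières étapes et suit une route réellement différente de celle du mémoire : vous montrez d'abord que l'ensemble des extensions linéaires de $\dec(T)$ est clos par adjacence sylvestre (donc une union de classes sylvestres complètes, le lemme \ref{lem:tamari_intervalles:carac-foret} servant bien à exclure $c \trprec_{\dec} a$ grâce au témoin $b$), puis vous ramenez l'identification des arbres $T'$ concernés à l'équivalence $\dec(T) \subseteq \dec(T') \Leftrightarrow T \leq T'$. Le sens direct (une rotation droite ne détruit aucune relation décroissante, elle ne fait qu'en créer) est correct et se vérifie sur le schéma local de la figure \ref{fig:tamari_prelim:rot-tree}.

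Le point faible est précisément celui que vous identifiez vous-même : la réciproque n'est pas démontrée. Votre esquisse de récurrence (« on repère un nœud $y$ dont la rotation est dans la direction de $T'$ ») ne précise ni comment choisir ce nœud, ni pourquoi l'arbre obtenu reste $\leq T'$ ; c'est pourtant là toute la difficulté. Quant à l'alternative via $\BH_T = \sum_{\sigma \in \inc(T)} \BF_\sigma$, elle est circulaire dans ce mémoire : cette identification est établie au paragraphe \ref{sub-sec:tamari_intervalles:intervalles-posets:PBT} comme conséquence de la proposition que vous cherchez à prouver. La preuve du mémoire évite entièrement cette chirurgie de rotations en passant par l'ordre faible : les extensions linéaires de $\dec(T)$ sont exactement les permutations dont l'ensemble des coinversions contient $\lbrace (a,b) : b \trprec_{\dec} a \rbrace$, et l'on vérifie que cet ensemble est exactement $\coinv(\alpha_T)$, où $\alpha_T$ est l'élément minimal de la classe sylvestre de $T$. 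Les extensions linéaires de $\dec(T)$ forment donc l'intervalle final $[\alpha_T, \omega]$ de l'ordre faible droit, et le théorème \ref{thm:tamari_prelim:quotient} conclut immédiatement. Votre réciproque en découle d'ailleurs gratuitement : $\dec(T) \subseteq \dec(T')$ équivaut à $\coinv(\alpha_T) \subseteq \coinv(\alpha_{T'})$, c'est-à-dire $\alpha_T \infd \alpha_{T'}$, donc $T \leq T'$. Complétez rigoureusement votre récurrence sur les rotations, ou substituez cet argument par coinversions à votre troisième étape.
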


\begin{proof}
On effectue la preuve uniquement pour $\dec(T)$. Par symétrie de l'ordre faible et de l'ordre de Tamari, le résultat est aussi vrai pour $\inc(T)$. Soit $\alpha_T$ l'élément minimal de la classe sylvestre de $T$. On veut prouver que les extensions linéaires de $\dec(T)$ correspondent à l'intervalle $[\alpha_T, \omega]$ où $\omega$ est la permutation maximale. Comme l'ordre de Tamari est un quotient de l'ordre faible, cela prouve entièrement le résultat.

Le poset $\dec(T)$ ne contient que des relations décroissantes $b \trprec_{\dec} a$ avec $b > a$. Les extensions linéaires de $\dec(T)$ sont exactement les permutations contenant toutes les coinversions $(a,b)$ telles que $b \trprec_{\dec} a$. En effet, par définition les extensions linéaires de $\dec(F)$ contiennent toutes ces coinversions. C'est aussi une condition suffisante. Soit  $\sigma$ une permutation non extension linéaire de $\dec(T)$. Alors il existe une relation $b \trprec_{\dec} a$ avec $b > a$ et $a$ avant $b$ dans $\sigma$. La permutation $\sigma$ ne contient pas la covinversion $(a,b)$.

Enfin, $\alpha_T$ ne contient pas d'autres coinversions que les relations de $\dec(T)$. En effet, on lit $\alpha_T$ sur l'arbre binaire de recherche $T$ par un parcours suffixe : fils gauche, fils droit, racine. Soit $b > a$ telle que $\dec(T)$ ne contient pas la relation $b \trprec_{\dec} a$. 
Alors $b$ n'est pas dans le sous-arbre droit de $a$. On a soit que $a$ est dans le sous-arbre gauche de $b$, soit que $a$ est dans le sous-arbre gauche d'un élément $b'$ dont $b$ est dans le sous-arbre droit. Dans tous les cas, $a$ est lu avant $b$ dans $\alpha_T$.

Pour conclure, rappelons la règle de comparaison des éléments dans l'ordre faible droit donnée au paragraphe \ref{sub-sec:prelim_groupe_sym:perm:treillis} : une permutation $\sigma$ est plus petite qu'une permutation $\mu$ si les coinversions de $\sigma$ sont incluses dans les coinversions de $\mu$. Les extensions linéaires de $\dec(T)$ sont exactement les permutations dont les coinversions contiennent celles de $\alpha_T$.
\end{proof}

\subsection{Définition des intervalles-posets}
\label{sub-sec:tamari_intervalles:intervalles-posets:def}

\begin{figure}[p]
\begin{leftfullpage}
\centering
\input{includes/figures/forest-intersection}
\caption[Construction d'un intervalle poset]{Sur la page de gauche : la construction d'un intervalle poset à partir des forêts initiales et finales. Dans la dernière image, on a supprimé les relations redondantes.

Sur la page de droite : l'intervalle de Tamari correspondant. On pourra vérifier qu'une extension linéaire d'un arbre de l'intervalle correspond toujours à une extention linéaire du poset et vice versa.}
\label{fig:tamari_intervalles:intervalle-poset}
\end{leftfullpage}
\end{figure}

\begin{figure}[p]
\centering
\input{includes/figures/interval-forest-intersection}
\end{figure}

Soit $[T_1, T_2]$ un intervalle de Tamari. Si $\sigma$ est une extension linéaire de $\dec(T_1)$ alors $\sigma$ appartient à la classe sylvestre d'un arbre $T'\geq T_1$. Maintenant, si $\sigma$ est aussi une extension linéaire de $\inc(T_2)$, alors on a $T' \leq T_2$. On peut donc encoder l'intervalle $[T_1, T_2]$ par les relations des deux posets $\dec(T_1)$ et $\inc(T_2)$. 

\begin{Definition}
\label{def:tamari_intervalles:intervalles-posets}
Un intervalle-poset $(P, \trprec)$ est un poset sur les entiers de 1 à $n$ tel que les conditions suivantes soient respectées :
\begin{enumerate}
    \item si $a < c$ et $a \trprec c$ alors pour tout $b$ tel que $a < b < c$, on a $b \trprec c$,
    \item si $a < c$ et $c \trprec a$ alors pour tout $b$ tel que $a < b < c$, on a $b \trprec a$.
\end{enumerate}
\end{Definition}

\begin{Proposition}
Les intervalles-posets sont en bijection avec les intervalles de Tamari.

Plus précisément, à chaque intervalle-poset $P$ correspond un couple d'arbres $T_1 \leq T_2$ tel que les extensions linaires de $P$ soient exactement les extensions linéaires des arbres $T' \in [T_1, T2]$. 

En particulier, les intervalles-posets sont les seuls posets étiquetés dont les extensions linéaires forment des intervalles de l'ordre faible droit $[\alpha_{T_1}, \omega_{T_2}]$ où $\alpha_{T_1}$ est l'élément minimal d'une classe sylvestre $T_1$ et $\omega_{T_2}$, l'élément maximal d'une classe $T_2$. 
\end{Proposition}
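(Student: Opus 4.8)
L'approche naturelle consiste à prouver les trois affirmations dans l'ordre où elles sont formulées : d'abord que tout intervalle-poset $P$ détermine un couple d'arbres $T_1 \leq T_2$ avec la bonne propriété sur les extensions linéaires, puis que la correspondance $[T_1,T_2] \mapsto P$ est son inverse, et enfin la caractérisation par les intervalles de l'ordre faible droit. Le point de départ est le Lemme~\ref{lem:tamari_intervalles:carac-foret} : les conditions (1) et (2) de la Définition~\ref{def:tamari_intervalles:intervalles-posets} sont exactement, respectivement, la condition caractérisant les forêts initiales et celle caractérisant les forêts finales. L'idée-clé est donc de décomposer un intervalle-poset $P$ en deux parties : la \emph{partie croissante} $P_{\inc}$ formée des relations $a \trprec_P c$ avec $a<c$, et la \emph{partie décroissante} $P_{\dec}$ formée des relations $c \trprec_P a$ avec $a<c$. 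Par le Lemme~\ref{lem:tamari_intervalles:carac-foret}, $P_{\inc}$ est une forêt initiale $\inc(T_2)$ et $P_{\dec}$ est une forêt finale $\dec(T_1)$ pour des arbres binaires $T_1, T_2$ bien déterminés.

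\textbf{Étapes principales.} Première étape : vérifier que ces deux parties recouvrent bien toutes les relations de couverture de $P$, c'est-à-dire que $P$ est engendré par transitivité à partir de $P_{\inc} \cup P_{\dec}$ — ceci devrait découler du fait que toute relation $x \trprec_P y$ est soit croissante soit décroissante selon que $x<y$ ou $x>y$. Deuxième étape : montrer $T_1 \leq T_2$ dans l'ordre de Tamari. Pour cela j'utiliserais la Proposition~\ref{prop:tamari_intervalles:ext-lin-foret} : les extensions linéaires de $\dec(T_1)$ sont les permutations de coinversions contenant celles de $\alpha_{T_1}$, celles de $\inc(T_2)$ sont les permutations de coinversions \emph{contenues} dans celles de... (le dual, via la symétrie ordre faible gauche/droit) — plus précisément, les extensions linéaires de $\inc(T_2)$ forment l'intervalle initial $[\mathrm{id}, \omega_{T_2}]$. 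L'ensemble des extensions linéaires de $P$ étant l'intersection, il est non vide (car $P$ est un poset fini, donc admet au moins une extension linéaire), ce qui force $\alpha_{T_1} \leq \omega_{T_2}$ pour l'ordre faible droit, d'où $T_1 \leq T_2$ par passage au quotient (Théorème~\ref{thm:tamari_prelim:quotient}). Troisième étape : identifier les extensions linéaires de $P$ avec $\bigcup_{T' \in [T_1,T_2]} (\text{classe sylvestre de } T')$. Une permutation $\sigma$ est extension linéaire de $P$ si et seulement si elle est extension linéaire de $\dec(T_1)$ \emph{et} de $\inc(T_2)$, donc par la Proposition~\ref{prop:tamari_intervalles:ext-lin-foret} ssi elle appartient à la classe sylvestre d'un arbre $T' \geq T_1$ et d'un arbre $T'' \leq T_2$ ; comme une permutation appartient à une seule classe sylvestre, $T' = T''$ et donc $T_1 \leq T' \leq T_2$. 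Réciproquement, toute permutation dans une classe sylvestre d'un $T' \in [T_1,T_2]$ satisfait les deux conditions. Quatrième étape : la bijectivité — partant de $[T_1,T_2]$, on pose $P := \dec(T_1) \cup \inc(T_2)$ (union des relations, puis clôture transitive), on vérifie que $P$ est bien un poset (antisymétrie : une relation $a \trprec b$ et $b \trprec a$ forcerait une contradiction car l'une est croissante et l'autre décroissante, impossible simultanément entre deux éléments fixés) satisfaisant la Définition~\ref{def:tamari_intervalles:intervalles-posets}, et que sa partie croissante redonne $\inc(T_2)$, sa partie décroissante redonne $\dec(T_1)$ — c'est ici qu'il faut un petit argument pour voir que la clôture transitive n'ajoute pas de relation "mixte" parasite. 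La dernière affirmation (caractérisation par $[\alpha_{T_1}, \omega_{T_2}]$) se lit directement sur l'étape trois combinée au fait que les extensions linéaires d'un intervalle-poset forment un intervalle de l'ordre faible droit, puisque chaque classe sylvestre d'un $T' \in [T_1,T_2]$ est un intervalle et que leur réunion, ordonnée par Tamari, est elle-même l'intervalle $[T_1,T_2]$ (sous-treillis de l'ordre faible).

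\textbf{Principal obstacle.} Le point délicat me semble être de contrôler précisément l'interaction entre les parties croissante et décroissante : vérifier que la clôture transitive de $\dec(T_1) \cup \inc(T_2)$ (lorsque $T_1 \leq T_2$) ne crée pas de relations violant les conditions de la Définition~\ref{def:tamari_intervalles:intervalles-posets}, et surtout que l'on retrouve bien $\dec(T_1)$ et $\inc(T_2)$ comme parties extraites. Il faut argumenter qu'une chaîne $a \trprec b \trprec c$ avec $a \trprec b$ croissante et $b \trprec c$ décroissante ne peut pas produire une relation nouvelle entre $a$ et $c$ incompatible — en fait, l'hypothèse $T_1 \leq T_2$ garantit la compatibilité, et c'est exactement ce que mesure la non-vacuité de l'intersection des extensions linéaires. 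Je structurerais donc la preuve en établissant d'abord le dictionnaire entre conditions de la définition et Lemme~\ref{lem:tamari_intervalles:carac-foret}, puis en faisant transiter toute l'information combinatoire par la Proposition~\ref{prop:tamari_intervalles:ext-lin-foret} et le Théorème~\ref{thm:tamari_prelim:quotient}, de sorte que les arguments d'ordre partiel se ramènent à des inclusions d'ensembles de (co)inversions, plus faciles à manipuler.
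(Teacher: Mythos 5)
Votre démarche est essentiellement celle du texte : même découpage de $P$ en relations croissantes et décroissantes identifiées via le Lemme~\ref{lem:tamari_intervalles:carac-foret} à $\inc(T_2)$ et $\dec(T_1)$, même recours à la Proposition~\ref{prop:tamari_intervalles:ext-lin-foret} pour faire transiter l'argument par les classes sylvestres, et même obtention de $T_1 \leq T_2$ en prenant une extension linéaire $\sigma$ de $P$ et en encadrant $\ABR(\sigma)$. Le point que vous signalez comme délicat (contrôler la clôture transitive de $\dec(T_1) \cup \inc(T_2)$) est précisément celui que le texte traite en invoquant la compatibilité des deux familles de relations, garantie par l'existence d'une extension linéaire commune.
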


\begin{proof}
Soit un intervalle de Tamari $[T_1, T_2]$. Comme $T_1 \leq T_2$, par la proposition \ref{prop:tamari_intervalles:ext-lin-foret}, les extensions linéaires de $T_1$ en particulier vérifient à la fois les relations des posets $\dec(T_1)$ et $\inc(T_2)$. Ces deux posets sont donc compatibles dans le sens où il n'existe pas de relations contradictoires : $a \trprec_{\dec} b$ et $a \trsucc_{\inc} b$. On forme alors le poset $P$ contenant à la fois les relations de $\dec(T_1)$ et $\inc(T_2)$. Par le lemme \ref{lem:tamari_intervalles:carac-foret}, $P$ possède les deux conditions qui en font un intervalle-poset.

\`A présent, soit $P$ un intervalle-poset. Soit $F_1$ le poset formé par les relations décroissantes de $P$ : $b \trprec_{F_1} a$ si $b > a$ et $b \trprec_P a$. Et soit $F_2$ le poset formé par les relations croissantes de $P$. Par le lemme \ref{lem:tamari_intervalles:carac-foret}, les posets $F_1$ et $F_2$ sont respectivement les forêts finales et initiales de deux arbres binaires de recherche $T_1$ et $T_2$. Soit $\sigma$ une extension linéaire de $P$ et $T' = \ABR(\sigma)$. On a que $\sigma$ est aussi une extension linéaire de $F_1$ et donc $T_1 \leq T'$ par la proposition \ref{prop:tamari_intervalles:ext-lin-foret}. Et $\sigma$ est une extension linéaire de $F_2$ d'où $T' \leq T_2$. On a donc $T_1 \leq T_2$, et le poset $P$ correspond aux extensions linéaires des arbres de l'intervalle $[T_1, T_2]$.
\end{proof}

Un exemple de la construction avec l'intervalle correspondant est donné figure~\ref{fig:tamari_intervalles:intervalle-poset}. La bijection permet d'identifier les intervalles de Tamari aux intervalles-posets. Un arbre binaire de recherche $T$ est un intervalle-poset particulier qui correspond à $[T,T]$. De même, les forêts initiales et finales sont des cas particuliers d'intervalles-posets. Ces objets combinatoires sont facilement maniables et programmables et on y lit de nombreuses propriétés.

\begin{Proposition}
\label{prop:tamari_intervalles:comb-prop}
\begin{enumerate}[label=(\roman{*}), ref=(\roman{*})]
\item Soient $I_1$ et $I_2$ deux intervalles-posets. L'intersection de $I_1$ et $I_2$ est non vide si et seulement si les relations de $I_1$ ne contredisent pas celles de $I_2$. Dans ce cas, l'intersection est aussi un intervalle, elle est donnée par $I_3$ l'intervalle-poset contenant les relations à la fois de $I_1$ et $I_2$.
\label{prop:tamari_intervalles:comb-prop-intersect}
\item Un intervalle  $I_1 := \left[ T_1, T_1' \right] $ contient l'intervalle $I_2 := \left[ T_2, T_2' \right]$, c'est-à-dire $T_1 \leq T_2$ et $T_1' \geq T_2'$, si et seulement si $I_2$ est une extension de $I_1$ ($I_2$ contient les relations de $I_1$ et éventuellement d'autres)
\label{prop:tamari_intervalles:comb-prop-inclusion}
\item Si $I_1 := \left[ T_1, T_1' \right]$  alors $I_2 = \left[ T_2, T_1' \right]$ tel que $T_2 \geq T_1$ si et seulement si $I_2$ est une extension de $I_1$ et que les relations supplémentaires de $I_2$ sont décroissantes. De façon symétrique, $I_3 = \left[ T_1, T_3 \right]$ tel que $T_3 \leq T_1'$ si et seulement si $I_3$ est une extension de $I_1$ et que les relations supplémentaires de $I_3$ sont croissantes.
\label{prop:tamari_intervalles:comb-prop-minmax-inclusion}
\end{enumerate}
\end{Proposition}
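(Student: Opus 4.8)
The three statements all compare an intervalle-poset with the intervals it encodes, so the strategy is uniform: translate each claim about the Tamari order $T_1 \leq T_2$ (or $[T_2, T_2'] \subseteq [T_1, T_1']$) into a claim about linear extensions, and then use the characterization proved just above---namely that intervalles-posets are exactly those labelled posets whose linear extensions form an interval $[\alpha_{T_1}, \omega_{T_2}]$ of the right weak order. The key elementary fact I will invoke repeatedly is that adding relations to a poset shrinks its set of linear extensions, together with Proposition~\ref{prop:tamari_intervalles:ext-lin-foret}, which identifies linear extensions of $\dec(T)$ with $\bigcup_{T' \geq T}$ (sylvester class of $T'$) and of $\inc(T)$ with $\bigcup_{T' \leq T}$ (sylvester class of $T'$). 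Since the Tamari order is a quotient of the weak order (Theorem~\ref{thm:tamari_prelim:quotient}), comparing unions of sylvester classes and comparing Tamari intervals amounts to the same thing, so I can freely pass back and forth.

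\textbf{For \ref{prop:tamari_intervalles:comb-prop-intersect}}, I would argue as follows. If the relations of $I_1$ and $I_2$ together contain a contradiction ($a \trprec b$ in one and $b \trprec a$ in the other), then no linear order can extend both, so $I_1 \cap I_2 = \emptyset$. Conversely, if the union of the relations has no contradiction, let $I_3$ be the poset generated by this union. One must check $I_3$ is an intervalle-poset: conditions (1) and (2) of Definition~\ref{def:tamari_intervalles:intervalles-posets} are closed under union of relations (if $a \trprec c$ holds in $I_1$ or $I_2$ then the forced relations $b \trprec c$ already hold in whichever poset contains $a \trprec c$, hence in $I_3$), so this is immediate. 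Then the linear extensions of $I_3$ are exactly those permutations that extend both $I_1$ and $I_2$, i.e.\ the intersection of the two intervals of the weak order, which is nonempty (it contains, e.g., any topological sort of $I_3$). Passing to the quotient by the sylvester congruence, this intersection of weak-order intervals is the set of sylvester classes of trees lying in both Tamari intervals, and by the bijection it is the Tamari interval encoded by $I_3$.

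\textbf{For \ref{prop:tamari_intervalles:comb-prop-inclusion} and \ref{prop:tamari_intervalles:comb-prop-minmax-inclusion}}, I would use the refinement of the bijection that splits an intervalle-poset $I = [T_1, T_1']$ into its decreasing part $\dec(T_1)$ and its increasing part $\inc(T_1')$ (Lemma~\ref{lem:tamari_intervalles:carac-foret}). Inclusion $[T_2, T_2'] \subseteq [T_1, T_1']$ means $T_1 \leq T_2$ and $T_2' \leq T_1'$. By Proposition~\ref{prop:tamari_intervalles:ext-lin-foret}, $T_1 \leq T_2$ is equivalent to: every linear extension of $\dec(T_2)$ is a linear extension of $\dec(T_1)$, i.e.\ $\dec(T_2)$ contains all decreasing relations of $\dec(T_1)$; symmetrically $T_2' \leq T_1'$ is equivalent to $\inc(T_2')$ containing all increasing relations of $\inc(T_1')$. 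Combining, $I_2$ contains all relations of $I_1$, i.e.\ $I_2$ is an extension of $I_1$; and conversely, if $I_2$ extends $I_1$, splitting into decreasing/increasing parts recovers both inequalities. Statement \ref{prop:tamari_intervalles:comb-prop-minmax-inclusion} is the special case where only the lower (resp.\ upper) endpoint moves: if $T_1' = T_1'$ is fixed, then $I_2$ extends $I_1$ and the extra relations, being exactly the extra relations of $\dec(T_2)$ over $\dec(T_1)$, are decreasing; and conversely extra decreasing relations only strengthen the final forest, leaving $\inc$ untouched so $T_1'$ stays fixed while $T_1 \leq T_2$. The symmetric case is identical with $\inc$ and increasing relations.

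\textbf{Main obstacle.} None of the three steps is deep; the real care is bookkeeping. The subtle point---and the one I would write out most carefully---is the two-way equivalence "$T_1 \leq T_2$ in Tamari $\iff$ $\dec(T_2)$ contains the relations of $\dec(T_1)$": the forward direction needs that $\alpha_{T_1} \leq \alpha_{T_2}$ in the weak order (endpoints of intervals of linear extensions move monotonically), and the reverse needs that a permutation whose coinversions contain those of $\alpha_{T_1}$ really does lie in a sylvester class $\geq T_1$, which is exactly the content of Proposition~\ref{prop:tamari_intervalles:ext-lin-foret}. Once that equivalence and its $\inc$-analogue are in hand, all three items of Proposition~\ref{prop:tamari_intervalles:comb-prop} follow by combining them with the observation that union of relations corresponds to intersection of linear-extension sets.
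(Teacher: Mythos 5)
Your proposal is correct and follows exactly the route the paper intends: the paper's own ``proof'' of this proposition is the single sentence that all three properties follow directly from the construction of the intervalles-posets, and your argument is a faithful unpacking of that via the identification of an intervalle-poset with the pair $(\dec(T_1),\inc(T_1'))$ and the correspondence between adding relations and shrinking the set of linear extensions (Lemme~\ref{lem:tamari_intervalles:carac-foret} and Proposition~\ref{prop:tamari_intervalles:ext-lin-foret}). The only point worth writing out if you expand this is that the transitive closure of the union of relations in (i), and of added decreasing relations in (iii), creates no new increasing (resp.\ contradictory) relations beyond those already forced by conditions (1)--(2) of Définition~\ref{def:tamari_intervalles:intervalles-posets} — a one-line check that your lattice-theoretic detour through the weak order also covers.
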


Toutes ces propriétés découlent directement de la construction des intervalles-posets.

\subsection{Lien avec $\PBT$}
\label{sub-sec:tamari_intervalles:intervalles-posets:PBT}
\index{PBT}

On a vu dans le chapitre \ref{chap:tamari_prelim} qu'une base de l'algèbre de Hopf $\PBT$ est donnée par les éléments $\BP_T$ :
\begin{equation}
\BP_T = \sum_{\ABR(\sigma) = T} \BF_\sigma.
\end{equation}

Dans le paragraphe \ref{sub-sec:tamari_prelim:pbt:mult}, on définit deux autres bases de $\PBT$ et d'après la définition des forêts finales et initiales, on a que
\begin{align}
\BH_T &= \sum_{\sigma \in \inc(T)} \BF_\sigma, \\
\BE_T &= \sum_{\sigma \in \dec(T)} \BF_\sigma,
\end{align}
où on écrit $\sigma \in \inc(T)$ (resp. $\dec(T)$) pour $\sigma$ une extension linéaire de $\inc(T)$ (resp. $\dec(T)$). Par ailleurs, on a vu que le produit dans $\PBT$ s'exprime comme une somme sur un intervalle de l'ordre de Tamari et que $\BH$ et $\BE$ sont des bases multiplicatives. Définissons à présent de nouveaux éléments particuliers de $\PBT$ indexés par les intervalles,
\begin{equation}
\BI_{[T_1, T_2]} = \sum_{T \in [T_1,T_2]} \BP_{T}.
\end{equation}

Notons qu'un élément de la base $\BP_T$ est égal à $\BI_{[T,T]}$ et que donc les éléments $\BI$ ne forment pas une base de $\PBT$. Cependant, si on note $P_{[T_1,T_2]}$ l'intervalle-poset qui correspond à $[T_1,T_2]$ alors on a
\begin{equation}
\BI_{[T_1,T_2]} = \sum_{\sigma \in P_{[T_1,T_2]}} \BF_\sigma
\end{equation}
et le produit s'exprime de façon simple.

\begin{Proposition}
On a
\begin{equation}
\BI_{P} \BI_{P'} = \BI_{P\ConcDec P'}
\end{equation}
où $P\ConcDec P'$ est la concaténation décalée du poset $P'$ au poset $P$. C'est-à-dire que $P \ConcDec P'$ contient le poset $P$ et le poset $P'$ où toutes les étiquettes ont été décalées de $|P|$.
\end{Proposition}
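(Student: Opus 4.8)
The statement asserts that for two intervalle-posets $P$ and $P'$, one has $\BI_{P} \BI_{P'} = \BI_{P \ConcDec P'}$, where $P \ConcDec P'$ is the shifted concatenation. The natural strategy is to pass through the polynomial realization inside $\FQSym$: since $\PBT$ is a sub-algebra of $\FQSym$ and the product there is simply the shifted shuffle product on the fundamental basis $(\BF_\sigma)$, everything reduces to a statement about sets of permutations. First I would record the key identity, already available from the definition of intervalle-posets, that
\begin{equation}
\BI_{[T_1,T_2]} = \sum_{\sigma \in P_{[T_1,T_2]}} \BF_\sigma,
\end{equation}
i.e.\ $\BI_P$ is the sum of the fundamental elements indexed by the linear extensions of $P$. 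This is the bridge: it turns a statement about $\PBT$ into one about linear extensions of posets.

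\textbf{Reduction to a shuffle identity on linear extensions.}
With that identity in hand, the product becomes
\begin{equation}
\BI_P \BI_{P'} = \Bigl( \sum_{\sigma \in P} \BF_\sigma \Bigr)\Bigl( \sum_{\tau \in P'} \BF_\tau \Bigr) = \sum_{\sigma \in P} \sum_{\tau \in P'} \sum_{\nu \in \sigma \cshuffle \tau} \BF_\nu,
\end{equation}
using the product formula \eqref{eq:tamari_prelim:fqsym_prod_f} for $\FQSym$. So the heart of the matter is the purely combinatorial claim: a permutation $\nu \in \Sym{n+m}$ is a linear extension of $P \ConcDec P'$ (with $|P|=n$, $|P'|=m$) if and only if there exist a linear extension $\sigma$ of $P$ and a linear extension $\tau$ of $P'$ such that $\nu$ appears in the shifted shuffle $\sigma \cshuffle \tau$, and moreover each such $\nu$ arises from exactly one such pair $(\sigma,\tau)$ (so that multiplicities match and the result is multiplicity-free exactly when the right-hand side is). The multiplicity point is in fact automatic: given $\nu$, the only candidate pair is $\sigma = \std(\nu_{|\{1,\dots,n\}})$ restricted to the first block of values and $\tau = \std$ of the complementary subword; shifted shuffle recovers the pair uniquely, so I only need the set equality.

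\textbf{The set equality.}
To prove the set equality I would argue directly. The poset $P \ConcDec P'$ is, by definition, the disjoint union of $P$ on $\{1,\dots,n\}$ and the shifted copy of $P'$ on $\{n+1,\dots,n+m\}$, with no relations between the two blocks. A total order $\nu$ on $\{1,\dots,n+m\}$ extends $P \ConcDec P'$ iff it respects all relations of $P$ among the small values and all relations of $P'$ among the large values — and since there are no cross-relations, any interleaving is allowed. That is precisely the condition that the subword of $\nu$ on $\{1,\dots,n\}$ is a linear extension of $P$ and the standardized subword of $\nu$ on $\{n+1,\dots,n+m\}$ is a linear extension of the shifted $P'$, which standardizes back to a linear extension $\tau$ of $P'$; and that $\nu$ is obtained from $\sigma$ and $\Dec{\tau}$ by interleaving is exactly the definition of $\sigma \cshuffle \tau = \sigma \shuffle \Dec{\tau}$. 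Both inclusions follow immediately, and equating coefficients of each $\BF_\nu$ gives $\BI_P \BI_{P'} = \BI_{P \ConcDec P'}$.

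\textbf{Main obstacle.}
There is no deep obstacle here; the only point requiring a little care is the bookkeeping between the value-decomposition of $\nu$ (which is how the $\FQSym$ product naturally decomposes a shuffled permutation) and the block structure $\{1,\dots,n\} \sqcup \{n+1,\dots,n+m\}$ of $P \ConcDec P'$, together with checking that standardization of the large block is compatible with the shift by $n$ that defines $P \ConcDec P'$. Once that dictionary is set up explicitly, the proof is a short verification. An alternative, slightly slicker route, if one prefers to avoid re-deriving the shuffle identity, is to observe that $P \ConcDec P'$ is always itself an intervalle-poset (its decreasing relations come from those of $P$ and the shifted $P'$, and likewise its increasing relations, so Lemma~\ref{lem:tamari_intervalles:carac-foret} applies to both), hence corresponds to a genuine interval $[T_1 \ConcDec T_1', T_2 \ConcDec T_2']$ of Tamari; one may then also cross-check the formula against the known multiplicative behaviour of the bases $\BH$ and $\BE$ from \ref{sub-sec:tamari_prelim:pbt:mult}, which are the special cases where $P$ is an initial or final forest.
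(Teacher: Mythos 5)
Votre preuve est correcte, mais elle suit un chemin sensiblement différent de celui du texte. Les deux démonstrations partent du même pont, le développement $\BI_P = \sum_{\sigma \in P} \BF_\sigma$ dans la base fondamentale de $\FQSym$; mais là où le texte invoque le résultat connu de \cite{PBT2} selon lequel le produit de deux sommes sur des intervalles de l'ordre faible est encore une somme sur un intervalle, puis identifie les arbres extrémaux $Q_1$ et $Q_2$ du produit $\BP_{T_1}\BP_{T_1'}$ et $\BP_{T_2}\BP_{T_2'}$ pour reconnaître que $P_{[Q_1,Q_2]} = P \ConcDec P'$, vous établissez directement l'égalité ensembliste entre les extensions linéaires de $P \ConcDec P'$ et les mélanges décalés $\sigma \cshuffle \tau$ d'extensions linéaires de $P$ et de $P'$, en exploitant l'absence de relations entre les deux blocs et l'unicité de la paire $(\sigma,\tau)$ reconstruite à partir de $\nu$. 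Votre argument est plus élémentaire et auto-contenu (il ne dépend ni de la structure d'intervalle de l'ordre faible ni de la description des bornes du produit dans $\PBT$), au prix d'un peu de comptabilité entre décomposition par valeurs et structure en blocs; la preuve du texte est plus courte mais s'appuie davantage sur des résultats antérieurs. Un seul point mérite d'être remonté du statut de remarque annexe à celui d'ingrédient nécessaire dans votre rédaction : il faut vérifier que $P \ConcDec P'$ est bien lui-même un intervalle-poset (ce qui découle immédiatement du caractère local des conditions de la définition et de l'absence de relations croisées) pour que le membre de droite $\BI_{P \ConcDec P'}$ ait le sens voulu, c'est-à-dire une somme de $\BP_T$ sur un intervalle de Tamari.
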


\begin{proof}
La démonstration est immédiate. Soient $P$ et $P'$ les intervalles-posets correspondant respectivement à $ [T_1,T_2]$ et $[T_1',T_2']$, alors
\begin{equation}
\BI_P \BI_{P'} = \left( \sum_{\sigma \in P} \BF_\sigma \right) \left( \sum_{\mu \in P'} \BF_\mu \right).
\end{equation}
Les extensions linéaires d'intervalles-posets forment des intervalles de l'ordre droit. C'est un résultat connu que le produit de deux sommes sur des intervalles dans $\FQSym$ est toujours une somme sur un intervalle \cite{PBT2}. Soient $Q_1$ l'arbre minimal du produit $\BP_{T_1} \BP_{T_1'}$ et $Q_2$ l'arbre maximal de $\BP_{T_2} \BP_{T_2'}$. De façon claire, on a
\begin{equation}
\BI_P \BI_{P'} = \sum_{\alpha_{Q_1} \leq \sigma \leq \omega_{Q_2}} \BF_\sigma
\end{equation}
où $\alpha_{Q_1}$ est la permutation minimale de la classe sylvestre de $Q_1$ et $\omega_{Q_2}$ la permutation maximale de la classe sylvestre de $Q_2$. Comme l'ordre de Tamari est un quotient de l'ordre droit, ce produit correspond à la somme des éléments $\BP_{Q'}$ pour $ Q' \in [Q_1, Q_2]$. Nous avons donné la construction des arbres $Q_1$ et $Q_2$ dans la figure~\ref{fig:tamari_prelim:prod_pbt} quand nous avons expliqué le produit dans $\PBT$. On en déduit directement que l'intervalle-poset $P_{[Q_1,Q_2]} = P \ConcDec P'$.
\end{proof}

Par exemple,
\begin{align}
\BI_{\scalebox{0.6}{

\def \hlev{0.8}
\def \wlev{0.5}

\begin{tikzpicture}
\node(N1) at (0,0) {1};
\node(N2) at (0.5 * \wlev,-1 * \hlev) {2};
\node(N3) at (\wlev,0) {3};
\draw (N2) -- (N3);
\draw (N2) -- (N1);
\end{tikzpicture}
}} 
= \BI_{\left[ \scalebox{0.4}{


\begin{tikzpicture}
\node (N0) at (1.250, 0.000){};
\node (N00) at (0.250, -0.500){};
\node (N001) at (0.750, -1.000){};
\draw[Point] (N001) circle;
\draw (N00.center) -- (N001.center);
\draw[Point] (N00) circle;
\draw (N0.center) -- (N00.center);
\draw[Point] (N0) circle;
\end{tikzpicture}},\scalebox{0.4}{


\begin{tikzpicture}
\node (N0) at (0.250, 0.000){};
\node (N01) at (1.250, -0.500){};
\node (N010) at (0.750, -1.000){};
\draw[Point] (N010) circle;
\draw (N01.center) -- (N010.center);
\draw[Point] (N01) circle;
\draw (N0.center) -- (N01.center);
\draw[Point] (N0) circle;
\end{tikzpicture}} \right]} &= \BP_{213} + \BP_{231}, \\
\BI_{\scalebox{0.6}{

\def \wlev{0.5}
\def \hlev{0.8}

\begin{tikzpicture}
\node(N1) at (0,0) {1};
\node(N2) at (\wlev,0) {2};
\node(N3) at (\wlev,-1 * \hlev) {3};
\draw (N3) -- (N2);
\end{tikzpicture}
}} 
= \BI_{\left[ \scalebox{0.4}{},\scalebox{0.4}{


\begin{tikzpicture}
\node (N0) at (0.250, 0.000){};
\node (N01) at (0.750, -0.500){};
\node (N011) at (1.250, -1.000){};
\draw[Point] (N011) circle;
\draw (N01.center) -- (N011.center);
\draw[Point] (N01) circle;
\draw (N0.center) -- (N01.center);
\draw[Point] (N0) circle;
\end{tikzpicture}} \right]} &= \BP_{312} + \BP_{321}
\end{align}
où les éléments $\BP$ sont indexés par leur mot canonique $\omega_T$, permutation maximale de la classe sylvestre. Alors, en appliquant la formule du produit dans $\PBT$ de la proposition \ref{prop:tamari_prelim:pbt-product}, on obtient
\begin{align}
\BI_{\scalebox{0.6}{}} 
\BI_{\scalebox{0.6}{}} &=
(\BP_{213} + \BP_{231})(\BP_{312} + \BP_{321}) \\
&= \BP_{621345} + \BP_{642135} + \BP_{645213} + \BP_{652134} + \BP_{654213} \\ \nonumber
&+ \BP_{623145} + \BP_{623415} + \BP_{623451} + \BP_{642315} + \BP_{642351} \\ \nonumber
&+ \BP_{645231} + \BP_{652314} + \BP_{652341} + \BP_{654231} \\
&= \BI_{\scalebox{0.6}{
\def \hlev{0.8}
\def \wlev{0.6}

\begin{tikzpicture}
\node(N1) at (0,0) {1};
\node(N2) at (0.5 * \wlev,-1 * \hlev) {2};
\node(N3) at (\wlev,0) {3};
\node(N4) at (2 * \wlev,0) {4};
\node(N5) at (3 * \wlev,0) {5};
\node(N6) at (3 * \wlev,-1 * \hlev) {6};
\draw (N2) -- (N3);
\draw (N2) -- (N1);
\draw (N6) -- (N5);
\end{tikzpicture}
}}.
\end{align}

\section{Polynômes de Tamari}
\label{sec:tamari_intervalles:polynomes}
\index{polynômes!de Tamari}

\subsection{Composition des intervalles-posets}
\label{sub-sec:tamari_intervalles:polynomes:composition}

Soit $\phi(y)$, la série génératrice des intervalles de Tamari,
\begin{equation}
\phi(y) = \sum_{n \geq 0} I_n y^n
\end{equation}
où $I_n$ est le nombre d'intervalles sur des arbres de taille $n$. Les premières valeurs sont données par \cite{OEISIntervalles}
\begin{equation}
\phi(y) = 1 + y + 3 y^2 + 13 y^3 + 68 y^4 + \dots~.
\end{equation}

Dans \cite{Chap}, Chapoton donne une version raffinée de $\phi$,
\begin{equation}
\Phi(x,y) = \sum_{\substack{n \geq 0 \\ m \geq 0}}I_{n,m} x^m y^n
\end{equation}
où $I_{n,m}$ est le nombre d'intervalles $[T_1,T_2]$ sur des arbres de taille $n$ tel que $T_1$ possède $m$ \noeuds sur sa branche gauche. On a
\begin{equation}
\Phi(x,y) = 1 + xy + (x + 2x^2)y^2 + (3x + 5x^2 + 5x^3)y^3 + \dots~.
\end{equation}

Comme nous l'avons vu dans au paragraphe \ref{sub-sec:tamari_prelim:tamari:planaires}, la statistique du nombre de \noeuds sur la branche gauche de $T$ se lit aussi sur l'arbre planaire correspondant à $T$. C'est le nombre de fils de la racine de l'arbre planaire ou le nombre de retours à 0 sur le chemin de Dyck \cite{FindStatLeftBranch,FindStatTouchPoints}. Sur la forêt finale $\dec(T)$, c'est le nombre d'arbres, c'est-à-dire son nombre de composantes connexes. 

\begin{Definition}
\label{def:tamari_intervalles:stat}
Soit un intervalle $[T_1,T_2]$ et $I$ son intervalle-poset, on note 
\begin{enumerate}
\item $\Isize(I)$ le nombre de \noeuds dans $I$, c'est-à-dire la taille des arbres $T_1$ et $T_2$.
\item $\Itrees(I)$ le nombre d'arbres de $\dec(I)$ la forêt formée en conservant uniquement les relations décroissantes de $I$.
\end{enumerate} 
Enfin, on définit $\PI(I) = x^{\Itrees(I)}y^{\Isize(I)}$ et on étend $\PI$ par linéarité aux combinaisons linéaires d'intervalles-posets.
\end{Definition}

La série génératrice raffinée $\Phi$ sur les intervalles de Tamari s'exprime alors par
\begin{equation}
\Phi(x,y) = \sum_{I} \PI(I)
\end{equation}
sommée sur l'ensemble des intervalles-posets. On prouve le théorème suivant.

\begin{Theoreme}
\label{thm:tamari_intervalles:equation-fonct}
La série génératrice $\Phi(x,y)$ vérifie l'équation fonctionnelle 
\begin{equation}
\label{eq:tamari_intervalles:equation-fonct}
\Phi(x,y) = \OB(\Phi,\Phi) + 1
\end{equation}
où 
\begin{equation}
\OB(f,g) = xy f(x,y) \frac{x g(x,y) - g(1, y)}{x - 1}.
\end{equation}
\end{Theoreme}

Ce théorème est prouvé par Chapoton dans \cite{Chap}. La formulation est légèrement différente, dans la série génératrice donnée en \cite[formule (6)]{Chap}, le degré de $x$ diffère de 1 et la série ne compte pas l'intervalle de taille 0. Dans le paragraphe~\ref{sub-sec:tamari_intervalles:polynomes:enumeration}, nous donnons une nouvelle preuve de ce théorème. Nous utilisons pour cela une opération de composition sur les intervalles-posets.

\begin{Definition}
\label{def:tamari_intervalles:composition}
Soient $I_1$ et $I_2$ deux intervalles-posets de tailles respectives $k_1$ et $k_2$. Alors $\BB(I_1,I_2)$ est la somme formelle de tous les intervalles-posets de taille $k_1 + k_2 + 1$ tels que
\begin{enumerate}[label=(\roman{*}), ref=(\roman{*})]
\item les relations entre les sommets $1, \dots, k_1$ sont celles de $I_1$,
\label{def:tamari_intervalles:composition:cond:I1}
\item les relations entre les sommets $k_1 + 2, \dots, k_1 + k_2 + 1$ sont celles de $I_2$ décalées de $k_1 + 1$,
\label{def:tamari_intervalles:composition:cond:I2}
\item on a $i \trprec k_1 + 1$ pour tout $i \leq k_1$,
\label{def:tamari_intervalles:composition:cond:incr}
\item il n'existe aucune relation $k_1+i \trprec j$ pour $j>k_1+1$
\label{def:tamari_intervalles:composition:cond:decr}
\end{enumerate}
On appelle cette opération la composition des intervalles et on l'étend par bilinéarité à toutes les sommes formelles d'intervalles-posets.
\end{Definition}

\begin{figure}[ht]
\centering
\input{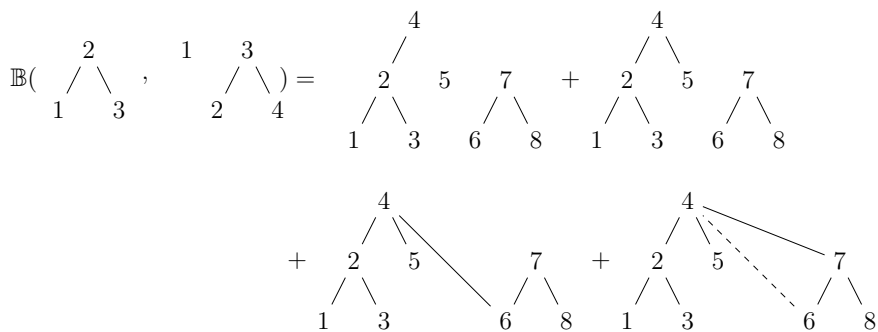}
\caption[Composition des intervalles-posets]{Composition des intervalles-posets. Les quatre termes correspondent à l'ajout de respectivement 0, 1, 2 et 3 relations décroissantes entre le second poset et le sommet 4. Dans le dernier terme, on a ajouté 3 relations : la relation $6 \trprec 4$ a été mise en pointillés car elle peut être obtenue par transitivité.}
\label{fig:tamari_intervalles:composition}
\end{figure}

La somme que l'on obtient correspond à toutes les façons d'ajouter des relations décroissantes entre le second poset et le nouveau sommet $k_1 + 1$, comme on peut le voir  figure \ref{fig:tamari_intervalles:composition}. En particulier, il n'y a aucune relation entre les sommets $1, \dots, k_1$ du premier poset et les sommets $k_1 +2, \dots, k_1 + k_2 +1$ du second poset. En effet, la condition \ref{def:tamari_intervalles:composition:cond:incr} interdit toute relation $j \trprec i$ avec $i < k_1+1 < j$ car cela impliquerait par la définition \ref{def:tamari_intervalles:intervalles-posets} que $k_1 +1 \trprec i$. Par ailleurs, la condition \ref{def:tamari_intervalles:composition:cond:decr} interdit toute relation $i \trprec j$ car cela impliquerait $k_1 + 1 \trprec j$. 

Le nombre d'éléments dans la somme est donné par $\Itrees(I_2) + 1$. En effet, si $x_1 < x_2 < \dots < x_m$ sont les racines des arbres de $\dec(I_2)$, on ne peut rajouter une relation $x_i \trprec k_1 +1$ que si on a $x_j \trprec k_1 +1$ pour tout $j < i$. On a donc 
\begin{equation}
\label{eq:tamari_intervalles;composition-simple}
\BB(I_1,I_2) = \sum_{0 \leq i \leq m} P_i
\end{equation}
où $P_i$ est l'intervalle-poset où on a rajouté exactement $i$ relations décroissantes : $x_j \trprec k_1 +1$ pour $j \leq i$.

\begin{Proposition}
Soit $I_1$ l'intervalle-poset de taille $k_1$ correspondant à l'intervalle $[T_1, T_1']$  et $I_2$ l'intervalle-poset de taille $k_2$ correspondant à $[T_2, T_2']$. Soient $k = k_1 + 1$ et 
\begin{enumerate}
\item $Q_\alpha$, l'arbre $T_2$ auquel on a greffé $k(T_1,\emptyset)$ à gauche de son \noeud le plus à gauche,
\item $Q_\omega$, l'arbre $k(T_1,T_2)$,
\item et $Q'$, l'arbre $k(T_1',T_2')$.
\end{enumerate}
On a
\begin{equation}
\BB(I_1, I_2) = \sum_{Q \in [Q_\alpha,Q_\omega]} P_{[Q,Q']}
\end{equation}
où $ P_{[Q,Q']}$ est l'intervalle-poset correspondant à $[Q,Q']$.
\end{Proposition}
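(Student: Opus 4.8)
The strategy is to unwind the definition of $\BB(I_1,I_2)$ in equation~\eqref{eq:tamari_intervalles;composition-simple} and show that each summand $P_i$ is exactly the interval-poset $P_{[Q,Q']}$ for a uniquely determined arbre $Q$ in the Tamari interval $[Q_\alpha, Q_\omega]$, with $Q'$ fixed. The key observation is that in $\BB(I_1,I_2)$ the new vertex $k = k_1+1$ receives all the increasing relations from the first block (condition~\ref{def:tamari_intervalles:composition:cond:incr}), no decreasing relation below $k$ goes past $k$ (condition~\ref{def:tamari_intervalles:composition:cond:decr}), and the only freedom is the number $i$ of decreasing relations $x_j \trprec k$ added from the roots of $\dec(I_2)$. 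This is precisely the combinatorial description one gets by reading the forêt finale and forêt initiale of a binary tree of the shape $k(T,T')$ through Lemme~\ref{lem:tamari_intervalles:carac-foret}.

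\textbf{Main steps.} First I would identify the ``upper'' arbre: the croissant relations of each $P_i$ are the same (they do not depend on $i$), namely those of $I_1$, those of $I_2$ shifted, plus $i' \trprec k$ for all $i' \le k_1$. By Lemme~\ref{lem:tamari_intervalles:carac-foret} and the recursive description of forêts initiales in Figure~\ref{fig:tamari_intervalles:forets}, this increasing forest is exactly $\inc(k(T_1',T_2'))= \inc(Q')$, so the maximal tree of every $P_i$ is $Q' = k(T_1',T_2')$. Second, I would analyze the decreasing part: $\dec(P_i)$ consists of $\dec(I_1)$, of $\dec(I_2)$ shifted, and of the relations $x_j \trprec k$ for $j \le i$. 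Again invoking Lemme~\ref{lem:tamari_intervalles:carac-foret} (the characterization of forêts finales) together with the recursive unfolding of $\dec$ on $k(T_1,T_2)$, one checks that this decreasing forest is $\dec(Q_i)$ where $Q_i = k(T_1, S_i)$ and $S_i$ is the tree obtained from $T_2$ by performing $i$ left-rotations that successively pull the leftmost roots of $\dec(T_2)$ under the new node — equivalently, $Q_0 = k(T_1, T_2) = Q_\omega$ and $Q_m = Q_\alpha$ is the tree where $k(T_1,\emptyset)$ has been grafted to the left of the leftmost node of $T_2$. Third, since $\dec(Q_i)$ has $\Itrees(I_1) + (\Itrees(I_2) - i) + [\,i < m\,]$-style component counts decreasing by exactly one as $i$ increases by one, the $Q_i$ form a saturated chain in the Tamari order from $Q_\omega$ down to $Q_\alpha$, hence $\{Q_i : 0\le i\le m\} = [Q_\alpha, Q_\omega]$. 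Putting these together, $P_i = P_{[Q_i, Q']}$ and the sum in~\eqref{eq:tamari_intervalles;composition-simple} becomes $\sum_{Q\in[Q_\alpha,Q_\omega]} P_{[Q,Q']}$, as claimed.

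\textbf{Where the work is.} The routine part is matching the increasing relations of $P_i$ with $\inc(Q')$ — this is a direct consequence of the recursive structure of forêts initiales, since the composition simply stacks $I_1$ as a left child and $I_2$ as a right subtree under the fresh root $k$, which is exactly the recursion $\inc(k(L,R))$. The genuine obstacle is the decreasing side: one must verify carefully that adding $i$ decreasing relations $x_j \trprec k$ to $\dec(I_1) \sqcup \dec(I_2)$ yields a forest that is again the forêt finale of a \emph{valid} binary search tree, and that the resulting tree is obtained from $k(T_1,T_2)$ by precisely $i$ left-rotations along the leftmost branch coming from $T_2$. Here I would argue by induction on $i$, using the ``racine de valeur maximale'' decomposition from the proof of Lemme~\ref{lem:tamari_intervalles:carac-foret}: each added relation $x_j \trprec k$ merges the $j$-th component of $\dec(T_2)$ into the component of $k$, which on the tree side is exactly one covering rotation, and one checks this rotation lands inside the Tamari interval and does not disturb the increasing relations. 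Finally I would note that the two extreme trees $Q_\alpha$ and $Q_\omega$ are the minimal and maximal trees appearing in the $\PBT$ product $\BP_{T_1}\BP_{T_2}$ shifted by the insertion of the root $k$, which is consistent with Figure~\ref{fig:tamari_prelim:prod_pbt}, and that the saturated-chain claim then identifies the index set with the full Tamari interval.
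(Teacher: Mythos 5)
Your overall architecture matches the paper's proof: fix the common maximal tree $Q'$ via the increasing relations, then show that the minimal trees of the summands $P_i$ run through a saturated chain between $Q_\alpha$ and $Q_\omega$. Your identification of the increasing part with $\inc(k(T_1',T_2'))$ is correct. However, there is a concrete error on the decreasing side: you have the correspondence backwards. By Proposition~\ref{prop:tamari_intervalles:comb-prop}~\ref{prop:tamari_intervalles:comb-prop-minmax-inclusion}, adding decreasing relations to an interval-poset \emph{raises} the minimal tree of the corresponding interval. Hence $P_0$, the term with \emph{no} added relation $x_j \trprec k$, is the one whose final forest has the most components, namely $\Itrees(I_1)+1+\Itrees(I_2)$, and its minimal tree is $Q_\alpha$ (the tree $T_2$ with $k(T_1,\emptyset)$ grafted at its leftmost node), not $Q_\omega$. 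Conversely $P_m$, with all $m=\Itrees(I_2)$ relations added, has minimal tree $k(T_1,T_2)=Q_\omega$. Each added relation $x_j\trprec k$ is a covering step \emph{upward} in the Tamari order (a right rotation at the parent of $k$, pulling one more left-branch node of $T_2$ into the right subtree of $k$), not a left rotation descending from $Q_\omega$.

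This reversal also makes your description of the intermediate trees untenable: you write $Q_i=k(T_1,S_i)$ for all $i$, but only the top of the chain has $k$ as its root; in $Q_\alpha$ and in every intermediate tree the root is a left-branch node of $T_2$ and $k$ sits strictly inside the left spine. Your formula is in fact inconsistent with your own claim that $Q_m=Q_\alpha$, since $Q_\alpha$ is not of the form $k(T_1,S)$. Once the direction is corrected ($Q_0=Q_\alpha$, $Q_m=Q_\omega$, each step a covering rotation between $k$ and its current parent), the remainder of your argument --- constant increasing part giving the common upper tree $Q'$, component counts of $\dec(P_i)$ dropping by one at each step, hence a saturated chain exhausting $[Q_\alpha,Q_\omega]$ --- goes through and coincides with the paper's proof.
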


\begin{proof}
La composition de $I_1$ et $I_2$ est une somme d'intervalle-posets $P_0, \dots P_{m}$ où $m=\Itrees(I_2)$ et où $P_i$ est l'intervalle-poset où on a ajouté exactement $i$ relations décroissantes. L'arbre maximum de tous les intervalles est le même car ils ont les mêmes relations croissantes, c'est $Q' =k(T_1',T_2')$. La forêt finale de $P_0$, $\dec(P_0)$ contient $\Itrees(I_1) + \Itrees(I_2) + 1$ arbres : les \noeuds sur la branche gauche de son arbre minimal sont exactement ceux de $T_1$, puis $k$, puis ceux de $T_2$, ce qui correspond à $Q_\alpha$. Soit $Q_i$ l'arbre minimal de $P_i$. Pour passer de $P_i$ à $P_{i+1}$, on rajoute une relation décroissante vers $k$ ce qui revient à effectuer une rotation entre le \noeud $k$ de $Q_i$ et sa racine. Le procédé termine quand l'arbre $T_2$ est entièrement passé à droite du \noeud $k$. On obtient alors l'arbre $Q_{m} = Q_\omega$.

Notons que l'intervalle entre $Q_\alpha$ et $Q_\omega$ est en fait une chaîne saturée : $Q_\alpha = Q_0 \lessdot Q_1 \lessdot \dots \lessdot Q_{m} = Q_\omega$. 
\end{proof}

\begin{figure}[p]
\centering
\input{includes/figures/composition-2}
\caption{Interprétation en termes d'intervalles de la composition des intervalles-posets.}
\label{fig:tamari_intervalles:composition-2}
\end{figure}

En exemple, on a repris le calcul de la figure \ref{fig:tamari_intervalles:composition} et on donne son interprétation en termes d'intervalles dans la figure \ref{fig:tamari_intervalles:composition-2}.

La composition est en fait formée de deux opérations distinctes : le produit gauche $\pleft$ et le produit droit $\pright$.

\begin{Definition}
\label{def:tamari_intervalles:left-right-product}
Soient $I_1$ et $I_2$ deux intervalles-posets tels que $\Itrees(I_2) = m$, et $\alpha$ est l'étiquette de valeur minimale de $I_2$ et $\omega$ l'étiquette de valeur maximale de $I_1$, alors
\begin{enumerate}
\item $I_1 \pleft I_2$ est l'intervalle obtenu par la concaténation décalée de $I_1$ et $I_2$ et l'ajout des relations croissantes $x \trprec \alpha$ pour tout $x \in I_1$.
\item $I_1 \pright I_2$ est la somme des $m + 1$ intervalles-posets $P_0, P_1, \dots, P_m$ où $P_i$ est la concaténation décalée de $I_1$ et $I_2$ où l'on a ajouté $i$ relations décroissantes $x_j \trprec \omega$ pour $j \leq i$ où $x_1 < \dots < x_m$ sont les les racines des arbres de $\dec(I_2)$.
\end{enumerate} 
\end{Definition}

Par exemple,

\begin{align}
\label{eq:tamari_intervalles:left-product}
\scalebox{0.8}{}
\pleft
\scalebox{0.8}{} &=
\scalebox{0.8}{\input{includes/figures/interval-posets/I6-ex2}} \\
\label{eq:tamari-intervalles:right-product}
\scalebox{0.8}{}
\pright
\scalebox{0.8}{} &=
\scalebox{0.8}{

\def \wlev{0.5}
\def \hlev{0.8}

\begin{tikzpicture}
\node(N1) at (0,0) {1};
\node(N2) at (0.5 * \wlev,-1 * \hlev) {2};
\node(N3) at (\wlev,0) {3};

\node(N4) at (2*\wlev,-1 *\hlev) {4};
\node(N5) at (3*\wlev,-1 * \hlev) {5};
\node(N6) at (3*\wlev,-2 *\hlev) {6};

\draw (N2) -- (N3);
\draw (N2) -- (N1);
\draw (N6) -- (N5);
\end{tikzpicture}
} +
\scalebox{0.8}{\input{includes/figures/interval-posets/I6-ex4}} +
\scalebox{0.8}{\input{includes/figures/interval-posets/I6-ex5}}.
\end{align}

\`A partir de la description de la composition donnée par \eqref{eq:tamari_intervalles;composition-simple}, on a clairement que
\begin{equation}
\BB(I_1, I_2) = I_1~\pleft~u~\pright~I_2
\end{equation}
où $u$ est l'intervalle-poset possédant un unique sommet. Notons que l'ordre des opérations ne modifie pas le résultat : $(I_1 \pleft u) \pright I_2 = I_1 \pleft (u \pright I_2)$.

\subsection{\'Enumération des intervalles}
\label{sub-sec:tamari_intervalles:polynomes:enumeration}

L'opérateur $\OB$ peut aussi se décomposer en deux opérations, gauche et droite,
\begin{align}
\label{eq:tamari_intervalles:polleft}
f \polleft g &:= fg \\
\label{eq:tamari_intervalles:polright}
f \polright g &:= f \Delta(g),
\end{align}
où 
\begin{equation}
\label{eq:tamari_intervalles:delta}
\Delta(g) := \frac{x g(x,y) - g(1,y)}{x - 1}.
\end{equation}
On a dans ce cas
\begin{equation}
\OB(f,g) = f \polleft xy \polright g.
\end{equation}

La composition des intervalles-posets est une interprétation combinatoire de l'opérateur $\OB$ défini dans le théorème \ref{thm:tamari_intervalles:equation-fonct}, ce qui s'exprime par la proposition suivante.

\begin{Proposition}
\label{prop:tamari_intervalles:equiv-composition}
Soient $I_1$ et $I_2$ deux intervalles-posets et $\PI$ l'application linéaire de la définition \ref{def:tamari_intervalles:stat}. Alors
\begin{align}
\label{eq:tamari_intervalles:equiv-pleft}
\PI(I_1 \pleft I_2) &= \PI(I_1) \polleft \PI(I_2), \\
\label{eq:tamari_intervalles:equiv-pright}
\PI(I_1 \pright I_2) &= \PI(I_1) \polright \PI(I_2),
\end{align}
et donc
\begin{equation}
\PI(\BB( I_1,  I_2)) = \OB(\PI(I_1), \PI(I_2)).
\end{equation}
\end{Proposition}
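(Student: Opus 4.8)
The plan is to prove the two identities \eqref{eq:tamari_intervalles:equiv-pleft} and \eqref{eq:tamari_intervalles:equiv-pright} directly from the combinatorial descriptions of $\pleft$ and $\pright$ (Definition~\ref{def:tamari_intervalles:left-right-product}), using that $\PI(I) = x^{\Itrees(I)} y^{\Isize(I)}$, and then assemble the statement about $\OB$ and $\BB$ from the factorization $\BB(I_1,I_2) = I_1 \pleft u \pright I_2$ and $\OB(f,g) = f\polleft xy \polright g$ already noted in the text. Each of these is a short bookkeeping argument on the two statistics $\Isize$ and $\Itrees$.

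First I would handle the left product \eqref{eq:tamari_intervalles:equiv-pleft}. By definition $I_1 \pleft I_2$ is the shifted concatenation of $I_1$ and $I_2$ together with the extra \emph{increasing} relations $x \trprec \alpha$ for every $x \in I_1$, where $\alpha$ is the minimal label of $I_2$. So $\Isize(I_1\pleft I_2) = \Isize(I_1) + \Isize(I_2)$, which accounts for the $y$-grading. For $\Itrees$: the forest $\dec(\cdot)$ keeps only decreasing relations, and since the relations added by $\pleft$ are all increasing, $\dec(I_1\pleft I_2)$ is the disjoint union of $\dec(I_1)$ (on the labels $1,\dots,k_1$) and the shift of $\dec(I_2)$. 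Hence $\Itrees(I_1\pleft I_2) = \Itrees(I_1) + \Itrees(I_2)$, so $\PI(I_1\pleft I_2) = x^{\Itrees(I_1)+\Itrees(I_2)} y^{\Isize(I_1)+\Isize(I_2)} = \PI(I_1)\PI(I_2) = \PI(I_1)\polleft\PI(I_2)$ by \eqref{eq:tamari_intervalles:polleft}.

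Next I would treat the right product \eqref{eq:tamari_intervalles:equiv-pright}, which is where the real content lies. Write $\PI(I_2) = x^m y^{k_2}$, so $m = \Itrees(I_2)$ and $k_2 = \Isize(I_2)$. By Definition~\ref{def:tamari_intervalles:left-right-product}, $I_1 \pright I_2 = \sum_{i=0}^{m} P_i$, where $P_i$ is the shifted concatenation of $I_1$ and $I_2$ with $i$ decreasing relations $x_j \trprec \omega$ ($j\le i$) added to the maximal label $\omega$ of $I_1$; here $x_1 < \dots < x_m$ are the roots of the trees of $\dec(I_2)$. Each $P_i$ has size $\Isize(I_1)+\Isize(I_2)$. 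For the $\Itrees$ count: in $\dec(P_i)$ the $i$ roots $x_1,\dots,x_i$ of $\dec(I_2)$ have been absorbed under $\omega$, which belongs to the $I_1$-part; one checks this merges exactly those $i$ components with a component of $\dec(I_1)$ (the root $\omega$ of $I_1$ is still a root of its component in $P_i$, since no new relation makes $\omega$ a descendant of anything). Thus $\Itrees(P_i) = \Itrees(I_1) + (m - i)$, giving $\PI(P_i) = x^{\Itrees(I_1) + m - i} y^{\Isize(I_1)+\Isize(I_2)}$, and
\begin{equation}
\PI(I_1\pright I_2) = x^{\Itrees(I_1)} y^{\Isize(I_1)} \cdot y^{k_2} \sum_{i=0}^{m} x^{m-i}
= \PI(I_1)\cdot \frac{x^{m+1}-1}{x-1}\, y^{k_2}.
\end{equation}
On the other side, $\PI(I_1)\polright\PI(I_2) = \PI(I_1)\,\Delta(x^m y^{k_2})$ with $\Delta$ as in \eqref{eq:tamari_intervalles:delta}, and $\Delta(x^m y^{k_2}) = \frac{x\cdot x^m - 1}{x-1} y^{k_2} = \frac{x^{m+1}-1}{x-1} y^{k_2}$; these agree. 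I expect the step of verifying the exact bookkeeping $\Itrees(P_i) = \Itrees(I_1) + (m-i)$ — in particular that adding the relations $x_1\trprec\omega,\dots,x_i\trprec\omega$ neither splits nor over-merges components of the decreasing forest, and that $\omega$ stays a root — to be the main obstacle, and I would support it by reusing the characterization of decreasing forests in Lemma~\ref{lem:tamari_intervalles:carac-foret} together with the interval-poset axioms of Definition~\ref{def:tamari_intervalles:intervalles-posets}. Finally, combining the two cases with $u$ the one-vertex interval-poset (for which $\PI(u) = xy$) and the associativity $(I_1 \pleft u)\pright I_2 = I_1 \pleft (u \pright I_2)$ already recorded, together with $\OB(f,g) = f\polleft xy\polright g$, yields $\PI(\BB(I_1,I_2)) = \PI\big((I_1\pleft u)\pright I_2\big) = \big(\PI(I_1)\polleft xy\big)\polright \PI(I_2) = \OB(\PI(I_1),\PI(I_2))$, which is the claimed statement; linearity of $\PI$ extends it to formal sums.
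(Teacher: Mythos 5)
Your proposal is correct and follows essentially the same route as the paper's proof: additivity of both statistics under $\pleft$ because the added relations are increasing, the count $\Itrees(P_i)=\Itrees(I_1)+m-i$ for $\pright$ because each added decreasing relation attaches one tree of $\dec(I_2)$ to the component of $\dec(I_1)$ containing the maximal label, and then summation of the geometric series to match $\Delta$. The only (harmless) differences are that you spell out the component-merging bookkeeping that the paper dispatches in one sentence, and the final assembly via $\BB(I_1,I_2)=I_1\pleft u\pright I_2$ with $\PI(u)=xy$, which the paper leaves implicit in its ``et donc''.
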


Par exemple, dans la figure \ref{fig:tamari_intervalles:BTExample}, on a $\PI(I_1) = x^2 y^3$ et $\PI(I_2) = x^3 y^4$ et on vérifie que $\PI(\BB(I_1,I_2)) = y^8(x^6 + x^5 + x^4 + x^3) = \OB(x^2 y ^3, x^3 y^4)$.

\begin{proof}
Soient $I_1$ et $I_2$ deux intervalles-posets. Le produit gauche $I_1 \pleft I_2$ est la concaténation décalée de $I_1$ et $I_2$ à laquelle on a rajouté des relations décroissantes. On a clairement 
\begin{equation}
\PI(I_1 \pleft I_2) = y^{\Isize(I_1) + \Isize(I_2)} x^{\Itrees(I_1) + \Itrees(I_2)} = \PI(I_1)\PI(I_2) 
\end{equation}
ce qui prouve \eqref{eq:tamari_intervalles:equiv-pleft}.

\`A présent, si $\Itrees(I_2) = m$, et que les racines de $\dec(I_2)$ sont $x_1 < x_2 < \dots <x_m$, on a par définition
\begin{equation}
I_1 \pright I_2 = \sum_{0 \leq i \leq m} P_i
\end{equation}
où on a ajouté exactement $i$ relations décroissantes entre les racines $x_1, \dots, x_i$ de $\dec(I_2)$ et l'étiquette de valeur maximale de $I_1$. On a $\Itrees(P_i) = \Itrees(I_1) + \Itrees(I_2) - i$ car chaque relation décroissante relie un arbre de $I_2$ à un arbre de $I_1$. Alors
\begin{align}
\PI(I_1 \pright I_2) &= y^{\Isize(I_1) + \Isize(I_2)}x^{\Itrees(I_1)}(1 + x + x^2 + \dots x^m) \\
&= y^{\Isize(I_1) + \Isize(I_2)}x^{\Itrees(I_1)} \frac{x^{m+1} - 1}{x - 1} \\
&= \PI(I_1) \polright \PI(I_2).
\end{align}
\end{proof}

Pour prouver le théorème \ref{thm:tamari_intervalles:equation-fonct}, nous avons encore besoin d'un résultat.

\begin{Proposition}
\label{prop:tamari_intervalles:unicity-composition}
Soit $I$ un intervalle-poset, alors il n'existe qu'un seul couple d'intervalles-posets $(I_1,I_2)$ tel que $I$ apparaisse dans la somme $\BB(I_1,I_2)$.
\end{Proposition}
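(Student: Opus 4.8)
The plan is to show that, given an interval-poset $I$ that occurs in $\BB(I_1,I_2)$, the integer $k_1$ as well as both factors $I_1$ and $I_2$ can be recovered from $I$ by an intrinsic rule independent of the chosen decomposition. Write $n=\Isize(I)$ and set $k:=k_1+1$, the ``pivot'' vertex created by the composition. From conditions \ref{def:tamari_intervalles:composition:cond:incr} and \ref{def:tamari_intervalles:composition:cond:decr} of Definition~\ref{def:tamari_intervalles:composition}, the vertex $k$ of $I$ satisfies two properties which I will take as its characterization inside $I$: (a) every vertex $i<k$ satisfies $i\trprec k$ (an increasing relation), and (b) there is no increasing relation $k\trprec j$ with $j>k$ (this is condition \ref{def:tamari_intervalles:composition:cond:decr} specialized to $i=1$).

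First I would establish uniqueness of the pivot. Suppose two vertices $k<k'$ of $I$ both satisfy (a) and (b). Applying (a) to $k'$ with the vertex $k<k'$ gives $k\trprec k'$, which is an increasing relation out of $k$, contradicting (b) for $k$. Hence the pivot $k$, equivalently $k_1=k-1$, is uniquely determined by $I$. Once $k$ is fixed, conditions \ref{def:tamari_intervalles:composition:cond:I1} and \ref{def:tamari_intervalles:composition:cond:I2} of Definition~\ref{def:tamari_intervalles:composition} force $I_1$ to be the sub-poset of $I$ induced on $\{1,\dots,k-1\}$ and $I_2$ to be the sub-poset induced on $\{k+1,\dots,n\}$ with labels shifted down by $k$; both are genuine interval-posets, being restrictions of the interval-poset $I$ to intervals of labels (the axioms of Definition~\ref{def:tamari_intervalles:intervalles-posets} are preserved by such restriction). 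Thus the pair $(I_1,I_2)$ is completely determined by $I$, which is the assertion.

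To also get existence (every nonempty interval-poset $I$ does occur in some $\BB(I_1,I_2)$), I would take $k$ to be the \emph{largest} vertex of $I$ for which (a) holds — the set of such vertices is nonempty since $k=1$ works vacuously — and verify that this $k$ has all the required features. Property (b) follows from maximality of $k$: if $k\trprec j$ with $j>k$, then axiom (1) of Definition~\ref{def:tamari_intervalles:intervalles-posets} together with transitivity shows that every $i<j$ satisfies $i\trprec j$, so $j$ satisfies (a), contradicting maximality. The absence of cross-relations between $\{1,\dots,k-1\}$ and $\{k+1,\dots,n\}$ follows similarly: a relation across these sets would, through the two axioms of Definition~\ref{def:tamari_intervalles:intervalles-posets}, force a relation involving $k$ that contradicts either (a) or (b). Finally one defines $I_1$, $I_2$ as the induced restrictions and checks conditions \ref{def:tamari_intervalles:composition:cond:I1}--\ref{def:tamari_intervalles:composition:cond:decr}, so that $I\in\BB(I_1,I_2)$.

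The main obstacle I anticipate is the last verification in the existence half: one must check that the decreasing relations linking block $2$ of $I$ to the pivot $k$ are precisely the initial segment of roots of $\dec(I_2)$ prescribed by $\BB$, and not some other down-set. This is handled by combining two local facts: axiom (2) of Definition~\ref{def:tamari_intervalles:intervalles-posets} forces the set of vertices $>k$ lying below $k$ to be an initial segment $\{k+1,\dots,k+t\}$; and the forest characterization of Lemma~\ref{lem:tamari_intervalles:carac-foret}, applied to $\dec(I)$ (which must be the final forest of the minimal tree of the interval encoded by $I$), then forces $t$ to land exactly at a block boundary of $\dec(I_2)$, since otherwise an extra vertex of $I_2$ would be dragged into the component of $k$. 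The uniqueness statement that the proposition literally asks for, by contrast, is the three-line argument of the second paragraph.
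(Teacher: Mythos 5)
Your argument is correct and is essentially the paper's own proof: you characterize the pivot $k$ as the unique (equivalently, maximal) vertex that receives an increasing relation from every smaller vertex and emits no increasing relation to any larger one, which is exactly how the paper pins down $k$ and hence forces $I_1$ and $I_2$ to be the induced restrictions on $\{1,\dots,k-1\}$ and $\{k+1,\dots,n\}$. The ``main obstacle'' you flag at the end is moot, because $\BB(I_1,I_2)$ is defined as the formal sum of \emph{all} interval-posets satisfying conditions (i)--(iv) of Définition~\ref{def:tamari_intervalles:composition}, so once those four conditions are verified for $I$ there is nothing further to match against an initial segment of roots of $\dec(I_2)$.
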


\begin{proof}
Soit $I$ un intervalle-poset de taille $n$ et soit $k$ le sommet de $I$ dont l'étiquette est maximale vérifiant que pour tout $i < k$, on a $i \trprec k$ . Notons que le sommet 1 vérifie cette propriété et donc que $k$ existe toujours. On prouve que $I$ apparaît uniquement dans la composition des intervalles $I_1$ et $I_2$ où $I_1$ est le sous-poset de $I$ restreint à $1, \dots, k-1$ et $I_2$ le sous-poset de $I$ réétiqueté restreint à $k+1, \dots, n$. Si $k=1$ (resp. $k=n$) alors $I_1$ (resp. $I_2$) est le poset vide.

Pouvons d'abord que $I \in \BB(I_1,I_2)$. Les conditions \ref{def:tamari_intervalles:composition:cond:I1}, \ref{def:tamari_intervalles:composition:cond:I2} et \ref{def:tamari_intervalles:composition:cond:incr} de la définition \ref{def:tamari_intervalles:composition} sont vérifiées par construction. Si la condition \ref{def:tamari_intervalles:composition:cond:decr} n'est pas vérifiée, cela signifie qu'il existe une relation $k \trprec j$ avec $j>k$. Alors, par définition des intervalles-posets, on a aussi $\ell \trprec j$ pour tout $k<\ell<j$. Par ailleurs, $i \trprec k \trprec j$ pour tout $i < k$, et donc quelque soit $i < j$, $i \trprec j$. C'est impossible car $k$ est l'étiquette maximale vérifiant cette condition.

On a donc $I \in \BB(I_1,I_2)$. C'est le seul couple d'intervalles possible. En effet, supposons que $I \in \BB(I_1',I_2')$. Le sommet $k' = |I_1'| +1$ vérifie par définition que pour tout $i<k'$, on a $i \trprec k'$ et pour tout $j>k'$, $k' \ntrprec j$. C'est exactement la définition de $k$. On a donc $k'=k$ ce qui implique $I_1' = I_1$ et $I_2' = I_2$.
\end{proof}

\begin{proof}[Démonstration du théorème \ref{thm:tamari_intervalles:equation-fonct}]
Soit $\SI = \sum_{T_1 \leq T_2} P_{[T_1,T_2]}$ la série formelle des intervalles-posets. D'après la proposition \ref{prop:tamari_intervalles:unicity-composition}, on a
\begin{equation}
\SI = \BB(\SI,\SI) + \emptyset.
\end{equation}
Et par la proposition \ref{prop:tamari_intervalles:equiv-composition}, 
\begin{align}
\Phi &= \PI(\SI) \\
&= \PI(\BB(\SI,\SI)) + 1 \\
&= \OB(\Phi,\Phi) + 1.
\end{align}
\end{proof}

\subsection{Comptage des éléments inférieurs à un arbre}
\label{sub-sec:tamari_intervalles:polynomes:smaller}

En développant \eqref{eq:tamari_intervalles:equation-fonct}, on obtient 
\begin{align}
\Phi &= 1 + \OB(1,1) + \OB(\OB(1,1),1) + \OB(1,\OB(1,1)) + \dots \\
&= \sum_T y^{|T|}\OBT_T,
\end{align}
où $\OBT_T$ est le polynôme de Tamari de la définition \ref{def:tamari_intervalles:tamari-polynomials}. On prouve le théorème~\ref{thm:tamari_intervalles:smaller-trees} par la proposition suivante.

\begin{Proposition}
\label{prop:tamari_intervalles:smaller-trees}
Soit $T := k(T_L,T_R)$ un arbre binaire et $S_T := \sum_{T' \leq T} P_{[T',T]}$ la somme des intervalles-posets dont $T$ est l'arbre maximal. Alors on a  $S = \BB(S_{T_L},S_{T_R})$.
\end{Proposition}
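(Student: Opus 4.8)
Soit $T := k(T_L,T_R)$ un arbre binaire et $S_T := \sum_{T' \leq T} P_{[T',T]}$ la somme des intervalles-posets dont $T$ est l'arbre maximal. Alors $S_T = \BB(S_{T_L},S_{T_R})$.

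Le plan est de montrer une double inclusion (au sens des sommes formelles d'intervalles-posets) en utilisant la description combinatoire de la composition donnée en \eqref{eq:tamari_intervalles;composition-simple} et la proposition \ref{prop:tamari_intervalles:unicity-composition} sur l'unicité de la décomposition. D'abord, je remarque que l'arbre maximal $T = k(T_L,T_R)$ a pour forêt initiale $\inc(T)$ la forêt obtenue en greffant la racine $k$ au-dessus de $\inc(T_L)$ et en plaçant $\inc(T_R)$ à côté (cf. figure \ref{fig:tamari_intervalles:forets}), donc les relations croissantes de tout intervalle-poset $P_{[T',T]}$ sont exactement : les relations croissantes de $\inc(T_L)$ sur $\{1,\dots,k-1\}$, les relations $i \trprec k$ pour tout $i<k$, et les relations croissantes de $\inc(T_R)$ sur $\{k+1,\dots,n\}$ (décalées). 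Ce sont précisément les conditions \ref{def:tamari_intervalles:composition:cond:I1}, \ref{def:tamari_intervalles:composition:cond:I2}, \ref{def:tamari_intervalles:composition:cond:incr} et \ref{def:tamari_intervalles:composition:cond:decr} de la définition \ref{def:tamari_intervalles:composition} appliquées avec $k_1 = |T_L|$. Autrement dit, tout intervalle-poset dont l'arbre maximal est $T$ apparaît dans une composition $\BB(I_1,I_2)$ où $I_1$ (resp. $I_2$) a pour arbre maximal $T_L$ (resp. $T_R$), et par la proposition \ref{prop:tamari_intervalles:unicity-composition} ce couple est unique. Réciproquement, je dois vérifier que pour tout $I_1 \in S_{T_L}$ et $I_2 \in S_{T_R}$, chaque terme de $\BB(I_1,I_2)$ est bien un intervalle-poset dont l'arbre maximal est exactement $T$ : son arbre maximal est obtenu en recollant les relations croissantes comme ci-dessus, ce qui donne $\inc(T)$, donc l'arbre maximal est $T$.

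Les étapes concrètes, dans l'ordre : (1) décrire $\inc(T)$ en fonction de $\inc(T_L)$ et $\inc(T_R)$ à l'aide du lemme \ref{lem:tamari_intervalles:carac-foret} et de la construction récursive de la forêt initiale ; (2) en déduire que les relations croissantes d'un intervalle-poset $P$ sont contraintes à être de la forme voulue si et seulement si l'arbre maximal de $P$ est $T$ — ici j'utilise que l'arbre maximal d'un intervalle-poset ne dépend que de ses relations croissantes (via $\inc$) ; (3) appliquer la proposition \ref{prop:tamari_intervalles:unicity-composition} : tout $P \in S_T$ s'écrit de manière unique dans $\BB(I_1,I_2)$, et le sommet $k$ distingué dans la preuve de cette proposition (l'étiquette maximale telle que $i \trprec k$ pour tout $i<k$) est exactement la racine de $T$, d'où $I_1$ a pour ensemble de sommets $\{1,\dots,k-1\}$ et $I_2$ le complémentaire ; (4) vérifier que $I_1$ (resp. $I_2$) est un intervalle-poset dont l'arbre maximal est $T_L$ (resp. $T_R$) — c'est-à-dire que restreindre $P$ aux $k-1$ premiers sommets donne $P_{[T_L',T_L]}$ pour un certain $T_L' \le T_L$, ce qui découle de la compatibilité des intervalles-posets avec la restriction aux intervalles initiaux de l'alphabet (analogue de la compatibilité de la congruence sylvestre) ; (5) inversement, partant de $I_1 \in S_{T_L}$, $I_2 \in S_{T_R}$, montrer que tout terme de $\BB(I_1,I_2)$ est dans $S_T$ en recalculant son arbre maximal.

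Le point délicat sera l'étape (4), à savoir l'identification précise de la \emph{restriction} d'un intervalle-poset $P_{[T',T]}$ avec $T = k(T_L,T_R)$ : il faut montrer que le sous-poset de $P$ induit sur $\{1,\dots,k-1\}$ est bien un intervalle-poset de la forme $P_{[T_L',T_L]}$ avec $T_L' \le T_L$, et de même à droite, et surtout que le nombre de relations décroissantes \emph{entre} le sommet $k$ et les composantes de $\dec(I_2)$ peut varier librement de $0$ à $\Itrees(I_2)$ sans changer l'arbre maximal — ce qui correspond exactement à la somme sur $i$ dans \eqref{eq:tamari_intervalles;composition-simple}. Pour cela je traduirai ces relations décroissantes additionnelles en rotations droites appliquées à l'arbre minimal (comme dans la preuve de la proposition précédente décrivant $\BB$ en termes d'intervalles), et j'observerai que l'ensemble $\{T' : T' \le T\}$ se décompose naturellement selon la position du sous-arbre gauche du fils $k$, ce qui redonne précisément la structure $S_{T_L} \pleft u \pright S_{T_R}$. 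Une fois cette bijection établie terme à terme, la proposition suit, et en appliquant $\PI$ et la proposition \ref{prop:tamari_intervalles:equiv-composition} on obtient le théorème \ref{thm:tamari_intervalles:smaller-trees}, puisque $\OBT_T(x) = \PI_{y=1}(S_T)$ et $\OBT_T(x) = \OB_{y=1}(\OBT_{T_L},\OBT_{T_R})$ par définition \ref{def:tamari_intervalles:tamari-polynomials}.
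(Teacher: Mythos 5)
Votre démonstration est correcte et suit essentiellement la même démarche que celle du mémoire : caractériser $S_T$ comme l'ensemble des extensions de $\inc(T)$ par des relations uniquement décroissantes (proposition \ref{prop:tamari_intervalles:comb-prop}), découper chaque intervalle-poset par restriction aux étiquettes $1,\dots,k-1$ et $k+1,\dots,n$ pour obtenir $I_L \in S_{T_L}$ et $I_R \in S_{T_R}$, puis vérifier les deux inclusions via la structure récursive des forêts initiales. Votre appel explicite à la proposition \ref{prop:tamari_intervalles:unicity-composition} pour régler les multiplicités est un soin supplémentaire que le texte laisse implicite, mais ne change pas la nature de l'argument.
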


\begin{proof}
Soit $T$ un arbre binaire de taille $n$ tel que $T=k(T_L,T_R)$. L'intervalle initial $[T_0,T]$ correspond à la forêt initiale de $T$, $\inc(T)$ qui est un intervalle-poset particulier. D'après la proposition \ref{prop:tamari_intervalles:comb-prop} \ref{prop:tamari_intervalles:comb-prop-minmax-inclusion},
la somme $S_T$ est la somme sur tous les intervalles-posets $I$ qui sont des extensions de $\inc(T)$ où les relations ajoutées sont décroissantes.

Soit $I$ un intervalle de la somme $S_T$. Soient $I_L$ et $I_R$ les sous-posets formés par la restriction de $I$ à respectivement $1, \dots, k-1$ et $k+1, \dots, n$. D'après la définition récursive des forêts initiales décrites en figure \ref{fig:tamari_intervalles:forets}, $I_L$ et $I_R$ sont des extensions de respectivement $\inc(T_L)$ et $\inc(T_R)$ où seules des relations décroissantes ont été ajoutées. On a alors $I_L \in S_{T_L}$ et $I_R \in S_{T_R}$. Enfin, on a clairement que $I \in \BB(I_L,I_R)$ car comme $I$ est une extension de $\inc(T)$ on a en particulier $i \trprec k$ pour $i < k$ et $k \ntrprec j$ pour $j > k$.

Inversement, si $I_L$ et $I_R$ sont deux éléments de respectivement $S_{T_L}$ et $S_{T_R}$ alors tout intervalle $I$ de $\BB(I_L,I_R)$ appartient à $S_T$ par construction car il est bien une extension de $\inc(T)$ où seules des relations décroissantes ont été ajoutées.
\end{proof}

Dans la figure \ref{fig:tamari_intervalles:BTExample}, on reprend l'exemple du calcul de la figure \ref{fig:tamari_intervalles:BTExemple-2} en détaillant la liste des intervalles-posets de $S_T = \sum_{T' \leq T} P_{[T',T]}$.

\begin{figure}[ht]
  \input{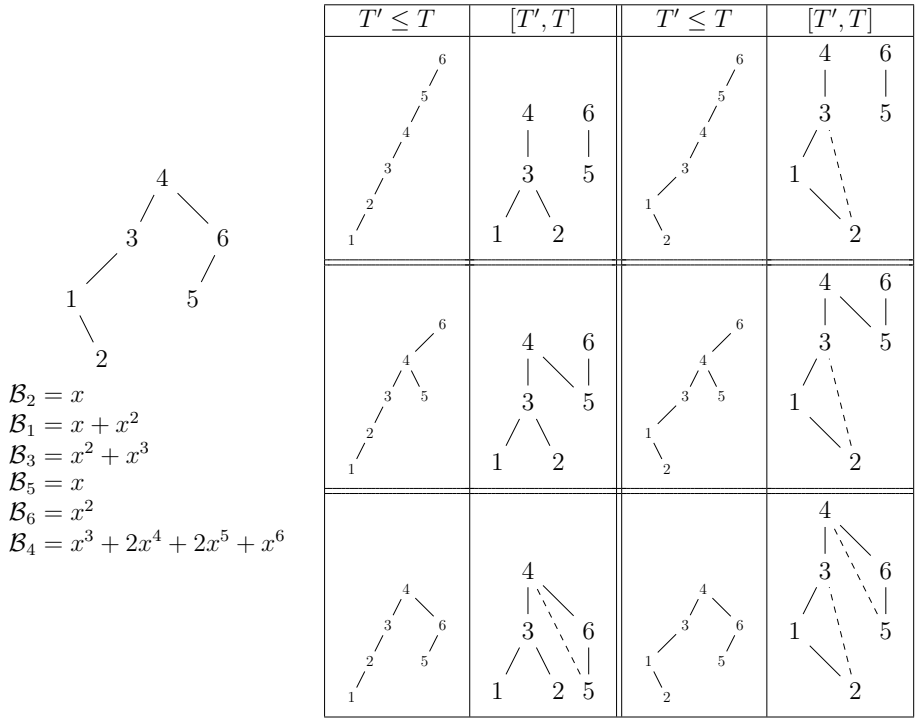}

  \caption{Exemple de calcul de $\OBT_T$ avec la liste des arbres plus petits et des intervalles-posets associés.}
  \label{fig:tamari_intervalles:BTExample}
\end{figure}

\begin{proof}[Démonstration du théorème \ref{thm:tamari_intervalles:smaller-trees}]
Compter le nombre d'arbres $T' \leq T$ en fonction du nombre de \noeuds sur la branche gauche de $T'$ revient à compter le nombre d'intervalles $I = [T',T]$ en fonction de $\Itrees(I)$. On souhaite donc prouver que $\OBT_T = \PI(S_T)$ où $S_T = \sum_{T' \leq T} P_{[T',T]}$. Cela se fait par récurrence sur la taille de $T$. Le cas initial est trivial. Soit $T=k(T_L,T_R)$, par hypothèse de récurrence on a que $\OBT_{T_L} = \PI(S_{T_L})$ et $\OBT_{T_R} = \PI(S_{T_R})$, alors les propositions \ref{prop:tamari_intervalles:equiv-composition} et \ref{prop:tamari_intervalles:smaller-trees} nous donnent
\begin{align}
\OBT_T &= \OB(\PI(S_{T_L}),\PI(S_{T_R})) \\
&= \PI(\BB(S_{T_L},S_{T_R})) \\
&= \PI(S_T).
\end{align}
\end{proof}

\subsection{Version bivariée}
\label{sub-sec:tamari_intervalles:polynomials:bivar}
\index{flots}

Dans un article très récent \cite{ChapBiVar}, Chapoton calcule des polynômes bivariés qui semblent correspondre à ceux que nous étudions. En calculant les premiers exemples de \cite[formule (7)]{ChapBiVar}, on remarque \cite{BiVar} que pour $b=1$ et $t = 1 - 1/x$ le polynôme défini est égal à $\OBT_T$ où $T$ est un arbre binaire dont le sous-arbre gauche est vide. Dans \cite{ChapBiVar}, le polynôme est indexé par un arbre enraciné. Cet arbre semble être la version non planaire de l'arbre planaire en bijection avec $T$ par $\dec(T)$.

Il est aussi possible d'ajouter un paramètre $b$ aux polynômes de la définition~\ref{def:tamari_intervalles:tamari-polynomials}. Pour un intervalle $[T',T]$, le paramètre $b$ compte le nombre de \noeuds de $T'$ qui possèdent un sous-arbre droit, ou de façon équivalente le nombre de \noeuds $x$ dans l'intervalle-poset $P_{[T',T]}$ qui ont une relation $y \trprec x$ avec $y>x$. On généralise alors l'opérateur $\PI$ en associant à chaque intervalle-poset un monôme en $x,y$ et $b$. On a 

\begin{equation}
\OB(f,g) = y \left( xbf \frac{x g - g_{x=1}}{x - 1} -bxfg + xfg \right),
\end{equation}
où $f$ et $g$ sont des polynômes en $x,y$ et $b$. La proposition \ref{prop:tamari_intervalles:equiv-composition} est toujours vérifiée car on ajoute un \noeud avec une relation décroissante dans tous les termes de la composition sauf un. Par exemple, pour le calcul présenté figure \ref{fig:tamari_intervalles:composition}, on a $\OB(y^3x^2b,y^4x^3b) = y^8(x^6b^2 + x^5b^3 + x^4b^3 + x^3b^3)$.

Avec cette définition du paramètre $b$, les polynômes bivariés $\OBT_T(x,b)$ semblent correspondre exactement aux polynômes calculés par Chapoton dans \cite{ChapBiVar} pris en $t = 1 - 1/x$. Le rapport entre les deux objets est le thème d'un travail commun entrepris avec Chapoton, Novelli et Thibon et sera étudié de façon plus complète dans une prochaine publication.


\chapter{Treillis de $m$-Tamari}
\label{chap:mtamari}
\index{treillis!de $m$-Tamari}

Dans un article récent \cite{BergmTamari}, Bergeron et Préville-Ratelle introduisent une famille de treillis généralisant le treillis de Tamari sur les chemins de Dyck. Pour un paramètre $m$ donné, on étudie l'ensemble des chemins dans le plan de $(0,0)$ à $(mn,n)$ formés de pas horizontaux $(1,0)$ et de pas verticaux $(0,1)$ et restant au dessus de la droite $y = \frac{x}{m}$. Dans le cas où $m=1$, ces chemins correspondent simplement à des chemins de Dyck où les pas montants ont été remplacés par des pas verticaux et les pas descendants par des pas horizontaux. Ce sont des objets combinatoires bien connus qui apparaissent en particulier dans le problème du scrutin et qui sont comptés par les nombres de $m$-Catalan,

\begin{equation}
\frac{1}{mn + 1} \binom{(m+1)n}{n}.
\end{equation}
Par la suite, on utilisera la dénomination anglo-saxonne \mpaths~pour l'ensemble de ces chemins.

\begin{figure}[ht]
\centering
\input{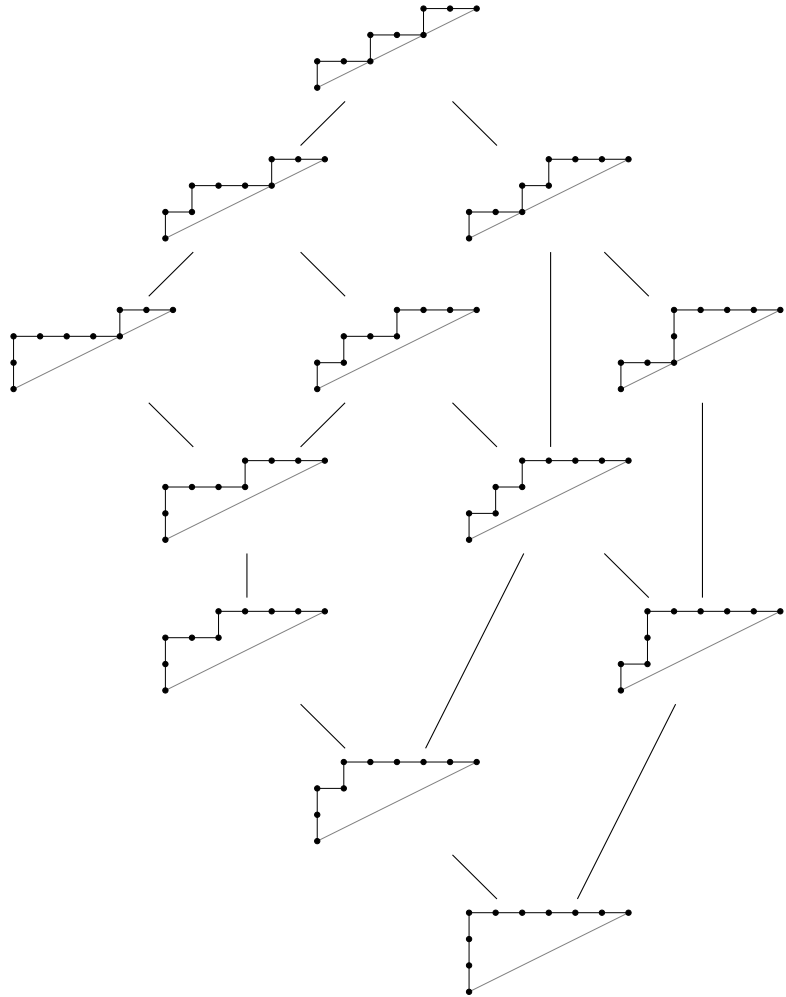}
\caption{Treillis de $m$-Tamari $\Tam{3}{2}$ sur les chemins.}
\label{fig:mtamari:mtam-3-2-paths}
\end{figure}

L'opération de rotation sur les chemins de Dyck (cf. figure \ref{fig:tamari_prelim:rot-dyck}) s'étend naturellement aux \mpaths. Elle induit aussi une structure de treillis \cite{BergmTamari} qu'on appelle \emph{treillis de $m$-Tamari}, un exemple est donné figure \ref{fig:mtamari:mtam-3-2-paths}. Quand $m=1$, on retrouve le cas classique du treillis de Tamari sur les chemins de Dyck. Les intervalles des treillis de $m$-Tamari ont été dénombrés dans \cite{mTamari}. La formule généralise celle de Chapoton \eqref{eq:tamari_intervalles:formule_chap}. On a
\begin{equation}
\label{eq:mtamari:formule_intervalles}
I_{n,m} = \frac{m+1}{n(mn +1)} \binom{(m+1)^2 n + m}{n - 1}
\end{equation}
où $I_{n,m}$ est le nombre d'intervalles dans $\Tamnm$, le treillis de $m$-Tamari pour les chemins de taille $n$. Comme dans le cas $m=1$, la formule est prouvée par la résolution d'une équation fonctionnelle sur la série génératrice des intervalles. On peut exprimer cette équation à l'aide d'un opérateur $m+1$-linéaire, généralisation de l'opérateur bilinéaire $\OB$ défini dans le chapitre \ref{chap:tamari_intervalles} \eqref{eq:tamari_intervalles:def-B}. Soit $\OBm$ l'opérateur défini par
\begin{equation}
\label{eq:mtamari:def-Bm}
\OBm(f,g_1, \dots, g_m) = fxy \Delta(g_1 \Delta(g_2 \Delta( \dots \Delta(g_m)) \dots )
\end{equation}
où $\Delta$ est la différence divisée déjà définie en \eqref{eq:tamari_intervalles:delta},
\begin{equation}
\Delta(g) = \frac{x g(x,y) - g(1,y)}{x - 1}.
\end{equation}
Si $\Phim(x,y)$ est la série génératrice des intervalles de $\Tamnm$ où $y$ compte la taille des chemins et $x$ la statistique des retours à 0 sur le chemin inférieur, on a \cite{mTamari}
\begin{equation}
\Phim(x,y) = 1 + \OBm(\Phim, \Phim, \dots, \Phim).
\end{equation}

La structure de l'équation fonctionnelle généralise donc directement celle du cas $m=1$. En développant l'expression, on obtient une somme sur les arbres $m+1$-aires. Cela laisse penser que les résultats du chapitre précédent peuvent se généraliser aux treillis $m$-Tamari. C'est en effet le cas et on obtient ainsi une nouvelle preuve que la série génératrice des intervalles de $\Tamnm$ vérifie bien l'équation fonctionnelle. Par ailleurs, en généralisant le théorème \ref{thm:tamari_intervalles:smaller-trees}, on obtient une formule comptant le nombre d'éléments plus petits ou égaux à un élément donné. La preuve de ces deux résultats est donné au paragraphe \ref{sec:mtamari:intervalles}. 

Par ailleurs, dans les récents articles traitant des treillis de $m$-Tamari, le problème était laissé ouvert de l'interprétation en termes d'arbres de ces treillis. La question était pour nous fondamentale car nos démonstrations dans le cas du treillis de Tamari classique utilisent comme base les arbres binaires et le lien avec l'ordre faible. Pour y répondre, nous utilisons le plongement naturel du treillis $\Tamnm$ dans le treillis de Tamari $\Tam{n \times m}{1}$ qu'avait déjà décrit \cite{mTamari}. Ainsi, les intervalles-posets définis au chapitre précédent se généralisent simplement : les intervalles de $m$-Tamari étant des cas particulier d'intervalles de Tamari.

Nous commençons donc par donner la description en termes d'arbres des treillis de $m$-Tamari dans le paragraphe \ref{sec:mtamari:def}. En particulier, nous décrivons les relations de couverture du treillis de $m$-Tamari sur les arbres $m+1$-aires. Nous utilisons ensuite cette description pour généraliser l'opération de composition des intervalles-posets que nous avons définie au chapitre précédent. Nous démontrons ainsi au paragraphe \ref{sec:mtamari:intervalles} les généralisations des théorèmes \ref{thm:tamari_intervalles:smaller-trees} et \ref{thm:tamari_intervalles:equation-fonct}. Enfin, dans le paragraphe \ref{sec:mtamari:alg}, nous décrivons des structures d'algèbres de Hopf généralisant les cas de $\FQSym$ et $\PBT$ à des versions dites "$m$". Les résultats de ce chapitre sont issus d'un travail commun avec Grégory Chatel, Jean-Christophe Novelli et Jean-Yves Thibon et feront prochainement l'objet de publications.   

\section{Les treillis $m$-Tamari sur les arbres}
\label{sec:mtamari:def}

\subsection{Définition sur les chemins}
\label{sub-sec:mtamari:def:chemins}
\index{ballot paths}

\begin{Definition}
\label{def:mtamari:m-ballot-paths}
Un \mpath~de taille $n$ est un chemin dans le plan depuis l'origine $(0,0)$ jusqu'au point $(nm,n)$ formé de pas "montants" verticaux $(0,1)$ et de pas "descendants" horizontaux $(1,0)$ tel que le chemin reste toujours au dessus de la droite $y = \frac{x}{m}$.  
\end{Definition}

Tout comme les chemins de Dyck, les \mpaths~peuvent s'interpréter comme des mots sur un alphabet binaire $\lbrace 1, 0 \rbrace$ où les pas montants sont codés par la lettre 1 et les pas descendants par 0. De même, un chemin est dit \emph{primitif} s'il n'a pas d'autres contacts avec la droite $y = \frac{x}{m}$ que ses extrémités. La \emph{rotation} est alors définie comme dans la définition \ref{def:tamari_prelim:dyck_rotation} et illustrée figure \ref{fig:mtamari:mpath-rot}.

\begin{figure}[ht]
\centering
\input{includes/figures/rotation_mpath}
\caption{Rotations sur les \mpaths.}
\label{fig:mtamari:mpath-rot}
\end{figure}

Si on considère la rotation sur les chemins comme une relation de couverture, l'ordre induit est un treillis qui généralise l'ordre de Tamari usuel \cite{BergmTamari}. En exemple, on donne l'ordre sur les \kpaths{2} de taille 3, $\Tam{3}{2}$ figure \ref{fig:mtamari:mtam-3-2-paths}.

En remplaçant chaque pas montant par une suite de $m$ pas montants, on peut faire correspondre injectivement un chemin de Dyck de taille $m.n$ à chaque \mpath. L'ensemble obtenu est composé de chemins de Dyck dont les cardinaux des suites de pas montants sont divisibles par $m$. On appelle ces objets les chemins de $m$-Dyck. Un exemple de la correspondance est donnée figure \ref{fig:mtamari:m-dyck}.

\begin{figure}[ht]
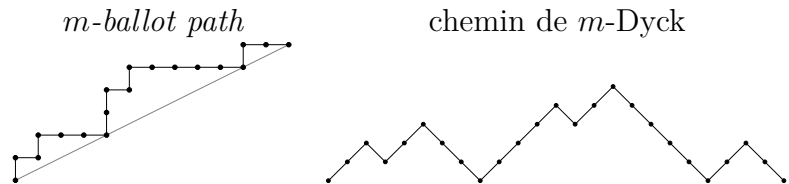

\centering

\def \fpath{includes/figures/}

\begin{tabular}{cc}
\mpath & chemin de $m$-Dyck \\
\scalebox{0.3}{\input{\fpath mpaths/P6-2-ex1}} &
\scalebox{0.25}{\input{\fpath dyck/D12-ex1}}
\end{tabular}
\caption{Un \mpath~et son chemin de $m$-Dyck correspondant}
\label{fig:mtamari:m-dyck}
\end{figure}

Le passage d'un \mpath~à son chemin de $m$-Dyck est compatible avec la rotation. On en déduit la propriété suivante que l'on trouve déjà dans \cite{mTamari} :

\begin{Proposition}
Le treillis de $m$-Tamari $\Tamnm$ est isomorphe à l'idéal supérieur de $\Tam{n\times m}{1}$ engendré par le chemin de Dyck $(1^m 0^m)^n$ (cf. figure \ref{fig:mtamari:minimal}).  
\end{Proposition}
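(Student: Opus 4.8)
Le plan est de construire explicitement l'isomorphisme via l'opération d'éclatement déjà évoquée (figure~\ref{fig:mtamari:m-dyck}) : on lit un \mpath{} de taille $n$ comme un mot binaire et l'on remplace chaque pas montant $1$ par le bloc $1^m$ et chaque pas descendant $0$ par $0$ ; notons $\phi$ cette application. Je vérifierais d'abord qu'elle est bien définie et injective : lorsque le chemin a effectué $a$ pas montants et $b$ pas descendants, il se trouve au point $(b,a)$ et reste au-dessus de la droite $y = x/m$ exactement quand $ma \ge b$, ce qui est précisément la condition pour que le chemin image soit à hauteur $ma - b \ge 0$ après le préfixe correspondant ; ainsi $\phi$ envoie les \mpaths{} de taille $n$ sur des chemins de Dyck de demi-longueur $nm$. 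Comme $\phi$ respecte la concaténation, son image est exactement l'ensemble $\mathcal M$ des \emph{chemins de $m$-Dyck} de demi-longueur $nm$, c'est-à-dire les chemins de Dyck dont toutes les suites maximales de pas montants ont une longueur divisible par $m$ ; l'application réciproque regroupe chaque telle suite en blocs de $m$ puis vérifie la condition de \mpath{} en sens inverse. On remarque au passage que $\phi$ envoie le minimum $(10^m)^n$ de $\Tamnm$ sur $(1^m 0^m)^n$.

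Le point central est que $\phi$ échange les opérations de rotation, donc les relations de couverture. Une couverture de $\Tamnm$ est une rotation en un pas descendant $d$ immédiatement suivi d'un pas montant, avec pour $c$ le sous-chemin primitif de premier retour débutant à cet endroit ; comme $\phi$ respecte la concaténation, envoie bijectivement les pas descendants sur les pas descendants, les pas montants sur des suites de pas montants, et préserve les portions de premier retour, une telle rotation est transportée sur la rotation analogue de $\phi(u)$, qui est une couverture de $\Tam{n\times m}{1}$. Réciproquement, pour $D \in \mathcal M$, tout pas descendant $d$ de $D$ immédiatement suivi d'un pas montant est en fait suivi d'une suite montante de longueur $km$ avec $k \ge 1$, dont le bloc initial de $m$ pas montants débute un sous-chemin primitif de premier retour ; je vérifierais que la rotation de Tamari correspondante fusionne la suite montante qui précède $d$ (une suite montante maximale de $D$, donc de longueur divisible par $m$) avec la suite montante initiale de ce sous-chemin primitif (elle aussi maximale dans $D$, donc divisible par $m$) et ne modifie aucune autre suite montante ; le résultat appartient donc encore à $\mathcal M$ et vaut $\phi$ de la rotation correspondante de $\phi^{-1}(D)$. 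Ceci montre à la fois que $\mathcal M$ est stable par passage à un successeur immédiat dans $\Tam{n\times m}{1}$ et que $\phi$ réalise une bijection entre les couvertures de $\Tamnm$ et celles du sous-poset induit $\mathcal M$. Comme dans un poset fini l'ordre est la clôture transitive de la relation de couverture, $\phi$ est alors un isomorphisme de posets $\Tamnm \cong \mathcal M$.

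Il reste à identifier $\mathcal M$, comme sous-poset induit de $\Tam{n\times m}{1}$, avec l'idéal supérieur engendré par $(1^m 0^m)^n$. Une inclusion est immédiate : $(1^m 0^m)^n \in \mathcal M$ et $\mathcal M$ est stable par passage à un successeur immédiat, donc contient tout l'idéal supérieur. Pour l'inclusion réciproque, tout $D \in \mathcal M$ s'écrit $D = \phi(v)$ pour un \mpath{} $v$, et $v \ge (10^m)^n$ dans $\Tamnm$ puisque ce dernier en est le minimum ; en transportant cette relation par l'isomorphisme déjà établi (les deux extrémités appartiennent à $\mathcal M$), on obtient $D \ge \phi((10^m)^n) = (1^m 0^m)^n$ dans $\Tam{n\times m}{1}$. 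Ainsi $\mathcal M = \lbrace D : D \ge (1^m 0^m)^n \rbrace$ et $\Tamnm \cong \mathcal M$, ce qui achève la preuve. Le point que je m'attends à devoir traiter avec le plus de soin est le contrôle des longueurs des suites de pas montants sous une rotation de Tamari — en particulier l'affirmation que les deux suites fusionnées sont chacune des suites montantes maximales du chemin de $m$-Dyck et qu'aucune autre ne change de longueur — que j'énoncerais comme un lemme séparé, en prenant garde aux cas limites (par exemple lorsque le préfixe précédant $d$ se termine déjà par un pas descendant, la suite montante "fusionnée" étant alors de longueur $0$).
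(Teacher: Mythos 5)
Votre démonstration est correcte et suit exactement la même idée que le texte : l'éclatement de chaque pas montant en un bloc de $m$ pas montants, dont on vérifie qu'il est compatible avec la rotation. Le mémoire se contente d'énoncer cette compatibilité en renvoyant à \cite{mTamari}, tandis que vous en détaillez la vérification (conservation des chemins primitifs, fusion de deux montées maximales de longueurs divisibles par $m$, stabilité de l'image par passage au successeur), ce qui est le bon niveau de soin pour les points délicats que vous identifiez vous-même.
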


De cette observation triviale, on déduit la plupart des propriétés des treillis de $m$-Tamari. 

\subsection{Arbres $m$-binaires}
\label{sub-sec:mtamari:def:mbinaires}
\index{arbres!$m$-binaires}

Nous avons décrit dans le paragraphe \ref{sub-sec:tamari_prelim:tamari:def} la bijection entre les arbres binaires et les chemins de Dyck. L'image du chemin de $m$-Dyck minimal est donnée par un arbre binaire qu'on appelle \emph{peigne-$(n,m)$}.

\begin{Definition}
\label{def:mtamari:peigne-nm}
Le peigne-$(n,m)$ est l'arbre binaire $T$ de taille $n\times m$ tel que $T$ possède $n$ \noeuds sur sa branche gauche et tel que le sous-arbre droit de chacun de ces \noeuds soit une suite de $m-1$ fils droits sans fils gauches (cf. figure \ref{fig:mtamari:minimal}).
\end{Definition}

\begin{figure}[ht]
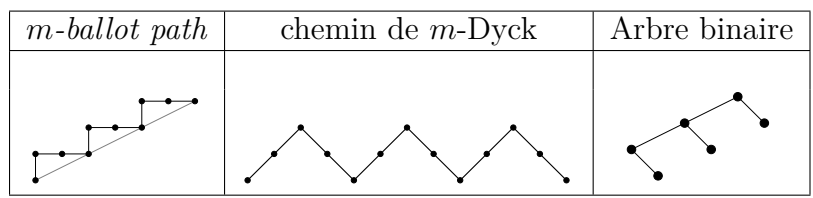

\centering

\def \fpath{includes/figures/}

\begin{tabular}{|c|c|c|}
  \hline
  \mpath & chemin de $m$-Dyck & Arbre binaire \\
  \hline
  & & \\
  \scalebox{0.35}{
  \input{\fpath mpaths/P3-2-a}
    } &
  \scalebox{0.35}{
    \input{\fpath dyck/P3-2-a}
  }&
  \scalebox{0.7}{
    \input{\fpath mbinary/P3-2-a}
  }\\\hline
\end{tabular}

\caption{\'Elément minimal de $\Tam{3}{2}$ en tant que \mpath, chemin de Dyck et arbre binaire (peigne-$(3,2)$).}
\label{fig:mtamari:minimal}
\end{figure}

Le treillis de $m$-Tamari est donc identifiable à l'idéal supérieur engendré par le peigne-$(n,m)$. Ces arbres sont comptés par les nombres de $m$-Catalan et possèdent une structure non plus binaire mais $m+1$-aire. 

\begin{figure}[ht]
\centering
\input{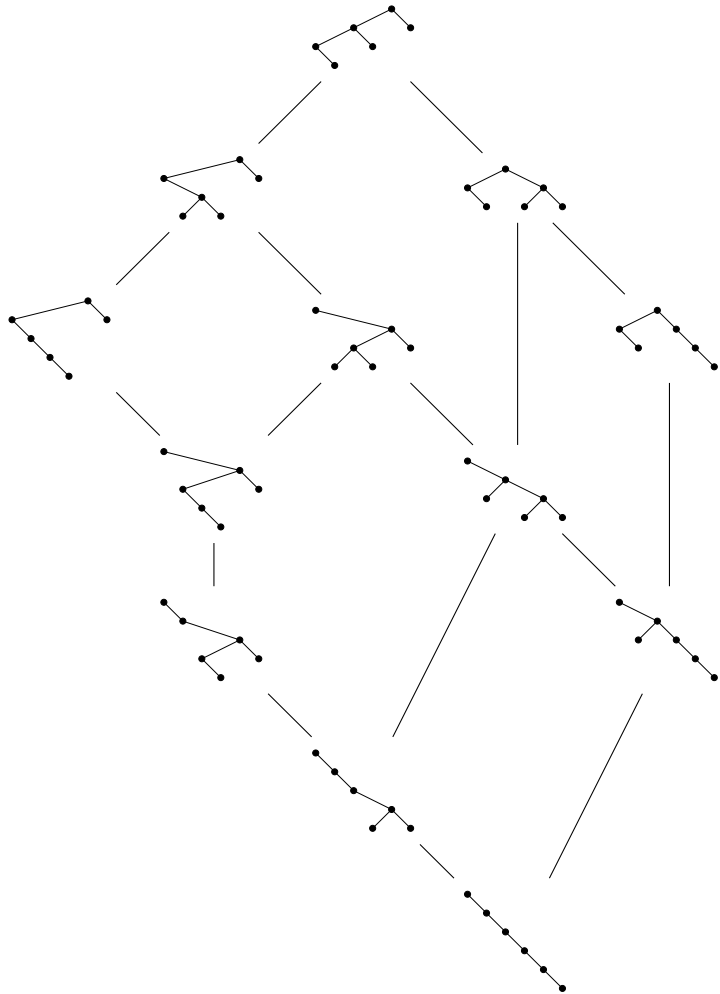}
\caption{Treillis de $m$-Tamari $\Tam{3}{2}$ sur les arbres $m$-binaires}
\label{fig:mtamari:mtam-3-2-mbinary}
\end{figure}

\begin{figure}[ht!]
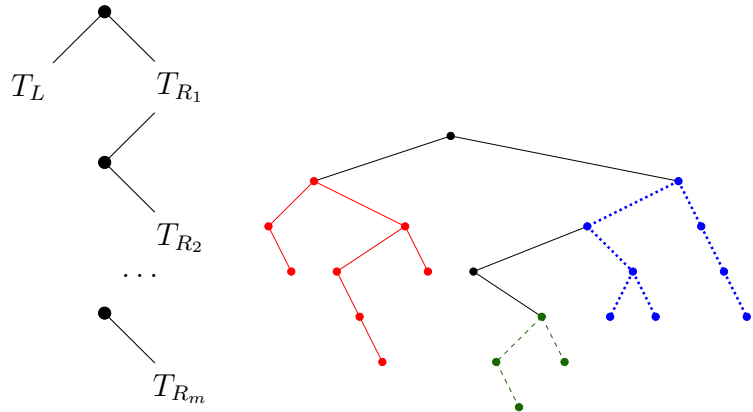

\centering
\begin{tabular}{cc}
\input{includes/figures/m-binary-trees}
&
\scalebox{0.6}{\input{includes/figures/exemple-arbre-m-binaire}}
\end{tabular}
\caption[Structure des arbres $m$-binaires]{Structure des arbres $m$-binaires. A droite, un exemple pour $m=2$ : $T_L$ est en rouge, $T_{R_1}$ en bleu (pointillés) et $T_{R_2}$ en vert (tirets).}
\label{fig:mtamari:mbinary}
\end{figure}

\begin{Definition}
On appelle \emph{arbres $m$-binaires} les arbres binaires qui vérifient la structure récursive décrite par la figure \ref{fig:mtamari:mbinary}. Un arbre $m$-binaire est soit l'arbre vide, soit un ensemble de $m+1$ arbres $m$-binaires $T_L, T_{R_1}, T_{R_2}, \dots, T_{R_m}$, greffés sur $m$ \noeuds racines. 

L'arbre $T_L$ est le sous-arbre gauche du premier \noeud racine. Le sous-arbre droit est formé de $T_{R_1}$ auquel on a greffé le deuxième \noeud racine à gauche de son \noeud le plus à gauche. Le sous-arbre droit de ce \noeud racine est alors $T_{R_2}$ auquel on a greffé le troisième \noeud racine et ainsi de suite. Si un arbre $T_{R_i}$ est vide, alors le \noeud racine $i+1$ est directement le fils droit du \noeud racine $i$.
\end{Definition}

\pagebreak

\begin{Proposition}
\label{prop:mtamari:carac-mbinaire}
Un arbre binaire $T$ est un arbre $m$-binaire si et seulement si il est de taille $n \times m$ et que son arbre binaire de recherche vérifie :
\begin{align}
\label{eq:mtamari:mbinaire-cond}
\nonumber
m &\trprec_T m-1 \trprec_T \dots \trprec_T 1 ,\\
\nonumber
2m &\trprec_T 2m-1 \trprec_T \dots \trprec_T m+1, \\
\dots \\
\nonumber
n.m &\trprec_T n.m-1 \trprec_T \dots \trprec_T (n-1).m +1.
\end{align}
\end{Proposition}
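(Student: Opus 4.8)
L'idée est de prouver l'équivalence entre les deux caractérisations des arbres $m$-binaires (la définition récursive via les $m+1$ sous-arbres $T_L, T_{R_1}, \dots, T_{R_m}$ d'une part, et la condition combinatoire \eqref{eq:mtamari:mbinaire-cond} sur l'arbre binaire de recherche d'autre part) par récurrence sur la taille $n$ (où l'arbre a $n \times m$ \noeuds). Le cas $n=0$ est trivial : l'arbre vide vérifie vacueusement les deux descriptions. Pour l'étape de récurrence, je décomposerais un arbre $m$-binaire $T$ de taille $n \times m$ selon sa branche gauche, comme dans la figure \ref{fig:mtamari:mbinary}.

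\textbf{Sens direct.} Soit $T$ un arbre $m$-binaire construit par greffe de $T_L, T_{R_1}, \dots, T_{R_m}$ sur $m$ \noeuds racines $x_1, \dots, x_m$. Dans l'étiquetage standard de l'arbre binaire de recherche, les \noeuds du sous-arbre gauche $T_L$ reçoivent les étiquettes $1, \dots, |T_L|$ ; la racine $x_1$ reçoit $|T_L| + 1$ ; puis, en descendant le long de la chaîne de fils droits $x_1, x_2, \dots, x_m$, chaque $T_{R_i}$ et chaque $x_{i+1}$ reçoivent des blocs d'étiquettes consécutives. Le point clé est que $x_1, x_2, \dots, x_m$ forment une chaîne descendante de fils droits : dans l'arbre binaire de recherche, $x_m$ est dans le sous-arbre droit de $x_{m-1}$, lequel est dans le sous-arbre droit de $x_{m-2}$, etc., donc $x_m \trprec_T x_{m-1} \trprec_T \dots \trprec_T x_1$. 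Il faut alors vérifier que ces \noeuds $x_1, \dots, x_m$ portent exactement les étiquettes $n.m, n.m-1, \dots, (n-1).m+1$ (les $m$ plus grandes valeurs inférieures ou égales à... attention, non : ce sont les \noeuds de la « dernière tranche » relative à $T_L$). En réalité la numérotation fonctionne tranche par tranche : l'hypothèse de récurrence appliquée à $T_L$ fournit les $|T_L|/m$ premières lignes de \eqref{eq:mtamari:mbinaire-cond}, celle appliquée successivement aux $T_{R_i}$ fournit les lignes suivantes (à un décalage près des étiquettes), et les \noeuds racines $x_1, \dots, x_m$ fournissent la dernière ligne. Le soin à apporter ici est la gestion précise des décalages d'étiquettes entre les blocs, en utilisant que l'étiquetage binaire de recherche d'un sous-arbre est l'étiquetage binaire de recherche standard de cet arbre, décalé.

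\textbf{Réciproque.} Soit $T$ un arbre binaire de recherche de taille $n \times m$ vérifiant \eqref{eq:mtamari:mbinaire-cond}. La dernière ligne $n.m \trprec_T n.m-1 \trprec_T \dots \trprec_T (n-1).m+1$ signifie que les $m$ plus grandes étiquettes forment une chaîne ; je montrerais d'abord que, dans un arbre binaire de recherche, une telle chaîne d'étiquettes consécutives maximales ne peut être réalisée que par une chaîne de fils droits issue d'un \noeud, ce qui identifie les \noeuds racines $x_1 = (n-1).m+1, \dots, x_m = n.m$ et force la branche gauche de $x_1$ à être un arbre $T_L$ et chaque $T_{R_i}$ (sous-arbre gauche de $x_{i+1}$) à être de taille divisible par $m$. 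Il restera à vérifier que $T_L$ et les $T_{R_i}$, munis de leur étiquetage induit, vérifient à nouveau une condition de la forme \eqref{eq:mtamari:mbinaire-cond} (après renumérotation standard), de sorte que l'hypothèse de récurrence s'applique et donne que ce sont des arbres $m$-binaires — donc $T$ aussi.

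\textbf{Principal obstacle.} Le point le plus délicat est la \emph{comptabilité des étiquettes} : montrer que les conditions \eqref{eq:mtamari:mbinaire-cond} se « découpent » proprement en restreignant à $T_L$ et aux $T_{R_i}$, c'est-à-dire que les lignes de \eqref{eq:mtamari:mbinaire-cond} se répartissent exactement entre les sous-arbres selon les blocs d'étiquettes consécutives qu'ils occupent, et qu'aucune relation ne « traverse » d'un bloc à l'autre autrement que via la chaîne des $x_i$. Cela repose essentiellement sur le lemme de l'arbre binaire de recherche (un sous-arbre occupe un intervalle d'étiquettes consécutives) combiné avec l'observation que $\trprec_T$ restreint à un intervalle d'étiquettes correspond à $\trprec$ du sous-arbre associé. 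Une alternative plus courte serait de passer entièrement par la bijection arbres binaires / chemins de Dyck déjà établie : le peigne-$(n,m)$ correspond au chemin $(1^m0^m)^n$, les arbres au-dessus du peigne-$(n,m)$ pour Tamari correspondent (via la proposition sur les forêts finales et l'idéal supérieur) aux arbres dont la forêt finale contient les relations de \eqref{eq:mtamari:mbinaire-cond}, et il suffit alors de reconnaître que ces dernières sont précisément les chemins de $m$-Dyck, donc les arbres $m$-binaires ; mais cette voie demande d'abord d'identifier soigneusement la forêt finale du peigne-$(n,m)$, ce qui ramène au même type de calcul sur les étiquettes.
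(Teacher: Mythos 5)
Votre sens direct est, pour l'essentiel, celui du texte : récurrence sur $n$ suivant la décomposition de la figure~\ref{fig:mtamari:mbinary}, les $m$ \noeuds racines recevant les étiquettes consécutives $|T_L|+1,\dots,|T_L|+m$ (vous vous corrigez à raison : ce n'est pas la tranche $nm,\dots,(n-1)m+1$ en général) et les blocs $T_{R_m},\dots,T_{R_1}$ étant décalés par des multiples de $m$. Explicitez toutefois l'ordre infixe, qui est le c\oe{}ur de l'argument : $T_L$, puis $x_1,x_2,\dots,x_m$ \emph{consécutivement}, puis $T_{R_m},T_{R_{m-1}},\dots,T_{R_1}$ ; votre formulation laisse entendre que les $x_{i+1}$ et les $T_{R_i}$ s'intercalent, auquel cas les racines ne formeraient pas une tranche. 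Notez aussi que les $x_i$ ne forment pas une chaîne de fils droits : $x_{i+1}$ est le fils \emph{gauche} du \noeud le plus à gauche de $T_{R_i}$ dès que $T_{R_i}$ est non vide ; seule la relation de sous-arbre $x_m \trprec_T \dots \trprec_T x_1$ est garantie, et elle suffit.

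La réciproque comporte en revanche une erreur qui fait échouer le plan. Vous identifiez les \noeuds racines $x_1,\dots,x_m$ à la dernière tranche $(n-1)m+1,\dots,nm$. C'est faux en général : d'après l'ordre infixe ci-dessus, la plus grande étiquette $nm$ appartient à $T_{R_1}$ dès que celui-ci est non vide, tandis que les racines portent les étiquettes $|T_L|+1,\dots,|T_L|+m$ ; en particulier le sous-arbre gauche du \noeud $(n-1)m+1$ n'est pas le $T_L$ de la décomposition, et $T_{R_i}$ n'est pas le sous-arbre gauche de $x_{i+1}$ (celui-ci est vide). Le bon point d'ancrage est la racine de $T$ : elle ne précède aucun \noeud pour $\trprec_T$, alors que \eqref{eq:mtamari:mbinaire-cond} force tout label non congru à $1$ modulo $m$ à précéder son prédécesseur ; la racine est donc étiquetée $km+1$, d'où $|T_L|=km$ et la récurrence s'applique à $T_L$. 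On repère ensuite $km+2$ comme \noeud le plus à gauche du sous-arbre droit, on définit $T_{R_1}$ comme l'arbre placé entre la racine et le \noeud $km+2$, et un argument de comptage montre que chaque tranche est entièrement dans $T_{R_1}$ ou entièrement hors de $T_{R_1}$, donc que $m$ divise $|T_{R_1}|$ ; on itère sur $T_{R_2},\dots,T_{R_m}$. Enfin, votre variante par l'idéal supérieur du peigne-$(n,m)$ serait circulaire ici : c'est précisément la proposition suivante, qui se déduit du présent énoncé.
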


On peut vérifier la propriété sur la figure \ref{fig:mtamari:exemple-m-binaire} qui est l'arbre binaire de recherche de l'exemple donné figure \ref{fig:mtamari:mbinary}.

\begin{figure}[ht]
\centering
\scalebox{0.7}{
\input{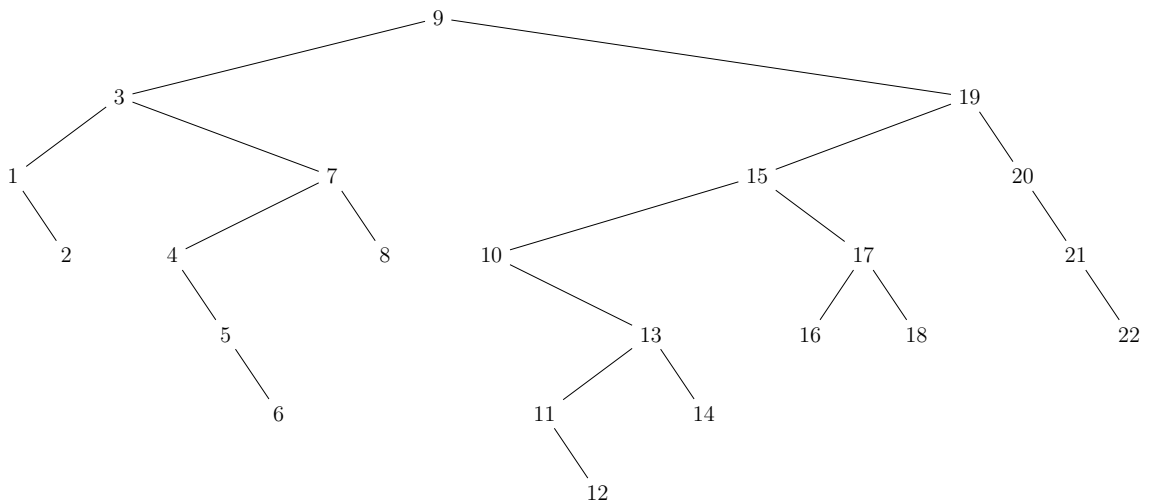}
}
\caption[Arbre binaire de recherche d'un arbre $m$-binaire]{Arbre binaire de recherche d'un arbre $m$-binaire. On peut vérifier qu'on a toujours $2k \trprec 2k -1$.}
\label{fig:mtamari:exemple-m-binaire}
\end{figure}

\begin{proof}
On prouve la propriété par récurrence sur $n$. Soit $T$ un arbre $m$-binaire composé de $T_L, T_{R_1}, \dots, T_{R_m}$. Supposons que $T_L, T_{R_1}, \dots, T_{R_m}$ sont des arbres $m$-binaires et qu'ils vérifient la propriété. Alors l'arbre $T$ la vérifie aussi. La racine de $T$ est étiquetée par $|T_L| +1$ et comme $T_L$ est $m$-binaire, alors $|T_L| = k.m$ pour un certain $k \in \NN$. L'étiquette de la racine de $T$ est donc $x = k.m + 1$ et pour tout $j>x$, on a $j \trprec_T x$. C'est vrai en particulier pour $x+1, x+2, \dots, x+m-1$. Par ailleurs, au sein de $T$, l'étiquetage de $T_L$ reste inchangé, l'étiquetage de $T_{R_m}$ est décalé de $|T_L| + m$, celui de $T_{R_{m-1}}$ de $|T_L| + m + |T_{R_m}|$ etc. Les étiquetages sont décalés uniquement par des multiples de $m$ et donc si la propriété \eqref{eq:mtamari:mbinaire-cond} était respectée dans $T_L$, $T_{R_1}, \dots, T_{R_m}$, elle l'est toujours dans $T$.

\`A présent, soit $T$ un arbre binaire de recherche vérifiant \eqref{eq:mtamari:mbinaire-cond}. Soit $x$ la racine de $T$. Comme $x$ ne précède aucun élément de $T$, on a forcément que $x = k.m + 1$ pour un certain $0 \leq k < n$. Soit $T_L$ le sous arbre gauche de $T$, alors $|T_L| = km$ et par récurrence, $T_L$ est un arbre $m$-binaire. On a que $x+1 \trprec_T x$, c'est-à-dire $x+1$ est dans le sous-arbre droit de $x$. C'est le \noeud le plus à gauche du sous-arbre droit. Soit $T_{R_1}$, l'arbre placé entre $x$ et $x+1$. Pour $0 \leq a < n$, et $ 1 \leq b \leq m$, on a que le \noeud $y = a.m +b$ est dans $T_{R_1}$ si et seulement si 
 tous les \noeuds $a.m + m \trprec_T a.m + m-1 \trprec_T \dots \trprec_T a.m + 1$ sont aussi dans $T_{R_1}$. L'arbre est donc d'une taille multiple de $m$ et vérifie \eqref{eq:mtamari:mbinaire-cond}, c'est un arbre $m$-binaire par récurrence. Le même raisonnement s'applique à $T_{R_2}$, $T_{R_3}$, \dots, $T_{R_m}$, et $T$ vérifie bien la structure récursive.
\end{proof}

\begin{Proposition}
L'idéal engendré par le peigne-$(n,m)$ est l'ensemble des arbres $m$-binaires.
\end{Proposition}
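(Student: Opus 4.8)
Le but est de montrer que l'idéal supérieur engendré par le peigne-$(n,m)$ dans le treillis de Tamari $\Tam{n\times m}{1}$ est exactement l'ensemble des arbres $m$-binaires, que nous venons de caractériser par la condition \eqref{eq:mtamari:mbinaire-cond} à la proposition \ref{prop:mtamari:carac-mbinaire}. On procède par double inclusion. La première partie consiste à vérifier que le peigne-$(n,m)$ est lui-même un arbre $m$-binaire et que l'ensemble des arbres $m$-binaires est \emph{clos par rotation montante}, c'est-à-dire stable lorsqu'on applique une rotation droite $\to$ gauche (celle qui fait monter dans le treillis de Tamari tel que nous le dessinons). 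La seconde partie consiste à montrer que tout arbre $m$-binaire est effectivement atteignable depuis le peigne-$(n,m)$ par une suite de telles rotations.

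\textbf{Première inclusion : clôture par rotation.} Que le peigne-$(n,m)$ vérifie \eqref{eq:mtamari:mbinaire-cond} est immédiat : dans son arbre binaire de recherche, chaque bloc $\{(i-1)m+1,\dots,im\}$ forme un peigne droit, donc $im \trprec (im-1) \trprec \dots \trprec (i-1)m+1$. Pour la clôture, je considère un arbre $m$-binaire $T$ et une rotation en un \noeud $y$ de fils gauche $x$, produisant $T'$. Il faut voir que les relations $\trprec$ du type \eqref{eq:mtamari:mbinaire-cond} sont préservées. La clef est que, dans un arbre binaire de recherche, une rotation en $y$ ne modifie les relations $a \trprec b$ que pour $a,b$ appartenant aux sous-arbres concernés, et seulement pour les couples $(a,b)$ avec $a$ dans l'ancien sous-arbre droit $B$ de $x$ (qui devient le sous-arbre gauche de $y$) : ces $a$ cessent de précéder $x$ et se mettent à précéder $y$. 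Or une rotation \emph{montante} dans Tamari correspond précisément à $\inv(T') \subsetneq \inv(T)$ restreint convenablement — plus utilement, je décrirai l'effet sur les mots de $m$-Dyck : une rotation échange un pas descendant avec un sous-chemin primitif $c$, et l'hypothèse « $m$-Dyck » signifie que les montées de $c$ ont longueur divisible par $m$ ; comme le déplacement global du pas descendant préserve cette divisibilité, le chemin résultant reste un chemin de $m$-Dyck. C'est l'argument le plus propre : il suffit donc de re-vérifier que la bijection chemins de Dyck / arbres binaires envoie « chemin de $m$-Dyck » sur « arbre $m$-binaire » (ce qui découle de la structure récursive : un chemin de $m$-Dyck s'écrit $D = D_1\, 1^m D_2 (1^{(k-1)m}0^{km}\cdots) 0^m$ de manière compatible avec la décomposition de la figure \ref{fig:mtamari:mbinary}), puis d'invoquer que la rotation sur les chemins de Dyck correspond à la rotation sur les arbres.

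\textbf{Seconde inclusion : tout arbre $m$-binaire est atteignable.} Comme le treillis de $m$-Tamari $\Tamnm$ est déjà connu pour être isomorphe à l'idéal supérieur de $\Tam{n\times m}{1}$ engendré par $(1^m0^m)^n$ (c'est la proposition rappelée au paragraphe \ref{sub-sec:mtamari:def:chemins}, due à \cite{mTamari}), et que cet idéal, transporté par la bijection chemins/arbres, a pour élément minimal le peigne-$(n,m)$, il suffit de montrer que l'ensemble des arbres $m$-binaires est \emph{exactement} l'image par la bijection de l'ensemble des chemins de $m$-Dyck. Pour l'inclusion restante (tout arbre $m$-binaire provient d'un chemin de $m$-Dyck), je raisonne par récurrence sur $n$ en utilisant la caractérisation récursive : si $T$ est $m$-binaire de décomposition $(T_L, T_{R_1},\dots,T_{R_m})$, alors par hypothèse de récurrence chacun de $T_L, T_{R_1},\dots,T_{R_m}$ correspond à un chemin de $m$-Dyck, et la construction de figure \ref{fig:mtamari:mbinary} — greffer $T_L$ à gauche, puis enchaîner les $m$ \noeuds racines portant les $T_{R_i}$ sur leurs sous-arbres droits successifs — se traduit exactement par $D = D_L\, 1^m\, D_{R_1}\, 0\, \cdots\, 1^0\, D_{R_m}\, 0^{?}$ ; plus précisément la montée initiale de longueur $m$ et le fait que les $0$ séparant les $T_{R_i}$ viennent par paquets compatibles garantissent la divisibilité par $m$ de toutes les longueurs de montée. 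Ainsi l'ensemble des arbres $m$-binaires coïncide avec l'image des chemins de $m$-Dyck, qui est l'idéal engendré par le peigne-$(n,m)$.

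\textbf{Obstacle principal.} La partie la plus délicate est de rendre rigoureuse la correspondance entre la décomposition récursive « $m+1$-aire » des arbres $m$-binaires (figure \ref{fig:mtamari:mbinary}) et la décomposition des chemins de $m$-Dyck en facteurs $1^m D_i 0^m$, car la bijection standard arbre binaire / chemin de Dyck est régie par la décomposition \emph{binaire} usuelle $T = x(T_1,T_2)$, et il faut « dérouler » $m$ étapes de cette décomposition binaire pour retrouver la décomposition $m$-aire. Concrètement, je pense traiter cela en montrant d'abord le lemme : un chemin de Dyck $D$ est un chemin de $m$-Dyck si et seulement si l'arbre binaire associé vérifie \eqref{eq:mtamari:mbinaire-cond} — via la traduction de la condition $im \trprec (i-1)m+1$ en une condition sur les hauteurs/longueurs de montée du chemin — ce qui évite d'avoir à expliciter la bijection $m$-aire et ramène tout à la proposition \ref{prop:mtamari:carac-mbinaire} et à l'isomorphisme déjà établi avec $\Tamnm$. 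Une fois ce lemme acquis, la proposition est immédiate.
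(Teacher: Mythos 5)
Votre stratégie est viable mais n'a rien à voir avec celle du texte, qui est beaucoup plus courte : la forêt finale $\dec$ du peigne-$(n,m)$ est \emph{exactement} le poset des relations \eqref{eq:mtamari:mbinaire-cond}, la proposition \ref{prop:mtamari:carac-mbinaire} caractérise les arbres $m$-binaires comme ceux dont la forêt finale étend ce poset, et les propositions \ref{prop:tamari_intervalles:ext-lin-foret} et \ref{prop:tamari_intervalles:comb-prop} identifient l'idéal supérieur engendré par $T$ à l'ensemble des arbres dont la forêt finale étend $\dec(T)$ ; la proposition en découle sans toucher aux chemins. Vous passez au contraire par les chemins de $m$-Dyck et l'isomorphisme avec $\Tamnm$ rappelé au paragraphe \ref{sub-sec:mtamari:def:chemins}, ce qui est légitime, mais tout le poids repose alors sur le lemme que vous reportez à la fin (un chemin de Dyck est de $m$-Dyck si et seulement si son arbre vérifie \eqref{eq:mtamari:mbinaire-cond}) et que vous ne démontrez pas : c'est là le seul vrai trou, à l'endroit même que vous appelez l'obstacle principal.

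Pour le combler, la voie la plus propre est celle que vous esquissez en dernier : dans la bijection $D = D_1 1 D_2 0 \leftrightarrow x(T_1,T_2)$, le $k$-ième pas montant de $D$ correspond au \noeud d'étiquette $k$ de l'arbre binaire de recherche, et l'on a $i+1 \trprec i$ si et seulement si les pas montants $i$ et $i+1$ sont adjacents dans $D$ (le sous-arbre droit de $i$ est non vide si et seulement si le facteur $D_2$ de la décomposition en $i$ est non vide, et son \noeud le plus à gauche, d'étiquette $i+1$, fournit le pas montant qui suit immédiatement celui de $i$) ; la condition \eqref{eq:mtamari:mbinaire-cond} équivaut donc à ce que toute montée maximale de $D$ soit de longueur divisible par $m$. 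Une fois ce lemme rédigé, notez que votre première partie (clôture par rotation, décomposition récursive des chemins de $m$-Dyck) devient entièrement redondante, puisque l'idéal engendré par $(1^m0^m)^n$ est déjà, par la proposition citée, l'ensemble des chemins de $m$-Dyck. Ce que chaque approche apporte : la vôtre est indépendante de la machinerie du chapitre \ref{chap:tamari_intervalles} et rend explicite le lien entre relations décroissantes de l'arbre et longueurs de montées du chemin ; celle du texte est immédiate mais présuppose les propriétés des intervalles-posets.
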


\begin{proof}
La forêt finale du peigne-$(n,m)$ est exactement le poset formé par les relations \eqref{eq:mtamari:mbinaire-cond} et on a prouvé par la proposition \ref{prop:mtamari:carac-mbinaire} que les arbres $m$-binaires étaient exactement ceux dont la forêt finale était une extension du poset \eqref{eq:mtamari:mbinaire-cond}. Cela prouve le résultat par les propriétés des intervalles-posets (proposition~\ref{prop:tamari_intervalles:comb-prop}).
\end{proof}

Cette description de $m$-Tamari sur les arbres $m$-binaires nous permet de généraliser les résultats du chapitre \ref{chap:tamari_intervalles}. Par ailleurs, on peut utiliser les arbres $m$-binaires comme outil pour construire l'ordre de $m$-Tamari sur les arbres $m+1$-aires. La figure \ref{fig:mtamari:mbinary} donne $\Tam{3}{2}$ sur les arbres $m$-binaires.

\begin{figure}[ht]
\centering
\input{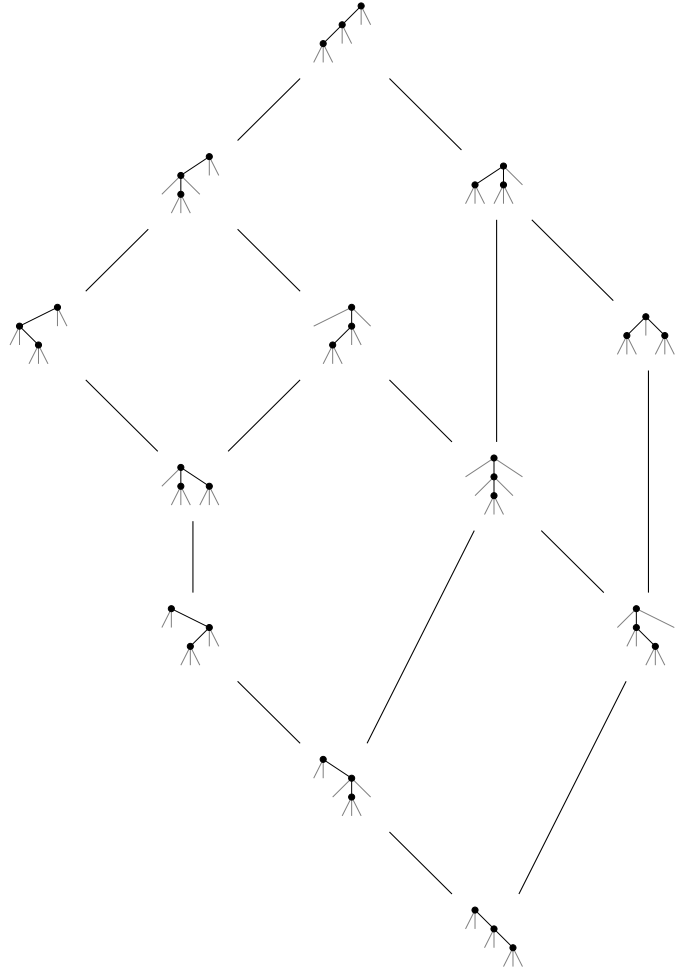}
\caption{Treillis de $m$-Tamari $\Tam{3}{2}$ sur les arbres ternaires}
\label{fig:mtamari:mtam-3-2-mary}
\end{figure}

\subsection{Arbres $m+1$-aires}
\label{sub-sec:mtamari:def:arbres-m-aires}

Les arbres $m$-binaires ont une structure $m+1$-aire : on peut donc leur associer un arbre $m+1$-aire. Si $T$ est un arbre $m$-binaire non vide, $T$ est composé de $T_L, T_{R_1}, \dots, T_{R_m}$, on lui associe l'arbre $\tilde{T}$ dont les sous-arbres sont dans cet ordre $\tilde{T_L}, \tilde{T_{R_1}}, \dots, \tilde{T_{R_m}}$. Un exemple est donné en figure \ref{fig:mtamari:path-tree}. 

La bijection entre les chemins de Dyck et les arbres binaires donne le passage d'un chemin de $m$-Dyck à un arbre $m$-binaire et donc par extension d'un \mpaths~à un arbre $m+1$-aire. Les \mpaths~admettent aussi une structure $m+1$-aire. Un \mpath~$D$ s'exprime récursivement comme 
\begin{equation}
D = D_L~1~D_{R_m}~0~D_{R_{m-1}}~\dots 0~D_{R_1}~0
\end{equation}
et cette structure reflète exactement celle de l'arbre correspondant, comme on peut le voir dans la figure \ref{fig:mtamari:path-tree}.

Cette bijection permet de représenter l'ordre de $m$-Tamari sur les arbres $m+1$-aires, ce que nous avons fait dans les figures \ref{fig:mtamari:mtam-2-2} et \ref{fig:mtamari:mtam-3-2-mary}. La relation de couverture se comprend à partir des arbres $m$-binaires. A partir d'un arbre $m$-binaire, deux types de rotations sont possibles : entre la racine de l'arbre $T_L$ et la racine de $T$, et entre un \noeud racine de $T$ et le \noeud le plus à gauche d'un arbre $T_{R_i}$. Ces deux rotations donnent deux types de transformations sur les arbres $m+1$-aires : passage de la branche gauche à une des branches droites et passage d'une branche droite à une autre. On donne le schéma général de ces deux opérations sur les arbres $m$-binaires et leur traduction en terme d'arbres $m+1$-aires dans figures \ref{fig:mtamari:rotation-type1} et \ref{fig:mtamari:rotation-type2}.

\begin{figure}[ht]
\centering
\input{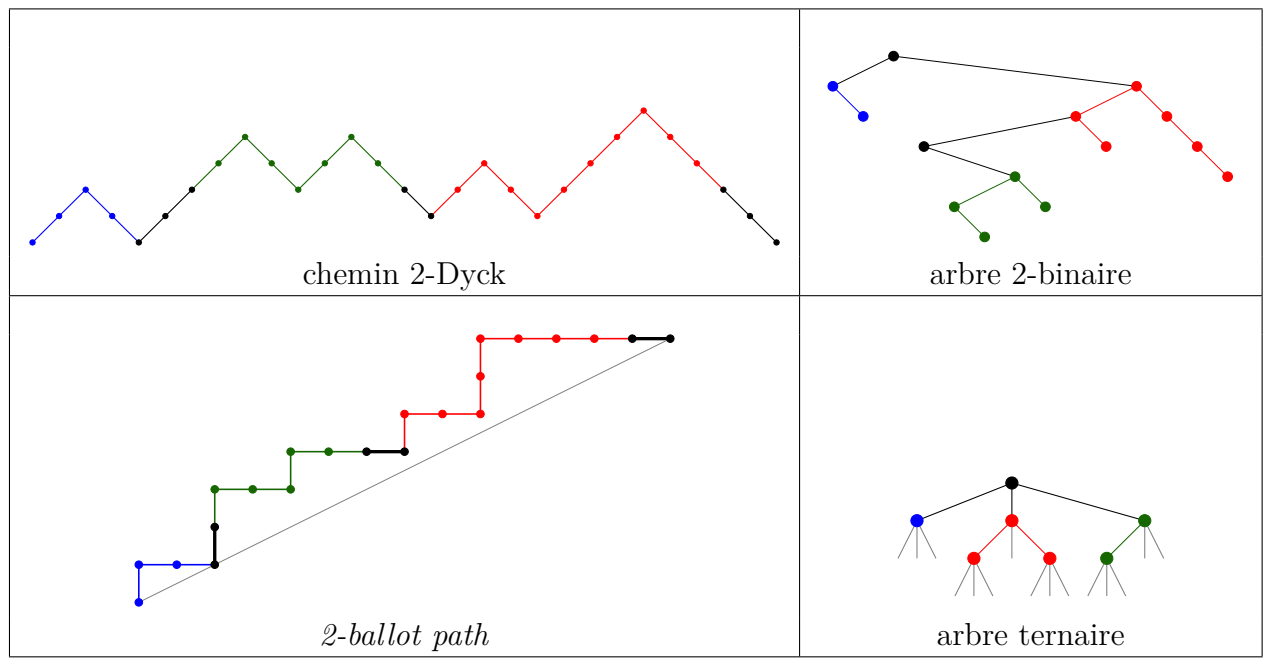}
\caption[Correspondance entre les arbres ternaires et les \kpaths{2}]{Correspondance entre les arbres ternaires et les \kpaths{2}. Quatre représentations du même objet de $\Tam{7}{2}$}
\label{fig:mtamari:path-tree}
\end{figure}

\begin{figure}[h!]
\centering
\input{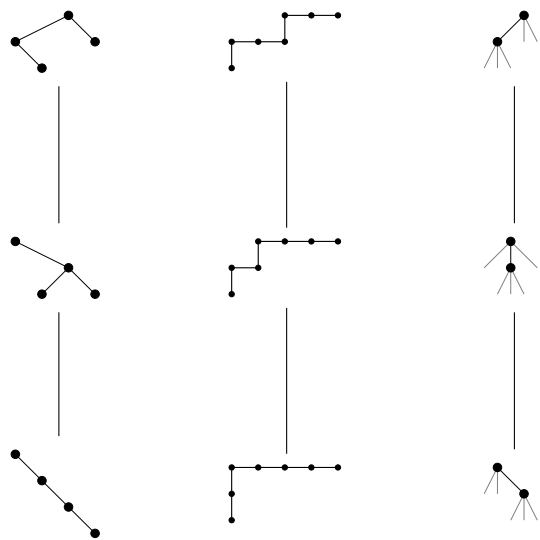}
\caption{Treillis de $m$-Tamari $\Tam{2}{2}$ sur les \mpaths, les arbres $m$-binaires et les arbres ternaires.}
\label{fig:mtamari:mtam-2-2}
\end{figure}

\begin{figure}[p]
\begin{leftfullpage}
\centering
\begin{tabular}{c}
Schéma général \\
\input{includes/figures/mtamari-rotation-type1}
\\
\\
 \\
 Exemple \\
 \input{includes/figures/mtamari-exemple-rotation1}
\end{tabular}
\end{leftfullpage}
\caption{Rotation de type 1 sur les arbres $m$-binaires et ternaires.}
\label{fig:mtamari:rotation-type1}
\end{figure}

\begin{figure}[p]
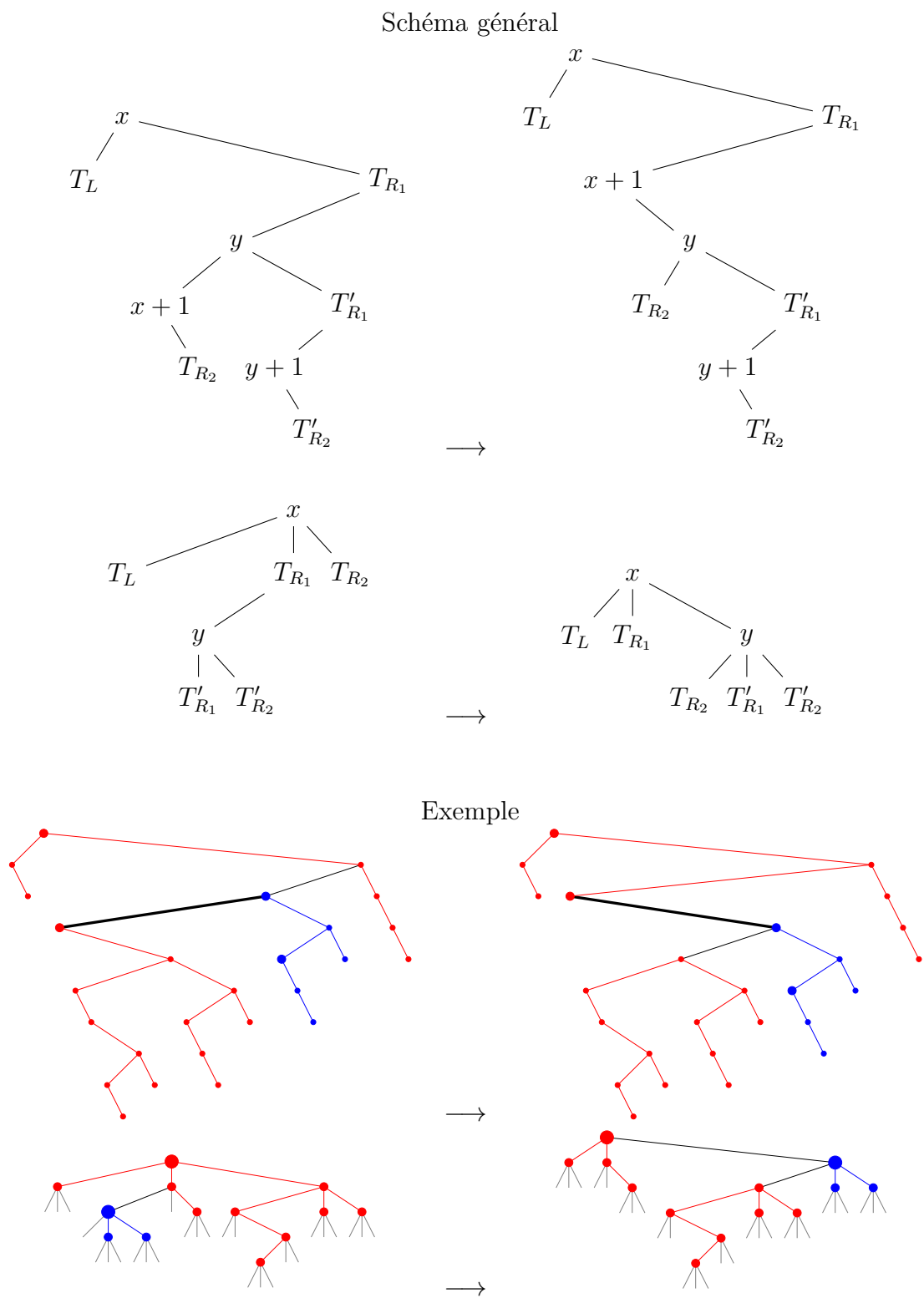

\centering
\begin{tabular}{c}
Schéma général \\
\input{includes/figures/mtamari-rotation-type2} \\
\\
\\
Exemple \\
\input{includes/figures/mtamari-exemple-rotation2}
\end{tabular}
\caption{Rotation de type 2 sur les arbres $m$-binaires et ternaires.}
\label{fig:mtamari:rotation-type2}
\end{figure}

\section{Intervalles}
\label{sec:mtamari:intervalles}
\index{intervalles!de $m$-Tamari}

Dans cette section, on donne les preuves des deux théorèmes suivants qui généralisent les théorèmes \ref{thm:tamari_intervalles:equation-fonct} et \ref{thm:tamari_intervalles:smaller-trees} au cas $m$-Tamari.

\begin{Theoreme}
\label{thm:mtamari:equation-fonct}
Soit $\Phim(x,y)$ la série génératrice des intervalles de $m$-Tamari où $y$ compte la taille des chemins et $x$ le nombre de retours à 0 du chemin inférieur (nombre de contacts avec la droite $y=\frac{x}{m}$ après l'origine). Alors
\begin{equation}
\Phim = \OBm(\Phim, \dots, \Phim) + 1
\end{equation}
où $\OBm$ est l'opérateur $m+1$-linéaire défini par 
\begin{equation}
\label{eq:mtamari:def-Bm2}
\OBm(f,g_1, \dots, g_m) = f \polleft xy \polright (g_1 \polright ( \dots \polright (g_{m-1} \polright g_m) ) \dots).
\end{equation}
avec $\polleft$ et $\polright$ les produits gauches et droits définis au chapitre précédent \eqref{eq:tamari_intervalles:polleft} et \eqref{eq:tamari_intervalles:polright}.
\end{Theoreme}

La définition de l'opérateur $\OBm$ \eqref{eq:mtamari:def-Bm2} est une réécriture de celle donnée en \eqref{eq:mtamari:def-Bm} et ce théorème est une reformulation de la proposition 8 de \cite{mTamari} dont nous proposons une nouvelle preuve. Le théorème suivant est un nouveau résultat sur les treillis $m$-Tamari.

\begin{Theoreme}
\label{thm:matamari:smaller-trees}
Soit $T$ un arbre $m+1$-aire, on définit récursivement le polynôme $\OBTm{T}$ par
\begin{align}
\OBTm{\emptyset} &= 1 \\
\OBTm{T} &= \OBm_{y=1}(\OBTm{T_L},\OBTm{T_{R_1}}, \dots, \OBTm{T_{R_m}}) 
\end{align}
où $T_L,T_{R_1}, \dots, T_{R_m}$ sont les sous-arbres de $T$. Alors $\OBTm{T}$ compte le nombre d'éléments inférieurs ou égaux à $T$ dans le treillis $\Tamnm$ en fonction du nombre de \noeuds sur la branche gauche de $T$, ou de façon équivalente en fonction du nombre de retours à 0 dans le chemin correspondant à l'arbre $T$. En particulier, $\OBTm{T}(1)$ est le nombre d'éléments inférieurs ou égaux à $T$.
\end{Theoreme}

Un exemple du calcul de $\OBTm{T}$ est donné figure~\ref{fig:mtamari:BmT-exemple}.

\begin{figure}[ht]
\centering
\input{includes/figures/mtamari_BmTExemple}
\caption[Exemple du calcul de $\OBTm{T}$]{Exemple du calcul de $\OBTm{T}$. On obtient $\OBTm{T}(1) = 5$ pour l'arbre $T$ en bas du graphe. On peut vérifier sur la figure que la puissance de $x$ correspond aux nombres de \noeuds sur la branche gauche des arbres ou au nombre de retours à 0 sur les chemins.}
\label{fig:mtamari:BmT-exemple}
\end{figure}

\subsection{Composition des $m$-intervalles-posets}
\label{sub-sec:mtamari:intervalles:composition}

\begin{Definition}
Un $m$-intervalle-poset est un intervalle-poset de taille $n \times m$ vérifiant 
\begin{align}
\label{eq:mtamari:m-intervalles-cond}
\nonumber
m &\trprec m-1 \trprec \dots \trprec 1, \\
\nonumber
2m &\trprec 2m-1 \trprec \dots \trprec m+1, \\
\dots \\
\nonumber
n.m &\trprec n.m-1 \trprec \dots \trprec (n-1).m +1.
\end{align}
\end{Definition}

\begin{Proposition}
Les $m$-intervalles-posets de taille $n$ sont en bijection avec les intervalles de $\Tam{n}{m}$.
\end{Proposition}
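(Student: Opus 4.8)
The plan is to exhibit two mutually inverse maps between $m$-intervals-posets of size $n\times m$ and intervals of $\Tam{n}{m}$, using as a bridge the embedding of $\Tam{n}{m}$ into $\Tam{n\times m}{1}$ described in paragraph~\ref{sub-sec:mtamari:def:chemins}, together with the bijection between intervals of $\Tam{n\times m}{1}$ and ordinary interval-posets of size $n\times m$ already established in Proposition of paragraph~\ref{sub-sec:tamari_intervalles:intervalles-posets:def}. The point is that everything I need is already in place: I only have to check that the defining condition~\eqref{eq:mtamari:m-intervalles-cond} is exactly what singles out, among all interval-posets, those corresponding to intervals lying inside the sublattice $\Tam{n}{m}$.

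\textbf{Key steps.} First I would recall that $\Tam{n}{m}$ is isomorphic to the upper ideal of $\Tam{n\times m}{1}$ generated by the peigne-$(n,m)$, hence an interval $[T_1,T_2]$ of $\Tam{n\times m}{1}$ with $T_1 \geq \text{peigne-}(n,m)$ is exactly an interval of $\Tam{n}{m}$, and by Proposition~\ref{prop:tamari_intervalles:comb-prop}~\ref{prop:tamari_intervalles:comb-prop-minmax-inclusion} (or rather its consequence on upper ideals, already used to characterize $m$-binary trees) this happens if and only if both $T_1$ and $T_2$ are $m$-binary trees. Second, I would show that the interval-poset $P$ associated to $[T_1,T_2]$ satisfies~\eqref{eq:mtamari:m-intervalles-cond} if and only if $T_1$ and $T_2$ are both $m$-binary. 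One direction: by Proposition~\ref{prop:mtamari:carac-mbinaire}, $T_1$ being $m$-binary means $\dec(T_1)$ contains the chain relations $jm \trprec (j-1)m \trprec \dots \trprec (j-1)m+1$ for each $j$; since $P$ contains all relations of $\dec(T_1)$, $P$ satisfies~\eqref{eq:mtamari:m-intervalles-cond}. Conversely, if $P$ satisfies~\eqref{eq:mtamari:m-intervalles-cond}, then these relations are decreasing relations of $P$, hence belong to $\dec(T_1)$; moreover for $T_2$ one observes that these same relations, being decreasing, cannot belong to $\inc(T_2)$, so $\inc(T_2)$ does not forbid them, and one checks via Proposition~\ref{prop:mtamari:carac-mbinaire} that $T_2$ too is $m$-binary — here I would use that $T_1 \leq T_2$ together with the fact that the relations of~\eqref{eq:mtamari:m-intervalles-cond} are preserved when going up in the Tamari order (a rotation never destroys a decreasing relation $jm \trprec \dots$, which is the combinatorial content of the $m$-Tamari sublattice). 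Third, I would note the map is injective and surjective simply because it is the restriction of an already-established bijection (ordinary interval-posets $\leftrightarrow$ ordinary Tamari intervals) to subsets that it identifies.

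\textbf{Main obstacle.} The only genuinely delicate point is the converse in step two: showing that~\eqref{eq:mtamari:m-intervalles-cond} forces \emph{both} endpoints to be $m$-binary, not just the lower one. For the lower endpoint it is immediate since $P \supseteq \dec(T_1)$; for $T_2$ one must argue that $T_1 \leq T_2$ in $\Tam{n\times m}{1}$ together with $T_1$ being $m$-binary implies $T_2$ is $m$-binary, i.e. that the $m$-binary trees form an upper ideal — which is precisely the proposition stated just before in paragraph~\ref{sub-sec:mtamari:def:mbinaires} (the ideal engendered by the peigne-$(n,m)$ is the set of $m$-binary trees). So in fact even this step reduces to a result already proved, and the proof will be short: assemble the embedding $\Tam{n}{m} \hookrightarrow \Tam{n\times m}{1}$, the bijection interval-posets $\leftrightarrow$ Tamari intervals, and the characterization of $m$-binary trees via~\eqref{eq:mtamari:mbinaire-cond}, then observe that~\eqref{eq:mtamari:m-intervalles-cond} is the interval-poset translation of "lower endpoint is $m$-binary", which is equivalent to "the whole interval lies in $\Tam{n}{m}$".
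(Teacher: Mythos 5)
Votre preuve est correcte et suit essentiellement la même démarche que celle du mémoire : on identifie le $m$-intervalle-poset à un intervalle $[T_1,T_2]$ de $\Tam{n\times m}{1}$ via la bijection déjà établie, on constate par la proposition \ref{prop:mtamari:carac-mbinaire} que la condition \eqref{eq:mtamari:m-intervalles-cond} équivaut au fait que $T_1$ soit $m$-binaire, puis on conclut pour $T_2$ grâce au fait que les arbres $m$-binaires forment l'idéal supérieur engendré par le peigne-$(n,m)$. Votre rédaction est simplement plus détaillée sur la réciproque, que le texte laisse implicite.
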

\begin{proof}
Un $m$-intervalle-poset $I$ correspond à un intervalle $[T_1,T_2]$ de Tamari $n\times m$ . D'après la proposition \ref{prop:mtamari:carac-mbinaire}, l'arbre $T_1$ est un arbre $m$-binaire. Comme $T_2 \geq T_1$, alors $T_2$ est aussi $m$-binaire et $I$ correspond donc à un intervalle de $\Tam{n}{m}$.
\end{proof}

Par ailleurs, le nombre de \noeuds sur la branche gauche d'un arbre $m$-binaire est le même que sur son arbre $m+1$-aire associé et correspond au nombre de retours à 0 du \mpath. Soit l'opérateur défini sur les $m$-intervalles-posets,
\begin{equation}
\PIm(I) := x^{\Itrees(I)}y^{\frac{\Isize(I)}{m}}.
\end{equation}
Alors,
\begin{equation}
\Phim(x,y) = \sum_{I} \PIm(I)
\end{equation}
sommé sur l'ensemble des $m$-intervalles-posets.

La composition $\BB$ de deux $m$-intervalles-posets ne donne pas une somme sur des $m$-intervalles-posets : les tailles ne sont plus des multiples de $m$. Il faut définir une $m$-composition qui soit $m+1$-linéaire et généralise la composition $\BB$. Si on se contente de traduire l'expression \eqref{eq:mtamari:def-Bm2} en transformant les produits droites $\polright$ et gauches $\polleft$ en respectivement $\pright$ et $\pleft$, on ne génère pas l'ensemble des $m$-intervalles-posets. Cependant, on remarque que
\begin{equation}
g_1 \polright g_2 = \frac{(xy \polright g_2) \polleft g_1}{xy}.
\end{equation}
Cette expression reflète la structure des arbres $m$-binaires (cf. figure \ref{fig:mtamari:mbinary}). On peut récrire \eqref{eq:mtamari:def-Bm2} à partir de cette observation. Par exemple, pour $m=3$ on a
\begin{align}
\OBk{3}(f,g_1,g_2,g_3) &= f \polleft xy \polright \left( g_1 \polright \left( g_2 \polright g_3 \right) \right) \\
&= f \polleft xy \polright \frac{1}{xy} \left( \left( xy \polright \frac{1}{xy}\left( (xy \polright g_3) \polleft g_2 \right) \right) \polleft g_1 \right) \\
&= \frac{1}{y^2} \left( f \polleft xy \polrightx \left( \left( xy \polrightx \left( (xy \polright g_3) \polleft g_2 \right) \right) \polleft g_1 \right) \right)
\end{align}
où
\begin{align}
f \polrightx g &:= f \polright (\frac{g}{x}) \\
&= f \Delta(\frac{g}{x}).
\end{align}
L'opération $\polrightx$ se traduit sur les intervalles-posets.

\begin{Definition}
\label{def:mtamari:polrightx}
Soient $I_1$ et $I_2$ deux intervalles-posets tels que $\Itrees(I_2) = k$. On note $y$ le label maximal des \noeuds de $I_1$ et $x_1, \dots, x_k$ les racines des arbres de $\dec(I_2)$. Alors $I_1 \prightx I_2$ est la somme des $k$ intervalles-posets $P_1, \dots, P_k$ où $P_i$ est la concaténation décalée de $I_1$ et $I_2$ où l'on a ajouté exactement $i$ relations décroissantes : $x_j \trprec y$ pour $j\leq i$.
\end{Definition}

La somme $I_1 \prightx I_2$ est la somme $I_1 \pright I_2$ de la définition \ref{def:tamari_intervalles:left-right-product} moins le poset $P_0$, la concaténation décalée de $I_1$ et $I_2$ auquel aucune relation décroissante n'a été ajoutée. En particulier, tous les intervalles-posets obtenus admettent la relation $y+1 \trprec y$.

\begin{Proposition}
\label{prop:mtamari:mcomposition}
On définit l'opérateur $m+1$-linéaire sur les $m$-intervalles-posets $\BBm$ par
\begin{equation}
\BBm(I_L, I_{R_1}, I_{R_2}, \dots, I_{R_m}) = I_L \pleft~u~\prightx \left( (u~\prightx (u \prightx \dots ((u \pright I_{R_m}) \pleft I_{R_{m-1}}) \pleft \dots )\pleft I_{R_1} \right) 
\end{equation}
où $u$ est l'intervalle-poset contenant un unique sommet et $I_L, I_{R_1}, I_{R_2}, \dots, I_{R_m}$ sont des $m$-intervalles-posets. Récursivement, la définition se lit
\begin{equation}
\BBm(I_L, I_{R_1}, \dots, I_{R_m}) = I_L \pleft \BR(I_{R_1}, \dots, I_{R_m})
\end{equation}
avec 
\begin{align}
\BR(I) &= u \pright I, \\
\BR(I_1, \dots, I_k) &= u \prightx  \left( \BR(I_2, \dots, I_k)  \pleft I_1 \right).
\end{align}
 Alors la somme obtenue est une somme de $m$-intervalles-posets. On appelle cette opération la $m$-composition.
\end{Proposition}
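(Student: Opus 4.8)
\textbf{Plan de preuve de la Proposition \ref{prop:mtamari:mcomposition}.}

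L'objectif est double : d'une part montrer que tous les objets apparaissant dans la somme $\BBm(I_L, I_{R_1}, \dots, I_{R_m})$ sont bien des $m$-intervalles-posets, et d'autre part que l'opération obtenue généralise effectivement la composition $\BB$ et correspond, via $\PIm$, à l'opérateur $\OBm$. Le point de départ est que chaque $I_{R_i}$ étant un $m$-intervalle-poset, sa taille est un multiple de $m$ ; il faut donc vérifier que chaque application de $u$ ajoute exactement le nombre de \noeuds voulu pour que la taille totale reste divisible par $m$. En comptant, $\BBm$ ajoute $m$ \noeuds (un par opération $\prightx$ ou $\pright$) aux tailles $|I_L| + |I_{R_1}| + \dots + |I_{R_m}|$, et ces $m$ \noeuds deviennent justement les $m$ \noeuds racines de la structure $m$-binaire décrite figure~\ref{fig:mtamari:mbinary}. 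Ainsi la taille totale est bien un multiple de $m$.

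Le c\oe ur de la preuve est de vérifier les relations \eqref{eq:mtamari:m-intervalles-cond}. Je procéderais par récurrence sur $m$ en suivant la définition récursive $\BBm(I_L, I_{R_1}, \dots, I_{R_m}) = I_L \pleft \BR(I_{R_1}, \dots, I_{R_m})$. La clé est la définition \ref{def:mtamari:polrightx} du produit $\prightx$ : contrairement à $\pright$, il \emph{force} l'ajout d'au moins une relation décroissante vers le nouveau sommet $y$, si bien que tout intervalle-poset produit admet la relation $y+1 \trprec y$. En appliquant ceci de façon répétée, les $m$ \noeuds racines consécutifs $r_1 < r_2 < \dots < r_m$ introduits vérifieront $r_m \trprec r_{m-1} \trprec \dots \trprec r_1$ — c'est exactement une des lignes de \eqref{eq:mtamari:m-intervalles-cond}. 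Il reste à vérifier que les relations internes de chaque $I_{R_j}$ (respectées par $\pleft$ et $\prightx$ qui ne font qu'une concaténation décalée plus ajout de relations décroissantes vers $y$, donc n'affectent pas les relations entre les \noeuds des sous-posets) restent conformes au décalage : comme chaque $I_{R_j}$ est un $m$-intervalle-poset et que tous les décalages effectués sont des multiples de $m$ (ce qui découle du comptage du premier paragraphe), les blocs de $m$ \noeuds consécutifs de chaque $I_{R_j}$ restent des blocs de $m$ \noeuds consécutifs dans le résultat, préservant \eqref{eq:mtamari:m-intervalles-cond}. Enfin, il faut vérifier que le résultat est bien un intervalle-poset au sens de la définition \ref{def:tamari_intervalles:intervalles-posets}, ce qui s'obtient en notant que $\pleft$, $\pright$ et $\prightx$ sont tous des opérations respectant cette structure (comme déjà établi pour $\pleft$ et $\pright$ au chapitre précédent, et $\prightx$ étant simplement $\pright$ privé d'un terme).

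Pour l'équivalence avec $\OBm$, j'utiliserais la proposition \ref{prop:tamari_intervalles:equiv-composition} étendue par le lemme immédiat $\PIm(I_1 \prightx I_2) = \PIm(I_1) \polrightx \PIm(I_2)$, qui se démontre exactement comme \eqref{eq:tamari_intervalles:equiv-pright} en retirant le terme $i=0$ : si $\Itrees(I_2) = k$ alors $\PIm(I_1 \prightx I_2) = y^{(\Isize(I_1)+\Isize(I_2))/m} x^{\Itrees(I_1)}(x + x^2 + \dots + x^k) = \PIm(I_1) \Delta(\PIm(I_2)/x)$. En combinant linéairement sur la définition récursive de $\BBm$, on obtient $\PIm(\BBm(I_L, I_{R_1}, \dots, I_{R_m})) = \OBm(\PIm(I_L), \PIm(I_{R_1}), \dots, \PIm(I_{R_m}))$, la division par $y^{m-1}$ dans la réécriture de $\OBm$ étant absorbée par le fait que $\PIm$ compte les tailles divisées par $m$. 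Le principal obstacle sera de traiter soigneusement l'enchevêtrement des décalages d'étiquettes dans la définition récursive $\BR(I_1, \dots, I_k) = u \prightx (\BR(I_2,\dots,I_k) \pleft I_1)$ : il faut s'assurer que lorsqu'on recolle $I_1$ \emph{à gauche} de la structure déjà construite sur $I_2, \dots, I_k$, les relations croissantes vers le sommet $u$ nouvellement créé (issues du $\pleft$) ne contredisent pas la structure $m$-binaire attendue, et que le sommet $u$ de $\BR(I_1,\dots,I_k)$ se place bien comme \noeud racine $r_1$ de la figure~\ref{fig:mtamari:mbinary}. Ceci demande de vérifier, à chaque étape de récurrence, une interprétation précise en termes d'arbres $m$-binaires des opérations $\pleft$ et $\prightx$, que je détaillerais en complétant le diagramme de la figure~\ref{fig:mtamari:mbinary}.
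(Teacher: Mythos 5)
Votre démarche est correcte et coïncide pour l'essentiel avec la preuve du texte : comptage des $m$ sommets ajoutés pour la divisibilité de la taille, déroulement de la récursion en observant que chaque $\prightx$ force la relation $y+1 \trprec y$ entre les \noeuds racines successifs, et vérification que les décalages \emph{finaux} des blocs $I_L, I_{R_1}, \dots, I_{R_m}$ sont des multiples de $m$ (attention, les décalages intermédiaires ne le sont pas, seuls les décalages cumulés le sont). La discussion sur la compatibilité avec $\OBm$ via $\PIm$ est correcte mais relève de la proposition suivante, pas de celle-ci.
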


\begin{proof}
D'abord remarquons qu'on a composé avec l'intervalle-poset $u$ exactement $m$ fois et donc on a rajouté $m$ sommets en plus des sommets de $I_L, I_{R_1}, I_{R_2}, \dots, I_{R_m}$ : la taille des intervalles obtenus est bien un multiple de $m$. 

La première opération que l'on effectue est $\BR(I_{R_m}) = u \pright I_{R_m}$. C'est une somme d'intervalles-posets de taille $1 + |I_{R_m}|$, les labels de $I_{R_m}$ ont été décalés de 1. Le calcul suivant est
\begin{equation}
\BR(I_{R_{m-1}}, I_{R_m}) =  u \prightx \left( \BR(I_{R_m}) \pleft I_{R_{m-1}} \right).
\end{equation}

Le calcul $\BR(I_{R_m}) \pleft I_{R_{m-1}}$ revient à rattacher $I_{R_{m-1}}$ aux intervalles-posets de $u \pright I_{R_m}$ sans ajouter aucune relation décroissante. Les étiquettes de $I_{R_{m-1}}$ sont décalées de  $1 + |I_{R_m}|$. Lorsqu'on effectue $ u \prightx \left( \BR(I_{R_m}) \pleft I_{R_{m-1}} \right)$, on obtient alors une somme d'intervalles-posets qui ont tous la relation $2 \trprec 1$. Les étiquettes de $I_{R_m}$ ont été décalées de 2 et celles de $I_{R_{m-1}}$ de $2 + |I_{R_m}|$.

En réitérant cette opération, on obtient que $\BR(I_{R_1}, \dots, I_{R_m})$ est une somme d'intervalles-posets possédant les relations $m \trprec m-1 \trprec \dots \trprec 1$ avec les labels de $I_{R_m}$ décalés de $m$, ceux de $I_{R_{m-1}}$ décalés de $m + |I_{R_m}|$, etc, jusqu'à ceux de $I_{R_1}$ décalés de $m + |I_{R_2}| + \dots + |I_{R_m}|$. En particulier, $\BR(I_{R_1}, \dots, I_{R_m})$ est un $m$-intervalle-poset. On a alors aussi que $I_L \pleft \BR(I_{R_1}, \dots, I_{R_m})$ est un $m$-intervalle-poset car les labels du deuxième poset sont décalés d'un multiple de $m$ car $I_L$ est un $m$-intervalle-poset.
\end{proof}

On donne en exemple un calcul détaillé pour $m=2$.


\def \fpath{includes/figures/interval-posets/}
\def \sscale{0.8}

\begin{align}
\BBk{2} \left( 
\blue{\scalebox{\sscale}{\input{\fpath I2-ex1}}},
\red{\scalebox{\sscale}{\input{\fpath I4-ex2}}},
\green{\scalebox{\sscale}{\input{\fpath I2-ex1}}}
\right) &=
\blue{\scalebox{\sscale}{\input{\fpath I2-ex1}}} \pleft
\left(
u \prightx \left( 
\left( u \pright \green{\scalebox{\sscale}{\input{\fpath I2-ex1}}} \right) 
\pleft
\red{\scalebox{\sscale}{\input{\fpath I4-ex2}}} \right) 
\right)
\end{align}
\begin{align}
u \pright \green{\scalebox{\sscale}{\input{\fpath I2-ex1}}} &= 
\scalebox{\sscale}{\input{\fpath I3-ex4}}
+ \scalebox{\sscale}{\input{\fpath I3-ex5}} \\
\left( u \pright \green{\scalebox{\sscale}{\input{\fpath I2-ex1}}} \right)
\pleft
\red{\scalebox{\sscale}{\input{\fpath I4-ex2}}} &=
 \scalebox{\sscale}{\input{\fpath I7-ex1}}
+  \scalebox{\sscale}{\input{\fpath I7-ex2}}
\end{align}
\begin{align}
\label{eq:mtamari:exemple-prightx}
u \prightx \left( 
\left( u \pright \green{\scalebox{\sscale}{\input{\fpath I2-ex1}}} \right) 
\pleft
\red{\scalebox{\sscale}{\input{\fpath I4-ex2}}} \right) &=
\scalebox{\sscale}{\input{\fpath I8-ex5}}
+ \scalebox{\sscale}{\input{\fpath I8-ex6}}
+ \scalebox{\sscale}{\input{\fpath I8-ex7}}
+ \scalebox{\sscale}{\input{\fpath I8-ex8}} \\
&+ \scalebox{\sscale}{\input{\fpath I8-ex9}}
+ \scalebox{\sscale}{\input{\fpath I8-ex10}}
+ \scalebox{\sscale}{\input{\fpath I8-ex11}}
\end{align}
\begin{align}
\label{eq:mtamari:exemple-mcomp}
\BBk{2} \left( 
\blue{\scalebox{\sscale}{\input{\fpath I2-ex1}}},
\red{\scalebox{\sscale}{\input{\fpath I4-ex2}}},
\green{\scalebox{\sscale}{\input{\fpath I2-ex1}}}
\right) &=
\scalebox{\sscale}{\input{\fpath I10-ex1}}
+ \scalebox{\sscale}{\input{\fpath I10-ex2}}
+ \scalebox{\sscale}{\input{\fpath I10-ex3}}
+ \scalebox{\sscale}{\input{\fpath I10-ex4}} \\
& + \scalebox{\sscale}{\input{\fpath I10-ex5}}
+ \scalebox{\sscale}{\input{\fpath I10-ex6}}
+ \scalebox{\sscale}{\input{\fpath I10-ex7}}
\end{align}

\begin{Proposition}
\label{prop:mtamari:desc-mcomp}
Soient $I_L, I_{R_1}, \dots, I_{R_m}$ des $m$-intervalles-posets, on définit $I_0$ par
\begin{enumerate}[label=(\roman{*}), ref=(\roman{*})]
\item $I_0$ est une extension de la concaténation décalée dans cet ordre de $I_L$, $r$, $I_{R_m}, I_{R_{m-1}}, \dots, I_{R_1}$ où $r$ est le poset $m \trprec m-1 \trprec \dots \trprec 1$.
\label{def:matamari:mcomp:cond:I1}
\item Soit $k = |I_L| + 1$, on a $i \trprec k$ pour tout $i \in I_L$
\label{def:mtamari:mcomp:cond:increasingIL}
\item Pour tout $j$, $1 \leq j <m$ si $I_{R_j}$ n'est pas vide, soit $a_j$ le label minimal de $I_{R_j}$. On a $i \trprec a_j$ pour tout $i$ tel que $a_j > i > k+j$
\label{def:mtamari:mcomp:cond:increasingIR}
\item $I_0$ ne possède pas d'autres relations que celles décrites ci-dessus.
\end{enumerate}
 Alors $\BBm(I_L, I_{R_1}, \dots, I_{R_m})$ est la somme des $m$-intervalles-posets $I$ de taille $m + |I_L| + |I_{R_1}| + \dots + |I_{R_m}|$ qui sont des extensions de $I_0$ où l'on a rajouté uniquement des relations décroissantes et tels que $I_L, I_{R_1}, \dots, I_{R_m}$ soient toujours des sous-posets de $I$ (on ne rajoute pas de relations à l'intérieur des posets qu'on compose).
\end{Proposition}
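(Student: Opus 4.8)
\textbf{Plan de la preuve de la proposition \ref{prop:mtamari:desc-mcomp}.}

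Le principe est d'analyser pas à pas la définition récursive de $\BBm$ donnée dans la proposition \ref{prop:mtamari:mcomposition} et d'en déduire une description \emph{non récursive} de la somme obtenue, à savoir celle de l'énoncé. La démonstration se fait par récurrence sur $m$, en suivant la structure $\BR(I_{R_1}, \dots, I_{R_m}) = u \prightx (\BR(I_{R_2}, \dots, I_{R_m}) \pleft I_{R_1})$. Le cas de base est l'opération $\BR(I_{R_m}) = u \pright I_{R_m}$ : d'après la définition \ref{def:tamari_intervalles:left-right-product} du produit droit, c'est la somme sur toutes les façons d'ajouter $0, 1, \dots, \Itrees(I_{R_m})$ relations décroissantes entre les racines de $\dec(I_{R_m})$ et le nouveau sommet. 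Ceci correspond exactement à la description de l'énoncé pour $m=1$ (avec uniquement le sommet $k+1$ et la condition \ref{def:mtamari:mcomp:cond:increasingIR} vide).

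Pour le pas de récurrence, je procéderais en deux temps. Premièrement, l'opération $\BR(I_{R_2}, \dots, I_{R_m}) \pleft I_{R_1}$ : d'après la définition du produit gauche $\pleft$ (définition \ref{def:tamari_intervalles:left-right-product}), on effectue la concaténation décalée avec $I_{R_1}$ et on ajoute les relations croissantes $x \trprec a_1$ pour tout $x$ dans le poset de gauche, où $a_1$ est l'étiquette minimale de $I_{R_1}$ après décalage --- ce qui est précisément la condition \ref{def:mtamari:mcomp:cond:increasingIR} pour l'indice $j=1$. Il faut vérifier ici, en utilisant l'hypothèse de récurrence, que l'ensemble des $x$ concernés est bien $\lbrace i : a_1 > i > k+1 \rbrace$, c'est-à-dire que tous les sommets de $\BR(I_{R_2}, \dots, I_{R_m})$ ont une étiquette strictement comprise entre $k+1$ et $a_1$. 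Deuxièmement, l'opération $u \prightx (\cdot)$ : d'après la définition \ref{def:mtamari:polrightx}, on ajoute un nouveau sommet minimal (qui devient le sommet $k+1$ après renumérotation globale dans $\BBm$) et on somme sur les façons d'ajouter \emph{au moins une} relation décroissante entre les racines de $\dec$ du poset courant et ce sommet. La relation forcée $k+2 \trprec k+1$ (qui provenait de $2 \trprec 1$ dans la preuve de la proposition \ref{prop:mtamari:mcomposition}) garantit, par transitivité des intervalles-posets, que l'on obtient bien un $m$-intervalle-poset et, surtout, que le poset $I_0$ de l'énoncé (celui \emph{sans} relation décroissante supplémentaire, mais contenant déjà la chaîne $r$) apparaît comme le terme minimal. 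Enfin, la composition finale $I_L \pleft \BR(\dots)$ traite la branche gauche et fournit la condition \ref{def:mtamari:mcomp:cond:increasingIL}.

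Le point délicat sera le contrôle \emph{simultané} de deux choses à chaque étape récursive : (a) la position relative exacte des étiquettes (ce qui est crucial pour identifier quelles relations sont croissantes/décroissantes et pour invoquer les propriétés des intervalles-posets de la définition \ref{def:tamari_intervalles:intervalles-posets}), et (b) le fait que toutes les relations ajoutées au fil des $m$ produits sont soit les relations croissantes forcées (décrites dans \ref{def:mtamari:mcomp:cond:increasingIL} et \ref{def:mtamari:mcomp:cond:increasingIR}), soit des relations décroissantes libres, et qu'aucune relation parasite n'apparaît par transitivité à l'intérieur des sous-posets $I_L, I_{R_1}, \dots, I_{R_m}$. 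Pour (b), l'argument clé est que les sommets des différents blocs sont \guillemotleft{}séparés\guillemotright{} par les sommets $k+1, \dots, k+m$ via les conditions croissantes, de sorte qu'une relation entre un sommet de $I_{R_i}$ et un sommet de $I_{R_j}$ (ou de $I_L$) impliquerait une relation interdite sur l'un des sommets $k+\ell$ --- exactement le type de raisonnement déjà utilisé dans la preuve de la proposition \ref{prop:tamari_intervalles:unicity-composition}. Une fois cette description établie, la combiner avec la bijection entre $m$-intervalles-posets et intervalles de $\Tamnm$ permettra d'en déduire immédiatement les théorèmes \ref{thm:mtamari:equation-fonct} et \ref{thm:matamari:smaller-trees}, par les mêmes arguments que dans le cas $m=1$ (propositions \ref{prop:tamari_intervalles:equiv-composition}, \ref{prop:tamari_intervalles:unicity-composition} et \ref{prop:tamari_intervalles:smaller-trees}).
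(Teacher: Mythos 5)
Votre plan est correct et suit essentiellement la même démarche que la preuve du texte : dérouler la définition récursive de $\BBm$, constater que les relations croissantes sont forcées (elles correspondent aux conditions \ref{def:mtamari:mcomp:cond:increasingIL} et \ref{def:mtamari:mcomp:cond:increasingIR} et sont communes à tous les termes), que les relations décroissantes vers les sommets $k, \dots, k+m-1$ sont exactement les choix libres donnés par $\pright$ et $\prightx$, et que les relations croissantes interdisent toute relation parasite entre les blocs $I_L, I_{R_1}, \dots, I_{R_m}$. La seule différence est de présentation : le texte organise l'argument autour de la structure d'arbre $m$-binaire (identification de l'arbre maximal commun et de $I_0$ comme terme à relations décroissantes minimales) en s'appuyant sur le balisage des étiquettes déjà fait dans la preuve de la proposition \ref{prop:mtamari:mcomposition}, là où vous l'écrivez comme une récurrence explicite sur $m$.
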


\begin{proof}
La construction de $\BBm(I_L, I_{R_1}, \dots, I_{R_m})$ suit la structure d'un arbre $m$-binaire. Soient $T_L, T_{R_1}, \dots, T_{R_m}$ et $T_L', T_{R_1}', \dots, T_{R_m}'$ les arbres $m$-binaires respectivement minimaux et maximaux des intervalles $I_L,\allowbreak I_{R_1}, \dots, I_{R_m}$. Et soit $T$ l'arbre minimal de $I_0$ et $T'$ son arbre maximal. Les relations croissantes de $I_0$ font de $T'$ l'arbre $m$-binaire formé par $T_L', T_{R_1}', \dots, T_{R_m}'$ comme dans la figure \ref{fig:mtamari:mbinary}. C'est l'arbre maximal commun à tous les intervalles-posets obtenus par $\BBm(I_L, I_{R_1}, \dots, I_{R_m})$. En effet, les relations croissantes sont les mêmes pour tous les intervalles : $\pleft$ correspond à une greffe sur la gauche et $\pright$ ainsi que $\prightx$ à une greffe sur la droite. Plus précisément, $I_0$ est l'intervalle de $\BBm(I_L, I_{R_1}, \dots, I_{R_m})$ avec le minimum de relations décroissantes. En effet, en ce qui concerne les relations décroissantes, $I_0$ est par définition la concaténation de $I_L$, $r$, $I_{R_m}, I_{R_{m-1}}, \dots, I_{R_1}$ où $r$ est le poset $m \trprec m-1 \trprec \dots \trprec 1$ ce qui est exactement ce que l'on obtient à partir de $\BBm(I_L, I_{R_1}, \dots, I_{R_m})$.

\`A présent, les intervalles vérifiant la proposition \ref{prop:mtamari:desc-mcomp} sont toutes les façons d'ajouter des relations décroissants à $I_0$ vers les sommets $k+m-1 \trprec k+m-2 \trprec \dots \trprec k$. En effet, les relations croissantes rendent impossible l'ajout de relations entre les intervalles $I_L,\allowbreak I_{R_1}, \dots, I_{R_m}$. Les définitions de $\pright$ et $\prightx$ donnent par définition toutes les façon d'ajouter ses relations. 
\end{proof}

\subsection{\'Enumération des intervalles}
\label{sub-sec:mtamari:intervalles:denombrement}

\begin{Proposition}
\label{prop:mtamari:equiv-mcomposition}
Soient $I_L,I_{R_1}, \dots, I_{R_m}$ des $m$-intervalles-posets. Alors
\begin{equation}
\PIm(\BBm(I_L,I_{R_1}, \dots, I_{R_m})) = \OBm(\PIm(I_L),\PIm(I_{R_1}), \dots, \PIm(I_{R_m}))
\end{equation}
\end{Proposition}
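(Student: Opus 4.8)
Le plan est de montrer que l'égalité se ramène à une collection d'égalités élémentaires portant sur les opérations de base $\pleft$, $\pright$ et $\prightx$, en suivant la construction récursive de la $m$-composition donnée à la proposition \ref{prop:mtamari:mcomposition}. On s'appuie sur le fait que $\PIm$ est linéaire (par construction, comme $\PI$ dans la définition \ref{def:tamari_intervalles:stat}) et sur l'interprétation combinatoire explicite de $\BBm$ donnée à la proposition \ref{prop:mtamari:desc-mcomp} : un $m$-intervalle-poset $I$ de $\BBm(I_L,I_{R_1},\dots,I_{R_m})$ est obtenu en ajoutant à un poset canonique $I_0$ un certain nombre de relations décroissantes vers les $m$ nouveaux sommets $k, k+1, \dots, k+m-1$, sans jamais modifier les sous-posets $I_L, I_{R_j}$. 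La statistique $\Itrees$ d'un tel $I$ vaut donc $\Itrees(I_L) + \Itrees(I_{R_1}) + \dots + \Itrees(I_{R_m}) - (\text{nombre de relations décroissantes ajoutées})$, chaque relation décroissante reliant un arbre d'une composante à un arbre d'une autre, et $\Isize(I) = m + \Isize(I_L) + \sum_j \Isize(I_{R_j})$.

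Concrètement, je procéderais en trois étapes. D'abord j'établirais l'analogue de la proposition \ref{prop:tamari_intervalles:equiv-composition} pour l'opération $\prightx$ : si $\Itrees(I_2) = k$, alors en ajoutant de $1$ à $k$ relations décroissantes on obtient
\begin{equation}
\PIm(I_1 \prightx I_2) = y^{(\Isize(I_1)+\Isize(I_2))/m} x^{\Itrees(I_1)} (x + x^2 + \dots + x^k) = \PIm(I_1)\,\polrightx\,\PIm(I_2),
\end{equation}
où l'on rappelle que $f \polrightx g = f\,\Delta(g/x)$ ; de même $\PIm(I_1 \pleft I_2) = \PIm(I_1)\polleft\PIm(I_2)$ et $\PIm(I_1 \pright I_2) = \PIm(I_1)\polright\PIm(I_2)$, exactement comme au chapitre précédent (l'exposant de $y$ est divisé par $m$ des deux côtés, ce qui ne change rien puisque toutes les tailles en jeu sont des multiples de $m$ une fois la $m$-composition terminée — mais attention, les résultats intermédiaires de $\BBm$ ne le sont pas, donc il faut travailler avec $\PIm$ prenant des valeurs dans $\KK[x,y^{1/m}]$ ou bien vérifier que les étapes intermédiaires se combinent correctement). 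Ensuite, par récurrence sur $m$ (ou directement sur la définition récursive de $\BR$), j'appliquerais ces identités terme à terme en suivant l'expression
\[
\BBm(I_L,I_{R_1},\dots,I_{R_m}) = I_L \pleft \bigl(u \prightx (\cdots (u \pright I_{R_m})\pleft I_{R_{m-1}}\cdots)\pleft I_{R_1}\bigr),
\]
ce qui donne mot pour mot l'expression de $\OBm$ donnée en \eqref{eq:mtamari:def-Bm2}, à condition de vérifier que $\PIm(u) = x y^{1/m}$ et que les facteurs $y^{1/m}$ s'accumulent pour redonner $y$ (il y a exactement $m$ compositions avec $u$).

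Le point délicat sera la gestion cohérente des puissances fractionnaires de $y$ dans les étapes intermédiaires : $\BBm$ n'est pas bi-gradué de la même façon que $\OBm$ puisque $\OBm$ est construit à partir de $\polright$, $\polrightx$ et $\polleft$ qui agissent sur $\KK[x,y]$ via $\Delta$, alors que les étapes internes de $\BBm$ produisent des posets de taille non multiple de $m$. Le plus propre est de reformuler $\OBm$ en introduisant l'opérateur auxiliaire $\OBk{m}$ décomposé comme dans le calcul précédant la définition \ref{def:mtamari:polrightx} (par exemple $\OBk{3}(f,g_1,g_2,g_3) = \tfrac{1}{y^2}\bigl(f\polleft xy \polrightx((xy\polrightx((xy\polright g_3)\polleft g_2))\polleft g_1)\bigr)$ en variable $y$), puis de poser $\PIm(I) = x^{\Itrees(I)} z^{\Isize(I)}$ avec $z = y^{1/m}$ pour les objets intermédiaires, et de constater que l'identité à prouver se lit alors comme une chaîne d'applications des trois lemmes de base sans aucune puissance fractionnaire cachée, le $\tfrac{1}{y^{m-1}}$ de $\OBm$ correspondant exactement aux $m-1$ étapes où l'on divise par $xy$ dans la réécriture de $\BR$. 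Une fois cette bookkeeping mise en place, la preuve se réduit à une récurrence immédiate sur le nombre d'arguments droits, le cas de base $\BR(I) = u \pright I$ donnant $\PIm(u\pright I) = x z\,\polright \PIm(I)$.
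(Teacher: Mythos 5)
Votre démarche est exactement celle du texte : on établit d'abord l'identité $\PI(I_1 \prightx I_2) = \PI(I_1)\polrightx\PI(I_2)$ (analogue des lemmes $\pleft$/$\pright$ du cas $m=1$), on gère les tailles intermédiaires non multiples de $m$ en posant $Y=y^{1/m}$ et en travaillant avec $\PI(\cdot)(x,Y)$, puis on déroule la définition récursive de $\BBm$ en recollectant les $m$ facteurs $xY$ issus des compositions avec $u$ pour retomber sur le facteur $xy$ de $\OBm$. Seule coquille : le polynôme en $x$ dans votre formule intermédiaire pour $\PIm(I_1\prightx I_2)$ devrait être $x^{\Itrees(I_1)}(1+x+\dots+x^{k-1})$ et non $x^{\Itrees(I_1)}(x+\dots+x^{k})$, puisque $P_i$ (avec $i$ relations ajoutées, $1\leq i\leq k$) vérifie $\Itrees(P_i)=\Itrees(I_1)+k-i$ ; les deux membres extrêmes de votre chaîne d'égalités sont, eux, corrects et coïncident.
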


\begin{proof}
Il suffit de prouver que 
\begin{equation}
\label{eq:mtamari:equiv-prightx}
\PI(I_1 \prightx I_2) = \PI(I_1) \polrightx \PI(I_2)
\end{equation}
pour $I_1$ et $I_2$ deux intervalles-posets. En effet, soit $Y= y^{\frac{1}{m}}$ et $I$ un $m$-intervalle-poset de taille $n.m$, on a
\begin{equation}
\PIm(I)(x,y) = \PI(I)(x,Y).
\end{equation}
Alors si \eqref{eq:mtamari:equiv-prightx} est vérifiée, comme on a aussi \eqref{eq:tamari_intervalles:equiv-pleft} et \eqref{eq:tamari_intervalles:equiv-pright}, on a
\begin{align}
&\PIm(\BBm(I_L, I_{R_1}, \dots, I_{R_m})) = \PI \left( \BBm(I_L, I_{R_1}, \dots, I_{R_m}) \right)(x,Y) \\
&= \PI \left( I_L \pleft~u~\prightx \left( (u~\prightx \dots ((u \pright I_{R_m}) \pleft I_{R_{m-1}}) \pleft \dots )\pleft I_{R_1} \right) \right)(x,Y) \\
&= \PI(I_L) \polleft~x.Y~\polrightx \left( (x.Y~\polrightx \dots ((x.Y \polright \PI(I_{R_m})) \polleft \PI(I_{R_{m-1}})) \polleft \dots )\polleft \PI(I_{R_1}) \right) \\
&= Y^{m-1} \PI(I_L) \polleft x.Y \polright \left( \PI(I_{R_1}) \polright \dots \polright (\PI(I_{R_{m-1}}) \polright \PI(I_{R_m}))) \dots \right) \\
&= \OBm(\PI(I_L),\PI(I_{R_1}), \dots, \PI(I_{R_m}))(x,Y) \\
&= \OBm(\PIm(I_L),\PIm(I_{R_1}), \dots, \PIm(I_{R_m})).
\end{align}

On prouve donc \eqref{eq:mtamari:equiv-prightx}. Si $\Itrees(I_2) = k$, alors
\begin{align}
\Delta \left( \frac{\PI(I_2)}{x} \right) &= \Delta(y^{\Isize(I_2)} x^{k-1}) \\
&= y^{\Isize(I_2)} (1 + x + x^2 + \dots + x^{k-1}), \\
\PI(I_1) \polrightx \PI(I_2) &= y^{\Isize(I_1) + \Isize(I_2)} x^{\Itrees(I_1)}(1 + x + x^2 + \dots + x^{k-1})
\end{align}
Par ailleurs, $I_1 \prightx I_2$ est la somme des intervalles-posets $P_i$, $1 \leq i \leq k$, où $\Isize(P_i) = \Isize(I_1) + \Isize(I_2)$ et $\Itrees(P_i) = \Itrees(I_1) + k -i$ comme on l'a vu dans la preuve de la proposition \ref{prop:tamari_intervalles:equiv-composition}. 
\end{proof}

On peut vérifier la correspondance \eqref{eq:mtamari:equiv-prightx} sur l'exemple \eqref{eq:mtamari:exemple-prightx}.
\begin{align}
xy \polrightx y^7(x^4 + x^3) &= y^8x(1+x+x^2 +x^3 + 1 + x +x^2) \\
&= y^8(2x + 2x^2 + 2x^3 + x^4)  
\end{align}
Et en calculant
\begin{align}
\OBm(xy,x^2y^2,xy) &= xy \polleft (xy \polright (x^2y^2 \polright xy)) \\
&= y^5 x  (x \polright x^2(1+x)) \\
&= y^5x^2 (1 + x +x^2 + 1 + x + x^2 + x^3) \\
&= y^5(2x^2 + 2x^3 + 2x^4 + x^5)
\end{align}
on retrouve bien le résultat de \eqref{eq:mtamari:exemple-mcomp}.

\begin{Proposition}
\label{prop:mtamari:unicity-mcomposition}
Soit $I$ un $m$-intervalle-poset. Il existe une unique liste $I_L, I_{R_1},\allowbreak \dots, I_{R_m}$ tel que $I \in \BBm(I_L, I_{R_1},\allowbreak \dots, I_{R_m})$.
\end{Proposition}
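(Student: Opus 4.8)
\textbf{Plan de preuve de la Proposition \ref{prop:mtamari:unicity-mcomposition}.} L'approche consiste à reproduire l'argument de la Proposition \ref{prop:tamari_intervalles:unicity-composition}, en l'adaptant à la structure $m$-binaire. L'idée est la suivante : dans le cas $m=1$, le point de décomposition était déterminé comme le sommet $k$ d'étiquette maximale vérifiant $i \trprec k$ pour tout $i < k$. Ici, il faut identifier non pas un seul sommet mais le bloc $\{k, k+1, \dots, k+m-1\}$ qui joue le rôle des $m$ \noeuds racines de l'arbre $m$-binaire correspondant, et montrer que ce bloc est uniquement déterminé par $I$.

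\textbf{Étape 1 : identification du bloc racine.} Soit $I$ un $m$-intervalle-poset de taille $n\times m$. Comme $I$ correspond à un arbre $m$-binaire $T$ via la bijection, je considère la racine de l'arbre minimal $T_1$ de $I$ : par la Proposition \ref{prop:mtamari:carac-mbinaire} son étiquette est de la forme $k = \ell\cdot m + 1$ pour un certain $\ell$, et c'est l'unique sommet d'étiquette maximale tel que $i \trprec_I k$ pour tout $i < k$ \emph{et} $k \ntrprec_I j$ pour tout $j > k$ (l'argument est exactement celui de la Proposition \ref{prop:tamari_intervalles:unicity-composition}, puisque cette caractérisation ne dépend que des relations de $I$, pas de la structure $m$-binaire). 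On pose alors $I_L$ le sous-poset de $I$ restreint à $1, \dots, k-1$ (un $m$-intervalle-poset de taille $\ell\cdot m$ par hérédité de la condition \eqref{eq:mtamari:m-intervalles-cond}), puis on décompose le reste des sommets $k, \dots, n\cdot m$ en utilisant les relations obligatoires $k+m-1 \trprec \dots \trprec k+1 \trprec k$ de \eqref{eq:mtamari:m-intervalles-cond}. Les $m$ sommets $k, k+1, \dots, k+m-1$ sont les \noeuds racines ; les sous-arbres $T_{R_m}, T_{R_{m-1}}, \dots, T_{R_1}$ se lisent sur les composantes connexes restantes, comme dans la preuve de la Proposition \ref{prop:mtamari:carac-mbinaire}.

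\textbf{Étape 2 : $I$ appartient à la composition de ces arguments.} Je dois vérifier que $I \in \BBm(I_L, I_{R_1}, \dots, I_{R_m})$ où $I_{R_j}$ est le sous-poset (réétiqueté) de $I$ correspondant à $T_{R_j}$. Il suffit de contrôler que $I$ est une extension de l'intervalle-poset minimal $I_0$ de la Proposition \ref{prop:mtamari:desc-mcomp} où seules des relations décroissantes vers le bloc $\{k, \dots, k+m-1\}$ ont été ajoutées, et qu'aucune relation n'a été ajoutée à l'intérieur des $I_{R_j}$. Pour cela : les conditions \ref{def:mtamari:mcomp:cond:increasingIL} et \ref{def:mtamari:mcomp:cond:increasingIR} découlent du fait que $I$ est un $m$-intervalle-poset (les relations croissantes vers les étiquettes $k$ et vers les minima $a_j$ des $I_{R_j}$ sont forcées par la définition \ref{def:mtamari:m-intervalles-cond} d'un intervalle-poset, puisque $i \trprec_I k+m-1$ entraîne $i \trprec_I k$). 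Il faut ensuite exclure qu'il y ait une relation $k \trprec_I j$ avec $j > k$ : c'est le point où intervient la maximalité de $k$, exactement comme dans la Proposition \ref{prop:tamari_intervalles:unicity-composition} — une telle relation, combinée à $i \trprec_I k$ pour tout $i<k$, donnerait $i \trprec_I j$ pour tout $i < j$, contredisant le choix de $k$. Enfin, il faut voir qu'aucune relation décroissante « parasite » ne relie $I_L$ à un $I_{R_j}$ ou deux $I_{R_j}$ entre eux sans passer par le bloc racine : les conditions d'intervalle-poset forcent de telles relations à être « résumées » par des relations vers le bloc, ce qui est précisément ce que produit la composition.

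\textbf{Étape 3 : unicité.} Supposons $I \in \BBm(I_L', I_{R_1}', \dots, I_{R_m}')$ pour une autre liste. Par la Proposition \ref{prop:mtamari:desc-mcomp}, le bloc racine $\{k', \dots, k'+m-1\}$ avec $k' = |I_L'| + 1$ est caractérisé par : $i \trprec_I k'$ pour tout $i < k'$, $k' \ntrprec_I j$ pour tout $j > k'$, et $k'$ maximal pour cette propriété. C'est exactement la caractérisation de $k$ de l'Étape 1, donc $k' = k$, d'où $I_L' = I_L$ et, en restreignant aux composantes connexes restantes et en utilisant la structure récursive des arbres $m$-binaires (Proposition \ref{prop:mtamari:carac-mbinaire}), $I_{R_j}' = I_{R_j}$ pour tout $j$.

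\textbf{Principal obstacle.} La difficulté réside dans l'Étape 2, plus précisément dans le fait de montrer que la composition $\BBm$ génère bien \emph{tous} les $m$-intervalles-posets compatibles, c'est-à-dire que toute relation décroissante de $I$ non interne à un $I_{R_j}$ traverse nécessairement le bloc racine $\{k, \dots, k+m-1\}$ ou est interne à $I_L$. Il faut ici exploiter finement la condition \eqref{eq:mtamari:m-intervalles-cond} : une relation $x \trprec_I y$ avec $x$ dans (le support de) $I_{R_j}$ et $y$ dans $I_L$ ou dans un $I_{R_{j'}}$ avec $j' \neq j$ devrait, par transitivité et par la condition de « complétude » des intervalles-posets (tout $b$ entre deux sommets en relation est aussi en relation), impliquer une relation vers le bloc racine — et donc être déjà prise en compte. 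C'est essentiellement le même type d'argument que dans la Proposition \ref{prop:tamari_intervalles:smaller-trees} et sa preuve via \ref{prop:tamari_intervalles:comb-prop-minmax-inclusion}, mais il faut le dérouler soigneusement pour les $m$ branches droites simultanément.
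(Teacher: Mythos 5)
Votre stratégie globale coïncide avec celle du texte : on identifie $k$ comme dans le cas $m=1$ (étiquette maximale telle que $i \trprec k$ pour tout $i<k$), on en déduit $I_L$, puis on montre que le découpage obtenu est le seul possible. L'unicité de $k$ et l'argument excluant toute relation $k \trprec j$ pour $j>k$ sont corrects.

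Le point qui ne tient pas est l'identification des $I_{R_j}$ par "les composantes connexes restantes". Tout élément de $\BBm(I_L, I_{R_1}, \dots, I_{R_m})$ contient, par la condition \ref{def:mtamari:mcomp:cond:increasingIR} de la proposition \ref{prop:mtamari:desc-mcomp}, les relations croissantes $i \trprec a_j$ pour tout $i$ avec $k+j < i < a_j$, où $a_j$ est l'étiquette minimale de $I_{R_j}$ ; or les étiquettes de $I_{R_m}, \dots, I_{R_{j+1}}$ sont toutes comprises entre $k+j$ et $a_j$, de sorte que ces relations relient entre eux les différents blocs $I_{R_{j'}}$. Les sous-posets $I_{R_j}$ ne sont donc pas des composantes connexes de $I$ privé du bloc racine, ni des regroupements évidents de composantes de $\dec(I)$ (un même $I_{R_j}$ peut avoir plusieurs arbres dans sa forêt finale, c'est précisément ce que compte $\Itrees$). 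Le découpage est gouverné par les relations croissantes, et c'est là l'ingrédient nouveau par rapport au cas $m=1$ : il faut caractériser intrinsèquement les points de coupure en posant $a_j$ comme l'étiquette minimale telle que $k+j+1 \trprec a_j$ et $k+j \ntrprec a_j$ (et $a_j = \emptyset$ si elle n'existe pas). C'est cette caractérisation qui permet à la fois de vérifier $I \in \BBm(I_L, I_{R_1}, \dots, I_{R_m})$ et de conclure à l'unicité, les $a_j$ étant les seules étiquettes vérifiant la condition \ref{def:mtamari:mcomp:cond:increasingIR} sans ajouter de relations croissantes au poset $I_0$. Sans cette étape, vos Étapes 2 et 3 reposent sur un découpage mal défini ; l'obstacle que vous qualifiez de principal (les relations décroissantes transversales) est en fait secondaire, puisque la proposition \ref{prop:mtamari:desc-mcomp} autorise tout ajout de relations décroissantes vers le bloc racine.
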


\begin{proof}
On définit $k$ de la même façon que dans la preuve de la proposition~\ref{prop:tamari_intervalles:unicity-composition} : $k$ est le label maximal tel que $i \trprec k$ pour tout $i < k$. Pour les mêmes raisons, $k$ est unique. Pour $1 \leq j < m$, soit $a_j$ l'étiquette minimale telle que $k + j +1 \trprec a_j$ et $k + j \ntrprec a_j$. S'il n'existe pas de tel sommet, alors $a_j = \emptyset$. Enfin, on pose $a_m = k + m$ si $k + m-1 \ntrprec k+m$ sinon $a_m = \emptyset$.  

Les sommets $a_1, \dots, a_m$ vérifient exactement la condition \ref{def:mtamari:mcomp:cond:increasingIR} de la proposition~\ref{prop:mtamari:desc-mcomp}. Ils nous donnent un découpage de $I$ en sous-posets. Si $a_j = \emptyset$, on pose $I_{R_j} = \emptyset$, sinon $I_{R_j}$ est le sous-poset de $I$ dont $a_j$ est le label minimal. 

Toutes les conditions de la proposition \ref{prop:mtamari:desc-mcomp} sont vérifiées et on a $I \in \BBm(I_L,\allowbreak I_{R_1}, \dots, I_{R_m})$. Par ailleurs, les sommets $a_1, \dots, a_m$ sont les seuls à vérifier la condition \ref{def:mtamari:mcomp:cond:increasingIR} de la proposition \ref{prop:mtamari:desc-mcomp} sans ajouter de relations croissantes au poset $I_0$ et forment donc le seul découpage possible.
\end{proof}

\begin{proof}[Démonstration du théorème \ref{thm:mtamari:equation-fonct}]
La preuve est immédiate par les propositions \ref{prop:mtamari:equiv-mcomposition} et \ref{prop:mtamari:unicity-mcomposition} par le même raisonnement que pour le cas $m=1$ donné par le théorème \ref{thm:tamari_intervalles:equation-fonct}.
\end{proof}

\subsection{Comptage des éléments inférieurs à un arbre}
\label{sub-sec:mtamari:intervalles:smaller}

\begin{Proposition}
\label{prop:mtamari:smaller_trees}
Soit $T$ un arbre $m$-binaire et $S_T := \sum_{T' \leq T} P_{[T',T]}$ sommé sur les arbres $m$-binaires $T' \leq T$. C'est la somme des $m$-intervalles-posets dont $T$ est l'arbre supérieur. Alors, si $T$ est composé des arbres $m$-binaires $T_L, T_{R_1}, \dots, T_{R_m}$ on a $S_T = \BBm(S_{T_L}, S_{T_{R_1}}, \dots, S_{T_{R_m}})$.
\end{Proposition}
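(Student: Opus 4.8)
La démonstration suit exactement le schéma de la proposition \ref{prop:tamari_intervalles:smaller-trees} dans le cas classique $m=1$, en l'adaptant à la structure $m+1$-aire des arbres $m$-binaires. L'idée centrale est que la forêt initiale $\inc(T)$ d'un arbre $m$-binaire $T = \{T_L, T_{R_1}, \dots, T_{R_m}\}$ se décompose récursivement de façon compatible avec la $m$-composition $\BBm$ que l'on vient de définir. Je commencerais par rappeler que, d'après la proposition \ref{prop:tamari_intervalles:comb-prop} \ref{prop:tamari_intervalles:comb-prop-minmax-inclusion}, la somme $S_T$ est la somme sur \emph{tous} les $m$-intervalles-posets qui sont des extensions de $\inc(T)$ obtenues en ajoutant uniquement des relations décroissantes (et en ne modifiant pas les relations internes). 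Le point clé est donc d'identifier $\inc(T)$ comme le poset $I_0$ décrit dans la proposition \ref{prop:mtamari:desc-mcomp}, associé aux forêts initiales $\inc(T_L), \inc(T_{R_1}), \dots, \inc(T_{R_m})$.

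\textbf{Étapes principales.} Premièrement, je vérifierais la description récursive de la forêt initiale d'un arbre $m$-binaire. En utilisant la structure de la figure \ref{fig:mtamari:mbinary} et la construction récursive des forêts initiales illustrée figure \ref{fig:tamari_intervalles:forets}, on obtient que $\inc(T)$, pour $T = \{T_L, T_{R_1}, \dots, T_{R_m}\}$, est précisément la concaténation décalée de $\inc(T_L)$, puis du poset $r$ ($m \trprec m-1 \trprec \dots \trprec 1$ issu des $m$ \noeuds racines), puis de $\inc(T_{R_m}), \dots, \inc(T_{R_1})$, augmentée des relations croissantes $i \trprec k$ pour $i \in T_L$ (où $k = |T_L|+1$) et $i \trprec a_j$ pour $k+j < i < a_j$, où $a_j$ est l'étiquette minimale de $T_{R_j}$. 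Autrement dit, $\inc(T) = I_0$ au sens de la proposition \ref{prop:mtamari:desc-mcomp} appliquée à $(\inc(T_L), \inc(T_{R_1}), \dots, \inc(T_{R_m}))$. Deuxièmement, je procéderais par récurrence sur la taille de $T$ (cas de base : $T = \emptyset$, où $S_\emptyset = \{\emptyset\}$, trivial). Pour le pas de récurrence : soit $I \in S_T$. On restreint $I$ aux intervalles de labels correspondant à $T_L, T_{R_1}, \dots, T_{R_m}$ pour obtenir des posets $I_L, I_{R_j}$. Comme $I$ est une extension de $\inc(T)$ par ajout de relations décroissantes uniquement, chaque $I_{\bullet}$ est une extension de $\inc(T_{\bullet})$ par ajout de relations décroissantes, donc $I_L \in S_{T_L}$ et $I_{R_j} \in S_{T_{R_j}}$ ; et par la caractérisation \ref{prop:mtamari:desc-mcomp}, on a $I \in \BBm(I_L, I_{R_1}, \dots, I_{R_m})$ car $I$ contient les relations croissantes forcées et seulement des relations décroissantes supplémentaires. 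Réciproquement, si $I_L \in S_{T_L}$ et $I_{R_j} \in S_{T_{R_j}}$, alors tout $I \in \BBm(I_L, I_{R_1}, \dots, I_{R_m})$ est, par construction, une extension de $\inc(T)$ par ajout de relations décroissantes, donc $I \in S_T$. Ceci donne $S_T = \BBm(S_{T_L}, S_{T_{R_1}}, \dots, S_{T_{R_m}})$.

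\textbf{Obstacle principal.} La difficulté essentielle réside dans la première étape, c'est-à-dire l'identification \emph{exacte} de $\inc(T)$ avec le poset $I_0$ de la proposition \ref{prop:mtamari:desc-mcomp}. Il faut soigneusement tracer comment les $m$ \noeuds racines de la structure $m$-binaire (et leurs relations décroissantes internes $m \trprec \dots \trprec 1$ dans l'arbre binaire de recherche, cf. proposition \ref{prop:mtamari:carac-mbinaire}) s'insèrent dans la forêt initiale, et vérifier que les décalages d'étiquettes correspondent exactement à ceux produits par les opérations $\pleft$, $\pright$ et $\prightx$ dans la définition récursive de $\BBm$ (proposition \ref{prop:mtamari:mcomposition}). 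En particulier, il faudra être attentif au fait que $\prightx$ (et non $\pright$) intervient pour les $m-1$ premiers \noeuds racines, ce qui force précisément la relation décroissante qui lie chaque \noeud racine au précédent --- reflétant le fait que ces relations décroissantes \emph{appartiennent déjà} à $\inc(T)$ puisqu'elles sont présentes dans l'arbre binaire de recherche de tout arbre $m$-binaire. Une fois cette correspondance établie proprement, le reste de l'argument est une application directe de la proposition \ref{prop:mtamari:desc-mcomp} et de la proposition \ref{prop:tamari_intervalles:comb-prop}, exactement parallèle au cas $m=1$.

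\textbf{Conclusion attendue.} Enfin, comme dans la preuve du théorème \ref{thm:tamari_intervalles:smaller-trees}, on déduit immédiatement le théorème \ref{thm:matamari:smaller-trees} : compter les arbres $m$-binaires $T' \leq T$ selon leur nombre de \noeuds sur la branche gauche revient à compter les $m$-intervalles-posets $[T', T]$ selon $\Itrees$, c'est-à-dire à évaluer $\PIm(S_T)$ en $y = 1$. Par récurrence, en utilisant $S_T = \BBm(S_{T_L}, S_{T_{R_1}}, \dots, S_{T_{R_m}})$ et la proposition \ref{prop:mtamari:equiv-mcomposition}, on obtient $\PIm(S_T) = \OBm(\PIm(S_{T_L}), \dots, \PIm(S_{T_{R_m}}))$, ce qui est la définition récursive de $\OBTm{T}$ (après spécialisation $y = 1$).
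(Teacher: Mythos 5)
Votre démonstration suit essentiellement la même route que celle du texte : même découpage de l'intervalle-poset en sous-posets $I_L, I_{R_1},\dots,I_{R_m}$ selon les blocs d'étiquettes, même identification du poset minimal avec le $I_0$ de la proposition \ref{prop:mtamari:desc-mcomp}, et même double inclusion qui en découle. Seule imprécision, sans conséquence : le poset minimal de $S_T$ n'est pas $\inc(T)$ mais l'intervalle $[T_0,T]$ où $T_0$ est le peigne-$(n,m)$, c'est-à-dire $\inc(T)$ augmenté des relations décroissantes \eqref{eq:mtamari:mbinaire-cond} (dont la chaîne $r$ sur les $m$ \noeuds racines) — votre restriction explicite aux $m$-intervalles-posets rétablit ces relations, donc l'argument tient.
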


\begin{proof}
Soit $I_0$ l'intervalle $[T_0, T]$ où $T_0$ est le peigne-$(n,m)$, l'arbre $m$-binaire minimal. Les relations croissantes de $I_0$ sont celles de $T$ et les relations décroissantes sont \eqref{eq:mtamari:mbinaire-cond}. On effectue sur $I$ le découpage en sous-posets suivant : $I_L$ est le sous-poset sur les labels $1, \dots, |T_L|$, $I_{R_m}$ est le sous-poset sur les labels $|T_L| + m +1, \dots, |T_L| + m + |T_{R_m}|$, $I_{R_{m-1}}$ est le sous-poset sur les labels $|T_L| + m + |T_{R_m}| + 1, \dots, |T_L| + m + |T_{R_m}| + |T_{R_{m-1}}|$, etc. Alors, au vu de la structure de $T$, les $m$-intervalles-posets $I_L, I_{R_1}, \dots, I_{R_m}$ sont respectivement les intervalles initiaux de $m$-Tamari des arbres $T_L, T_{R_1}, \dots, T_{R_m}$. 

Soit $P$ un intervalle de la somme $S_T$, c'est une extension de $I_0$ où l'on a rajouté uniquement des relations décroissantes. Si on effectue le même découpage sur $P$ que sur $I_0$, alors $P_L, P_{R_1}, \dots, P_{R_m}$ sont des extensions de respectivement $I_L, I_{R_1}, \dots, I_{R_m}$ et donc appartiennent aux sommes respectives $S_{T_L}, S_{T_{R_1}}, \dots, S_{T_{R_m}}$. Enfin, comme les relations croissantes de $P$ sont celles de $T$, la structure de $T$ fait que $P \in \BBm(P_L, P_{R_1}, \dots, P_{R_m})$ par la proposition \ref{prop:mtamari:desc-mcomp}. 

Par ailleurs, si $P_L, P_{R_1}, \dots, P_{R_m}$ sont des éléments de respectivement $S_L, S_{R_1},\allowbreak \dots, S_{R_m}$ alors les relations croissantes des éléments de $\BBm(P_L, P_{R_1}, \dots, P_{R_m})$ sont bien celles de $P$ ce qui en fait des éléments de $S_T$.
\end{proof}

\begin{proof}[Démonstration du théorème \ref{thm:matamari:smaller-trees}]
Comme pour le théorème \ref{thm:tamari_intervalles:smaller-trees}, on souhaite prouver que 
\begin{equation}
\OBTm{T} = \PIm(S_T).
\end{equation}
Le résultat est donné par récurrence sur $n$ par les propositions \ref{prop:mtamari:equiv-mcomposition} et \ref{prop:mtamari:smaller_trees}
\begin{align}
\OBTm{T} &= \OBm(\OBTm{T_L}, \OBTm{T_{R_1}}, \dots, \OBTm{T_{R_m}}) \\
&= \OBm(\PIm(S_{T_L}), \PIm(S_{T_{R_1}}), \dots, \PIm(S_{T_{R_m}})) \\
&= \PIm\left( \BBm( S_{T_L}, S_{T_{R_1}}, \dots, S_{T_{R_m}} ) \right) \\
&= \PIm(S_T).
\end{align}
\end{proof}

\section{Structures algébriques $"m"$}
\label{sec:mtamari:alg}

Dans le chapitre \ref{chap:tamari_prelim}, nous avons expliqué comment le treillis de Tamari peut être construit comme un quotient de l'ordre faible sur les permutations. La relation s'exprime en termes algébriques : l'algèbre de Hopf des arbres binaires $\PBT$ est une sous-algèbre de l'algèbre de Hopf sur les permutations $\FQSym$. Il est possible de généraliser ces structures au cas $m$, c'est ce que nous présentons dans cette section. Ces résultats sont les premiers issus d'un travail commun avec Jean-Christophe Novelli, Jean-Yves Thibon et Grégory Chatel. Ils seront complétés et proposés à la publication prochainement. 

\subsection{Treillis sur les permutations bègues}
\label{sub-sec:mtamari:alg:begues}

\begin{Definition}
\label{def:mtamari:begues}
Une permutation $m$-bègue de taille $n$ est une permutation du mot $1^m 2^m \dots n^m$, c'est-à-dire un mot de taille $n \times m$ où l'on trouve $m$ fois chacune des lettres $1, \dots, n$.
\end{Definition}

Par exemple, le mot $221313$ est une permutation $2$-bègue de taille 3. Soit $u = u_1 \dots u_{n.m}$, tel que $u_i < u_{i+1}$, on définit une relation de couverture par $u \lessdot v$ où $v = u_1 \dots u_{i+1} u_i \dots u_{n.m}$. Cette relation correspond simplement à l'action d'une transposition simple sur $u$. Elle définit une structure de treillis sur les permutations bègues. En effet, on plonge l'ensemble des permutations $m$-bègues de taille $n$ dans l'ensemble des permutations de taille $n.m$ par la standardisation des mots. Le mot $1^m 2^m \dots n^m$ correspond à la permutation identité. Le treillis des permutations bègues est l'idéal inférieur de l'ordre droit engendré par le standardisé du mot $n^m (n-1)^m \dots 2^m 1^m$ comme on peut le voir dans la figure \ref{fig:mtamari:begues}.

\begin{figure}[ht]
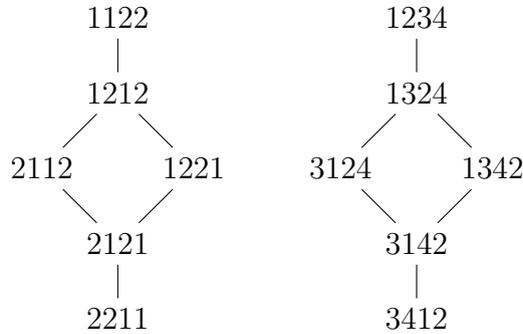

\centering
\begin{tabular}{cc}
\input{includes/figures/mbegues-2-2} &
\input{includes/figures/mbegues-perms-2-2}
\end{tabular}
\caption[Treillis des permutations $2$-bègues de taille 2]{Treillis des permutations $2$-bègues de taille 2 en tant que sous-treillis de l'ordre droit de taille 4}
\label{fig:mtamari:begues}
\end{figure} 

Comme pour les permutations classiques, on peut associer à chaque permutation bègue un arbre binaire de recherche par l'algorithme d'insertion $\ABR$ décrit au paragraphe \ref{sub-sec:tamari_prelim:tamari:ordre_faible}. Si $\mu$ est une permutation bègue, l'arbre binaire obtenu par $\ABR(\mu)$ est le même que celui obtenu par $\ABR(\std(\mu))$. L'ensemble des permutations bègues se découpe donc lui aussi en classes sylvestres et le treillis quotient sur les classes sylvestres correspond à un sous-treillis de Tamari $n \times m$. C'est l'idéal inférieur engendré par $\ABR(n^m \dots 2^m 1^m)$. 

L'ordre droit ainsi que l'ordre de Tamari sont symétriques : ils sont isomorphes à leurs ordres inverses. L'ordre inverse $\lret$ d'un ordre $\leq$ est défini par : $a~\lret~b$ si et seulement si $b \leq a$. Dans le cas de l'ordre faible, cela correspond au retournement de la permutation, on a $621345 \leq 643251$ et $152346 \leq 543126$. Pour le treillis de Tamari, l'inverse de l'ordre correspond à la symétrie gauche-droite sur les arbres binaires (échange des fils gauches et droits récursivement). On a

\begin{align}
\scalebox{0.4}{\input{includes/figures/trees/T4-3}} \leq 
\scalebox{0.4}{\input{includes/figures/trees/T4-13}}, \\
\scalebox{0.4}{\input{includes/figures/trees/T4-2}} \leq 
\scalebox{0.4}{\input{includes/figures/trees/T4-12}}.
\end{align}

Le treillis sur les permutations $m$-bègues de tailles $n$ est un idéal inférieur de l'ordre droit $n \times m$. Le quotient du treillis des permutations $m$-bègues par la relation sylvestre est donc un idéal inférieur de l'ordre de Tamari $n \times m$. Son inverse est isomorphe à l'idéal supérieur correspondant, c'est-à-dire au treillis de $m$-Tamari. En effet, le symétrisé gauche-droite de l'arbre binaire de recherche de $n^m \dots 2^m 1^m$ est le peigne-$(n,m)$ de la définition \ref{def:mtamari:peigne-nm}. Autrement dit, à partir du treillis inverse des permutations $m$-bègues, on associe à chaque permutation bègue $\mu$ le symétrique de son arbre $\ABR(\mu)$ et on obtient ainsi le treillis de $m$-Tamari sur les arbres $m$-binaires. Le treillis de $m$-tamari est donc un quotient du treillis des permutations $m$-bègues comme illustré en figure \ref{fig:mtamari:quotient-begues}.

\begin{figure}[ht]
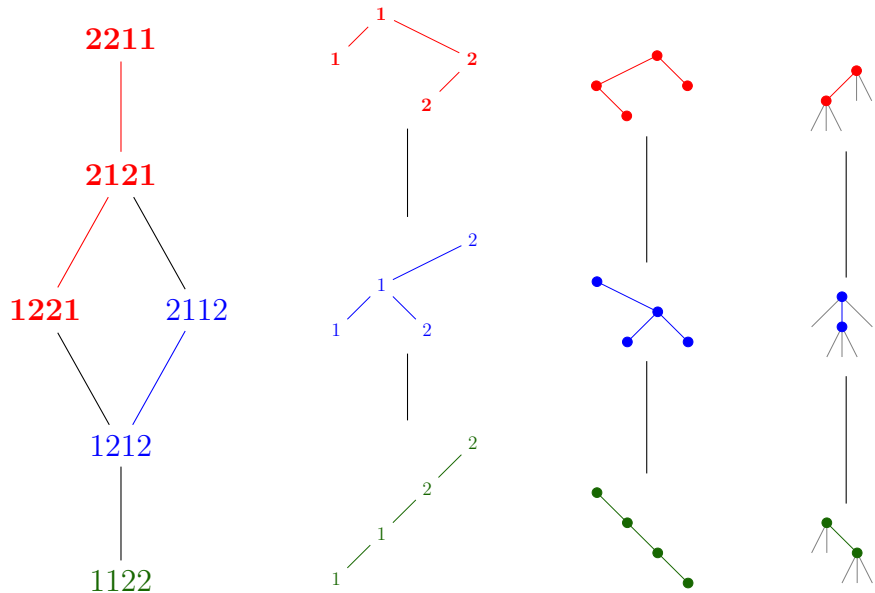

\centering
\begin{tabular}{cccc}
\input{includes/figures/mbegues-2-2-inv} &
\input{includes/figures/mTamari-2-2-inv} &
\input{includes/figures/mTamari-2-2-mbinary} &
\input{includes/figures/mTamari-2-2-ternary}
\end{tabular}
\caption[Le treillis des permutations $m$-bègues quotienté par la relation sylvestre]{Le treillis des permutations $m$-bègues quotienté par la relation sylvestre. De la gauche vers la droite : les permutations 2-bègues, leurs arbres binaires de recherches, l'arbre symétrisé et l'arbre ternaire correspondant.}
\label{fig:mtamari:quotient-begues}
\end{figure}

\subsection{Généralisation de $\FQSym$ à  $\FQSymm$}
\label{sub-sec:mtamari:alg:fqsymm}

Tout comme dans le cas $m=1$, la relation entre le treillis de $m$-Tamari et les permutations $m$-bègues s'exprime aussi en termes d'algèbres de Hopf. 

\begin{Definition}
Un mot $m$-bègue sur un alphabet $A$ est un mot tel que chaque lettre de $A$ apparaît un nombre multiple de $m$ fois.
\end{Definition}

Par exemple, les mots $ababaa$ et $abbbba$ sont des mots 2-bègues sur l'alphabet $A = \lbrace a, b \rbrace$. Un mot $m$-bègue a toujours un nombre de lettres égal à $m.n$ avec $n \in \NN$. La taille du mot $m$-bègue est donnée par $n$, le nombre de lettres divisé par $m$. Soit $\sigma$ la permutation standardisée d'un mot $m$-bègue $u$ de taille $n$. On a que $\sigma$ est inférieure à la permutation standardisée de $n^m(n-1)^m\dots2^m 1^m$. En d'autres termes, $\sigma$ est l'image par standardisation d'une permutation $m$-bègue. C'est ce qu'on appelle le $m$-standardisé du mot $u$.

\begin{Definition}
Le $m$-standardisé d'un mot $m$-bègue $u$, noté $\stdm(u)$ est l'unique permutation $m$-bègue $\sigma$ telle que
\begin{equation}
\std(u) = \std(\sigma)
\end{equation}
\end{Definition}

Par exemple, les $m$-standardisés de $ababaa$ et $abbbba$ sont respectivement $131322$ et $122331$. L'algorithme de $m$-standardisation est très similaire à celui de la standardisation. Au lieu des lettres $1, 2, \dots , n \times m$, on utilise $m$ fois la lettre $1$ puis $m$ fois la lettre $2$ etc. Comme le mot est $m$-bègue, on obtient bien une permutation $m$-bègue $\sigma$ avec $\std(\sigma) = \std(u)$. En particulier, toutes les lettres de $u$ numérotées par un même nombre dans $\sigma$ sont égales. L'ensemble des mots $m$-bègues est stable par la concaténation. On définit alors une algèbre sur les mots $m$-bègues dont le produit est la concaténation. On va définir une sous-algèbre $\FQSymm$ de l'algèbre sur les mots $m$-bègues.

\begin{Definition}
Soit $\sigma$ une permutation $m$-bègue. Alors
\begin{equation}
\BGm{\sigma} = \sum_{\stdm(u) = \sigma} u
\end{equation}
sommé sur les mots $m$-bègues.
\end{Definition}

Par exemple, sur l'alphabet $A = \lbrace a,b,c \rbrace$,
\begin{align}
\BGm{1221} &= abba + acca + bccb, \\
\BGm{11} &= aa + bb + cc.
\end{align}
Et on a
\begin{align}
\BGm{1221} \BGm{11} &= abbaaa + abbabb + abbacc + accaaa + accabb \\
\nonumber
&+ accacc + bccbaa + bccbbb + bccbcc \\
&= \BGm{122133} + \BGm{133122} + \BGm{233211}.
\end{align}
Plus généralement, si on se place sur un alphabet infini, le produit est donné par la proposition suivante.
\begin{Proposition}
\label{prop:mtamari:prod-fqsymm}
Soient $\sigma$ et $\mu$ des permutations $m$-bègues, alors
\begin{equation}
\label{eq:mtamari:prod-fqsymm}
\BGm{\sigma}\BGm{\mu} = \sum \BGm{\nu}
\end{equation}
sommé sur les permutations $m$-bègues $\nu$ qui s'écrivent $\nu = u.v$ avec $u$ et $v$ deux mots $m$-bègues tels que $\stdm(u) = \sigma$ et $\stdm(v) = \mu$.
\end{Proposition}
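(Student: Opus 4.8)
\textbf{Plan de preuve de la Proposition~\ref{prop:mtamari:prod-fqsymm}.}

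L'idée est de reproduire, dans le cadre des mots $m$-bègues, la preuve classique de la formule de produit de la base $\BG$ de $\FQSym$ rappelée au paragraphe~\ref{sub-sec:tamari_prelim:hopf:fqsym}. Je commencerai par rappeler que, pour $\sigma$ et $\mu$ deux permutations $m$-bègues sur un alphabet infini, le produit $\BGm{\sigma}\BGm{\mu}$ est, par définition du produit de concaténation sur les mots $m$-bègues et par bilinéarité, la somme sur tous les couples de mots $m$-bègues $(u,v)$ tels que $\stdm(u) = \sigma$ et $\stdm(v) = \mu$ du mot concaténé $u.v$. Il reste donc à regrouper ces mots $u.v$ selon leur $m$-standardisé, c'est-à-dire à montrer que l'ensemble $\{u.v \;:\; \stdm(u) = \sigma,\ \stdm(v) = \mu\}$ coïncide exactement avec $\bigcup_{\nu} \{w \;:\; \stdm(w) = \nu\}$, la réunion étant prise sur les $\nu$ admettant une décomposition $\nu = u'.v'$ en mots $m$-bègues avec $\stdm(u') = \sigma$ et $\stdm(v') = \mu$, et que cette réunion est disjointe.

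Les étapes concrètes seront les suivantes. D'abord, l'inclusion d'un sens : si $u.v$ apparaît dans le développement avec $\stdm(u) = \sigma$, $\stdm(v) = \mu$, je vérifie que le mot $\nu := \stdm(u.v)$ admet bien la décomposition voulue. Pour cela, le point clé est le comportement de la $m$-standardisation vis-à-vis de la concaténation et de la restriction à un intervalle de l'alphabet : tout comme la standardisation usuelle, la $m$-standardisation ne dépend que des positions relatives des lettres (plus précisément de $\std(u.v)$, via l'identité $\std(\stdm(w)) = \std(w)$), et le mot $\nu = \stdm(u.v)$ se décompose donc en $\nu = u'.v'$ où $u'$ (resp.\ $v'$) est le préfixe (resp.\ suffixe) de longueur $|u|$ (resp.\ $|v|$), avec $\std(u') = \std(u)$ donc $\stdm(u') = \sigma$ (et de même pour $v'$). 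Ici il faut prendre garde au fait que $u'$ et $v'$ ne sont pas simplement obtenus en $m$-standardisant $u$ et $v$ séparément (un même symbole peut apparaître à la fois dans $u'$ et dans $v'$), mais leurs standardisés usuels sont ceux de $u$ et $v$, ce qui suffit. Ensuite, l'inclusion réciproque : si $\nu = u'.v'$ avec $\stdm(u') = \sigma$ et $\stdm(v') = \mu$, alors pour tout mot $w$ avec $\stdm(w) = \nu$, on découpe $w = u.v$ aux mêmes positions, et la compatibilité $\stdm(w) = \nu \Rightarrow \std(w) = \std(\nu)$ combinée avec le fait que $\std$ commute au découpage et à la restriction donne $\stdm(u) = \sigma$ et $\stdm(v) = \mu$. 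Enfin, la disjonction des classes et l'absence d'annulation sont immédiates : deux mots distincts $w$ ont des $m$-standardisés qui peuvent différer, mais chaque mot $w$ apparaît dans $\BGm{\nu}$ pour un unique $\nu = \stdm(w)$, et tous les coefficients de la somme sont $+1$ puisqu'on travaille avec des mots et non des combinaisons signées.

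Le principal obstacle technique est le contrôle fin de l'interaction entre la $m$-standardisation et la concaténation, en particulier le fait déjà mentionné qu'un symbole de l'alphabet peut apparaître dans les deux facteurs $u'$ et $v'$ de $\nu$ alors qu'il correspondait à des symboles éventuellement distincts dans $\sigma$ et $\mu$ — c'est précisément pour cela que l'énoncé quantifie sur les \emph{mots} $m$-bègues $u,v$ et non directement sur $\sigma, \mu$ via un produit de mélange décalé comme dans~\eqref{eq:tamari_prelim:fqsym_prod_f}. Je pense qu'il est plus sûr de tout ramener à la standardisation usuelle via l'identité $\std(\stdm(w)) = \std(w)$ et d'utiliser que $\std$ est compatible à la concaténation et à la restriction aux intervalles (propriété rappelée au paragraphe~\ref{sub-sec:tamari_prelim:hopf:fqsym} et utilisée pour $\FQSym$), plutôt que de manipuler directement la $m$-standardisation. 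Une fois ce lemme de compatibilité posé, la preuve de la Proposition~\ref{prop:mtamari:prod-fqsymm} devient une simple réorganisation de sommes, parallèle à celle du cas $m=1$, et on obtient au passage que $\FQSymm$, engendrée par les $\BGm{\sigma}$, est stable par le produit de concaténation des mots $m$-bègues.
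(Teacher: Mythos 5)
Votre démonstration est correcte et suit essentiellement la même démarche que celle du texte : on regroupe les mots $u.v$ du produit selon leur $m$-standardisé et on prouve les deux inclusions en se ramenant à la standardisation usuelle via $\std(\stdm(w))=\std(w)$. Le seul point que vous expédiez d'un \emph{ce qui suffit} est précisément celui que la preuve du texte détaille : il faut vérifier que le préfixe de longueur $|u|$ de $\nu$ (et, dans l'autre sens, de tout mot $\nu'$ de la somme $\BGm{\nu}$) est lui-même un mot $m$-bègue, sans quoi $\stdm(u')$ n'est pas défini et l'implication $\std(u')=\std(u)\Rightarrow\stdm(u')=\sigma$ n'a pas de sens ; cela résulte du fait que les $m$ positions portant une même valeur de $\nu$ portent des lettres égales du mot de départ, de sorte qu'aucun bloc de valeurs ne chevauche la coupure --- votre remarque parenthétique suggérant qu'un même symbole pourrait apparaître à la fois dans $u'$ et dans $v'$ décrit en fait exactement la situation qui ne peut pas se produire.
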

\begin{proof}
La preuve est très similaire à celle du produit dans $\FQSym$. 

Soit $\nu$ une permutation $m$-bègue qui s'écrit $\nu = u.v$ avec $u$ et $v$ deux mots $m$-bègues tels que $\stdm(u) = \sigma$ et $\stdm(v) = \mu$. On veut prouver que tous les mots de la somme $\BGm{\nu}$ apparaissent dans le produit $\BGm{\sigma}\BGm{\mu}$.Comme on est sur un alphabet infini, la somme $\BGm{\nu}$ n'est jamais vide. Soit $\nu'$ un mot de la somme $\BGm{\nu}$ et $u'$ son facteur gauche de taille $|u|$.  Le mot $u$ est $m$-bègue par définition et contient exactement $m$ fois chacune de ses lettres. Comme $\nu$ est le $m$-standardisé de $\nu'$, si $\nu'(i) = a$ et $\nu(i) = k$ alors pour tout $j$ tel que $\nu(j)=k$, on a $\nu'(j) = a$. C'est en particulier vrai pour les lettres de $u$ et $u'$ et donc $u'$ est un mot $m$-bègue. Par ailleurs, on a que les inversions de $u'$ sont celles de $u$ car $\stdm(\nu') = \nu$ et donc $\std(u') = \std(u) = \std(\sigma)$. On en déduit que $\stdm(u') = \sigma$. De la même façon, si $v'$ est le facteur droit de taille $|v|$ de $\nu'$, alors $v'$ est un mot $m$-bègue avec $\stdm(v') = \mu$. Donc $u'$ et $v'$ apparaissent respectivement dans $\BGm{\sigma}$ et $\BGm{\mu}$.

Par ailleurs, le $m$-standardisé d'un mot $u'.v'$ du produit $\BGm{\sigma}\BGm{\mu}$ vérifie clairement les propriétés de la somme \eqref{eq:mtamari:prod-fqsymm} et la proposition est vérifiée.
\end{proof}

Comme pour $\FQSym$, on définit un coproduit sur les éléments $\BGm{\sigma}$ en utilisant la somme ordonnée des alphabets $A \cp B$ où les lettres de $A$ et $B$ commutent. Par exemple, si $A = \lbrace a, b, c, \dots \rbrace$ et $B = \lbrace a', b', c',  \dots \rbrace$.

\begin{align}
\BGm{121233} &= ababbb + ababcc + acaccc +bcbccc + \dots \\
\nonumber
&+ abab\sred{a'a'} + acac\sred{a'a'} + bcbc\sred{a'a'} + abab\sred{b'b'} + \dots \\
\nonumber
&+ a\sred{a'}a\sred{a'a'a'} + a\sred{a'}a\sred{a'b'b'} + a\sred{b'}a\sred{b'b'b'} + b\sred{b'}b\sred{b'b'b'} + \dots \\
\nonumber
&+ \sred{a'b'a'b'b'b'} + \sred{a'b'a'b'c'c'} + \sred{a'c'a'c'c'c'} + \sred{b'c'b'c'c'c'} + \dots \\
&= \BGm{121233} \otimes 1 + \BGm{1212} \otimes \BGm{11} + \BGm{11} \otimes \BGm{1122} + 1 \otimes \BGm{121233}.
\end{align}

De façon générale, on a 
\begin{Proposition}
\label{prop:mtamari:coprod-fqsymm}
\begin{align}
\label{eq:mtamari:coprod-fqsymm}
\Delta(\BGm{\sigma}) &:= \BGm{\sigma}(A \cp B) \\
&= \sum \BGm{\mu} \otimes \BGm{\nu}
\end{align}
sommé sur les permutations $m$-bègues $\mu$ et $\nu$ telles que pour un certain $k \leq n$, $\mu$ soit le sous-mot de $\sigma$ formé des lettres $a\leq k$ et $\nu$ soit le sous-mot $m$-standardisé formé des lettres $b>k$. 
\end{Proposition}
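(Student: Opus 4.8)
\textbf{Plan de la preuve de la Proposition \ref{prop:mtamari:coprod-fqsymm}.}

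L'approche consiste à calquer la preuve de la formule du coproduit de $\FQSym$ donnée dans \cite{NCSF6} et rappelée dans la Proposition \ref{prop:mtamari:prod-fqsymm}, en faisant attention au caractère $m$-bègue. Comme dans le cas classique, le coproduit s'obtient en évaluant la série $\BGm{\sigma}$ sur l'alphabet ordonné $A \cp B$ : chaque mot $m$-bègue $w$ sur $A \cp B$ se décompose de manière unique en $w = w_A . w_B$ (à commutation près des lettres de $A$ et $B$) où $w_A$ est le sous-mot des lettres de $A$ et $w_B$ celui des lettres de $B$, ce qui permet de le voir comme un élément du produit tensoriel.

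Le point clé est le suivant : je dois d'abord vérifier que si $w$ est un mot $m$-bègue sur $A \cp B$ avec $\stdm(w) = \sigma$, alors $w_A$ et $w_B$ sont eux-mêmes des mots $m$-bègues. En effet, si $w(i) = a$ (une lettre de $A \cp B$) et $\sigma(i) = k$, alors pour tout $j$ tel que $\sigma(j) = k$ on a $w(j) = a$ — car $\std(w) = \std(\sigma)$ et les $m$ positions portant la valeur $k$ dans $\sigma$ doivent porter des lettres égales dans $w$. Ainsi les lettres de $w$ égales à une même lettre de $A \cp B$ apparaissent par paquets dont le cardinal est un multiple de $m$ (réunion de paquets de taille $m$ au sens de $\sigma$), donc $w_A$ et $w_B$ sont $m$-bègues. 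Ensuite, comme la $m$-standardisation ne dépend que des inversions (de la standardisation usuelle), on a $\stdm(w_A) = \mu$ et $\stdm(w_B) = \nu$ où $\mu$ est le sous-mot $m$-standardisé de $\sigma$ restreint aux valeurs $\leq k$ et $\nu$ celui restreint aux valeurs $> k$, pour un certain $k$ (la frontière entre $A$ et $B$). Donc $\BGm{\mu} \otimes \BGm{\nu}$ apparaît dans $\Delta(\BGm{\sigma})$.

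Réciproquement, je dois montrer que tout couple $(\mu, \nu)$ obtenu de cette façon contribue effectivement : partant d'un découpage de $\sigma$ selon une valeur $k$, tout mot $m$-bègue $w_A$ sur $A$ avec $\stdm(w_A) = \mu$ et tout mot $m$-bègue $w_B$ sur $B$ avec $\stdm(w_B) = \nu$ se recollent en un mot $m$-bègue $w$ sur $A \cp B$ (les lettres de $A$ étant plus petites que celles de $B$) dont le $m$-standardisé est $\sigma$, d'où la contribution $w_A \otimes w_B$ à $\Delta(\BGm{\sigma})$. Le principal obstacle technique — mais il est léger — est de traiter correctement les \emph{multiplicités} : contrairement au cas des permutations ordinaires, plusieurs positions de $\sigma$ portent la même valeur, et il faut s'assurer que le découpage selon une valeur $k$ sépare bien les $m$ copies de chaque valeur sans couper un paquet en deux ; c'est précisément garanti par le fait que $A$ et $B$ sont des alphabets disjoints ordonnés et par la définition de $\stdm$. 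Une fois ces deux inclusions établies, la stabilité de $\FQSymm$ par le coproduit (et donc la structure de cogèbre, puis de bigèbre graduée connexe avec l'associativité du produit de la Proposition \ref{prop:mtamari:prod-fqsymm}) en découle exactement comme pour $\FQSym$, la coassociativité venant de $(A \cp B) \cp C = A \cp (B \cp C)$ et la compatibilité produit/coproduit du fait qu'on travaille sur des alphabets infinis.
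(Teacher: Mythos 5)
Your proof is correct and takes essentially the same route as the paper's: split a word of $\BGm{\sigma}(A \cp B)$ into its $A$-part and $B$-part, check that both are $m$-bègues and that the cut necessarily falls along a value $k$ of $\sigma$, then recombine in the converse direction. The only minor difference is that the paper deduces the $m$-bègue property of the two parts directly from the fact that each letter of the original word appears a multiple of $m$ times, whereas you derive it from the packet structure forced by $\stdm$ — both arguments are fine.
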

\begin{proof}
Soit $u$ un mot de $\BGm{\sigma}$ sur $A \cp B$. On a $u = v \otimes v'$ avec $v$ un mot sur $A$ et $v'$ un mot sur $B$. Comme $u$ est un mot $m$-bègue, chacune de ses lettres apparaît un nombre multiple de $m$ fois. En particulier, c'est le cas pour les lettres de $v$ et $v'$ qui sont donc des mots $m$-bègues sur respectivement $A$ et $B$. Par ailleurs, comme toutes les lettres de $B$ sont supérieures aux lettres de $A$, on a que toutes les lettres de $v$ sont inférieures à celles de $v'$. Soit $i$ tel que $u(i)$ soit la plus grande lettre de $v$. Alors on a $\sigma(i) = k$ et $\stdm(v)$ est le sous-mot formé par les lettres inférieures ou égales à $k$ de $\sigma$. De même, les lettres correspondant à $v'$ dans $\sigma$ sont celles supérieures strictement à $k$.

Soit à présent un élément $v \otimes v'$ avec $v$ un mot $m$-bègue de $A$ et $v'$ un mot $m$-bègue de $B$ et tel qu'il existe $k$ avec $\stdm(v)$ le sous-mot de $\sigma$ formé des lettres inférieures ou égales à $k$ et $\stdm(v')$ le sous-mot $m$-standardisé formé des lettres supérieures à $k$. Supposons que les lettres $1, \dots k$ sont en positions respectives $i_{1,1}, i_{1,2}, \dots, i_{1,m}, i_{2,1} \dots, i_{k,m}$. On forme le mot $u$ tel que le sous-mot $u(i_{1,1})\dots u(i_{k,m})$ soit $v$ et le sous-mot des positions complémentaires soit $v'$. Le mot $u$ est un mot $m$-bègue de $A \cp B$. Comme les lettres de $v'$ sont supérieures aux lettres de $v$, ses inversions sont exactement celles de $\sigma$ et on a donc $\stdm(u) = \sigma$, c'est à dire $u$ apparaît dans la somme $\BGm{\sigma}$. 
\end{proof}

Soit $\FQSymm$ la sous-algèbre de l'algèbre des mots $m$-bègues engendrée par les éléments $(\BGm{\sigma})$. La proposition \ref{prop:mtamari:prod-fqsymm} nous assure que c'est bien une algèbre et nous donne une formule directe pour le produit. De la proposition \ref{prop:mtamari:coprod-fqsymm}, on déduit que $\FQSymm$ munie du produit \eqref{eq:mtamari:prod-fqsymm} et du coproduit \eqref{eq:mtamari:coprod-fqsymm} est une algèbre de Hopf. La preuve est la même que dans le cas de $\FQSym$. La co-associativité est donné par $(A \cp B) \cp C = A \cp (B \cp C)$ et la compatibilité du produit et du coproduit par le fait que le développement de $\BGm{\sigma}(A \cp B)\BGm{\mu}(A \cp B)$ est équivalent à celui de $\BGm{\sigma}(A)\BGm{\mu}(A)$.

\subsection{Dual et $\PBTm$}
\label{sub-sec:mtamari:alg:pbtm}

On note $(\BFm{\sigma})$ la base duale en tant qu'algèbre de Hopf de $(\BGm{\sigma})$ comme on l'a définie par le crochet de dualité au début du paragraphe \ref{sub-sec:tamari_prelim:hopf:dualite}. On a par définition
\begin{equation}
\BFm{\sigma} \BFm{\mu} = \sum_{\BGm{\sigma} \otimes \BGm{\mu} \in \Delta(\BGm{\nu})} \BFm{\nu}.
\end{equation}
On a que $\BGm{\sigma} \otimes \BGm{\mu} \in \Delta(\BGm{\nu})$ si $\sigma$ correspond au sous-mot de $\nu$ formé des lettres inférieures ou égales à $k=|\sigma|$ et si $\mu$ correspond au sous-mot $m$-standardisé des lettres supérieures à $k$. C'est donc toutes les façons d'intercaler les lettres de $\sigma$ et les lettres de $\mu$ auxquelles on a ajouté $|\sigma|$ en conservant l'ordre des lettres dans $\sigma$ et $\mu$. En d'autres termes
\begin{equation}
\BFm{\sigma} \BFm{\mu} = \sum_{\nu \in \sigma \cshuffle \mu} \BFm{\nu}.
\end{equation}

Pour le coproduit, on a 
\begin{equation}
\Delta(\BFm{\sigma}) = \BFm{\mu} \otimes \BFm{\nu}
\end{equation}
sommé sur les permutations $m$-bègues $\mu$ et $\nu$ telles que pour un certain $k$, $\mu = \stdm(\sigma_1 \dots \sigma_k)$ et $\nu = \stdm(\sigma_{k+1} \dots \sigma_n)$. Ce sont toutes les façons de découper la permutation $\sigma$ en deux facteurs qui sont eux-mêmes des mots $m$-bègues. Par exemple
\begin{equation}
\BFm{121233} = \BFm{121233} \otimes 1 + \BFm{1212} \otimes \BFm{11} + 1 \otimes \BFm{121233}.
\end{equation}
Quand aucune coupure non triviale ne découpe $\sigma$ en deux mots $m$-bègues, on a $\BFm{\sigma} = \BFm{\sigma} \otimes 1 + 1 \otimes \BFm{\sigma}$, par exemple
\begin{equation}
\BFm{121323} = \BFm{121323} \otimes 1 + 1 \otimes \BFm{121323}.
\end{equation} 

En terme d'algèbre, le dual de $\FQSymm$ est isomorphe à une sous-algèbre de $\FQSym$. \`A un élément $\BFm{\sigma}$, on associe $\BF_{\std(\sigma)}$ et le produit est bien le même que celui défini par \eqref{eq:tamari_prelim:fqsym_prod_f}. Cependant, ce n'est pas un isomorphisme d'algèbre de Hopf. L'ensemble des permutations $m$-bègues n'est pas stable par le coproduit de $\FQSym$. La définition du coproduit dans $\FQSymm$ correspond à celle de $\FQSym$ où l'on a "supprimé" les éléments qui ne correspondent plus à des permutations $m$-bègues. 

On définit à présent une sous-algèbre de $\FQSymm$ qui généralise la construction de $\PBT$. Soit $T$ un arbre $m$-binaire, on note $\Ti$ son symétrisé gauche-droite, on pose alors
\begin{equation}
\BPm{T} := \sum_{\ABR(\sigma) = \Ti} \BFm{\sigma},
\end{equation}
la somme des éléments $\BFm{\sigma}$ avec $\sigma$ une permutation $m$-bègue dont l'arbre binaire de recherche est $\Ti$. Par exemple, on peut lire sur la figure \ref{fig:mtamari:quotient-begues},
\begin{align}
\BPm{\scalebox{0.5}{\input{includes/figures/trees/T4-7}}} &= \BFm{2211} + \BFm{2121} + \BFm{1221}, \\
\BPm{\scalebox{0.5}{\input{includes/figures/trees/T4-12}}} &= \BFm{2112} + \BFm{1212}, \\
\BPm{\scalebox{0.5}{\input{includes/figures/trees/T4-14}}} &= \BFm{1122}.
\end{align}

Comme en tant qu'algèbre, $\FQSymm$ est une sous-algèbre de $\FQSym$, on a directement que $\PBTm$ est une sous-algèbre de $\FQSymm$ car $\PBT$ est une sous-algèbre de $\FQSym$. Le produit est donné par \eqref{eq:tamari_prelim:pbt-product}, c'est à dire sur $\PBTm$,
\begin{equation}
\BPm{T_1} \BPm{T_2} = \sum_{\Ti \in \Ti_1 \shuffle \Ti_2} \BPm{T}.
\end{equation}
On indice les éléments par la $\omega_T$, la permutation $m$-bègue maximale de la classe sylvestre de $\Ti$. Le produit $\BPm{T_1} \BPm{T_2}$ revient à effectuer le shuffle $\omega_{T_1} \cshuffle \omega_{T_2}$ et à conserver uniquement les éléments qui évitent le motif $132$. Par exemple
\begin{align}
\BPm{\scalebox{0.5}{\input{includes/figures/trees/T4-12}}} 
\BPm{\scalebox{0.5}{


\begin{tikzpicture}
\node (N0) at (0.250, 0.000){};
\node (N00) at (0.750, -0.500){};
\draw[Point] (N00) circle;
\draw (N0.center) -- (N00.center);
\draw[Point] (N0) circle;
\end{tikzpicture}}}
&= \BPm{2112}\BPm{11} \\
&= \BPm{211233} + \BPm{321123} + \BPm{332112} \\
&= \BPm{\scalebox{0.5}{\input{includes/figures/mbinary/P3-2-k}}}
+  \BPm{\scalebox{0.5}{\input{includes/figures/mbinary/P3-2-h}}}
+  \BPm{\scalebox{0.5}{\input{includes/figures/mbinary/P3-2-c}}}.
\end{align}
De par les propriétés de $\PBT$, c'est un intervalle de Tamari et donc de $m$-Tamari. Plus précisément $\BPm{T_1} \BPm{T_2}$ est l'intervalle de $m$-Tamari entre l'arbre $m$-binaire $T_1$ auquel on a greffé $T_2$ à gauche de son fils le plus à gauche et $T_2$ auquel on a greffé $T_1$ à droite de son fils le plus à droite. On peut aussi faire la construction directement sur les arbres $m+1$-aires correspondants aux arbres $m$-binaires. Dans l'exemple précédent, la somme se fait sur l'intervalle
\begin{equation}
\left[ \scalebox{0.5}{\input{includes/figures/mbinary/P3-2-c}}, 
\scalebox{0.5}{\input{includes/figures/mbinary/P3-2-k}} \right],
\end{equation}
soit en terme d'arbres ternaires, l'intervalle
\begin{equation}
\left[ \scalebox{0.5}{\input{includes/figures/mary/P3-2-c}}, 
\scalebox{0.5}{\input{includes/figures/mary/P3-2-k}} \right].
\end{equation}

Assez clairement, $\PBTm$ est aussi une sous-algèbre de Hopf de $\FQSymm$. Le coproduit est donné par
\begin{equation}
\Delta(\BPm{T}) = \sum \BPm{T'} \otimes \BPm{T''}
\end{equation}
sommé sur les couples d'arbres $m$-binaires $(T', T'')$ tels que $\omega_{T'} = \stdm(\omega_1)$ et $\omega_{T''} = \stdm(\omega_2)$ avec $\omega_1 \omega_2$ appartenant à la classe sylvestre de $\Ti$. Par exemple,

\begin{align}
\Delta(\BPm{\scalebox{0.5}{\input{includes/figures/mbinary/P3-2-k}}}) 
&= \Delta( \BFm{121233} + \BFm{211233} ) \\
&= \BFm{121233} \otimes 1 + \BFm{1212} \otimes \BFm{11} + 1 \otimes \BFm{121233} \\
\nonumber
&+ \BFm{211233} \otimes 1 + \BFm{2112} \otimes \BFm{11} + 1 \otimes \BFm{211233} \\
&= \BPm{\scalebox{0.5}{\input{includes/figures/mbinary/P3-2-k}}} \otimes 1 
+ \BPm{\scalebox{0.5}{\input{includes/figures/trees/T4-12}}} \otimes 
\BPm{\scalebox{0.5}{}} 
+1 \otimes \BPm{\scalebox{0.5}{\input{includes/figures/mbinary/P3-2-k}}}.
\end{align}
Le fait que les éléments du coproduit soient bien des éléments de $\PBTm$ vient directement des propriétés du coproduit sur $\PBT$. Soit $\BFm{\mu} \otimes \BFm{\nu}$ un élément du coproduit de $\BPm{T}$. On sait par le coproduit de $\PBT$ que pour tout mot $\mu'$ de la classe sylvestre de $\mu$, il existe $\sigma$ tel que $\BF_{\std(\mu')} \otimes \BF_{\nu}$ soit dans le coproduit pour $\FQSym$ de $\BF_{\sigma}$ avec $\BF_{\sigma} \in \BP_{\Ti}$. Comme $T$ est un arbre $m$-binaire, $\sigma$ correspond à une permutation $m$-bègue $\sigma'$. Et comme $\mu$ est un mot $m$-bègue, $\mu'$ l'est aussi.  Le découpage de $\sigma'$  en $\mu'$ et $\nu$ est valide pour $\FQSymm$ et on a $\BFm{\mu'} \otimes \BFm{\nu} \in \Delta(\BFm{\sigma})$.

\cleardoublepage

\lhead[\oldstylenums \thepage]{Perspectives}
\rhead[Perspectives]{\oldstylenums \thepage}
\addcontentsline{toc}{chapter}{Perspectives}

\chapter*{Perspectives}

\section*{Produits d'opérateurs}

Dans le chapitre \ref{chap:polynomes_grothendieck} nous étudions un produit particulier d'opérateurs de différences divisées isobares $\pi$ et $\hpi$. Nous prouvons qu'il se développe comme une somme sur un intervalle de l'ordre de Bruhat. De façon générale, le développement de produits mélangeant les deux types d'opérateurs $\pi$ et $\hpi$ est une question ouverte. Il serait intéressant de rechercher quels autres cas particuliers se développent en un intervalle. On peut aussi poser la question dans l'autre sens : quels sont les intervalles de l'ordre de Bruhat que l'on peut obtenir à partir d'un produit d'opérateurs ? Le développement d'un produit $\pi_\sigma$ en somme d'éléments $\hpi_\mu$ est en effet une des méthodes plus efficaces pour générer des intervalles initiaux de l'ordre de Bruhat, il serait intéressant d'obtenir une méthode pour d'autres types d'intervalles.  Par ailleurs, comme on peut le lire dans \cite{LenartPostnikov}, d'autres produits de polynômes de Grothendieck se développent grâce à des manipulations dans l'ordre de Bruhat. On peut se demander dans quels cas les ensembles obtenus peuvent se décrire comme des intervalles.

\section*{Intervalles-posets}

Dans le chapitre \ref{chap:tamari_intervalles}, nous introduisons un nouvel objet combinatoire, les intervalles-posets, en bijection avec les intervalles de Tamari. Nous utilisons ces objets pour obtenir une nouvelle formule sur les treillis de Tamari et $m$-Tamari dénombrant les éléments plus petits ou égaux à un arbre donné. Nous pensons que ces objets sont un outil intéressant pour traiter toutes les questions relatives aux intervalles des treillis de Tamari. En particulier, dans \cite{triangulations}, les auteurs décrivent une bijection entre les intervalles de Tamari et les triangulations. Cette bijection doit pouvoir s'interpréter en termes d'intervalles-posets. Ainsi on pourrait espérer une preuve bijective directe du nombre d'intervalles qui soit généralisable aux treillis de $m$-Tamari.

Par ailleurs, les auteurs de \cite{mTamari} utilisent une seconde statistique sur les intervalles qu'ils appellent \emph{montée initiale} du chemin supérieur. Ils obtiennent alors une distribution symétrique du nombre de points de contact sur le chemin inférieur et de la montée initiale du chemin supérieur. Pour l'ordre de Tamari classique, cette symétrie s'explique simplement par la symétrie de l'ordre lui-même. Mais la bijection ne s'étend pas directement aux treillis de $m$-Tamari. Les statistiques de la montée initiale et du nombre de points de contact s'interprètent aussi en termes d'arbres et d'intervalles-posets. Il semble alors envisageable de rechercher la bijection explicite sur les intervalles-posets qui explique la distribution symétrique des deux statistiques.

\section*{Polynômes de Tamari}

Le calcul des polynômes de Tamari définis dans le chapitre \ref{chap:tamari_intervalles} fait intervenir deux opérations $\polleft$ et $\polright$. Nous souhaitons étudier les relations et la combinatoire liées à ces opérations. Le produit gauche $\polleft$ est le produit classique. En particulier, il est associatif et commutatif. Le produit droit $\polright$ n'est ni commutatif, ni associatif. Entre eux les opérateurs vérifient clairement
\begin{equation}
(f \polleft g) \polright h = f \polleft (g \polright h).
\end{equation}
Existe-t-il d'autres relations ? Par ailleurs, deux arbres binaires peuvent avoir le même polynôme : le produit gauche étant commutatif, on peut permuter entre eux les sous-arbres issus des branches gauches. Expérimentalement, il semble que ce soit la seule possibilité. Le nombre de polynômes serait donc égal au nombre d'arbres enracinés. On rejoint alors les résultats sur les flots obtenus par Chapoton \cite{ChapBiVar}. Le contexte pour le calcul des flots semble très différent du cadre dans lequel nous avons introduit les polynômes. Non seulement nous souhaitons montrer que ces deux familles de polynômes sont les mêmes, comme nous l'avons évoqué dans le paragraphe \ref{sub-sec:tamari_intervalles:polynomials:bivar}. Mais nous voulons comprendre le lien entre les deux théories. Une piste serait de décrire une version non commutative des polynômes pour exprimer le calcul que nous effectuons dans une algèbre de Hopf connue.

\section*{Structures "$m$"}

Comme nous l'avons vu dans le chapitre \ref{chap:mtamari}, le treillis de Tamari se généralise en treillis de $m$-Tamari. Comme on peut exprimer les treillis de $m$-Tamari comme des idéaux du treillis de Tamari classique, ils restent des quotients de certains idéaux de l'ordre faible. C'est ce que nous avons appelé le treillis des permutations $m$-bègues dans le paragraphe \ref{sec:mtamari:alg}. Il semble possible de définir un treillis quotient des permutations $m$-bègues dont le treillis de $m$-Tamari serait lui-même un quotient. Les sommets de ce treillis seraient donnés par certaines \emph{chaînes de permutations} elles-mêmes en bijection avec les arbres $m+1$-aires décroissants. Nous appelons chaîne de permutations une liste de $m$ permutations $\sigma^{(1)} \leq \sigma^{(2)} \leq \dots \leq \sigma^{(m)}$ ordonnées pour l'ordre faible droit. Nous ne considérons ensuite que les chaînes $c$ telles que si $\sigma$ et $\mu$ appartiennent à $c$ avec $\sigma \leq \mu$, alors $\sigma^{-1} \mu$ évite le motif $312$. L'ensemble de ces chaînes munies d'une relation d'ordre généralisant l'ordre faible formerait un treillis. Par une relation de réécriture raffinant la réécriture sylvestre, il semble alors possible de découper les classes sylvestres des permutations $m$-bègues tel que l'ordre induit entre ces nouvelles classes soit celui entre chaînes de permutations. En figure \ref{fig:matamari:quotient-chaines}, nous donnons une illustration de cette construction en dessinant le treillis de $m$-Tamari comme un quotient du treillis sur les chaînes de permutations.

Cette recherche est un travail en cours et nous donnerons prochainement les démonstrations formelles de ces résultats. Il reste alors à comprendre le rôle de ce nouveaux treillis et les liens avec les objets existants. En particulier, ce treillis correspond-il à une algèbre ? Dans le cas du treillis de Tamari classique, les arbres binaires décroissants sont en bijection avec les permutations. Ils permettent de construire l'ordre de Tamari comme quotient de l'ordre faible gauche. Les arbres $m+1$-aires décroissants pourraient donc être compris comme une généralisation de cette construction et permettraient peut-être d'obtenir de nouveaux résultats sur les structures algébriques "$m$".

\begin{figure}[p]
\centering
\input{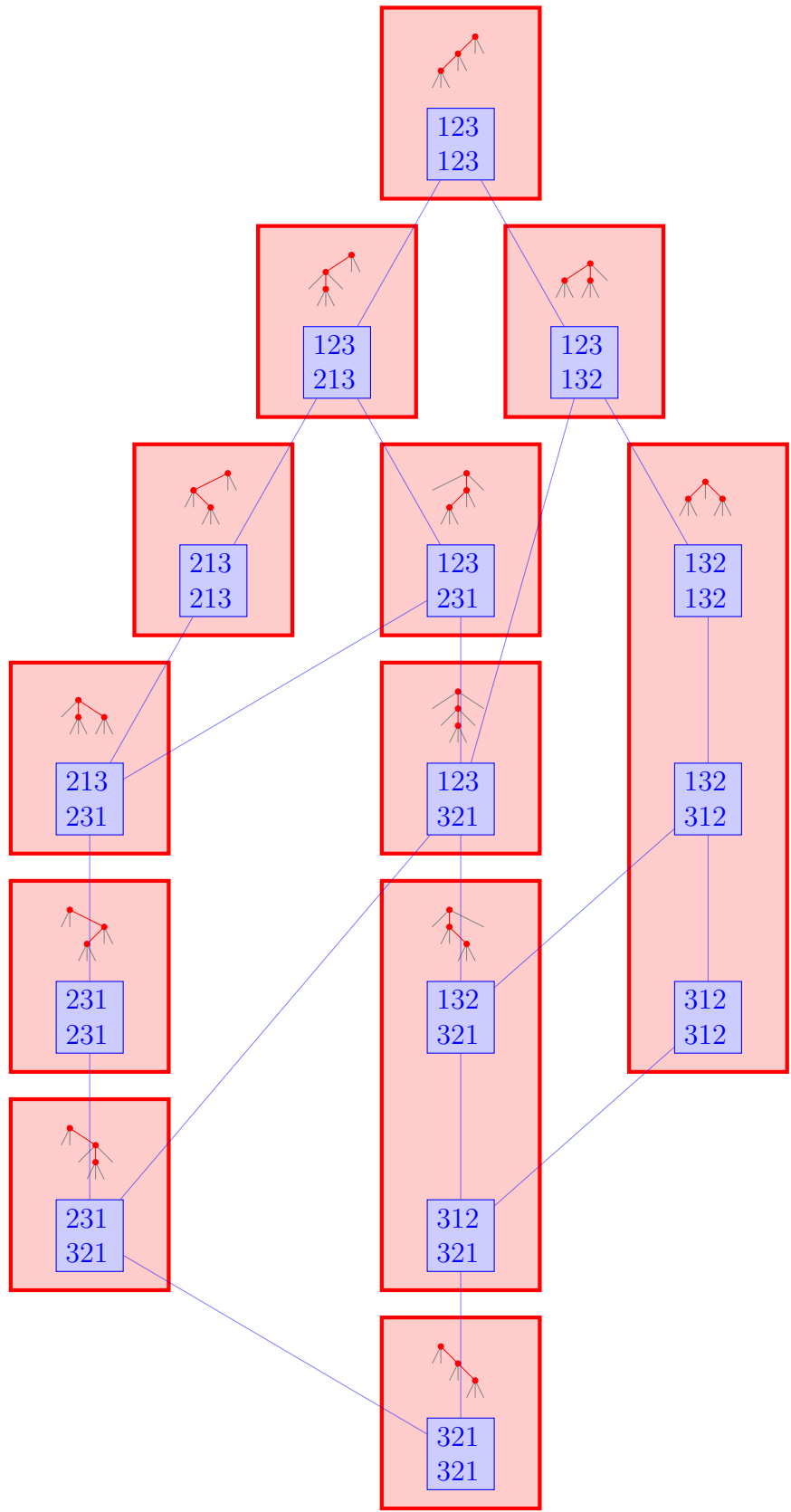}
\caption{Le treillis de $m$-Tamari comme quotient d'un treillis sur les chaînes de permutations}
\label{fig:matamari:quotient-chaines}
\end{figure}
\cleardoublepage


\backmatter
\lhead[\oldstylenums \thepage]{\rightmark}
\rhead[\leftmark]{\oldstylenums \thepage}

\lhead[\oldstylenums \thepage]{Bibliographie}
\rhead[Bibliographie]{\oldstylenums \thepage}
\bibliographystyle{alpha}
\bibliography{these}
\cleardoublepage

\lhead[\oldstylenums \thepage]{Index}
\rhead[Index]{\oldstylenums \thepage}
\printindex
\cleardoublepage

\thispagestyle{empty}
\begin{center}
    {\bf \Titre \quad --- \quad \TitreEN}
\end{center}

\end{document}